\newtheorem{thm}{Theorem}[section]
\newtheorem{prop}[thm]{Proposition}
\newtheorem{cor}[thm]{Corollary}
 \newtheorem{lemma}[thm]{Lemma}
\theoremstyle{definition}
\newtheorem{example}[thm]{Example}
\newtheorem{remark}[thm]{Remark}
\newtheorem{hyp}[thm]{Hypothesis}
\newtheorem{definition}[thm]{Definition}
\newtheorem*{notation}{Notation  and remarks for Examples \ref*{example:(I)dep}--\ref*{example:(I)E}}
\renewcommand\leq{\leqslant} 
\renewcommand\geq{\geqslant} 
\renewcommand\nleq{\nleqslant} 
\renewcommand\mod{\bmod} 
\newcommand\cprod{\mathbin{\Box}}
\DeclareMathOperator{\Aut}{Aut}
\DeclareMathOperator{\Sym}{Sym}
\DeclareMathOperator{\End}{End}
\DeclareMathOperator{\AG}{AG}
\DeclareMathOperator{\PG}{PG}
\DeclareMathOperator{\diag}{diag}
\DeclareMathOperator{\M}{M}
\newcommand*{\nonsplit}{{}^{\displaystyle .}}
\DeclareMathOperator{\GL}{GL}
\DeclareMathOperator{\SL}{SL}
\DeclareMathOperator{\PSL}{PSL}
\DeclareMathOperator{\PGL}{PGL}
\DeclareMathOperator{\AGL}{AGL}
\DeclareMathOperator{\PGammaL}{P\Gamma L}
\DeclareMathOperator{\AGammaL}{A\Gamma L}
\DeclareMathOperator{\GammaL}{\Gamma L}
\DeclareMathOperator{\Sp}{Sp}
\DeclareMathOperator{\GSp}{GSp}
\DeclareMathOperator{\SU}{SU}
\DeclareMathOperator{\PSU}{PSU}
\DeclareMathOperator{\GammaU}{\Gamma U}
\DeclareMathOperator{\GO}{O}
\DeclareMathOperator{\SO}{SO}
\DeclareMathOperator{\spin}{Spin}
\DeclareMathOperator{\POmega}{P\Omega}
\DeclareMathOperator{\Suz}{Suz}
\DeclareMathOperator{\Sz}{Sz}
\DeclareMathOperator{\f}{\mathbf{f}}
\title{Partial linear spaces with a rank~$3$ affine primitive  group of automorphisms}
\author[J. Bamberg, A. Devillers,  J. B. Fawcett, C. E. Praeger]{John Bamberg, Alice Devillers,  Joanna B. Fawcett,   Cheryl E. Praeger}
\address[Bamberg, Devillers, Praeger]{Centre for the Mathematics of Symmetry and Computation,
Department of Mathematics and Statistics,
The University of Western Australia, 35 Stirling Highway, 
Crawley, W.A. 6009, Australia}
\email{john.bamberg@uwa.edu.au, alice.devillers@uwa.edu.au, cheryl.praeger@uwa.edu.au}
\address[Fawcett]{Department of Mathematics,  Imperial College London, South Kensington Campus, 
London, SW7 2AZ, United Kingdom}
\email{j.fawcett@imperial.ac.uk}
\keywords{partial linear space, hypergraph, flag-transitive,  rank $3$ affine primitive group}
\thanks{We sincerely thank the anonymous referees for their helpful and insightful comments. This work forms part of the Australian Research Council Discovery Project grant DP130100106 of the first, second and fourth authors. The third author was supported by this same grant, the London Mathematical Society and  the European Union's Horizon 2020 research and innovation programme under the Marie Sk\l{}odowska-Curie grant agreement No.\ 746889”.  
}
\subjclass[2010]{51E30, 05E18, 20B15, 05B25, 20B25}
\begin{document}

\vspace*{-0.05cm}
\maketitle
\vspace*{-0.5cm}

\begin{abstract}
A partial linear space is a pair $(\mathcal{P},\mathcal{L})$ where $\mathcal{P}$ is a non-empty set of points and $\mathcal{L}$ is a collection of subsets of $\mathcal{P}$ called lines such that any two distinct points are contained in at most one line, and every line contains at least two points. A partial linear space is proper when it is not a  linear space or a graph. 
A group of automorphisms $G$ of a proper partial linear space acts transitively on ordered pairs of distinct collinear points and ordered pairs of distinct non-collinear points precisely when $G$ is transitive of rank~$3$ on points.
 In this paper, we classify the finite proper partial linear spaces that admit  rank~$3$ affine primitive automorphism groups, except for certain families of small groups, including  subgroups of $\AGammaL_1(q)$. Up to these exceptions, this completes the classification of the finite proper partial linear spaces admitting rank~$3$  primitive automorphism groups. We also provide a more detailed version of the classification of the rank~$3$ affine primitive permutation groups, which may be of independent interest.
\end{abstract}

\section{Introduction}
Partial linear spaces are a class of incidence structures that generalise both graphs and linear spaces. Examples include polar spaces and generalised quadrangles. Specifically, a \textit{partial linear space} $\mathcal{S}$ is a pair $(\mathcal{P},\mathcal{L})$, where $\mathcal{P}$ is a non-empty set of \textit{points} and $\mathcal{L}$ is a collection of subsets of $\mathcal{P}$ called \textit{lines} such that any two distinct points are contained in at most one line, and every line contains at least two points. \emph{In this paper, the set $\mathcal{P}$ will always be finite.} A partial linear space $\mathcal{S}$ is a \textit{linear space} when any two distinct points  are contained in exactly one line, and  a \textit{graph} when every line contains exactly two points. A partial linear space is \textit{proper} when it is neither a  linear space nor a graph;  in particular, every proper partial linear space contains at least one line.  If every line of $\mathcal{S}$ contains exactly $k$ points, then we say that $\mathcal{S}$ has \textit{line-size} $k$; dually, if every point of $\mathcal{S}$ lies on exactly $\ell$ lines, then we say that $\mathcal{S}$ has \textit{point-size} $\ell$. The \textit{automorphism group} $\Aut(\mathcal{S})$ of $\mathcal{S}$ consists of those permutations of $\mathcal{P}$ that preserve $\mathcal{L}$. 

Highly symmetric linear spaces  have been studied extensively. For example, Kantor~\cite{Kan1985} classified the $2$-transitive linear spaces, in which some automorphism group acts transitively on ordered pairs of distinct points; these include  the projective space $\PG_n(q)$ and the affine space $\AG_n(q)$. More generally, the flag-transitive  linear spaces $\mathcal{S}$---in which some automorphism group $G$ acts transitively on  point-line incident pairs---have also been classified~\cite{BueDelDoyKleLieSax1990}, except for the case where $\mathcal{S}$ has $q$ points and $G\leq \AGammaL_1(q)$ for some prime power $q$. 
The most natural way to generalise the concept of $2$-transitivity to arbitrary partial linear spaces is to consider those partial linear spaces for which some automorphism group  acts transitively on ordered pairs of  distinct collinear points, as well as ordered pairs of  distinct non-collinear points. Such  partial linear spaces are flag-transitive, and when they have non-empty line sets and are not linear spaces,  they are precisely the partial linear spaces $\mathcal{S}$ for which $\Aut(\mathcal{S})$ is transitive of rank~$3$ on points (see Lemma~\ref{lemma:rank3}  and Remark~\ref{remark:PLSrank3}). 

A permutation group $G$ acting on a finite set $\Omega$ has \textit{rank} $r$ when $G$ is transitive on $\Omega$ and has $r$ orbits on $\Omega\times \Omega$; we also say that $G$ is 
\textit{primitive} if it is transitive on $\Omega$ and there are no non-trivial $G$-invariant equivalence relations on $\Omega$. Using the classification of the finite simple groups (CFSG), the primitive permutation groups of rank~$3$  have been classified  (see~\cite{LieSax1986} for references), and as an immediate consequence of this classification, the graphs with a transitive automorphism group of rank~$3$ are known. However, we cannot make such a deduction for proper partial linear spaces, for the lines of such geometries are not necessarily determined by their collinearity relations. We therefore wish to classify the proper partial linear spaces with a rank~$3$ automorphism group $G$, and we will focus on the case where $G$ is  primitive (see Remark~\ref{remark:imprim} for some comments on the imprimitive case). The primitive permutation groups of rank~$3$ are of almost simple, grid or affine type (see Proposition~\ref{prop:primrank3}), and Devillers has  classified the partial linear spaces with a rank~$3$ primitive automorphism group of almost simple~\cite{Dev2005} or grid type~\cite{Dev2008}, so it remains to consider those of affine type. 

A primitive permutation group $G$ is \textit{affine} when the socle of $G$ is (the additive group of) a vector space $V:=V_d(p)$ for some $d\geq 1$ and prime $p$, in which case we may view $G$ as a subgroup of $\AGL_d(p)$ acting on $V$. Moreover, we may view  $V$ as the translation group of $\AGL_d(p)$,  in which case $G=V{:}G_0$, where $G_0$ denotes  the stabiliser of the zero vector in $V$, and $G_0$ is an irreducible subgroup of $\GL_d(p)$. Note that if $G_0\leq \GammaL_a(r)$ where $r^a=p^d$, then we may view $V$ as a vector space $V_a(r)$. The affine primitive permutation groups of rank~$3$ were classified by Foulser~\cite{Fou1969} in the soluble case and   Liebeck~\cite{Lie1987} in general,  and we provide a more detailed version of this classification in this paper (see Theorem~\ref{thm:rank3}). 
If $G$ is such a permutation group, then $G_0$ has two orbits on $V^*:=V\setminus \{0\}$, and if $\mathcal{S}$ is a partial linear space with  $G\leq \Aut(\mathcal{S})$, then we may identify the points of $\mathcal{S}$ with $V$, in which case the set of vectors in $V$ that are collinear with $0$, denoted by $\mathcal{S}(0)$, is one of the two orbits of $G_0$ on $V^* $. 

Before we state the main theorem of this paper, we first describe some infinite families of examples that arise. Throughout these examples, $p$ is a prime and $d$ is a positive integer.  See \S\ref{s:prelim} for any unexplained terminology. We will see that most of the proper partial linear spaces admitting rank~$3$ affine primitive automorphism groups have the form of Example~\ref{example:AG}.
 
\begin{example}
\label{example:AG}
Let $(G,a,r)$ satisfy the following properties:  $G$ is an affine primitive permutation group of rank~$3$ with socle $V:=V_d(p)$ such that $G_0\leq \GammaL_a(r)$  and $G_0$ has two orbits $\Delta_1$ and $\Delta_2$  on the points of $\PG_{a-1}(r)$,  where  $r^a=p^d$, $a\geq 2$  and $r>2$.  Let $\mathcal{L}_i:=\{\langle u\rangle + v: \langle u\rangle\in \Delta_i, v\in V\}$ for $i\in\{1,2\}$. Then $\mathcal{S}_i:=(V,\mathcal{L}_i)$ is a proper partial linear space  such that $G\leq \Aut(\mathcal{S}_i)$ for $i\in\{1,2\}$. Note that $(V,\mathcal{L}_1\cup \mathcal{L}_2)$ is the linear space $\AG_a(r)$, and $G$ has orbits $\mathcal{L}_1$ and $\mathcal{L}_2$  on the lines of $\AG_a(r)$. 
 Geometrically, if we embed  $\PG_{a-1}(r)$ as a hyperplane $\Pi$ in $\PG_{a}(r)$ and view $V$ as the set of points in $\AG_a(r)$ (i.e., the set of points in $\PG_a(r)$ that are not in $\Pi$), then $\mathcal{L}_i$ is the set of affine lines of $\AG_a(r)$ whose completions meet $\Pi$ in a point of $\Delta_i$. 
 For most rank~$3$ affine primitive permutation groups $G$, there are   various pairs $(a,r)$ such that  $G_0\leq \GammaL_a(r)$  and $G_0$ has two orbits on $\PG_{a-1}(r)$; see Hypothesis~\ref{hyp:AGgroups} and Corollary~\ref{cor:AGgroups} (and 
 Theorem~\ref{thm:rank3}). 
 \end{example}

\begin{example}
\label{example:tensor}
Let $U:=V_2(q)$ and $W:=V_m(q)$ where $q^{2m}=p^d$ and $m\geq 2$. 
Let $V:=U\otimes W$. Let $\Sigma_U:=\{U\otimes w:w\in W^*\}$ and $\Sigma_W:=\{u\otimes W : u\in U^*\}$. 
 For $X\in \{U,W\}$, let $\mathcal{L}_X:=\{Y+v : Y\in \Sigma_X,v\in V\}$. 
Then $\mathcal{S}_X:=(V,\mathcal{L}_X)$ is a proper partial linear space,
and $\mathcal{S}_U\simeq \mathcal{S}_W$ when $m=2$.  Geometrically,
if we  embed $\PG_{2m-1}(q)$ as a hyperplane $\Pi$ in $\PG_{2m}(q)$ and view $V$ as the set of points in $\AG_{2m}(q)$ (i.e., the set of points in $\PG_{2m}(q)$ that are not in $\Pi$), then for $(X,n)\in \{(U,2),(W,m)\}$, we may view 
 $\Sigma_X$ as a set of projective  $(n-1)$-subspaces  of $\Pi$, whence $\mathcal{L}_X$ is the set of affine $n$-subspaces  of $\AG_{2m}(q)$ whose completions  meet $\Pi$ in an element of $\Sigma_X$.  
  For $\mathcal{S}_U$, the line-size is $q^2$ and the point-size is $(q^m-1)/(q-1)$, while for $\mathcal{S}_W$, the line-size  is $q^m$ and the point-size is $q+1$. By Proposition~\ref{prop:tensoraut}, $\Aut(\mathcal{S}_U)=\Aut(\mathcal{S}_W)=V{:}(\GL_2(q)\circ  \GL_m(q)){:}\Aut(\mathbb{F}_q)$, a rank~$3$  affine primitive  group.  See \S\ref{s:(T)}  for more details.
\end{example}

\begin{example}
\label{example:grid}
Let $d=2n$ where $p^n\neq 2$. The $p^n\times p^n$ grid is a proper partial linear space with point set $V:=V_n(p)\oplus V_n(p)$ whose line set is the  union of $\{\{(v,w) : v \in V_n(p)\} : w \in V_n(p)\}$ and $\{\{(w,v) : v \in V_n(p)\} : w \in V_n(p)\}$. The $p^n\times p^n$ grid has line-size $p^n$ and point-size $2$,
 and its full automorphism group  is $S_{p^n}\wr S_2$, which contains the rank~$3$ affine primitive group $V{:}(\GL_n(p)\wr S_2)$. See \S\ref{s:(I)}  for more details.
\end{example}

Now we state the main result of this paper. 

\begin{thm}
\label{thm:main}  
Let $\mathcal{S}$ be a finite proper partial linear space, and let $G\leq \Aut(\mathcal{S})$ such that $G$ is an affine  primitive permutation group of rank~$3$ with socle $V:=V_d(p)$ where $d\geq 1$ and $p$ is prime. Then one of the following holds.
\begin{itemize}
\item[(i)] $\mathcal{S}$ is isomorphic to a partial linear space  from Example~\emph{\ref{example:AG}} with respect to  a triple $(H,a,r)$ satisfying Hypothesis~\emph{\ref{hyp:AGgroups}} such that $H$ is  primitive  with socle $V$ and $r^a=p^d$.
\item[(ii)] $\mathcal{S}$ is  described in Examples~\emph{\ref{example:tensor}} or~\emph{\ref{example:grid}}.
 \item[(iii)] $\mathcal{S}$ is described in Table~\emph{\ref{tab:main}} where $k$ and $\ell$ are the line- and point-size of $\mathcal{S}$, respectively, and $\Aut(\mathcal{S})=p^d{:}\Aut(\mathcal{S})_0$.
 \item[(iv)]  One of the following holds. 
 \begin{itemize}
 \item[(a)] $G_0\leq \GammaL_1(p^d)$.
 \item[(b)] $V=V_n(p)\oplus V_n(p)$ and $G_0\leq \GammaL_1(p^n)\wr S_2$ where $\mathcal{S}(0)=V_n(p)^*\times V_n(p)^*$.
 \item[(c)] $V=V_2(q^3)$ and  $\SL_2(q)\unlhd G_0\leq \GammaL_2(q^3)$ where 
 $|\mathcal{S}(0)|=q(q^3-1)(q^2-1)$.
 \end{itemize}
\end{itemize}
\end{thm}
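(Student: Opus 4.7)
The plan is to fix any line $L$ of $\mathcal{S}$ and translate so that $0\in L$. Then $L\setminus\{0\}$ lies in one $G_0$-orbit $\Delta=\mathcal{S}(0)$ on $V^*$; the lines through $0$ partition $\mathcal{S}(0)$ (by the partial linear space axiom applied to pairs through $0$); and $(G_0)_L$ acts transitively on $L\setminus\{0\}$ (by flag-transitivity, which follows from rank~$3$ via Lemma~\ref{lemma:rank3}). Conversely, any subset $L\ni 0$ with $|L|\geq 3$, $L\setminus\{0\}\subseteq\Delta$ and $(G_0)_L$ transitive on $L\setminus\{0\}$ yields a $G$-invariant proper partial linear space, since two distinct lines meeting in two points could be translated to share $0$ and would contradict the partition property. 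Thus the theorem reduces to enumerating such sets $L$ for each rank~$3$ affine primitive group $G$ and each $G_0$-orbit $\Delta\subseteq V^*$.

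Next I would invoke Theorem~\ref{thm:rank3}, the detailed classification of rank~$3$ affine primitive groups, setting aside the families (iv)(a)--(c) for separate treatment. For each remaining class the strategy is to identify a natural tower $G_0\leq \GammaL_a(r)$ with $r^a=p^d$, $a\geq 2$ and $r>2$, such that $G_0$ has two orbits on the points of $\PG_{a-1}(r)$ (as organised in Hypothesis~\ref{hyp:AGgroups} and Corollary~\ref{cor:AGgroups}). Under such a tower, the $\mathbb{F}_r$-lines $\langle u\rangle+v$ with $\langle u\rangle\in\Delta_i$ give the spaces of Example~\ref{example:AG}, so the core task is to show that \emph{every} line through $0$ must already be a $1$-dimensional $\mathbb{F}_r$-subspace. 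The key lever is the scalar subgroup $\mathbb{F}_r^*\leq Z(\GL_a(r))$: it normalises $G_0$ and permutes the lines through $0$, and a comparison of orbit sizes with the transitivity of $(G_0)_L$ on $L\setminus\{0\}$ should force $L$ to be closed under $\mathbb{F}_r$-scalar multiplication, hence to be an affine $\mathbb{F}_r$-line.

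Two infinite families escape this generic analysis. When $G_0$ preserves an additional tensor decomposition $V=V_2(q)\otimes V_m(q)$ (treated in \S\ref{s:(T)}), the slices $V_2(q)\otimes w$ and $u\otimes V_m(q)$ give further $(G_0)_L$-invariant subspaces, producing the lines of Example~\ref{example:tensor}; the partition property must be checked geometrically in $\AG_{2m}(q)$ and the full automorphism group computed via Proposition~\ref{prop:tensoraut}. When $G_0$ preserves an imprimitive decomposition $V=V_n(p)\oplus V_n(p)$ (\S\ref{s:(I)}), the two summands form lines and yield the grid of Example~\ref{example:grid}. For the finitely many sporadic rank~$3$ groups listed in Theorem~\ref{thm:rank3}, I would carry out a direct case-by-case analysis, enumerating the transitive actions of a putative line-stabiliser on sub-orbits of $\Delta$ (often using explicit coordinates or computer computation), to obtain the entries of Table~\ref{tab:main}; the full automorphism group of each such $\mathcal{S}$ is then pinned down by bounding $|\Aut(\mathcal{S})_0|$ via its transitive action on each line.

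The main obstacle I anticipate is the affine-line step: showing uniformly across all non-exceptional classes that each line through $0$ is an $\mathbb{F}_r$-subspace. This requires tight control of how $\mathbb{F}_r^*$ interacts with $(G_0)_L$, and it is precisely the failure of this argument for the small families in (iv)(a)--(c) --- where $G_0$ is too loosely situated inside the ambient $\GammaL$ to enforce scalar-closure of $L$, and where genuinely non-subspace line structures can arise --- that forces those families to be excluded from the present classification.
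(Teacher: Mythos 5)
Your reduction to sets $L\ni 0$ with $L^*\subseteq\mathcal{S}(0)$ and a transitive line-stabiliser is essentially Lemmas~\ref{lemma:necessary} and~\ref{lemma:sufficient}, but your converse is misstated: transitivity of $(G_0)_L$ on $L\setminus\{0\}$ does not by itself give the ``partition property''---you also need $L\setminus\{0\}$ to be a \emph{block} of $G_0$ on $\mathcal{S}(0)$, and you need $G_L$ to be transitive on all of $L$ (so that $0$ can be moved into $L^*$); both conditions are verified nontrivially in each of the paper's constructions.

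The more serious gap is the central step. Your proposed lever---that $\mathbb{F}_r^*\leq Z(\GL_a(r))$ ``permutes the lines through $0$'' and an orbit-size comparison forces $L$ to be $\mathbb{F}_r$-closed---does not work: $\mathbb{F}_r^*$ need not lie in $\Aut(\mathcal{S})$ (an element $g\in\mathbb{F}_r^*$ sends $\mathcal{L}$ to a possibly different $G$-invariant line set $\mathcal{L}^g$), so it does not act on $\mathcal{L}_0$; and the intended conclusion is simply false, since many spaces in the theorem (Example~\ref{example:tensor} and most rows of Table~\ref{tab:main}) have lines that are not one-dimensional $\mathbb{F}_r$-subspaces, and two rows have lines that are not affine subspaces at all. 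What the paper actually does is split each line according to whether $L\subseteq\langle x\rangle_{\mathbb{F}_s}$ for $x\in L^*$ (``dependent'') or not. Dependent lines are handled by restricting to the $1$-space and invoking Kantor's classification of $2$-transitive linear spaces inside $\AGammaL_1(q)$ (Lemma~\ref{lemma:Kantor}, Proposition~\ref{prop:dep}), which is what yields Example~\ref{example:AG}; independent lines are eliminated or pinned down class by class in \S\ref{s:(R1)}--\S\ref{s:(S0)} using the constraint of Lemma~\ref{lemma:basic}, namely that $y-x\in x^{G_0}$ and $y^{G_{0,x}}\subseteq L^*$ for distinct $x,y\in L^*$, played off against the unitary, quadratic, tensor or spinor geometry defining the two orbits. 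That difference constraint, not a scalar-orbit count, is the engine of the proof, and without it (or a substitute) the non-exceptional classes are not actually resolved. Your treatment of class (I) is also too thin: besides the grid, the orbit $V_1^*\cup V_2^*$ produces cartesian products of arbitrary $2$-transitive affine linear spaces, and the orbit $V_n(p)^*\times V_n(p)^*$ produces several further families via sharply $2$-transitive groups and the compatible-triple machinery of \S\ref{s:(I)}, which your sketch does not account for.
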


 \begin{table}[!h]
\renewcommand{\baselinestretch}{1.1}\selectfont
\centering
\begin{tabular}{ c c c c  c c c c  }
\hline
$p^d$ & $k$ & $\ell$ & $|\mathcal{S}(0)|$ & $\Aut(\mathcal{S})_0$ &  No.  & Ref. & Notes \\
\hline 
$2^8$ & $16$ & $9$ & $135$ & $A_9$ &$1$ &\ref{example:(AS)A9} &  
\multirowcell{7}{$\bigg\uparrow$\\ not obtained\\ from any rank~$3$\\  $2$-$(p^d,k,1)$ design\\ $\bigg\downarrow$} 
\\
$3^4$ & $6$ & $12$ & $60$ & $M_{10}$ & $1$&\ref{example:(R2)} & \\
$3^5$ & $12$ & $12$ & $132$ & $M_{11}$ &$1$ &\ref{example:(AS)M11} & \\
$3^8$ & $9$ & $180$ & $1440$ & $(Q_8\circ Q_8\circ Q_8)\nonsplit \SO_6^-(2)$ &$1$& \ref{example:(E)nonplane} & \\
$3^{12}$ & $27$ & $56$ & $1456$ & $\SL_2(13)\wr S_2 $ &$1$& \ref{example:(I)goodHering} & \\
 & $9$ & $8190$ & $65520$ & $2\nonsplit G_2(4).2^-$ &$1$& \ref{example:(AS)G24} & \\
$5^6$ & $25$ & $315$ & $7560$ & $(2\nonsplit J_2\circ 4)\nonsplit 2$ &$1$ &\ref{example:(AS)J2} & \\
\hline
  $3^4$ & $9$ & $5$ & $40$ & $2\nonsplit S_5^+$ & $1$&\ref{example:(AS)nearfield} &  \multirowcell{2}{from the nearfield \\ plane of order $9$} \\
 & $9$ & $8$ & $64$ & $(Q_8\wr S_2){:}S_3$ &$1$& \ref{example:(I)reg} &   \\
 \hline 
$5^4$ & $25$ & $24$ & $24^2$ & $(\SL_2(3)\wr S_2){:}4$ & $1$&\ref{example:(I)reg}   &\multirowcell{7}{$\bigg\uparrow$\\ from an irregular \\ nearfield plane \\ of order $p^2$\\ $\bigg\downarrow$} \\
$7^4$ & $49$ & $48$ & $48^2$ & $(2\nonsplit S_4^-\wr S_2){:}3$ &$1$& \ref{example:(I)reg} & \\
$11^4$ & $121$ & $120$ & $120^2$ & $((\SL_2(3)\times 5)\wr S_2){:}2$ & $1$&\ref{example:(I)reg} & \\
$23^4$ & $23^2$ & $528$ & $528^2$ & $(2\nonsplit S_4^-\times 11)\wr S_2$ &$1$& \ref{example:(I)reg} & \\
$11^4$ & $121$ & $120$ & $120^2$ & $(\SL_2(5)\wr S_2){:}5$ &$1$& \ref{example:(I)reg} & \\
$29^4$ & $29^2$ & $840$ & $840^2$ & $((\SL_2(5)\times 7)\wr S_2){:}2$ &$1$ &\ref{example:(I)reg} &\\
$59^4$ & $59^2$ & $3480$ & $3480^2$ & $(\SL_2(5)\times 29)\wr S_2$ &$1$& \ref{example:(I)reg} &\\
\hline
$5^4$ & $25$ & $10$ & $240$ & $(\SL_2(5)\circ D_8\circ  Q_8\circ 4)\nonsplit 2$ & $1$ &\ref{example:(E)walker} &\multirowcell{8}{$\Bigg\uparrow$\\ from a rank~$3$ \\ affine plane\\ of order $p^2$\\ $\Bigg\downarrow$} \\
 & $25$ & $16$ & $384$ & $(\SL_2(5)\circ D_8\circ  Q_8\circ 4)\nonsplit 2$ &$1$ &\ref{example:(E)walker} & \\
 & $25$ & $6$ & $144$ & $ (2\nonsplit A_6\circ  4)\nonsplit 2 $ & $1$&\ref{example:(AS)walker} & \\
 & $25$ & $20$ & $480$ & $ (2\nonsplit A_6\circ 4)\nonsplit 2 $ & $1$&\ref{example:(AS)walker} & \\
$7^4$ & $49$ & $48$ & $48^2$ & $((\SL_2(3)\times \SL_2(3))\nonsplit 4){:}3$ &$1$& \ref{example:(I)spor} &\\
 & $49$ & $10$ & $480$ & $2\nonsplit S_5^+\circ D_8\circ Q_8\circ 6$  & $1$&\ref{example:(E)mason-ostrom} & \\
 & $49$ & $40$ & $1920$ & $2\nonsplit S_5^+\circ D_8\circ Q_8\circ 6$    &$1$ &\ref{example:(E)mason-ostrom} &\\
 & $49$ & $20$ & $960$ & $2\nonsplit S_5^-\circ 24$ &$1$& \ref{example:(AS)korch} &\\
 \hline 
$3^8$ & $9$ & $20$ & $160$ & $(D_8\circ Q_8\circ 2\nonsplit S_5^-)\wr S_2$ &$1$& \ref{example:(I)goodnearfield} & \multirowcell{7}{$\Bigg\uparrow$\\ from a rank~$3$ \\  $2$-$(p^d,k,1)$ design\\ $\Bigg\downarrow$} \\ 
& $9$ & $800$ & $6400$ & $(((D_8\circ Q_8).D_{10})\wr S_2)\nonsplit 2$ &$1$& \ref{example:(I)E} & \\
 & $9$ & $800$ & $6400$ & $((D_8\circ Q_8).D_{10})\wr S_2$ &$1$& \ref{example:(I)E} & \\
 & $9$ & $800$ & $6400$ & $((2\nonsplit S_5^-{:}2)\wr S_2){:}2$ &$1$& \ref{example:(I)sl25} & \\
 & $9$ & $800$ & $6400$ & $(2\nonsplit S_5^-{:}2)\wr S_2$ & $1$&\ref{example:(I)sl25} & \\
 
$3^{12}$  & $9$ & $182$ & $1456$ & $\SL_2(13)\wr S_2 $ & $2$ & \ref{example:(I)goodHering} &   \\
& $9$ & $66248$ & $529984$ & $\SL_2(13)\wr S_2 $ & $4$ & \ref{example:(I)sl213}   &\\
\hline 
\end{tabular}
\caption{The exceptional partial linear spaces}
\label{tab:main}
\end{table}

For each exceptional partial linear space $\mathcal{S}$ in Table~\ref{tab:main},  $\Aut(\mathcal{S})$ is an affine primitive group and $\Aut(\mathcal{S})_0$, the stabiliser in $\Aut(\mathcal{S})$ of the $0$ vector, is an irreducible subgroup of $\GL_d(p)$. We also  list the following in Table~\ref{tab:main}:  the number (No.)\ of partial linear spaces that satisfy the given parameters up to isomorphism; a reference (Ref.)\ for the  definition of the partial linear space and its automorphism group; and some additional notes that will be explained in Remark~\ref{remark:PLSfromLinear}. Observe that $\mathcal{S}$  has $p^d\ell/k$ lines and that $|\mathcal{S}(0)|=\ell(k-1)$ 
(see Lemma~\ref{lemma:lines}). 

 We caution the reader  that there are affine primitive permutation  groups $G$ of rank~$3$ and partial linear spaces $\mathcal{S}$ with $G\leq \Aut(\mathcal{S})$ such that $\mathcal{S}$ satisfies the conditions of Theorem~\ref{thm:main}(i) with respect to some triple $(H,a,r)$ but 
 not $(G,a,r)$ (see Remark~\ref{remark:Hexample}).

 \begin{remark}
 \label{remark:PLSfromLinear}
 One way of constructing a partial linear space  is to remove lines from a linear space, as we did in Example~\ref{example:AG} with the linear space $\AG_a(r)$. 
In particular, for a rank~$3$ permutation group $G$ on $\mathcal{P}$, if  $\mathcal{S}:=(\mathcal{P},\mathcal{L})$ is a linear space with at least three points on every line such that $G\leq \Aut(\mathcal{S})$  and $G$ has  exactly two orbits $\mathcal{L}_1$ and $\mathcal{L}_2$ on $\mathcal{L}$, then $(\mathcal{P},\mathcal{L}_1)$ and $(\mathcal{P},\mathcal{L}_2)$ are proper partial linear spaces that admit  $G$ and have disjoint collinearity relations; in fact, the converse of this statement also holds  (see Lemma~\ref{lemma:linearspace}).
 Observe that $\mathcal{S}$ is a $2$-$(v,k,1)$ design---that is, a linear space with $v$ points and line-size $k$---precisely when  $(\mathcal{P},\mathcal{L}_1)$ and $(\mathcal{P},\mathcal{L}_2)$ both have line-size $k$.
   Those $2$-$(v,k,1)$ designs admitting a rank~$3$ automorphism group  $G$  with two orbits on lines have been studied in the special case of affine planes (e.g.,~\cite{BilJoh2001}) and in general when $G$ is an affine primitive group~\cite{BilMonFra2015,Mon2015}.  For each  partial linear space $\mathcal{S}$ in Table~\ref{tab:main}, we state whether $\mathcal{S}$ can be obtained from a   $2$-$(v,k,1)$ design  using a rank~$3$ group as above; when this design is an affine plane (i.e., when $v=k^2$), we state this instead, and when this affine plane is well known, we give its name. More details may be found at the given reference or in~\S\ref{s:proof}.  
\end{remark}

It therefore follows from Theorem~\ref{thm:main} that  there are proper partial linear spaces with rank~$3$ affine primitive automorphism groups that  cannot be obtained  from any  $2$-$(v,k,1)$ design using a rank~$3$ group as   described in Remark~\ref{remark:PLSfromLinear}.  In fact, there are infinitely many such structures: we prove that  the partial linear space $\mathcal{S}_W$ from Example~\ref{example:tensor}  cannot be obtained  from a   $2$-$(v,k,1)$ design using any  rank~$3$ group for $m\geq 4$ and $(m,q)\neq (5,2)$; see  Proposition~\ref{prop:tensornodesign}.
 However, if  $\mathcal{S}$ is $\mathcal{S}_U$ from Example~\ref{example:tensor} for $m\geq 2$ (respectively, $\mathcal{S}_W$ for $m=3$), then $\mathcal{S}$ can be obtained from $\AG_m(q^2)$ (respectively, $\AG_2(q^3)$)  using a rank~$3$ affine primitive subgroup $G$ of $\Aut(\mathcal{S})$ (see~\S\ref{s:extra}); in other words, $\mathcal{S}$ is described in Example~\ref{example:AG} with respect to the triple $(G,m,q^2)$ (respectively, $(G,2,q^3)$), but we choose not to omit $\mathcal{S}$ from Example~\ref{example:tensor} because  $\Aut(\mathcal{S})$ is itself a rank~$3$ affine primitive group. Similarly, if $\mathcal{S}$ is the $p^n\times p^n$ grid of Example~\ref{example:grid}, then $\mathcal{S}$ can be obtained from the affine plane $\AG_2(p^n)$ using the rank~$3$ affine primitive group  $V{:}(\GL_1(p^n)\wr S_2)$, but this group is considerably  smaller than $\Aut(\mathcal{S})$.

\begin{remark}
	We are unable to classify the partial linear spaces that satisfy the conditions of Theorem~\ref{thm:main}(iv). In fact, the groups of   Theorem~\ref{thm:main}(iv)(a) were also   omitted from the classification of the flag-transitive linear spaces~\cite{BueDelDoyKleLieSax1990}.
There are partial linear spaces from Example~\ref{example:AG} for which $G$  satisfies  the conditions of Theorem~\ref{thm:main}(iv)(a); see Example~\ref{example:R0} for more details.   
There are also partial linear spaces $\mathcal{S}$  for which $G$ and $\mathcal{S}(0)$ satisfy the conditions of Theorem~\ref{thm:main}(iv)(b); see Examples~\ref{example:(I)dep} and \ref{example:(I)reg}.
We completely classify the  proper partial linear spaces $\mathcal{S}$  for which  $G$ and $\mathcal{S}(0)$ satisfy the conditions of Theorem~\ref{thm:main}(iv)(c) under the extra assumption that $\GL_2(q)\circ Z(\GL_2(q^3))\leq G_0$ (see Proposition~\ref{prop:(S0)bad}); several infinite families arise, including  some  from Example~\ref{example:AG}. 
 However,  we believe that the situation is much more complicated in general. We illustrate this by providing a complete classification of the partial linear spaces that arise when $q=4$ (see Example~\ref{example:hardS0}).
\end{remark}

All of the partial linear spaces of Examples~\ref{example:AG},~\ref{example:tensor} and~\ref{example:grid} have the property that their lines are affine subspaces of $V_d(p)$. Moreover, this turns out to be true for all of the partial linear spaces in Table~\ref{tab:main} except when $\Aut(\mathcal{S})=3^4{:}M_{10}$ or $3^5{:}M_{11}$, in which case neither partial linear space has this  property since the line-size is not a power of $p$. We  suspect 
that these are the only two such partial linear spaces, and we prove that  any other such partial linear space must  satisfy the conditions of Theorem~\ref{thm:main}(iv)(b) as well as the constraints in (iii) below. 

\begin{cor}
\label{cor:affinesub}
Let $\mathcal{S}$ be a finite proper partial linear space, and let $G\leq \Aut(\mathcal{S})$ such that $G$ is an affine  primitive permutation group of rank~$3$ with socle $V:=V_d(p)$ where $d\geq 1$ and $p$ is prime. If the lines of $\mathcal{S}$ are not affine subspaces of $V_d(p)$, then one of the following holds. 
\begin{itemize}
 \item[(i)] $p^d=3^4$, $\mathcal{S}$ has line-size $6$ and $\Aut(\mathcal{S})=3^4{:}M_{10}$. 
 \item[(ii)] $p^d=3^5$, $\mathcal{S}$ has line-size $12$ and $\Aut(\mathcal{S})=3^5{:}M_{11}$.
 \item[(iii)]  $V=V_n(p)\oplus V_n(p)$ and $G_0\leq \GammaL_1(p^n)\wr S_2$ where  $\mathcal{S}(0)=V_n(p)^*\times V_n(p)^*$  and $n\geq 2$. Further, all of the following hold for any  line $L$ of $\mathcal{S}$ such that $0\in L$.
 \begin{enumerate}
\item The prime $p$ is odd, and $-1\notin G_0$. In particular, $|G_0\cap Z(\GL_{d}(p))|$ is odd.
 \item If $k$ is the line-size of $\mathcal{S}$, then $k(k-1)$ divides $(p^n-1)^2$, so $k$ is coprime to $p$.
 \item $L\cap \{\lambda u :\lambda\in \mathbb{F}_p\}=\{0,u\}$ for all   $u\in L^*$. 
 \item  $L=\{(v,v^\alpha):v\in M\}$ for some $M\subseteq V_n(p)$ and injective map $\alpha:M\to V_n(p)$. 
\item  For  $g\in G_0$, there exists $v\in L$ such that $v^g\neq -v$.   
 \item There exist $g_1,g_2\in \GammaL_1(p^n)$ such that  $(g_1,-1)\in G_0$ and $(-1,g_2)\in G_0$. 
 \item If $H\times  K\leq G_0$ for some  $H,K\leq \GammaL_1(p^n)$, then $H$ or $K$ is not transitive on $V_n(p)^*$.
 \end{enumerate}
 \end{itemize}
\end{cor}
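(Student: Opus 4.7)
The plan is to derive the corollary from Theorem~\ref{thm:main} by case analysis, noting at the outset that an affine subspace of $V_d(p)$ necessarily has cardinality a power of $p$, so any line whose size is not a power of $p$ is automatically not an affine subspace. In cases~(i) and~(ii) of Theorem~\ref{thm:main}, arising from Examples~\ref{example:AG}, \ref{example:tensor} and~\ref{example:grid}, the lines are constructed as cosets of linear subspaces over $\mathbb{F}_r$, $\mathbb{F}_q$ or $\mathbb{F}_p$, so they are affine subspaces of $V_d(p)$ and contribute nothing. For case~(iii), a direct inspection of Table~\ref{tab:main} shows that the line-size $k$ is a power of $p$ in every row except the two where $\Aut(\mathcal{S})=3^4{:}M_{10}$ (with $k=6$) and $\Aut(\mathcal{S})=3^5{:}M_{11}$ (with $k=12$); these two rows give exactly conclusions~(i) and~(ii) of the corollary, while for the remaining rows the cited constructions exhibit the lines explicitly as affine subspaces.

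For case~(iv)(a) of Theorem~\ref{thm:main}, where $G_0\leq \GammaL_1(p^d)$, the plan is to show that every line is an affine subspace. By flag-transitivity (Lemma~\ref{lemma:rank3}), the line stabiliser $G_L$ is $2$-transitive on each line $L$, so $G_{0,L}$ is transitive on $L\setminus\{0\}$ for any line $L$ through $0$; since $G_{0,L}\leq \GammaL_1(p^d)$, the standard description of transitive subgroups of $\GammaL_1(p^d)$ forces $L$ to be an $\mathbb{F}_{p^e}$-subspace for some divisor $e$ of $d$, and hence an affine subspace of $V_d(p)$. For case~(iv)(c), where $\SL_2(q)\unlhd G_0\leq \GammaL_2(q^3)$, the same strategy applies but the verification is more delicate: one uses the known orbit structure of $\SL_2(q)$ on $V_2(q^3)$ (with $|\mathcal{S}(0)|=q(q^3-1)(q^2-1)$) to see that any transitive collinearity block through $0$ is an $\mathbb{F}_q$-subspace, hence affine.

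The substantive work is in case~(iv)(b): assuming $L$ is a line through $0$ that is not an affine subspace of $V$, we derive the seven listed properties. Property~(4) comes first and is immediate: if $(u_1,v_1),(u_2,v_2)\in L$ with $u_1=u_2$, then their difference $(0,v_1-v_2)$ must lie in $\mathcal{S}(0)\cup\{0\}=(V_n(p)^*\times V_n(p)^*)\cup\{0\}$, forcing $v_1=v_2$; symmetrically for the second coordinate, giving the graph-of-function description. For property~(3), if $\lambda u\in L$ for some $\lambda\in\mathbb{F}_p\setminus\{0,1\}$, then combining the $2$-transitive action of $G_L$ on $L$ with~(4) forces $\mathbb{F}_p u\subseteq L$; iterating and using that the translations stabilising $L$ form a subgroup of $V$ contained in $L$ then shows $L$ is $\mathbb{F}_p$-closed, contradicting the assumption. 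Properties~(1), (5), (6) and~(7) are then established by analogous contradiction arguments, where in each case the negation (e.g.\ $-1\in G_0$, or $H\times K\leq G_0$ with both factors transitive) together with the containment $G_{0,L}\leq \GammaL_1(p^n)\wr S_2$ and the $2$-transitivity of $G_L$ is used to reconstruct $L$ as an affine subspace. Property~(2) is derived by observing that $|G_L|$ is divisible by $k(k-1)$ (from $2$-transitivity), that $G_L\cap V=\{0\}$ (a consequence of~(3)), so $k(k-1)$ divides $|G_{0,L}|$, and then tracking the constraint within the structure of $\GammaL_1(p^n)\wr S_2$ to obtain $k(k-1)\mid(p^n-1)^2$.

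The main obstacle is precisely case~(iv)(b): each of properties~(1)--(7) requires its own contradiction argument, and although the underlying framework is uniform, the involution-based arguments for~(1), (5), (6) and the divisibility bound in~(2) are delicate because one must rule out all possible alternative structures on $L$ simultaneously using only the limited group-theoretic machinery available inside $\GammaL_1(p^n)\wr S_2$. Cases~(iv)(a) and~(iv)(c) are potentially also subtle since Theorem~\ref{thm:main} does not classify the partial linear spaces there, so the proof must proceed without invoking a full list of examples.
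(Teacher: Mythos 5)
Your overall framework (reduce to Theorem~\ref{thm:main} and treat its cases separately) is the paper's, and your handling of case~(iii) of that theorem and of properties~(3), (4), (5) and the first part of~(1) in case~(iv)(b) is essentially correct, resting as it does on Lemmas~\ref{lemma:affine}, \ref{lemma:affineplus} and~\ref{lemma:A2badbasic}. But several steps have genuine gaps. The most serious is case~(iv)(a): you assert that since $G_{0,L}\leq\GammaL_1(p^d)$ acts transitively on $L^*$, ``the standard description of transitive subgroups of $\GammaL_1(p^d)$ forces $L$ to be an $\mathbb{F}_{p^e}$-subspace.'' No such description of the \emph{orbits} (as opposed to the groups) exists — an orbit of a subgroup of $\GammaL_1(q)$ on $\mathbb{F}_q^*$ need not be additively closed (e.g.\ the squares), and classifying line orbits for $G\leq\AGammaL_1(q)$ is precisely the case omitted from the flag-transitive linear space classification, as the paper itself remarks. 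The paper instead eliminates (iv)(a) by a parity argument: Lemma~\ref{lemma:affine} gives $p$ odd and $-1\notin G_0$, Lemma~\ref{lemma:onedim} then forces $|G_0|$ odd, and this contradicts the self-pairedness of the orbitals. You never identify Lemma~\ref{lemma:onedim} (that a transitive, or even-order two-orbit, subgroup of $\GammaL_1(p^n)$ with $p$ odd contains $-1$ — itself a nontrivial $2$-adic computation), yet it is also the engine behind properties~(6) and~(7): for~(6) one applies it to the two transitive coordinate projections of $G_{0,V_1}$, and for~(7) to the factors $H$ and $K$; your proposed mechanism for~(6) (``the negation \ldots is used to reconstruct $L$ as an affine subspace'') does not produce the statement.

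Two further points. For property~(2), your route via $k(k-1)\mid|G_{0,L}|$ and ``tracking the constraint within $\GammaL_1(p^n)\wr S_2$'' cannot yield $k(k-1)\mid(p^n-1)^2$, since $|\GammaL_1(p^n)\wr S_2|=2n^2(p^n-1)^2$ carries the extra factor $2n^2$; the correct argument (Lemma~\ref{lemma:notaffine}) is a counting one: non-affineness forces $V$ to act semiregularly on $\mathcal{L}$ by Lemma~\ref{lemma:affine}(iii), so $|V|$ divides $|\mathcal{L}|$, whence $k\mid\ell$ by Lemma~\ref{lemma:lines} and $k(k-1)\mid\ell(k-1)=|\mathcal{S}(0)|=(p^n-1)^2$. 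Finally, you do not address the assertion $n\geq 2$ in conclusion~(iii); the paper needs a separate argument showing that $n=1$ forces $(\zeta_p,\zeta_p)\in G_0$ and hence $-1\in G_0$, a contradiction. (Your treatment of case~(iv)(c) is also more laboured than necessary and slightly off — the lines there are affine $\mathbb{F}_p$-subspaces, not $\mathbb{F}_q$-subspaces in general — but the one-line argument is simply that $-1\in\SL_2(q)\leq G_0$ for $q$ odd, and $p=2$ otherwise, so Lemma~\ref{lemma:affine} applies.)
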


We saw in Example~\ref{example:grid} that the $p^n\times p^n$ grid is a proper partial linear space  with a rank~$3$ affine primitive group of automorphisms whose full automorphism group is not affine. Using~\cite{Pra1990}, we prove that this is the only such example. 

\begin{thm}
\label{thm:primrank3plus}
Let $\mathcal{S}$ be a finite proper partial linear space, and let $G\leq \Aut(\mathcal{S})$ such that $G$ is an affine  primitive permutation group of rank~$3$ with socle $V:=V_d(p)$ where $d\geq 1$ and $p$ is prime.
Then one of the following holds.
\begin{itemize}
\item[(i)] $\mathcal{S}$ is isomorphic to the $p^n\times p^n$ grid and $\Aut(\mathcal{S})=S_{p^n}\wr S_2$, where  $d=2n$ and $n\geq 1$.
\item[(ii)] $\Aut(\mathcal{S})$ is an affine primitive permutation group of rank~$3$ with socle $V$.
\end{itemize}
\end{thm}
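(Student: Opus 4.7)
The plan is to prove that $\Aut(\mathcal{S})$ is itself a primitive rank~$3$ permutation group on $V$, and then to use the classification of such groups (Proposition~\ref{prop:primrank3}) together with Praeger's analysis of inclusions of primitive groups~\cite{Pra1990} to pin down its structure. Since $G\leq\Aut(\mathcal{S})$, the overgroup is transitive on $V$, and its orbits on $V\times V$ are unions of the three $G$-orbits; because $\mathcal{S}$ is proper, the collinearity relation is non-trivial and proper, and, being preserved by $\Aut(\mathcal{S})$, it forces the rank to be exactly $3$. To see primitivity, suppose $\Aut(\mathcal{S})$ admits a non-trivial system of blocks. As $V\trianglelefteq G\leq\Aut(\mathcal{S})$ acts regularly on $V$, the block $W$ through $0$ is a subgroup of $V$ (for $v\in W$, the block $v+W$ meets $W$ in $v$, so $v+W=W$), preserved setwise by the stabiliser of $0$ in $\Aut(\mathcal{S})$ and hence by $G_0$; but irreducibility of $G_0$ on $V=V_d(p)$ (a consequence of the primitivity of $G$) forces $W\in\{0,V\}$, a contradiction.

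By Proposition~\ref{prop:primrank3}, $\Aut(\mathcal{S})$ is then of affine, grid, or almost simple type. If it is affine, its socle is the unique regular elementary abelian normal subgroup, and since $V$ is a regular abelian subgroup of $\Aut(\mathcal{S})$ of the same order and any regular abelian normal subgroup of a primitive group is self-centralising, $V$ must coincide with this socle, placing us in case~(ii). The remaining two possibilities I will handle via the classification~\cite{Pra1990} of primitive overgroups of primitive groups, applied to the inclusion $G\leq\Aut(\mathcal{S})$ with $G$ affine of rank~$3$: combined with the rank~$3$ constraint, this rules out the almost simple case entirely, and in the grid case it forces $d=2n$, $V\cong V_n(p)\oplus V_n(p)$, and $\Aut(\mathcal{S})\leq S_{p^n}\wr S_2$ acting in product action on $V_n(p)\times V_n(p)$.

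In the grid case it remains to identify $\mathcal{S}$. The two non-diagonal $\Aut(\mathcal{S})$-orbits on $V\times V$ are the ``same-coordinate'' relation $R$ (pairs sharing a row or column of the grid) and its complement $R'$, and the collinearity of $\mathcal{S}$ equals one of them. Analysing the action of $\Aut(\mathcal{S})_0$ on the putative lines through $0$---in particular the induced partition of $\mathcal{S}(0)$ into blocks of size $k-1$---rules out collinearity~$R'$, since the only $\Aut(\mathcal{S})_0$-invariant block systems on $V_n(p)^*\times V_n(p)^*$ compatible with the grid structure are rows and columns, which violate the ``different-coordinate'' requirement of $R'$. Hence collinearity is $R$, its classes through $0$ are the two coordinate axes $V_n(p)^*\times\{0\}$ and $\{0\}\times V_n(p)^*$, and the rank-$3$ transitivity on collinear pairs forces these to be exactly the lines of $\mathcal{S}$ through $0$; so $\mathcal{S}$ is the $p^n\times p^n$ grid of Example~\ref{example:grid}, with $\Aut(\mathcal{S})=S_{p^n}\wr S_2$. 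The main obstacle will be the invocation of~\cite{Pra1990} to rule out almost simple $\Aut(\mathcal{S})$ and to pin down the grid structure precisely, and to carefully distinguish a genuine grid-type $\Aut(\mathcal{S})$ from an affine $\Aut(\mathcal{S})$ whose socle $V$ decomposes as $V_n(p)\oplus V_n(p)$ with ``off-diagonal'' collinearity---such partial linear spaces do exist (see Theorem~\ref{thm:main}(iv)(b)) but belong to case~(ii) rather than case~(i).
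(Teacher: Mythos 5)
Your overall strategy---show that $\Aut(\mathcal{S})$ is primitive of rank~$3$, apply Proposition~\ref{prop:primrank3}, and control the resulting cases via~\cite{Pra1990}---is the same as the paper's, and your affine case is essentially sound (the cleanest route to $V=\mathrm{soc}(\Aut(\mathcal{S}))$ is to note that the socle of $\Aut(\mathcal{S})$ is normal in $G$, hence contains the unique minimal normal subgroup $V$ of the affine primitive group $G$, and then to compare orders). But the two non-affine cases each contain a genuine gap. For the almost simple case, the rank-$3$ constraint does \emph{not} rule it out by itself: there is an almost simple primitive rank-$3$ group of prime-power degree, namely $\PSU_4(2)\leq H\leq \PSU_4(2).2$ of degree $27$ with subdegrees $1,10,16$ (the exceptional case of Theorem~\ref{thm:Guralnick}). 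One must verify that this group has no affine primitive subgroup; this is done in the paper by computation, and one cannot simply cite~\cite{Pra1990} here because the relevant entry of its Table~2 is erroneous (the candidate subgroup listed there is imprimitive; the correction is in~\cite{PraSax1992}).

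The more serious gap is in the grid case. Proposition~\ref{prop:primrank3} only gives $T\times T\unlhd \Aut(\mathcal{S})\leq K\wr S_2$ where $K$ is a $2$-transitive almost simple group of degree $p^n$ with socle $T$; you silently upgrade this to $K=S_{p^n}$. When $T\neq A_{p^n}$ your identification of the lines fails: transitivity on collinear pairs only forces the lines lying inside a fixed row to form a $T$-invariant $2$-transitive linear space on $p^n$ points, and such a space can be non-trivial---for instance $T=\PSL_m(q)$ of prime degree $p=(q^m-1)/(q-1)$ preserves $\PG_{m-1}(q)$, whose lines have size $q+1\geq 3$---so the assertion that the collinearity classes through $0$ are whole coordinate axes, and hence are the lines, does not follow. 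The paper closes this case by proving that $K\wr S_2$ admits no affine primitive rank-$3$ subgroup unless $T=A_{p^n}$: by~\cite{Pra1990} the projection $G^1$ of $G\cap(K\times K)$ onto a factor is an affine $2$-transitive subgroup of $K$ of degree $p^n$, which forces $n=1$, $G^1\simeq\AGL_1(p)$ and $(K,p)$ into a short list, and a divisibility argument with the normaliser of a Singer cycle in $\PGammaL_m(q)$ then leaves only $T\simeq A_5$ with $p=5$. Without some version of this argument, your proof of case~(i) is incomplete.
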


This paper is organised as follows. In \S\ref{s:prelim}, we give some preliminaries  and  prove Theorem~\ref{thm:primrank3plus}. In \S\ref{s:rank3}, we state and prove a modified  version of Liebeck's classification~\cite{Lie1987} of the affine primitive permutation groups of rank~$3$ (see Theorem~\ref{thm:rank3});  in particular, we provide more detailed information about the possible rank~$3$ groups $G$ when $G_0$ is imprimitive or stabilises a tensor product decomposition. We then use Theorem~\ref{thm:rank3} to state Hypothesis~\ref{hyp:AGgroups}. 
The partial linear spaces of Example~\ref{example:AG}   have a property  that we term  \textit{dependence} (see \S\ref{s:dep}), and  we prove in \S\ref{s:dep} that, under very general assumptions, the dependent partial linear spaces all have the form of Example~\ref{example:AG}; we also provide more details about our  strategy for proving Theorem~\ref{thm:main}. 
 In \S\ref{s:(R1)}--\ref{s:(S0)}, we classify the independent proper  partial linear spaces for various classes of rank~$3$ affine primitive groups, as defined by Theorem~\ref{thm:rank3}, and in \S\ref{s:(I)}--\ref{s:(AS)}, we consider the remaining rank~$3$ groups.  In \S\ref{s:proof}, we prove Theorem~\ref{thm:main}, and  in \S\ref{s:affinesub}, we prove  Corollary~\ref{cor:affinesub}. In \S\ref{s:extra}, we consider  when a partial linear space   from Example~\ref{example:tensor}  can be obtained from a $2$-$(v,k,1)$ design using a rank~$3$ group.

\section{Preliminaries}
\label{s:prelim}

All groups and incidence structures in this paper are finite, and all group actions are written on the right. Basic terminology and results in permutation group theory or representation theory may be found in~\cite{DixMor1996} or~\cite{Isa1994}, respectively. 
The notation used to denote the
finite simple groups (and their automorphism groups) is consistent with~\cite{KleLie1990}.  We  use the algebra software {\sc Magma}~\cite{Magma} and {\sf GAP}~\cite{GAP4} for  a variety of computations.  In particular,  we use the {\sf GAP} package FinInG~\cite{FinInG}, as well as 
nauty and Traces \cite{NautyTraces} underneath the {\sf GAP} package Grape~\cite{Grape}.

 This section is organised as follows. In \S\ref{ss:basicsactions}--\ref{ss:affine}, we review some general notation, definitions and basic results. In \S\ref{ss:pls}, we consider   some elementary properties of partial linear spaces. In \S\ref{ss:plsnew}, we describe several ways of constructing new partial linear spaces from given ones. In \S\ref{ss:affinePLS}, we investigate some properties of partial linear spaces admitting affine automorphism groups. In \S\ref{ss:nearfield}, we describe a family of affine planes called nearfield planes.
In \S\ref{ss:2trans}, we state  the  classifications of the $2$-transitive affine  groups~\cite{Her1985trans,Hup1957}  and  the linear spaces admitting  $2$-transitive affine automorphism groups~\cite{Kan1985}. In \S\ref{ss:primrank3plus}, we prove Theorem~\ref{thm:primrank3plus}.
 
\subsection{Group actions}
\label{ss:basicsactions}
Let $G$ be a group acting on a (finite) set $\Omega$. We denote  the orbit of $x\in \Omega$ by $x^G$ and the pointwise stabiliser in $G$  of $x\in \Omega$ by $G_x$. We denote the  setwise and pointwise stabilisers of $X=\{x_1,\ldots,x_n\}\subseteq \Omega$ in $G$ by $G_X$ and  $G_{(X)}=G_{x_1,\ldots,x_n}$, respectively.  We denote the permutation group induced by $G_X$ on $X$ by $G^X_X=G_X/G_{(X)}$, and the symmetric group on $\Omega$ by $\Sym(\Omega)$.  The \textit{degree} of $G$ is $|\Omega|$.

A \textit{block} of $G$ is a non-empty subset $B$ of $\Omega$ such that for each $g\in G$, either $B^g=B$ or $B^g\cap B=\varnothing$.  If $G$ is transitive on $\Omega$, then a block $B$ is \textit{non-trivial} if it is neither a singleton nor $\Omega$, in which case $\{B^g: g\in G\}$ is a \textit{system of imprimitivity} for $G$ on $\Omega$, and $G$ is  \textit{imprimitive}; recall from the introduction that $G$ is primitive if no such system of imprimitivity exists. We will  use the following  observation  throughout this paper (see \cite[Theorem 1.5A]{DixMor1996}): if $G$ is transitive on $\Omega$ and $x\in \Omega$, then the set of blocks of $G$ containing $x$ is in one-to-one correspondence with the set of subgroups of $G$ containing $G_x$; under this correspondence, a block $B$ containing $x$ is mapped to $G_B$, and a subgroup $H$ containing $G_x$ is mapped to $x^{H}$.  

\subsection{Notation and definitions for groups}
\label{ss:basicsgroups}

For groups $G$ and $H$, we denote a split extension  of $G$ by $H$ by  $G{:}H$; a non-split extension  by $G\nonsplit H$, an arbitrary extension   by $G.H$, and the central product of $G$ and $H$ (with respect to some common central subgroup) by $G\circ H$.  For  $S\leq S_n$, we denote the wreath product $G^n{:}S$ by $G\wr S$. We denote the cyclic group of order $n$ by $C_n$ or just $n$,  the elementary abelian group $C_p^n$ by $p^n$, the dihedral group of order $n$ by $D_n$, and the quaternion group by $Q_8$. For $H\leq G$ and $K\leq G$, we denote the centraliser and normaliser of $H$ in $K$ by $C_K(H)$ and $N_K(H)$, respectively. We denote the centre of $G$ by $Z(G)$ and the derived subgroup of $G$ by $G'$.

The \textit{socle} of a group $G$ is the subgroup  generated by the minimal normal subgroups of $G$. The group $G$ is \textit{almost simple} if its socle is a non-abelian simple group $T$; equivalently, $G$ is almost simple if $T\leq G\leq \Aut(T)$. The group $G$ is \textit{quasisimple} if $G$ is perfect (i.e., $G=G'$) and $G/Z(G)$ is a simple group. A \textit{covering group} of $G$ is a group $L$ such that $L/Z(L)\simeq G$ and $Z(L)\leq L'$. The symmetric group $S_n$ has two covering groups $2\nonsplit S_n^+$ and $2\nonsplit S_n^-$ for $n\geq 4$, both of which contain the covering group $2\nonsplit A_n$ of $A_n$; in $2\nonsplit S_n^+$, transpositions lift to involutions, whereas in $2\nonsplit S_n^-$, transpositions lift to elements of order $4$. The almost simple group $G_2(4).2$ also has two covering groups $2\nonsplit G_2(4).2^+$ and $2\nonsplit G_2(4).2^-$, where $2\nonsplit G_2(4).2^+$ is the  group whose character table is given in \cite[p.99]{Atlas}. Note that $2\nonsplit S_4^+\simeq \GL_2(3)$, $2\nonsplit A_4\simeq \SL_2(3)$ and $2\nonsplit A_5\simeq \SL_2(5)$.

\subsection{Fields, vector spaces and representation theory}
\label{ss:basicsvs}

Let $q$ be a power of a prime $p$. We denote the finite field of order $q$  by $\mathbb{F}_q$ and an $n$-dimensional vector space over $\mathbb{F}_q$ by $V_n(q)$. If $W\subseteq V_n(q)$, then we define $W^*:=W\setminus \{0\}$. For a subfield  $F$ of $\mathbb{F}_q$, we write $\langle x_1,\ldots,x_m\rangle_F$ for the $F$-span of the vectors $x_1,\ldots,x_m\in V_n(q)$;  when $F=\mathbb{F}_q$ and the context permits, we omit  $F$ from this notation. We write $\sigma_q$ for the Frobenius automorphism $x\mapsto x^p$ of $\mathbb{F}_q$, so that $\Aut(\mathbb{F}_q)=\langle\sigma_q\rangle$, and we adopt the following convention: whenever we write $\GammaL_n(q)=\GL_n(q){:}\langle \sigma\rangle$, we mean that  $\langle\sigma\rangle\simeq \langle \sigma_q\rangle$ and $\GammaL_n(q)$  acts on $V_n(q)$ with respect to some basis $\{v_1,\ldots, v_n\}$ such that  $(\sum_{i=1}^n \lambda_i v_i)^{\sigma}=\sum_{i=1}^n \lambda_i^{\sigma} v_i$ for all $\lambda_i\in\mathbb{F}_q$. Any subgroup of $\GammaL_n(q)$ is $\mathbb{F}_q$\textit{-semilinear}, and  $g\in \GammaL_n(q)$ is $\sigma$\textit{-semilinear} when $\sigma\in\langle\sigma_q\rangle$ and $(\lambda v)^g=\lambda^\sigma v^g$ for all $\lambda\in\mathbb{F}_q$ and $v\in V_n(q)$.   We write $\diag(\lambda_1,\ldots,\lambda_n)$ for the diagonal $n\times n$  matrix with diagonal entries $\lambda_1,\ldots,\lambda_n$. Now $Z(\GL_n(q))=\{\diag(\lambda,\ldots,\lambda):\lambda\in\mathbb{F}_q^*\}$, and with a slight abuse of notation,  we write  $\lambda$ for  $\diag(\lambda,\ldots,\lambda)$ and $\mathbb{F}_q^*$ for $Z(\GL_n(q))$. We also write $\zeta_q$ for  a generator of the multiplicative group of $\mathbb{F}_q$. Note that  $-1$ denotes the  central involution of $\GL_n(q)$ when $p$ is odd, but $-1=1$ when $p$ is even. 

For a field $F$ and group $G$, we denote the group algebra of $G$ over $F$ by $FG$. Note that an $F$-vector space $V$ is a faithful $FG$-module if and only if $G\leq \GL(V)$. To emphasise that $V$ is a vector space over  $F$, we write $\GL(V,F)$.  An $FG$-module $V$ or a subgroup $G$ of $\GL(V,F)$ is \textit{irreducible} if there are no proper $FG$-submodules of $V$. 
An irreducible subgroup $G$ of $\GL(V,F)$ is  \textit{absolutely irreducible} if $G$ is irreducible when viewed as a subgroup of $\GL(V,E)$ for all  field extensions $E$ of $F$. An irreducible subgroup $G$ of  $\GL(V,F)$ is absolutely irreducible if and only if  $C_{\GL(V,F)}(G)=F^*$ by \cite[2.10.1]{KleLie1990}. We will  use the following  observation  throughout this paper: if $G$ is an irreducible subgroup of $\GL(V)$ and $Z(G)$ contains an involution $z$, then $z=-1$. 

Let $U$ and $W$ be vector spaces over the field $\mathbb{F}_q$, and let $V$ be the tensor product $U\otimes W$.  For $g\in  \GL(U)$ and $h\in  \GL(W)$, let $(u\otimes w)^{g\otimes h}:=u^g\otimes w^h$ for all $u\in U$ and $w\in W$. Now $g\otimes h$ extends to a linear map of $V$. For $S\leq \GL(U)$ and $T\leq \GL(W)$, define $S\otimes T:=\{g\otimes h : g\in S,h\in T\}$. Then $S\otimes T\leq \GL(V)$ and $S\otimes T\simeq S\circ T$. There are natural actions of $\Aut(\mathbb{F}_q)$ on $V$, $U$ and $W$ for which $(u\otimes w)^\sigma = u^\sigma \otimes w^\sigma$ for all $u\in U$, $w\in W$ and $\sigma\in\Aut(\mathbb{F}_q)$, so that  $(\GL(U)\otimes \GL(W)){:}\Aut(\mathbb{F}_q)$ stabilises the tensor decomposition of $V$.

\subsection{Affine and projective planes and spaces}
\label{ss:basicsplane}
 
 Recall from the introduction that a $2$-$(v,k,1)$ \textit{design} is a linear space with $v$ points and line-size $k$. 
 For $n\geq 2$, a (finite) \textit{affine plane of order} $n$ is a  $2$-$(n^2,n,1)$ design, and a (finite) \textit{projective plane of order} $n$ is a $2$-$(n^2+n+1,n+1,1)$ design. Given  a projective plane of order $n$, we obtain an affine plane of order $n$  by removing one line and all of its points. Conversely, given an affine plane $\mathcal{A}$ of order $n$, we obtain a projective plane of order $n$, called the \textit{completion} of $\mathcal{A}$, by adding  a \textit{point at infinity}  for every parallel class of lines, 
 and defining the union of these new points to be the \textit{line at infinity}, denoted by $\ell_\infty$.
 
For  $m\geq 0$ and a prime power $q$, the \textit{affine space} $\AG_m(q)$
is a linear space with points $V_m(q)$ and lines $\{\langle u\rangle +v : u\in V_m(q)^*,v\in V_m(q)\}$, while the \textit{projective space} $\PG_m(q)$ or $\PG(V_{m+1}(q))$ is a linear space whose points and lines are, respectively,  the one- and two-dimensional subspaces of $V_{m+1}(q)$. An \textit{affine} ($k$-)\textit{subspace} of $\AG_m(q)$ is a translation of a  $k$-dimensional subspace of $V_m(q)$, and a \textit{projective} ($k$-)\textit{subspace} of $\PG_m(q)$ is a  $(k+1)$-dimensional subspace of $V_{m+1}(q)$; in particular, 
affine $2$-subspaces are called \textit{planes}, and 
projective $(m-1)$-subspaces are called \textit{hyperplanes}.  The affine space $\AG_m(q)$ may be obtained from $\PG_m(q)$ by removing a hyperplane $\mathcal{H}$ and its points and lines. With this viewpoint of $\AG_m(q)$, for $1\leq k\leq m$, any affine $k$-subspace  $W$  of $\AG_m(q)$ is the intersection of a projective $k$-subspace of $\PG_m(q)$---the \textit{completion} of $W$---with the complement of $\mathcal{H}$; further, the completion of $W$ meets $\mathcal{H}$ in a projective $(k-1)$-subspace.  Note that $\AG_2(q)$ is the  \textit{Desarguesian} affine plane  of order $q$, and $\PG_2(q)$ is  the  \textit{Desarguesian} projective plane  of order $q$.
 
\subsection{Affine permutation groups}
\label{ss:affine}

In the introduction, we defined an affine primitive permutation group to be a primitive group whose socle is a vector space over a field of prime order. In this section, for convenience, we expand this definition to include (certain) transitive groups.

First we require some notation. Let $q$ be a power of a prime $p$, and let $n$ be a positive integer. We denote the affine general  linear group and affine semilinear group by $\AGL_n(q)$ and $\AGammaL_n(q)$, respectively.  For $v\in V:=V_n(q)$, define $\tau_v:V\to V$ to be the translation $x\mapsto x+v$ for all $x\in V$. With some abuse of notation, we denote the group of translations of $V$ by $V$, so that  $\AGL_n(q)=V{:}\GL_n(q)$ and $\AGammaL_n(q)=V{:}\GammaL_n(q)$.

A group  $G$ is  an \textit{affine permutation group  on} $V:=V_n(q)$ whenever $V\leq G\leq \AGammaL_n(q)$.  If $G$ is such a group, then 
 $G=V{:}G_0$ and $G_0\leq \GammaL_n(q)$, where $G_0$ is the stabiliser of the zero vector. Note that if $q^n=p^d$, then $\GammaL_n(q)\leq \GL_d(p)$ and  
  we may view $V$ as $V_d(p)$, so that $\AGammaL_n(q)\leq \AGL_d(p)$. The proof of the following is routine; see \cite[Proposition 6.1.1]{FawPhD}. 

\begin{lemma}
\label{lemma:primitiveirreducible}
Let $G$ be an affine permutation group on $V:=V_d(p)$, where  $d\geq 1$ and $p$ is prime. Then the following are equivalent.
\begin{itemize}
\item[(i)] $G$ is primitive on $V$.
\item[(ii)] $V$ is an irreducible $\mathbb{F}_pG_0$-module.
\end{itemize}
\end{lemma}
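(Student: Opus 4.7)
The plan is to use the standard correspondence (recalled in \S\ref{ss:basicsactions}) between blocks of a transitive group containing a fixed point and overgroups of its stabiliser. Transitivity of $G$ on $V$ is immediate since the translation subgroup $V \leq G$ acts regularly. So I will focus on identifying the blocks containing $0$ with $\mathbb{F}_p G_0$-submodules of $V$.

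First I would observe that $V \unlhd G$ and $G = V G_0$ with $V \cap G_0 = 1$, so every subgroup $H$ with $G_0 \leq H \leq G$ decomposes as $H = (H\cap V)G_0$; indeed, any $h = vg_0 \in H$ with $v \in V$ and $g_0 \in G_0$ satisfies $v = hg_0^{-1} \in H \cap V$. Setting $W := H \cap V$, the subgroup $W$ is normalised by $G_0$ (because $G_0$ normalises both $V$ and $H$), and since $V$ is an elementary abelian $p$-group, any subgroup of $V$ is automatically an $\mathbb{F}_p$-subspace. Thus $W$ is an $\mathbb{F}_p G_0$-submodule of $V$. Conversely, for any $\mathbb{F}_p G_0$-submodule $W$ of $V$, the set $WG_0$ is a subgroup of $G$ containing $G_0$. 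This gives a bijection between overgroups of $G_0$ in $G$ and $\mathbb{F}_p G_0$-submodules of $V$.

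Next I would identify the block of $G$ containing $0$ corresponding to $H = WG_0$: since $G_0$ fixes $0$ and elements of $W$ act on $V$ as translations, $0^{H} = 0^{W} = W$. Hence the blocks of $G$ containing $0$ are exactly the $\mathbb{F}_p G_0$-submodules of $V$. The trivial blocks $\{0\}$ and $V$ correspond to the submodules $0$ and $V$, respectively. Therefore $G$ has no non-trivial block containing $0$ if and only if $V$ has no proper non-zero $\mathbb{F}_p G_0$-submodule, that is, if and only if $V$ is an irreducible $\mathbb{F}_p G_0$-module. Combined with transitivity, this gives the equivalence of (i) and (ii).

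There is no real obstacle here; the argument is essentially bookkeeping via the subgroup/block correspondence, together with the elementary fact that subgroups of $(V,+)$ coincide with $\mathbb{F}_p$-subspaces. The only point requiring a moment of care is the decomposition $H = (H \cap V)G_0$, which uses $V \unlhd G$ and $V \cap G_0 = 1$.
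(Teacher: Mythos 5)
Your proof is correct, and it is exactly the routine argument the paper has in mind: the paper omits the proof of Lemma~\ref{lemma:primitiveirreducible} entirely, citing it as routine, and the intended argument is precisely your combination of the block--overgroup correspondence from \S\ref{ss:basicsactions} with the identification of subgroups $H$ satisfying $G_0\leq H\leq G$ with the $G_0$-invariant subgroups $H\cap V$ of the translation group, which are the $\mathbb{F}_pG_0$-submodules of $V$. No gaps.
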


Note that if an affine permutation group $G$ on $V$ is primitive, then  $V$ is  the socle of $G$  by~\cite[Theorem~4.3B]{DixMor1996}. Thus the definition from the introduction of an affine primitive permutation group  coincides with the  definition in this section of an affine permutation group that is primitive.

If $G$ is an affine permutation group on $V$, then $G$ has rank~$3$  if and only if $G_0$ has two orbits on $V^*=V\setminus \{0\}$. This observation leads us to the following useful version of Lemma~\ref{lemma:primitiveirreducible}.

\begin{lemma}
\label{lemma:primitive}
Let $G$ be an affine permutation group of rank~$3$ on $V:=V_d(p)$, where $d\geq 1$ and $p$ is prime. Let $X$ and $Y$ be the orbits of $G_0$ on $V^*$. Then the following are equivalent. 
\begin{itemize}
\item[(i)] $G$ is primitive on $V$.
\item[(ii)] Neither $X\cup \{0\}$ nor $Y\cup \{0\}$ is an $\mathbb{F}_p$-subspace of $V$.
\end{itemize}
\end{lemma}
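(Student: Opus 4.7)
The plan is to deduce this from Lemma~\ref{lemma:primitiveirreducible} by translating the condition ``$V$ is an irreducible $\mathbb{F}_pG_0$-module'' into a statement about the orbits of $G_0$ on $V^*$. Since $G_0$ has exactly two orbits $X$ and $Y$ on $V^*$, any $G_0$-invariant subset of $V$ containing $0$ is one of $\{0\}$, $X\cup\{0\}$, $Y\cup\{0\}$, or $V$. In particular, any proper nonzero $\mathbb{F}_pG_0$-submodule of $V$ must be $X\cup\{0\}$ or $Y\cup\{0\}$, which immediately links the two conditions.

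For the forward implication, I would argue the contrapositive. Suppose $X\cup\{0\}$ is an $\mathbb{F}_p$-subspace of $V$ (the case $Y\cup\{0\}$ is identical). Because $X$ is a $G_0$-orbit and $0$ is fixed by $G_0$, the set $X\cup\{0\}$ is $G_0$-invariant, hence an $\mathbb{F}_pG_0$-submodule. Since $X\neq\varnothing$ and $X\neq V^*$ (as $Y$ is also a $G_0$-orbit on $V^*$), this submodule is proper and nonzero, so $V$ fails to be irreducible and therefore $G$ is imprimitive by Lemma~\ref{lemma:primitiveirreducible}.

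For the converse, assume $G$ is imprimitive, so by Lemma~\ref{lemma:primitiveirreducible} there exists a proper nonzero $\mathbb{F}_pG_0$-submodule $W$ of $V$. Then $W^*:=W\setminus\{0\}$ is a nonempty, proper, $G_0$-invariant subset of $V^*$, so it is a union of $G_0$-orbits on $V^*$. Since the only such orbits are $X$ and $Y$, and since $W^*\ne X\cup Y=V^*$ (as $W$ is proper), we conclude $W^*=X$ or $W^*=Y$; equivalently, $X\cup\{0\}$ or $Y\cup\{0\}$ equals the $\mathbb{F}_p$-subspace $W$, contradicting~(ii).

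There is no real obstacle here: the argument is a one-step translation via Lemma~\ref{lemma:primitiveirreducible} plus the elementary fact that $G_0$-invariant subsets of $V$ containing $0$ correspond to unions of $\{0\}$ with $G_0$-orbits on $V^*$. The only point requiring a little care is ensuring that ``$G_0$-invariant $\mathbb{F}_p$-subspace'' coincides with ``$\mathbb{F}_pG_0$-submodule'', which is immediate from the definition of the module structure.
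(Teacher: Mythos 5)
Your proof is correct and follows essentially the same route as the paper: both directions are obtained by applying Lemma~\ref{lemma:primitiveirreducible} together with the observation that a $G_0$-invariant subset of $V^*$ must be a union of the two orbits $X$ and $Y$. The paper's proof is just a more compressed version of your argument.
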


\begin{proof}
If $X\cup \{0\}$  is an $\mathbb{F}_p$-subspace of $V$, then $X\cup \{0\}$  is an $\mathbb{F}_pG_0$-submodule of $V$, so $G$ is imprimitive on $V$ by Lemma~\ref{lemma:primitiveirreducible}. Conversely, if $G$ is imprimitive on $V$,  then $V$ has a proper non-zero $\mathbb{F}_pG_0$-submodule $W$ by Lemma~\ref{lemma:primitiveirreducible}, but $W^*$ is preserved by $G_0$, so $W^*=X$ or $Y$. 
\end{proof}

Thus if $G$ and $H$ are affine permutation groups of rank~$3$ on $V:=V_d(p)$ where $G$ and $H$ have the same orbits on $V^*$, then $G$ is primitive on $V$ if and only if $H$ is primitive on $V$.

We conclude this section with an  observation that we will use frequently  to prove that some $\mathbb{F}_p$-subspace of $V$ is a line of a partial linear space (cf.~Lemma~\ref{lemma:sufficient}).

\begin{lemma}
\label{lemma:transitive}
Let $G$ be an affine permutation group on $V:=V_d(p)$, where $d\geq 1$ and $p$ is prime.  If $L$ is an $\mathbb{F}_p$-subspace of $V$, then $G_L$ is transitive on $L$.
\end{lemma}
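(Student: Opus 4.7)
The plan is very short: exhibit an explicit transitive subgroup of $G_L$, namely the translations by vectors of $L$ itself.

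Recall that $G = V{:}G_0$ contains the full translation group $V$ as a normal subgroup, where $\tau_v : x \mapsto x+v$ for $v \in V$. First I would observe that if $L$ is any $\mathbb{F}_p$-subspace of $V$, then $L$ is in particular closed under addition, so for every $v \in L$ the translation $\tau_v$ satisfies $L^{\tau_v} = L + v = L$. Hence $T_L := \{\tau_v : v \in L\}$ is a subgroup of $V \leq G$ that is contained in $G_L$.

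Next I would note that $T_L$ acts regularly (and in particular transitively) on $L$: given any two points $u, w \in L$, the element $\tau_{w-u} \in T_L$ (since $w - u \in L$) sends $u$ to $w$. Therefore $G_L \geq T_L$ is transitive on $L$, which is the desired conclusion.

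There is no real obstacle here; the lemma is essentially a restatement of the fact that $\mathbb{F}_p$-subspaces are closed under translation by their own elements, combined with the fact that $V$ sits inside $G$ as the translation group. The only thing to be slightly careful about is the convention (set up in \S\ref{ss:affine}) that we identify $V$ with its group of translations, so that $\tau_v$ genuinely lies in $G$ for every $v \in V$; once that is granted, the argument above is complete.
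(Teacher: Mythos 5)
Your proof is correct and takes essentially the same approach as the paper: both arguments use the translations $\tau_v$ for $v\in L$, which fix $L$ setwise because $L$ is closed under addition, and which act transitively on $L$. The paper simply phrases it as ``it suffices to map $0$ to an arbitrary $x\in L^*$'' rather than mapping $u$ to $w$ directly, but the content is identical.
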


\begin{proof}
Let $x\in L^*$. Then $0^{\tau_x}=x$ and $\tau_x\in G$, so it suffices to show that $L^{\tau_x}\subseteq L$. If $y\in L$, then $y^{\tau_x}=y+x\in L$, so $L^{\tau_x}\subseteq L$, as desired.
\end{proof}

\subsection{Partial linear spaces}
\label{ss:pls}

Let  $\mathcal{S}:=(\mathcal{P},\mathcal{L})$ be a partial linear space. Distinct points $x,y\in\mathcal{P}$ are \textit{collinear} if there exists a line $L\in \mathcal{L}$ containing $x$ and $y$; we also say that $x$ and $y$ lie on the line $L$, and so on. 
For $x\in\mathcal{P}$, let $\mathcal{L}_x$ denote the set of lines in $\mathcal{L}$ that contain the point $x$, and let $\mathcal{S}(x)$ denote the set of points in $\mathcal{P}$ that are collinear with $x$. The \textit{collinearity} relation of $\mathcal{S}$ is the set of $(x,y)\in \mathcal{P}\times \mathcal{P}$  such that $x$ and $y$ are collinear, and the \textit{non-collinearity} relation of $\mathcal{S}$ is the   set of $(x,y)\in \mathcal{P}\times \mathcal{P}$  such that $x$ and $y$ are distinct and  not collinear. The \textit{collinearity graph} of $\mathcal{S}$ is the graph $(\mathcal{P},\{\{x,y\} : (x,y)\in \mathcal{R}\})$ where $\mathcal{R}$ is the collinearity relation of $\mathcal{S}$.

An \textit{isomorphism} $\varphi:\mathcal{S}\to\mathcal{S}'$ between $\mathcal{S}$ and a  partial linear space $\mathcal{S}':=(\mathcal{P}',\mathcal{L}')$ is a  bijection $\varphi:\mathcal{P}\to \mathcal{P}'$ such that $\mathcal{L}'=\mathcal{L}^\varphi$, where $\mathcal{L}^\varphi:=\{L^\varphi:L\in\mathcal{L}\}$ and $L^\varphi:=\{x^\varphi : x\in L\}$ for $L\in\mathcal{L}$.
When such an isomorphism exists, we say that $\mathcal{S}$ and $\mathcal{S}'$ are \textit{isomorphic}.
The \textit{automorphism group} $\Aut(\mathcal{S})$  of $\mathcal{S}$ is  $\{g\in \Sym(\mathcal{P}) : \mathcal{L}^g=\mathcal{L}\}$.
For $g\in \Sym(\mathcal{P})$, the pair  $(\mathcal{P},\mathcal{L}^g)$ is a partial linear space, which we denote by $\mathcal{S}^g$. Observe that $\mathcal{S}$ and $\mathcal{S}^g$ are isomorphic, and $\Aut(\mathcal{S}^g)=\Aut(\mathcal{S})^g$ for all $g\in \Sym(\mathcal{P})$.

A \textit{flag} of $\mathcal{S}$ is a pair $(x,L)$ where $x$ is a point on a line $L$, and $\mathcal{S}$ is \textit{flag-transitive} if $\Aut(\mathcal{S})$ acts transitively on the flags of $\mathcal{S}$. Similarly, $\mathcal{S}$ is \textit{point-} or \textit{line-transitive} if $\Aut(\mathcal{S})$ acts transitively on $\mathcal{P}$ or $\mathcal{L}$, respectively. Observe that if $\mathcal{S}$ is point- or line-transitive, then the point- or line-size of $\mathcal{S}$ is defined, respectively. Further, a flag-transitive partial linear space (with no isolated points) is point- and line-transitive. Lastly, if $\Aut(\mathcal{S})$ acts transitively on its collinearity relation, then $\mathcal{S}$ is flag-transitive. 

 We will use the following fundamental result throughout this paper; its proof is routine. 

\begin{lemma}
\label{lemma:rank3}
Let $\mathcal{S}:=(\mathcal{P},\mathcal{L})$ be a  partial linear space with collinearity relation $\mathcal{R}_1$ and non-collinearity relation $\mathcal{R}_2$ where $\mathcal{R}_1$ and $\mathcal{R}_2$ are non-empty.   Let $G\leq \Aut(\mathcal{S})$. Then the following are equivalent.
\begin{itemize}
\item[(i)] $G$ is transitive of rank~$3$ on $\mathcal{P}$.
\item[(ii)] The orbits of $G$ on $\mathcal{P}\times \mathcal{P}$ are $\{(u,u): u\in\mathcal{P}\}$, $\mathcal{R}_1$ and $\mathcal{R}_2$. 
\item[(iii)] $G$ is transitive on $\mathcal{P}$ and  for $u\in \mathcal{P}$, the orbits of $G_u$ on $\mathcal{P}$ are $\{u\}$, $\mathcal{S}(u)=\{v\in \mathcal{P} : (u,v)\in \mathcal{R}_1\}$ and $\{v \in\mathcal{P} :(u,v)\in \mathcal{R}_2\}$.
\end{itemize}
\end{lemma}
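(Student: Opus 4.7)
The plan is to treat the equivalences cyclically, relying on two simple facts: since $G \leq \Aut(\mathcal{S})$, the group $G$ preserves collinearity, so $\mathcal{R}_1$ and $\mathcal{R}_2$ are $G$-invariant subsets of $\mathcal{P}\times\mathcal{P}$; and the three sets $\{(u,u):u\in\mathcal{P}\}$, $\mathcal{R}_1$, $\mathcal{R}_2$ partition $\mathcal{P}\times\mathcal{P}$. These two facts do essentially all the work.

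For (i)$\Rightarrow$(ii), I would use transitivity of $G$ on $\mathcal{P}$ to observe that the diagonal $\{(u,u):u\in\mathcal{P}\}$ is a single $G$-orbit on $\mathcal{P}\times\mathcal{P}$. The rank~$3$ hypothesis then says that $\mathcal{R}_1\cup \mathcal{R}_2$ is the union of exactly two further $G$-orbits. Since $\mathcal{R}_1$ and $\mathcal{R}_2$ are each non-empty, $G$-invariant, and disjoint, each must itself be a full $G$-orbit, giving (ii). For (ii)$\Rightarrow$(i), the existence of the three listed orbits on $\mathcal{P}\times\mathcal{P}$ immediately shows $G$ has rank~$3$; transitivity of $G$ on $\mathcal{P}$ follows from the fact that the diagonal forms a single orbit, so for any $u,v\in\mathcal{P}$ there exists $g\in G$ with $(u,u)^g=(v,v)$, whence $u^g=v$.

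For (ii)$\Leftrightarrow$(iii), I would invoke the standard correspondence between $G$-orbits on $\mathcal{P}\times\mathcal{P}$ and $G_u$-orbits on $\mathcal{P}$ for a fixed $u\in\mathcal{P}$: assuming $G$ is transitive on $\mathcal{P}$, the map $(u,v)\mapsto v$ gives a bijection between the $G$-orbits on $\mathcal{P}\times\mathcal{P}$ and the $G_u$-orbits on $\mathcal{P}$, under which $\{(u,u)\}$ corresponds to $\{u\}$, $\mathcal{R}_1$ corresponds to $\mathcal{S}(u)=\{v\in\mathcal{P}:(u,v)\in\mathcal{R}_1\}$, and $\mathcal{R}_2$ corresponds to $\{v\in\mathcal{P}:(u,v)\in\mathcal{R}_2\}$. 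So (ii) and (iii) say the same thing about these three orbits. It is worth noting that the non-emptiness hypothesis on both $\mathcal{R}_1$ and $\mathcal{R}_2$ ensures that all three orbits are genuine, i.e.\ that the rank is exactly $3$ and not smaller.

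There is no real obstacle here; the only point that requires a moment's care is the direction (ii)$\Rightarrow$(i), where one must deduce transitivity of $G$ on $\mathcal{P}$ from the hypothesis that the diagonal is a $G$-orbit, and check that the non-emptiness assumption on $\mathcal{R}_1, \mathcal{R}_2$ is actually needed to conclude $G$ has rank precisely $3$ rather than $2$.
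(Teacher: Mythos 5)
Your proof is correct and is exactly the routine argument the paper has in mind (the paper omits the proof, declaring it routine): the diagonal, $\mathcal{R}_1$ and $\mathcal{R}_2$ partition $\mathcal{P}\times\mathcal{P}$ into $G$-invariant sets, and the standard orbital--suborbit correspondence handles (ii)$\Leftrightarrow$(iii). You have also correctly identified where the non-emptiness of $\mathcal{R}_1$ and $\mathcal{R}_2$ and the derivation of transitivity from (ii) are needed, so nothing is missing.
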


\begin{remark}
\label{remark:PLSrank3}
If $\mathcal{S}$ is a proper partial linear space with collinearity relation $\mathcal{R}_1$ and non-collinearity relation $\mathcal{R}_2$, then $\mathcal{R}_2$ is non-empty (or else $\mathcal{S}$ is  a linear space) and $\mathcal{R}_1$ is non-empty (or else $\mathcal{S}$ is  a graph with no edges). In particular, for a proper partial linear space $\mathcal{S}:=(\mathcal{P},\mathcal{L})$ with $G\leq \Aut(\mathcal{S})$ such that $G$ has rank~$3$ on $\mathcal{P}$, Lemma~\ref{lemma:rank3} implies that $\mathcal{S}$ is   flag-transitive. Moreover,  the orbitals of $G$ are self-paired, and $\Aut(\mathcal{S})$ has rank $3$ on~$\mathcal{P}$. 
\end{remark} 
 
The following  is a collection of necessary conditions for the existence of a partial linear space  whose  automorphism group is transitive on points and pairs of collinear points.

\begin{lemma}
\label{lemma:necessary}
Let $\mathcal{S}:=(\mathcal{P},\mathcal{L})$ be a partial linear space with collinearity relation $\mathcal{R}$.  Let $G\leq \Aut(\mathcal{S})$ 
where $G$ is transitive on $\mathcal{P}$ and $\mathcal{R}$. Let $L\in\mathcal{L}$ and $u\in L$. Let $B:=L\setminus \{u\}$.   Then the following hold.
\begin{itemize}
\item[(i)] $B$ is a block of $G_u$ on $\mathcal{S}(u)$.
\item[(ii)] $G_L$ is $2$-transitive on $L$.
\item[(iii)] If $\mathcal{S}$ is proper and $G$ is primitive on $\mathcal{P}$, then $B$ is a non-trivial block of $G_u$ on $\mathcal{S}(u)$.
\end{itemize}
\end{lemma}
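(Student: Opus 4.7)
The plan is to handle the three parts in order, each exploiting a different basic feature of partial linear spaces.

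For part (i), I would use the defining property of a partial linear space: two distinct lines through a common point meet only at that point. Clearly $B\subseteq\mathcal{S}(u)$ since every point of $B$ is joined to $u$ by $L$. For any $g\in G_u$, the image $L^g$ is again a line through $u$, so $B^g=L^g\setminus\{u\}$; this is $B$ if $L^g=L$, and otherwise $L^g\cap L=\{u\}$ forces $B^g\cap B=\varnothing$. Hence $B$ is a block of $G_u$ on $\mathcal{S}(u)$.

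For part (ii), the key fact is that a line is determined by any two of its points. Given ordered pairs $(x_1,y_1)$ and $(x_2,y_2)$ of distinct points on $L$, both lie in the collinearity relation $\mathcal{R}$, so transitivity of $G$ on $\mathcal{R}$ supplies some $g\in G$ sending the first to the second. The image $L^g$ is a line containing $x_2,y_2$ and so must equal $L$; thus $g\in G_L$, and $G_L$ is $2$-transitive on $L$.

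Part (iii) requires both $|B|\geq 2$ and $B\neq\mathcal{S}(u)$. For the first, transitivity of $G$ on $\mathcal{R}$ makes $G$ line-transitive (pick two distinct collinear points on each of two given lines and apply transitivity on $\mathcal{R}$, using uniqueness of the line through two points), so all lines share a common size $k$; properness excludes $k=2$, giving $k\geq 3$ and $|B|\geq 2$. For the second I would argue by contradiction. Assume $B=\mathcal{S}(u)$, so every point collinear with $u$ lies on $L$. Any other line $L'$ through $u$ would meet $L$ in at least two points (namely $u$ and some further point of $L'$) and hence coincide with $L$; thus $u$ lies on a unique line. Point-transitivity of $G$ spreads this property to every point of $\mathcal{P}$, so the lines partition $\mathcal{P}$. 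The resulting partition is a $G$-invariant equivalence relation whose classes have size $k\geq 3$; it cannot consist of a single class, for then $\mathcal{P}$ itself would be a line and $\mathcal{S}$ would be a linear space, contradicting properness. So the partition is non-trivial, contradicting primitivity of $G$ on $\mathcal{P}$.

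The main obstacle is the $B=\mathcal{S}(u)$ step of part (iii): parts (i) and (ii) are essentially formal consequences of the definitions, whereas here one must convert the \emph{local} hypothesis $\mathcal{S}(u)=L\setminus\{u\}$ into the \emph{global} structural statement that $\mathcal{L}$ partitions $\mathcal{P}$, in order to produce a non-trivial $G$-invariant equivalence relation that violates primitivity.
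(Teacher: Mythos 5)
Your proposal is correct and follows essentially the same route as the paper: (i) and (ii) use the uniqueness of the line through two distinct points exactly as in the paper's proof, and (iii) derives the same contradiction, namely that $B=\mathcal{S}(u)$ would force a unique line through every point, making $\mathcal{L}$ a non-trivial system of imprimitivity. Your write-up merely spells out two steps the paper leaves implicit (line-transitivity giving constant line-size $k\geq 3$, and the exclusion of a single-class partition), so nothing further is needed.
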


\begin{proof}
(i) Suppose that $v\in B\cap B^g$ for some $g\in G_u$. Now $u$ and $v$ are distinct points on the lines $L$ and $L^g$, so $L=L^g$, in which case $B=B^g$.

(ii) Let $x,y,v,w\in L$ where $x\neq y$ and $v\neq w$. Now $(x,y),(v,w)\in \mathcal{R}$, so there exists $g\in G$ such that $x^g=v$ and $y^g=w$. In particular, $v,w\in L\cap L^g$, so $L=L^g$.

(iii) Suppose that $\mathcal{S}$ is proper. Then $|B|\geq 2$ and $|\mathcal{L}|\geq 2$.  If $B=\mathcal{S}(u)$, then there is a unique line on $u$, so there is a unique line on every point of $\mathcal{S}$, but then $\mathcal{L}$ is a system of imprimitivity for $G$ on $\mathcal{P}$, so $G$ is imprimitive on $\mathcal{P}$. 
\end{proof}

Next we provide some  sufficient conditions for the existence of a point-transitive partial linear space.

\begin{lemma}
\label{lemma:sufficient}
Let $G$ be a transitive permutation group on   $\mathcal{P}$. Let $u\in \mathcal{P}$,  and let $B$ be a  block of  $G_u$  on $X$, where $X$ is an orbit of $G_u$ on $\mathcal{P}\setminus \{u\}$. Let  $L:=B\cup\{u\}$, $\mathcal{L}:=L^G$  and $\mathcal{S}:=(\mathcal{P},\mathcal{L})$, and suppose that  $G_L$ is transitive on $L$. Then  the following hold.
\begin{itemize} 
\item[(i)] $\mathcal{S}$ is a partial linear space with $G\leq \Aut(\mathcal{S})$ and $\mathcal{S}(u)=X$.
\item[(ii)] $\mathcal{S}$ is a linear space if and only if $G$ is $2$-transitive on $\mathcal{P}$.
\item[(iii)] If $B$ is a non-trivial block, then $\mathcal{S}$ has line-size at least $3$ and point-size at least $2$.
\item[(iv)] If $B$ is a non-trivial block and $G$ has rank~$3$ on $\mathcal{P}$, then $\mathcal{S}$ is proper.
\end{itemize}
\end{lemma}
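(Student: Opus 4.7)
The plan is to work part-by-part, with the main content being part~(i); the remaining parts follow with very little effort.

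For (i), the key is to identify the lines through $u$ and compute $\mathcal{S}(u)$. First I would observe that, because $G_L$ is transitive on $L$, every line of $\mathcal{S}$ containing $u$ has the form $L^g$ for some $g\in G_u$: indeed, if $u\in L^g$, then $u^{g^{-1}}\in L$, so transitivity of $G_L$ yields $h\in G_L$ with $u^{g^{-1}h^{-1}}=u$, whence $gh\in G_u$ and $L^g=L^{gh}$. Hence the set of lines on $u$ is $L^{G_u}=\{L^k:k\in G_u\}$, and consequently
\[
\mathcal{S}(u)=\bigcup_{k\in G_u}(L^k\setminus\{u\})=\bigcup_{k\in G_u}B^k=B^{G_u}=X,
\]
the last equality holding because $B$ is a block on the $G_u$-orbit $X$. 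To verify the partial linear space axiom, by transitivity of $G$ it suffices to check it for one of the two points being $u$. If $v$ lies on two lines $L^{k_1},L^{k_2}$ through $u$ with $k_1,k_2\in G_u$, then $v\in B^{k_1}\cap B^{k_2}$; since $B$ is a block of $G_u$ on $X$, this forces $B^{k_1}=B^{k_2}$, so the two lines coincide. That every line has at least two points is immediate since $u\in L$ and $B\neq\varnothing$, and $G\leq\Aut(\mathcal{S})$ is built into the construction $\mathcal{L}=L^G$.

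For (ii), $\mathcal{S}$ is a linear space precisely when every point distinct from $u$ is collinear with $u$, i.e. $\mathcal{S}(u)=\mathcal{P}\setminus\{u\}$; by (i) this says $X=\mathcal{P}\setminus\{u\}$, which is equivalent to $G_u$ being transitive on $\mathcal{P}\setminus\{u\}$, i.e.\ $G$ being $2$-transitive. For (iii), if $B$ is non-trivial then $|B|\geq 2$, giving line-size $|L|\geq 3$; moreover $B\neq X$, so the system of imprimitivity $B^{G_u}$ of $G_u$ on $X$ has more than one block, hence $|L^{G_u}|\geq 2$, i.e.\ point-size at least $2$. Finally (iv) is immediate: rank~$3$ rules out $2$-transitivity, so by (ii) $\mathcal{S}$ is not a linear space, and by (iii) the line-size is at least $3$, so $\mathcal{S}$ is not a graph.

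There is no substantial obstacle here; the only step requiring a moment's thought is the identification of the lines through $u$ as exactly $L^{G_u}$, which requires using the hypothesis that $G_L$ is transitive on $L$ to absorb the ambiguity in the choice of coset representative. Everything else is bookkeeping with blocks and orbits.
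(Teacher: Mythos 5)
Your proof is correct and follows essentially the same route as the paper's: the paper likewise identifies the lines through $u$ as $L^{G_u}$ using transitivity of $G_L$ on $L$ (it defers the routine partial-linear-space verification to a citation, which you instead write out), and parts (ii)--(iv) are handled identically. One cosmetic slip: with actions written on the right, the element you want is $hg$ rather than $gh$, since $u^{g^{-1}h^{-1}}=u$ gives $hg\in G_u$ and $L^{hg}=(L^{h})^{g}=L^{g}$, whereas $L^{gh}=(L^{g})^{h}$ need not equal $L^{g}$.
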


\begin{proof} 
(i) The proof that $\mathcal{S}$ is a partial linear space is routine (see~\cite[Theorem 2.3]{Dev2005} for details). Clearly $G\leq \Aut(\mathcal{S})$. If $v\in\mathcal{S}(u)$, then $v\in M$ for some $M\in \mathcal{L}_u$, and there exists $g\in G_u$ such that $M=L^g$ since $G_L$ is transitive on $L$, so  $v\in B^g\subseteq X^g=X$. Conversely, if $v\in X$, then $v\in B^g$ for some $g\in G_u$, so $v\in\mathcal{S}(u)$.

(ii) $\mathcal{S}$ is a linear space if and only if $\mathcal{S}(u)=\mathcal{P}\setminus \{u\}$, so (ii) follows from (i).
 
 (iii) Note that $\mathcal{S}$ is point- and line-transitive. If $B$ is non-trivial, then $|L|\geq 3$ and there are at least two lines on $u$, so (iii) holds.  
 
 (iv) This follows from  (ii) and (iii).
\end{proof}

In particular, if $G$ is a primitive permutation group of rank~$3$ on $\mathcal{P}$, then $\mathcal{S}:=(\mathcal{P},\mathcal{L})$ is a proper partial linear space with $G\leq \Aut(\mathcal{S})$ if and only if  $\mathcal{L}=L^G$ for some $L\subseteq \mathcal{P}$ such that $G_L$ is transitive on $L$ and  $L\setminus \{u\}$ is a non-trivial block of $G_u$ on $v^{G_u}$ for some  distinct $u,v\in L$. 

The following result  is standard and can be proved by counting flags.

\begin{lemma}
\label{lemma:lines}
Let $\mathcal{S}:=(\mathcal{P},\mathcal{L})$ be a partial linear space with line-size $k$ and point-size~$\ell$. Then $|\mathcal{L}|k=|\mathcal{P}|\ell$ and $|\mathcal{S}(u)|=\ell(k-1)$ for all $u\in \mathcal{P}$.
\end{lemma}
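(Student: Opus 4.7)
The plan is to prove both identities by straightforward double counting.

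For the first identity $|\mathcal{L}|k = |\mathcal{P}|\ell$, I would count the set of flags $\{(x,L) : x \in \mathcal{P}, L \in \mathcal{L}, x \in L\}$ in two ways. Since $\mathcal{S}$ has line-size $k$, each line $L \in \mathcal{L}$ contributes exactly $k$ flags, giving $|\mathcal{L}|k$ flags in total. On the other hand, since $\mathcal{S}$ has point-size $\ell$, each point $x \in \mathcal{P}$ lies on exactly $\ell$ lines, contributing $\ell$ flags, which yields $|\mathcal{P}|\ell$ flags in total. Equating the two counts gives the first identity.

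For the second identity, fix $u \in \mathcal{P}$, and count the set $S := \{(v, L) : v \in \mathcal{S}(u),\ L \in \mathcal{L}_u,\ v \in L\}$ in two ways. Given $v \in \mathcal{S}(u)$, the defining property of a partial linear space ensures that there is exactly one line $L$ containing both $u$ and $v$, so each $v \in \mathcal{S}(u)$ contributes exactly one pair to $S$; hence $|S| = |\mathcal{S}(u)|$. On the other hand, given $L \in \mathcal{L}_u$, the points $v \in L \setminus \{u\}$ are precisely the points collinear with $u$ that lie on $L$, and there are $k-1$ of them since $L$ has line-size $k$. Since $|\mathcal{L}_u| = \ell$, this gives $|S| = \ell(k-1)$, as required.

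The argument involves no obstacles worth highlighting; the only subtlety is ensuring that the partial linear space axiom (at most one line through two distinct points) is invoked precisely where needed, namely to guarantee uniqueness of the line through $u$ and $v$ in the second count. Both computations rely only on the definitions of line-size and point-size given earlier in the section, so no further preliminaries are required.
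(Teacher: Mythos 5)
Your proof is correct and is exactly the flag-counting argument the paper alludes to (the paper states only that the result ``is standard and can be proved by counting flags'' and omits the details). Both double counts are carried out properly, and the partial linear space axiom is invoked at the right place to get uniqueness of the line through $u$ and $v$.
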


In general, the automorphism group of a proper partial linear space can be primitive of rank~$3$ and have  imprimitive subgroups. However, we now see that   all rank~$3$  subgroups are primitive.

\begin{lemma}
\label{lemma:primrank3}
Let $\mathcal{S}:=(\mathcal{P},\mathcal{L})$ be a  partial linear space with collinearity relation $\mathcal{R}_1$ and non-collinearity relation $\mathcal{R}_2$ where $\mathcal{R}_1$ and $\mathcal{R}_2$ are non-empty.   Let $G\leq \Aut(\mathcal{S})$ where $G$ is transitive of rank~$3$ on $\mathcal{P}$. Then $G$ is primitive on $\mathcal{P}$ if and only if $\Aut(\mathcal{S})$ is primitive on $\mathcal{P}$.
\end{lemma}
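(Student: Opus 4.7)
The plan is to prove both implications separately, with the first being essentially immediate and the second using the rank~$3$ structure of $G$ together with the intrinsic nature of the collinearity relations.

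For the forward direction, suppose $G$ is primitive. Since $G\leq \Aut(\mathcal{S})$ and $G$ is transitive on $\mathcal{P}$, the group $\Aut(\mathcal{S})$ is also transitive on $\mathcal{P}$. The key observation is that being a block is a condition on a subset of $\mathcal{P}$ that is inherited downward: any block $B$ of $\Aut(\mathcal{S})$ is automatically a block of $G$, and triviality of a block depends only on $|B|$ and $|\mathcal{P}|$, not on the acting group. Hence a non-trivial block of $\Aut(\mathcal{S})$ would yield a non-trivial block of $G$, contradicting primitivity of $G$. So $\Aut(\mathcal{S})$ is primitive.

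For the converse, I argue by contrapositive: assume $G$ is imprimitive, and produce a non-trivial block system for $\Aut(\mathcal{S})$. Let $\sim$ be a non-trivial $G$-invariant equivalence relation on $\mathcal{P}$. Viewed as a subset of $\mathcal{P}\times\mathcal{P}$, $\sim$ is a union of $G$-orbits on $\mathcal{P}\times\mathcal{P}$ that contains the diagonal $\Delta:=\{(u,u):u\in\mathcal{P}\}$. Since $G$ has rank~$3$ on $\mathcal{P}$, the orbits of $G$ on $\mathcal{P}\times\mathcal{P}$ are exactly $\Delta$, $\mathcal{R}_1$ and $\mathcal{R}_2$ by Lemma~\ref{lemma:rank3}. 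As $\sim$ is non-trivial, it is either $\Delta\cup\mathcal{R}_1$ or $\Delta\cup\mathcal{R}_2$.

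Now the crucial point is that $\mathcal{R}_1$ (and hence $\mathcal{R}_2$) is determined purely by $\mathcal{L}$, so both of these relations are preserved by every element of $\Aut(\mathcal{S})$. Therefore $\sim$ is an $\Aut(\mathcal{S})$-invariant equivalence relation on $\mathcal{P}$. Since $\Aut(\mathcal{S})$ is transitive on $\mathcal{P}$ (as it contains $G$), the equivalence classes of $\sim$ form a system of imprimitivity for $\Aut(\mathcal{S})$, and this system is non-trivial because $\sim$ is non-trivial. Hence $\Aut(\mathcal{S})$ is imprimitive, completing the contrapositive. The only potential subtlety is verifying that the rank~$3$ hypothesis really does force any non-trivial $G$-invariant equivalence relation to coincide (as a subset of $\mathcal{P}\times\mathcal{P}$) with $\Delta\cup\mathcal{R}_1$ or $\Delta\cup\mathcal{R}_2$, but this is immediate from Lemma~\ref{lemma:rank3} and the fact that such relations are always unions of orbitals containing the diagonal.
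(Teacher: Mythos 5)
Your proof is correct and follows essentially the same route as the paper: the forward direction is the routine observation that blocks of $\Aut(\mathcal{S})$ are blocks of $G$, and the converse uses Lemma~\ref{lemma:rank3} to identify any non-trivial $G$-invariant equivalence relation with $\Delta\cup\mathcal{R}_1$ or $\Delta\cup\mathcal{R}_2$, both of which are $\Aut(\mathcal{S})$-invariant. Nothing further is needed.
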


\begin{proof}
Suppose that $G$ is imprimitive on $\mathcal{P}$. Then there is a $G$-invariant equivalence relation $\mathcal{R}$ on $\mathcal{P}$ for which  $\{(u,u):u\in \mathcal{P}\}\subsetneq \mathcal{R}\subsetneq\mathcal{P}\times \mathcal{P}$.
By Lemma~\ref{lemma:rank3},
$\mathcal{R}\setminus \{(u,u):u\in \mathcal{P}\}$ is $\mathcal{R}_1$ or $\mathcal{R}_2$. Since $\mathcal{R}_1$ and $\mathcal{R}_2$ are $\Aut(\mathcal{S})$-invariant,  $\Aut(\mathcal{S})$ is imprimitive. The converse is routine.
\end{proof}

To finish this section, we make  some brief observations  about classifying 
partial linear spaces whose automorphism groups are imprimitive of rank $3$.

\begin{remark}
\label{remark:imprim}
Let $\mathcal{S}:=(\mathcal{P},\mathcal{L})$ be a  partial linear space with collinearity relation $\mathcal{R}_1$ and non-collinearity relation $\mathcal{R}_2$ where $\mathcal{R}_1$ and $\mathcal{R}_2$ are non-empty. 
 Let $\Gamma$ be the collinearity graph of~$\mathcal{S}$. Then 
 $\Gamma$ has the same collinearity and non-collinearity relations as~$\mathcal{S}$.   Let $G\leq \Aut(\mathcal{S})$ where $G$ is imprimitive  of  rank~$3$ on $\mathcal{P}$. By the proof of Lemma~\ref{lemma:primrank3},  $\mathcal{R}_i\cup \{(u,u):u\in \mathcal{P}\}$ is an  equivalence relation with $m$ classes of size $n$ for some  $i\in \{1,2\}$  and $m,n\geq 2$.
 If $i=1$, then $\Gamma$ is a disjoint union of $m$ complete graphs of size $n$, so $\mathcal{S}$ is a disjoint union of isomorphic $2$-transitive linear spaces (and such linear spaces were  classified in~\cite{Kan1985}). Otherwise,  $i=2$ and $\Gamma$ is a complete multipartite graph $K_{m[n]}$ with $m$ parts of size $n$. Thus, in order to classify the proper partial linear spaces admitting  rank~$3$ imprimitive automorphism groups, it suffices to consider those  with  collinearity graph   $K_{m[n]}$. 
 This difficult problem has been studied in certain cases (see~\cite{DevHal2006,PeaPra2013}) but  remains open in general.
\end{remark}

\subsection{Constructing new partial linear spaces from given ones}
\label{ss:plsnew}

Let $\mathcal{S}:=(\mathcal{P},\mathcal{L})$ be a linear space with  at least three points on every line, let $G\leq \Aut(\mathcal{S})$ such that $G$ has rank~$3$ on $\mathcal{P}$, and let $u\in \mathcal{P}$. Now $G_u$ has two orbits $X$ and $Y$ on $\mathcal{P}\setminus \{u\}$. If there exists $L\in\mathcal{L}_u$ such that $X\cap L$ and $Y\cap L$ are non-empty, then it is not hard to see that 
 $\mathcal{S}$ is flag-transitive. Otherwise, $G$ has exactly two orbits on $\mathcal{L}$, and, as we discussed in Remark~\ref{remark:PLSfromLinear}, we can use these orbits to construct partial linear spaces that admit  $G$. We  consider this construction in more detail now. 

\begin{lemma}
\label{lemma:linearspace}
Let $G$ be a rank~$3$ permutation group on  $\mathcal{P}$. Then the following are equivalent.
\begin{itemize}
\item[(i)] $\mathcal{S}:=(\mathcal{P},\mathcal{L}_1\cup\mathcal{L}_2)$ is a linear space with at least three points on every line such that $G\leq\Aut(\mathcal{S})$ and $G$ has distinct orbits $\mathcal{L}_1$ and $\mathcal{L}_2$ on $\mathcal{L}_1\cup\mathcal{L}_2$.
\item[(ii)] $\mathcal{S}_1:=(\mathcal{P},\mathcal{L}_1)$ and $\mathcal{S}_2:=(\mathcal{P},\mathcal{L}_2)$  are proper partial linear spaces with $G\leq \Aut(\mathcal{S}_1)$ and $G\leq \Aut(\mathcal{S}_2)$ such that the collinearity relations of $\mathcal{S}_1$ and $\mathcal{S}_2$ are disjoint.
\end{itemize}
\end{lemma}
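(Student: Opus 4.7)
The plan is to prove the two implications separately, with the rank~$3$ structure of $G$ together with Lemma~\ref{lemma:rank3} serving as the main tool in both directions.

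For (i)$\Rightarrow$(ii), I would first observe that each $\mathcal{S}_i:=(\mathcal{P},\mathcal{L}_i)$ is automatically a partial linear space: uniqueness of a line through any pair of collinear points is inherited from the linear space $\mathcal{S}$, and every line of $\mathcal{L}_i$ has at least three, hence at least two, points. Since $\mathcal{L}_1$ and $\mathcal{L}_2$ are distinct $G$-orbits, they are disjoint and both non-empty, so any line $L\in\mathcal{L}_{3-i}$ supplies a pair of points that lies on no line of $\mathcal{L}_i$ (otherwise the uniqueness of lines in $\mathcal{S}$ would force $L\in\mathcal{L}_i$); hence $\mathcal{S}_i$ is not a linear space. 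Its lines have at least three points, so it is not a graph, and $\mathcal{S}_i$ is therefore proper. Disjointness of the collinearity relations of $\mathcal{S}_1$ and $\mathcal{S}_2$ follows from the same uniqueness argument: two points collinear in both $\mathcal{S}_1$ and $\mathcal{S}_2$ would lie on distinct lines of $\mathcal{S}$. Finally, $G\leq \Aut(\mathcal{S}_i)$ because $G$ preserves the orbit $\mathcal{L}_i$.

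For (ii)$\Rightarrow$(i), let $\mathcal{R}_i$ denote the collinearity relation of $\mathcal{S}_i$. Since each $\mathcal{S}_i$ is proper, it contains at least one line, so each $\mathcal{R}_i$ is non-empty; moreover each $\mathcal{R}_i$ is $G$-invariant. As $G$ has rank~$3$ on $\mathcal{P}$, Lemma~\ref{lemma:rank3} identifies the two non-diagonal $G$-orbits on $\mathcal{P}\times \mathcal{P}$ with $\mathcal{R}_i$ and its complement in the non-diagonal; disjointness of $\mathcal{R}_1$ and $\mathcal{R}_2$ then forces $\{\mathcal{R}_1,\mathcal{R}_2\}$ to be exactly these two orbits. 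Consequently, every ordered pair of distinct points is collinear in exactly one of $\mathcal{S}_1$ and $\mathcal{S}_2$, which supplies both existence of a line through every pair of distinct points in $\mathcal{L}_1\cup \mathcal{L}_2$ and, together with the partial linear space property of each $\mathcal{S}_i$, uniqueness of that line. Thus $(\mathcal{P},\mathcal{L}_1\cup \mathcal{L}_2)$ is a linear space.

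It remains to verify the side conditions. Transitivity of $G$ on $\mathcal{R}_i$, together with uniqueness of a line through each collinear pair, gives transitivity of $G$ on $\mathcal{L}_i$; since $\mathcal{R}_1\neq \mathcal{R}_2$, no element of $G$ maps a line of $\mathcal{L}_1$ to a line of $\mathcal{L}_2$, so $\mathcal{L}_1$ and $\mathcal{L}_2$ are distinct $G$-orbits on $\mathcal{L}_1\cup \mathcal{L}_2$. Properness of each $\mathcal{S}_i$ gives some line of $\mathcal{L}_i$ with at least three points, and $G$-transitivity on $\mathcal{L}_i$ then forces every line of $\mathcal{L}_i$ to have at least three points, as required. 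No serious obstacle is anticipated; the whole proof is a direct translation between the two descriptions, and the only delicate point is applying Lemma~\ref{lemma:rank3} to pin down $\mathcal{R}_1$ and $\mathcal{R}_2$ as exactly the two non-diagonal $G$-orbits.
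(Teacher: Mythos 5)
Your proof is correct and takes essentially the same route as the paper's: the forward direction uses uniqueness of lines in the linear space to get properness and disjoint collinearity relations, and the reverse direction applies Lemma~\ref{lemma:rank3} to identify $\mathcal{R}_1$ and $\mathcal{R}_2$ as the two non-diagonal $G$-orbits. You fill in several details (properness of each $\mathcal{S}_i$, transitivity of $G$ on each $\mathcal{L}_i$, the line-size condition) that the paper dispatches with ``clearly''; all of these checks are sound.
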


\begin{proof}
First suppose that (i) holds. Clearly $\mathcal{S}_1$ and $\mathcal{S}_2$ are proper partial linear spaces  such that $G\leq \Aut(\mathcal{S}_1)$ and $G\leq \Aut(\mathcal{S}_2)$. If there exist $u,v\in \mathcal{P}$ such that $u$ and $v$ are collinear in both $\mathcal{S}_1$ and $\mathcal{S}_2$, then there exist $L_1\in \mathcal{L}_1$ and $L_2\in\mathcal{L}_2$ such that $u,v\in L_1\cap L_2$, but then $L_1=L_2$, so $\mathcal{L}_1$ and $\mathcal{L}_2$ are not disjoint, a contradiction. Thus (ii) holds.

Conversely, suppose that (ii) holds, and let $\mathcal{R}_i$ be the collinearity relation of $\mathcal{S}_i$ for $i\in\{1,2\}$. Since $\mathcal{R}_1$ and $\mathcal{R}_2$ are  disjoint, $\mathcal{L}_1$ and $\mathcal{L}_2$ are disjoint, and $\mathcal{S}$ is a partial linear space. Observe that $\mathcal{L}_1$ and $\mathcal{L}_2$ are both non-empty since $\mathcal{S}_1$ and $\mathcal{S}_2$ are not graphs. Now $\mathcal{R}_1$ and $\mathcal{R}_2$ are both non-empty,  so the orbits of $G$ on $\mathcal{P}\times\mathcal{P}$ are $\{(u,u):u\in\mathcal{P}\}$, $\mathcal{R}_1$ and $\mathcal{R}_2$. If $u$ and $v$ are distinct elements of $\mathcal{P}$, then $(u,v)\in \mathcal{R}_i$ for some $i$, so there exists $L\in\mathcal{L}_i$ such that $u,v\in L$. Thus $\mathcal{S}$ is a linear space. Clearly $G\leq\Aut(\mathcal{S})$, and $G$ is transitive on $\mathcal{L}_1$ and $\mathcal{L}_2$, so (i) holds.
\end{proof}

Note  that the partial linear spaces  $\mathcal{S}_1$ and $\mathcal{S}_2$ of Lemma~\ref{lemma:linearspace} have the same line-size if and only if  $\mathcal{S}$ is a $2$-$(v,k,1)$ design for some $v$ and $k$.

\begin{remark}
\label{remark:lineartopls}
Observe that if $\mathcal{S}:=(\mathcal{P},\mathcal{L})$ and $\mathcal{S}':=(\mathcal{P}',\mathcal{L}')$ are isomorphic linear spaces that satisfy the conditions of Lemma~\ref{lemma:linearspace}(i), then any isomorphism $\varphi:\mathcal{S}\to \mathcal{S}'$ naturally determines isomorphisms between the corresponding partial linear spaces: indeed, if $\mathcal{L}_1$ and $\mathcal{L}_2$ are the orbits of the rank~$3$ group $G$ on $\mathcal{L}$, then $\mathcal{L}_1':=\mathcal{L}_1^\varphi$ and $\mathcal{L}_2':=\mathcal{L}_2^\varphi$ are the orbits of $G^\varphi$ on $\mathcal{L}'$, so $\varphi$ is an isomorphism between the partial linear spaces $(\mathcal{P},\mathcal{L}_1)$ and $(\mathcal{P}',\mathcal{L}_1')$, as well as  $(\mathcal{P},\mathcal{L}_2)$ and $(\mathcal{P}',\mathcal{L}_2')$.  However,  the converse is not true in general: see Remark~\ref{remark:linearspace}. 
\end{remark}

Next we define the intersection of two partial linear spaces. Let $\mathcal{S}_1:=(\mathcal{P}_1,\mathcal{L}_1)$ and $\mathcal{S}_2:=(\mathcal{P}_2,\mathcal{L}_2)$ be partial linear spaces. Define $\mathcal{S}_1\cap \mathcal{S}_2$ to be the pair $(\mathcal{P}_1\cap \mathcal{P}_2,\mathcal{L})$ where $\mathcal{L}:=\{L_1\cap L_2: L_1\in\mathcal{L}_1,L_2\in\mathcal{L}_2,|L_1\cap L_2|\geq 2\}$. Further, if $\mathcal{P}\subseteq \mathcal{P}_1$ such that $|\mathcal{P}|>1$, then $\mathcal{S}:=(\mathcal{P},\{\mathcal{P}\})$ is a linear space, and  we write $\mathcal{S}_1\cap \mathcal{P}$ for $\mathcal{S}_1\cap \mathcal{S}$. The proof of the following is routine.

\begin{lemma}
\label{lemma:intersect}
Let $\mathcal{S}_1:=(\mathcal{P}_1,\mathcal{L}_1)$ and $\mathcal{S}_2:=(\mathcal{P}_2,\mathcal{L}_2)$ be partial linear spaces. 
\begin{itemize}
\item[(i)] If $\mathcal{P}_1\cap \mathcal{P}_2\neq \varnothing$, then $\mathcal{S}_1\cap\mathcal{S}_2$ is a partial linear space. 
\item[(ii)] If $\mathcal{P}\subseteq \mathcal{P}_1$ such that $|\mathcal{P}|>1$ and $G\leq \Aut(\mathcal{S}_1)$, then $G_{\mathcal{P}}^\mathcal{P}\leq \Aut(\mathcal{S}_1\cap \mathcal{P})$.
\end{itemize}
\end{lemma}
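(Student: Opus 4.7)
The plan is to verify both claims directly from the definitions, as the author indicates the proof is routine.

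For part (i), I would show that $\mathcal{S}_1\cap\mathcal{S}_2=(\mathcal{P}_1\cap\mathcal{P}_2,\mathcal{L})$ satisfies the two axioms of a partial linear space. Each element of $\mathcal{L}$ is of the form $L_1\cap L_2$ with $L_1\in\mathcal{L}_1$, $L_2\in\mathcal{L}_2$ and $|L_1\cap L_2|\geq 2$, which simultaneously establishes that every line contains at least two points and that $L_1\cap L_2\subseteq\mathcal{P}_1\cap\mathcal{P}_2$ (since $L_i\subseteq\mathcal{P}_i$). For uniqueness of lines through two points, suppose distinct $x,y\in\mathcal{P}_1\cap\mathcal{P}_2$ both lie on $L_1\cap L_2$ and $L_1'\cap L_2'$ in $\mathcal{L}$. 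Then $x,y\in L_1\cap L_1'$, so $L_1=L_1'$ by the partial linear space axiom in $\mathcal{S}_1$, and likewise $L_2=L_2'$ in $\mathcal{S}_2$. Hence $L_1\cap L_2=L_1'\cap L_2'$. Note that one does not need $\mathcal{L}$ to be non-empty for the structure to be a partial linear space under the definition given.

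For part (ii), by definition $\mathcal{S}_1\cap\mathcal{P}=\mathcal{S}_1\cap(\mathcal{P},\{\mathcal{P}\})=(\mathcal{P},\mathcal{L}')$ where $\mathcal{L}':=\{L\cap\mathcal{P}:L\in\mathcal{L}_1,\,|L\cap\mathcal{P}|\geq 2\}$. Given $g\in G_{\mathcal{P}}$, the induced permutation $\bar g\in G_{\mathcal{P}}^{\mathcal{P}}$ acts on subsets of $\mathcal{P}$ by the restriction of $g$. For any $L\cap\mathcal{P}\in\mathcal{L}'$, the equality $(L\cap\mathcal{P})^{\bar g}=L^g\cap\mathcal{P}^g=L^g\cap\mathcal{P}$ holds because $g$ fixes $\mathcal{P}$ setwise; moreover $L^g\in\mathcal{L}_1$ since $g\in\Aut(\mathcal{S}_1)$, and $|L^g\cap\mathcal{P}|=|L\cap\mathcal{P}|\geq 2$. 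So $(L\cap\mathcal{P})^{\bar g}\in\mathcal{L}'$, meaning $\bar g$ preserves $\mathcal{L}'$ and thus lies in $\Aut(\mathcal{S}_1\cap\mathcal{P})$. Since the map $G_{\mathcal{P}}\to\Aut(\mathcal{S}_1\cap\mathcal{P})$ factors through $G_{\mathcal{P}}^{\mathcal{P}}$ (the pointwise stabiliser of $\mathcal{P}$ acts trivially on $\mathcal{P}$ and thus preserves any line structure on $\mathcal{P}$ trivially), we obtain $G_{\mathcal{P}}^{\mathcal{P}}\leq\Aut(\mathcal{S}_1\cap\mathcal{P})$.

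There is no real obstacle here; the only mild subtlety is being careful that $G_{\mathcal{P}}^{\mathcal{P}}$ is well-defined as a subgroup of $\Aut(\mathcal{S}_1\cap\mathcal{P})$, which follows because the pointwise stabiliser $G_{(\mathcal{P})}$ acts as the identity on $\mathcal{P}$ and hence is contained in the kernel of the action on $(\mathcal{P},\mathcal{L}')$.
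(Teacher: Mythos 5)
Your proof is correct and is exactly the routine verification the paper has in mind (the paper omits the proof, stating only that it is routine). Both the uniqueness argument in (i), via the partial linear space axiom applied separately in $\mathcal{S}_1$ and $\mathcal{S}_2$, and the check in (ii) that the induced action of $G_{\mathcal{P}}$ preserves the line set $\{L\cap\mathcal{P}: L\in\mathcal{L}_1,\ |L\cap\mathcal{P}|\geq 2\}$, are complete and accurate.
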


Our final construction of this section is a natural generalisation of the cartesian product of graphs. 
Let $\mathcal{S}_1:=(\mathcal{P}_1,\mathcal{L}_1)$ and $\mathcal{S}_2:=(\mathcal{P}_2,\mathcal{L}_2)$ be partial linear spaces. Define the \textit{cartesian product} $\mathcal{S}_1\cprod\mathcal{S}_2$ of $\mathcal{S}_1$ and $\mathcal{S}_2$ to be $(\mathcal{P}_1\times \mathcal{P}_2,\mathcal{L}_1\cprod\mathcal{L}_2)$ where $\mathcal{L}_1\cprod\mathcal{L}_2$ 
 is the union of  $\{\{(x_1,x_2):x_1\in L\}:x_2\in\mathcal{P}_2,L\in\mathcal{L}_1\}$ and $\{\{(x_1,x_2):x_2\in L\}:x_1\in\mathcal{P}_1,L\in\mathcal{L}_2\}$. 
 
 \begin{lemma}
 \label{lemma:cartesian}
Let $\mathcal{S}_1:=(\mathcal{P}_1,\mathcal{L}_1)$ and $\mathcal{S}_2:=(\mathcal{P}_2,\mathcal{L}_2)$ be partial linear spaces. 
  \begin{itemize}
\item[(i)]   $\mathcal{S}_1\cprod\mathcal{S}_2$ is a partial linear space, and  $\Aut(\mathcal{S}_1)\times\Aut(\mathcal{S}_2)\leq\Aut(\mathcal{S}_1\cprod\mathcal{S}_2)$. 
\item[(ii)]  $\Aut(\mathcal{S}_1)\wr \langle \tau\rangle\leq \Aut(\mathcal{S}_1\cprod\mathcal{S}_1)$ where $(x_1,x_2)^\tau:=(x_2,x_1)$ for all $x_1,x_2\in\mathcal{P}_1$. 
\item[(iii)] If $|\mathcal{P}_1|> 1$, then $ \mathcal{S}_1\cprod\mathcal{S}_1$ is not a linear space.
\end{itemize}
 \end{lemma}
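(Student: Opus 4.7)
The plan is to verify each part directly from the definition of the cartesian product, exploiting the symmetric roles of the two "directions" of lines.

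For part (i), the main observation is that every line of $\mathcal{S}_1\cprod \mathcal{S}_2$ has one coordinate constant: a line of the first type $\{(z_1,x_2):z_1\in L\}$ with $L\in\mathcal{L}_1$ has the second coordinate fixed, and a line of the second type $\{(x_1,z_2):z_2\in L\}$ with $L\in\mathcal{L}_2$ has the first coordinate fixed. So given two distinct points $(x_1,x_2)$ and $(y_1,y_2)$, if $x_2=y_2$ then they can only lie on a line of the first type, and then $x_1\neq y_1$ forces that line to be $\{(z_1,x_2):z_1\in L\}$ where $L$ is the unique line of $\mathcal{S}_1$ through $x_1$ and $y_1$ (if such a line exists); the situation is symmetric if $x_1=y_1$, and if neither coordinate is equal no line contains both points. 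Every line has at least two points because $\mathcal{S}_1$ and $\mathcal{S}_2$ do. For the automorphism claim, I would verify by inspection that a pair $(g,h)\in\Aut(\mathcal{S}_1)\times\Aut(\mathcal{S}_2)$ acting coordinatewise sends a first-type line built from $L\in\mathcal{L}_1$ and $x_2\in\mathcal{P}_2$ to the first-type line built from $L^g\in\mathcal{L}_1$ and $x_2^h\in\mathcal{P}_2$, and symmetrically for second-type lines.

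For part (ii), when $\mathcal{S}_1=\mathcal{S}_2$ the involution $\tau$ interchanges the two types of lines: it sends the first-type line determined by $(L,x_2)$ to the second-type line determined by $(x_2,L)$, and vice versa. Hence $\tau\in\Aut(\mathcal{S}_1\cprod \mathcal{S}_1)$, and combined with the direct product from (i) this yields the wreath product $\Aut(\mathcal{S}_1)\wr\langle\tau\rangle$ as a subgroup (the base group is normal because conjugation by $\tau$ simply swaps the two factors).

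For part (iii), I take any two distinct points $x,y\in\mathcal{P}_1$ and consider the diagonal points $(x,x)$ and $(y,y)$ in $\mathcal{S}_1\cprod\mathcal{S}_1$. Since a first-type line requires equal second coordinates and a second-type line requires equal first coordinates, neither type of line contains both $(x,x)$ and $(y,y)$, so they are not collinear and $\mathcal{S}_1\cprod\mathcal{S}_1$ cannot be a linear space.

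Nothing here presents a real obstacle: the only mild subtlety is ruling out the possibility in (i) that the same pair of points lies on a first-type line and a second-type line simultaneously, which is handled by noting that such a coincidence would force both coordinates of the two points to agree, contradicting distinctness. The whole argument is a direct verification from the definitions.
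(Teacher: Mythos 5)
Your proof is correct and follows essentially the same route as the paper: the partial-linear-space axiom is checked by noting that two distinct points sharing a line must agree in one coordinate, which pins down the line type and reduces uniqueness to the corresponding factor, and (iii) is witnessed by a non-collinear pair of points (you use $(x,x),(y,y)$ where the paper uses $(x_1,x_2),(x_2,x_1)$, an immaterial difference). No gaps.
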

 
 \begin{proof} 
 Suppose that there exist distinct $(x_1,x_2),(y_1,y_2)\in L\cap M$ for some $L,M\in \mathcal{L}_1\cprod\mathcal{L}_2$. We may assume that $x_1\neq y_1$. Then $x_2=y_2$, so $L=\{(u,x_2): u\in L'\}$ and $M=\{(v,x_2):v\in M'\}$ for some $L',M'\in\mathcal{L}_1$ such that $x_1,y_1\in L'\cap M'$. Now $L'=M'$, so $L=M$. Thus 
 $\mathcal{S}_1\cprod\mathcal{S}_2$ is a partial linear space. The remaining claims of  (i) and (ii) are straightforward. If there exist distinct $x_1,x_2\in\mathcal{P}_1$, then $(x_1,x_2)$ and $(x_2,x_1)$ are not collinear in $ \mathcal{S}_1\cprod\mathcal{S}_1$, so (iii) holds.
  \end{proof}

\subsection{Affine partial linear spaces}
\label{ss:affinePLS}

A partial linear space $S:=(\mathcal{P},\mathcal{L})$ is  \textit{$G$-affine}  if $G\leq\Aut(\mathcal{S})$ and $G$ is an affine permutation group  on $V:=V_d(p)=\mathcal{P}$ (see~\S\ref{ss:affine}) for some prime $p$ and $d\geq 1$. We also say that $\mathcal{S}$ is \textit{affine} if it is $G$-affine for some $G$. 
Recall that for $x\in V$, we write $\mathcal{L}_x$ for the set of lines of $\mathcal{S}$ that contain $x$, and $\mathcal{S}(x)$ for the set of points of $\mathcal{S}$ that are collinear with $x$. 
In particular, $\mathcal{L}_0$ denotes the set of lines on the vector $0$, and $\mathcal{S}(0)$ denotes the set of vectors that are collinear with $0$. Recall also that if $L\subseteq V$, then $L^*:=L\setminus \{0\}$. 

The first result of this section consists of two elementary observations that  are the key tools for determining---or rather limiting---the structure of a  $G$-affine proper partial linear space. In particular, there is a useful tension between Lemmas~\ref{lemma:basic}(i)  and~\ref{lemma:primitive}.

\begin{lemma}
\label{lemma:basic}
Let $V:=V_d(p)$ where $d\geq 1$ and $p$ is prime, and let $\mathcal{S}:=(V,\mathcal{L})$  be a  $G$-affine proper  partial linear space where $G$ has  rank~$3$ on $V$. Let $L\in \mathcal{L}_0$.
\begin{itemize}
\item[(i)] If $x,y\in L^*$ and $x\neq y$, then  $y-x\in x^{G_0}$.
\item[(ii)] If $x,y\in L^*$ and $x\neq y$, then $y^{G_{0,x}}\subseteq L^*$.
\end{itemize}
\end{lemma}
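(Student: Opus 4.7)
The plan is to invoke two standard mechanisms: the fact that all translations lie in $G$ (for part (i)) and the uniqueness-of-line axiom defining a partial linear space (for part (ii)). Both parts should be short, so the challenge is mainly bookkeeping rather than a substantive obstacle.

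For part (i), I will apply the translation $\tau_{-x} \in V \leq G$ to the line $L$. Since $\tau_{-x} \in \Aut(\mathcal{S})$, the image $L - x$ is a line of $\mathcal{S}$, and it contains $0 = x - x$ together with the nonzero vector $y - x$. Hence $y - x \in \mathcal{S}(0)$. Because $\mathcal{S}$ is a proper partial linear space admitting $L \in \mathcal{L}_0$, both the collinearity and non-collinearity relations are non-empty (Remark \ref{remark:PLSrank3}), so Lemma \ref{lemma:rank3}(iii) applies with $u = 0$ and identifies $\mathcal{S}(0)$ as one of the two $G_0$-orbits on $V^*$. Since $x$ is collinear with $0$ via $L$ we have $x \in \mathcal{S}(0)$, and therefore $\mathcal{S}(0) = x^{G_0}$, yielding $y - x \in x^{G_0}$.

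For part (ii), I take an arbitrary $g \in G_{0,x}$ and observe that $L^g$ is a line of $\mathcal{S}$ containing $0^g = 0$ and $x^g = x$. Since two distinct points lie on at most one common line in a partial linear space, $L^g = L$. Consequently $y^g \in L$, and $y^g \neq 0$ because $y \neq 0$ and $g$ is a bijection fixing $0$. Hence $y^g \in L^*$, and ranging over $g \in G_{0,x}$ gives the claimed inclusion $y^{G_{0,x}} \subseteq L^*$.

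The only subtlety worth flagging is the invocation of Lemma \ref{lemma:rank3}(iii) in part (i): one needs the non-collinearity relation to be non-empty, which is exactly where \emph{proper} is used, and one needs the rank-3 hypothesis to conclude that $\mathcal{S}(0)$ is a single $G_0$-orbit on $V^*$ rather than a union of several. Neither point presents a genuine difficulty, and no further machinery beyond the definitions and Lemma \ref{lemma:rank3} is required.
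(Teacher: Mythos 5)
Your proof is correct and follows essentially the same route as the paper's: part (i) translates $L$ by $\tau_{-x}$ and identifies $\mathcal{S}(0)$ with $x^{G_0}$ via Lemma~\ref{lemma:rank3}, and part (ii) uses that $L^g$ contains the two fixed points $0$ and $x$, forcing $L^g=L$. The extra care you take in justifying $\mathcal{S}(0)=x^{G_0}$ and $L^g=L$ matches what the paper leaves implicit.
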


\begin{proof}
(i) By assumption, $V\leq G$, and  $x$ and $y$ are collinear, so  $x^{\tau_{-x}}$ and $y^{\tau_{-x}}$ are also collinear. Thus  $y-x\in \mathcal{S}(0)$, and $\mathcal{S}(0)=x^{G_0}$ by Lemma~\ref{lemma:rank3} since $x\in \mathcal{S}(0)$ and $G$ has rank~$3$ on $V$.

(ii) If $z=y^g$ for some $g\in G_{0,x}$, then $L^g=L$, so $z\in (L^*)^g=L^*$.
\end{proof}

 A subset $L$ of a vector space $V$ is an \textit{affine subspace} of $V$ if $L=W+v$ for some subspace $W$ of $V$ and $v\in V$.   In the following, we generalise~\cite[Lemma 2.3]{Lie1998} and  the proof of~\cite[Lemma~2.2]{Lie1998}. 

\begin{lemma}
\label{lemma:affine}
Let $V:=V_d(p)$ where  $d\geq 1$ and $p$ is prime. Let $\mathcal{S}:=(V,\mathcal{L})$  be a  $G$-affine   partial linear space  where $G$ is transitive on $\mathcal{L}$ and $\mathcal{S}$ has  line-size at least $3$. Suppose that one  of the following holds.
\begin{itemize}
\item[(i)] There exists $g\in G_0$ and $L\in \mathcal{L}_0$  such that $v^g=-v$ for all $v\in  L$.
\item[(ii)] The prime $p=2$.
\item[(iii)] There exists $x\in V^*$ and $L\in \mathcal{L}$ such that $L^{\tau_x}=L$.
\end{itemize}
Then every line of $\mathcal{S}$ is an affine subspace of $V$. 
\end{lemma}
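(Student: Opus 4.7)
The plan is to reduce the statement to showing that one specific line $L \in \mathcal{L}_0$ is an $\mathbb{F}_p$-subspace of $V$, and then to transport this to every line using the hypothesis that $G$ is transitive on $\mathcal{L}$. Indeed any $M \in \mathcal{L}$ has the form $L^g$ for some $g = \tau_v h \in V{:}G_0$, and since $h \in G_0 \leq \GammaL(V)$ is $\mathbb{F}_p$-semilinear, it preserves $\mathbb{F}_p$-subspaces, so $M = L^h + v^h$ is an affine subspace whenever $L$ is one. Note that case (ii) reduces immediately to case (i) via the choice $g = 1 \in G_0$, since $v^g = v = -v$ for every $v \in V$ in characteristic two.

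For case (i), I would fix the given $L \in \mathcal{L}_0$ and $g \in G_0$, and exploit the maps $g\tau_u \in G \leq \Aut(\mathcal{S})$ for $u \in L^*$. A direct computation shows that $g\tau_u$ sends each $v \in L$ to $-v + u = u - v$, so $L^{g\tau_u} = u - L$ is a line of $\mathcal{S}$; since this line and $L$ both contain the two points $0$ and $u$, the partial linear space axiom forces $u - L = L$. Specialising this identity at $u_1, u_2 \in L^*$ gives $u_1 - u_2 \in T_L := V \cap G_L$, so the nonzero points of $L$ lie in a single coset $T_L + u_0$ of the translation stabiliser. Since $L$ is a union of $T_L$-cosets and contains $T_L$ (because $0 \in L$), a short cardinality check using $|L|\geq 3$ rules out $u_0 \notin T_L$, forcing $L = T_L$, an $\mathbb{F}_p$-subspace.

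For case (iii), I would first translate so that the given $L$ lies in $\mathcal{L}_0$; then $T_L$ contains the corresponding nonzero $x$ and is therefore a nontrivial subgroup of $V$ contained in $L$, with $L$ a union of $T_L$-cosets. Suppose for contradiction some $u_1 \in L \setminus T_L$ exists. Then $L$ and $L - u_1$ are distinct lines both through $0$, so they share only the point $0$; however for any $t \in T_L \setminus \{0\}$ one has $t \in L$ and $t + u_1 \in T_L + u_1 \subseteq L$, yielding a second common point of $L$ and $L - u_1$, a contradiction. Hence $L = T_L$, an $\mathbb{F}_p$-subspace.

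The main subtlety is the bookkeeping in case (i) needed to rule out the degenerate alternative $T_L = \{0\}$; this is where the hypothesis $|L| \geq 3$ is essential, since with $T_L = \{0\}$ the containment $L^* \subseteq T_L + u_0$ would force $|L^*| \leq 1$.
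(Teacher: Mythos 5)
Your proof is correct and follows essentially the same route as the paper's: in each case the key step is that two distinct points lie on at most one line, applied to $L$ and its image under $g\tau_u$ (case (i)) or under a translation (case (iii)), which is exactly how the paper pins down $L$. The only difference is organisational — you route both cases through the translation stabiliser $T_L=V\cap G_L$ and a short coset count, whereas the paper concludes case (i) by checking directly that $L$ is closed under differences and case (iii) by showing $L-y$ is a subgroup; both finishes are valid.
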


\begin{proof}
By assumption, $V\leq G$. Suppose that (i) holds.  Let $ y,z\in L^*$ be distinct. 
 Applying $g$ to $0$, $y$ and $z$, we see that $0$, $-y$ and $-z$ lie on a line, and we may translate by $y$, so $y$, $0$ and $y-z$ lie on a line.   
Now $L$ is the  unique line on $0$ and $y$, so $y-z\in L^*$. Similarly, $z-y\in L^*$. Since $z-y\neq z$, we also have $-y=(z-y)-z\in L^*$. It follows that $u-v\in L$ for all $u,v\in L$,  and since $V$ is an $\mathbb{F}_p$-vector space, $L$ is a subspace of $V$. Hence every line of $\mathcal{S}$ is an affine subspace of $V$, for if $M\in \mathcal{L}$, then $M=L^h+v$ for some $h\in G_0$ and $v\in V$.
 
If the condition of (ii) holds, then the condition of (i) is satisfied with $g=1$ for any line $L$, so every line is an affine subspace of $V$. 
 
Suppose that the condition of (iii) holds. First we claim that  $L^{\tau_{v-u}}=L$ for all $u,v\in L$.  Let $u,v\in L$. Now $u+\langle x\rangle$ and $v+\langle x\rangle$ are subsets of $L$, and $(u+\langle x\rangle)^{\tau_{v-u}}=v+\langle x\rangle$. Since $x\neq 0$, it follows  that $|L^{\tau_{v-u}}\cap L|\geq 2$, so  $L^{\tau_{v-u}}=L$, and the claim holds. Choose $y\in L$, and let $W:=\{u-y: u\in L\}$. Now $L=W+y$, and $W$ is a subspace of $V$ by the claim, so $L$ is an affine subspace of $V$. It follows that every line of $\mathcal{S}$ is an affine subspace of $V$.
\end{proof}

\begin{lemma}
\label{lemma:notaffine}
Let $V:=V_d(p)$ where  $d\geq 1$ and $p$ is prime, and let $\mathcal{S}:=(V,\mathcal{L})$  be a  $G$-affine proper  partial linear space where $G$ has  rank~$3$ on $V$. 
 Then one of the following holds.
\begin{itemize}
\item[(i)] Every line of $\mathcal{S}$ is an affine subspace of $V$.
\item[(ii)] The translation group $V$ acts semiregularly on $\mathcal{L}$ and $k(k-1)$ divides $|x^{G_0}|$, where $k$ is the line-size of $\mathcal{S}$ and $x\in \mathcal{S}(0)$. 
\end{itemize}
\end{lemma}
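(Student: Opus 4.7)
The plan is to split on whether the translation group $V$ acts semiregularly on $\mathcal{L}$, using Lemma~\ref{lemma:affine}(iii) to handle the non-semiregular case and a counting argument for the semiregular case.

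First I would record a few preliminaries that come for free under the hypotheses. By Remark~\ref{remark:PLSrank3}, $\mathcal{S}$ is flag-transitive, so both the line-size $k$ and the point-size $\ell$ are well-defined, and $G$ (hence $V\trianglelefteq G$) is transitive on $\mathcal{L}$. Because $\mathcal{S}$ is proper and line-transitive we must have $k\geq 3$ (otherwise every line would have size $2$ and $\mathcal{S}$ would be a graph). By Lemma~\ref{lemma:rank3}, $G_0$ is transitive on $\mathcal{S}(0)$, so for any $x\in\mathcal{S}(0)$ we have $|x^{G_0}|=|\mathcal{S}(0)|=\ell(k-1)$, the last equality by Lemma~\ref{lemma:lines}.

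Next I would split on the action of $V$ on $\mathcal{L}$. Suppose first that $V$ does \emph{not} act semiregularly on $\mathcal{L}$. Then there exist $L\in\mathcal{L}$ and $x\in V^*$ with $L^{\tau_x}=L$. Since $k\geq 3$ and $G$ is transitive on $\mathcal{L}$, all hypotheses of Lemma~\ref{lemma:affine}(iii) are satisfied, and that lemma delivers conclusion~(i): every line of $\mathcal{S}$ is an affine subspace of $V$.

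Now suppose instead that $V$ acts semiregularly on $\mathcal{L}$. Then every $V$-orbit on $\mathcal{L}$ has size $|V|=p^d$, so $p^d$ divides $|\mathcal{L}|$. Flag counting (Lemma~\ref{lemma:lines}) gives $|\mathcal{L}|\,k=|V|\,\ell$, whence
\[
|\mathcal{L}|=\frac{p^d\,\ell}{k},
\]
and the divisibility $p^d\mid |\mathcal{L}|$ forces $k\mid \ell$. Multiplying by $k-1$ yields $k(k-1)\mid \ell(k-1)=|x^{G_0}|$, which is conclusion~(ii).

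There is no real obstacle here: the only subtlety is making sure the hypothesis $k\geq 3$ of Lemma~\ref{lemma:affine} is available (which is why ``proper'' plus line-transitivity is used), and recognising that $V$-orbits on $\mathcal{L}$ have a common size because $V$ is normal in the line-transitive group $G$, so that either all $V$-orbits on $\mathcal{L}$ have size $|V|$ (the semiregular case) or some have smaller size (giving a non-trivial translation stabilising a line).
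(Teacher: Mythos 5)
Your proof is correct and follows essentially the same route as the paper: the paper assumes (i) fails, deduces semiregularity of $V$ on $\mathcal{L}$ from Lemma~\ref{lemma:affine}(iii), and then applies the same counting via Lemma~\ref{lemma:lines}, whereas you merely reorganise this as a case split on semiregularity. The added detail on why $k\geq 3$ and why $k\mid\ell$ is accurate and fills in steps the paper leaves implicit.
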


\begin{proof}
Suppose that (i) does not hold. Now the translation group $V$ acts semiregularly on $\mathcal{L}$ by Lemma~\ref{lemma:affine}. Hence $|V|$ divides $ |\mathcal{L}|$, so (ii) holds by Lemma~\ref{lemma:lines} since $\mathcal{S}(0)=x^{G_0}$.
\end{proof}

\subsection{Nearfield planes}
\label{ss:nearfield}
In Remark~\ref{remark:PLSfromLinear}, we described how some of the partial linear spaces in Table~\ref{tab:main} can be obtained from affine planes using  rank~$3$ groups. In this section, we construct a particular family of these affine planes: the nearfield planes.

A \textit{nearfield} $(Q,+,\circ)$ is a set $Q$ with binary operations $+$ and $\circ$ such that the following four axioms hold. 
\begin{itemize}
\item[(N1)] $(Q,+)$ is an abelian group.  
\item[(N2)] $(a+b)\circ c=a\circ c + b\circ c$ for all $a,b,c\in Q$. 
\item[(N3)] $(Q\setminus \{0\},\circ)$ is a group. 
\item[(N4)]  $a\circ 0=0$ for all $a\in Q$. 
\end{itemize} 
By (N2), $0\circ b=0$ for all $b\in Q$. It then follows from (N2) that $(-a)\circ b=-(a\circ b)$ for all $a,b\in Q$.

A nearfield plane is constructed using a nearfield $(Q,+,\circ)$ as follows. For $w\in Q$, let $L(w):=\{(v,v\circ w): v\in Q\}$, and let $L(\infty):=\{(0,v) : v \in Q\}$. 
The \textit{nearfield plane} on $Q$ has point set $Q\times Q$ and line set   
$\{L(w)+x : w\in Q\cup \{\infty\},x\in Q\times Q\}$. Note that $(L(w),+)\simeq (Q,+)$ for $w\in Q\cup \{\infty\}$. Further, for each $(a,b)\in (Q\times Q)\setminus \{(0,0)\}$, there is a unique $w\in Q\cup \{\infty\}$ such that $(a,b)\in L(w)$  by  (N3). Thus the nearfield plane on $Q$ is an affine plane of order $|Q|$.

Next we describe how to construct a nearfield using a sharply $2$-transitive group. Let $V:=V_n(q)$ where $n\geq 1$ and $q$ is a power of a prime $p$. Let $R$ be a subgroup of $\GammaL_n(q)$ that is regular on $V^*=V\setminus \{0\}$, so $V{:}R$ acts sharply $2$-transitively  on $V$. Let $x\in V^*$. For $w\in V^*$, let $r_w$ denote the unique element of $R$ for which $w=x^{r_w}$, and for  $v\in V$, let $v\circ w:=v^{r_w}$ and $v\circ 0:=0$. If $a,b\in V$ and $c\in V^*$, then $a\circ c+b\circ c = a^{r_c}+b^{r_c}=(a+b)^{r_c}=(a+b)\circ  c$.
Further, $r_vr_w=r_{v\circ w}$ for all $v,w\in V^*$, so $(V^*,\circ)$ is a group (and isomorphic to $R$).  Thus $(V,+,\circ)$ is a nearfield.
 
  The possibilities for $R$ are given by  Zassenhaus's  classification~\cite{Zas1936}
 of the sharply $2$-transitive  groups (see~\cite[\S 7.6]{DixMor1996}): 
 either $n=1$, or $n=2$ and  $q=p\in\{5,7,11,23,29,59\}$, in which case  there are seven possibilities for $R$ (two of which occur for  $p=11$), and  the  corresponding nearfield plane is an \textit{irregular nearfield plane of order $p^2$}. See~\cite[\S 7]{Lun1980} for more details.
  Note that $\AG_2(q)$ is the nearfield plane   that arises by taking $R=\GL_1(q)$. The (non-Desarguesian) nearfield plane of order $9$ arises by taking $R= Q_8\leq \GammaL_1(9)$; see~\cite[\S 8]{Lun1980} for more details.

\subsection{Affine $2$-transitive groups and linear spaces}
\label{ss:2trans}

The classification of the $2$-transitive affine permutation groups was obtained  by Huppert~\cite{Hup1957} in the soluble case (see also~\cite{Fou1969}), and by Hering~\cite{Her1985trans} in general. Liebeck provides another proof of Hering's result in~\cite[Appendix~1]{Lie1987}. The statement we give is similar to~\cite{Lie1987}, except that we  add a description of the affine $2$-transitive subgroups of $\AGammaL_2(q)$ and $\AGammaL_3(q)$ for  the proof of  Theorem~\ref{thm:tensorgroups}.

\begin{thm}[\cite{Her1985trans,Lie1987}]
\label{thm:2trans}
Let $G$ be a $2$-transitive affine permutation group with socle $V:=V_d(p)$ where $d\geq 1$ and $p$ is prime. Then the stabiliser $G_0$ belongs to one of the following classes.
\begin{itemize}
\item[(H0)] $G_0\leq \GammaL_1(q)$ where $q=p^d$.
\item[(H1)] $\SL_n(q)\unlhd G_0$ where $q^n=p^d$, $n\geq 2$ and $(n,q)\neq (2,2)$ or $(2,3)$.
\item[(H2)] $\Sp_{2n}(q)\unlhd G_0$ where $q^{2n}=p^d$ and $n\geq 2$. 
\item[(H3)] $G_2(q)'\unlhd G_0$ where $q^6=p^d$ and $q$ is even. 
\item[(H4)] $\SL_2(5)\unlhd G_0\leq \GammaL_2(q)$ where  $q^2=p^d$ and  $q\in \{9,11,19,29,59\}$.
\item[(H5)] $G_0=A_6$ or $A_7$ and $p^d=2^4$.
\item[(H6)] $G_0=\SL_2(13)$ and $p^d=3^6$.
\item[(H7)] $\SL_2(3)\unlhd G_0$ where $d=2$  and $p\in \{3,5,7,11,23\}$. Here $\SL_2(3)$ is irreducible on $V$.
\item[(H8)] $D_8\circ Q_8\unlhd G_0$ where $p^d=3^4$. Here $D_8\circ Q_8$ is irreducible on $V$.
\end{itemize}
Further, if $G_0\leq \GammaL_m(r)$ where $r^m=p^d$ and $m=2$ or $3$, then  one of the following holds: $G_0$ belongs to  \emph{(H0)};  $G_0$ belongs to   \emph{(H1)} with $n=m$; or $m=2$ and $G_0$ belongs to \emph{(H4)} or \emph{(H7)}.
\end{thm}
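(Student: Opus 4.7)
The plan is to take the first part of the theorem (the existence of classes (H0)--(H8)) as a direct consequence of Hering's classification \cite{Her1985trans}, with Huppert's earlier work \cite{Hup1957} covering the soluble case and Liebeck's \cite[Appendix~1]{Lie1987} providing a uniform reformulation in the language we use. Since this is well documented and relies on CFSG, I would not reprove it here; the task is to extract the statement in the exact form above and then verify the additional constraint on subgroups of $\GammaL_m(r)$ with $m\in\{2,3\}$.

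For the additional claim, the strategy is a case-by-case analysis of (H0)--(H8) using the observation that any containment $G_0\leq \GammaL_m(r)$ with $r^m=p^d$ and $m\in\{2,3\}$ must be compatible with the $\mathbb{F}_p$-endomorphism algebra of the socle of $G_0$ acting on $V$. Concretely, since $G_0$ is transitive on $V^*$ it is irreducible, so by Schur's Lemma $C_{\GL_d(p)}(G_0)=\mathbb{F}_s^*$ for some $s=p^t$, and any $\mathbb{F}_r$-structure making $G_0$ semilinear is given by multiplication by an element of $\mathbb{F}_s$; in particular $\mathbb{F}_r\subseteq \mathbb{F}_s$ and $m=d/[\mathbb{F}_r:\mathbb{F}_p]$. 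Thus cases (H0), (H1) with $n=m$, (H4), and (H7) (only for $m=2$) clearly satisfy the conclusion, and for each remaining case one must show no admissible $(m,r)$ exists.

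The key eliminations proceed as follows. For (H1) with $n\neq m$: since the natural module of $\SL_n(q)$ is absolutely irreducible, its $\mathbb{F}_p$-endomorphism ring equals $\mathbb{F}_q$, forcing $\mathbb{F}_r\subseteq \mathbb{F}_q$ and $m=n\,[\mathbb{F}_q:\mathbb{F}_r]$; for $m\in\{2,3\}$ and $n\geq 2$ this forces $n=m$ and $r=q$. For (H2) with $n\geq 2$, the minimal dimension of a faithful irreducible module for $\Sp_{2n}(q)$ over any field of characteristic $p$ is $2n\geq 4$, which rules out $m\leq 3$; (H3) is similar, with the minimal faithful module dimension of $G_2(q)'$ being $6$ (or $7$). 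For (H5)--(H6), a quick calculation using the constraint $r^m=p^d$ leaves only a handful of candidate pairs $(m,r)$, each of which can be excluded using character-theoretic minimal-dimension bounds (e.g.\ the smallest faithful $\mathbb{F}_3$-representation of $\SL_2(13)$ has dimension $6$, forcing absolute irreducibility and $C_{\GL_6(3)}(\SL_2(13))=\mathbb{F}_3^*$) or by comparing orders ($|A_7|>|\GammaL_2(4)|$ and $A_6\not\simeq\GammaL_2(4)$ since the latter has a nontrivial normal subgroup). For (H8), the extraspecial group $D_8\circ Q_8$ is absolutely irreducible on $V_4(3)$, so $C_{\GL_4(3)}(D_8\circ Q_8)=\mathbb{F}_3^*$; this immediately rules out $D_8\circ Q_8\leq \GL_2(9)$, and a short argument on conjugacy of order-$4$ elements in $D_8\circ Q_8$ shows it does not normalise any $\langle J\rangle$ with $J^2=-1$ either, so $D_8\circ Q_8\not\leq \GammaL_2(9)$.

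The main obstacle is precisely the last class of eliminations in (H5)--(H8): although each individual check is routine, several require either a slightly delicate semilinearity argument (distinguishing $\GL_m(r)$ from $\GammaL_m(r)$) or an appeal to the absolute irreducibility of very specific small modules. In the writeup I would package these uniformly by the Schur-lemma/endomorphism-ring argument sketched above, so that the only case-specific input is the dimension (or order) bound that excludes each of (H2), (H3), (H5), (H6), (H8) in turn.
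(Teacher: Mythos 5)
Your route to the ``Further'' claim differs from the paper's (the paper takes $a$ minimal with $G_0\leq\GammaL_a(p^{d/a})$, observes that $a\in\{1,m\}$ is already settled by the main case division, and kills the one residual case $a=2$, $m=3$ by a single Schur's-lemma argument), and most of your case-by-case eliminations are fine. But your organising principle is stated incorrectly, and this becomes a genuine error in case (H8). The principle ``$C_{\GL_d(p)}(G_0)=\mathbb{F}_s^*$ and any $\mathbb{F}_r$ with $G_0\leq\GammaL_m(r)$ satisfies $\mathbb{F}_r\subseteq\mathbb{F}_s$'' is false: $G_0=\GammaL_1(p^d)$ has centraliser $\mathbb{F}_p^*$ yet is $\mathbb{F}_{p^d}$-semilinear. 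The containment $G_0\leq\GammaL_m(r)$ only forces $\mathbb{F}_r^*$ to centralise the subgroups of $G_0$ that land inside $\GL_m(r)$; since $\GammaL_m(r)/\GL_m(r)$ is cyclic, every \emph{perfect} normal subgroup does land there, which is why your arguments for (H1)--(H3), (H6) (and, via order counts, (H2) with $q=2$ and (H5)) can be repaired. You should make the perfectness step explicit.

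For (H8) the repair fails because $D_8\circ Q_8$ is not perfect, and in fact your claim that $D_8\circ Q_8\nleq\GammaL_2(9)$ is false. Writing $V_4(3)=V_2(3)\otimes V_2(3)$ with the second factor identified with $\mathbb{F}_9$, one has $Q_8\otimes\langle\zeta_9^2,\sigma_9\rangle\leq\GL_2(3)\otimes\GammaL_1(9)\leq\GammaL_2(9)$; here $\langle\zeta_9^2,\sigma_9\rangle\simeq D_8$, the group $Q_8\otimes D_8\simeq D_8\circ Q_8$ is absolutely irreducible on $V_4(3)$ (tensor product of absolutely irreducible modules), and it visibly normalises $\mathbb{F}_9^*=\langle 1\otimes\zeta_9\rangle$ -- indeed it even \emph{contains} the order-$4$ scalar $1\otimes\zeta_9^2$, contrary to your claimed ``conjugacy of order-$4$ elements'' obstruction. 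So excluding (H8) requires using the full $2$-transitive group $G_0$, not just $E:=D_8\circ Q_8$: since $|V_4(3)^*|=80$ and $C_{\GL_4(3)}(E)=\mathbb{F}_3^*\leq E$, transitivity forces $G_0/E$ to contain an element of order $5$ acting fixed-point-freely on $E/Z(E)\simeq 2^4$, which therefore cannot normalise the index-$2$ subgroup $E\cap\GL_2(9)$; hence $E\leq\GL_2(9)$, which the centraliser argument does rule out. (The paper sidesteps all of this by a {\sc Magma} computation showing that the $2$-transitive normalisers of $E$ inside $\GammaL_2(9)$ fall into class (H4).) Until the (H8) case is reargued along such lines, the proof of the ``Further'' statement is incomplete.
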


\begin{proof} Let $a$ be the minimal divisor of $d$ for which $G_0\leq \GammaL_a(p^{d/a})$. Let $s:=p^{d/a}$. If $a=1$, then (H0) holds, so we assume that $a\geq 2$. In particular, $(a,s)\neq (2,2)$ since  $\GL_2(2)=\GammaL_1(4)$. 
By~\cite[Appendix~1]{Lie1987}, one of the following holds: \textbf{(i)} $\SL_a(s)\unlhd G_0$; 
\textbf{(ii)} $\Sp_a(s)\unlhd G_0$; 
\textbf{(iii)} $G_2(s)'\unlhd G_0$ and $(a,p)=(6,2)$; 
\textbf{(iv)} $\SL_2(5)\unlhd G_0$, $a=2$ and $s\in \{9,11,19,29,59\}$; 
\textbf{(v)} $G_0=A_6$ or $A_7$ and $(a,s)=(4,2)$; 
\textbf{(vi)} $G_0=\SL_2(13)$ and $(a,s)=(6,3)$; \textbf{(vii)} $G_0$ normalises $E$ where $E\leq \GL_d(p)$ and either $E=Q_8$ and $p^d\in \{5^2,7^2,11^2,23^2\}$, or $E=D_8\circ Q_8$ and $p^d=3^4$. If (i) holds, then either (H1) holds with $n=a$, or  (H7) holds with $(a,s)=(2,3)$. If (ii) holds, then (H2) holds with $a=2n$ where $n\geq 2$ since $\Sp_2(q)=\SL_2(q)$. If 
one of  (iii), (iv), (v) or (vi) holds, then (H3), (H4), (H5) or (H6) holds, respectively. If (vii) holds with  $E=Q_8$, then $a=2$, and  (H7) holds by a computation in {\sc Magma}. Lastly, if (vii) holds and  $E=D_8\circ Q_8$, then by a computation in {\sc Magma}, either $a=2$ and (H4) holds, or $a=4$ and (H8) holds (cf. Remark~\ref{remark:2trans}).

Suppose that $G_0\leq \GammaL_m(r)$ where $r^m=p^d$ and $m=2$ or $3$. Then $a\leq m$.  If $a=1$ or $m$, then we are done by the observations above. Otherwise,  $a=2$ and $m=3$. 
Now $s^2=r^3$, so $s=t^3$ and $r=t^2$ where $t^6=p^d$. 
Further,  (H1) holds and $S:=\SL_2(t^3)\unlhd G_0$. Since $S$ is perfect,  $S\leq \GL_3(t^2)$, so
$C:=C_{\GL_6(t)}(S)$ contains both $Z:=Z(\GL_2(t^3))$ and $Z(\GL_3(t^2))$.  
However, $S$ is irreducible on $V=V_6(t)$, so by Schur's lemma (see~\cite[Lemma 1.5]{Isa1994}), $k:=\mathrm{Hom}_{\mathbb{F}_tS}(V,V)=C_{\End(V)}(S)$ is a  division ring.  Thus $k$ is a field by Wedderburn's theorem. Since $V$ is naturally a faithful $kS$-module, $t^6=|V|=|k|^i$ where $i\geq 2$, but $\mathbb{F}_{t^3}\subseteq k$, so $k=\mathbb{F}_{t^3}$ and $C=Z$, a contradiction. 
\end{proof}

Throughout this paper, we say that an affine $2$-transitive permutation group $G$ or its stabiliser $G_0$ belongs to (or lies in, is contained in, etc.) one of the classes (H0)--(H8) whenever $G_0$ satisfies the given description. Note that the groups in class (H7) are all soluble.

Next we state Kantor's classification~\cite{Kan1985} of the $2$-transitive affine linear spaces. 
Four exceptional linear spaces arise: the nearfield plane of order $9$ (see~\S\ref{ss:nearfield} or~\cite[\S8]{Lun1980}), the Hering plane of order $27$ (see~\cite[p.236]{Dem1997})  and two Hering spaces on $3^6$ points with line-size $9$~\cite{Her1985}.

\begin{thm}[\cite{Kan1985}]
\label{thm:Kantor}
Let $G$ be a $2$-transitive affine permutation group with socle $V:=V_d(p)$ where $d\geq 1$ and $p$ is  prime. Then $\mathcal{S}$ is a $G$-affine linear space with  line-size at least $3$ if and only if one of the following holds.
\begin{itemize}
\item[(i)] $\mathcal{S}$ has line set $\{V\}$ and $p^d\neq 2$.
\item[(ii)] $G_0\leq \GammaL_m(q)$ where $q^m=p^d$ and $\mathcal{S}$ has line set $\{\langle u\rangle_F +v: u\in V^*,v\in V\}$, where $F$ is a subfield of $\mathbb{F}_q$ and $|F|\neq 2$.
\item[(iii)] $G_0\leq N_{\GL_4(3)}(D_8\circ Q_8)$ where $p^d=3^4$ and one of the following holds, where $\mathcal{N}$ is the nearfield plane of order $9$.
\begin{itemize}
\item[(a)] $D_8\circ Q_8\unlhd G_0$ and $\mathcal{S}=\mathcal{N}$.
\item[(b)] $G_0=\SL_2(5)\langle \zeta_9\sigma_9\rangle\leq \GammaL_2(9)$  and   $\mathcal{S}=\mathcal{N}$ or $\mathcal{N}^{z}$ where $z:=\zeta_9^2$.
\end{itemize}
\item[(iv)] $G_0=\SL_2(13)$ where $p^d=3^6$ and $\mathcal{S}$ is the Hering plane of order $27$ or either  of the two Hering spaces with line-size $9$.
\end{itemize}
Moreover, if $G_0$ lies in~\emph{(H0)}--\emph{(H3)}, then (i) or (ii) holds, and if $G_0$ lies in~\emph{(H5)}, then (i) holds. 
\end{thm}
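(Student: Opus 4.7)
My plan is to derive the statement from Kantor's classification of $2$-transitive linear spaces~\cite{Kan1985}, together with Theorem~\ref{thm:2trans}. I will first verify the ``if'' direction by inspection of the listed examples, then invoke Kantor's theorem to establish the ``only if'' direction, and finally carry out a case-by-case check to establish the ``moreover'' clause.

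For the ``if'' direction, (i) is trivial: $(V,\{V\})$ is a linear space with $p^d\geq 3$ points on its unique line and is preserved by every group on $V$. For~(ii), I will observe that every element of $G_0\leq\GammaL_m(q)$ is $\mathbb{F}_q$-semilinear and hence $F$-semilinear for any subfield $F\leq\mathbb{F}_q$; therefore $G$ permutes the $1$-dimensional $F$-subspaces of $V$ (viewed as an $F$-vector space) together with their translates, which yields an affine space of line-size $|F|\geq 3$ in the sense of \S\ref{ss:basicsplane}. For (iii) and (iv), I will directly check that the nearfield plane of order $9$ (see~\S\ref{ss:nearfield} and~\cite[\S 8]{Lun1980}) admits $V{:}(D_8\circ Q_8)$, that the listed image $\mathcal{N}^z$ is preserved by $V{:}\SL_2(5)\langle\zeta_9\sigma_9\rangle$, and that the Hering plane of order $27$ (\cite[p.\,236]{Dem1997}) and the two Hering spaces of line-size $9$~\cite{Her1985} admit $V{:}\SL_2(13)$.

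For the ``only if'' direction, let $\mathcal{S}=(V,\mathcal{L})$ be a $G$-affine linear space with line-size $k\geq 3$. Since $G$ is $2$-transitive on $V$, the line set is $\mathcal{L}=L_0^G$, where $L_0$ is the unique line through $0$ and any chosen non-zero point. Kantor's theorem~\cite{Kan1985}, restricted to affine $2$-transitive groups, yields exactly the list~(i)--(iv): the trivial linear space, the affine spaces over subfields, and the small number of exceptional examples on $3^4$ and $3^6$ points. Kantor's argument proceeds case-by-case through the Hering classes of Theorem~\ref{thm:2trans}, using CFSG to exclude all further possibilities; I will simply invoke this.

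For the ``moreover'' clause, I will show that (iii) and (iv) cannot occur for $G_0$ in (H0)--(H3), and that (ii) cannot occur for $G_0$ in~(H5). The exceptions in~(iii) sit on $V_4(3)$ and force $G_0$ to normalise $D_8\circ Q_8$ or to contain $\SL_2(5)$, placing $G_0$ in class~(H8) or~(H4) by Theorem~\ref{thm:2trans}; the exceptions in~(iv) live on $V_6(3)$ and force $G_0=\SL_2(13)$, which is class~(H6). In each case an order-and-structure comparison rules out compatibility with (H0)--(H3). For~(H5), where $G_0=A_6$ or $A_7$ and $p^d=2^4$, the exceptional socles $3^4$ and $3^6$ do not arise, and since $A_6, A_7$ do not embed in $\GammaL_1(16)$ (of order $60$) nor in $\GammaL_2(4)\cong S_5$, case~(ii) with $|F|\geq 3$ is impossible and only~(i) survives. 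The main obstacle is the reduction in the ``only if'' direction, which rests on Kantor's deep classification; the remaining steps are routine matching of the Hering classes against the list~(i)--(iv).
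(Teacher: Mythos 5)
Your overall strategy is the same as the paper's: the ``if'' direction is a direct verification, the ``only if'' direction is an appeal to Kantor's classification~\cite{Kan1985}, and the ``moreover'' clause is a routine matching against the Hering classes of Theorem~\ref{thm:2trans}. The paper's proof is exactly this, except that for cases (iii) and (iv) it supplements Kantor with a {\sc Magma} computation (using Lemmas~\ref{lemma:necessary} and~\ref{lemma:affine}), and that computation covers the one place where your proposal has a genuine gap. Kantor's theorem identifies the exceptional linear spaces only up to isomorphism, whereas the statement you are proving pins down the actual line sets on the fixed vector space $V$ and matches them to specific stabilisers: e.g.\ (iii)(b) asserts that for $G_0=\SL_2(5)\langle\zeta_9\sigma_9\rangle$ the \emph{only} $G$-affine copies of the nearfield plane are $\mathcal{N}$ and $\mathcal{N}^{\zeta_9^2}$ (and no other conjugate $\mathcal{N}^g$), while for $D_8\circ Q_8\unlhd G_0$ only $\mathcal{N}$ itself occurs. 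Your direct checks establish the ``if'' direction for these concrete examples, but citing Kantor alone does not rule out further conjugates being preserved; one must either compute (as the paper does) or argue via the stabiliser $N_{\GL_4(3)}(D_8\circ Q_8)$ of $\mathcal{N}$ in $\GL_4(3)$ (cf.\ Remark~\ref{remark:2trans}) that the set of $G_0$-invariant conjugates is exactly as listed. The same issue arises, more mildly, in (iv). Finally, a small factual slip: $\GammaL_2(4)$ has order $360$ and is not isomorphic to $S_5$ (you may be thinking of $\Sigma L_2(4)=\SL_2(4){:}2\simeq S_5$ or of $\PGammaL_2(4)$); your conclusion for (H5) still stands, since $A_6$ is simple of order $360$ while $\GammaL_2(4)$ has a normal subgroup of order $3$, and $|A_7|>360$.
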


\begin{proof}
This follows from~\cite[\S 4]{Kan1985} and, in cases (iii) and (iv), a computation in {\sc Magma} using Lemmas~\ref{lemma:necessary} and~\ref{lemma:affine}. 
\end{proof}

\begin{remark}
\label{remark:2trans}
 The automorphism group of the nearfield plane of order $9$ is $V_4(3){:}N_0$   where $N_0:=N_{\GL_4(3)}(D_8\circ Q_8)=(D_8\circ Q_8).S_5=D_8\circ Q_8\circ (\SL_2(5)\langle \zeta_9\sigma_9\rangle)$ with $\SL_2(5)\leq \SL_2(9)$. Further, if $G_0\leq N_0$  such that $G_0$ is transitive on $V_4(3)^*$, then either $D_8\circ Q_8\unlhd G_0$, or $G_0=\SL_2(5)\langle \zeta_9\sigma_9\rangle$. Note that $\SL_2(5)\langle \zeta_9\sigma_9\rangle\simeq 2\nonsplit S_5^-$ (see \S\ref{ss:basicsgroups} for the definition of $2\nonsplit S_5^-$). 
\end{remark}

\subsection{Proof of Theorem~\ref{thm:primrank3plus}}
\label{ss:primrank3plus}

First we state the well-known structure theorem for primitive permutation groups of rank~$3$, which follows from the O'Nan-Scott Theorem (see~\cite{LiePraSax1988,Lie1987}). The groups in (ii) are said to be of \textit{grid type}.
 
\begin{prop}
\label{prop:primrank3}
For  a primitive permutation group $G$ of rank~$3$ with degree $n$, one of the following holds.
\begin{itemize}
\item[(i)] $G$ is almost simple.
\item[(ii)] $T\times T\unlhd G\leq K\wr S_2$ and $n=m^2$, where $K$ is a $2$-transitive group  of degree $m$ that is almost simple with socle $T$, and $G$ has subdegrees $1$, $2(m-1)$ and $(m-1)^2$.
\item[(iii)] $G$ is affine.
\end{itemize}
\end{prop}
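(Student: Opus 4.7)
The plan is to deduce Proposition~\ref{prop:primrank3} from the O'Nan--Scott theorem by imposing the rank~$3$ hypothesis on each possible O'Nan--Scott type. Since the proposition is stated with references to~\cite{LiePraSax1988,Lie1987}, the approach is to cite the O'Nan--Scott dichotomy and then carry out the  rank counting in each non-affine, non-almost-simple case.

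First I would recall that, by O'Nan--Scott, the primitive permutation groups of finite degree fall into five broad families: affine (HA), almost simple (AS), simple diagonal (SD) or its holomorph variant (HS), product-action (PA) together with the compound diagonal cousins (CD, HC), and twisted wreath (TW). The types HA and AS immediately supply outcomes (iii) and (i) respectively, so no further work is required there. All the remaining effort is to show that rank~$3$ forces the product-action case, with the two factors isomorphic and $2$-transitive, and excludes all the diagonal and twisted wreath possibilities.

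Next I would handle the product action type. Here $G \leq K \wr S_r$ acts on $\Delta^r$ with $K$ primitive on $\Delta$ of degree $m \geq 5$ and $r \geq 2$. Fix a base point $x = (x_0,\ldots,x_0)$. The $G_x$-orbits on $\Delta^r$ refine according to the ``support pattern'' of a point (the set of coordinates where it differs from $x_0$) and, within each such pattern, according to the orbits of $K_{x_0}$ on $\Delta \setminus \{x_0\}$. A straightforward count shows that the number of orbits is at least the number of subdegrees of $K$ on $\Delta$, plus one for each additional ``support size'' beyond the singleton. Demanding exactly three orbits then pins down $r = 2$ and forces $K_{x_0}$ to be transitive on $\Delta \setminus \{x_0\}$, i.e.\ $K$ must be $2$-transitive on $\Delta$. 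The subdegree computation $1, 2(m-1), (m-1)^2$ of statement~(ii) then falls out of the pattern analysis. Since $K$ is $2$-transitive, its socle $T$ is either elementary abelian (putting $G$ into a larger affine primitive group, hence into HA, already covered) or non-abelian simple, giving precisely the statement of (ii) with $T \times T \unlhd G \leq K \wr S_2$.

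Finally I would rule out the remaining types. In the simple diagonal type with socle $T^k$ ($k \geq 2$), the subdegrees are governed by the conjugacy of elements of $T$ under $\operatorname{Out}(T)$, and a short argument (already in \cite[\S 3]{LiePraSax1988}) shows the number of orbits of a point stabiliser always exceeds~$3$ when $k \geq 2$, unless one degenerates into the PA/grid case already captured. The holomorph and twisted wreath types are similarly excluded because a point stabiliser contains a full diagonal copy of the socle, forcing too many suborbits. Putting all pieces together yields (i), (ii) or (iii). The main obstacle in this outline is the case analysis for SD/HS/TW, which is notationally heavy but entirely routine given the subdegree formulas in~\cite{LiePraSax1988}; everything else reduces to a clean orbit-counting argument in the wreath product $K \wr S_r$.
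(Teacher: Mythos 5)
Your outline is correct and is exactly the route the paper takes: the paper gives no proof of Proposition~\ref{prop:primrank3}, stating only that it follows from the O'Nan--Scott theorem and citing \cite{LiePraSax1988,Lie1987}, which is precisely the case analysis you sketch (affine and almost simple types are immediate, the product-action orbit count forces $r=2$ with a $2$-transitive almost simple component, and the diagonal-type and twisted wreath cases have too many suborbits). The only quibble is that your stated reason for excluding the twisted wreath type is loosely worded---there the socle is regular, so the point stabiliser is a complement to it rather than containing a diagonal copy of it---but the conclusion that such groups have far more than three suborbits is still correct.
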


 We will  use Proposition~\ref{prop:primrank3} to determine the  automorphism groups of certain affine  proper partial linear spaces, together with the following result.
 
 \begin{thm}[\cite{Gur1983}]
 \label{thm:Guralnick}
 Let $T$ be a non-abelian simple group that is transitive of degree $p^d$ for some prime $p$ and positive integer $d$. Then either $T$ is $2$-transitive, or $T\simeq \PSU_4(2)$ and 
 $p^d=27$, in which case $T$ is primitive of rank $3$ with subdegrees $1$, $10$ and $16$. 
 \end{thm}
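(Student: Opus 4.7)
The plan is to apply the Classification of Finite Simple Groups (CFSG) together with a case-by-case analysis of subgroups of prime-power index in each non-abelian finite simple group. Writing $H := T_\omega$ for a point stabiliser gives $[T:H] = p^d$; one may reduce to the case that $H$ is maximal in $T$ (so that the action is primitive) by passing to a block system and inducting on the degree, since $T$ is simple and so acts faithfully on any non-trivial block system. The task then becomes: classify pairs $(T,M)$ with $T$ a non-abelian finite simple group and $M$ a maximal subgroup of prime-power index, and for each such pair decide whether the coset action of $T$ on $T/M$ is $2$-transitive.

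I would proceed family by family through CFSG. For $T = A_n$ with $n \geq 5$, the only maximal subgroups of prime-power index are the intransitive ones $M \cong A_{n-1}$, yielding the natural $2$-transitive action when $n = p^d$: imprimitive maximals $(S_a \wr S_b) \cap A_n$ with $ab = n$ have indices divisible by at least two primes (via multinomial coefficients), and the primitive maximals are ruled out by Bochert's bound $|M| \leq 4^n$ combined with $[A_n:M] = p^d$. For $T$ of Lie type in defining characteristic $r$ with $p = r$, the hypothesis forces $M$ to contain a Sylow $r$-subgroup and hence to be a maximal parabolic; the standard index formulas then yield only $\PSL_n(q)$ on $\PG_{n-1}(q)$ and a short list of further configurations (for instance $\Sp_{2n}(2)$ on a quadric, $\PSU_3(q)$ on a unital, $G_2(q)'$ with $q$ even on the hexagon, and the Suzuki and Ree groups on their ovoids), each of which is $2$-transitive. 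In cross characteristic $p \neq r$, Zsigmondy-prime arguments combined with Aschbacher's theorem on maximal subgroups of classical groups eliminate every configuration except $T \simeq \PSU_4(2) \simeq \PSp_4(3)$ with $[T:M] = 27$ and $|M| = 960$. The exceptional Lie type and sporadic families are then dispatched using the Liebeck--Seitz classification and the Atlas respectively: the only remaining prime-power indices correspond to well-known $2$-transitive actions such as $M_{11}$ on $11$ points, $M_{23}$ on $23$ points, and $\PSL_2(11)$ on $11$ points via $M \cong A_5$.

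To finish the $\PSU_4(2)$ case one must verify primitivity and the subdegrees $1, 10, 16$. I would identify $T/M$ with the $27$ lines of a smooth cubic surface (equivalently, with the vertex set of the Schl\"afli graph): the relation \emph{``intersects in a point''} partitions the remaining $26$ lines into an orbit of size $16$ (those meeting the distinguished line) and an orbit of size $10$ (those skew to it), matching the strongly-regular parameters $(27, 16, 10, 8)$ of the Schl\"afli graph. The main obstacle is the cross-characteristic analysis for the classical Lie type groups, where the interaction of Zsigmondy primes with Aschbacher's subgroup theorem must be controlled delicately in order to isolate the unique $\PSU_4(2)$ exception; by contrast, the defining-characteristic, alternating and sporadic cases reduce to relatively routine consultation of existing classifications.
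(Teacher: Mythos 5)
The paper does not prove this statement; it is quoted verbatim from Guralnick's classification of subgroups of prime-power index in finite simple groups \cite{Gur1983}, so there is no internal proof to compare against. Judged on its own terms, your sketch follows the broad shape of Guralnick's argument (reduce to maximal subgroups, then run through the CFSG families), but it contains a step that would fail as written. You assert that in defining characteristic $r=p$ the hypothesis ``forces $M$ to contain a Sylow $r$-subgroup and hence to be a maximal parabolic.'' This is exactly backwards: if $[T:M]=p^d$ with $p=r$, then the index is divisible by $r$, so $M$ does \emph{not} contain a Sylow $r$-subgroup and Borel--Tits does not apply. It is in \emph{cross} characteristic $p\neq r$ that $M$ contains a Sylow $r$-subgroup and is therefore parabolic; indeed the exceptional case $\PSU_4(2)$ of index $27=3^3$ is precisely a parabolic (the stabiliser of a totally isotropic $2$-space) in characteristic $2$, and the $\PSL_n(q)$ actions on points and hyperplanes arise the same way. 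The defining-characteristic case, which your framework would wrongly dispose of, is where $M$ contains a full Sylow $q$-subgroup for every $q\neq p$ and requires a genuinely different argument; it produces the example $\PSL_2(11)\supset A_5$ of index $11$. Because that action happens to be $2$-transitive the final statement survives, but the proof as structured has a hole through which a counterexample could in principle pass. Relatedly, your list of ``further configurations'' (symplectic groups on quadrics, unitals, generalized hexagons, Suzuki/Ree ovoids) is spurious: none of these have prime-power degree, and the hexagon actions are not even $2$-transitive, so labelling them all as $2$-transitive is a second error that would matter if any of them did occur.

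Two smaller points. The reduction to maximal subgroups by ``inducting on the degree'' is not quite right as stated: if $H<M<T$ with $M$ maximal, the induction must analyse subgroups of $p$-power index in the block stabiliser $M$, which is not simple, so one cannot simply reapply the theorem to a smaller degree; Guralnick handles this by a separate check on each maximal $M$ in his list. And in the $\PSU_4(2)$ verification you have the two suborbits mislabelled: a line of a smooth cubic surface meets exactly $10$ of the other $26$ lines and is skew to $16$ (the Schl\"afli graph, of valency $16$, is the \emph{non}-intersection graph), though the multiset of subdegrees $\{1,10,16\}$ of course comes out the same.
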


Any non-trivial normal subgroup of a primitive group is transitive, so if $G$ is an almost simple primitive  group whose degree is a prime power not equal to $27$, then $G$ is $2$-transitive by Theorem~\ref{thm:Guralnick}. 

\begin{proof}[Proof of Theorem~\emph{\ref{thm:primrank3plus}}]
Let $H:=\Aut(\mathcal{S})$.
 Since $\mathcal{S}$ is proper and $G$ is a primitive permutation group of rank~$3$ on $V$, Remark~\ref{remark:PLSrank3} implies that $H$ is a primitive permutation group of rank~$3$ on $V$. Note that $H$ has degree $p^d$. If $H$ is almost simple with socle $T$, then $T$ is transitive of degree $p^d$.  Since $H$ is not $2$-transitive, $T\simeq \PSU_4(2)$ and $p^d=27$ by Theorem~\ref{thm:Guralnick}, but then $H$ has no affine primitive subgroups by a computation in~{\sc Magma}. (This also  
 follows from \cite[Proposition~5.1]{Pra1990}, except that
 the one possibility described for $G$ in \cite[Table 2]{Pra1990} is  not primitive as claimed, and this  is  corrected in~\cite[Proposition~2.1]{PraSax1992}.)
If $H$ is affine, then by~\cite[Proposition~5.1]{Pra1990}, the socle of $G$ is equal to the socle of $H$, so (ii) holds. Otherwise, by Proposition~\ref{prop:primrank3}, $T\times T\unlhd H\leq K\wr S_2$ and $d=2n$, where $K$ is a $2$-transitive group of degree $p^n$ that is almost simple with socle $T$. If  $T=A_{p^n}$, then  it is routine to verify that
 (i) holds (see~\cite{Dev2008} for details). If $T=\PSU_4(2)$ and $p^n=27$, then the order of $K$ is not divisible by $p^n-1$, so $K$ is not $2$-transitive, a contradiction. Thus, by~\cite[Proposition~5.1]{Pra1990}, $K$ has degree $p$ and is listed in~\cite[Table 2]{Pra1990}.

Since  $G$ is an affine primitive subgroup of $K\wr S_2$  with rank $3$, the subgroup $G\cap (K\times K)$ has index $2$ in $G$, and the projection $G^1$ of $G\cap (K\times K)$ onto the first coordinate of $K\times K$ is an affine $2$-transitive subgroup of $K$. Since $G^1$ has degree $p$, it is therefore  isomorphic to $\AGL_1(p)$. In particular, $G^1$ has order $p(p-1)$.
 By~\cite[Table 2]{Pra1990}, no such subgroup of $K$  exists when $(K,p)$ is one of $(\PSL_2(11),11)$, $(M_{11},11)$ or $(M_{23},23)$, so  $T=\PSL_m(q)$ and $K\leq \PGammaL_m(q)$ for some prime power $q$ and $m\geq 2$ such that $p=(q^m-1)/(q-1)$, and $G^1$ is a subgroup of the normaliser of a Singer cycle.  Now $q=r^f$ for some prime $r$, and  it follows from~\cite[p.187]{Hup1967} that the normaliser of a Singer cycle in $\PGammaL_m(q)$ has order $(q^m-1)/(q-1)fm=pfm$. Since $G^1$ has order $p(p-1)$, we conclude that $p-1$ divides $fm$. In particular,  
 $$fm\geq p-1=(q^m-1)/(q-1)-1\geq q^{m-1}\geq 2^{f(m-1)}\geq 2f(m-1).$$
 Thus $m=2$, and all of the inequalities above are equalities. Since $q\neq 2$ (or else $T$ is soluble), we  conclude that $q=4$.  Hence $T\simeq A_5$ and $p=5$, and as we saw above, (i) holds. \end{proof}

\section{The affine primitive permutation groups of rank~$3$}
\label{s:rank3}

In this section, we state and prove a modified  version of Liebeck's classification~\cite{Lie1987} of the affine primitive permutation groups of rank~$3$.  In particular, we provide more details about the rank~$3$ groups $G$ when  $G_0$  is  imprimitive or stabilises a  tensor product decomposition. Throughout this section, recall the definition of $\zeta_q$ and $\sigma_q$ from~\S\ref{ss:basicsvs}.

\begin{thm}[\cite{Lie1987}]
\label{thm:rank3}
Let $G$ be a finite affine primitive permutation group of rank~$3$ with socle $V:=V_d(p)$  where $d\geq 1$ and $p$ is prime. Then   $G_0$ belongs to one of the following classes.
\begin{itemize}
\item[(T)] The tensor product classes. Here $G_0$ stabilises a decomposition $V=V_2(q)\otimes V_m(q)$  where $q^{2m}=p^d$ and $m\geq 2$, and one of the following holds.
\begin{itemize}
\item[(1)] $\SL_2(q)\otimes \SL_m(q)\unlhd G_0$ where $q\geq 4$ or $(m,q)=(2,3)$.
\item[(2)] $\SL_2(5) \otimes \SL_m(q)\unlhd G_0$ where $q\in \{9,11,19,29,59\}$.  
\item[(3)] $1\otimes \SL_m(q)\unlhd G_0\leq N_{\GL_2(q)}(\SL_2(3))\otimes \GL_m(q)$ where $q\in \{3,5,7,11,23\}$   but $G_0\nleq \GammaL_1(q^2)\otimes \GL_m(q)$. Further,  $-1\otimes g\in G_0$ for some $g\in \GL_m(q)$.
\end{itemize}
\item[(S)] The subfield classes. Here  $q^{2n}=p^d$ and one of the following holds.
\begin{itemize}
\item[(0)] $V=V_2(q^n)$ and $\SL_2(q)\unlhd G_0\leq \GammaL_2(q^n)$ where $n=3$.
\item[(1)] $V=V_n(q^2)$ and $\SL_n(q)\unlhd G_0\leq \GammaL_n(q^2)$ where  $n\geq 2$.
\item[(2)] $V=V_n(q^2)$ and $A_7\unlhd  G_0 \leq \GammaL_n(q^2)$  where  $A_7\leq \SL_4(2)\simeq A_8$ and  $(n,q)=(4,2)$.
\end{itemize}

\item[(I)] The imprimitive classes.  Here $G_0$ stabilises a decomposition $V=V_n(p)\oplus V_n(p)$ where $2n=d$. 
Further, 
$G_0\leq \GammaL_m(q)\wr  S_2$ where $q^m=p^n$  and one of the following holds. 
\begin{itemize}
\item[(0)] $G_0\leq \GammaL_1(q)\wr S_2$ and  $m=1$. 
\item[(1)] $\SL_m(q)\times \SL_m(q)\unlhd G_0$ where $m\geq 2$ and $(m,q)\neq (2,2),(2,3)$.
\item[(2)] $\Sp_m(q) \times \Sp_m(q) \unlhd G_0$ where $m$ is even and $m\geq 4$.
\item[(3)] $G_2(q)'\times G_2(q)' \unlhd G_0$ where $m=6$ and $q$ is even.
\item[(4)] $S\times S\unlhd G_0 \leq N_{\GammaL_m(q)}(S)\wr S_2$ where $S=\SL_2(5)$, $m=2$ and $q\in \{9,11,19,29,59\}$.
\item[(5)] $G_0=A_6\wr S_2$ or $A_7\wr S_2$ where $m=4$ and $q=2$.
\item[(6)] $G_0=\SL_2(13)\wr S_2$ where $m=6$ and $q=3$.
\item[(7)] $G_0\leq N_{\GL_m(q)}(\SL_2(3))\wr S_2$ where   $m=2$ and $q\in \{3,5,7,11,23\}$.
\item[(8)] $E\times E\unlhd G_0\leq N_{\GL_m(q)}(E)\wr S_2$ where $E=D_8\circ Q_8$, $m=4$ and $q=3$.
\end{itemize}
\item[(R)] The remaining  infinite classes.
\begin{itemize}
\item[(0)] $G_0\leq \GammaL_1(p^d)$.
\item[(1)] $\SU_n(q)\unlhd G_0$ where $q^{2n}=p^d$ and $n\geq 3$.
\item[(2)] $\Omega_{2m}^\varepsilon(q)\unlhd G_0$ where $q^{2m}=p^d$ and either $m\geq 3$ and $\varepsilon=\pm$, or $m=2$ and $\varepsilon=-$.
\item[(3)] $V=\bigwedge^2(V_5(q))$ and $\SL_5(q)\unlhd G_0$ where $q^{10}=p^d$.
\item[(4)] $\Sz(q)={}^2B_2(q)\unlhd G_0$ where   $q^4=p^d$ and  $q=2^{2n+1}$ where  $n\geq 1$.
\item[(5)] $V=V_n(q)$ is a spin module and $\spin_m^\varepsilon(q)\unlhd G_0$
where $q^n=p^d$ and $(m,n,\varepsilon)$ is one of $(7,8,\circ)$ or $(10,16,+)$.
\end{itemize}
\item[(E)] The extraspecial class. Here $E\unlhd G_0$  and $E$ is irreducible on $V_d(p)$ where  $(E,p^d,G)$ is given by Table~\emph{\ref{tab:E}}. 
\item[(AS)] The almost simple class. Here $S\unlhd G_0\leq \GammaL_n(q)$   and $S$ is irreducible on $V_d(p)$ where   $(S,p^d,G)$  is   given by  Table~\emph{\ref{tab:AS}},   $q^n=p^d$, and    $q$ is  given by the embedding of $S$ in $\SL_n(q)$. 
\end{itemize}
\end{thm}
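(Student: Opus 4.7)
The plan is to take Liebeck's classification \cite{Lie1987} as the starting point and then refine the statements for classes (T) and (I); the descriptions of classes (S), (R), (E), and (AS) match Liebeck's up to notation and require no further work, so the bulk of the effort goes into giving precise subgroup-structure statements for the tensor product and imprimitive cases.

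For the imprimitive class (I), the group $G_0$ stabilises a decomposition $V = V_n(p) \oplus V_n(p)$. Writing $K := G_0 \cap (\GL_n(p) \times \GL_n(p))$, which has index $2$ in $G_0$, one of the two $G_0$-orbits on $V^*$ is $(V_n(p)^* \times \{0\}) \cup (\{0\} \times V_n(p)^*)$. The projection of $K$ to either $\GL_n(p)$ factor is therefore transitive on $V_n(p)^*$, and I would argue further that it must be an affine $2$-transitive group, using the rank-$3$ structure on the off-diagonal orbit. Plugging this projection into Theorem \ref{thm:2trans} produces the nine subcases (I)(0)--(I)(8), with the common embedding $G_0 \leq \GammaL_m(q) \wr S_2$ coming from the shared semilinear structure on both summands together with the swap element in $G_0 \setminus K$.

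For the tensor product class (T), $V = V_2(q) \otimes V_m(q)$ is preserved by $G_0$, with stabiliser contained in $(\GL_2(q) \otimes \GL_m(q)){:}\Aut(\mathbb{F}_q)$. The rank-$3$ hypothesis separates the set of simple tensors from its complement inside $V^*$ and forces the induced action on the $\GL_2(q)$-factor to be $2$-transitive. The final clause of Theorem \ref{thm:2trans} then restricts the possible Hering classes for this projection to (H0), (H1) with $n=2$, (H4), and (H7), which correspond respectively to (T)(3) via the $\GammaL_1(q^2)$ exclusion, (T)(1), (T)(2), and (T)(3). The requirement that $-1\otimes g \in G_0$ in (T)(3) then follows from the transitivity of $G_0$ on $V^*$ together with the structure of $N_{\GL_2(q)}(\SL_2(3))$.

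The main obstacle will be the careful handling of small cases and overlaps among classes, in particular the exclusions $(m,q) \neq (2,2),(2,3)$ in (I)(1) and the boundary conditions in (T)(3) and (I)(7) which involve the soluble group $\SL_2(3)$ and various semilinear extensions. These corner cases are easy to mis-state or double-count, and I would verify each with a direct computation (using \textsc{Magma} in the smallest dimensions) to confirm that the stated subgroup lattice agrees with Liebeck's original list.
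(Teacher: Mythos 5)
Your overall strategy is the same as the paper's: start from Liebeck's classification and refine the tensor-product and imprimitive cases by analysing the projections of $G_0$ and feeding them into Theorem~\ref{thm:2trans}. However, there is a genuine gap at the heart of both refinements. Knowing that the projection of $G_0\cap(\GL_n(p)\times\GL_n(p))$ onto a factor is a $2$-transitive affine group of type (H$i$) does \emph{not} by itself yield $N\times N\unlhd G_0$ (class (I)) or $S\otimes T\unlhd G_0$ (class (T)): a diagonal-type subgroup $\{(g,g^\alpha)\}$ has full projections but contains no such product. The paper closes this gap by showing that the kernels of the projections are non-central, and most of the length of Theorems~\ref{thm:tensorgroups} and~\ref{thm:A2groups} is devoted to exactly this: transitivity on the large orbit forces $(q^m-1)^2$ (resp.\ the non-simple-tensor orbit length) to divide $|G_0|$, a Zsigmondy-prime argument then rules out central kernels except in a handful of small cases, and the residual diagonal possibility (e.g.\ $\{(g,g^\iota)\}\le\SL_6(2)\times\SL_6(2)\}$) is eliminated by an explicit orbit computation. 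Once the kernel is known to be non-central, one still needs the fact that the socle of the almost simple quotient must lie inside it. Your proposal asserts the subcase list directly from the type of the projection and never confronts this.

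A second, smaller problem is your stated correspondence in class (T): you route the (H0) case of the $\GL_2(q)$-projection to (T3) ``via the $\GammaL_1(q^2)$ exclusion'', but (T3) explicitly excludes $G_0\le\GammaL_1(q^2)\otimes\GL_m(q)$. In the paper this case becomes class (T5) of Theorem~\ref{thm:tensorgroups} and is then reinterpreted as the subfield class (S1) (similarly (T4) becomes (S2) and (T6) becomes (S0), and the soluble-$W$-projection cases with $m\le 3$ must also be tracked). Your claim that classes (S), (R), (E) and (AS) ``require no further work'' therefore undersells what is needed: part of (S) is generated by the tensor analysis itself, the extraspecial normaliser case of Liebeck has to be redistributed among (E), (R0), (R1), (S1), (I), (T) and (AS) by explicit computation, and the known oversights in Liebeck's statement (the $Z(G_0)$ versus $G_0\cap\mathbb{F}_q^*$ issue, the $(A_7,2^8)$ case, and the $A_9$ subdegrees) must be corrected before his list can be used as a black box.
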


 \begin{table}[!h]
\renewcommand{\baselinestretch}{1.1}\selectfont
\centering
\begin{tabular}{ c c c c    }
\hline
 $E$ &  $p^d$ & Subdegrees of $G$  & Conditions  \\
\hline
&$7^2$  & $24,24$  &  $G_0\nleq \GammaL_1(7^2)$\\
&$13^2$  & $72,96$  & \\
&$17^2$  & $96,192$  & \\
$Q_8$ &$19^2$  & $144,216$  &\\
&$23^2$  & $264,264$  &\\
&$29^2$ & $168,672$  &\\
&$31^2$ & $240,720$  &\\
&$47^2$ & $1104,1104$  &\\
\hline 
 $3^{1+2}$ & $2^6$ & $27,36$  &  $\SU_3(2)\not\leq G_0\leq \GammaU_3(2)$ \\
 $D_8\circ Q_8$ & $5^4$ & $240,384$  &    \\
 $D_8\circ Q_8\circ \langle \zeta_5\rangle$ & $5^4$ & $240,384$    & $G_0\nleq N_{\GL_4(5)}(D_8\circ Q_8)$ \\
 $D_8\circ Q_8$ & $7^4$ & $480,1920$   &  \\
 $Q_8\circ Q_8\circ Q_8$ & $3^8$  &  $1440,5120$    & \\
\hline
\end{tabular}
\caption{Extraspecial class (E)}
\label{tab:E}
\end{table}

\begin{table}[!h]
\renewcommand{\baselinestretch}{1.1}\selectfont
\centering
\begin{tabular}{ c c c c   }
\hline
 $S$ & $p^d$ &    Embedding of $S$ & Subdegrees of $G$   \\
\hline
 & $3^4$ & & $40,40$  \\
    & $31^2$ & & $360,600 $  \\
    & $41^2$ & & $480,1200 $  \\
 $2\nonsplit A_5$   & $7^4$ &  $S< \SL_2(p^{d/2})$ & $960,1440 $ \\
    & $71^2$ & & $840,4200 $  \\
    & $79^2$ & & $1560,4680 $  \\
    & $89^2$ & & $2640,5280 $  \\
\hline
$3\nonsplit A_6$ & $2^6$ & $S< \SL_3(4)$ & $18,45$  \\
 $2\nonsplit A_6$   & $5^4$  & $S< \Sp_4(5)$ & $144,480$\\
$2\nonsplit A_7$ & $7^4$  & $S< \Sp_4(7)$ & $720,1680$\\
$A_9$ & $2^8$  & $S< \Omega_8^+(2)$& $120,135$\\
$A_{10}$ & $2^8$  & $S< \Sp_8(2)$& $45,210$\\
$\PSL_2(17)$ & $2^8$  & $S< \Sp_8(2)$& $102,153$\\
$2\nonsplit\PSL_3(4)$ & $3^6$ & $S< \Omega_6^-(3)$& $224,504$ \\
$2\nonsplit\PSU_4(2)$ & $7^4$  & $S< \SL_4(7)$& $240,2160$ \\
$M_{11}$ & $3^5$  & $S< \SL_5(3)$ & $22,220$ \\
&&&  $110,132$\\
$M_{24}$ & $2^{11}$  & $S< \SL_{11}(2)$ & $276,1771$ \\
&&& $759,1288$ \\
$2\nonsplit\Suz$ & $3^{12}$ & $S< \Sp_{12}(3)$ & $65520,465920$  \\
$2\nonsplit G_2(4)$ &  $3^{12}$ & $S< 2\nonsplit\Suz< \Sp_{12}(3)$  & $65520,465920$ \\
$J_2$ & $2^{12}$  & $S< G_2(4)< \Sp_6(4)$ & $1575,2520$\\
$2\nonsplit J_2$ & $5^6$  & $S<\Sp_6(5)$& $7560,8064$\\  
\hline
\end{tabular}
\caption{Almost simple class (AS)}
\label{tab:AS}
\end{table}

\begin{table}[!h]
\renewcommand{\baselinestretch}{1.1}\selectfont
\centering
\begin{tabular}{ l l l   }
\hline
 Class of $G$ & $p^d$ & Subdegrees of $G$ \\
\hline
(T): tensor product & $q^{2m}$ & $(q+1)(q^m-1),q(q^m-1)(q^{m-1}-1)$\\
(S): subfield & $q^{2n}$ & $(q+1)(q^n-1),q(q^n-1)(q^{n-1}-1)$\\
(I): $G_0$ imprimitive & $p^{2n}$ & $2(p^n-1),(p^n-1)^2$\\
(R0): $G_0\leq \GammaL_1(p^d)$ & $p^d$ & given in~\cite[\S 3]{FouKal1978} \\
(R1): $\SU_n(q)\unlhd G_0$ & $q^{2n}$ & $(q^n-1)(q^{n-1}+1),q^{n-1}(q-1)(q^n-1)$, $n$ even\\
& & $(q^n+1)(q^{n-1}-1),q^{n-1}(q-1)(q^n+1)$, $n$ odd \\
(R2): $\Omega_{2m}^\varepsilon(q)\unlhd G_0$ & $q^{2m}$ & $(q^m-1)(q^{m-1}+1),q^{m-1}(q-1)(q^m-1)$, $\varepsilon=+$ \\
& & $(q^m+1)(q^{m-1}-1),q^{m-1}(q-1)(q^m+1)$, $\varepsilon=-$ \\
(R3): $\SL_5(q)\unlhd G_0$ & $q^{10}$ & $(q^5-1)(q^2+1),q^2(q^5-1)(q^3-1)$\\
(R4): $\Sz(q)\unlhd G_0$ & $q^4$ & $(q^2+1)(q-1),q(q^2+1)(q-1)$\\
(R5): $\spin_{m}^\varepsilon(q)\unlhd G_0$ & $q^8$ &  $(q^4-1)(q^3+1),q^3(q^4-1)(q-1)$, $(m,\varepsilon)=(7,\circ)$   \\
& $q^{16}$ &  $(q^8-1)(q^3+1),q^3(q^8-1)(q^5-1)$, $(m,\varepsilon)=(10,+)$   \\
\hline
\end{tabular}
\caption{Classes (T), (S), (I) and (R)}
\label{tab:subdegree}
\end{table}

Throughout this paper, we say that an affine  permutation group $G$ or its stabiliser $G_0$ belongs to (or lies in, is contained in, etc.) one of the classes (T1)--(T3), (S0)--(S2), (I0)--(I8), (R0)--(R5), (E) or (AS) if $G$ has rank~$3$ and $G_0$ satisfies the given description. We also say that  $G$  or  $G_0$ belongs to class (T), (S), (I) or (R), respectively.  We remark that if $G$ belongs to one of the classes   (T1)--(T3), (S0)--(S2), (I0)--(I8) with $p^n>2$, (R1)--(R5), (E) or (AS), then $G$ is primitive:    this holds for classes (E) and (AS) by Lemma~\ref{lemma:primitiveirreducible} and can otherwise be verified using 
Lemma~\ref{lemma:primitive} and the descriptions of the orbits of $G_0$ that are given in subsequent sections.

 In Table~\ref{tab:subdegree}, we list the subdegrees of the groups in classes (T), (S), (I) and (R), as given by~\cite[Table 12]{Lie1987}. Note that the groups in the classes (S0), (I0) and (R0) are precisely those that are listed in Theorem~\ref{thm:main}(iv). We caution the reader that the groups in class (I) are not necessarily  subgroups of $\GammaL_{2m}(q)$. For example, if $q$ is not prime, then
$\GammaL_m(q)\wr S_2$ lies in class (I0) or (I1) but is not a subgroup of $\GammaL_{2m}(q)$ (see \S\ref{s:(I)} for more details). We  also caution the reader that (R5) includes the case where $m=7$ and $q$ is even, in which case $\spin_7(q)\simeq \Sp_6(q)$. 

 In order to prove Theorem~\ref{thm:rank3}, we first 
 consider the case where $G_0$ stabilises a tensor product decomposition. In the following, recall the definition of  $\sigma_q$ from~\S\ref{ss:basicsvs}.

\begin{thm}
\label{thm:tensorgroups}
Let $G$ be an affine  permutation group of rank~$3$ on $V$ where $G_0$ stabilises a decomposition $V=V_2(q)\otimes V_m(q)$ for some $m\geq 2$ and  prime power $q$.   Then either $G_0$ belongs to one of the classes~\emph{(R0)},~\emph{(I0)},~\emph{(T1)},~\emph{(T2)} or~\emph{(T3)}, or 
one of the following holds.
\begin{itemize}
\item[(T4)] $1\otimes A_7\unlhd  G_0$ where  $(m,q)=(4,2)$. 
\item[(T5)] $1\otimes \SL_m(q)\unlhd G_0\leq (\GL_1(q^2)\otimes \GL_m(q)){:}\langle (t\otimes 1)\sigma_q \rangle$ for some $t\in \GL_2(q)$ where  $\GammaL_1(q^2)=\GL_1(q^2){:}\langle t\sigma_q\rangle$.
\item[(T6)] $\SL_2(q)\otimes 1\unlhd G_0\leq (\GL_2(q)\otimes \GL_1(q^3)){:}\langle (1\otimes t)\sigma_q \rangle$ for some $t\in \GL_m(q)$ where $m=3$ and  $\GammaL_1(q^3)=\GL_1(q^3){:}\langle t\sigma_q\rangle$.
\end{itemize}
\end{thm}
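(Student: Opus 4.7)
The approach is to analyse the orbit structure of $G_0$ on $V^*$, extract a $2$-transitive affine action on each tensor factor via Theorem~\ref{thm:2trans}, and then combine the resulting possibilities. Throughout I write $U := V_2(q)$, $W := V_m(q)$, so $V = U \otimes W$.

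First I establish the orbit structure. Since $G_0$ stabilises the tensor decomposition, it is contained in the normaliser $N$ of $\GL(U) \otimes \GL(W)$ in $\GammaL(V)$, which for $m \neq 2$ equals $(\GL(U) \otimes \GL(W)){:}\Aut(\mathbb{F}_q)$; the case $m = 2$ gains an additional swap factor, which I can absorb by passing to an index-$2$ subgroup. Every element of $N$ preserves the set $\Omega_1 := \{u \otimes w : u \in U^*, w \in W^*\}$ of simple tensors, since $(u \otimes w)^{g_1 \otimes g_2} = u^{g_1} \otimes w^{g_2}$ and $(u \otimes w)^\sigma = u^\sigma \otimes w^\sigma$. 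Because $|\Omega_1| = (q+1)(q^m-1) < |V| - 1$, the two orbits of $G_0$ on $V^*$ must be $\Omega_1$ and its complement $\Omega_2$.

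Transitivity of $G_0$ on $\Omega_1$ then forces $2$-transitive actions on each tensor factor. For any nonzero $v_0 \in U$, the ``sheet'' $v_0 \otimes W$ is one of $q+1$ subspaces whose nonzero vectors partition $\Omega_1$, so the setwise stabiliser of $v_0 \otimes W$ in $G_0$ acts transitively on the $q^m - 1$ nonzero vectors of this sheet. Identifying $v_0 \otimes W$ with $W$ via $v_0 \otimes w \leftrightarrow w$, an element $(g_1 \otimes g_2)\sigma$ with $v_0^{\sigma g_1} = \mu v_0$ induces $w \mapsto \mu\, w^{\sigma g_2}$, giving a subgroup $K_W \leq \GammaL_m(q)$ transitive on $W^*$. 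A symmetric construction yields $K_U \leq \GammaL_2(q)$ transitive on $U^*$. Both $W . K_W$ and $U . K_U$ are thus affine $2$-transitive permutation groups, so by Theorem~\ref{thm:2trans}, $K_W$ lies in one of the Hering classes (H0)--(H8), while the final clause of that theorem constrains $K_U$ to (H0), (H1) with $n = 2$, (H4), or (H7).

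Next, I case-analyse the pairs $(K_U, K_W)$ and match each to a listed class. The pairs (H1)/(H1), (H4)/(H1), (H7)/(H1), (H0)/(H1), (H1)/(H0) (which forces $m=3$), and (H5)/(H5)-with-$A_7$ and $(m,q)=(4,2)$, yield respectively (T1), (T2), (T3), (T5), (T6), and (T4). The pair (H0)/(H0) collapses to (R0) or (I0) according to whether a finer direct-sum or one-dimensional semilinear structure is available on $V$. The extra conditions in (T3) (that $G_0 \nleq \GammaL_1(q^2)\otimes\GL_m(q)$, and that $-1\otimes g \in G_0$ for some $g$) are exactly what prevents overlap with (T5) and is forced by the transitivity of $G_0$ on $\Omega_2$; similarly the twisted generators $(t\otimes 1)\sigma_q$ in (T5) and $(1\otimes t)\sigma_q$ in (T6) encode the diagonal action of $\Aut(\mathbb{F}_q)$ on $U \otimes W$. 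The remaining combinations---e.g.\ (H2), (H3), the $A_6$ subcase of (H5), (H6), (H8) for $K_W$ paired with non-(H1) $K_U$---are excluded by order-divisibility against $|\Omega_1|$ and $|\Omega_2|$, together with direct verification in the small-parameter cases using the subdegrees in Table~\ref{tab:subdegree}.

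The main obstacle in executing this plan is the combination step. Because $\Aut(\mathbb{F}_q)$ acts diagonally on $U \otimes W$, a field-semilinear element of $G_0$ does not split as a tensor product of semilinear parts on each factor, so the explicit forms of the extensions in (T5) and (T6) must be extracted from this diagonal behaviour rather than read off the factors separately. Similarly, distinguishing (T3) from (T5)---via the position of the first-factor image inside $N_{\GL_2(q)}(\SL_2(3))$ relative to $\GammaL_1(q^2)$---and ruling out that any remaining Hering-class pairing sneaks through the rank~$3$ test form the bulk of the remaining technical work.
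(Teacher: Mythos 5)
Your overall strategy is the same as the paper's: identify the two orbits as the simple and non-simple tensors, use the block system of ``sheets'' to extract transitive actions $K_U\leq \GammaL_2(q)$ and $K_W\leq\GammaL_m(q)$ on the factors, feed these into Theorem~\ref{thm:2trans}, and combine. However, there is a genuine gap at the combination step, and it is not the one you flag. Knowing that the induced actions $K_U$ and $K_W$ (equivalently, the images of $G_0\cap(\GL(U)\otimes\GL(W))$ in $\PGammaL(U)$ and $\PGammaL(W)$) lie in prescribed Hering classes does \emph{not} by itself yield the conclusions (T1)--(T6), all of which assert that a specific group of the form $S\otimes 1$ or $1\otimes S$ is \emph{normal in $G_0$}. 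A subgroup of $\GL(U)\otimes\GL(W)$ can have large projections onto both factors while its kernels on each factor are central (a Goursat/diagonal-type configuration), in which case no nontrivial $1\otimes S$ sits inside $G_0$ at all. Ruling this out is the heart of the paper's proof: it shows that the kernel $K_U=1\otimes H_W$ of the projection to $\PGammaL(U)$ is non-central by observing that otherwise $H_0/Z_0$ embeds in $\PGammaL(U)$, so $(q+1)(q^m-1)$ divides $q(q-1)(q^2-1)e$ (with $q=p^e$), forcing $q^{m-1}+\cdots+q+1$ to divide $(q-1)e$, a contradiction; only then can one deduce $\SL(W)\unlhd H_W$ from the almost simplicity of $G_0^WZ_W/Z_W$, and symmetrically for $H_U$. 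Your proposal treats ``(H1)/(H1) yields (T1)'' etc.\ as immediate and locates the ``main obstacle'' in the diagonal $\Aut(\mathbb{F}_q)$-action and the (T3)/(T5) distinction, which are comparatively minor bookkeeping.

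Two further points. First, your case list of pairs $(K_U,K_W)$ is incomplete: you omit (H4)/(H4), (H4)/(H0), (H0)/(H4) and (H7)/(H0), all of which the paper must treat (the first three are eliminated by divisibility or a computation at $q=9$; the last, in the soluble case $(m,q)=(3,3)$, actually \emph{produces} class (T6)), and (H7)/(H7), which produces (T1) with $(m,q)=(2,3)$. Relatedly, your claim that (H0)/(H0) ``collapses to (R0) or (I0)'' understates the soluble case, which the paper handles separately via Foulser's classification and which also yields groups in (T1), (T5) and (T6). Second, your plan to exclude (H2), (H3), (H6), (H8) for $K_W$ by order-divisibility would need primitive-prime-divisor arguments that you do not supply; the paper avoids this entirely for $m\geq 4$ by noting that transitivity of $G_0$ on the non-simple tensors forces $G_0^W$ to be transitive on the \emph{lines} of $\PG(W)$, hence $2$-transitive on $\PG(W)$ by Kantor, so that only $\SL(W)$ or $A_7$ can occur. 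You never exploit transitivity on $\Omega_2$ except in passing, and this is the cleanest route to controlling $K_W$ for large $m$.
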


We will see in the proof of Theorem~\ref{thm:rank3} that if $G_0$ satisfies  (T4), (T5) or (T6), then $G_0$ lies in class (S2), (S1) or (S0),  respectively, and we will see in~\S\ref{s:(S1)}, \S\ref{s:(S0)} and~\S\ref{s:extra} that   these two different viewpoints of $G_0$ are important. Throughout this paper, we say that an affine permutation group $G$ or its stabiliser $G_0$ belongs to (or lies in, is contained in, etc.) one of the classes (T4)--(T6) if $G$ has rank~$3$ and $G_0$ satisfies the given description. We also enlarge the class (T) so that it contains the groups in classes (T4)--(T6). Note that the statement of  Theorem~\ref{thm:tensorgroups} is similar to that of~\cite[Lemma~55]{BilMonFra2015}, but~\cite[Lemma~55]{BilMonFra2015}  is proved as part  of an analysis of  $2$-$(v,k,1)$ designs admitting  rank~$3$ automorphism groups.

\begin{proof}[Proof of Theorem~\emph{\ref{thm:tensorgroups}}]
Let $U:=V_2(q)$, $W:=V_m(q)$ and $A:=  (\GL(U)\otimes \GL(W)){:}\Aut(\mathbb{F}_{q})$ (see~\S\ref{ss:basicsvs}). We write elements of $A$ as $g\otimes h$ for $g\in \GammaL(U)$ and  $h\in \GammaL(W)$ where $g$ and $h$ are $\sigma$-semilinear for some $\sigma\in \Aut(\mathbb{F}_q)$.  Let $H_0:=A\cap G_0$.   Either $H_0= G_0$, or  $m=2$ and  $G_0=H_0\langle (g_0\otimes h_0) \tau\rangle$ for some $g_0,h_0\in \GammaL(U)$ where $\tau\in \GL(V)$ is defined by $u\otimes w\mapsto w\otimes u$ for all $u,w\in U=W$; let $\alpha:=(g_0\otimes h_0) \tau$. The orbits of $G_0$ on $V^*$ are the simple tensors $\{u\otimes w: u\in U^*, w\in W^*\}$ and the non-simple tensors $\{u_1\otimes w_1+u_2\otimes w_2 : w_1,w_2\in W, \ \dim\langle w_1,w_2\rangle =2\}$ for any choice of basis $\{u_1,u_2\}$ of $U$. Let $p$ and $d$ be such that $q^{2m}=p^d$ where $p$ is prime.

Let $Z_U:=Z(\GL(U))$, $Z_W:=Z(\GL(W))$, $Z_V:=Z(\GL(V))$ and $Z_0:=Z_V\cap G_0=Z_V\cap H_0$.
Define $G_0^U$ to be the set of $g\in \GammaL(U)$ for which there exists  $h\in\GammaL(W)$ such that $g$ and $h$ are both $\sigma$-semilinear for some $\sigma\in\Aut(\mathbb{F}_q)$ and $ g\otimes h\in H_0$.  Define $G_0^W$ similarly. Then $G_0^U\leq \GammaL(U)$ and $G_0^W\leq \GammaL(W)$. Define $\varphi_U:A\to  \PGammaL(U) $ by  $g\otimes h  \mapsto  Z_Ug$. Define $\varphi_W$ similarly. Then $\varphi_U$ and $\varphi_W$  are homomorphisms with kernels $1\otimes \GL(W)$ and $\GL(U)\otimes 1$, respectively. Let $K_U:=\ker(\varphi_U)\cap H_0$ and $K_W:=\ker(\varphi_W)\cap H_0$. Note that $G_0^UZ_U/Z_U=H_0\varphi_U\simeq H_0/K_U$ and $G_0^WZ_W/Z_W=H_0\varphi_W\simeq H_0/K_W$. Now $K_U=1\otimes H_W$ for some $H_W\leq \GL(W)$ and $K_W=H_U\otimes 1$ for some $H_U\leq \GL(U)$. Note that $H_U\unlhd G_0^U$ and $H_W\unlhd G_0^W$. 

First we claim that if $H_0\neq G_0$, then $G_0^U$ and $G_0^W$ are conjugate in $\GammaL(U)$. If  $g\in G_0^U$, then $g\otimes h\in H_0$ for some $h\in \GammaL(U)$, and $h^{h_0}\otimes g^{g_0}=(g\otimes h)^\alpha\in H_0$, so $(G_0^U)^{g_0}\leq G_0^W$. Similarly, $(G_0^W)^{h_0}\leq G_0^U$. By comparing orders, it follows that  $(G_0^U)^{g_0}=G_0^W$.

Next we claim that $G_0^U$ is transitive on $\PG(U)$ and  $G_0^W$ is transitive on  $\PG(W)$. Since $G_0$ is transitive on the set of simple tensors of $V$, the claim holds when $G_0=H_0$. Suppose then that $G_0\neq H_0$.
Now $G_0^U$ and $G_0^W$ have the same number of orbits, say $r$, on $\PG(U)$ since they are conjugate, so  $H_0$ has at least $r^2$ orbits on the set of simple tensors of $V$. On the other hand, $G_0$ is transitive on the set of simple tensors and $[G_0:H_0]=2$, so $H_0$ has at most $2$ orbits on the set of simple tensors. Thus $r=1$, as desired. 

\textbf{Case 1: $G_0$ is soluble.}  
We may assume that $G_0$ does not belong to class (I0) or (R0). Then $(m,q)\neq (2,2)$,  or else $G_0\leq \GL_2(2)\wr S_2=\GammaL_1(4)\wr S_2$.  If $G_0$ stabilises a decomposition $V=V_\ell(p)\oplus V_\ell(p)$ where $2\ell=d$, then $q^{2m}=p^{2\ell}$ and $G_0$ has orbit sizes $2(p^\ell-1)$ and $(p^\ell-1)^2$ on $V^*$, but $G_0$ also has orbit sizes $(q+1)(q^m-1)$ and $q(q^m-1)(q^{m-1}-1)$, so $(m,q)=(2,2)$, a contradiction. 
By~\cite[Theorem~1.1]{Fou1969} and a consideration of the suborbit lengths of $G$, we conclude that $(m,q)=(2,3)$ or $(3,3)$.   Using {\sc Magma}, we determine that if $(m,q)=(2,3)$, then $G_0$ belongs to class (T1) or (T5), and if $(m,q)=(3,3)$, then  either $\GL(U)\otimes \GL_1(q^3)\unlhd G_0\leq \GL(U)\otimes \GammaL_1(q^3)$, in which case $G_0$ belongs to (T6), or $G_0=\GammaL_1(q^2)\otimes \GammaL_1(q^3)\leq \GammaL_1(q^6)$, a contradiction.
 
\textbf{Case 2: $G_0$ is insoluble.} Now $G_0^U$ and $G_0^W$ are not both soluble, for $K_U/Z_0=K_U/(K_U\cap K_W)\simeq K_UK_W/K_W\unlhd H_0/K_W\simeq G_0^WZ_W/Z_W$, and similarly $K_W/Z_0$ is isomorphic to a normal subgroup of $H_0/K_U\simeq G_0^UZ_U/Z_U$, so if $G_0^U$ and $G_0^W$ are both soluble, then $K_W$ and $K_U$ are both soluble, but then $H_0$ is soluble, so $G_0$ is soluble, a contradiction.

If $m\geq 4$, then $G_0^W$ is transitive on the lines of $\PG(W)$ since $G_0$ is transitive on the set of non-simple tensors of $V$,  so  either $G_0^W$ is $2$-transitive on $\PG(W)$, or  $G_0^W=\GammaL_1(2^5)$ with $m=5$ and $q=2$~\cite{Kan1973}. In this latter case,  both $G_0^U$ and $G_0^W$ are soluble, a contradiction. Hence $G_0^W$ is $2$-transitive on $\PG(W)$, so  either $\SL(W)\unlhd G_0^W$, or  $G_0^W=A_7$ with $m=4$ and $q=2$~\cite{CamKan1979}. If instead $m\leq 3$ and $G_0^W$ is insoluble, then since $G_0^WZ_W$ is transitive on $W^*$,  either $\SL(W)\unlhd G_0^W$, or  $\SL_2(5)\unlhd G_0^W$ with $m=2$ and $q\in \{9,11,19,29,59\}$  by Theorem~\ref{thm:2trans}.  Thus one of the following holds: \textbf{(a)}  $\SL(W)\unlhd G_0^W$ where $m\geq 2$ and $(m,q)\neq (2,2)$ or $(2,3)$; \textbf{(b)} $\SL_2(5)\unlhd G_0^W$  where $m=2$ and  $q\in \{9,11,19,29,59\}$; \textbf{(c)}  $G_0^W$ is soluble where $m=2$ or $3$; \textbf{(d)} $G_0^W=A_7$ where $m=4$ and $q=2$. Further, since $G_0^UZ_U$ is transitive on $U^*$, either $\SL(U)\unlhd G_0^U$, or $\SL_2(5)\unlhd G_0^U$  where  $q\in \{9,11,19,29,59\}$, or $G_0^U$ is soluble.

Suppose for a contradiction that $H_W\leq Z_W$. Then $K_U=Z_0$, so $H_0/Z_0\simeq G_0^UZ_U/Z_U\leq \PGammaL(U)$. First suppose that $G_0= H_0$.  Now $(q+1)(q^m-1)$ divides $|G_0|$, so $(q+1)(q^m-1)$ divides $q(q-1)(q^2-1)e$ where $q=p^e$. Then $q^{m-1}+\cdots +q+1$ divides $q(q-1)e$ and therefore $(q-1)e$. But $q\geq e$, so  $q^{m-1}+\cdots +q+1>(q-1)e$ for $m\geq 3$, a contradiction. Thus $m=2$. Since $(q+1,q-1)=1$ or $2$, it follows that $q+1$ divides $2e$, a contradiction. Now suppose that $G_0\neq H_0$. Then  $(q+1)(q^2-1)$ divides $|G_0|=2|H_0|$, so $q+1$ divides $2(q-1)e$. If $q$ is even, then $q+1$ divides $e$, a contradiction. Thus $q$ is odd, and $q+1$ divides $4e$, so $q=3$, but then $G_0^U$ and $G_0^W$ are soluble, a contradiction.

Thus $H_W$ is a non-central normal subgroup of $G_0^W$.  If (d) holds, then $H_W=A_7$, so $G_0$ lies in class  (T4). Hence we may assume that one of (a), (b) or (c) holds. Since $G_0^WZ_W/Z_W$ is almost simple when (a) or (b) holds,  $\SL(W)\unlhd H_W$ when (a) holds and $\SL_2(5)\unlhd H_W$ when (b) holds.

If $H_U\leq Z_U$, then $K_W=Z_0$, so $  G_0^WZ_W/Z_W \simeq H_0/Z_0 \unrhd K_U/Z_0 \simeq H_WZ_W/Z_W$, and since the quotient of $G_0^WZ_W/Z_W$ by $H_WZ_W/Z_W$ is soluble, it follows that  $H_0/K_U$ is soluble and therefore that $G_0^U$ is soluble. Hence when  $G_0^U$ is insoluble, either $\SL(U)\unlhd H_U$ and $q\geq 4$, or $\SL_2(5)\unlhd H_U$  where $q\in \{9,11,19,29,59\}$. 

If $G_0\neq H_0$, then since $G_0^U$ and $G_0^W$ are conjugate, we may assume  that $S\otimes  S\unlhd H_0$ where either $S=\SL(U)$, or $S=\SL_2(5)$ and $q\in \{9,11,19,29,59\}$.  Now $S$ is the unique subgroup of $G_0^U$ and $G_0^W$ that is  isomorphic to $S$, so 
$(S\otimes S)^\alpha=S^{h_0}\otimes S^{g_0}=S\otimes S$. Thus $S\otimes S\unlhd G_0$.

We conclude that one of the following holds: \textbf{(i)} $\SL(U)\otimes \SL(W)\unlhd G_0$ where $q\geq 4$; \textbf{(ii)} $\SL_2(5) \otimes \SL(W)\unlhd G_0=H_0$ where  $q\in\{9,11,19,29,59\}$; \textbf{(iii)} $1\otimes \SL(W)\unlhd G_0=H_0$ and $G_0^U$ is soluble; \textbf{(iv)} $\SL(U)\otimes \SL_2(5)\unlhd G_0=H_0$ where $m=2$ and $q\in\{9,11,19,29,59\}$; \textbf{(v)} $\SL_2(5)\otimes \SL_2(5)\unlhd G_0$ where $m=2$ and $q\in\{9,11,19,29,59\}$; \textbf{(vi)} $1\otimes \SL_2(5)\unlhd G_0=H_0$ and $G_0^U$ is soluble where $m=2$ and $q\in\{9,11,19,29,59\}$; \textbf{(vii)} $\SL(U)\otimes 1\unlhd G_0=H_0$ and $G_0^W$ is soluble where $m=2$ or $3$; \textbf{(viii)} $\SL_2(5)\otimes 1\unlhd G_0=H_0$ and $G_0^W$ is soluble where $m=2$ or $3$ and $q\in\{9,11,19,29,59\}$.    Moreover, if $m\leq 3$ and $G_0^W$ is soluble, then  since $G_0^WZ_W$ is  transitive on $W^*$, either $G_0^W\leq \GammaL_1(q^m)$, or $\SL_2(3)\unlhd  G_0^WZ_W$ where $m=2$ and $q\in \{3,5,7,11,23\}$  by Theorem~\ref{thm:2trans}. A similar result applies to $G_0^U$.

\textbf{Cases (i), (ii), (iv) and (v).}
If (i) or (ii) holds, then  $G_0$ lies in class (T1) or (T2), respectively. If (iv) holds, then $G_0$ lies in class (T2) by interchanging $U$ and $W$. Suppose  that (v) holds. If $q\neq 9$, then $G_0\leq (\SL_2(5)\otimes \SL_2(5) Z_V).2$, but $q$ divides $|G_0|$, a contradiction.  Thus $q=9$, so  $G_0\leq (\SL_2(5)\otimes \SL_2(5) Z_V).2^2$, but, using {\sc Magma}, we determine that this group is not transitive on the set of non-simple tensors of $V$, a contradiction.

\textbf{Case (vi).} Either $G_0^U\leq \GammaL_1(q^2)$, or $\SL_2(3)\unlhd  G_0^UZ_U$ and $q=11$, in which case $G_0^U\leq \GL_2(3)Z_U$. In particular, the order of $G_0^U$ divides $(q^2-1)2e$, or $240$, respectively. Further, $\SL_2(5)\unlhd H_W\leq \GL(W)$, so $H_W\leq \SL_2(5)\circ Z_W$. In particular, $1\otimes H_W=K_U$ has order dividing $60(q-1)$. Recall that   $G_0^UZ_U/Z_U\simeq G_0/K_U$,  and $q$ divides $|G_0|$. If $q\neq 9$, then $q$ divides $|G_0^U|$, a contradiction. Similarly, if $q=9$, then $3$ divides $|G_0^U|$, a contradiction.
 
\textbf{Case (viii).} We have just proved that $m\neq 2$, so $m=3$ and $G_0^WZ_W\leq \GammaL_1(q^3)$.  In particular, the order of $G_0^WZ_W$ divides $(q^3-1)3e$. Further, $\SL_2(5)\unlhd H_U\leq \GL(U)$, so $H_U\otimes 1=K_W$ has order dividing $60(q-1)$. Recall that   $G_0^WZ_W/Z_W\simeq G_0/K_W$,  and $q$ divides $|G_0|$.
 If $q\neq 9$, then $q$ divides $|G_0^W|$, a contradiction. Thus $q=9$. Now $q(q^2-1)(q^3-1)(q-1)$ divides $|G_0||Z_W|$, which divides $(q^3-1)3e\cdot 60(q-1)$, a contradiction.
    
\textbf{Case (iii).} Either $\SL_2(3)\unlhd  G_0^UZ_U$ where  $q\in \{3,5,7,11,23\}$ and $G_0^U\nleq  \GammaL_1(q^2)$, or $G_0^U\leq  \GammaL_1(q^2)\leq \GammaL(U)$.  If the former holds, then  $G_0\leq (N_{\GL(U)}(\SL_2(3)))\otimes \GL(W)$, and   $-1\in G_0^U$ since $Q_8=\SL_2(3)'\leq  G_0^U$ and $Q_8$ is irreducible on $U$. Thus $-1\otimes g\in G_0$ for some $g\in \GL(W)$ and $G_0$ lies in class (T3).  
 Otherwise, since $\GammaL_1(q^2)=\GL_1(q^2){:}\langle t\sigma_q\rangle$ for some $t\in \GL(U)$,  it follows that $G_0$ lies in class (T5). 
 
\textbf{Case (vii).} If $m=2$, then $G_0$ lies in class (T5) by interchanging $U$ and $W$, so we may assume that $m=3$. Now $G_0^W\leq \GammaL_1(q^3)\leq \GammaL(W)$, so $G_0$ lies in class (T6). 
 \end{proof}
 
Next we consider the case where $G_0$ is imprimitive.
 Note that our result in this case is similar to~\cite[Proposition~39]{BilMonFra2015}, but ~\cite[Proposition~39]{BilMonFra2015} is  proved as part of an analysis of  $2$-$(v,k,1)$ designs admitting  rank~$3$ automorphism groups and omits  the cases where $G_0$ is soluble or $(n,p)=(6,2)$.

\begin{thm}
\label{thm:A2groups}
Let $G$ be an affine permutation group of rank~$3$  on $V$ where $G_0$ stabilises a decomposition $V=V_n(p)\oplus V_n(p)$ for some $n\geq 1$ and prime $p$.  Then $G_0$ belongs to one of the classes~\emph{(I0)}--\emph{(I8)}. 
\end{thm}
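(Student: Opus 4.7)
Following the strategy of Theorem~\ref{thm:tensorgroups}, I would write $V=V_1\oplus V_2$ with each $V_i\simeq V_n(p)$ and, after identifying $V_1$ with $V_2$, let $\tau\in\GL(V)$ be the involution swapping the two summands; the stabiliser in $\GL(V)$ of the unordered pair $\{V_1,V_2\}$ is then $(\GL(V_1)\times\GL(V_2)){:}\langle\tau\rangle$. Set $H_0:=G_0\cap(\GL(V_1)\times\GL(V_2))$, so $[G_0:H_0]\in\{1,2\}$. Since $V_1^*\times\{0\}$, $\{0\}\times V_2^*$ and the \emph{generic set} $V_1^*\times V_2^*$ are three nonempty $H_0$-invariant subsets of $V^*$, the rank~$3$ hypothesis forces $[G_0:H_0]=2$, with the quotient fusing the first two sets. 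Writing $\pi_i:H_0\to\GL(V_i)$ for the projection and $G_0^i:=\pi_i(H_0)$, the affine group $V_i{:}G_0^i$ is then $2$-transitive on $V_i$ for each $i$.

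By Theorem~\ref{thm:2trans}, each $G_0^i$ lies in one of Hering's classes (H0)--(H8); moreover, conjugation in $G_0$ by any element of $G_0\setminus H_0$ composed with $\tau$ carries $G_0^1$ to $G_0^2$, so the two projections belong to the \emph{same} Hering class and are conjugate in $\GammaL(V_1)$. Taking $m$ minimal dividing $n$ with $G_0^i\leq\GammaL_m(q)$ where $q:=p^{n/m}$, the upper bound $G_0\leq\GammaL_m(q)\wr S_2$ --- or the semilinear analogue required by (I0) when $m=1$ --- is then immediate, and the Hering class (H$j$) of $G_0^i$ pinpoints the intended class (I$j$) for $G_0$.

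It remains to extract the normal structure of $H_0$. I would apply Goursat's lemma: setting $N_i:=\pi_i(\ker\pi_{3-i}\cap H_0)$, one obtains $N_i\unlhd G_0^i$, an isomorphism $G_0^1/N_1\simeq G_0^2/N_2$, and $|H_0|=|G_0^1|\cdot|N_2|=|G_0^2|\cdot|N_1|$. The generic set is a $G_0$-orbit of size $(p^n-1)^2$ partitioned into at most two $H_0$-orbits (swapped by $\tau$), so $(p^n-1)^2$ divides $2|H_0|$. In the classes (H1)--(H6) and (H8), comparing this divisibility against the orders of the normal subgroups of $G_0^i$ listed in Theorem~\ref{thm:2trans} should force $N_i$ to contain the characteristic subgroup $T$ singled out by (H$j$) --- namely $\SL_m(q)$, $\Sp_m(q)$, $G_2(q)'$, $\SL_2(5)$, $A_6$ or $A_7$, $\SL_2(13)$, or $D_8\circ Q_8$ --- yielding $T\times T\unlhd G_0$ and thereby class (I$j$). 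In the soluble classes (H0) and (H7), the upper bound already matches (I0) and (I7) without further work.

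The main obstacle is this case-by-case verification that $N_i$ is large enough: one must rule out the possibility that $N_i$ is a proper subgroup of $T$ (for example, central in $T$ or contained in some smaller characteristic subgroup). I would handle this by direct order comparisons using Lagrange in the generic cases, falling back on a finite {\sc Magma} computation for the sporadic small parameters such as (H4) with $q\in\{9,11,19,29,59\}$ and (H8) with $p^d=3^4$.
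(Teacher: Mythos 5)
Your strategy is essentially the paper's: pass to $H_0:=G_0\cap(\GL(V_1)\times\GL(V_2))$ of index $2$, apply Theorem~\ref{thm:2trans} to the $2$-transitive projections $G_0^i$, and use the divisibility of $2|H_0|$ by $(p^n-1)^2$ to show that the Goursat kernel is a non-central normal subgroup of $G_0^i$, hence contains the subgroup $T$ defining the Hering class; the paper normalises so that $G_0^1=G_0^2$ and writes the kernel as $1\times H$, with your $N_i$ playing the role of $H$. The generic order comparison is done in the paper with Zsigmondy primes rather than bare Lagrange, but that is the argument you are gesturing at.

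There is one concrete point where your plan as written would fail. After reducing to the question of whether $(q^m-1)^2$ can divide $2|\GammaL_m(q)|(q-1)$, the exceptional case $(m,q)=(6,2)$ survives: $(2^6-1)^2=3^4\cdot 7^2$ really does divide $2|\SL_6(2)|$, so the possibility $N_i=1$ with $G_0^1=\SL_6(2)$ cannot be excluded by any order count. The paper kills it structurally: if $N_i=1$ then $H_0=\{(g,g^\alpha):g\in\SL_6(2)\}$ for some $\alpha\in\Aut(\SL_6(2))=\SL_6(2){:}\langle\iota\rangle$ ($\iota$ the inverse-transpose), and since $(2^6-1)^2$ is odd the index-$2$ subgroup $H_0$ would itself have to be transitive on $V_6(2)^*\times V_6(2)^*$; but a diagonal subgroup (up to the $\iota$-twist) visibly is not, e.g.\ $(v_1,v_1)$ and $(v_1,v_2)$ lie in different orbits. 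This case is not among the ``sporadic small parameters'' you list, and no Lagrange-type comparison will catch it, so you need to add this step (or an explicit computation) to close the argument. The rest of your outline --- (H0) and (H7) giving (I0) and (I7) from the upper bound, (H5)/(H6) forcing $N_i=G_0^i$, the socle argument identifying $T\unlhd N_i$ for (H1)--(H4), and {\sc Magma} for (H8) --- matches the paper.
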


\begin{proof}
The group $G_0$ is a subgroup of $\GL_n(p)\wr\langle \tau\rangle $ where   $\tau$ is the involution in $\GL_{2n}(p)$  defined by  $(u_1,u_2)^\tau =(u_2,u_1)$ for all $u_1,u_2\in V_n(p)$. Let $V_1:=\{(u,0): u \in V_n(p)\}$ and $V_2:=\{(0,u):u\in V_n(p)\}$. For each $i\in \{1,2\}$, let $\pi_i$ denote the projection of $G_{0,V_1}=G_0\cap (\GL_n(p)\times \GL_n(p))$ onto the $i$-th factor of $\GL_n(p)\times \GL_n(p)$, and let $K_i$ and $G_0^i$ denote the kernel and image of $\pi_i$, respectively.  The orbits of $G_0$ on $V^*$ are $V_1^*\cup V_2^*$ and $V_n(p)^*\times V_n(p)^*$. In particular,  $G_0^i$ is transitive on $V_n(p)^*$ for both $i$, and $G_{0,V_1}$ is an index $2$ subgroup of $G_0$. Now $G_0=G_{0,V_1}\langle (t,s)\tau\rangle$ for some $t,s\in\GL_n(p)$. Conjugating $G_0$ by $(s^{-1},1)$ if necessary, we may assume that $s=1$, in which case $G_0^1=G_0^2$ and $t\in G_0^1$ since $((t,1)\tau)^2=(t,t)$. Now $G_0\leq G_0^1\wr \langle\tau\rangle$, and there exists $H\unlhd G_0^1$ such that $K_1=1\times H$ and $K_2=H\times 1$.

By Theorem~\ref{thm:2trans}, $G_0^1$ belongs to one of the classes (H0)--(H8). In particular, $G_0^1\leq \GammaL_m(q)$ where $q^m=p^n$, and either $G_0^1$ belongs to (H5)--(H8) and $q=p$, or $G_0^1$ belongs to (H0)--(H4) and $q$ is specified  by Theorem~\ref{thm:2trans}. Now $G_0\leq \GammaL_m(q)\wr \langle \tau\rangle$. Write $q=p^e$ and let $Z:=Z(\GL_m(q))$.

If $G_0^1$ lies in class (H0) (i.e., if $m=1$), then $G_0$ lies in class (I0), so we assume that $m\geq 2$.  If $G_0^1$ lies in (H7), then $G_0$ lies in (I7), and if $G_0^1$ lies in (H8), then $G_0$ lies in class (I8) by a computation in {\sc Magma}, so we may assume that $G_0^1$ lies in one of the classes  (H1)--(H6). 

Suppose for a contradiction that $H\leq Z$. Recall that $(q^m-1)^2$ divides $|G_0|=2|G_{0,V_1}|=2|G_0^1||H|$. Hence $(q^m-1)^2$ divides $2|\GammaL_m(q)|(q-1)$. First suppose that $em\geq 3$ and $(em,p)\neq (6,2)$. By Zsigmondy's theorem~\cite{Zsi1892}, there exists a primitive prime divisor $r$ of $p^{em}-1$ (see~\cite[Theorem 5.2.14]{KleLie1990}). Let $f$ be the largest integer for which $r^f$ divides $q^m-1$. Now $r^{2f}$ divides $(q^m-1)^2$, and $r$ is odd, so $r^{2f}$ divides $|\GammaL_m(q)|(q-1)$, but $r$ is coprime to $q$ and does not divide $p^i-1$ for $i<em$, so $r$ must divide $e$. However, $r\equiv 1\mod em$ by~\cite[Proposition 5.2.15]{KleLie1990}, a contradiction. Thus either $em=2$ (so $m=2$ and $e=1$), or $(em,p)=(6,2)$. Suppose that $m=2$. Now $(q^2-1)^2$ divides $2q(q-1)^2(q^2-1)e$, so $q+1$ divides $2(q-1)e$. This is impossible when $p=2$ and $e=3$. Hence $e=1$. Now $p+1$ divides $2(p-1)$, but $(p+1,p-1)=(p-1,2)$, so $p=3$, in which case $G_0^1$ belongs to (H7), a contradiction. Further, if $q=4$ and $m=3$, then $(4^3-1)^2$ divides $6|\GammaL_3(4)|$, a contradiction. 

Thus $q=2$ and $m=6$. In particular, $H=1$, and $G_0^1$ is one of $G_2(2)'=\PSU_3(3)$, $G_2(2)=\PSU_3(3){:}2$, $\Sp_6(2)$ or $\SL_6(2)$. Since $(2^6-1)^2$ divides $2|G_0^1|$, we conclude that $G_0^1=\SL_6(2)$. Now  there exists a bijective map $\alpha: \SL_6(2)\to \SL_6(2)$ such that $G_{0,V_1}=\{(g,g^\alpha):g\in \SL_6(2)\}$. Moreover, $\alpha$ is a homomorphism, so $\alpha\in\Aut(\SL_6(2))=\SL_6(2){:}\langle \iota\rangle$ where $\iota$ maps each element of $\SL_6(2)$ to its inverse transpose. Since $G_{0,V_1}$ has index $2$ in $G_{0}$ and $(2^6-1)^2$ is odd, it follows that $G_{0,V_1}$ is transitive on $V_6(2)^*\times V_6(2)^*$. Further, $G_{0,V_1}$ is conjugate in $\SL_6(2)\times \SL_6(2)$ to $\{(g,g):g\in \SL_6(2)\}$ or $\{(g,g^\iota):g\in \SL_6(2)\}$. However, neither of these is transitive, for if we view $V_6(2)$ as $\mathbb{F}_2^6$ and define $v_1:=(1,0,0,0,0,0)$ and $v_2:=(0,1,0,0,0,0)$, then $(v_1,v_1)$ and $(v_1,v_2)$ lie in different orbits.

Hence $H$ is a non-central normal subgroup of $G_0^1$. If $G_0^1$ lies in (H5) or (H6), then $H=G_0^1$, so $G_0$ lies in (I5) or (I6), respectively. Let $N$ be $\SL_m(q)$, $\Sp_m(q)$, $G_2(q)'$ or $\SL_2(5)$ when $G_0^1$ lies in classes  (H1)--(H4),  respectively. Since $NZ/Z$ is the socle of the almost simple group $G_0^1Z/Z$, it follows that $N\unlhd H$. Hence $N\times N\unlhd G_0$, and $G_0$ lies in one of the classes (I1)--(I4).
\end{proof}

\begin{remark}
\label{remark:rank3thm}
Before we use~\cite{Lie1987} to prove Theorem~\ref{thm:rank3}, we  discuss three  minor oversights in the main theorem of~\cite{Lie1987}. 
This theorem states that if $G$ is an affine primitive permutation group of rank~$3$ with socle $V:=V_d(p)$ for some prime $p$, then $G_0$ belongs to one of $13$ classes, and these classes are labelled (A1)--(A11), (B) and  (C). The three issues are described below. 
\begin{enumerate}
\item In (A9), (A10) and (C), there is a non-abelian simple group $L$ such that $L\unlhd G_0/Z(G_0)$ where $L\leq \PSL_a(q)$   and $q^a=p^d$.  Here  $Z(G_0)$ should be replaced with $G_0\cap \mathbb{F}_q^*$.
 Indeed, except for the case $(L,p^d)=(A_7,2^8)$ of (C),  such $G_0$ are analysed in~\cite{Lie1987} as part of~\cite[p.485, Case (I)]{Lie1987}, where  the following hold: $G_0\leq \GammaL_a(q)$; $L$ is the socle of $G_0/(G_0\cap \mathbb{F}_q^*)$; and the  (projective) representation of $L$ on $V=V_a(q)$ is absolutely irreducible and cannot be realised over a proper subfield of $\mathbb{F}_q$. If $G_0\leq \GL_a(q)$, then  $Z(G_0)=G_0\cap \mathbb{F}_q^*$ by~\cite[Lemma~2.10.1]{KleLie1990}, but $G_0\cap \mathbb{F}_q^*$ is not a subgroup of $Z(G_0)$ in general.
 \item  The case $(L,p^d)=(A_7,2^8)$ of (C) is different. Here $A_7\leq \SL_4(4)$ and $q=4$, but this representation of $A_7$ on $V_4(4)$ can be realised over $\mathbb{F}_2$ since $A_7\leq A_8\simeq \SL_4(2)\leq \SL_4(4)$. In~\cite[p.483, Case (IIc)]{Lie1987},  Liebeck notes  that  $H_0:=\mathbb{F}_4^*\times A_7$ has two orbits on $V^*$, and since $H_0/\mathbb{F}_4^*$ is simple,  $H_0$ lies in (C). Here Liebeck is classifying  those $G$ for which $G_0\leq N_{\GammaL_4(4)}(\SL_4(2))$ and $G_0\cap \SL_4(2)=A_7$. Since $C_{\GammaL_4(4)}(\SL_4(2))=\mathbb{F}_4^*{:}\langle \sigma_4\rangle$, we may take $G_0$ to be $H_0\langle\sigma_4\rangle=\mathbb{F}_4^*{:}\langle \sigma_4\rangle \times A_7$, but the socle of $H_0\langle\sigma_4\rangle/\mathbb{F}_4^*$ is not simple, so $H_0\langle\sigma_4\rangle$ is not in (C). This is an omission of~\cite[p.483]{Lie1987} but not~\cite{Lie1987} since $H_0\langle\sigma_4\rangle\simeq \GammaL_1(4)\otimes A_7$, which stabilises a decomposition $V=V_2(2)\otimes V_4(2)$, so  $H_0$ and $H_0\langle\sigma_4\rangle$  are in class (A3), and there are no other possibilities for $G_0$ by a computation in {\sc Magma}. Note that    $H_0$ and $H_0\langle\sigma_4\rangle$  belong to class~(T4) of  Theorem~\ref{thm:tensorgroups}, and they also belong to class~(S2).
 \item When $L=A_9<\Omega_8^+(2)$, the subdegrees in~\cite[Table~14]{Lie1987} are  recorded as $105$ and $150$; these should be $120$ and $135$, which are, respectively, the numbers of non-singular and singular vectors with respect to the non-degenerate quadratic form preserved by $\Omega_8^+(2)$.
\end{enumerate}
 \end{remark}

\begin{proof}[Proof of Theorem~\emph{\ref{thm:rank3}}]
By~\cite{Lie1987} and Remark~\ref{remark:rank3thm}, one of the following holds.
\begin{enumerate}
\item \begin{itemize}\item[(a)] $G_0\leq \GammaL_1(p^d)$. 
\item[(b)] $V=V_n(q^2)$ and $\SL_n(q)\unlhd G_0\leq \GammaL_n(q^2)$ where  $q^{2n}=p^d$ and $n\geq 2$ (see~\cite[p.482]{Lie1987}). 
\item[(c)] $V=V_2(q^3)$ and $\SL_2(q)\unlhd G_0\leq \GammaL_2(q^3)$ where $q^6=p^d$ (see~\cite[p.483]{Lie1987}).
\item[(d)] $V=\bigwedge^2(V_5(q))$ and $\SL_5(q)\unlhd G_0$ where  $q^{10}=p^d$. 
\item[(e)] $\Sz(q)={}^2B_2(q)\unlhd G_0$ where   $q^4=p^d$ and  $q$ is an odd power of $2$.
 \end{itemize}
\item $\SU_n(q)\unlhd G_0$ where $q^{2n}=p^d$ and $n\geq 2$.
\item $\Omega_{2m}^\pm(q)\unlhd G_0$ where $q^{2m}=p^d$ and $m\geq 1$.
\item $G_0$ stabilises a decomposition $V=V_n(p)\oplus V_n(p)$ where $2n=d$.
\item $G_0$ stabilises a decomposition $V=V_2(q)\otimes V_m(q)$ where $q^{2m}=p^d$ and $m\geq 2$.
\item  $G_0\leq \GammaL_n(q)$   and the socle $L$ of $G_0/(G_0\cap \mathbb{F}_q^*)$ is   absolutely irreducible on $V=V_n(q)$ and cannot be realised over a proper subfield of $\mathbb{F}_q$. Further,   one of the following holds.

\begin{itemize}\item[(a)] $V$ is a spin module,  $L=\POmega_m^\varepsilon(q)$ and $(m,n,\varepsilon)$ is   $(7,8,\circ)$ or $(10,16,+)$.
 \item[(b)] $(L,p^d)$ and the embedding of $L$ in $\PSL_n(q)$ are given by~\cite[Table~2]{Lie1987},  where $(L,p^d)\neq (A_7,2^8)$. The subdegrees of $G$ are given by~\cite[Theorem 5.3]{FouKal1978} and \cite[Table~14]{Lie1987}, except for $L=A_9$, in which case the subdegrees are $120$ and $135$.
 \end{itemize} 
\item $G_0$ normalises $E$ where $E\leq \GL_d(p)$ and  $(E,p^d)$ is  given by Table~\ref{tab:Bclass} 
(see~\cite[\S4.6]{KleLie1990} for details on extraspecial groups and~\cite[p.483, Case (IIe)]{Lie1987}).
  The subdegrees of $G$ are given by~\cite[Theorem~1.1, 2(b) or 2(b$'$)]{Fou1969}   when   $E=D_8$ or $Q_8$, and~\cite[Table 13]{Lie1987} otherwise.
  \begin{table}[!h]
\renewcommand{\baselinestretch}{1.1}\selectfont
\centering
\begin{tabular}{ c c}
\hline 
$E$ & $p^d$ \\
\hline 
$3^{1+2}$ & $2^6$\\
$D_8$, $Q_8$ & $3^4,3^6,7^2,13^2,17^2,19^2,23^2,29^2,31^2,47^2$\\ 
$Q_8\circ Q_8$ & $3^4$\\$D_8\circ Q_8$ & $3^4,5^4,7^4$\\
$D_8\circ Q_8\circ \langle \zeta_5\rangle$ & $5^4$\\ 
  $Q_8\circ Q_8\circ Q_8$ & $3^8$\\
 \hline
\end{tabular}
\caption{}
\label{tab:Bclass}
\end{table}
\end{enumerate}

We now consider each of the cases described above in order to prove that $G_0$ belongs to one of the classes  in the statement of Theorem~\ref{thm:rank3}.

\textbf{Case (1).} If one of (a), (b), (c) or (d) holds, then $G_0$ lies in class (R0), (S1), (S0) or (R3), respectively. If (e) holds, then either $G_0$ lies in class (R4), or $q=2$ and $G_0$ lies in class (R0).

\textbf{Case (2).} 
Either $G_0$ lies in class (R1), or $n=2$, in which case $G_0$ lies in class (S1) since $\SL_2(q)$ and $\SU_2(q)$ are conjugate in $\GL_d(p)$. Indeed, let $\{x_1,x_2\}$ be a basis of $V_2(q^2)$ on which $\SL_2(q)$ acts naturally over $\mathbb{F}_q$, and choose $\mu\in \mathbb{F}_{q^2}^*$  such that $\mu+\mu^q=0$. Then $\SL_2(q)$  preserves the non-degenerate unitary form $\f$ for which $\f(x_1,x_2)=\mu$ and $\f(x_i,x_i)=0$ for $i=1,2$.

\textbf{Case (3).} Here one of the following holds: $G_0$ lies in class (R2);  $m=2$ and $\varepsilon=+$, in which case $\Omega_4^+(q)$ and $\SL_2(q)\otimes \SL_2(q)$ are conjugate in $\GL_d(p)$, so $G_0$ lies in class (T1) for $q>2$ and class (I0) for $q=2$; or $m=1$, in which case $\varepsilon=+$ and  $G_0$ lies in class (I0). 

\textbf{Case (4).} By Theorem~\ref{thm:A2groups}, $G_0$ lies in one of the classes (I0)--(I8). 

\textbf{Case (5).} By Theorem~\ref{thm:tensorgroups}, either $G_0$ belongs to one of the classes (R0), (I0), (T1), (T2) or (T3), in which case we are done, or $G_0$ lies in  (T4), (T5) or (T6). If $G_0$ lies in class (T5), then $V$ is naturally an $m$-dimensional $\mathbb{F}_{q^2}$-vector space of the form  $V_1(q^2)\otimes V_m(q)$, and $\SL_m(q)\unlhd G_0\leq \GammaL_m(q^2)$, so $G_0$ belongs to class (S1). Similarly, if $G_0$ lies in class (T4), then since  $\GL_2(2)=\GammaL_1(2^2)$, again $V$ is naturally a $4$-dimensional $\mathbb{F}_{2^2}$-vector space of the form $V_1(2^2)\otimes V_4(2)$, and $A_7\unlhd G_0\leq \GammaL_4(2^2)$, so $G_0$ belongs to class (S2). Lastly, if $G_0$ lies in class (T6), then $V$ is naturally a $2$-dimensional $\mathbb{F}_{q^3}$-vector space of the form  $V_2(q)\otimes V_1(q^3)$, and $\SL_2(q)\unlhd G_0\leq \GammaL_2(q^3)$, so $G_0$ belongs to class~(S0). 

\textbf{Case (6).}
View $L$ as a subgroup of $G_0\mathbb{F}_q^*/\mathbb{F}_q^*$, and define $M$ to be the subgroup of $\GL_n(q)$ for which $M/\mathbb{F}_q^*=L$. Now $\mathbb{F}_q^*=Z(M)$, so by~\cite[31.1]{Asc2000}, $M=M'\mathbb{F}_q^*$ and $M'$ is quasisimple.  Since $M\unlhd G_0\mathbb{F}_q^*$ and $M'$ is perfect, it follows that $M'\unlhd G_0$ and $M'\leq \SL_n(q)$. If (a) holds,  then $M'=\spin_m^\varepsilon(q)$ (see~\S\ref{s:(R5)} for more details), so $G_0$ belongs to class (R5). Suppose instead that (b) holds. Using the $p$-modular character tables in~\cite{BAtlas,Atlas,GAP4}, we determine that $M'=S$, where $S$ is given by Table~\ref{tab:AS}.  
Now $S$ is absolutely irreducible on $V$ and  cannot be realised over a proper subfield of $\mathbb{F}_q$, so $S$ is irreducible on $V_d(p)$ by~\cite[Theorems VII.1.16 and VII.1.17]{BlaHup1981}. Thus  $ G_0$ belongs to class (AS).

\textbf{Case (7).} 
Where necessary, we use {\sc Magma}  to verify the various claims made below.  
 If either $p^d=3^8$ and $E=Q_8\circ Q_8\circ Q_8$, or   $p^d=7^4$ and $E=D_8\circ Q_8$, then $G_0$ belongs to (E). Suppose that $p^d=5^4$. If $E=D_8\circ Q_8$, then $G_0$ belongs to (E). Otherwise, $E=D_8\circ Q_8\circ \langle \zeta_5\rangle$, and we may assume that $G_0\nleq N_{\GL_4(5)}(D_8\circ Q_8)$, in which case $G_0$ belongs to (E).  Suppose that $p^d=2^6$ and $E$ is an extraspecial group $3^{1+2}$. Now $N_{\GL_d(p)}(E)$ is conjugate to $\GammaU_3(2)$, so we may assume that $N_{\GL_d(p)}(E)=\GammaU_3(2)$. If $\SU_3(2)\leq G_0$, then $G_0$ belongs to (R1); otherwise, $G_0$ belongs to (E).
 
Suppose that $d=2$, in which case $E=D_8$ or $Q_8$ and $p\in \{7,13,17,19,23,29,31,47\}$.  For each such  $p$, the group $\GL_2(p)$ contains a unique conjugacy class of subgroups isomorphic to $E$.  If $p\neq 7$, then $N_{\GL_2(p)}(D_8)$ has at least $4$ orbits on $V_d(p)$, so $E=Q_8$, in which case $G_0$ belongs to (E). Suppose instead that $p=7$. Now $N_{\GL_2(7)}(Q_8)$ is transitive on $V^*$ and contains, up to conjugacy in $\GL_d(p)$, five subgroups with two orbits on $V^*$; these have orders $24$, $24$, $24$, $36$ and $72$. The group of order $36$ has orbits of size $12$ and $36$ on $V^*$ and lies in (I0). The remaining four  groups have two orbits of size $24$;  two of these are subgroups of $\GammaL_1(7^2)$  and therefore belong to (R0), and the other two  belong to (E). Lastly, if $E=D_8$, then $N_{\GL_2(7)}(E)\leq \GammaL_1(7^2)$, so $G_0$ belongs to (R0).

Suppose that $p^d=3^6$ and $E=D_8$ or $Q_8$. Now $E\leq \GammaL_2(27)$ by~\cite[Theorem 1.1]{Fou1969}. The group $\GammaL_2(27)$ has a unique conjugacy class of subgroups isomorphic to $E$, and each such group is conjugate in $\GL_6(3)$ to $E\otimes 1\leq \GL_2(3)\otimes \GL_3(3)$. The normalisers of $D_8\otimes 1$ and $Q_8\otimes 1$ in $\GL_6(3)$ are $\GammaL_1(9)\otimes \GL_3(3)$ and $\GL_2(3)\otimes \GL_3(3)$,  respectively, so  $G_0$  stabilises a decomposition $V=V_2(3)\otimes V_3(3)$, and we  considered such groups in case (5) above.  

Suppose that $p^d=3^4$ and $E=D_8\circ Q_8$. Now $N_{\GL_4(3)}(E)$ is transitive on $V^*$ and has, up to conjugacy in $\GL_d(p)$, exactly  $18$ subgroups with two orbits on $V^*$.  Eight of these groups have orbit sizes $32$ and $48$; these lie in (S1). Another eight have orbit sizes $16$ and $64$; these  stabilise a decomposition $V=V_2(3)\oplus V_2(3)$, and we considered such groups in case (4) above. This leaves two groups with orbit sizes $40$ and $40$. One is a subgroup of $\GammaL_1(3^4)$ and therefore lies in (R0). The other is isomorphic to $\SL_2(5)$ and lies in class (AS). 

It remains to consider the case where $p^d=3^4$ and $E$ is one of  $Q_8$, $D_8$ or $Q_8\circ Q_8$. If $E=D_8$ or $Q_8$, then $\GL_4(3)$ contains a unique conjugacy class of subgroups isomorphic to $E$ whose normalisers in $\GL_4(3)$ have $2$ orbits on $V^*$, and this conjugacy class contains $E\otimes 1\leq \GL_2(3)\otimes \GL_2(3)$. The normalisers of $D_8\otimes 1$ and $Q_8\otimes 1$ in $\GL_4(3)$ are $\GammaL_1(9)\otimes \GL_2(3)$ and $\GL_2(3)\otimes \GL_2(3)$,  respectively. Both of these normalisers are subgroups of $(\GL_2(3)\otimes \GL_2(3)){:}2$, which is the normaliser in $\GL_4(3)$ of $Q_8\otimes Q_8\simeq Q_8\circ Q_8$, and $\GL_4(3)$ contains a unique conjugacy class of subgroups isomorphic to $Q_8\circ Q_8$. Thus, for each possible $E$, the group $G_0$  stabilises a decomposition $V=V_2(3)\otimes V_2(3)$, and we considered such groups in case (5) above. 
\end{proof}

In \cite[Remark~3]{Lie1987}, Liebeck observes that if $G$ is an  affine primitive permutation group of rank~$3$ on $V$, then $G_0$ has exactly two orbits on the set of one-dimensional subspaces of $V$ with one exception: those groups  in class (AS) with $S=2\nonsplit A_5$ and degree $3^4$. However, Liebeck does not specify the field over which this occurs, and the choice of field matters; indeed, in the exceptional case, $2\nonsplit A_5\unlhd G_0\leq \GammaL_2(9)\leq \GL_4(3)$, and $G_0$ is transitive on the one-dimensional subspaces of $V_2(9)$ but has two orbits on the set of one-dimensional subspaces of $V_4(3)$. In the following, we give an explicit statement of \cite[Remark~3]{Lie1987}, which  we deduce from~\cite{Lie1987}.

\begin{cor}
\label{cor:remark3}
Let $G$ be an affine primitive permutation group of rank~$3$ on $V_d(p)$ where $d\geq 1$ and $p$ is prime, and let $s^c=p^d$ where  one of the following  holds  (in the notation of Theorem~\emph{\ref{thm:rank3}}). 
\begin{itemize}
\item[(i)] $G_0$ lies in class $\mathcal{C}$, where $\mathcal{C}$ and $(c,s)$ are given by Table~\emph{\ref{tab:ar}}.
\item[(ii)] $G_0$ lies in class \emph{(E)} and either $(c,s)=(3,4)$, or $p$ is odd and $(c,s)=(d,p)$.  
\item[(iii)] $G_0$ lies in class  \emph{(AS)}, and $(c,s)=(d,p)$ unless $(S,p^d)$ is one of $(2\nonsplit A_5,3^4)$, $(2\nonsplit A_5,7^4)$, $(3\nonsplit A_6,2^6)$ or $(J_2,2^{12})$, in which case $(c,s)=(d/2,p^2)$.
\end{itemize}
 Then $G_0\leq \GammaL_c(s)$. Further, $G_0$ has two orbits on  the points of $\PG_{c-1}(s)$ unless $G_0$ lies in  \emph{(R0)} or \emph{(AS)} with $(S,p^d)=(2\nonsplit A_5,3^4)$, in which case $G_0$  is  transitive on the points of $\PG_{c-1}(s)$. 
\end{cor}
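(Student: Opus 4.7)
The plan is to deduce the corollary from Theorem~\ref{thm:rank3} together with Liebeck's Remark~3 in~\cite{Lie1987}. The work divides naturally into two tasks: verifying the containment $G_0\leq\GammaL_c(s)$ for the prescribed $(c,s)$, and counting the orbits of $G_0$ on the points of $\PG_{c-1}(s)$.

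For the containment I would walk through the classes of Theorem~\ref{thm:rank3}. In (R0), (S0), (S1) the statement is immediate, and in (R1)--(R5) it follows from the standard embeddings $\SU_n(q)\leq\GammaL_n(q^2)$, $\Omega_{2m}^\varepsilon(q)\leq\GammaL_{2m}(q)$, the natural domain of $\SL_5(q)$ on $\wedge^2 V_5(q)$, and the Suzuki and spin modules. For $G_0$ in class (T), Theorem~\ref{thm:rank3} places $G_0$ inside $(\GL_2(q)\otimes\GL_m(q)){:}\Aut(\mathbb{F}_q)\leq\GammaL_{2m}(q)$, and for (I) we have $G_0\leq\GammaL_m(q)\wr S_2\leq\GammaL_{2m}(q)$, giving $(c,s)=(2m,q)$ in both cases. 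For (E) with $(E,p^d)=(3^{1+2},2^6)$ the normaliser of $E$ in $\GL_6(2)$ lies in $\GammaU_3(2)\leq\GammaL_3(4)$, and the remaining extraspecial cases live naturally in $\GammaL_d(p)=\GL_d(p)$. For (AS), the embedding column of Table~\ref{tab:AS} places $S$ in $\SL_c(s)$ for the stated $(c,s)$; since $S$ is absolutely irreducible on $V$ and not realisable over a proper subfield of $\mathbb{F}_s$, one deduces $N_{\GammaL_d(p)}(S)\leq\GammaL_c(s)$ via~\cite[Theorems~VII.1.16 and VII.1.17]{BlaHup1981}, exactly as in the proof of Theorem~\ref{thm:rank3}.

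For the orbit count, the rank-$3$ hypothesis gives $G_0$ exactly two orbits on $V^*$, and hence at most two orbits on $\PG_{c-1}(s)$. The case (R0) is trivial because $c=1$ makes $\PG_0(s)$ a single point. In the remaining classes I would appeal to Liebeck's~\cite[Remark~3]{Lie1987}, which asserts that $G_0$ has exactly two projective orbits except when $(S,p^d)=(2\nonsplit A_5,3^4)$. What the corollary adds over and above Liebeck's remark is the specification of the field: to confirm that $\mathbb{F}_s$ is the field in which this dichotomy occurs, I would verify, class by class, that the two $G_0$-orbits on $V^*$ are unions of $\mathbb{F}_s^*$-cosets. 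This reduces to checking that $G_0\cap Z(\GL_c(s))$ contains $\mathbb{F}_s^*$, or at least acts transitively on each $\mathbb{F}_s^*$-coset of $V^*$; in every non-trivial class this follows from the normal subgroup prescribed by Theorem~\ref{thm:rank3}, and it is consistent with the divisibility of the subdegrees in Table~\ref{tab:subdegree} by $s-1$.

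The only genuinely exceptional case is $(S,p^d)=(2\nonsplit A_5,3^4)$, where $(c,s)=(2,9)$ and $\PG_1(9)$ has $10$ points. Transitivity of $G_0$ on these points is already recorded in the paragraph immediately preceding the corollary (and the fact that over the subfield $\mathbb{F}_3$ the two projective orbits of size $20$ reappear on $\PG_3(3)$ illustrates why the choice of field is essential). The main obstacle I anticipate is the bookkeeping required in classes (T), (I), and (E), where $\mathbb{F}_s$ is strictly larger than the prime field: one must verify $\mathbb{F}_s^*$-invariance of each $G_0$-orbit even when the scalar subgroup of $G_0$ is a proper subgroup of $\mathbb{F}_s^*$. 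In each such case the required invariance is supplied either by a tensor or wreath-product factor of the socle of $G_0$, or in (E) by direct inspection of the extraspecial normaliser.
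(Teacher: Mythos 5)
Your overall strategy is sound and matches what the paper actually does (the paper gives no formal proof of this corollary; it is deduced from Theorem~\ref{thm:rank3} together with Liebeck's Remark~3, with the field specification checked class by class exactly as you propose). However, your treatment of class (I) contains a genuine error. You assert that $G_0\leq\GammaL_m(q)\wr S_2\leq\GammaL_{2m}(q)$ and assign $(c,s)=(2m,q)$, but Table~\ref{tab:ar} assigns $(c,s)=(d,p)$ to class (I), and the containment $\GammaL_m(q)\wr S_2\leq\GammaL_{2m}(q)$ is false whenever $q$ is not prime: an element $(g_1\sigma_1,g_2\sigma_2)$ with $\sigma_1\neq\sigma_2$ is not $\mathbb{F}_q$-semilinear on $V_m(q)\oplus V_m(q)$. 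The paper warns about precisely this pitfall immediately after Theorem~\ref{thm:rank3} ("the groups in class (I) are not necessarily subgroups of $\GammaL_{2m}(q)$", with $\GammaL_m(q)\wr S_2$ as the explicit counterexample) and again in Remark~\ref{remark:hypfail}. Your later remark that the "main obstacle" is bookkeeping in classes (T), (I) and (E) "where $\mathbb{F}_s$ is strictly larger than the prime field" shows you carried this wrong assignment through: for class (I) the corollary takes $s=p$, so the containment $G_0\leq\GL_d(p)$ is trivial and the two orbits $V_1^*\cup V_2^*$ and $V_n(p)^*\times V_n(p)^*$ are visibly unions of $\mathbb{F}_p^*$-cosets. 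You need to replace your class (I) paragraph accordingly.

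A secondary, lesser point: your claimed reduction of the orbit count to "$G_0\cap Z(\GL_c(s))$ contains $\mathbb{F}_s^*$" is neither necessary nor the cleanest route. What is actually needed is that each $G_0$-orbit on $V^*$ is a union of $\mathbb{F}_s^*$-cosets, and in classes (R1)--(R5), (T) and (S) this holds for geometric reasons (isotropic/non-isotropic, singular/non-singular, simple/non-simple tensors, and $\mathbb{F}_q$-subspace orbits are all preserved by $\mathbb{F}_s^*$) irrespective of whether $G_0$ contains the full scalar group; once each coset lies in a single orbit, transitivity of $G_0$ on each of its two vector orbits forces exactly two projective orbits. With the class (I) correction and this adjustment, your argument goes through.
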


\begin{table}[!h]
\renewcommand{\baselinestretch}{1.1}\selectfont
\centering
\begin{tabular}{ c | c c c c c c c c c c}
\hline 
$\mathcal{C}$ & (R0) & (R1) & (R2) & (R3) & (R4) & (R5)  & (T1)--(T3) & (S0) & (S1)--(S2) & (I)\\
 $(c,s)$ & $(1,p^d)$ & $(n,q^2)$ & $(2m,q)$ & $(10,q)$ & $(4,q)$ & $(n,q)$  & $(2m,q)$ & $(2,q^3)$ & $(n,q^2)$ & $(d,p)$ \\ 
 \hline
\end{tabular}
\caption{For $G_0$ in class $\mathcal{C}$, the  parameters $(c,s)$ where  $G_0\leq \GammaL_c(s)$}
\label{tab:ar}
\end{table}

We now use the pair $(c,s)$ from Corollary~\ref{cor:remark3} to  state  Hypothesis~\ref{hyp:AGgroups}, which was alluded to in Example~\ref{example:AG} and Theorem~\ref{thm:main}. 

\begin{hyp}
\label{hyp:AGgroups}
Let $(G,a,r)$ be defined as follows:  $G$  is an affine primitive permutation group of rank~$3$ on $V:=V_d(p)$ where $d\geq 2$ and  $p$ is prime, $r^a=p^d$ and one of the following holds,  where  $s^c=p^d$ and $(c,s)$ is given by Corollary~\ref{cor:remark3}.
\begin{itemize}
\item[(i)] $G_0$ lies in one of   (R1)--(R5), (T1)--(T3), (S0)--(S2) or (E), and $\mathbb{F}_r$ is a subfield of $\mathbb{F}_s$. 
\item[(ii)] $G_0$ lies in (AS), and $\mathbb{F}_r$ is a subfield of $\mathbb{F}_s$ unless $(S,p^d)=(2\nonsplit A_5,3^4)$, in which case $r=p$.
\item[(iii)] $V=V_b(r)\oplus V_b(r)$ and $G_0\leq (\GammaL_b(r)\wr S_2)\cap \GammaL_{2b}(r)$ where $a=2b$.
\end{itemize}
\end{hyp}

\begin{cor}
\label{cor:AGgroups}
If $(G,a,r)$ satisfies  Hypothesis~\emph{\ref{hyp:AGgroups}}, then $G_0\leq \GammaL_a(r)$ and $G_0$ has two orbits on the points of $\PG_{a-1}(r)$.
\end{cor}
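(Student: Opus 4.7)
The plan is to verify the two assertions of the corollary --- the containment $G_0\leq\GammaL_a(r)$ and the existence of exactly two orbits on $\PG_{a-1}(r)$ --- by dispatching the three cases of Hypothesis~\ref{hyp:AGgroups} and in each case reducing to Corollary~\ref{cor:remark3}. First I will establish the containment. In cases~(i) and~(ii), Corollary~\ref{cor:remark3} already supplies $G_0\leq\GammaL_c(s)$, and $\mathbb{F}_r$ is a subfield of $\mathbb{F}_s$ in every subcase (including the exceptional one $(S,p^d)=(2\nonsplit A_5,3^4)$, where $r=p=3$ and $s=9$). A restriction-of-scalars argument then yields $\GammaL_c(s)\leq\GammaL_a(r)$ with $a=c[\mathbb{F}_s:\mathbb{F}_r]$: any Frobenius automorphism of $\mathbb{F}_s$ restricts to an automorphism of $\mathbb{F}_r$, so every $\mathbb{F}_s$-semilinear map remains $\mathbb{F}_r$-semilinear, and the identity $r^a=s^c=p^d$ confirms the dimensions. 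In case~(iii) the containment is built into the hypothesis.

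For the orbit count I will use a uniform argument. Any element of $\GammaL_a(r)$ sends the $\mathbb{F}_r$-line $\langle v\rangle_r$ to $\langle v^g\rangle_r$, so $G_0$ acts on $\PG_{a-1}(r)$ and the natural projection $\pi\colon V^*\to\PG_{a-1}(r)$ is $G_0$-equivariant. Since $G$ has rank $3$, $G_0$ has exactly two orbits on $V^*$, hence at most two on $\PG_{a-1}(r)$. To obtain equality it suffices to show that each of the two $G_0$-orbits on $V^*$ is $\mathbb{F}_r^*$-invariant, for then they descend to disjoint, nonempty, complementary subsets of $\PG_{a-1}(r)$.

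The $\mathbb{F}_r^*$-invariance is verified case by case. In cases~(i) and~(ii) with $(S,p^d)\neq(2\nonsplit A_5,3^4)$, Corollary~\ref{cor:remark3} provides two $G_0$-orbits already on $\PG_{c-1}(s)$, so the $G_0$-orbits on $V^*$ are unions of $\mathbb{F}_s^*$-cosets and in particular $\mathbb{F}_r^*$-invariant. In case~(iii), the two orbits on $V^*$ are $(V_b(r)\oplus 0)^*\cup(0\oplus V_b(r))^*$ and its complement in $V^*$ (arguing exactly as in the proof of Theorem~\ref{thm:A2groups}, which carries over verbatim to any subfield $\mathbb{F}_r$ over which the decomposition is defined), both visibly closed under scaling by $\mathbb{F}_r^*$. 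The only genuine obstacle is the exceptional subcase of~(ii) with $(S,p^d)=(2\nonsplit A_5,3^4)$: there $G_0$ is merely transitive on $\PG_1(9)$, so the subfield argument does not apply. I will instead use the embedding $2\nonsplit A_5\leq\SL_2(9)$ from Table~\ref{tab:AS}, whose central involution is $-I$; this forces $-1\in G_0$, so the two $G_0$-orbits on $V_4(3)^*$ (of sizes $40$ and $40$, from Table~\ref{tab:AS}) are $\mathbb{F}_3^*=\mathbb{F}_r^*$-invariant. Once this subcase is dealt with, the corollary follows.
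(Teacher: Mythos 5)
Your proof is correct and follows essentially the same route as the paper: reduce cases (i) and (ii) to Corollary~\ref{cor:remark3} via the subfield containment $\GammaL_c(s)\leq\GammaL_a(r)$, handle case (iii) directly from the stabilised decomposition, and settle the exceptional subcase $(S,p^d)=(2\nonsplit A_5,3^4)$ by observing $-1\in G_0$ (you get this from the central involution of $\SL_2(5)<\SL_2(9)$, the paper from irreducibility of $2\nonsplit A_5$ — equivalent observations). Your reformulation of the two-orbit criterion as $\mathbb{F}_r^*$-invariance of the $G_0$-orbits on $V^*$ is just the paper's condition $\langle u\rangle_{\mathbb{F}_r}^*\subseteq u^{G_0}$ in different words.
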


\begin{proof}
Suppose that $(G,a,r)$ satisfies  Hypothesis~\ref{hyp:AGgroups}, and let $V:=V_d(p)$ where $p^d=r^a$ and $p$ is prime.  Now $G_0$ has two orbits on $V^*$. In particular, if $G_0\leq \GammaL_m(q)$ where $q^m=p^d$, then  $G_0$ has two orbits on the points of $\PG_{m-1}(q)$ if and only if $\langle u\rangle_{\mathbb{F}_q}^*\subseteq u^{G_0}$ for all $u\in V^*$. 
Note also that if $t^n=q^m$ where $\mathbb{F}_t$ is a subfield of $\mathbb{F}_q$, then  $\GammaL_m(q)\leq \GammaL_n(t)$. If $V=V_b(r)\oplus V_b(r)$ and $G_0\leq (\GammaL_b(r)\wr S_2)\cap \GammaL_{2b}(r)$ where $a=2b$, then clearly $G_0\leq \GammaL_a(r)$ and $G_0$ has two orbits on the points of $\PG_{a-1}(r)$.   If $G_0$ lies in one of   (R1)--(R5), (T1)--(T3), (S0)--(S2), (E) or (AS) with $(S,p^d)\neq (2\nonsplit A_5,3^4)$,  and if $s^c=p^d$ where $(c,s)$ is given by Corollary~\ref{cor:remark3}, then by Corollary~\ref{cor:remark3}, $G_0\leq \GammaL_c(s)$ and $G_0$ has two orbits on the points of  $\PG_{c-1}(s)$, so if  $\mathbb{F}_r$ is a subfield of $\mathbb{F}_s$, then $G_0\leq \GammaL_a(r)$ and $G_0$ has two orbits on the points of $\PG_{a-1}(r)$. 
Lastly, suppose that $G_0$ lies in class (AS) with $(S,p^d)= (2\nonsplit A_5,3^4)$ and $r=p$. Clearly $G_0\leq \GL_4(3)$, and $-1\in G_0$ since $2\nonsplit A_5\unlhd G_0$ and $2\nonsplit A_5$ is irreducible on $V$, so $\{u,-u\}\subseteq u^{G_0}$ for all $u\in V^*$. Thus $G_0$ has two orbits on the points of $\PG_{3}(3)$.
\end{proof}

\begin{remark}
\label{remark:hypfail}
Let $V:=V_b(r)\oplus V_b(r)$ and $p^d=r^{2b}$ where $p$ is prime, $r$ is not prime and $b\geq 1$. Let $G:=\AGammaL_b(r)\wr S_2$. Then $G_0=\GammaL_b(r)\wr S_2$, and $G$ is an affine primitive permutation group of rank~$3$ on $V_d(p)$ by Lemma~\ref{lemma:primitive}. However, we claim that $(G,2b,r)$ does not satisfy Hypothesis~\ref{hyp:AGgroups}. Suppose for a contradiction that it does. Now (iii) does not hold since $G_0$ is not a subgroup of $\GammaL_{2b}(r)$ (see~\S\ref{s:(I)} for more details), so (i) or (ii) holds. Recall from Table~\ref{tab:subdegree}  that $G$ has subdegrees $2(r^b-1)$ and $(r^b-1)^2$. 
If (ii) holds, then $G_0$ lies in class (AS) and $r=s=p^2$, so $(S,p^d)$ is  $(2\nonsplit A_5,7^4)$ or $(J_2,2^{12})$, but this is impossible by the subdegrees listed in Table~\ref{tab:AS}. Thus (i) holds. If $G_0$ lies in class (E), then $r=s=4$ and $p^d=2^6$, a contradiction. Thus $G_0$ lies in one of  (R1)--(R5), (T1)--(T3) or  (S0)--(S2). By Table~\ref{tab:subdegree}, there is a subdegree $t$ of $G$ that is divisible by~$p$. Then $t=2(r^b-1)$,  $p=2$, and $2$ is the highest power of $p$ that divides~$t$. Since  $r\leq s$ and $r$ is not prime,  it follows that $G_0$ lies in class (S1) with $(n,q)=(2,2)$ and $(b,r)=(1,4)$,  but then $\SL_2(2)\unlhd G_0=\SL_2(2)\wr S_2$, a contradiction.
\end{remark}

 \section{Dependent partial linear spaces}
\label{s:dep}

Let $V:=V_n(q)$ where $q$ is a prime power and $n\geq 1$. A partial linear space $\mathcal{S}:=(V,\mathcal{L})$ is $\mathbb{F}_q$\textit{-dependent} if $L\subseteq  \langle x\rangle_{\mathbb{F}_q}$ for every $L\in\mathcal{L}_0$ and $x\in L^*$, and $\mathbb{F}_q$\textit{-independent} otherwise. Observe that if $\mathcal{S}$ is $\mathbb{F}_q$-dependent, then it is $\mathbb{F}_{q^m}$-dependent for all divisors $m$ of $n$ since  we may view $V$ as a vector space $V_{n/m}(q^m)$.  In other words, if $\mathcal{S}$ is $\mathbb{F}_q$-independent, then it is $\mathbb{F}_r$-independent for all subfields $\mathbb{F}_r$ of $\mathbb{F}_q$.

Using Kantor's classification~\cite[Proposition~4.1]{Kan1985} of the  $G$-affine linear spaces for which $G$ is $2$-transitive and $G\leq \AGammaL_1(q)$ (see also  Theorem~\ref{thm:Kantor}), we prove the following.

\begin{lemma}
\label{lemma:Kantor}
Let $V:=V_n(q)$ where $n\geq 1$ and $q$ is a prime power. Let $(V,\mathcal{L})$ be  a $G$-affine  partial linear space where $G_0\leq \GammaL_n(q)$. Let $L\in \mathcal{L}_0$ and $u\in L^*$. If  $\langle u\rangle^* \subseteq u^{G_0}$, then $\{\lambda \in \mathbb{F}_q:\lambda u\in L\}$ is either $\{0,1\}$ or a subfield of $\mathbb{F}_q$.
\end{lemma}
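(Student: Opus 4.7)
The plan is to restrict $\mathcal{S}$ to the $\mathbb{F}_q$-line $\mathcal{P} := \langle u\rangle$, identify $\mathcal{P}$ with $\mathbb{F}_q$ via $\lambda u\leftrightarrow\lambda$, and then apply Kantor's classification of $2$-transitive affine linear spaces (Theorem~\ref{thm:Kantor}). Under this identification the set $M := L\cap\mathcal{P}$ corresponds to $F$, and $\{0,1\}\subseteq F$ because $0,u\in L$.

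First I would produce a $2$-transitive group on $\mathcal{P}$. Set $H := G_{0,\mathcal{P}}$. The hypothesis $\langle u\rangle^*\subseteq u^{G_0}$ supplies, for each $\lambda\in\mathbb{F}_q^*$, some $g\in G_0$ with $u^g=\lambda u$; since $g\in\GammaL_n(q)$ is $\mathbb{F}_q$-semilinear, $\mathcal{P}^g=\langle\lambda u\rangle=\mathcal{P}$, so $g\in H$. Thus $H^\mathcal{P}$ is transitive on $\mathcal{P}^*$ and lies in $\GammaL_1(q)$. Setting $\mathcal{S}':=\mathcal{S}\cap\mathcal{P}$, Lemma~\ref{lemma:intersect}(ii) gives $K:=G_\mathcal{P}^\mathcal{P}\leq\Aut(\mathcal{S}')$. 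Since $V\leq G$, each translation $\tau_v$ with $v\in\mathcal{P}$ stabilises $\mathcal{P}$ and acts as a translation on it, so the full translation group of $\mathcal{P}$ is contained in $K$. Combined with the transitivity of $H^\mathcal{P}$ on $\mathcal{P}^*$, this shows that $K\leq\AGammaL_1(q)$ acts $2$-transitively on $\mathcal{P}$ and has socle $\mathcal{P}$.

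I would then split on $|M|$. The set $M$ is a line of $\mathcal{S}'$ with $|M|\geq 2$. If $|M|=2$ then $F=\{0,1\}$ and we are done. Otherwise $|M|\geq 3$, and $2$-transitivity of $K$ ensures that every pair of distinct points of $\mathcal{P}$ lies on some $K$-image of $M$, so $\mathcal{S}'$ is a $K$-affine linear space of line-size at least $3$ and Theorem~\ref{thm:Kantor} applies. Cases (iii) and (iv) force $K_0$ to contain $\SL_2(5)$ or $\SL_2(13)$, which is impossible since $K_0\leq\GammaL_1(q)$ is metacyclic (hence solvable). In case (i) the unique line is $\mathcal{P}$, so $F=\mathbb{F}_q$. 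In case (ii) with $m=1$ every line has the form $u'F'+v$ for a subfield $F'$ of $\mathbb{F}_q$ with $|F'|\neq 2$, and the line through $0$ and $1$ must be $u'F'$ with $u'\in F'$, giving $M=F'$; so $F$ is a subfield of $\mathbb{F}_q$ in all remaining cases.

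The argument is mostly mechanical once $\mathcal{S}'$ and $K$ have been set up correctly, and the only point requiring care is the exclusion of cases (iii) and (iv) of Theorem~\ref{thm:Kantor}, which follows immediately from the solvability of $\GammaL_1(q)$.
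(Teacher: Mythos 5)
Your proof is correct and follows essentially the same route as the paper: restrict to $U:=\langle u\rangle$, use $\langle u\rangle^*\subseteq u^{G_0}$ to produce a $2$-transitive subgroup of $\AGammaL_1(q)$ acting on $\mathcal{S}\cap U$, and apply Theorem~\ref{thm:Kantor} after disposing of the case $|M|=2$. The only imprecision is your reason for excluding case (iii) of Theorem~\ref{thm:Kantor}: subcase (iii)(a) requires only $D_8\circ Q_8\unlhd G_0$, not $\SL_2(5)\leq G_0$, so solvability alone does not rule it out — but the exclusion still holds (e.g.\ $D_8\circ Q_8$ is not metacyclic, so it does not embed in $\GammaL_1(3^4)$), and in any case the ``Moreover'' clause of Theorem~\ref{thm:Kantor} already states that a group in class (H0) forces conclusion (i) or (ii), which is exactly what the paper invokes.
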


\begin{proof}
  Let  $\mathcal{S}:=(V,\mathcal{L})$. Let $U:=\langle u\rangle$ and $H:=U{:}G_{0,U}^{U}\leq \AGammaL_1(q)$. By Lemma~\ref{lemma:intersect}, $\mathcal{S}\cap U$ is a partial linear space, and $H\leq \Aut(\mathcal{S}\cap U)$. If $\lambda u, \mu u\in U^*$, then by assumption there exists $g\in G_0$ such that $(\lambda u)^g=\mu u$, so $U^g=U$. Thus $H$ acts $2$-transitively on $U$.  Let $F:=\{\lambda \in \mathbb{F}_q:\lambda u\in L\}$ and $k:=|F|\geq 2$. If $k=2$, then $F=\{0,1\}$, so we may assume that $k\geq 3$. Now  $\mathcal{S}\cap U$ is  an $H$-affine linear space whose line set contains $L\cap U$, and $H$ lies in class (H0), so Theorem~\ref{thm:Kantor}(i) or (ii) holds. In either case, 
 $F$ is a subfield of $\mathbb{F}_q$. 
\end{proof}

Let $V:=V_n(q)$ where $n\geq 2$ and  $q$ is a prime power such that $q>2$. Let $G$ be an affine  permutation group of rank~$3$ on $V$, so that $G_0\leq \GammaL_n(q)$ (see~\S\ref{ss:affine}).  Observe that the following three statements are equivalent.
\begin{itemize}
\item[(i)] $G_0$ has two orbits on the points of $\PG_{n-1}(q)$.
\item[(ii)] $\langle u\rangle^*\subseteq u^{G_0}$ for all $u\in V^*$.
\item[(iii)] $\langle x\rangle^*\subseteq x^{G_0}$ for some $x\in V^*$.
\end{itemize} 
Now suppose that $G_0$ has two orbits $\Sigma_1$ and $\Sigma_2$ on the points of $\PG_{n-1}(q)$, and let $\langle x_1\rangle\in \Sigma_1$ and $\langle x_2\rangle\in \Sigma_2$. Let $X_1$ and $X_2$ be the orbits of $G_0$ on $V^*$ containing $x_1$ and $x_2$, respectively, and note that $V^*=X_1\cup X_2$. In Example~\ref{example:AG} (for $G$ primitive), we constructed two $G$-affine proper partial linear spaces $\mathcal{S}_i:=(V,\mathcal{L}_i)$, where $\mathcal{L}_i:=\{\langle u\rangle+v:\langle u\rangle\in\Sigma_i,v\in V\}=\{\langle u\rangle+v: u\in X_i,v\in V\}$ for $i\in\{1,2\}$. Observe that $\mathcal{S}_1$ and $\mathcal{S}_2$ are $\mathbb{F}_q$-dependent partial linear spaces. 
Let  $F:=\mathbb{F}_r$  be a subfield of $\mathbb{F}_q$ with $r> 2$ where $r^a=q^n$. Now $G_0\leq \GammaL_n(q)\leq \GammaL_a(r)$, and $G_0$ has two orbits $\Delta_1$ and $\Delta_2$ on the points of $\PG_{a-1}(r)$, where $\langle x_1\rangle_F\in \Delta_1$ and $\langle x_2\rangle_F\in \Delta_2$. Thus both $(V,\{\langle u\rangle_F+v: u\in X_1,v\in V\})$ and $(V,\{\langle u\rangle_F+v: u\in X_2,v\in V\})$ are $\mathbb{F}_q$-dependent $G$-affine proper   partial linear spaces from Example~\ref{example:AG} (when $G$ is primitive). Using Lemma~\ref{lemma:Kantor}, we now prove that every $\mathbb{F}_q$-dependent $G$-affine proper partial linear space has this form for some subfield $F$.

\begin{prop}
\label{prop:dep}
Let $V:=V_n(q)$ where  $n\geq 2$ and $q$ is a prime power, and let $G$ be an affine permutation group of rank~$3$ on $V$ where $\langle x\rangle^*\subseteq x^{G_0}$ for some $x\in V^*$. Let  $(V,\mathcal{L})$ be an $\mathbb{F}_q$-dependent $G$-affine proper partial linear space. Then there exists a subfield $F$ of $\mathbb{F}_q$ with $|F|> 2$ and an orbit $X$ of $G_0$ on $V^*$ 
 such that $\mathcal{L}=\{\langle u\rangle_{F}+v:u\in X,v\in V\}$.
\end{prop}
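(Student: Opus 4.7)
The plan is to reduce to describing the lines through the origin. Since $V\leq G$ acts by translation on $V$, the full line set $\mathcal{L}$ is the $V$-orbit of $\mathcal{L}_0$, so it will suffice to pin down $\mathcal{L}_0$; the target orbit $X$ will simply be $\mathcal{S}(0)$, which is a single $G_0$-orbit by rank~$3$. Before anything else I will upgrade the hypothesis: the assumption that $\langle x\rangle^*\subseteq x^{G_0}$ holds for \emph{some} $x\in V^*$ in fact holds for \emph{every} $u\in V^*$. Indeed, writing an arbitrary $g\in G_0\leq\GammaL_n(q)$ as $\sigma$-semilinear, the identity $\lambda\cdot x^g=(\lambda^{\sigma^{-1}}x)^g$ shows that $x^{G_0}$ is closed under $\mathbb{F}_q^*$-multiplication, and the other $G_0$-orbit on $V^*$, being its complement, is then $\mathbb{F}_q^*$-closed as well.

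Next I will extract a subfield from each line through $0$. Fixing $L\in\mathcal{L}_0$ and $u\in L^*$, $\mathbb{F}_q$-dependence gives $L\subseteq\langle u\rangle_{\mathbb{F}_q}$, so $L=F_u\cdot u$ where $F_u:=\{\lambda\in\mathbb{F}_q:\lambda u\in L\}$. The upgraded hypothesis puts us squarely in the setting of Lemma~\ref{lemma:Kantor}, which forces $F_u$ to be $\{0,1\}$ or a subfield of $\mathbb{F}_q$. Properness together with the flag-transitivity guaranteed by Remark~\ref{remark:PLSrank3} makes $\mathcal{S}$ line-transitive with common line-size $k\geq 3$, so $F_u$ must be a subfield of size $k>2$. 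A short calculation, namely $F_{\mu u}=\mu^{-1}F_u$ together with $\mu,\mu^{-1}\in F_u$, shows $F_u$ depends only on $L$ and not on the choice of $u\in L^*$; call this subfield $F_L$.

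Finally I will show that $F_L$ does not depend on $L$. The group $G_0$ is transitive on $\mathcal{L}_0$: given $L,L'\in\mathcal{L}_0$, any $g\in G_0$ sending $u\in L^*$ to $u'\in(L')^*$ (which exists because both belong to the single $G_0$-orbit $X=\mathcal{S}(0)$) must send $L$ to $L'$ by partial linearity, since both $L^g$ and $L'$ pass through $0$ and $u'$. For such $g$ with associated field automorphism $\sigma$, $L^g=F_L^\sigma\cdot u^g$, and $\sigma$ fixes every subfield of $\mathbb{F}_q$ setwise, so $F_{L^g}=F_L$. Hence all $F_L$ coincide with a common subfield $F$ of $\mathbb{F}_q$ with $|F|>2$, giving $\mathcal{L}_0=\{\langle u\rangle_F:u\in X\}$, and translating by $V$ yields $\mathcal{L}=\{\langle u\rangle_F+v:u\in X,v\in V\}$. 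The only places where care is really needed are the hypothesis upgrade and the invariance of $F_L$ under $G_0$, since both pivot on correctly tracking the field automorphism attached to a semilinear map; once those two book-keeping steps are handled the result falls out of Lemma~\ref{lemma:Kantor}.
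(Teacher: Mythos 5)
Your proof is correct and follows essentially the same route as the paper's: both arguments hinge on Lemma~\ref{lemma:Kantor} applied to a line through $0$, with properness ruling out the $\{0,1\}$ case. The paper's version is terser — it treats your "hypothesis upgrade" as the observation stated just before the proposition and leaves the $G_0$-invariance of the subfield implicit in the final sentence — but the substance is identical.
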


\begin{proof}
By assumption, $G_0\leq \GammaL_n(q)$. There exists $L\in \mathcal{L}_0$ and $w\in L^*$.  Since $\langle x\rangle^*\subseteq x^{G_0}$, it follows that $\langle w\rangle^*\subseteq w^{G_0}$. By assumption, $L\subseteq \langle w\rangle$ and $|L|\geq 3$, so by Lemma~\ref{lemma:Kantor}, there exists a subfield $F$ of $\mathbb{F}_q$ with $|F|> 2$ such that $L=\langle x\rangle_{F}$. Hence $\mathcal{L}=\{\langle u\rangle_{F}+v:u\in w^{G_0},v\in V\}$.
\end{proof}

Let $G$ be an affine primitive permutation group of rank~$3$ on $V:=V_d(p)$ where $d\geq 1$ and $p$ is prime. If $G_0$ belongs to one of  (R1)--(R5), (T1)--(T3) or  (S0)--(S2), then by Corollary~\ref{cor:remark3}, $V=V_c(s)$  and $G_0\leq \GammaL_c(s)$  where  $(c,s)$ is given by Table \ref{tab:ar}, and $\langle x\rangle_{\mathbb{F}_s}^*\subseteq x^{G_0}$ for all $x\in V^*$, 
so each $\mathbb{F}_s$-dependent $G$-affine proper partial linear space is given by Proposition~\ref{prop:dep} and therefore  Example~\ref{example:AG} with respect to   some triple $(G,\log_r(p^d),r)$ that satisfies Hypothesis~\ref{hyp:AGgroups}. Thus for those $G_0$ in classes (R1)--(R5), (T1)--(T3) and   (S0)--(S2), it remains to consider $\mathbb{F}_s$-independent $G$-affine proper partial linear spaces. This we do in \S\ref{s:(R1)}--\ref{s:(S0)}, and we will see that Lemma~\ref{lemma:basic}  imposes severe restrictions on the  possible examples; in particular,  for $G_0$ in classes (R1)--(R5), there is a unique $\mathbb{F}_s$-independent $G$-affine proper partial linear space, and  this example arises in class (R2) with $G=3^4{:}M_{10}\simeq 3^4{:}(\Omega_4^-(3).2)$. However, for  $G_0$ in classes (I0)--(I8), since $G_0$ need not be a subgroup of $\GammaL_{2m}(q)$,  it is no longer convenient to make the distinction between $\mathbb{F}_q$-dependent and $\mathbb{F}_q$-independent  $G$-affine proper partial linear spaces in our proofs; instead, in \S\ref{s:(I)}, we develop methods for building such partial linear spaces from $2$-transitive affine linear spaces. Similarly,  for $G_0$ in classes (E) and (AS), our methods are primarily computational (see \S\ref{s:(E)} and \S\ref{s:(AS)}), so it is again more convenient not to make the distinction between dependent and independent  $G$-affine proper partial linear spaces. 

The next two results are also  consequences of Lemma~\ref{lemma:Kantor}. When $q^n=p^d$ for a prime $p$, we say that a subset  $L$ of $V:=V_n(q)$ is an \textit{affine} $\mathbb{F}_p$\textit{-subspace} of $V$ if $L$ is an affine subspace of the $\mathbb{F}_p$-vector space $V$.

\begin{lemma}
\label{lemma:closed}
Let $V:=V_n(q)$ where  $n\geq 1$ and $q$ is a power of a prime $p$, and let $G$ be an affine permutation group of rank~$3$ on $V$ where $\langle x\rangle^*\subseteq x^{G_0}$ for some $x\in V^*$. Let  $\mathcal{S}:=(V,\mathcal{L})$ be a $G$-affine proper partial linear space whose lines are affine $\mathbb{F}_p$-subspaces of $V$. Then there exists a subfield $F$ of $\mathbb{F}_q$ such that each $L\in\mathcal{L}_0$ is an $F$-subspace of $V$ with the property that $F=\{\lambda\in\mathbb{F}_q:\lambda u\in L\}$ for all $u\in L^*$.
\end{lemma}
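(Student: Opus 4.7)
The plan is to show, first, that for every line $L\in\mathcal{L}_0$ and every $u\in L^*$ the set $F_{L,u}:=\{\lambda\in\mathbb{F}_q:\lambda u\in L\}$ is a subfield of $\mathbb{F}_q$ containing $\mathbb{F}_p$; second, that $F_{L,u}$ does not depend on the flag $(L,u)$ through $0$; and finally, to assemble these facts into the statement.  By the equivalence of conditions (i)--(iii) recorded before Proposition~\ref{prop:dep}, the hypothesis $\langle x\rangle^*\subseteq x^{G_0}$ for some $x\in V^*$ upgrades to $\langle u\rangle^*\subseteq u^{G_0}$ for every $u\in V^*$.  Fixing $(L,u)$, I would then apply Lemma~\ref{lemma:Kantor} to conclude that $F_{L,u}$ is either $\{0,1\}$ or a subfield of $\mathbb{F}_q$; since $L$ contains $0$ and is an affine $\mathbb{F}_p$-subspace of $V$ it is actually an $\mathbb{F}_p$-subspace, so $\mathbb{F}_p\subseteq F_{L,u}$ and $F_{L,u}$ is in fact a subfield containing $\mathbb{F}_p$.

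Next, I would show that $G_0$ acts transitively on the set of pairs $(L,u)$ with $L\in\mathcal{L}_0$ and $u\in L^*$.  Since $\mathcal{S}$ is proper, $G$ is flag-transitive, hence line-transitive, by Remark~\ref{remark:PLSrank3}.  Given $L,L'\in\mathcal{L}_0$, write $g=\tau_v h$ with $v\in V$ and $h\in G_0$ such that $L^g=L^h+v=L'$.  Because $h\in\GammaL_n(q)$ is $\mathbb{F}_p$-linear, $L^h$ is again an $\mathbb{F}_p$-subspace of $V$, and since $0\in L'$ one gets $-v\in L^h$, whence $v\in L^h$ and $L^h+v=L^h=L'$.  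Thus $G_0$ is transitive on $\mathcal{L}_0$, and combined with the transitivity of $G_{0,L}$ on $L^*$ guaranteed by Lemma~\ref{lemma:necessary}(ii), this yields the claimed transitivity on pairs.

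I would then exploit this transitivity together with semilinearity.  For any $g\in G_0$ mapping $(L,u)$ to $(L',u')$, there exists $\sigma\in\Aut(\mathbb{F}_q)$ with $(\lambda u)^g=\lambda^\sigma u'$ for all $\lambda\in\mathbb{F}_q$, so $F_{L',u'}=F_{L,u}^{\sigma}$; but every subfield of $\mathbb{F}_q$ is $\Aut(\mathbb{F}_q)$-invariant, whence $F_{L',u'}=F_{L,u}$.  Denote this common subfield by $F$.  For any $L\in\mathcal{L}_0$, the set $L$ is an additive subgroup of $V$ and $Fu\subseteq L$ for each $u\in L^*$ by definition of $F$, so $L$ is closed under scalar multiplication by $F$ and is therefore an $F$-subspace of $V$; and by construction $F=F_{L,u}$ for every $u\in L^*$.

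I expect the main obstacle to be establishing the transitivity of $G_0$ on $\mathcal{L}_0$: flag-transitivity only directly yields transitivity of $G$, and the trick is to use the $\mathbb{F}_p$-subspace structure of lines through $0$ to absorb the accompanying translation, reducing a general $g\in G$ back to an element of $G_0$.
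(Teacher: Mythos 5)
Your proposal is correct and follows essentially the same route as the paper: apply Lemma~\ref{lemma:Kantor} (after upgrading the hypothesis to all of $V^*$) together with the $\mathbb{F}_p$-subspace assumption to see that each $F_{L,u}$ is a subfield containing $\mathbb{F}_p$, then use semilinearity and the $\Aut(\mathbb{F}_q)$-invariance of subfields to show $F_{L,u}$ is independent of the flag. The only cosmetic difference is that the paper deduces constancy of $F_u$ along a fixed line from the block property of $L^*$ rather than from $2$-transitivity of $G_L$, and it leaves the transitivity of $G_0$ on $\mathcal{L}_0$ (your translation-absorption step) implicit.
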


\begin{proof}
Let $L\in\mathcal{L}_0$ and  $B:=L^*$. By Lemma~\ref{lemma:necessary}, $B$ is a block of $G_0$ on $\mathcal{S}(0)$. For $u\in B$, let $F_u:=\{\lambda\in \mathbb{F}_{q}:\lambda u\in L\}$. First we claim that $F_u$ is a subfield of $\mathbb{F}_q$ for all $u\in B$. Let $u\in B$, and note that $\{0,1\}\subseteq F_u$.  If $|F_u|\geq 3$, then the claim holds by Lemma~\ref{lemma:Kantor}. Otherwise, $|F_u|=2$, but $|F_u|\geq p$ since $L$ is an $\mathbb{F}_p$-subspace of $V$, so $p=2$, and the claim follows. Let $u,v\in B$. Now $v=u^g$ for some  $g\in G_0$, so $B=B^g$, and $g$ is $\sigma$-semilinear for some $\sigma\in\Aut(\mathbb{F}_q)$, so   $F_v=F_u^\sigma=F_u$. Thus we may define $F:=F_u$ for $u\in B$, and $L$ is an $F$-subspace of $V$. If $L'\in\mathcal{L}_0$, then $L'=L^g$ for some $g\in G_0$, so $L'$ also has the desired structure.
\end{proof}

\begin{lemma}
\label{lemma:affineplus}
Let $V:=V_n(q)$ where  $n\geq 1$ and $q$ is a power of a prime $p$, and let $G$ be an affine permutation group of rank~$3$ on $V$ where $\langle x\rangle^*\subseteq x^{G_0}$ for some $x\in V^*$. Let  $\mathcal{S}:=(V,\mathcal{L})$ be a $G$-affine proper partial linear space. If $|L\cap \langle u\rangle|\geq 3$ for some $L\in \mathcal{L}_0$ and $u\in L^*$, then every line of $\mathcal{S}$ is an affine $\mathbb{F}_p$-subspace of $V$.
\end{lemma}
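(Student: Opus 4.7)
The plan is to reduce the statement to Lemma~\ref{lemma:affine}(iii): I will find a nonzero translation that fixes the given line $L$ setwise, and then invoke Lemma~\ref{lemma:affine} to deduce that every line of $\mathcal{S}$ is an affine $\mathbb{F}_p$-subspace of $V$.

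First I would verify that $\langle u\rangle^* \subseteq u^{G_0}$ for the particular $u$ in the hypothesis. The assumption $\langle x\rangle^* \subseteq x^{G_0}$ for some $x\in V^*$, together with the rank~$3$ action, forces $G_0$ to have two orbits on $\PG_{n-1}(q)$, and this in turn yields $\langle v\rangle^* \subseteq v^{G_0}$ for every $v\in V^*$: if $\langle v\rangle$ contained points from both $G_0$-orbits on $V^*$, replacing $v$ by a suitable scalar multiple in the orbit of $x$ would contradict $\langle x\rangle^*\subseteq x^{G_0}$. Since $G$ is affine on $V_n(q)$ we have $G_0\leq \GammaL_n(q)$, so I can apply Lemma~\ref{lemma:Kantor} to $L$ and $u$: the set $F := \{\lambda\in\mathbb{F}_q : \lambda u\in L\}$ is either $\{0,1\}$ or a subfield of $\mathbb{F}_q$. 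But $|F| = |L\cap\langle u\rangle|\geq 3$ by hypothesis, so $F$ is a subfield of $\mathbb{F}_q$ with $|F|\geq 3$.

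Next I exploit additive closure. Since $F$ is a subfield, $Fu$ is an additive subgroup of $(V,+)$ containing $u$, so $Fu + u = Fu$; that is, $(Fu)^{\tau_u} = Fu$. Because $Fu\subseteq L$, this gives $Fu \subseteq L\cap L^{\tau_u}$. Now $V\leq G\leq \Aut(\mathcal{S})$ implies $L^{\tau_u}\in\mathcal{L}$, and any two distinct lines of a partial linear space meet in at most one point, whereas $|L\cap L^{\tau_u}|\geq |Fu|=|F|\geq 3$. Therefore $L^{\tau_u}=L$, which is exactly condition (iii) of Lemma~\ref{lemma:affine} for the nonzero vector $u$.

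Finally I check the remaining hypotheses of Lemma~\ref{lemma:affine}. The partial linear space $\mathcal{S}$ is proper and admits the rank~$3$ group $G$, so by Remark~\ref{remark:PLSrank3} it is flag-transitive and in particular $G$ is transitive on $\mathcal{L}$; and since $|L|\geq |L\cap\langle u\rangle|\geq 3$, line-transitivity forces every line to have size at least $3$. Viewing $V=V_n(q)$ as $V_d(p)$ with $q^n=p^d$, Lemma~\ref{lemma:affine} then yields that every line of $\mathcal{S}$ is an affine subspace of $V_d(p)$, i.e.\ an affine $\mathbb{F}_p$-subspace of $V$. There is no real obstacle here: the substantive work was done in Lemmas~\ref{lemma:Kantor} and~\ref{lemma:affine}, and the only subtlety is propagating $\langle u\rangle^*\subseteq u^{G_0}$ from the one vector $x$ guaranteed by the hypothesis to the vector $u$ that actually lies on our distinguished line.
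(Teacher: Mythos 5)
Your proof is correct and follows essentially the same route as the paper's: apply Lemma~\ref{lemma:Kantor} to see that $\{\lambda\in\mathbb{F}_q:\lambda u\in L\}$ is a subfield, deduce $L^{\tau_u}=L$, and invoke Lemma~\ref{lemma:affine}(iii). The extra details you supply (propagating $\langle x\rangle^*\subseteq x^{G_0}$ to $u$, and using the partial-linear-space axiom to get $L^{\tau_u}=L$ from $|L\cap L^{\tau_u}|\geq 2$) are exactly the steps the paper leaves implicit.
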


\begin{proof}
If $|L\cap \langle u\rangle|\geq 3$ for some $L\in \mathcal{L}_0$ and $u\in L^*$, then $\{\lambda\in\mathbb{F}_q:\lambda u\in L\}$ is a subfield of $\mathbb{F}_q$ by Lemma~\ref{lemma:Kantor}, so $(L\cap \langle u\rangle)^{\tau_u}=L\cap \langle u\rangle$. Thus $L^{\tau_u}=L$, and we may apply Lemma~\ref{lemma:affine}.
\end{proof}

Now we  construct some dependent $G$-affine proper partial linear spaces for $G$ in class (R0).

\begin{example}
\label{example:R0}
Let $q:=p^d$ and $V:=\mathbb{F}_q$  where  $p$ is prime and $d\geq 2$. Let $G$ be an affine primitive permutation group of rank~$3$ on $V$, so that  $G_0\leq \GammaL_1(q)$. Let $F:=\mathbb{F}_r$ be a proper subfield of $\mathbb{F}_q$ with $r>2$ such that  $\langle x\rangle_F^*\subseteq x^{G_0}$ for $x\in V^*$. Write $q=r^a$.   Then $G_0\leq \GammaL_a(r)$, and $G_0$ has two orbits on the points of $\PG_{a-1}(r)$, so $(V,\{\langle u\rangle_F+v: u\in x^{G_0},v\in V\})$ is a $G$-affine proper partial linear space from Example~\ref{example:AG} for $x\in V^*$. 

To see that such $G$ and $F$ exist, assume that  $p$ is odd and $d$  is even, and choose $F$ to be any subfield of $V=\mathbb{F}_q$ for which $[\mathbb{F}_q:F]$ is even. Let $\zeta :=\zeta_q$ and $\sigma:=\sigma_q$ (see~\S\ref{ss:basicsvs}). Then $\GammaL_1(q)=\langle \zeta, \sigma\rangle$, and the orbits of $\langle \zeta^2,\sigma\rangle $ on $V^*$ are the squares $\langle \zeta^2\rangle$ and the non-squares $\langle \zeta^2\rangle \zeta$ of $\mathbb{F}_q^*$.  
Let $G:=V{:}G_0$, where $G_0$ is any subgroup of $\langle \zeta^2,\sigma\rangle $ with two orbits on $V^*$. Then $G$ is an affine  permutation group of rank~$3$ on $V$, and $G$ is primitive by Lemma~\ref{lemma:primitive} since $|x^{G_0}\cup \{0\}|=(q+1)/2$ for $x\in V^*$. 
Further, since $[\mathbb{F}_q:F]$ is even, every element of $\langle 1\rangle_F^*=F^*$ is a square in $\mathbb{F}_q^*$, so $\langle x\rangle_F^*\subseteq x^{G_0}$ for $x\in V^*$. Thus $\mathcal{S}:=(V,\{\langle u\rangle_F+v: u\in \langle \zeta^2\rangle,v\in V\})$ is a $G$-affine proper partial linear space from Example~\ref{example:AG}. In fact, $\Aut(\mathcal{S})=V{:}\langle \zeta^2,\sigma\rangle$ since  $V{:}\langle \zeta^2,\sigma\rangle$  is the automorphism group of the Paley graph with vertex set $\mathbb{F}_q$, in which  vertices  $x$ and $y$ are adjacent if and only if $x-y$ is a square in $\mathbb{F}_q^*$ (see~\cite[Theorem~9.1]{Jon2020}).  
\end{example}

\section{Class (R1)}
\label{s:(R1)}

Let $G$ be an affine permutation group of rank~$3$ on $V:=V_n(q^2)$ for which $\SU_n(q)\unlhd G_0$ where $n\geq 3$. The  orbits of $G_0$ on $V^*$ are the non-zero isotropic vectors and the   non-isotropic vectors.  Note that $\SU_n(q)$ acts transitively on the set of  non-zero isotropic vectors for $n\geq 3$ by~\cite[Lemma 2.10.5]{KleLie1990}. In order to simplify the proofs in this section, we define a standard basis for a unitary space. Let $\f$ be the non-degenerate unitary form preserved by $\SU_n(q)$, and let $m$ be such that $n=2m$ or $n=2m+1$. By~\cite[Proposition~2.3.2]{KleLie1990},   $V$ has a \textit{standard basis}  $\{e_1,\ldots,e_m,f_1,\ldots,f_m\}$ or  $\{e_1,\ldots,e_m,f_1,\ldots,f_m,x_0\}$ when $n=2m$ or $n=2m+1$, respectively, where $\f(e_i,e_j)=\f(f_i,f_j)=0$ and $\f(e_i,f_j)=\delta_{i,j}$  for all $i,j$, and when $n$ is odd, $\f(x_0,x_0)=1$ and $\f(e_i,x_0)=\f(f_i,x_0)=0$  for all $i$. 

\begin{prop}
\label{prop:(R1)}
Let $V:=V_n(q^2)$ where $n\geq 3$ and $q$ is a prime power, and let $G$ be an affine permutation group of rank~$3$ on $V$ where $\SU_n(q)\unlhd G_0$. Then there is no $\mathbb{F}_{q^2}$-independent $G$-affine proper partial linear space. 
\end{prop}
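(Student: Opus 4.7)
The plan is to argue by contradiction.  Suppose $\mathcal{S}=(V,\mathcal{L})$ is an $\mathbb{F}_{q^2}$-independent $G$-affine proper partial linear space, fix $L\in\mathcal{L}_0$ and $x\in L^*$, and use $\mathbb{F}_{q^2}$-independence to choose $y\in L^*$ with $y\notin\langle x\rangle$.  Set $S:=\SU_n(q)$ and $S_L:=S\cap(G_0)_L$.  Because $L$ is the unique line of $\mathcal{S}$ through $0$ and each of its nonzero points, the pointwise stabilisers $G_{0,x}$ and $G_{0,y}$ preserve $L$, so $S_x,S_y\leq S_L$, where $S_x:=(\SU_n(q))_x$.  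Now $S_{\langle x\rangle}$ is a maximal subgroup of $S$---a maximal parabolic when $x$ is isotropic (since $S$ is $2$-transitive on isotropic $1$-subspaces of $V$) and a geometric $\mathcal{C}_1$-subgroup when $x$ is non-isotropic---so the overgroup $S_L$ of $S_x$ in $S$ is either contained in $S_{\langle x\rangle}$ or equal to $S$, leading to two cases.

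Case~A ($S_L\leq S_{\langle x\rangle}$).  Applying the same dichotomy to $y$, either $S_L=S$ (handled in Case~B below) or $S_L\leq S_{\langle y\rangle}$, and the latter forces $S_x$ to stabilise the line $\langle y\rangle$ setwise.  The technical heart here is the claim that, for $n\geq 3$, the only $1$-dimensional $S_x$-invariant subspace of $V$ is $\langle x\rangle$.  For non-isotropic $x$ this follows from $V=\langle x\rangle\oplus x^\perp$ together with the irreducibility of $\SU_{n-1}(q)$ on the non-degenerate hyperplane $x^\perp$.  For isotropic $x$ one works with the composition series $0<\langle x\rangle<x^\perp<V$ of $\mathbb{F}_{q^2}S_x$-modules (with successive quotients trivial, the natural $\SU_{n-2}(q)$-module, and trivial) and verifies by a direct matrix calculation in a standard hyperbolic basis that the unipotent radical of $S_{\langle x\rangle}$, which lies in $S_x$, has fixed vectors exactly $\langle x\rangle$; the case $n=3$ is explicit and the general case reduces to it by restriction to a non-degenerate $3$-subspace of $V$ containing $x$.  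This forces $\langle y\rangle=\langle x\rangle$, contradicting the choice of $y$.

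Case~B ($S_L=S$).  Then $L^*$ is $S$-invariant and $L^*\supseteq x^S$.  When $x$ is isotropic, $x^S$ equals $\mathcal{S}(0)$ and so $L=\{0\}\cup\mathcal{S}(0)$.  Since the Witt index of $\f$ is at least one, one can choose a nonzero isotropic $v\in x^\perp$; then $x+v$ is isotropic too, so the distinct pairs $(x,0)$ and $(x+v,v)$ in $L\times L$ both have difference $x$, giving $\{x,x+v\}\subseteq L\cap(L+x)$.  Since two distinct lines of $\mathcal{S}$ meet in at most one point, this forces $L+x=L$, so $x\in\mathrm{Stab}_V(L)$.  Iterating over $x\in\mathcal{S}(0)$, $\mathrm{Stab}_V(L)$ contains every nonzero isotropic vector; these $\mathbb{F}_p$-span $V$ for $n\geq 3$, so $L=V$, contradicting properness.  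When $x$ is non-isotropic, $L^*\supseteq x^S$ lies in one norm class, and an analogous Sidon argument applies:  the system $\Tr(a)=-1$, $\f(w,w)=(1-N(a))\f(x,x)$ with $w\in x^\perp$ yields $v=ax+w\in x^S$ with $x+v\in x^S$, producing the required second pair $(x+v,v)$; in characteristic~$3$ the degenerate choice $a=1$ with $w\in x^\perp$ nonzero isotropic is used.  The main obstacle is twofold:  the representation-theoretic Case~A claim on the uniqueness of the $S_x$-invariant line, and the characteristic-sensitive arithmetic in the Case~B Sidon manipulations, especially characteristic~$3$.
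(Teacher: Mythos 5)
There is a genuine gap at the very first reduction, and it is the step on which the whole architecture rests. You assert that, because $S_{\langle x\rangle}$ is maximal in $S:=\SU_n(q)$, the overgroup $S_L$ of $S_x$ must be contained in $S_{\langle x\rangle}$ or equal to $S$. But $S_L$ is only known to contain the \emph{vector} stabiliser $S_x$, which is a proper subgroup of the \emph{line} stabiliser $S_{\langle x\rangle}$ of index $q^2-1$ (isotropic case) or $q+1$ (non-isotropic case); maximality of $S_{\langle x\rangle}$ says nothing about overgroups of $S_x$ that do not contain $S_{\langle x\rangle}$. (And $S_{\langle x\rangle}\not\leq S_L$ in general: an element sending $x$ to $\lambda x$ sends $L$ to the line through $0$ and $\lambda x$, which need not be $L$ since $L\cap\langle x\rangle$ may be just $\{0,x\}$.) Subgroups of $S$ between $S_x$ and $S$ correspond to blocks of $S$ on $x^S$ containing $x$, and ruling out blocks that are neither contained in $\langle x\rangle$ nor all of $x^S$ is essentially the hard content of the proposition --- for instance, for $n=4$ and $x$ isotropic, $S_x$ has $q+1$ orbits on the singular vectors of $x^\perp\setminus\langle x\rangle$, and nothing you have said excludes a block built from one of them. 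So your two cases do not exhaust the possibilities. The same unproved dichotomy is then invoked a second time for $y$ (which, if non-isotropic, need not even lie in $x^S$, only in $x^{G_0}$), and there are also small-case worries about maximality of the $\mathcal{C}_1$-subgroup $S_{\langle x\rangle}$ for non-isotropic $x$.

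The paper avoids this entirely: it never analyses $S_L$ as an abstract overgroup, but works directly with the block $B=L^*$ using two facts --- that $y^{G_{0,x}}\subseteq B$ for $y\in B$, and that $z_2-z_1\in\mathcal{S}(0)$ for distinct $z_1,z_2\in L$. Transitivity of $S_x$ on the sets $V_{x,\lambda}=\{w:\f(w,w)=0,\ \f(x,w)=\lambda\}$ (and their non-isotropic analogues) forces whole $S_x$-orbits into $B$ as soon as $B\not\subseteq\langle x\rangle$, and one then exhibits two vectors in such an orbit whose difference has the wrong isotropy type. Your Case~B ``Sidon'' trick (two pairs in $L\times L$ with the same difference forcing $L=L+x$, hence $L=V$) is a genuinely nice alternative argument for the subcase $S_L=S$, and could be salvaged, but as written the proof does not establish that the only alternative to $S_L=S$ is $S_L\leq S_{\langle x\rangle}$, so the argument is incomplete. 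Also note that for $n=3$ and $x$ isotropic every isotropic vector of $x^\perp$ lies in $\langle x\rangle$, so your chosen $v$ there must be taken in $\langle x\rangle^*\setminus\{-x\}$ rather than outside it.
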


\begin{proof} 
Let $F:=\mathbb{F}_{q^2}$ and $\overline{\lambda}:=\lambda^q$ for $\lambda\in F$. Let $T:F\to \mathbb{F}_q$ be the trace map $\lambda\mapsto \lambda+\overline{\lambda}$, and let $N:F\to\mathbb{F}_q$ be the norm map $\lambda \mapsto \lambda\overline{\lambda}$. Note that both $T$ and $N$ are surjective maps. Further, $T$ has kernel $K:=\mathbb{F}_q\tau$ for some $\tau\in F^*$. 

Suppose for a contradiction that  $(V,\mathcal{L})$ is an $F$-independent  $G$-affine proper partial linear space. Let $\f$ be the non-degenerate unitary form preserved by $\SU_n(q)$. Let $L\in\mathcal{L}_0$, let $B:=L^*$, and let $x\in L^*$.  By Lemma~\ref{lemma:necessary}, $B$ is a  block of $G_0$ on $x^{G_0}$.  Since $(V,\mathcal{L})$  is $F$-independent,  $B\setminus \langle x\rangle$ is non-empty. Let $y\in B\setminus \langle x\rangle$.

First suppose that $x$ is isotropic. Let $m$ be such that $n=2m$ or $n=2m+1$. Let $\{e_1,\ldots,e_m,f_1,\ldots,f_m\}$ or $\{e_1,\ldots,e_m,f_1,\ldots,f_m,x_0\}$ be a standard basis of $V$ when $n=2m$ or $n=2m+1$, respectively.  In either case, we denote this basis by $\mathcal{B}$. We may assume without loss of generality that $x=e_1$. 
For  $\lambda\in F$, define $V_{x,\lambda}:=\{w \in V^*: \f(w,w)=0\ \mbox{and}\ \f(x,w)=\lambda \}$. 

We claim that if  $\lambda\neq 0$, then   $\SU_n(q)_x$ acts transitively on $V_{x,\lambda}$. We may assume that $\lambda=1$. Note that $f_1\in V_{x,1}$, and let $u\in V_{x,1}$. Then $u=\varepsilon e_1+f_1+u_0$ for some $\varepsilon\in F$ and $u_0\in \langle e_1,f_1\rangle^\perp$. There exists $g\in \SL_n(q^2)$ such that $e_1^g=e_1$, $f_1^g=u$ and $w^g=-\f(w,u)e_1+w$ for $w\in \mathcal{B}\setminus \{e_1,f_1\}$. Now $\f(v_1^g,v_2^g)=\f(v_1,v_2)$ for all $v_1,v_2\in V$, so $g\in \SU_n(q)_x$ and the claim holds.

Next we claim that if  $n\geq 5$, then $\SU_n(q)_x$ acts transitively on $V_{x,0}\setminus \langle x\rangle$. Note that $e_2\in V_{x,0}\setminus \langle x\rangle$, and let $u\in V_{x,0}\setminus \langle x\rangle$. Then $u=\varepsilon e_1+u_0$ for some $\varepsilon\in F$ and  non-zero  isotropic vector $u_0\in \langle e_1,f_1\rangle^\perp$. Since   $n\geq 4$, there exists $g\in \SL_n(q^2)$ such that $e_2^g=\varepsilon e_1+e_2$, $f_1^g=f_1-\overline{\varepsilon} f_2$ and $w^g=w$ for $w\in \mathcal{B}\setminus \{e_2,f_1\}$. Now $\f(v_1^g,v_2^g)=\f(v_1,v_2)$ for all $v_1,v_2\in V$, so $g\in \SU_n(q)_x$.  Moreover, 
we may view $\SU_{n-2}(q)$ as a subgroup of $\SU_n(q)$ that fixes $\langle e_1,f_1\rangle$ pointwise and acts naturally on $\langle e_1,f_1\rangle^\perp$, 
 and  there  exists $h\in \SU_{n-2}(q)$ such that $e_2^h=u_0$ by~\cite[Lemma 2.10.5]{KleLie1990} since $n\geq 5$. Now $gh\in \SU_n(q)_x$ and $e_2^{gh}=u$, so the claim holds.

Choose $\mu\in F\setminus K$ and let $\lambda:=\f(x,y)$. First suppose that either  $n\geq 4$ and $\lambda \neq 0$, or  $n\geq 5$  and $\lambda=0$. Now $\SU_n(q)_x$ acts transitively on $V_{x,\lambda}\setminus \langle x\rangle$  by the claims above, so $V_{x,\lambda}\setminus \langle x\rangle\subseteq B$ by Lemma~\ref{lemma:basic}(ii). In particular, $\overline{\lambda}f_1+\mu e_2$ and $\overline{\lambda}f_1+f_2$ are elements of  $B$, but then $z:=\mu e_2 -f_2$ is isotropic by Lemma~\ref{lemma:basic}(i), a contradiction since $\f(z,z)=-(\mu+\overline{\mu})\neq 0$. 

Next suppose that $n=3$. Now $V_{x,0}=\langle x\rangle$, so $\lambda\neq 0$. Again,  $V_{x,\lambda}\subseteq B$ by Lemma~\ref{lemma:basic}(ii). Choose distinct $\delta,\varepsilon\in F$ such that $N(\delta)=N(\varepsilon)=-N(\lambda)T(\mu)$. Now 
$\mu\overline{\lambda}e_1+\overline{\lambda}f_1+\delta x_0$ and $\mu\overline{\lambda}e_1+\overline{\lambda}f_1+\varepsilon x_0$ are elements of $V_{x,\lambda}$ and therefore $B$, but then $(\delta-\varepsilon)x_0$ is isotropic by Lemma~\ref{lemma:basic}(i), a contradiction.

 Hence $n=4$ and $\lambda=0$.  Let $\lambda_0,\ldots,\lambda_{q}$ be a transversal for $\mathbb{F}_q^*$ in $F^*$. Recall that $\tau\in F^*$ and $\tau+\overline{\tau}=0$.   For $0\leq i\leq q$, let 
$\Lambda_i:=\langle \lambda_i e_2,\lambda_i\tau f_2\rangle_{\mathbb{F}_q}$ and $\Delta_i:=\{\delta e_1+u : \delta\in F,u\in \Lambda_i^*\}$.
It is straightforward to verify that the orbits of $\SU_4(q)_x$ on $V_{x,0}\setminus \langle x\rangle$ are $\Delta_0,\ldots,\Delta_q$ since  the orbits of $\SU_2(q)$ on the  isotropic vectors in $\langle e_2,f_2\rangle^*$ are $\Lambda_0^*,\ldots,\Lambda_q^*$. Thus  $\Delta_i\subseteq B$ for some $i$ by Lemma~\ref{lemma:basic}(ii), in which case $\lambda_i e_2$ and $\lambda_i\tau f_2$ are elements of $B$. There exists $g\in G_{0,B}$ such that $(\lambda_ie_2)^g=x$. Now $z:=(\lambda_i\tau f_{2})^g\in B\setminus \langle x\rangle$ and $\f(x,z)=\f(\lambda_i e_2,\lambda_i\tau f_2)\neq 0$,   but we have already seen (replacing $y$ by $z$ above) that this leads to a contradiction.

 Thus $x$ is  non-isotropic. Let $\{v_1,\ldots,v_n\}$ be an orthonormal basis of $V$ (which exists by~\cite[Proposition~2.3.1]{KleLie1990}). We may assume without loss of generality that $x=v_1$. Now $B$ consists of non-isotropic vectors, so $\delta:=\f(y,y)\in \mathbb{F}_q^*$. Let $\lambda:=\f(x,y)$ and  $\varepsilon:= \delta-\lambda\overline{\lambda}\in \mathbb{F}_q$. Define 
   $\Gamma:=\{v\in V\setminus \langle x\rangle: \f(v,v)=\delta,\f(x,v)=\lambda\}$. Let $W:=\langle x\rangle$, $U:=W^\perp$ and $U_\varepsilon:=\{u\in U^*: \f(u,u)=\varepsilon\}$. Now $\Gamma=\{\overline{\lambda}x+u : u\in U_\varepsilon\}$. Observe that if either $n\geq 4$, or $n=3$ and $\varepsilon\neq 0$, then   $\SU_n(q)_x$ acts transitively on $\Gamma$ since $\SU_{n-1}(q)$  acts transitively (as a subgroup of $\SU_n(q)_x$) on $U_\varepsilon$ by~\cite[Lemma 2.10.5]{KleLie1990}, so $\Gamma\subseteq B$ by Lemma~\ref{lemma:basic}(ii).
      
First suppose that $\varepsilon=0$. By the above observations, there exist $\lambda_2,\lambda_3\in F^*$ such that $N(\lambda_2)+N(\lambda_3)=0$ and $z_1:=\overline{\lambda}x+\lambda_2v_2+\lambda_3v_3\in B$ (we may take $z_1=y$ when $n=3$). Choose $\mu\in F^*$ such that $\mu\neq 1$ and $\mu\overline{\mu}=1$. Let 
$z_2:=\overline{\lambda}x+\mu\lambda_2v_2+\overline{\mu}\lambda_3v_3$. There exists $g\in \SU_n(q)_x$ such that $v_2^g=\mu v_2$, $v_3^g=\overline{\mu}v_3$ and $v_i^g=v_i$ for $i\geq 4$.  Now  $z_2=z_1^g\in B$ by Lemma~\ref{lemma:basic}(ii), but $z_2-z_1$ is isotropic, contradicting Lemma~\ref{lemma:basic}(i). 

Hence $\varepsilon\neq 0$ and $\Gamma\subseteq B$. It suffices to find distinct $z_1,z_2\in \Gamma$ such that $z_2-z_1$ is isotropic, for this will contradict Lemma~\ref{lemma:basic}(i). 
 If $q=2$, then $\varepsilon=1$, so we may  take $z_1$ and $z_2$ to be $\overline{\lambda}x+v_2$ and  $\overline{\lambda}x+v_3$. Thus  $q\geq 3$. Now $|K|\geq 3$, so there exists $\mu\in F^*$ such that $\mu\neq 1$ and  $\mu+\overline{\mu}=2$ (including when $q$ is even). Let $F^*=\langle \zeta\rangle$, and write  $\varepsilon=\zeta^{i(q+1)}$ and $\mu=\zeta^{j-i}$ where $1\leq i\leq q-1$ and $1\leq j-i< q^2-1$. Choose $\alpha\in F$ such that $\alpha\overline{\alpha}=\varepsilon-\zeta^j\zeta^{jq}$. Let $z_1:=\overline{\lambda}x+\zeta^iv_2$ and $z_2:=\overline{\lambda}x+\zeta^jv_2+\alpha v_3$. Now $z_1$ and $z_2$ are distinct elements of $\Gamma$ since $\zeta^i\neq \zeta^j$. Further,  $$\f(z_1,z_2)+\f(z_2,z_1)-2\lambda\overline{\lambda}=\zeta^i\zeta^{jq}+\zeta^j\zeta^{iq}=(\zeta^{(j-i)q}+\zeta^{j-i})\zeta^{i(q+1)}=2\varepsilon=2\delta-2\lambda \overline{\lambda},$$
so $\f(z_2-z_1,z_2-z_1)=2\delta-\f(z_2,z_1)-\f(z_1,z_2)=0$. Thus $z_2-z_1$ is isotropic.
\end{proof}

\section{Class (R2)}
\label{s:(R2)}

Let $G$ be an affine permutation group of rank~$3$ on $V:=V_{2m}(q)$ for which $\Omega_{2m}^\varepsilon(q)\unlhd G_0$ where  either $m\geq 3$ and $\varepsilon=\pm$, or $m=2$ and $\varepsilon=-$. The  orbits of $G_0$ on $V^*$ consist of  the non-zero singular vectors and the   non-singular vectors.  In order to simplify the proofs in this section, we define a standard basis for a quadratic space. Let $Q$ be the non-degenerate quadratic form preserved by $\Omega_{2m}^\varepsilon(q)$, and let  $\f$ be the non-degenerate symmetric bilinear form associated with $Q$ (so  $\f(x,y)=Q(x+y)-Q(x)-Q(y)$ for all $x,y\in V$). 
 By~\cite[Proposition~2.5.3]{KleLie1990}, if 
 $\varepsilon=+$, then $V$ has a basis  $\{e_1,\ldots,e_m,f_1,\ldots,f_m\}$ where  $Q(e_i)=Q(f_i)=\f(e_i,e_j)=\f(f_i,f_j)=0$ and $\f(e_i,f_j)=\delta_{i,j}$ for all $i,j$. Further, if $\varepsilon=-$, then 
 $V$ has a basis $\{e_1,\ldots,e_{m-1},f_1,\ldots,f_{m-1},x_0,y_0\}$ where  $Q(e_i)=Q(f_i)=\f(e_i,e_j)=\f(f_i,f_j)=\f(e_i,x_0)=\f(f_i,x_0)=\f(e_i,y_0)=\f(f_i,y_0)=0$ and $\f(e_i,f_j)=\delta_{i,j}$ for all $i,j$, and $Q(x_0)=1$, $\f(x_0,y_0)=1$ and $Q(y_0)=\alpha$ for some $\alpha\in\mathbb{F}_q$ such that the polynomial $X^2+X+\alpha$ is irreducible over $\mathbb{F}_q$;  we then define $e_m:=x_0$ and $f_m:=y_0$. In either case, we refer to $\{e_1,\ldots,e_m,f_1,\ldots,f_m\}$ as a \textit{standard basis} of $V$.

First we consider the case where $m\geq 3$.

\begin{prop}
\label{prop:(R2)mbig}
Let $V:=V_{2m}(q)$ where  $m\geq 3$ and $q$ is a prime power, and let $G$ be an affine permutation group of rank~$3$  on $V$  where $\Omega_{2m}^\varepsilon(q)\unlhd G_0$ and $\varepsilon=\pm$. Then there is no $\mathbb{F}_q$-independent $G$-affine proper partial linear space.
\end{prop}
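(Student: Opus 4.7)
The plan is to mirror the proof of Proposition \ref{prop:(R1)} for the unitary class, working with the non-degenerate quadratic form $Q$ preserved by $\Omega_{2m}^\varepsilon(q)$ and its associated bilinear form $\f$. Suppose for contradiction that $(V, \mathcal{L})$ is an $\mathbb{F}_q$-independent $G$-affine proper partial linear space. Fix $L \in \mathcal{L}_0$, set $B := L^*$, choose $x \in L^*$, and invoke $\mathbb{F}_q$-independence to obtain $y \in B \setminus \langle x \rangle$. By Lemma \ref{lemma:necessary}(i), $B$ is a block of $G_0$ on the orbit $x^{G_0}$, which is either the set of non-zero singular vectors or the set of non-singular vectors. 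The strategy is to use Lemma \ref{lemma:basic}(ii) to force a large subset of $x^{G_0}$ into $B$, and then use Lemma \ref{lemma:basic}(i) to exhibit a pair $z_1, z_2 \in B$ whose difference $z_2 - z_1$ lies in the other $G_0$-orbit, which is the desired contradiction.

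First I would treat the case that $x$ is singular. Fix a standard basis with $x = e_1$ and, for each $\lambda \in \mathbb{F}_q$, define
\[
V_{x, \lambda} := \{ v \in V^* : Q(v) = 0,\ \f(x, v) = \lambda\}.
\]
The central sub-task is to show that $\Omega_{2m}^\varepsilon(q)_x$ acts transitively on $V_{x, \lambda}$ when $\lambda \neq 0$, and on $V_{x, 0} \setminus \langle x \rangle$ when $\lambda = 0$. Both statements follow from Witt's extension theorem combined with the Siegel-type unipotent elements in $\Omega_{2m}^\varepsilon(q)_x$, and the hypothesis $m \geq 3$ guarantees that the residual orthogonal space $\langle e_1, f_1 \rangle^\perp$ has dimension at least $4$ and type $\varepsilon$. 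Setting $\lambda := \f(x, y)$, Lemma \ref{lemma:basic}(ii) forces $V_{x, \lambda} \setminus \langle x \rangle \subseteq B$. Parametrising a general element of $V_{x, \lambda}$ (for $\lambda \neq 0$) as $v = (-Q(v')/\lambda) e_1 + \lambda f_1 + v'$ with $v' \in \langle e_1, f_1 \rangle^\perp$, a direct calculation gives $Q(v_2 - v_1) = Q(v_2' - v_1')$. Taking $v_1' = 0$ and $v_2'$ any non-singular vector in $\langle e_1, f_1 \rangle^\perp$ produces $z_1, z_2 \in V_{x, \lambda} \subseteq B$ whose difference is non-singular, contradicting Lemma \ref{lemma:basic}(i). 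The subcase $\lambda = 0$ is handled by the same device after the analogous parametrisation of $V_{x, 0} \setminus \langle x \rangle$.

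For the non-singular case I would fix an orthogonal decomposition $V = \langle x \rangle \oplus x^\perp$, set $\delta := Q(y)$ and $\lambda := \f(x, y)$, and consider
\[
\Gamma := \{ v \in V \setminus \langle x \rangle : Q(v) = \delta,\ \f(x, v) = \lambda\}.
\]
The stabiliser $\Omega_{2m}^\varepsilon(q)_x$ acts on $x^\perp$, which is non-degenerate of dimension $2m - 1$, and for $m \geq 3$ it is transitive on vectors of each prescribed norm in $x^\perp$; hence it is transitive on $\Gamma$, so $\Gamma \subseteq B$ by Lemma \ref{lemma:basic}(ii). The final step is to produce $z_1, z_2 \in \Gamma$ with $z_2 - z_1$ singular. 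For $q \geq 3$ a scalar argument analogous to that at the end of the proof of Proposition \ref{prop:(R1)} (choosing $\zeta^i, \zeta^j \in \mathbb{F}_q^*$ so that a specific linear combination of two basis vectors of $x^\perp$ realises the required norm) furnishes such a pair, and $q = 2$ can be settled by the explicit choice of two distinct basis vectors of $x^\perp$ of equal norm.

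The main obstacle is establishing the transitivity claims for $\Omega_{2m}^\varepsilon(q)_x$ uniformly over both signs $\varepsilon = \pm$, all prime powers $q$, and both ranges of $\lambda$ in the singular case; in characteristic $2$ the quadratic form is no longer determined by $\f$, so Witt's theorem must be invoked in its quadratic form and a little care is needed to produce the Siegel-type elements. The hypothesis $m \geq 3$ is indispensable: it ensures that $\langle e_1, f_1 \rangle^\perp$ is a non-degenerate orthogonal space of rank at least $4$ which simultaneously contains singular and non-singular vectors of every required type, and that the residual $\Omega_{2(m-1)}^\varepsilon(q)$ still acts with the full transitivity needed. The case $m = 2$, $\varepsilon = -$ genuinely escapes these arguments and is treated separately in the next proposition, where the exceptional example $3^4 {:} M_{10}$ of Table \ref{tab:main} first makes its appearance.
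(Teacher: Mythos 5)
Your singular case is sound, and in fact easier than you make it: since $y-x$ must be singular, $Q(y-x)=-\f(x,y)$ forces $\lambda=\f(x,y)=0$ (so your $\lambda\neq 0$ subcase is vacuous), and writing $y=\alpha e_1+w$ with $w$ a nonzero singular vector of $W:=\langle e_1,f_1\rangle^\perp$, the transitivity of $\Omega_{2m-2}^\varepsilon(q)\leq \Omega_{2m}^\varepsilon(q)_x$ on the nonzero singular vectors of $W$ (valid since $\dim W=2m-2\geq 4$) already puts $\alpha e_1+e_2$ and $\alpha e_1+f_2$ into $B$, whose difference $e_2-f_2$ is non-singular — this is essentially what the paper does, without needing the Eichler elements or the full transitivity on $V_{x,0}\setminus\langle x\rangle$.

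The non-singular case, however, has a genuine gap in characteristic $2$. You build everything on ``an orthogonal decomposition $V=\langle x\rangle\oplus x^\perp$'' with ``$x^\perp$ non-degenerate of dimension $2m-1$'', but for $q$ even a non-singular $x$ satisfies $\f(x,x)=2Q(x)=0$, so $x\in x^\perp$, no such decomposition exists, and $\f|_{x^\perp}$ has radical $\langle x\rangle$. The transitivity of $\Omega_{2m}^\varepsilon(q)_x$ on $\Gamma$ — which you deduce from transitivity on ``vectors of each prescribed norm in $x^\perp$'' — is therefore unsupported exactly where the quadratic/bilinear distinction bites; one would have to work through the symplectic quotient $x^\perp/\langle x\rangle$ and also check that the isometries produced by Witt's theorem can be taken inside $\Omega$ rather than merely in the full isometry group. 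Your concrete recipe for the final contradiction also fails there: for $z_i=\mu x+u_i\in\Gamma$ one needs $Q(u_2-u_1)=2\varepsilon_0-\f(u_1,u_2)$ to vanish, and ``two distinct basis vectors of $x^\perp$ of equal norm'' do not in general achieve this (for $\varepsilon=-$, $m=3$, $q=2$ the two non-singular basis vectors $x_0,y_0$ have $Q(x_0+y_0)=1\neq 0$); even for $q$ odd, two vectors of equal nonzero norm inside a single hyperbolic plane never have singular difference, so one must pass to a singular vector in $u_1^\perp$, i.e.\ use a third direction. For $q$ odd your route does go through and is a genuine alternative to the paper's; the paper sidesteps the characteristic dichotomy entirely by taking $x=e_1+f_1$ inside the hyperbolic plane $U=\langle e_1,f_1\rangle$ and working with $W=U^\perp$, which is non-degenerate of even dimension in every characteristic: it first proves $B\subseteq U$ using transitivity on the sets $W_\lambda$, and then exhibits an explicit commutator in $\Omega_{2m}^\varepsilon(q)_x$ moving any $y\in U\setminus\langle x\rangle$ out of $U$.
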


\begin{proof} 
Let $Q$ be the non-degenerate quadratic form  preserved by $\Omega_{2m}^\varepsilon(q)$, let $\f$ be the  non-degenerate symmetric bilinear form associated with $Q$, and let $\{e_1,\ldots,e_m,f_1,\ldots,f_m\}$ be a standard basis for $V$. Suppose 
for a contradiction that  $(V,\mathcal{L})$ is an $\mathbb{F}_q$-independent  $G$-affine proper partial linear space. Let $L\in\mathcal{L}_0$, let $B:=L^*$, and let $x\in L^*$. By Lemma~\ref{lemma:necessary},  $B$ is a  block of $G_0$ on $x^{G_0}$.  By assumption, $B\setminus \langle x\rangle$ is non-empty.

Let $U:=\langle e_1,f_1\rangle$ and $W:=U^\perp$, and note that $V=U\oplus W$ since $U$ is non-degenerate. We may view  $\Omega^\varepsilon_{2m-2}(q)$ as a subgroup of $\Omega_{2m}^\varepsilon(q)$ that fixes $\langle e_1,f_1\rangle$ pointwise and acts naturally on $W$ by~\cite[Lemma 4.1.1]{KleLie1990}.  For $\lambda\in\mathbb{F}_q$, the group $\Omega^\varepsilon_{2m-2}(q)$ acts transitively on $W_\lambda:=\{w\in W^*: Q(w)=\lambda\}$  by~\cite[Lemma 2.10.5]{KleLie1990} since $m\geq 3$.  The following consequence of Lemma~\ref{lemma:basic}(ii) will be used repeatedly without reference below: if $x\in U$ and
$u+w\in B$ where $u\in U$ and $w\in W^*$,  then $u+z\in B$ for all $z\in W_{Q(w)}$.

First suppose that $x=e_1$, in which case $B$ consists of singular vectors. Choose $y\in B\setminus \langle x\rangle$, and write $y=u+w$ where  $u\in U$ and $w\in W$.  Since $y-x$ is singular by Lemma~\ref{lemma:basic}(i), it follows that  $\f(y,e_1)=0$. Then $\f(u,e_1)=0$, so $u\in \langle x\rangle$. Since $y\not\in\langle x\rangle$, it follows that $w$ is a non-zero singular vector. Thus $u+e_2$ and $u+f_2$ are elements of $B$, but then $e_2-f_2$ is singular by Lemma~\ref{lemma:basic}(i), a contradiction.

Thus we may assume that $x=e_1+f_1$, in which case $B$ consists of non-singular vectors.  We claim that $B \subseteq U$. Suppose for a contradiction that there exists $y\in B\setminus  U$. Write $y=u+w$ where  $u\in U$ and $w\in W^*$. Let $\lambda:=Q(w)$.  If either $\varepsilon=+$, or $\varepsilon=-$ and $m\geq 4$, then $u+\lambda e_2+f_2+e_3$ and $u+\lambda e_2+f_2$ are elements of $B$, but then $e_3$ is non-singular by Lemma~\ref{lemma:basic}(i), a contradiction. Thus $m=3$ and $\varepsilon=-$ (so $e_3$ and $f_3$ are non-singular).  If $\lambda\neq 0$, then there exist $\mu_1,\mu_2\in\mathbb{F}_q$ such that $Q(\mu_1 e_3+\mu_2 f_3)=\lambda$ (see the remark after~\cite[Proposition 2.5.3]{KleLie1990}), in which case $u+e_2+\mu_1 e_3+\mu_2 f_3$ and $u+\mu_1 e_3+\mu_2 f_3$ are elements of $B$, but then $e_2$ is non-singular by Lemma~\ref{lemma:basic}(i), a contradiction. Thus $\lambda=0$.  If $q>2$, then there exists $\mu\in \mathbb{F}_q^*\setminus \{1\}$, 
in which case $u+e_2$ and $u+\mu e_2$ are elements of $B$, but then $(\mu-1)e_2$ is non-singular by Lemma~\ref{lemma:basic}(i), a contradiction. Thus $q=2$. In particular, $L$ is a subspace of $V$ by Lemma~\ref{lemma:affine}.  Therefore, since $u+e_2$, $u+f_2$ and $u+e_2+f_2+e_3$ are elements of $B$, it follows  $u+e_3\in B$, but $Q(e_3)\neq 0$, so this is a contradiction, as above in the case $\lambda\neq 0$.
 
 Thus $B\subseteq U$. Choose $y\in B\setminus \langle x\rangle$. In order to obtain a contradiction, it suffices to find  $g\in \Omega_{2m}^\varepsilon(q)_x$ such that $y^g\notin U$. There exists  $g_1\in \GL_{2m}(q)$ such that $e_1^{g_1}=e_1+e_2$, $f_1^{g_1}=f_1-e_2$, $ e_2^{g_1}= e_1-f_1+e_2+f_2$, $f_2^{g_1}=e_2$ and $g_1$ fixes $\{e_3,\ldots,e_m,f_3,\ldots,f_m\}$ pointwise, and there exists   $g_2\in \GL_{2m}(q)$ that fixes $\{e_1,e_3,\ldots, e_m,f_1,f_3,\ldots,f_m\}$ pointwise and interchanges $e_2$ and $f_2$. Let 
$g:=g_1^{-1}g_2^{-1}g_1g_2$.  Now $Q(v^{g_i})=Q(v)$ for $v\in V$ and $i\in \{1,2\}$, so $g\in \Omega_{2m}^\varepsilon(q)_x$
(see the definition of $\Omega_{2m}^\varepsilon(q)$ in~\cite[\S 2.5, Descriptions~$1$ and~$2$]{KleLie1990}). Write $y=\delta_1 e_1+\delta_2 f_1$ where $\delta_1,\delta_2\in\mathbb{F}_q$, and note that $\delta_1\neq \delta_2$ since $y\notin\langle x\rangle$.
 Now $(\delta_1 e_1+\delta_2 f_1+(\delta_2-\delta_1)f_2)^{g_1}=y$, so $y^g=\delta_2 e_1+\delta_1 f_1+(\delta_2-\delta_1) e_2\notin U$, as desired.   
\end{proof}

In contrast to the case where $m\geq 3$,  an $\mathbb{F}_q$-independent $G$-affine proper partial linear space does exist when $m=2$ and $q=3$. We describe this example now.

\begin{example}
\label{example:(R2)}
Let $G_0:=M_{10}\simeq A_6.2\simeq \Omega_4^-(3).2$.   Let $V$ be one of the irreducible $\mathbb{F}_3G_0$-modules of dimension $4$. (There are two such modules, but the corresponding groups are conjugate in $\GL_4(3)$.) Let $G:=V{:}G_0$. Now $G$ has subdegrees $20$ and $60$, and the restriction of $V$ to $H_0:=A_6$ is irreducible, so it is isomorphic to the fully deleted permutation module $D=S/W$ where $S=\{(\lambda_1,\ldots,\lambda_6)\in \mathbb{F}_3^6{:} \sum_{i=1}^6 \lambda_i=0\}$ and $W=\{(\lambda,\ldots,\lambda):\lambda\in\mathbb{F}_3\}$. With this viewpoint, $H_0$ has orbits of sizes $20$, $30$ and $30$  containing $(1,1,1,0,0,0)+W$, $(1,-1,0,0,0,0)+W$ and $(1,-1,1,-1,0,0)+W$, respectively.  Let $x:=(1,0,0,0,0,-1)+W$ and $B:=x^{K_0}$ where $K_0$ is the subgroup of $H_0$ that fixes $6$. Let  $\mathcal{L}:=L^G$ where $L:=B\cup\{0\}$. We claim that $\mathcal{M}:=(V,\mathcal{L})$ is a $G$-affine proper partial linear space  with  line-size $6$ and point-size $12$: by Lemma~\ref{lemma:sufficient}, it suffices to show that $B$ is a block of $G_0$ and that there exists  $h\in H_L$ for which $0^h=x$ where $H:=V{:}H_0$.  Let $C$ be the orbit of $H_0$ containing $x$. Now $C$ is  a block of $G_0$, so $G_{0,x}\leq G_{0,C}=H_0$. Thus $G_{0,x}$ is the subgroup of $H_0$ that fixes $1$ and $6$. Since $G_{0,x}\leq K_0$, it follows that $B$ is a block of $G_0$. Define $h:=\tau_v (1 2 6)$ where  $v:=(0,-1,0,0,0,1)+W$.  Then $h\in H_L$ and $0^h=x$, so the claim holds. Using {\sc Magma}, we verify that $\Aut(\mathcal{M})=G$.  Note that if we instead choose $K_0$ to be the subgroup of $H_0$ that fixes $1$, then we obtain another  $G$-affine proper partial linear space (to see this, take $h:=\tau_v(162)$ where  $v:=(-1,1,0,0,0,0)+W$), but we will see in Proposition~\ref{prop:(R2)msmall} that these partial linear spaces are isomorphic. Note also that $G_0$ has no proper subgroups with two orbits on $V^*$.
\end{example}

\begin{prop}
\label{prop:(R2)msmall}
Let $V:=V_{4}(q)$ where $q$ is a prime power, and let $G$ be an affine permutation group of rank~$3$  on $V$  where $\Omega_{4}^-(q)\unlhd G_0$.
 There is an $\mathbb{F}_q$-independent $G$-affine proper partial linear space $\mathcal{S}$  if and only if $q=3$, $G_0=M_{10}$ and $\mathcal{S}=\mathcal{M}$ or $\mathcal{M}^g$ where $g\in N_{\GL_4(3)}(G_0)\setminus G_0$ and $\mathcal{M}$  is the partial linear space of Example~\emph{\ref{example:(R2)}}. 
\end{prop}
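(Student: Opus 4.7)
The backward direction is handled by Example~\ref{example:(R2)}, together with the observation that conjugating $\mathcal{M}$ by any element of $N_{\GL_4(3)}(M_{10})\setminus M_{10}$ produces another $G$-affine partial linear space with the same invariants. So the plan is to attack the forward direction. Assume $\mathcal{S}=(V,\mathcal{L})$ is an $\mathbb{F}_q$-independent $G$-affine proper partial linear space, fix $L\in\mathcal{L}_0$, $x\in L^*$, and set $B:=L^*$. By Lemma~\ref{lemma:necessary}, $B$ is a non-trivial block of $G_0$ on $x^{G_0}$ with $|B|\geq 2$, and by $\mathbb{F}_q$-independence there exists $y\in B\setminus\langle x\rangle$.

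The first step is to rule out the case $Q(x)=0$. If $x$ is a non-zero singular vector, then $B\subseteq x^{G_0}$ consists of singular vectors, and for any $y\in B\setminus\{x\}$ Lemma~\ref{lemma:basic}(i) forces $y-x$ to be singular. Since $Q(y-x)=Q(y)+Q(x)-\f(x,y)=-\f(x,y)$, this gives $y\in x^{\perp}$. Because $V$ carries an elliptic form, $x^{\perp}/\langle x\rangle$ is a $2$-dimensional anisotropic space, so the only singular vectors in $x^{\perp}$ lie in $\langle x\rangle$; hence $B\subseteq\langle x\rangle$, contradicting $\mathbb{F}_q$-independence. Thus $x$ must be non-singular.

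With $x$ non-singular and $B$ contained in the non-singular orbit of $G_0$, Lemma~\ref{lemma:basic}(i) imposes the strong condition that \emph{every} non-zero difference of elements of $B$ is non-singular. The next step is to bound $q$. For $q$ odd the group $\Omega_4^-(q)\cong\PSL_2(q^2)$ has $q-1$ orbits on non-singular vectors (one per value of $Q$), so $G_0$ must fuse these via elements of $\GammaO_4^-(q)\setminus\Omega_4^-(q)$, which (together with the short list of overgroups of $\Omega_4^-(q)$ in $\GL_4(q)$) constrains $G_0$ to lie in a small family. Since $G_{0,x}\cap\Omega_4^-(q)$ is the pointwise stabiliser of $\langle x\rangle$ and is isomorphic to the orthogonal group on $x^{\perp}/\langle x\rangle$ (so of type $\Omega_3(q)\cong\PSL_2(q)$), I would enumerate the orbits of $G_{0,x}$ on $x^{G_0}\setminus\{x\}$ and show, for each candidate block~$B$ (a union of such suborbits), that one can exhibit two elements $u,v\in B$ with $u-v$ singular. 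For $q\geq 4$, a counting argument tracking how many differences across each pair of $\Omega_3(q)$-suborbits are singular (using the geometry of non-singular vectors in $x^{\perp}$ and its orthogonal complement) should rule out every candidate.

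The remaining small cases are handled by {\sc Magma}: for $q=2$ one checks directly that no rank~$3$ overgroup $G_0$ of $\Omega_4^-(2)\cong A_5$ admits a non-trivial block on the non-singular orbit whose pairwise differences are all non-singular, and for $q=3$ one enumerates all overgroups of $\Omega_4^-(3)$ in $\GL_4(3)$ with two orbits on $V^*$ and all non-trivial blocks of each such overgroup on the non-singular orbit, keeping only those satisfying the non-singular-difference constraint of Lemma~\ref{lemma:basic}(i) and the transitivity condition of Lemma~\ref{lemma:sufficient}; this isolates precisely $G_0=M_{10}$ and, up to the action of $N_{\GL_4(3)}(M_{10})\setminus M_{10}$, the single partial linear space $\mathcal{M}$. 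The main obstacle is the $q$-bounding step: producing, uniformly in $q\geq 4$ and for every candidate block $B$, an explicit singular difference within $B$ requires a fairly delicate analysis of the action of $\Omega_3(q)$ on the non-singular vectors of $V$ outside $\langle x\rangle$.
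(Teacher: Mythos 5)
Your reduction to the non-singular case is correct and matches the paper: for singular $x$, Lemma~\ref{lemma:basic}(i) forces $y\in\langle x\rangle^\perp$, and since the form is elliptic the only singular vectors of $x^\perp$ lie in $\langle x\rangle$, contradicting $\mathbb{F}_q$-independence. The backward direction is also fine. But the heart of the forward direction --- eliminating $q\geq 4$ --- is only a plan, not a proof. You propose to enumerate the $G_{0,x}$-suborbits on the non-singular orbit and run ``a counting argument tracking how many differences across each pair of $\Omega_3(q)$-suborbits are singular,'' and you yourself flag this as the main obstacle requiring ``a fairly delicate analysis.'' Nothing in the proposal shows this count actually produces a singular difference inside every candidate block, so the statement is not established for $q\geq 4$. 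You also do not treat even $q$ in the non-singular case, where your framework breaks down: there $\langle x\rangle$ is degenerate, $S_x=S_{\langle x\rangle}\simeq\Sp_2(q)$ rather than $\Omega_3(q)$ acting on a complement, and the paper needs a separate argument (orbits of $\Omega_2^+(q)$ on a hyperbolic plane, followed by a product of two reflections fixing $x$ but moving $B$ out of $\langle e_2,f_2\rangle$).

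The paper closes the gap by a different and much more tractable mechanism that you should compare against your plan. Writing $S:=\Omega_4^-(q)\simeq\PSL_2(q^2)$ and $V_\lambda:=\{v:Q(v)=\lambda\}$, it observes that $B\cap V_\lambda$ is a block of $S$ on the $S$-orbit $V_\lambda$, so $S_z\leq S_{B\cap V_\lambda}\leq M$ for a maximal subgroup $M$ of $S$ containing $S_z\simeq\PSL_2(q)$. Dickson's classification then gives $M\simeq\PGL_2(q)$ (so $|B\cap V_\lambda|\leq 2$) except when $q=3$ and $M\simeq A_5$. On the other hand, Lemma~\ref{lemma:basic}(ii) forces $B\cap V_{Q(y)}$ to contain a full orbit of $S_x=\Omega_3(q)$ on $(\langle x\rangle^\perp)^*$ (for $q$ odd), and those orbits have sizes $(q^2-1)/2$, $q(q+1)$, $q(q-1)$, all exceeding $2$. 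This pins down $q=3$ with $|B\cap V_\lambda|=5$ without any counting of singular differences, after which {\sc Magma} isolates $G_0=M_{10}$ and the two blocks giving $\mathcal{M}$ and $\mathcal{M}^g$. Your $q=2$ disposal by computer is acceptable but unnecessary: since $S_x\simeq S_3$ is maximal in $S\simeq A_5$ and $S$ is transitive on non-singular vectors, any block containing $x$ of size at least $2$ is the whole orbit.
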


\begin{proof}
Let  $S:=\Omega_{4}^-(q)$, and note that $S\simeq \PSL_2(q^2)$. Let $Q$ be the non-degenerate quadratic form  preserved by $S$, let $\f$ be the  non-degenerate symmetric bilinear form associated with $V$, and let $\{e_1,e_2,f_1,f_2\}$ be a standard basis for $V$. By~\cite[Lemma 2.10.5]{KleLie1990}, the orbits of $S$ on $V^*$ are  $V_\lambda:=\{v\in V^*: Q(v)=\lambda\}$ for $\lambda\in\mathbb{F}_q$. In particular, since $V$ contains $q(q^2+1)(q-1)$ non-singular vectors, it follows that  $|V_\lambda|=q(q^2+1)$ for $\lambda\in\mathbb{F}_q^*$.

Let $(V,\mathcal{L})$ be an $\mathbb{F}_q$-independent  $G$-affine proper partial linear space. Let $L\in\mathcal{L}_0$, let $B:=L^*$, and let $x\in B$. By Lemma~\ref{lemma:primitive}, $G$ is primitive, so by Lemma~\ref{lemma:necessary},  $B$ is a non-trivial block of $G_0$ on $x^{G_0}$. By assumption, there exists $y\in B\setminus \langle x\rangle$. If $x$ is singular, then $y$ is singular and  $y\in \langle x\rangle^\perp$ by Lemma~\ref{lemma:basic}(i),  but then $y\in\langle x\rangle$, a contradiction.  Thus $x$ is non-singular, and $B$ consists of non-singular vectors. 

First we claim that $S_x\simeq \PSL_2(q)$. Note that $|S_x|=q(q^2-1)/(2,q-1)=|\PSL_2(q)|$.  
If $q$ is odd, then $\langle x\rangle$ is non-degenerate,  so $\Omega_3(q)$ is a subgroup of $S$ that fixes $\langle x\rangle$ pointwise by~\cite[Lemma 4.1.1(iii)]{KleLie1990}; thus $\PSL_2(q)\simeq \Omega_3(q)\leq S_x$, and the claim follows. 
Otherwise $q$ is even, in which case $S_x=S_{\langle x\rangle}$,  and $S_{\langle x\rangle}\simeq \Sp_2(q)$ by the proof of~\cite[Proposition 4.1.7]{KleLie1990} (which is only stated for orthogonal groups with larger dimension). Thus $S_x\simeq \PSL_2(q)$, as desired. 

If $q=2$,  then $S$ is transitive on non-singular vectors  and $S\simeq A_5$. By the claim, $S_x\simeq S_3$, so $S_x$ is maximal in $S$, in which case $B=x^S=x^{G_0}$,  a contradiction. Thus  $q>2$. 

Let $M$ be a maximal subgroup of $S$ containing $S_x$. We claim that either $[M:S_x]\leq 2$, or $q=3$ and $M\simeq A_5$. If $q\neq 3$, then since $S_x\simeq \PSL_2(q)$, it follows from~\cite[Table 8.17]{BraHolRon2013} (see also Dickson~\cite{Dic1901})  that  $M\simeq \PGL_2(q)$, in which case $[M:S_x]\leq 2$. If $q=3$, then $S\simeq A_6$ and $S_x\simeq A_4$, and the claim follows.
 
Let $\lambda\in\mathbb{F}_q^*$. We next claim that either $|B\cap V_\lambda|\leq 2$, or $q=3$ and $S_{B\cap V_\lambda}\simeq A_5$.  Let $z\in B\cap V_\lambda$. Now $V_\lambda$ is an orbit of $S$ and $B$ is a block of $G_0$, so $B\cap V_\lambda$ is a block of $S$ in its action on $V_\lambda$, and $S_z\leq S_{B\cap V_\lambda}$. If $V_\lambda\subseteq B$, then some conjugate of $B$ contains $V_1$, but $Q(f_1+e_2)=1=Q(e_2)$, while $f_1$ is singular, contradicting Lemma~\ref{lemma:basic}(i). Thus $S_{B\cap V_\lambda}$ lies in some maximal subgroup $M$ of $S$. If $q=3$ and $M\simeq A_5$, then since $ S_z\simeq A_4$, we conclude that $B\cap V_\lambda =\{z\}$ or  $S_{B\cap V_\lambda}\simeq A_5$, as desired. Otherwise, $|B\cap V_\lambda|\leq [M:S_z]\leq 2$ by the previous claim.

We may assume that $x=e_2$ (since $x$ is non-singular). Suppose that $q$ is even.  Let $U:=\langle e_1,f_1\rangle$ and $W:=U^\perp=\langle e_2,f_2\rangle$. Suppose that there exists $z\in B\setminus W$.  Write $z=u+w$ where $u\in U^*$ and $w\in W$.  Now $\{u^g+w:g \in \Omega_2^+(q)\}\subseteq B\cap V_{Q(z)}$ by Lemma~\ref{lemma:basic}(ii), but  the  orbits of $\Omega_2^+(q)$ on $U^*$ all have size $q-1$ by~\cite[Lemma 2.10.5]{KleLie1990}, so $|B\cap V_{Q(z)}|>2$, a contradiction. Hence $B\subseteq W$. Now $y=\delta_1e_2+\delta_2f_2$ for some $\delta_1,\delta_2\in \mathbb{F}_q$ where $\delta_2\neq 0$. For $e\in \{e_1+e_2,e_2\}$, the reflection $r_e$ is an isometry of $(V,Q)$ for which $v\mapsto v+\f(e,v)e$ for all $v\in V$. By~\cite[p.30]{KleLie1990}, 
$g:=r_{e_1+e_2}r_{e_2}\in S$ since $g$ is a product of two reflections. Now $x^g=x$, so $y^g\in B$, but $f_2^g=e_1+f_2$, so 
$y^g=\delta_2 e_1 + y\not\in W$,  a contradiction.

Thus $q$ is odd. Now $\Omega_3(q)=S_x$. Let
$U:=\langle x\rangle$ and $W:=U^\perp$. Write $y=u+w$ where  $u\in U$ and $w\in  W^*$. Let $\lambda:=Q(y)$. Now $\{u+w^g:g\in \Omega_3(q) \}\subseteq B\cap V_\lambda$ by Lemma~\ref{lemma:basic}(ii), but 
 the orbits of $\Omega_3(q)$ on $W^*$ have sizes $(q^2-1)/2$, $q(q+1)$ and $q(q-1)$ by~\cite[Lemma 2.10.5]{KleLie1990}, none of which are at most $2$. Thus $q=3$ and $S_{B\cap V_\lambda}\simeq A_5$. In particular, $|B\cap V_{\lambda}|=5$.
 Using Lemma~\ref{lemma:necessary} and {\sc Magma}, we determine that this is only possible if $G_0= M_{10}$ and  $|B|=5$. Further, $G_0$ has exactly two such blocks, say  $B_1$ and $B_2$, and by Lemma~\ref{lemma:sufficient}, both $(B_1\cup\{0\})^G$ and $(B_2\cup\{0\})^G$ are $G$-affine proper partial linear spaces, so one  must be $\mathcal{M}$, where $\mathcal{M}$ is the partial linear space of Example~\ref{example:(R2)}. Since $N_{\GL_4(3)}(G_0)=M_{10}.2$,  the other is $\mathcal{M}^g$ where $g\in N_{\GL_4(3)}(G_0)\setminus G_0$.  
\end{proof}

\section{Class (R3)}
\label{s:(R3)}

Recall that the exterior square $\bigwedge^2(W)$ of a vector space $W:=V_n(q)$ is the quotient of $W\otimes W$ by the ideal $I:=\langle u\otimes u: u\in W\rangle$, and we write $u\wedge v$ for $u\otimes v+I$. A non-zero vector of $\bigwedge^2(W)$ is \textit{simple} whenever it can be written in the form $u\wedge v$ for some $u,v\in W$. Note that $u\wedge u=0$ and $u\wedge v=-v\wedge u$ for all $u,v\in W$. Further, if $\{u_1,\ldots,u_n\}$ is a basis for $W$, then $\{u_i\wedge u_j:1\leq i<j\leq n\}$ is a basis for $\bigwedge^2(W)$. Lastly,  $\GammaL(W)$ acts on $\bigwedge^2(W)$ by $(u\wedge v)^g=u^g\wedge v^g$ for all $g\in \GammaL(W)$ and $u,v\in W$.   The kernel of this action is $\langle -1\rangle$ for $n\geq 3$.

Let $G$ be an affine permutation group of rank~$3$ on $V:=\bigwedge^2(V_5(q))$ for which $\SL_5(q)\unlhd G_0$.  The  orbits of $G_0$ on $V^*$ consist of  the set of  simple vectors and the set of non-simple vectors. 

\begin{prop}
\label{prop:(R3)}
Let $V:=\bigwedge^2(V_5(q))$ where $q$ is a prime power. Let $G$ be an affine permutation group of rank~$3$ on $V$ where $\SL_5(q)\unlhd G_0$. Then there is no $\mathbb{F}_q$-independent $G$-affine proper partial linear space. 
\end{prop}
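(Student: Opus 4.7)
The plan is to mirror the strategy of Propositions~\ref{prop:(R1)} and~\ref{prop:(R2)mbig}. Assume for contradiction that $(V,\mathcal{L})$ is an $\mathbb{F}_q$-independent $G$-affine proper partial linear space, fix $L\in\mathcal{L}_0$, set $B:=L^*$ and $x\in B$, and use $\mathbb{F}_q$-independence to produce $y\in B\setminus\langle x\rangle$. By Lemma~\ref{lemma:necessary}, $B$ is a block of $G_0$ on $x^{G_0}$; Lemma~\ref{lemma:basic} then says that pairwise differences of elements of $B$ lie in $x^{G_0}$ and that $y^{G_{0,x}}\subseteq B$. The two $G_0$-orbits on $V^*$ are the simple (rank-$2$) vectors and the non-simple (rank-$4$) vectors, so I would split on whether $x$ is simple or not, and in each case manufacture two elements of $B$ whose difference lies in the wrong orbit.

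For $x$ simple, choose a basis $\{e_1,\ldots,e_5\}$ of $W:=V_5(q)$ with $x=e_1\wedge e_2$ and set $P:=\langle e_1,e_2\rangle$. The stabiliser $\SL_5(q)_x$ preserves $P$ and has two orbits on simple vectors outside $\langle x\rangle$, distinguished by $\dim(P\cap P')\in\{0,1\}$, where $P'$ is the $2$-plane of a representative. If $\dim(P\cap P')=0$, the $\SL_5(q)_x$-orbit of $y$ contains a vector $\mu\, e_3\wedge e_4$, but $\mu\,e_3\wedge e_4-e_1\wedge e_2$ has rank $4$ and is thus non-simple, contradicting Lemma~\ref{lemma:basic}(i). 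If $\dim(P\cap P')=1$, I may assume $e_1\wedge e_3\in B$; applying the element of $\SL_5(q)_x$ defined by $e_1\mapsto e_2,\,e_2\mapsto -e_1,\,e_3\mapsto e_4,\,e_4\mapsto -e_3,\,e_5\mapsto e_5$ (which fixes $x$ and has determinant $1$) shows $e_2\wedge e_4\in B$, whereas $e_1\wedge e_3-e_2\wedge e_4$ has rank $4$, once again contradicting Lemma~\ref{lemma:basic}(i).

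For $x$ non-simple, I would exploit the canonical $4$-dimensional \emph{support hyperplane} $H_z\subset W$ of a non-simple vector $z$, namely the unique hyperplane with $z\in\bigwedge^2 H_z$ (obtained from the non-zero wedge-square $z\wedge z\in\bigwedge^4 W\cong W^*$ in odd characteristic, and from the rank of the associated skew form in general). Choosing coordinates so that $x=e_1\wedge e_2+e_3\wedge e_4$ gives $H_x=\langle e_1,e_2,e_3,e_4\rangle$ and $\SL_5(q)_x\cong \Sp_4(q)\ltimes H_x$, where $\Sp_4(q)$ preserves $H_x$ together with the non-degenerate skew form on it and $H_x$ acts via the transvections $\tau_h: e_5\mapsto e_5+h$, $e_i\mapsto e_i$ for $i\leq 4$. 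I would split on $H_y$ versus $H_x$. When $H_y\neq H_x$, writing $y=y_1+z\wedge e_5$ with $y_1\in\bigwedge^2 H_x$ and $z\in H_x^*$ (nonzero because $H_y\neq H_x$), one computes $y^{\tau_h}-y=z\wedge h$ for each $h\in H_x$; taking $h\notin\langle z\rangle$ produces a simple nonzero element of $B-B$, contradicting Lemma~\ref{lemma:basic}(i). When $H_y=H_x$, both $y$ and $y-x$ lie in $\bigwedge^2 H_x$ and must be non-degenerate, and I would decompose $\bigwedge^2 H_x=\langle x\rangle\oplus x^\perp$ as an $\Sp_4(q)$-module, writing $y=ax+y_0$ with $y_0\in x^\perp$ nonzero; using that $x^\perp$ is the natural $5$-dimensional orthogonal $\Omega_5(q)$-module carrying an invariant quadratic form $f$ whose vanishing detects simplicity, I would find $g\in\Sp_4(q)$ with $y_0^g\neq y_0$ and $f(y_0^g-y_0)=0$, so that $y^g-y$ is simple and Lemma~\ref{lemma:basic}(i) is contradicted.

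The principal obstacle is the non-simple case with $H_y=H_x$: one must analyse the $\Sp_4(q)$-orbits on $x^\perp$ (parametrised by the value of $f$), and verify in each orbit that the bilinear form associated to $f$ attains the value $f(y_0)$ on the orbit of $y_0$, so as to obtain the desired $g$. This rests on the fact that a non-degenerate quadratic form in four variables over $\mathbb{F}_q$ represents every element of $\mathbb{F}_q$. In characteristic $2$ the wedge-square vanishes identically, so simplicity must instead be tracked directly via the rank of the skew form $y^g-y$; this requires a more hands-on matrix computation but follows the same pattern.
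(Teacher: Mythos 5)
Your overall strategy is the paper's: reduce via Lemma~\ref{lemma:necessary} and Lemma~\ref{lemma:basic} to producing two elements of $B$ whose difference lies in the wrong $G_0$-orbit, splitting on whether $x$ is simple. Your simple case is correct and is essentially the paper's argument in cleaner form (the paper also reduces to a simple $y=v_1\wedge v_2$ with $v_1\in\langle x_1,x_2\rangle$ and then moves the second factor around to manufacture a rank-$4$ difference). Your first non-simple subcase ($H_y\neq H_x$) is exactly the paper's: a transvection $\tau_h$ fixing $H_x$ pointwise produces the simple difference $z\wedge h$. Where you genuinely diverge is the subcase $H_y=H_x$: the paper finishes with two explicit unipotent elements of $\Sp_4(q)\leq\SL_5(q)_x$, first $x_1\mapsto x_1+x_2$ (which forces $\lambda_{13}=\lambda_{14}=0$, and by symmetry $\lambda_{23}=\lambda_{24}=0$, so $y=\lambda x_1\wedge x_2+\mu x_3\wedge x_4$ with $\lambda\neq\mu$), then $k\colon x_1\mapsto x_1+x_3,\ x_4\mapsto x_4-x_2$, giving the simple nonzero difference $y^k-y=(\lambda-\mu)x_3\wedge x_2$. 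Your route instead invokes the isogeny $\Sp_4(q)\to\Omega_5(q)$ on $x^\perp$ and the Pfaffian form $f$; for odd $q$ this is a valid and more conceptual alternative (the needed pair $v_1\neq v_2$ in a norm class with $v_1-v_2$ singular exists because $y_0^\perp\cap x^\perp$ is a nondegenerate $4$-space, hence isotropic, and $\Omega_5(q)$ is transitive on each norm class by~\cite[Lemma~2.10.5]{KleLie1990}), at the cost of citing orbit facts the paper avoids.

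The one genuine gap is characteristic $2$ in that last subcase. There $x\wedge x=0$, so $x\in x^\perp$ and the decomposition $\bigwedge^2H_x=\langle x\rangle\oplus x^\perp$ you rely on does not exist; writing $y=ax+y_0$ with $y_0\in x^\perp$ is no longer possible, and the $\Omega_5(q)$ picture is replaced by the $\Sp_4(q)$-module $x^\perp/\langle x\rangle$, which does not carry the data you need in the same form. Deferring this to ``a more hands-on matrix computation following the same pattern'' leaves the even-$q$ case unproved. The fix is to abandon the orthogonal-geometry framing for this subcase entirely and run the characteristic-free computation above: writing $y=\sum_{i<j}\lambda_{ij}x_i\wedge x_j$, the two elements $h$ and $k$ just described fix $x$, and Lemma~\ref{lemma:basic} applied to $y^h-y$ and $y^k-y$ finishes the proof uniformly in $q$ (and, incidentally, also replaces your odd-characteristic argument with something shorter).
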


\begin{proof}
 Suppose for a contradiction that  $(V,\mathcal{L})$ is an $\mathbb{F}_q$-independent  $G$-affine proper partial linear space. Let $L\in \mathcal{L}_0$, let $B:=L^*$, and let $x\in L^*$. By Lemma~\ref{lemma:necessary},  $B$ is a  block of $G_0$ on $x^{G_0}$. By assumption, there exists $y\in B\setminus \langle x\rangle$.

First suppose that $x$ is simple.  Then $x=x_1\wedge x_2$ for some linearly independent $x_1,x_2\in V_5(q)$ and $y=y_1\wedge y_2$ for some linearly independent $y_1,y_2\in V_5(q)$. If $ x_1,x_2,y_1,y_2$ are linearly independent, then $y-x$ is not simple, contradicting Lemma~\ref{lemma:basic}(i), so  $\mu_1x_1+\mu_2x_2 +\mu_3y_1+\mu_4y_2=0$ for some  $\mu_1,\ldots,\mu_4\in \mathbb{F}_q$ where $\mu_i\neq 0$ for some $i$. If $\mu_3=\mu_4=0$, then we have a contradiction, so without loss of generality, we may assume that $\mu_3\neq 0$. Hence $y=v_1\wedge v_2$  where $v_2:=y_2$  and $v_1:=\lambda_1 x_1+\lambda_2 x_2$ for some $\lambda_1,\lambda_2\in \mathbb{F}_q$. 
Now $x_1,x_2,v_2$ are linearly independent since $y\notin \langle x\rangle$, so we may choose $v_3\in V_5(q)$ such that $x_1,x_2,v_2,v_3$ are linearly independent, and there exists $g\in \SL_5(q)$ such that $x_1^g=x_1$,  $x_2^g=x_2$ and $v_2^g=v_3$. Then $x^g=x$, so $z:=v_1\wedge v_3=y^g\in B$ by Lemma~\ref{lemma:basic}(ii). Since $v_1,v_2,v_3$ are linearly independent, there exists $v_4\in V_5(q)$ such that $v_1,v_2,v_3,v_4$ are linearly independent, and there exists $h\in \SL_5(q)$ such that $v_1^h=v_3$, $v_3^h=-v_1$ and $v_2^h=v_4$. Then $z^h=z$, so $v_3\wedge v_4=(v_1\wedge v_2)^h\in B$ by Lemma~\ref{lemma:basic}(ii). Thus $v_1\wedge v_2-v_3\wedge v_4$ is simple by Lemma~\ref{lemma:basic}(i), a contradiction.

Thus $x$ is not simple, in which case $x=x_1\wedge x_2+x_3\wedge x_4$ for some linearly independent $x_1,x_2,x_3,x_4\in V_5(q)$ and $y=y_1\wedge y_2+y_3\wedge y_4$ for some linearly independent $y_1,y_2,y_3,y_4\in V_5(q)$. Suppose that $x_1,x_2,x_3,x_4,y_4$ are linearly independent. We may assume without loss of generality that $y_1,y_2,y_3\in \langle x_1,x_2,x_3,x_4\rangle$. Choose $x_0\in \langle x_1,x_2,x_3,x_4\rangle \setminus \langle y_3\rangle$. There exists $g\in \SL_5(q)$ such that $x_i^g=x_i$ for $1\leq i\leq 4$ and $y_4^g=x_0+y_4$. Then $x^g=x$ and  $y^g-y=y_3\wedge x_0\neq 0$, contradicting Lemma~\ref{lemma:basic}. By symmetry, we conclude that $y_i\in \langle x_1,x_2,x_3,x_4\rangle$ for $1\leq i\leq 4$. Now  $y=\sum_{1\leq i<j\leq 4}\lambda_{i,j} x_i\wedge x_j$ for some $\lambda_{i,j}\in\mathbb{F}_q$.  Let $x_0:=\lambda_{1,3}x_3+\lambda_{1,4}x_4$. There exists $h\in \SL_5(q)$ such that $x_1^h=x_1+x_2$ and $x_i^h=x_i$ for $2\leq i\leq 4$. Now $x^h=x$ and   $y^h-y= x_2\wedge x_0$, so by Lemma~\ref{lemma:basic}, $x_0=0$.  By exchanging the roles of $x_1$ and $x_2$ in this argument, we see that $y=\lambda x_1\wedge x_2+\mu x_3\wedge x_4$ for some $\lambda,\mu\in\mathbb{F}_q$. Note that $\lambda\neq \mu$ since $y\notin \langle x\rangle$. There exists $k\in \SL_5(q)$ such that $x_1^k=x_1+x_3$, $x_4^k=x_4-x_2$ and $x_i^k=x_i$ for $i=2,3$. Then $x^k=x$ and $y^k-y=(\lambda-\mu) x_3\wedge x_2\neq 0$, contradicting Lemma~\ref{lemma:basic}.
\end{proof}

\section{Class (R4)}
\label{s:(R4)}

Let $G$ be an affine permutation group of rank~$3$ on $V_4(q)$ for which $\Sz(q)\unlhd G_0$, where $q=2^{2n+1}$.   Recall that the group $G$ has suborbits of size $(q-1)(q^2+1)$ and $q(q-1)(q^2+1)$. 
 
\begin{prop}
\label{prop:(R4)}
Let $V:=V_4(q)$ where $q:=2^{2n+1}$ for some $n\geq 1$, and let $G$ be an affine permutation group of rank~$3$ on $V$ where $\Sz(q)\unlhd G_0$.  Then there is no $\mathbb{F}_q$-independent $G$-affine proper partial linear space. 
\end{prop}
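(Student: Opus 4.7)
The plan is to argue by contradiction. Assume $(V,\mathcal{L})$ is an $\mathbb{F}_q$-independent $G$-affine proper partial linear space, fix $L\in\mathcal{L}_0$, $x\in B:=L^*$, and $y\in L\setminus\langle x\rangle_{\mathbb{F}_q}$. Since $q$ is even, Lemma~\ref{lemma:affine}(ii) forces $L$ to be an $\mathbb{F}_2$-subspace of $V$, so $x+y\in L^*$ is distinct from $x$ and $y$; Lemma~\ref{lemma:basic}(i) places $x$, $y$ and $x+y$ in a common orbit of $G_0$ on $V^*$. The two such orbits are the $(q-1)(q^2+1)$ vectors spanning points of the Suzuki--Tits ovoid $\mathcal{O}\subset\PG_3(q)$, and the $q(q-1)(q^2+1)$ non-ovoid vectors. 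If the three vectors are ovoid, then $\langle x\rangle$, $\langle y\rangle$ and $\langle x+y\rangle$ are three distinct collinear points of $\mathcal{O}$ in $\PG_3(q)$, ruled out by the cap property of the Suzuki--Tits ovoid. The bulk of the proof addresses the non-ovoid case.

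Here I would pass to coordinates adapted to a Sylow $2$-subgroup $F=\{T(a,b):a,b\in\mathbb{F}_q\}$ of $\Sz(q)$ fixing an ovoid vector $e_4$, so that $Z(F)=\{T(0,b)\}$ has order $q$ and acts trivially on the plane $U=\langle e_3,e_4\rangle_{\mathbb{F}_q}$. After replacing $x$ by a $\Sz(q)$-conjugate I may take $x=e_3$ and $\Sz(q)_x=Z(F)$. By Lemma~\ref{lemma:necessary}, $B$ is a $G_0$-block and hence a $\Sz(q)$-block, so $B=x^H$ for some $H$ with $Z(F)\leq H\leq \Sz(q)$. Consulting the maximal subgroup list of $\Sz(q)$, the only maximal subgroup containing $Z(F)$ is the Borel $F\rtimes\langle h\rangle$, with $\langle h\rangle$ cyclic of order $q-1$---the dihedral-like normaliser $N(\langle h\rangle)$, the normalisers of the non-split tori of orders $4(q\pm\sqrt{2q}+1)$, and the Suzuki subfield subgroups $\Sz(q_0)$ are all too small to contain $Z(F)$. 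Hence $H=\Sz(q)$ (ruled out because then $|L|=q(q-1)(q^2+1)+1$ is not a power of $2$, whereas $L$ is an $\mathbb{F}_2$-subspace), or $B\subseteq x^{F\rtimes\langle h\rangle}=U\setminus\langle e_4\rangle$ and $L\subseteq U$.

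Identifying $U$ with $\mathbb{F}_q^2$ by $(s,t)\leftrightarrow se_3+te_4$, every element of $L^*$ has $s\neq 0$ (since $\langle e_4\rangle$ is ovoid). The crucial observation is that the projection $\pi\colon L\to\mathbb{F}_q$, $(s,t)\mapsto s$, is injective: two $L^*$-elements with equal $s$-coordinate would sum (by $\mathbb{F}_2$-closure) to a non-zero vector in $\langle e_4\rangle$, contradicting $L^*\subseteq\mathcal{N}^*$. So $L$ is the graph of an $\mathbb{F}_2$-linear map on the $\mathbb{F}_2$-subspace $S:=\pi(L)$ of $\mathbb{F}_q$. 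If $H\cap F\supsetneq Z(F)$, then some $T(a,0)\in H$ with $a\neq 0$ provides distinct elements $x=(1,0)$ and $T(a,0)\cdot x=(1,a)$ in $B$ with equal $s$-coordinate, contradicting injectivity; so $H\cap F=Z(F)$. Schur--Zassenhaus (applicable since $\gcd(q,q-1)=1$) then supplies $\phi\in F$ with $\phi H\phi^{-1}\leq Z(F)\langle h\rangle=\Sz(q)_{\langle x\rangle}$. Writing $\phi(x)=(1,c)$ and using the diagonal action $h\cdot(s,t)=(\nu s,\nu^{\theta+1}t)$ of $h$ on $U$ (where $\theta\colon y\mapsto y^{2^{n+1}}$ is the Tits endomorphism and $\nu\in\mathbb{F}_q^*$ records the scaling of the third coordinate by $h$), a direct calculation gives $B=\{(s,cs(1+s^{\theta})):s\in S\setminus\{0\}\}$. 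The sub-case $c=0$ forces $L\subseteq\langle x\rangle_{\mathbb{F}_q}$, contradicting $\mathbb{F}_q$-independence; if $c\neq 0$, the $\mathbb{F}_2$-linearity of $s\mapsto cs(1+s^{\theta})$ on $S$ reduces to $(s_1/s_2)^\theta=s_1/s_2$ for all non-zero $s_1,s_2\in S$, and since the fixed field of $\theta$ in $\mathbb{F}_q$ is $\mathbb{F}_2$, this forces $|S|\leq 2$ and so $|L|\leq 2$, contradicting that $\mathcal{S}$ is proper.

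The main obstacle, as in the preceding propositions of Sections~\ref{s:(R1)}--\ref{s:(R3)}, will be the detailed bookkeeping in the non-ovoid case---specifically, the precise identification of the subgroups $H$ with $Z(F)\leq H\leq \Sz(q)$, the Schur--Zassenhaus conjugation step, and the explicit computation of $B=x^H$ in the chosen coordinates. The decisive algebraic input is that the Tits endomorphism $\theta$ generates $\Aut(\mathbb{F}_q)$ and so has trivial fixed subfield $\mathbb{F}_2$ in $\mathbb{F}_q$.
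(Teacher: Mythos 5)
Your proposal is correct and follows essentially the same route as the paper's proof of Proposition~\ref{prop:(R4)}: rule out the ovoid orbit, trap $L$ inside the plane spanned by $x$ and the fixed ovoid vector, reduce the block stabiliser to (a conjugate of) $Z(F)\langle h\rangle$, and obtain the contradiction from $\mathbb{F}_2$-additivity of $s\mapsto cs(1+s^{\theta})$ together with the fact that the Tits endomorphism fixes only $\mathbb{F}_2$. The differences are only in how individual steps are justified --- you invoke the cap property of the Suzuki--Tits ovoid and Schur--Zassenhaus/Hall where the paper does a direct matrix computation and cites Suzuki's subgroup theorem with a counting argument in $H(q)/\Sz(q)_x$ --- but the parametrisation of $B$ and the final contradiction coincide exactly with the paper's.
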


\begin{proof}
We follow the notation of~\cite{Suz1962}.   Let $\theta\in \Aut(\mathbb{F}_q)$ where $\alpha^\theta=\alpha^{2^{n+1}}$ for all $\alpha\in\mathbb{F}_q$. For $\alpha,\beta\in \mathbb{F}_q$, define $f(\alpha,\beta):=\alpha^{\theta+2} +\alpha\beta+\beta^\theta$, 
$$
[\alpha,\beta]:=
\begin{pmatrix}
1 & 0 & 0 & 0 \\
\alpha & 1 & 0 & 0 \\
\alpha^{\theta+1}+\beta & \alpha^\theta & 1 & 0 \\
f(\alpha,\beta) & \beta & \alpha & 1 \\
\end{pmatrix},
\tau:=
\begin{pmatrix}
0 & 0 & 0 & 1 \\
0 & 0 & 1 & 0 \\
0 & 1 & 0 & 0 \\
1 & 0 & 0 & 0 \\
\end{pmatrix},
$$
and observe that $[\alpha,\beta][\gamma,\delta]=[\alpha+\gamma,\alpha \gamma^\theta+\beta+\delta]$ for all $\gamma,\delta\in\mathbb{F}_q$. 
We also define
\begin{align*}
Q(q)&:=\{[\alpha,\beta]:\alpha,\beta\in \mathbb{F}_q\},\\
K(q)&:=\{\diag(\kappa^{\theta^{-1}+1},\kappa^{\theta^{-1}},(\kappa^{\theta^{-1}})^{-1},(\kappa^{\theta^{-1}+1})^{-1}):\kappa\in \mathbb{F}_q^*\},\\
H(q)&:=\langle Q(q),K(q)\rangle,\\
\Sz(q)&:=\langle H(q),\tau\rangle.
\end{align*}
Clearly $Q(q)$ is a group of order $q^2$. Note that if $\kappa^{\theta+1}=1$ for some $\kappa\in\mathbb{F}_q^*$, then $\kappa=1$ since the integers $q-1$ and $2^{n+1}+1$ are coprime, so $\{\kappa^{\theta+1}:\kappa\in\mathbb{F}_q^*\}=\mathbb{F}_q^*$. Now $K(q)$ is a cyclic group of order $q-1$, and $K(q)$ normalises $Q(q)$, so $H(q)=Q(q){:}K(q)$ and  $|H(q)|=q^2(q-1)$.  Let $\Sz(q)$ act on $V$ with respect to the basis $\{v_1,v_2,v_3,v_4\}$.
By~\cite[Theorem 7]{Suz1962} and its proof, $\Sz(q)$ has order $q^2(q-1)(q^2+1)$ and every element of $\Sz(q)\setminus H(q)$ has the form $h_1\tau h_2$ for some $h_1\in H(q)$ and $h_2\in Q(q)$. It follows that $\Sz(q)_{v_1}=Q(q)$ and  
$$\Omega_1:=v_1^{\Sz(q)}=\{\lambda v_1:\lambda\in\mathbb{F}_q^*\}\cup\{\mu(f(\alpha,\beta)v_1+\beta v_2 +\alpha v_3+v_4):\mu\in\mathbb{F}_q^*,\alpha,\beta\in\mathbb{F}_q\},
$$
a set of size $(q-1)(q^2+1)$.
Moreover, $\Sz(q)_{v_2}=\{[0,\beta]:\beta\in\mathbb{F}_q\}$ and $\Omega_2:=v_2^{\Sz(q)}$ is a set of size $q(q-1)(q^2+1)$. Hence $V{:}\Sz(q)$ is itself a rank~$3$ permutation group on $V$.

Suppose for a contradiction that  $(V,\mathcal{L})$ is an $\mathbb{F}_q$-independent  $G$-affine proper partial linear space.  Then $(V,\mathcal{L})$ is also a  $(V{:}\Sz(q))$-affine partial linear space. Let $L\in \mathcal{L}_0$, let $B:=L^*$, and let $x\in B$. By Lemma~\ref{lemma:primitive}, $V{:}\Sz(q)$ is primitive, so by Lemma~\ref{lemma:necessary},  $B$ is a non-trivial block of  $\Sz(q)$ on $x^{\Sz(q)}$. 
By assumption, there exists $y\in B\setminus \langle x\rangle$. Now $y=\sum_{i=1}^4\lambda_i v_i$ for some $\lambda_i\in \mathbb{F}_q$. If $x=v_1$, then $y\in \Omega_1\setminus \langle v_1\rangle$, so $\lambda_4\neq 0$, but  
  $x^{[0,1]}=x$ and $y^{[0,1]}-y=(\lambda_3+\lambda_4)v_1+\lambda_4 v_2\notin x^{\Sz(q)}$,
  contradicting  Lemma~\ref{lemma:basic}. It follows that $x\not\in \Omega_1$, so $x\in \Omega_2$, and we may assume without loss of generality that $x=v_2$. 

Observe that   $\Sz(q)_x\leq \Sz(q)_B<\Sz(q)$. Now $q$  divides $|\Sz(q)_B|$, so    $\Sz(q)_B\leq H(q)^g$ for some $g\in \Sz(q)$ by~\cite[Theorem 9]{Suz1962}. If $g\notin H(q)$, then $H(q)^g= H(q)^{\tau h}$ for some $h\in Q(q)$, but $\Sz(q)_x=Z(Q(q))$, so $\Sz(q)_x \leq H(q)^\tau$,  a contradiction since $\tau$ conjugates any lower triangular matrix to an upper triangular matrix. Hence $g\in H(q)$, so   $\Sz(q)_B\leq H(q)$.

Let $C:=\langle v_1,v_2\rangle\setminus \langle v_1\rangle$. Now $\Sz(q)_C=H(q)$ and $C$ is a block of $\Sz(q)$ in its action on~$\Omega_2$. Thus $B\subseteq C$. In particular, $|B|$ divides $q(q-1)$. Moreover, $L$ is an $\mathbb{F}_2$-subspace of $V$ by Lemma~\ref{lemma:affine}, so $|B|$ divides $q-1$. Observe that $\Sz(q)_x$ is the kernel of the action of $H(q)$ on $C$, so $\overline{H}:=H(q)/\Sz(q)_x$ acts regularly on $C$. Thus $|\overline{H}_B|$ divides $q-1$. 

Let   $\overline{Q}:=Q(q)/\Sz(q)_x$ and  $\overline{K}:=K(q)\Sz(q)_x/\Sz(q)_x\simeq K(q)$. Note that $\overline{H}=\overline{Q}{:}\overline{K}$. Further,  $|\overline{Q}|=q$ and $|\overline{K}|=q-1$. Now $\overline{H}_B\cap \overline{Q}=1$. In particular, $\overline{H}_B$ is isomorphic to a subgroup of $\overline{H}/\overline{Q}\simeq \overline{K}$, so $\overline{H}_B$ is cyclic. Let $h$ be a generator of $\overline{H}_B$. We claim that $h\in \overline{K}^g$ for some $g\in \overline{Q}$.  Observe that $\overline{K}\cap \overline{K}^g=1$ for all $g\in \overline{Q}\setminus \{1\}$, so $$|\overline{H}\setminus (\bigcup_{g\in \overline{Q}}\overline{K}^g)|=|\overline{H}|-(|\overline{Q}|(|\overline{K}|-1)+1)=|\overline{Q}|-1.$$ Thus $\overline{Q}\setminus \{1\}=\overline{H}\setminus (\bigcup_{g\in \overline{Q}}\overline{K}^g)$, and the claim follows. 

Hence $\overline{H}_B\leq \overline{K}^g$ for some $g\in \overline{Q}$. It follows that $B\subseteq \{\alpha (\kappa^{\theta^{-1}+1}+\kappa^{\theta^{-1}})v_1+\kappa^{\theta^{-1}}v_2: \kappa\in \mathbb{F}_q^*\}$ for some $\alpha\in\mathbb{F}_q$. Note that $\alpha\neq 0$, or else $B\subseteq \langle x\rangle$. Since $|B|\geq 2$, there exists  $\kappa\in \mathbb{F}_q^*$ such that $\kappa\neq 1$ and $\alpha (\kappa^{\theta^{-1}+1}+\kappa^{\theta^{-1}})v_1+\kappa^{\theta^{-1}}v_2\in B$, and since $L$ is an $\mathbb{F}_2$-subspace of $V$, $\alpha (\kappa^{\theta^{-1}+1}+\kappa^{\theta^{-1}})v_1+(\kappa^{\theta^{-1}}+1)v_2\in B$. Note that $\kappa^{\theta^{-1}}+1=(\kappa+1)^{\theta^{-1}}$. Now
$$\kappa^{\theta^{-1}+1}+\kappa^{\theta^{-1}}=(\kappa+1)^{\theta^{-1}+1}+(\kappa+1)^{\theta^{-1}}=\kappa^{\theta^{-1}+1}+\kappa.$$ 
Thus $\kappa^\theta=\kappa$, but then $\kappa=\kappa^\theta=\kappa^{\theta^2}=\kappa^2$, so $\kappa=1$, a contradiction.
\end{proof}

\section{Class (R5)}
\label{s:(R5)}

In order to determine the $G$-affine proper partial linear spaces where $G$  belongs to (R5), we first outline the theory of spin modules and the quasisimple group $\spin_m^\varepsilon(q)$. Our exposition closely follows Chevalley's treatise on the subject~\cite{Che1996}.

Let $W:=V_m(q)$ where $m\geq 7$ and $q$ is a prime power.  Let $Q$ be a non-degenerate quadratic form on $W$ with associated  non-degenerate symmetric bilinear form $\f$. (In particular, if $m$ is odd, then $q$ is odd.) Let $\varepsilon$ be the type of $Q$, where $\varepsilon\in\{+,-,\circ\}$. The \textit{Clifford algebra} $C(W,Q)=C$  is defined to be the algebra $T/I$, where $T$ is the tensor algebra of $W$ (see~\cite{ShuSur2015}) and $I$ is the ideal generated by $v\otimes v-Q(v)$ for all $v\in W$. We view $W$ as a subspace of $C$ by identifying each $v\in W$ with $v+I$. Similarly, we view $\mathbb{F}_q$ as a subalgebra of $C$ by identifying each $\lambda\in \mathbb{F}_q$ with $\lambda+I$. We also suppress the notation~$\otimes$. Now $v^2=Q(v)$ and $vw+wv=\f(v,w)$ for all $v,w\in V$.   

By~\cite[II.1.2]{Che1996}, for any basis $\{v_1,\ldots,v_m\}$ of $W$, the Clifford algebra has a basis consisting of the  products  $v_{i_1}\cdots v_{i_k}$ for $1\leq k\leq m$ and  integers  $i_1,\ldots,i_k$ such that $1\leq i_1<\cdots<i_k\leq m$, as well as the empty product $1$. In particular, $C$ has dimension $2^m$. Further, $C=C_+\oplus C_-$ where $C_+$ (respectively, $C_-$) is the subspace of $C$ generated by the basis vectors with an even (respectively, odd) number of factors.   We call the elements of $C_+$ and $C_-$ \textit{even} and \textit{odd},  respectively.
Let $Z(C)$ denote the centre of $C$.  For $m$ even, $Z(C)=\mathbb{F}_q$ by~\cite[II.2.1]{Che1996}, and for $m$ odd,  $Z(C)=\mathbb{F}_q+\mathbb{F}_qz_0$ for some $z_0\in C_-$  by~\cite[II.2.6]{Che1996}. In particular, $Z(C)\cap C_+=\mathbb{F}_q$. For $R\subseteq C$, we write $R^\times$ for the set of invertible elements in~$R$.

The \textit{Clifford group}  $\Gamma(C)=\Gamma$ is defined to be $\{s\in C^\times: s^{-1}xs\in W\ \mbox{for all}\ x\in W\}$. Note that $\Gamma\cap W$ is the set of non-singular vectors of $W$, for such a vector $v$ has inverse $Q(v)^{-1}v$ and  $v^{-1}xv=-x+\f(x,v)Q(v)^{-1}v$ for all $x\in W$, while no singular vector of $W$ is invertible. The \textit{even Clifford group}  $\Gamma^+$ is defined to be $\Gamma\cap C_+$. There is a natural representation $\chi$ of $\Gamma$ on $W$ defined by $s\chi: x\mapsto s^{-1}xs$ for $s\in\Gamma$ and $x\in W$. By~\cite[II.3.1]{Che1996}, $\chi$ has kernel $Z(C)^\times$, and $\Gamma\chi$ is $\GO_m^\varepsilon(q)$ for $m$ even and $\SO_m^\varepsilon(q)$ for $m$ odd. By~\cite[II.3.3]{Che1996}, the group $\Gamma^+\chi$ has index $2$ in $\GO_m^\varepsilon(q)$ and, for $q$ odd,  equals $\SO_m^\varepsilon(q)$. 
 Note that $v\chi=-r_v$ for all non-singular vectors $v\in W$, where $r_v$ is the \textit{reflection} in $v$, defined by $x\mapsto x-\f(x,v)Q(v)^{-1}v$ for all $x\in W$.

The Clifford algebra has an  antiautomorphism $\alpha$ defined by $v_1v_2 \cdots v_k\mapsto v_k v_{k-1}\cdots v_1$ for all  $k\geq 1$ and $v_i\in W$ (see~\cite{ShuSur2015}). Now $\Gamma^\alpha=\Gamma$ and $s(s^\alpha)\in Z(C)^\times$ for $s\in\Gamma$ by~\cite[II.3.5]{Che1996}, so there is a homomorphism $\varphi:\Gamma\to Z(C)^\times$ defined by $s\mapsto s(s^\alpha)$ for $s\in \Gamma$. Note that $v\varphi=v^2=Q(v)$ for $v\in \Gamma\cap W$. Let $\Gamma_0:=\ker(\varphi)$ and $\Gamma_0^+:=\Gamma_0\cap C_+$. Let $\pi:\mathbb{F}_q^*\to \mathbb{F}_q^*/\mathbb{F}_q^{*2}$ be the natural homomorphism, where $\mathbb{F}_q^{*2}=\{\lambda^2:\lambda\in\mathbb{F}_q^*\}$, and note that $|\mathbb{F}_q^*/\mathbb{F}_q^{*2}|=\gcd(2,q-1)$. Since $\ker(\chi|_ {\Gamma^+})=\mathbb{F}_q^*\leq \ker(\varphi\pi)$, there is a homomorphism $\theta:\Gamma^+\chi\to \mathbb{F}_q^*/\mathbb{F}_q^{*2}$ called the \textit{spinor norm} defined by $s\chi \mapsto s\varphi\pi$ for $s\in \Gamma^+$. Now $(\ker(\varphi|_{\Gamma^+}))\chi\leq \ker(\theta)$, and $\ker(\theta)\leq (\ker(\varphi|_{\Gamma^+}))\chi$ since  $\ker(\pi)\leq  (\ker(\chi|_{\Gamma^+}))\varphi$. Thus $\Gamma_0^+\chi=\ker(\theta)$. It follows from the definition of $\Omega_m^\varepsilon(q)$ in~\cite[\S 2.5, Descriptions~$1$ and~$2$]{KleLie1990} that $\ker(\theta)=\Omega_m^\varepsilon(q)$. Thus $\Gamma_0^+\chi= \Omega_m^\varepsilon(q)$.    Further, $\Gamma_0^+\cap \ker(\chi)=\langle -1\rangle$  since $\ker(\chi|_ {\Gamma^+})=\mathbb{F}_q^*$. Hence $\Gamma_0^+/\langle -1\rangle\simeq \Omega_m^\varepsilon(q)$. The group $\Gamma_0^+$ is  called the \textit{spin group} and is also denoted by $\spin_m^\varepsilon(q)$. Since $\Omega_m^\varepsilon(q)$ is perfect, $\Gamma_0^+=(\Gamma_0^+)'\langle -1\rangle$. There exist $u,v\in W$ such that $Q(u)=Q(v)=1$ and $\f(u,v)=0$. Now  $uv\in \Gamma_0^+$, so $-1=(uv)^2\in (\Gamma_0^+)'$. Thus $\spin_m^\varepsilon(q)$ is perfect. The proof of the following is routine.

\begin{lemma}
\label{lemma:spinelements}
Let $Q$ be a non-degenerate quadratic form on $W:=V_{m}(q)$ with type $\varepsilon$ and associated  bilinear form $\f$ where $m\geq 7$ and $q$ is a prime power. Let $u,v\in W$ be linearly independent where $Q(u)=0$ and $\f(u,v)=0$. Then $(1-uv)(1+uv)=1$ and  $(1+uv)x(1-uv)=x+\f(x,v)u-\f(x,u)v-Q(v)\f(x,u)u$ for all $x\in W$. In particular, $1-uv\in \spin_{m}^\varepsilon(q)$.
\end{lemma}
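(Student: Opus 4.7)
The lemma is a direct computation in the Clifford algebra $C(W,Q)$, using only the defining relations $v^2=Q(v)$ and $vw+wv=\f(v,w)$ for $v,w\in W$. The hypotheses $Q(u)=0$ and $\f(u,v)=0$ translate to $u^2=0$ and $uv=-vu$, and these are exactly the two identities that make everything collapse cleanly.

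First I would verify $(1-uv)(1+uv)=1$ by expanding: the cross terms $\pm uv$ cancel, and the remaining $(uv)^2$ term vanishes because $uvuv=u(vu)v=-u^2v^2=-Q(u)Q(v)=0$. In particular, $1-uv$ is invertible with two-sided inverse $1+uv$ (the product in the opposite order is handled by the same computation).

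Next, to derive the conjugation formula, I would expand
\[
(1+uv)x(1-uv)=x-xuv+uvx-uvxuv
\]
for $x\in W$, and use $xu=\f(x,u)-ux$ and $xv=\f(x,v)-vx$ to push $x$ past $uv$. A short manipulation gives $xuv=\f(x,u)v-\f(x,v)u+uvx$, so the middle two terms of the expansion collapse to $\f(x,v)u-\f(x,u)v$. Substituting the same identity into the final term $uvxuv$ and then using $u^2=0$ together with $uvuv=0$ reduces it to $Q(v)\f(x,u)u$, which combined with the rest yields the stated formula.

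For the final claim, I would check in turn that $1-uv$ lies in each of $C_+^\times$, $\Gamma^+$ and $\Gamma_0^+=\spin_m^\varepsilon(q)$. Membership in $C_+$ is immediate, since $1-uv$ is a sum of terms with an even number of vector factors; invertibility together with the conjugation formula (whose right-hand side lies in $W$) gives $1-uv\in\Gamma$, hence $1-uv\in\Gamma^+$. Applying the antiautomorphism $\alpha$ reverses products, so $(uv)^\alpha=vu=-uv$ and therefore $(1-uv)^\alpha=1+uv$; then $(1-uv)\varphi=(1-uv)(1+uv)=1$, placing $1-uv$ in $\ker\varphi\cap C_+=\Gamma_0^+$. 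There is no real obstacle here: the entire statement amounts to careful bookkeeping with the Clifford relations, and the two hypotheses $Q(u)=0$ and $\f(u,v)=0$ are precisely what force each offending term to vanish.
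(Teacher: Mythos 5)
Your proof is correct: the paper omits this verification as routine, and your direct expansion using $u^2=Q(u)=0$ and $uv=-vu$ is exactly the intended computation, with the invertibility, $\Gamma^+$-membership, and $\ker\varphi$ checks all handled properly. Nothing to add.
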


Now suppose that $m$ is even and $\varepsilon=+$. Write $m=2r$. Let  $\{e_1,\ldots,e_r,f_1,\ldots,f_r\}$ be a standard basis of $W$ (see \S\ref{s:(R2)}). Let $E:=\langle e_1,\ldots,e_r\rangle$, and denote by $C^E$ the subalgebra of $C$ generated by $E$. For $S=\{i_1,\ldots,i_k\}$ in the power set $\mathbb{P}(\{1,\ldots,r\})$ where $i_1<\cdots <i_k$, let $e_S:=e_{i_1}\cdots e_{i_k}$  and $f_S:=f_{i_1}\cdots f_{i_k}$. Let $e_\varnothing:=1$ and $f_\varnothing:=1$. 
Then $\{e_S : S\in \mathbb{P}(\{1,\ldots,r\})\}$ is a basis for $C^E$, and $\{f_Te_S : T,S\in \mathbb{P}(\{1,\ldots,r\})\}$ is a basis for $C$.   Let $f^*:=f_1\cdots f_r=f_{\{1,\ldots, r\}}$, and note that $f^*f_i=0$ for  $1\leq i\leq r$.    
Using this observation and the above basis for $C$, it can be verified that, for each $a\in C^E$ and $c\in C$, there exists a unique $a'\in C^E$ such that $f^*ac=f^*a'$, and we write $a^c:=a'$. Further,   $C^E$ is a right $C$-module under this action.  
Define $C^E_+:=C^E\cap C^+$. Now $C^E_+$ is an irreducible $\mathbb{F}_q\Gamma^+_0$-module by~\cite[II.4.3]{Che1996} and the introduction to~\cite[Ch.\ III]{Che1996}; it is called the \textit{spin module} and has dimension $2^{r-1}$.
 The kernel of the action of $\Gamma^+$ on $C^E_+$ 
 has order $\gcd(2,q-1)$  by~\cite[III.6.1]{Che1996}. Let $z:=\Pi_{i=1}^r (e_i+f_i)(f_i-e_i)$. For $S\subseteq \{1,\ldots,r\}$, note that   $f^*e_S(e_i+f_i)(f_i-e_i)$ is $-f^*e_S$ when $i\in S$ and $f^*e_S$ otherwise,  whence $e_S^z=(-1)^{|S|}e_S$. Since those  $e_S$ with $|S|$ even form a basis of  $C^E_+$, it follows that $\langle z\rangle$ is the kernel of $\Gamma^+$ on $C^E_+$. In particular, for $r$  odd, $C^E_+$ is a faithful $\mathbb{F}_q\spin_{2r}^+(q)$-module. 
 
We claim that $\spin_{10}^+(q)$ is quasisimple with centre $\spin_{10}^+(q)\cap Z$ where $Z:=Z(\GL(C_+^E))$. Since $\Gamma_0^+/\langle -1\rangle\simeq\Omega_{10}^+(q)$, it suffices to consider the case where $\Omega_{10}^+(q)$ is not simple, so we assume that $q\equiv 1\mod 4$. Now there exists $\lambda\in \mathbb{F}_q^*$ such that  $\lambda^2=-1$, and $z\lambda\in \Gamma_0^+\cap Z$. Since $(\Gamma_0^+\cap Z)/\langle -1\rangle \leq Z(\Gamma_0^+)/\langle -1\rangle\leq Z(\Omega_{10}^+(q))$, which has order $2$, it follows that $\Gamma_0^+\cap Z=Z(\Gamma_0^+)$ and that $\Gamma_0^+$ is quasisimple.

Let $M$ be a maximal totally singular subspace of $W$. By~\cite[\S 3.1]{Che1996}, there exists a unique one-dimensional subspace $P_M$ of $C^E$ such that $v^w=0$ for all $v\in P_M$ and $w\in M$. Any non-zero element of $P_M$ is called a \textit{representative  spinor} of $M$; for example, $1$ is a representative spinor of $\langle f_1,\ldots,f_r\rangle$. Any element of $C^E$ that is the representative spinor of some maximal totally singular subspace is called a \textit{pure spinor}. If $u$ is a representative spinor of the maximal totally singular subspace $M'$, then $M'=\{v\in W : u^v=0\}$ by~\cite[III.1.4]{Che1996}. Thus there is a one-to-one correspondence between the maximal totally singular subspaces of $W$ and the  one-dimensional subspaces of $C^E$ spanned by pure spinors.
 If $u$ is a representative spinor of $M$ and $s\in \Gamma$, then $u^s$ is a representative spinor of $s^{-1}Ms$ by~\cite[III.1.3]{Che1996}. The group $\Gamma_0\chi$ acts transitively on the maximal totally singular subspaces  by~\cite[III.2.7]{Che1996}, so $\Gamma_0$ acts transitively on the one-dimensional subspaces of $C^E$ spanned by pure spinors.  Since $m$ is even, any element $s$  of  $\Gamma$ is either even or odd by~\cite[II.3.2]{Che1996}, so  $1^s$ is either even or odd. Thus  a pure spinor is either even or odd, and  $\Gamma_0^+$ acts transitively on the one-dimensional subspaces of $C_+^E$ spanned by pure spinors. Further, since  $W$ has exactly $2(q+1)(q^2+1)\cdots (q^{r-1}+1)$ maximal totally singular subspaces,  
 there are exactly $(q-1)(q+1)(q^2+1)\cdots (q^{r-1}+1)$ pure spinors in $C_+^E$.  The following is~\cite[III.1.12]{Che1996}.

\begin{lemma}
\label{lemma:evenspinor}
Let $M$ and $M'$ be distinct maximal totally singular subspaces of $V_{2r}(q)$ with respect to a non-degenerate quadratic form with type $+$ where $r\geq 4$ and $q$ is a prime power. Let  $u$ and $u'$ be  representative spinors of $M$ and $M'$, respectively.
Then $u+u'$ is pure if and only if $\dim(M\cap M')=r-2$.
\end{lemma}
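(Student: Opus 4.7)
The plan is to reduce the lemma to a canonical configuration via the transitivity of $\spin_{2r}^+(q)$ on maximal totally singular subspaces, and then to compute directly in the Clifford algebra. Since $u,u'\in C_+^E$, both $M$ and $M'$ lie in the same family as $F=\langle f_1,\ldots,f_r\rangle$, so $r-\dim(M\cap M')$ is even. By Witt's extension theorem, $\Omega_{2r}^+(q)=\Gamma_0^+\chi$ acts transitively on ordered pairs $(M,M')$ of maximal totally singular subspaces in a fixed family with $\dim(M\cap M')$ prescribed; lifting this transitivity to $\Gamma_0^+$, which acts on representative spinors compatibly with its projective action on maximal totally singular subspaces and only up to scalars in $\mathbb{F}_q^*$ (scalars do not affect purity), I may assume $M=F$ with $u=1$ and $M'=M_S:=\langle f_k:k\notin S\rangle+\langle e_k:k\in S\rangle$ with $u'=\lambda e_S$ for some even-sized $S\subseteq\{1,\ldots,r\}$ satisfying $|S|=r-\dim(M\cap M')$, and some $\lambda\in\mathbb{F}_q^*$. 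The task reduces to deciding when $1+\lambda e_S$ is pure, and the criterion I would apply is that a nonzero $a\in C_+^E$ is pure precisely when the annihilator $T(a):=\{w\in W:a^w=0\}$ has dimension $r$, in which case $T(a)$ is the corresponding maximal totally singular subspace.

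For $|S|=2$, say $S=\{i,j\}$, a direct Clifford-algebra computation shows that the $r-2$ vectors $f_k$ with $k\notin S$ annihilate both $1$ and $e_ie_j$, and that two further independent vectors of the form $f_i+\lambda e_j$ and $f_j-\lambda e_i$ annihilate $1+\lambda e_S$: the Chevalley action gives $1^{f_i}=0$, $1^{e_j}=e_j$, and $(e_ie_j)^{f_i}=-e_j$, so the contributions to $(1+\lambda e_ie_j)^{f_i+\lambda e_j}$ cancel exactly, and symmetrically at the other generator pair. This exhibits $T(1+\lambda e_S)$ as an $r$-dimensional totally singular subspace, proving the ``if'' direction.

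For $|S|\geq 4$ with $|S|$ even, the analogous expansion shows that $T(1+\lambda e_S)=\langle f_k:k\notin S\rangle$, of dimension only $r-|S|<r$, so $1+\lambda e_S$ is not pure. The main obstacle is to rule out every ``mixed'' candidate annihilator: writing $w=\sum c_ke_k+\sum d_kf_k$ and expanding $(1+\lambda e_S)^w$ on the monomial basis of $C^E$, for $k\in S$ the contribution $\lambda e_S\cdot c_ke_k$ vanishes because $e_k$ already appears in $e_S$ and $e_k^2=0$, while for $k\notin S$ the contribution $\lambda e_S\cdot c_ke_k$ yields a monomial of length $|S|+1\geq 5$ that cannot be cancelled by any term arising from $1\cdot w$ (each of length $1$); similarly the contribution $\lambda e_S\cdot d_kf_k$ for $k\in S$ produces a surviving monomial of length $|S|-1\geq 3$ that cannot be matched. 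These length considerations force $c_k=0$ for every $k$ and $d_k=0$ for every $k\in S$, and the remaining freedom is exactly $\langle f_k:k\notin S\rangle$. The length-tracking in the Clifford algebra is the technical heart of the proof; it is essentially the content of Chevalley's III.1.12, which one could alternatively invoke directly.
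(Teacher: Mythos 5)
The paper does not prove this lemma at all: it is stated with the single line ``The following is \cite[III.1.12]{Che1996}'', so the only comparison available is between your argument and a bare citation to Chevalley. Your self-contained proof is correct. The reduction is legitimate: by III.1.3 the Clifford group $\Gamma$ carries representative spinors to representative spinors compatibly with its action on maximal totally singular subspaces, hence preserves purity, and since $\Gamma\chi=\GO_{2r}^+(q)$ surjects onto the full orthogonal group, Witt's theorem on pairs with prescribed intersection dimension lets you normalise $(M,M',u,u')$ to $(F,M_S,1,\lambda e_S)$ with $|S|=r-\dim(M\cap M')$ even; you do not actually need the finer claim that $\Omega_{2r}^+(q)=\Gamma_0^+\chi$ itself is transitive on such pairs, since any preimage in $\Gamma$ (necessarily even, as both families are preserved) does the job. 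Your purity criterion is also sound: $T(a)$ is always totally singular (from $a^{w^2}=Q(w)a$ and linearity in $w$), so $\dim T(a)\leq r$ with equality exactly when $a$ spans the one-dimensional annihilator $P_{T(a)}$ of a maximal totally singular subspace, i.e.\ when $a$ is pure. The computations check out: $(1+\lambda e_ie_j)^{f_i+\lambda e_j}=0+\lambda e_j-\lambda e_j+0=0$ and symmetrically at $f_j-\lambda e_i$, and these two vectors together with the $f_k$, $k\notin S$, are pairwise orthogonal singular vectors, giving $\dim T=r$ when $|S|=2$; for $|S|\geq 4$ the monomials $e_k$, $e_{S\cup\{k\}}$, $e_{S\setminus\{k\}}$ have distinct degrees $1$, $|S|+1$, $|S|-1$, so no cancellation is possible and $T(1+\lambda e_S)=\langle f_k:k\notin S\rangle$ has dimension $r-|S|<r$. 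The only loose end is that you silently assume $u,u'\in C_+^E$; the mixed-parity case is vacuous (a pure spinor is homogeneous, and subspaces in different families have $r-\dim(M\cap M')$ odd), and in the paper the lemma is only ever applied to even spinors, so this costs nothing. What your approach buys is a proof readable from the paper's own summary of the Clifford-algebra formalism, at the price of an explicit basis computation that Chevalley's coordinate-free treatment avoids.
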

 
Observe that $\{e_S : S\in \mathbb{P}(\{1,\ldots,r\}),|S|\equiv 0\mod 2\}$ is a basis for $C^E_+$ consisting entirely of pure spinors. Indeed, $e_S$ is a representative spinor for the maximal totally singular subspace  spanned by $e_i$ for $i\in S$ and $f_j$ for $j\in \{1,\ldots,r\}\setminus S$. 

\begin{lemma}
\label{lemma:eventrick}
Let $Q$ be a non-degenerate quadratic form on $W:=V_{2r}(q)$ with type $+$ where $r\geq 4$ and $q$ is a power of a prime $p$. Let $\{e_1,\ldots,e_r,f_1,\ldots,f_r\}$ be a standard basis of $W$, and let $E:=\langle e_1,\ldots,e_r\rangle$. Let $H\leq \spin_{2r}^+(q)$, and let $B$ be a  block of $H$ on $C^E_+$ such that $B\cup \{0\}$ is an $\mathbb{F}_p$-subspace of $C^E_+$. Let $x,y\in B$. Let $I:=\{S\in \mathbb{P}(\{1,\ldots,r\}): |S|\equiv 0\mod 2\}$ and write $y=\sum_{S\in I} \lambda_Se_{S}$ where $\lambda_S\in\mathbb{F}_q$ for $S\in I$.   Let $\ell,k\in \{1,\ldots,r\}$ where $\ell\neq k$, and let $I_{\ell,k}:=\{S \in I:\ell\in S,k\notin S\}$.  If $\lambda_T\neq 0$ for some $T\in I_{\ell,k}$ and  $1-f_\ell e_k\in H_x$,  then 
 $$\sum_{S\in I_{\ell,k}} (-1)^{i_S}\lambda_S e_{(S\setminus \{\ell\})\cup \{k\}}\in B$$ 
 for some $i_S\in \{0,1\}$.
\end{lemma}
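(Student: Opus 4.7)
The plan is to leverage the fact that $1 - f_\ell e_k$ lies in $\spin_{2r}^+(q)$ and fixes $x$, so it must preserve the block $B$, and then to carry out an explicit computation of its action on the basis vectors $e_S$ of $C^E_+$.

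First I would observe that $f_\ell$ and $e_k$ are linearly independent with $Q(f_\ell) = 0$ and $\f(f_\ell, e_k) = 0$ (since $\ell \neq k$ and $\{e_1, \dots, e_r, f_1, \dots, f_r\}$ is a standard basis), so Lemma~\ref{lemma:spinelements} gives $1 - f_\ell e_k \in \spin_{2r}^+(q)$. By the hypothesis $1 - f_\ell e_k \in H_x$, and since $B$ is a block of $H$ containing $x$, we have $B^{1-f_\ell e_k} = B$, hence $y^{1-f_\ell e_k} \in B$. Because the action of $C$ on $C^E$ is right multiplication (that is, $a^c$ is defined by $f^* a c = f^* a^c$) and this action is $\mathbb{F}_q$-linear, $y^{1 - f_\ell e_k} = y - y^{f_\ell e_k}$. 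Since $B \cup \{0\}$ is an $\mathbb{F}_p$-subspace, the difference $y^{1 - f_\ell e_k} - y = -y^{f_\ell e_k}$ lies in $B \cup \{0\}$, and hence so does $y^{f_\ell e_k}$.

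The heart of the argument is the explicit computation of $e_S^{f_\ell e_k}$ for $S \in I$, which amounts to simplifying $f^* e_S f_\ell e_k$ modulo the relations $e_i f_j + f_j e_i = \delta_{ij}$, $e_i e_j + e_j e_i = 0$, $f_i f_j + f_j f_i = 0$, together with $e_i^2 = 0 = f_i^2$ and $f^* f_i = 0$ for all $i$. If $\ell \notin S$, then $f_\ell$ anticommutes past every factor of $e_S$, so $e_S f_\ell e_k$ is a scalar multiple of $f_\ell e_S e_k$ and is annihilated by $f^*$. If $\ell \in S$ with $\ell$ at position $j$, then using $e_\ell f_\ell = 1 - f_\ell e_\ell$ one finds $e_S f_\ell = (-1)^{|S|-j} e_{S \setminus \{\ell\}} - (-1)^{|S|-1} f_\ell e_S$; multiplying on the right by $e_k$ and applying $f^*$ kills the second term, so $e_S^{f_\ell e_k} = (-1)^{|S|-j} (e_{S \setminus \{\ell\}} e_k)^\bullet$, which vanishes when $k \in S$ (because $e_k^2 = 0$) and equals $\pm e_{(S \setminus \{\ell\}) \cup \{k\}}$ when $k \notin S$, where the sign records the number of transpositions needed to reorder the indices.

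Summing this over the basis expansion of $y$ then gives
\[
y^{f_\ell e_k} = \sum_{S \in I_{\ell,k}} (-1)^{i_S} \lambda_S e_{(S\setminus\{\ell\}) \cup \{k\}}
\]
for suitable $i_S \in \{0,1\}$. The assumption $\lambda_T \neq 0$ for some $T \in I_{\ell,k}$ ensures this sum is nonzero, since the map $S \mapsto (S \setminus \{\ell\}) \cup \{k\}$ is injective on $I_{\ell,k}$ and its image consists of distinct basis elements of $C^E_+$. Combined with $y^{f_\ell e_k} \in B \cup \{0\}$, this places the sum in $B$, as required. The main obstacle is the careful sign bookkeeping in the commutation computation; after that, the block-preservation and $\mathbb{F}_p$-linearity arguments are routine.
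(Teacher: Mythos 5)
Your proof is correct and follows essentially the same route as the paper's: both compute $y^{1-f_\ell e_k}=y-y^{f_\ell e_k}$, evaluate $f^*e_Sf_\ell e_k$ term by term (vanishing unless $S\in I_{\ell,k}$, where it gives $\pm f^*e_{(S\setminus\{\ell\})\cup\{k\}}$), and then use the block property together with the $\mathbb{F}_p$-subspace structure of $B\cup\{0\}$ to place the nonzero difference in $B$. The only difference is presentational — you move $f_\ell$ leftward through $e_S$, whereas the paper uses the identity $f^*e_Ae_\ell f_\ell=f^*e_A$ — and your sign bookkeeping and the injectivity observation for $S\mapsto(S\setminus\{\ell\})\cup\{k\}$ are both sound.
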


\begin{proof}
Let $f^*:=f_1\cdots f_r$ and $s:=1-f_\ell e_k$. Now $f^*y^s=f^*y-\sum_{S\in I_{\ell,k}}\lambda_S f^*e_Sf_\ell e_k$ since $f^*e_Sf_\ell e_k=0$ for $S\in I\setminus I_{\ell,k}$. Further, for $S\in I_{\ell,k}$, $f^*e_Sf_\ell e_k=f^*(-1)^{i_S}e_{(S\setminus \{\ell\})\cup \{k\}}$ for some $i_S\in \{0,1\}$ since for any $A\in \mathbb{P}(\{1,\ldots,r\})$ with $\ell\notin A$, $f^*e_Ae_\ell f_\ell =f^*e_A(1-f_\ell e_\ell)=f^*e_A$. Let $w:=\sum_{S\in I_{\ell,k}} (-1)^{i_S}\lambda_S e_{(S\setminus \{\ell\})\cup \{k\}}\neq 0$. Now $x^s=x$, so $y-w=y^s\in B$. Thus $w\in B$.
\end{proof}

Now we consider class (R5) where $(m,n,\varepsilon)=(10,16,+)$. 

\begin{prop}
\label{prop:(R5)10}
Let $V$ be the spin module of $\spin_{10}^+(q)$ where $q$ is a prime power. Let $G$ be an affine permutation group of rank~$3$ on $V$ where $\spin_{10}^+(q)\unlhd G_0$. Then there is no $\mathbb{F}_q$-independent $G$-affine proper partial linear space. 
\end{prop}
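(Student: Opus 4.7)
The plan is to argue by contradiction in parallel with Propositions~\ref{prop:(R1)}--\ref{prop:(R4)}, but driven by the module-theoretic machinery assembled just before the statement. Suppose $(V,\mathcal{L})$ is an $\mathbb{F}_q$-independent $G$-affine proper partial linear space. Fix $L\in\mathcal{L}_0$, set $B:=L^*$, choose $x\in B$; by Lemma~\ref{lemma:necessary}, $B$ is a block of $G_0$ on $x^{G_0}$, and $\mathbb{F}_q$-independence yields $y\in B\setminus\langle x\rangle$. Since $-1\in\spin_{10}^+(q)\leq G_0$ acts as $-1$ on the spin module, Lemma~\ref{lemma:affine}(i) (with $g=-1$ when $p$ is odd, or (ii) when $p=2$) forces every line of $\mathcal{S}$ to be an affine subspace of $V$, so $L=B\cup\{0\}$ is an $\mathbb{F}_p$-subspace of $V$. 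This is precisely the hypothesis demanded by Lemma~\ref{lemma:eventrick}, which is the decisive tool in what follows.

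Comparing Table~\ref{tab:subdegree} with the formula $(q-1)(q+1)(q^2+1)(q^3+1)(q^4+1)=(q^8-1)(q^3+1)$ for the number of pure spinors shows that the two orbits of $G_0$ on $V^*$ are exactly the set of non-zero pure spinors and the set of non-pure spinors. Fix a standard basis $\{e_1,\ldots,e_5,f_1,\ldots,f_5\}$ of $W:=V_{10}(q)$ and identify $V$ with $C^E_+$ with basis $\{e_S:S\subseteq\{1,\ldots,5\},\ |S|\ \text{even}\}$. The argument will split according to whether $x^{G_0}$ is the pure or non-pure orbit.

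For the pure case, since $G_0$ is transitive on the orbit I may assume $x=1$, the representative spinor of $M:=\langle f_1,\ldots,f_5\rangle$. For any $\ell\neq k$ in $\{1,\ldots,5\}$, Lemma~\ref{lemma:spinelements} puts $s_{\ell k}:=1-f_\ell e_k$ in $\spin_{10}^+(q)$, and the identity $f^*f_\ell=0$ gives $f^*\cdot 1\cdot s_{\ell k}=f^*$, so $s_{\ell k}\in \spin_{10}^+(q)_x\leq G_{0,x}$. Expanding $y=\sum_{S\in I}\lambda_Se_S$ and applying Lemma~\ref{lemma:eventrick} repeatedly with varying $(\ell,k)$ will produce a sequence of elements of $B$ whose supports in the basis $\{e_S\}$ shrink at each step, using the $\mathbb{F}_p$-subspace property of $L$ to combine them. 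The aim is to extract an element $y'\in B$ of the form $\lambda_0+\lambda_S e_S$ with $S$ of size $2$ or $4$; then Lemma~\ref{lemma:basic}(i) forces $y'-x$ to be a pure spinor, which via Lemma~\ref{lemma:evenspinor} requires the maximal totally singular subspace associated with $e_S$ to meet $M$ in dimension $3$, and this fails for $|S|=4$ (intersection has dimension $1$). The residual case $|S|=2$ is then ruled out by iterating $s_{\ell k}$ further to manufacture an element $y''\in B$ whose difference with some earlier element of $B$ is non-pure.

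For the non-pure case, a convenient representative is $x:=1+e_{\{1,2,3,4\}}$: the associated maximal totally singular subspaces meet in $\langle f_5\rangle$, so Lemma~\ref{lemma:evenspinor} certifies that $x$ is not pure. I would enumerate the elements $1-uv$ (with $u,v$ singular and orthogonal) which fix this particular $x$---typically those stabilising both $1$ and $e_{\{1,2,3,4\}}$, such as $1-f_5e_k$ for $k\neq 5$ and suitable combinations involving $e_5$---and run the analogous elimination on $y$ via Lemma~\ref{lemma:eventrick}, again obtaining a minimal-support element $y'\in B$ to which Lemmas~\ref{lemma:basic}(i) and~\ref{lemma:evenspinor} jointly apply to yield a contradiction. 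The main obstacle in both cases will be the combinatorial bookkeeping: identifying sufficiently many elements of $\spin_{10}^+(q)_x$ of the simple shape $1-uv$, tracking the signs $i_S$ produced by Lemma~\ref{lemma:eventrick}, and orchestrating the elimination across the size-$2$ and size-$4$ levels of the spinor basis so that the terminal $y'$ visibly violates the pure/non-pure dichotomy governing $y'-x$.
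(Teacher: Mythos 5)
Your overall strategy is the paper's: force $L$ to be an $\mathbb{F}_p$-subspace via $-1\in\spin_{10}^+(q)$, identify the two $G_0$-orbits with the pure and non-pure spinors of $C_+^E$, and eliminate coefficients of $y$ using elements $1-f_\ell e_k\in\spin_{10}^+(q)_x$ together with Lemmas~\ref{lemma:eventrick},~\ref{lemma:evenspinor} and~\ref{lemma:basic}. In the pure case $x=1$ your plan tracks the paper's execution closely: one application of Lemma~\ref{lemma:eventrick} yields $w=\sum_{j\ne\ell,k}\mu_je_{\{j,k\}}+\mu e_A\in B$ with $|A|=4$; if $w=\mu e_A$ then $1+\mu e_A\in B$ is not pure by Lemma~\ref{lemma:evenspinor}, and otherwise two further applications place nonzero multiples of $e_{\{1,2\}}$, $e_{\{2,3\}}$ and $e_{\{3,4\}}$ in $B$, whence $\delta_1e_{\{1,2\}}+\delta_2e_{\{3,4\}}\in B$ is not pure. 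The bookkeeping you defer there is genuinely routine, so the pure half of the plan is sound.

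The gap is in the non-pure case $x=1+e_{\{1,2,3,4\}}$. Every $g\in\spin_{10}^+(q)_x$ that stabilises $1$ and $e_{\{1,2,3,4\}}$ separately (your $1-f_5e_k$, $1-f_\ell e_k$ for $\ell,k\in\{1,2,3,4\}$, etc.) acts trivially on the plane $\langle 1,e_{\{1,2,3,4\}}\rangle$, so after your elimination you are left with the residual possibility $y=\lambda\cdot 1+\mu e_{\{1,2,3,4\}}$ with $\lambda,\mu\in\mathbb{F}_q^*$ and $\lambda\neq\mu$, about which such elements say nothing. Lemma~\ref{lemma:basic}(i) does not close this either: for $\lambda,\mu\notin\{0,1\}$ the difference $y-x=(\lambda-1)\cdot 1+(\mu-1)e_{\{1,2,3,4\}}$ is itself non-pure, hence perfectly consistent with lying in $x^{G_0}$ (only $q=2$ escapes, since then $\lambda=\mu=1$ is forced). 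The missing idea is an element of $\spin_{10}^+(q)_x$ that fixes $x$ \emph{without} fixing its two pure summands individually. The paper takes $s:=1-uv$ with $u:=e_2-f_4$ and $v:=f_3-e_1$ (both singular and mutually orthogonal, so Lemma~\ref{lemma:spinelements} applies); then $1^s=1-e_{\{1,2\}}$ and $e_{\{1,2,3,4\}}^s=e_{\{1,2,3,4\}}+e_{\{1,2\}}$, so $x^s=x$ while $y^s-y=(\mu-\lambda)e_{\{1,2\}}\in L$ is a nonzero pure spinor in $B$, the desired contradiction. Note that this element involves only the coordinates $1,2,3,4$, so your suggestion of "suitable combinations involving $e_5$" points in the wrong direction; what matters is precisely that $s$ mixes the two summands of $x$.
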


\begin{proof}
Let $Q$ be a non-degenerate quadratic form on $W:=V_{10}(q)$ with type $+$ and standard basis $\{e_1,\ldots,e_5,f_1,\ldots,f_5\}$. Let $E:=\langle e_1,\ldots,e_5\rangle$ so that $V=C_+^E$,  and let $Z:=Z(\GL(C_+^E))$.   Recall that $\spin_{10}^+(q)/(\spin_{10}^+(q)\cap Z)\simeq \POmega_{10}^+(q)$ and
 that $\spin_{10}^+(q)Z$ acts transitively on the set of pure spinors in $C_+^E$. By~\cite[Lemma 2.9]{Lie1987},  $\spin_{10}^+(q)Z$ has two orbits on $(C_+^E)^*$. Thus one of the orbits of $G_0$  is the set of pure spinors.  Let $q=p^e$ where $p$ is prime.

Suppose for a contradiction that  $(C_+^E,\mathcal{L})$ is an $\mathbb{F}_q$-independent  $G$-affine proper partial linear space. Let $L\in \mathcal{L}_0$, let $B:=L^*$, and let $x\in B$.  By Lemma~\ref{lemma:necessary}, $B$ is a  block of $G_0$ on $x^{G_0}$, and   $L$ is an $\mathbb{F}_p$-subspace of $C_+^E$  by Lemma~\ref{lemma:affine}(i)  since $-1\in \spin_{10}^+(q)$. Let $I:=\{S\in \mathbb{P}(\{1,\ldots,5\}): |S|\equiv 0\mod 2\}$, and for distinct integers $\ell,k\in \{1,2,3,4,5\}$, let $I_{\ell,k}:=\{S \in I:\ell\in S,k\notin S\}$. By assumption, there exists $y\in B\setminus \langle x\rangle$. Write $y=\sum_{S\in I} \lambda_Se_{S}$ where $\lambda_S\in\mathbb{F}_q$ for $S\in I$.  

First suppose that $x=1$.   Since $y\notin\langle 1\rangle$, there exists  $T\in I\setminus \{\varnothing\}$ such that $\lambda_T\neq 0$. Since $T$ is a non-empty proper subset of $\{1,\ldots,5\}$, we may choose $k\in \{1,\ldots,5\}\setminus T$ and $\ell\in T$. Now $T\in I_{\ell,k}$ and $1-f_\ell e_k\in \spin_{10}^+(q)_x$ by Lemma~\ref{lemma:spinelements}, so Lemma~\ref{lemma:eventrick} implies that $w:=\sum_{S\in I_{\ell,k}} (-1)^{i_S}\lambda_S e_{(S\setminus \{\ell\})\cup \{k\}}\in B$ for some $i_S\in \{0,1\}$. Thus $w=\sum_{j\neq \ell,k} \mu_j e_{\{j,k\}} +\mu e_{A}$ for some $\mu_j,\mu\in \mathbb{F}_q$ where $A:=\{1,\ldots,5\}\setminus \{\ell\}$. First suppose that $w=\mu e_A$. Then $1+\mu e_A\in B$. However,  $1+\mu e_A$ is not pure by Lemma~\ref{lemma:evenspinor}, a contradiction. Thus $\mu_j\neq 0$ for some $j$  (where $j\neq \ell,k$). Choose $i\in \{1,\ldots,5\}\setminus \{\ell,k,j\}$, and note that $i\in A$. Let $\ell^*:=j$ and $k^*:=i$. Now $1-f_{\ell^*} e_{k^*}\in \spin_{10}^+(q)_x$ by Lemma~\ref{lemma:spinelements}, and $\{j,k\}$ is the only element of $I_{\ell^*,k^*}$ in  the decomposition of $w$, so by Lemma~\ref{lemma:eventrick}, $\langle e_{\{i,k\}}\rangle \cap B\neq\varnothing$. Without loss of generality, we may assume that $\{i,k\}=\{1,2\}$. Since $1-f_1e_3\in \spin_{10}^+(q)_x$  by Lemma~\ref{lemma:spinelements}, $\langle e_{\{2,3\}}\rangle \cap B\neq\varnothing$ by Lemma~\ref{lemma:eventrick}. By a similar argument, $\langle e_{\{3,4\}}\rangle \cap B\neq\varnothing$. Thus $\delta_1e_{\{1,2\}}+\delta_2e_{\{3,4\}}\in B$ for some $\delta_1,\delta_2\in \mathbb{F}_q^*$, contradicting Lemma~\ref{lemma:evenspinor}.

Thus $x$ is not a pure spinor, in which case we may assume that $x=1+e_{\{1,2,3,4\}}$ by Lemma~\ref{lemma:evenspinor}. Let $A:=\{1,2,3,4\}$. Suppose that there exists $T\in I$ such that $\lambda_T\neq 0$ and $5\in T$. Choose $k\in A\setminus T$. Now $T\in I_{5,k}$ and $1-f_5 e_k\in \spin_{10}^+(q)_x$ by Lemma~\ref{lemma:spinelements}, so Lemma~\ref{lemma:eventrick} implies that $w:=\sum_{S\in I_{5,k}} (-1)^{i_S}\lambda_S e_{(S\setminus \{5\})\cup \{k\}}\in B$ for some $i_S\in \{0,1\}$. Now $w=\sum_{j\neq 5,k} \mu_j e_{\{j,k\}} +\mu e_{A}$ for some $\mu_j,\mu\in \mathbb{F}_q$. Since $w$ is not pure, $\mu_j\neq 0$ for some $j$  (where $j\neq 5,k$). Choose $i\in A\setminus \{k,j\}$. Let $\ell^*:=j$ and $k^*:=i$. Now $1-f_{\ell^*} e_{k^*}\in \spin_{10}^+(q)_x$ by Lemma~\ref{lemma:spinelements}, and
$\{j,k\}$ is the only element of $I_{\ell^*,k^*}$ in  the decomposition of $w$, so by Lemma~\ref{lemma:eventrick}, $\langle e_{\{i,k\}}\rangle \cap B\neq\varnothing$, a contradiction. Thus $5\notin S$ for all $S\in I$ such that $\lambda_S\neq 0$. Suppose that $\lambda_{\{1,2\}}\neq 0$.  Now $1-f_1 e_3\in \spin_{10}^+(q)_x$ by Lemma~\ref{lemma:spinelements}, so $\langle e_{2,3},e_{3,4}\rangle\cap B\neq \varnothing$ by Lemma~\ref{lemma:eventrick}, contradicting Lemma~\ref{lemma:evenspinor}. It follows that $y=\lambda 1+\mu e_A$ for some $\lambda,\mu\in \mathbb{F}_q^*$ such that $\lambda\neq\mu$. Let $u:=e_2-f_4$ and $v:=f_3-e_1$, and observe that $s:=1-uv\in \spin_{10}^+(q)$ by Lemma~\ref{lemma:spinelements}. Now $1^s=1-e_{\{1,2\}}$ and $e_A^s=e_A+e_{\{1,2\}}$, so $x^s=x$. Thus $(\mu-\lambda)e_{\{1,2\}}= y^s-y\in B$, a contradiction.
\end{proof}

 We continue to assume that $W=V_m(q)$ where  $m=2r\geq 8$ and that $Q$ is a non-degenerate form on $W$ of type $\varepsilon=+$ with associated bilinear form $\f$. As above, $C=C(W,Q)$, $\Gamma=\Gamma(C)$, and 
 $W$ has a  standard basis $\{e_1,\ldots,e_r,f_1,\ldots,f_r\}$. 

First suppose that $q$ is even, and define $\spin_{2r-1}(q):=\{s\in \Gamma_0^+ : s^{-1}\langle e_r+f_r\rangle s=\langle e_r+f_r\rangle\}$. By~\cite[p.197]{KleLie1990}, $\spin_{2r-1}(q)\simeq \Sp_{2r-2}(q)$, and  the \textit{spin module} of $\Sp_{2r-2}(q)$ is defined to be $C_+^E$ restricted to $\spin_{2r-1}(q)$. 

Suppose instead that $q$ is odd. Let $\overline{W}$ be the subspace of $W$ spanned by $e_r+f_r$ and $e_1,\ldots,e_{r-1},f_1,\ldots,f_{r-1}$. Observe that the restriction of $f$ to $\overline{W}$ is non-degenerate. Let $\overline{C}:=C(\overline{W},Q)$ and $\overline{\Gamma}:=\Gamma(\overline{C})$. We may view $\overline{C}$ as the subalgebra of $C$ generated by $\overline{W}$ by~\cite[II.1.4]{Che1996}, in which case $\overline{C}_+=\overline{C}\cap C_+$. Further, $\spin_{2r-1}(q)= \overline{\Gamma}_0^+=\overline{\Gamma}^+\cap \Gamma_0^+$ by~\cite[II.6.1]{Che1996}.  The  \textit{spin module} of $\spin_{2r-1}(q)$  is defined to be the restriction of $C_+^E$ to $\overline{\Gamma}_0^+$ (see~\cite[II.5.1 and \S 3.8]{Che1996}). 

Now we consider class (R5) where $(m,n,\varepsilon)=(7,8,\circ)$. Note that $\spin_{7}(q)$ is quasisimple with centre $\spin_7(q)\cap Z=\langle -1\rangle$ where $Z:=Z(\GL(C_+^E))$ since $\Sp_6(q)$ ($q$ even) and $\Omega_7(q)$ ($q$ odd) are simple.

\begin{prop}
\label{prop:(R5)7}
Let $V$ be the spin module of $\spin_7(q)$ where $q$ is a prime power. Let $G$ be an affine permutation group of rank~$3$ on $V$  where $\spin_{7}(q)\unlhd G_0$. Then there is no $\mathbb{F}_q$-independent $G$-affine proper partial linear space. 
\end{prop}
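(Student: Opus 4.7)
The plan is to closely follow the strategy of Proposition~\ref{prop:(R5)10}, adapted to the smaller ambient parameter $r=4$ and with the crucial observation that the arguments there can be run inside $\spin_7(q)\leq \spin_8^+(q)$. Set $W := V_8(q)$ with a non-degenerate quadratic form of type~$+$, fix a standard basis $\{e_1,\ldots,e_4,f_1,\ldots,f_4\}$, let $E := \langle e_1,\ldots,e_4\rangle$, and identify the spin module $V$ with $C_+^E$. This has basis $\{e_S : S\subseteq\{1,2,3,4\},\ |S|\text{ even}\}$ consisting of pure spinors. The subdegrees in Table~\ref{tab:subdegree} coincide with those of $\spin_8^+(q)$, so the orbits of $\spin_7(q)Z$ on $V^*$ are, as for $\spin_8^+(q)$, the pure spinors and the non-pure spinors.

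Assume for a contradiction that $(V,\mathcal{L})$ is an $\mathbb{F}_q$-independent $G$-affine proper partial linear space, pick $L\in\mathcal{L}_0$, $B:=L^*$, $x\in B$, and $y\in B\setminus\langle x\rangle$. As in Proposition~\ref{prop:(R5)10}, $B$ is a block of $G_0$ by Lemma~\ref{lemma:necessary} and $L$ is an $\mathbb{F}_p$-subspace by Lemma~\ref{lemma:affine}(i) (using $-1\in\spin_7(q)$). Split into the two cases of Proposition~\ref{prop:(R5)10}: either $x$ is a pure spinor, in which case we may take $x=1$, or $x$ is not pure, in which case Lemma~\ref{lemma:evenspinor} allows us to take $x=1+e_{\{1,2,3,4\}}$. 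In each case, write $y=\sum_{S\in I}\lambda_S e_S$ with $I:=\{S\subseteq\{1,2,3,4\}:|S|\text{ even}\}$, and apply Lemma~\ref{lemma:eventrick} with $H=\spin_7(q)$ and with the shift elements $1-f_\ell e_k$ for $\ell,k\in\{1,2,3\}$, $\ell\neq k$. These elements lie in $\spin_7(q)$ since $f_\ell,e_k\in\overline{W}$ (and, in the characteristic~$2$ case, they visibly fix $\langle e_4+f_4\rangle$), and Lemma~\ref{lemma:spinelements} shows they fix $x=1$ and $x=1+e_{\{1,2,3,4\}}$. The resulting chain of shifts produces elements of $B$ of the form $\sum_{j}\mu_j e_{\{j,k\}}+\mu e_A$, at which point Lemma~\ref{lemma:evenspinor} applied to the appropriate maximal totally singular subspaces forces the requisite contradictions, exactly as in Proposition~\ref{prop:(R5)10}.

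The main obstacle is the handful of subcases where every non-zero coefficient $\lambda_T$ of $y$ has $T$ either equal to $\{1,2,3,4\}$ (in Case~1) or containing the index~$4$ (in Case~2), since the shift operations $1-f_\ell e_k$ with $\ell,k\in\{1,2,3\}$ cannot move the index~$4$. To overcome this, I will use the additional elements $s:=1-uv\in\spin_7(q)$ obtained by taking $v:=e_4+f_4\in \overline{W}$ (non-singular, with $Q(v)=1$) and $u$ a singular vector in $\overline{W}\cap \langle v\rangle^\perp$, e.g.\ $u\in\{e_1,e_2,e_3,f_1,f_2,f_3\}\cap\langle v\rangle^\perp$, which by Lemma~\ref{lemma:spinelements} acts on $W$ by a transvection interchanging contributions involving the fourth coordinate. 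A direct computation with $f^* e_S(e_4+f_4)u$ in the Clifford algebra yields an analogue of Lemma~\ref{lemma:eventrick} that moves basis elements between those containing $4$ and those not, and thereby completes the remaining subcases. The last step in Case~2 mirrors Proposition~\ref{prop:(R5)10} exactly: one reduces to $y=\lambda\cdot 1+\mu e_{\{1,2,3,4\}}$ with $\lambda\neq\mu$ and applies $s=1-uv$ with $u=e_2-f_3,\ v=f_1-e_4$ (or an analogous pair in $\overline{W}$) to produce $(\mu-\lambda)e_T\in B$ for some $T$ with $|T|=2$, contradicting the block property. The even-characteristic case requires only that $\spin_7(q)$ be viewed as the stabiliser of $\langle e_4+f_4\rangle$ in $\spin_8^+(q)$; the same elements $1-f_\ell e_k$ and $1-u(e_4+f_4)$ remain in $\spin_7(q)$ and the argument goes through unchanged.
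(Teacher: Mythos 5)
Your proposal follows essentially the same route as the paper's proof: the same identification of $V$ with $C_+^E$, the same case split on whether $x$ is a pure spinor (normalised to $1$ or $1+e_{\{1,2,3,4\}}$ via Lemma~\ref{lemma:evenspinor}), the same shifts $1-f_\ell e_k$ for $\ell,k\in\{1,2,3\}$ supplemented by elements $1-uv$ built from $e_4+f_4$ to move the fourth index, and the same appeals to Lemmas~\ref{lemma:spinelements}, \ref{lemma:eventrick} and~\ref{lemma:evenspinor}. Two small corrections are needed when executing it: for $q$ odd the group $\spin_7(q)Z$ has \emph{three} orbits on $V^*$ (the non-pure spinors split into two halves), although the fact you actually need---that the pure spinors form one $G_0$-orbit---still follows since $G$ has rank~$3$ and the pure spinors are a proper non-empty $G_0$-invariant subset; and your suggested pair $u=e_2-f_3$, $v=f_1-e_4$ for the final step does not lie in $\overline{W}$ (so $1-uv\notin\spin_7(q)$), whence one must instead take a genuine pair in $\overline{W}$ fixing $x$, such as $u=e_2-f_2-e_4-f_4$ and $v=f_3-e_1$.
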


\begin{proof}
Let $Q$ be a non-degenerate quadratic form on $W:=V_{8}(q)$ with type $+$ and  standard basis $\{e_1,\ldots,e_4,f_1,\ldots,f_4\}$. Let $E:=\langle e_1,\ldots,e_4\rangle$, so that $V=C_+^E$, and let $Z:=Z(\GL(C_+^E))$.   
    Recall that $\spin_7(q)/(\spin_7(q)\cap Z)\simeq \Omega_7(q)$. Recall also that $\spin_7(q)Z$ acts on the set $P$ of pure spinors in $C_+^E$ and that $|P|=(q^3+1)(q^4-1)$.  If $q$ is even, then by~\cite[Lemma 2.8]{Lie1987}, $\spin_7(q)Z$ has two orbits on $(C_+^E)^*$ with sizes $(q^3+1)(q^4-1)$ and $q^3(q-1)(q^4-1)$, so $P$ is an orbit of $\spin_7(q)Z$. If $q$ is odd, then by~\cite[p.494]{Lie1987}, $\spin_7(q)Z$ has three orbits on $(C_+^E)^*$, one with size $(q^3+1)(q^4-1)$ and two with size $q^3(q-1)(q^4-1)/2$, so $P$ is an orbit of $\spin_7(q)Z$. Thus one of the orbits of $G_0$  is the set of pure spinors. Let $q=p^e$ where $p$ is prime.

Suppose for a contradiction that  $(C_+^E,\mathcal{L})$ is an $\mathbb{F}_q$-independent  $G$-affine proper partial linear space. Let $L\in \mathcal{L}_0$, let $B:=L^*$, and let $x\in B$.  By Lemma~\ref{lemma:necessary}, $B$ is a  block of $G_0$ on $x^{G_0}$, and by Lemma~\ref{lemma:affine}, $L$ is an $\mathbb{F}_p$-subspace of $C_+^E$. Let $A:=\{1,2,3,4\}$ and $I:=\{S\in\mathbb{P}(A):|S|\equiv 0\mod 2\}$. For distinct integers $\ell,k\in A$, let $I_{\ell,k}:=\{S \in I:\ell\in S,k\notin S\}$.  Note that $1-f_\ell e_k\in \spin_7(q)$ for distinct $k,\ell\in \{1,2,3\}$ by Lemma~\ref{lemma:spinelements}. By assumption, there exists $y\in B\setminus \langle x\rangle$. Write $y=\sum_{S\in I} \lambda_Se_{S}$ where $\lambda_S\in\mathbb{F}_q$ for $S\in I$. 

First suppose that $x=1$.  Then  $1-f_\ell e_k$ fixes $x$ for all $k,\ell\in \{1,2,3\}$.  We claim that $\langle e_{\{i,j\}}\rangle\cap B$ is empty for all $i,j\in\{1,2,3\}$ such that $i\neq j$. Suppose otherwise. Then $\lambda e_{\{1,2\}},\mu e_{\{1,3\}}\in B$ for some  $\lambda,\mu\in\mathbb{F}_q^*$ by Lemma~\ref{lemma:eventrick}.  Now $s:=1-f_1(e_4+f_4)\in \spin_7(q)_x$ by Lemma~\ref{lemma:spinelements}, so $\mu(e_{\{1,3\}}+e_{\{3,4\}})=(\mu e_{\{1,3\}})^{s}\in B$. But then $\lambda e_{\{1,2\}}+\mu e_{\{3,4\}}\in B$, contradicting Lemma~\ref{lemma:evenspinor}. Thus the claim holds.   If $\lambda_T=0$ for all $T\in I$ such that $|T|=2$, then   $y=\lambda_Ae_A$ by Lemma~\ref{lemma:evenspinor}. However, this implies that $1+\lambda_A e_A\in B$, contradicting Lemma~\ref{lemma:evenspinor}. Thus $\lambda_T\neq 0$ for some $T\in I$ such that $|T|=2$. Choose $\ell\in T\setminus \{ 4\}$ and $k\in \{1,2,3\}\setminus T$. By Lemma~\ref{lemma:eventrick}, $\lambda e_{\{i,k\}}+\mu e_{\{k,4\}}\in B$ for some $\lambda,\mu\in\mathbb{F}_q$ where $\{i,k,\ell\}=\{1,2,3\}$.
If $\lambda\neq 0$, then $\langle e_{\{\ell,k\}}\rangle \cap B\neq\varnothing$ by Lemma~\ref{lemma:eventrick}, a contradiction. Thus $\mu e_{\{k,4\}}\in B$. Let $s:=1-e_ke_\ell\in \spin_7(q)$ by Lemma~\ref{lemma:spinelements}. Now $s$ fixes $\mu e_{\{k,4\}}$, so $1^s\in B$, but then $e_{\{k,\ell\}}\in B$, a contradiction.

Thus $x$ is not a pure spinor, in which case we may assume that $x=1+e_A$ by Lemma~\ref{lemma:evenspinor}.  Suppose that $\lambda_T\neq 0$ for some $T\in I$ such that $|T|=2$. Choose $\ell\in T\setminus \{ 4\}$ and $k\in \{1,2,3\}\setminus T$. Since $x^{1-f_\ell e_k}=x$, Lemma~\ref{lemma:eventrick} implies that  $\langle e_{\{i,k\}},e_{\{k,4\}}\rangle\cap B\neq \varnothing$ where $\{i,k,\ell\}=\{1,2,3\}$,
 contradicting Lemma~\ref{lemma:evenspinor}. Thus $y=\lambda 1+\mu e_A$ for some $\lambda,\mu\in \mathbb{F}_q^*$ such that $\lambda\neq \mu$. Let $u:=e_2-f_2-e_4-f_4$ and $v:=f_3-e_1$, and observe that $s:=1-uv\in \spin_7(q)$ by Lemma~\ref{lemma:spinelements}. Now $1^s=1-e_{\{1,2\}}+e_{\{1,4\}}$ and $e_A^s=e_A+e_{\{1,2\}}-e_{\{1,4\}}$, so $x^s=x$. Thus $(\mu-\lambda)(e_{\{1,2\}}-e_{\{1,4\}})= y^s-y\in B$,  contradicting Lemma~\ref{lemma:evenspinor}.
\end{proof}

\section{Tensor product class (T)}
\label{s:(T)}

Let $V:=U\otimes W$ where $U:=V_2(q)$ and $W:=V_m(q)$ with $q$ a prime power and $m\geq 2$. Recall from~\S\ref{ss:basicsvs} that for $g\in  \GL(U)$ and $h\in  \GL(W)$, the linear map $g\otimes h$ maps  $u\otimes w$ to $u^g\otimes w^h$ for all $u\in U$ and $w\in W$. Recall also that for $S\leq \GL(U)$ and $T\leq \GL(W)$, the group $S\otimes T=\{g\otimes h : g\in S,h\in T\}\leq \GL(V)$, and $\Aut(\mathbb{F}_q)$ acts naturally on $V$, $U$ and $W$ in such a way that $(u\otimes w)^\sigma = u^\sigma \otimes w^\sigma$ for all $u\in U$, $w\in W$ and $\sigma\in\Aut(\mathbb{F}_q)$. Either $m\geq 3$ and  the stabiliser of the decomposition of $V$ is
$ (\GL(U)\otimes \GL(W)){:}\Aut(\mathbb{F}_{q})$, or $m=2$, in which case we may assume that $U=W$, and  the stabiliser of the decomposition of $V$ is $(\GL(U)\otimes \GL(U)){:}(\Aut(\mathbb{F}_{q})\times \langle\tau\rangle)$ where $\tau\in \GL(V)$ is defined by $u\otimes w\mapsto w\otimes u$ for all $u,w\in U$. We adopt the convention of writing elements of $(\GL(U)\otimes \GL(W)){:}\Aut(\mathbb{F}_{q})$ as $g\otimes h$ for $g\in \GammaL(U)$ and  $h\in \GammaL(W)$ where $g$ and $h$ are $\sigma$-semilinear for the same $\sigma\in \Aut(\mathbb{F}_q)$. Let $G$ be an affine  permutation group of rank~$3$ on $V$ where  $G_0$ stabilises the decomposition of $V$. Recall that the orbits of $G_0$ on $V^*$ are the \textit{simple} tensors $\{u\otimes w: u\in U^*, w\in W^*\}$ and the \textit{non-simple} tensors  $\{u_1\otimes w_1+u_2\otimes w_2 : w_1,w_2\in W, \ \dim\langle w_1,w_2\rangle =2\}$ for any choice of basis $\{u_1,u_2\}$ of $U$. 

This section is devoted to the proof of the following result, which deals with the classes (T1), (T2) and (T3). However, in the process of proving Proposition~\ref{prop:(T1)--(T3)}, we will often prove results for more general families of groups in class (T) so that these results can be used for classes (S0), (S1) and (S2). 

\begin{prop}
\label{prop:(T1)--(T3)}
Let $G$ be an affine permutation group of rank~$3$ on $V:=U\otimes W$ where $U:=V_2(q)$, $W:=V_m(q)$, $m\geq 2$, $q$ is a prime power and $G_0$   belongs to one of the classes~\emph{(T1)},~\emph{(T2)} or~\emph{(T3)}. Then the following are equivalent. 
\begin{itemize}
\item[(i)] $(V,\mathcal{L})$ is an $\mathbb{F}_q$-independent $G$-affine proper partial linear space.
\item[(ii)]  $\mathcal{L}$ is  $\{U\otimes w+v: w \in W^*,v\in V\}$ or $\{u\otimes W+v: u \in U^*,v\in V\}$, and either $m\geq 3$, or $m=2$ and $G_0\leq (\GL(U)\otimes  \GL(W)){:}\Aut(\mathbb{F}_{q})$.
\end{itemize}
\end{prop}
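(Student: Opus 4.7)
The plan is to establish both directions of the equivalence. For the easy direction (ii) $\Rightarrow$ (i), I would verify directly that each of the candidate line sets $\mathcal{L}_U := \{U \otimes w + v : w \in W^*, v \in V\}$ and $\mathcal{L}_W := \{u \otimes W + v : u \in U^*, v \in V\}$ gives a proper partial linear space. Two distinct lines from $\mathcal{L}_U$ either are parallel or meet in at most one point, because $U \otimes w_1 \cap U \otimes w_2 = \{0\}$ when $\langle w_1 \rangle \neq \langle w_2 \rangle$; the lines through $0$ are $2$-dimensional $\mathbb{F}_q$-subspaces, giving $\mathbb{F}_q$-independence. The analogous statements hold for $\mathcal{L}_W$ with $m$-dimensional lines. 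Since the stabiliser $(\GL(U) \otimes \GL(W)){:}\Aut(\mathbb{F}_q)$ of the tensor decomposition permutes both $\Sigma_U$ and $\Sigma_W$ and contains $G_0$ precisely when $m \geq 3$ or ($m = 2$ and $\tau \notin G_0$), such a $G_0$ preserves the chosen line set.

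For (i) $\Rightarrow$ (ii), I would take an $\mathbb{F}_q$-independent $G$-affine proper partial linear space $(V, \mathcal{L})$, fix $L \in \mathcal{L}_0$, put $B := L^*$, and pick $x \in B$. My first step is to show every line of $\mathcal{S}$ is an affine $\mathbb{F}_p$-subspace via Lemma~\ref{lemma:affine}(i): the element $-1_V = -I_2 \otimes I_m$ lies in $\SL_2(q) \otimes \SL_m(q) \leq G_0$ in classes (T1) and (T2), and in class (T3) an appropriate product of the generator $-1 \otimes g$ with an element of $1 \otimes \SL_m(q)$ yields an element of $G_0$ that acts as $-1$ on any line through $0$. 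Combined with Corollary~\ref{cor:remark3} (which gives $\langle x \rangle_{\mathbb{F}_q}^* \subseteq x^{G_0}$), Lemma~\ref{lemma:closed} then produces a subfield $F \subseteq \mathbb{F}_q$ with $L$ an $F$-subspace and $F = \{\lambda \in \mathbb{F}_q : \lambda u \in L\}$ for $u \in L^*$; the $\mathbb{F}_q$-independence hypothesis forces $\dim_F L \geq 2$.

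I would then split into cases according to whether $B$ consists of simple or non-simple tensors. In the simple case, writing $x = u_0 \otimes w_0$, a straightforward rank computation shows that $u' \otimes w' - u_0 \otimes w_0$ is simple if and only if $u' \in \langle u_0 \rangle$ or $w' \in \langle w_0 \rangle$; applying Lemma~\ref{lemma:basic}(i) to each $y \in B$, and then to the difference of two elements of $B \setminus \langle x \rangle$ lying in different such directions, forces $L \subseteq U \otimes w_0$ (or, symmetrically, $L \subseteq u_0 \otimes W$). To promote this to $L = U \otimes w_0$, I would pick $y = u_1 \otimes w_0 \in B$ with $u_1 \notin \langle u_0 \rangle$ and, for each $\lambda \in \mathbb{F}_q^*$, use $g \otimes h \in G_{0,x}$ with $g u_0 = \lambda u_0$, $g u_1 = \lambda^{-1} u_1$ and $h w_0 = \lambda^{-1} w_0$ to map $y$ to $\lambda^{-2} y$; Lemma~\ref{lemma:basic}(ii) then yields $\mathbb{F}_q^{*2} \subseteq F$, and since $|\mathbb{F}_q^{*2}| \geq (q-1)/2$ exceeds the order of every proper subfield of $\mathbb{F}_q$ in all relevant cases (or trivially when $q$ is prime), $F = \mathbb{F}_q$ and $L = U \otimes w_0$. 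In the non-simple case, with $x = u_1 \otimes w_1 + u_2 \otimes w_2$, I would combine Lemma~\ref{lemma:basic} with the stabiliser $1 \otimes \SL(W)_{w_1, w_2}$ (substantial for $m \geq 3$) and the eigenvalue constraint that no $F$-linear combination $\alpha x + \beta y$ can be simple (since $L$ avoids the simple orbit) to reach a contradiction, treating $m = 2$ and class (T3) separately by exploiting the diagonal torus and the element $-1 \otimes g$.

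Once $L = U \otimes w_0$ is established, $\mathcal{L} = L^G = \mathcal{L}_U$, because $1 \otimes \SL(W) \leq G_0$ acts transitively on $\Sigma_U$. For the $m = 2$ conclusion in (ii), if $\tau$ were in $G_0$ then $u_0 \otimes W = \tau(U \otimes w_0) \in \mathcal{L}$, so the pair $(0, u_0 \otimes w_0)$ would lie on both the lines $U \otimes w_0$ and $u_0 \otimes W$, violating the partial linear space axiom; thus $\tau \notin G_0$ and $G_0 \leq (\GL(U) \otimes \GL(W)){:}\Aut(\mathbb{F}_q)$. The main obstacle will be the non-simple case, whose orbit analysis must be carried out uniformly across (T1), (T2), (T3) and both $m = 2$ and $m \geq 3$; the smaller stabiliser available in the $m = 2$ regime is the trickiest point.
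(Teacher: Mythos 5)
Your overall architecture matches the paper's: split on whether $0$ is collinear with a simple or a non-simple tensor, show in the simple case that a line through $0$ lies in $U\otimes w_0$ or $u_0\otimes W$ and then equals it, and rule out the non-simple case. The localisation step (difference of simple tensors is simple iff the $U$- or $W$-components are proportional) and the $m=2$ swap argument are correct and are exactly Lemma~\ref{lemma:tensorsimple}. However, there are two concrete gaps. First, your upgrade from $L\subseteq U\otimes w_0$ to $L=U\otimes w_0$ relies on elements $g\otimes h$ with $g=\diag(\lambda,\lambda^{-1})$ acting on $U$; these are available in class (T1) but \emph{not} in class (T2), where the group induced on $U$ normalises $\SL_2(5)$ and (for $q\in\{11,19,29,59\}$) lies in $\SL_2(5)\circ \mathbb{F}_q^*$, so the full diagonal torus is absent and the conclusion $\mathbb{F}_q^{*2}\subseteq F$ is unjustified. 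The paper needs a separate and genuinely delicate argument here (Lemma~\ref{lemma:T2simple}): one reduces to $q=9$, shows the putative block $C$ has size $8$, identifies the only two candidates as $x^{\SL_2(3)}$ and $x^{\SL_2(3)^s}$, and eliminates both using a scalar--semilinear element. Your proposal contains no substitute for this. (In class (T3) the torus is likewise unavailable, but there $q=p$ is prime and the $\mathbb{F}_p$-subspace argument inside the $2$-dimensional space $U\otimes w_0$ already forces $L=U\otimes w_0$; this is recoverable from what you wrote.)

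Second, the non-simple case is not actually argued. The observation that no $F$-combination of elements of $L$ may be simple cannot by itself produce a contradiction: after localising to $U\otimes X\simeq \M_2(q)$, there exist $2$-dimensional $\mathbb{F}_q$-subspaces (e.g.\ an embedded field $\mathbb{F}_{q^2}$, as in a spread) meeting the singular locus only in $0$, so the "eigenvalue constraint" is satisfiable. What kills these candidates is the block condition under the specific stabiliser of a non-simple tensor, and this is where the real work lies: the paper first proves $L\subseteq U\otimes X$ with $|L|\mid q^2$ (Lemma~\ref{lemma:nonsimple}), then reduces to $m=2$ (Lemma~\ref{lemma:tensorred}, which itself requires checking that the induced group on $U\otimes X$ is again a rank~$3$ group of the same type), and finally, in Lemma~\ref{lemma:T123nonsimple}, disposes of (T1) with $q\geq 4$ by an explicit computation with the stabiliser $\{g\otimes\tilde g : g\in\SL_2(q)\}$ and an element $a\in\mathbb{F}_q^*\setminus\langle -1\rangle$, while (T1) with $q=3$, (T2) and (T3) are eliminated by machine computation. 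Your sketch gives no indication of how the (T2), (T3) and small-$q$ subcases would be dispatched by hand, and the "$m=2$ regime" that you correctly flag as the trickiest point is precisely where the proposal stops short of a proof.
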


Observe that the partial linear spaces of  Proposition~\ref{prop:(T1)--(T3)}(ii) are all described in Example~\ref{example:tensor}, though the case $q=2$ is omitted from Proposition~\ref{prop:(T1)--(T3)} by our restrictions on $G_0$. In Lemma~\ref{lemma:tensorsimple} below, we prove that the incidence structures $\mathcal{S}_U$ and $\mathcal{S}_W$ of Example~\ref{example:tensor} are indeed $G$-affine proper partial linear spaces for all $m$ and $q$, where $G:=V{:}(\GL(U)\otimes \GL(W)){:}\Aut(\mathbb{F}_q)$.  We will prove at the end of this section that $G=\Aut(\mathcal{S}_U)=\Aut(\mathcal{S}_W)$.

In order to prove Proposition~\ref{prop:(T1)--(T3)}, we first consider the orbit of simple tensors.

\begin{lemma}
\label{lemma:tensorsimple}
Let $G$ be an affine permutation group of rank~$3$ on $V:=U\otimes W$ where $U:=V_2(q)$, $W:=V_m(q)$, $m\geq 2$, $q$ is a prime power and $G_0$ stabilises the decomposition of $V$. 
\begin{itemize}
\item[(i)] Let $\mathcal{S}:=(V,\mathcal{L})$ be a  $G$-affine proper partial linear space in which $\mathcal{S}(0)$ is the set of simple tensors. Let $L\in \mathcal{L}_0$ and $u\otimes w\in L^*$. Then $L\subseteq U\otimes w$ or $L\subseteq u\otimes W$. Moreover, if $S\otimes T\leq G_0$ for some $S\leq \GL(U)$ and $T\leq \GL(W)$, then $L^*=C\otimes w$ or $u\otimes C$,  where $C$ is  a block of $S$ on $U^*$ or $T$ on $W^*$, respectively.
\item[(ii)] Let $H_0\leq G_0$. Then for any $u\in U^*$ and $w\in W^*$, both $(U\otimes w)^*$ and $(u\otimes W)^*$ are blocks of $H_0$ in its action on the simple tensors unless $m=2$ and $H_0$ contains an element interchanging $U$ and $W$, in which case neither are blocks.
\item[(iii)] Define $\mathcal{L}$ to be $\{U\otimes w+v: w \in W^*,v\in V\}$ or $\{u\otimes W+v: u\in U^*,v\in V\}$. Then $(V,\mathcal{L})$ is  an $\mathbb{F}_q$-independent  $G$-affine proper partial linear space if and only if either $m\geq 3$, or $m=2$ and $G_0\leq (\GL(U)\otimes  \GL(W)){:}\Aut(\mathbb{F}_{q})$. 
\item[(iv)] If $m=2$, then the two partial linear spaces described in (iii) are isomorphic. 
\end{itemize}
 \end{lemma}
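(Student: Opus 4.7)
The plan is to treat the four parts in order, with parts (i) and (ii) supplying the structural input, part (iii) being a direct verification via Lemma~\ref{lemma:sufficient}, and part (iv) following from the swap map $\tau$.

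For (i), the key input is Lemma~\ref{lemma:basic}(i): any two distinct non-zero points of $L$ must differ by an element of $\mathcal{S}(0)$, which consists of simple tensors. So for a second point $a\otimes b\in L^*$, the tensor $a\otimes b-u\otimes w$ is simple. A routine calculation shows that the difference of two simple tensors $a\otimes b - u\otimes w$ is simple if and only if $\langle u,a\rangle$ has dimension at most one, or $\langle w,b\rangle$ has dimension at most one. Moreover, one cannot mix: if $u\otimes w'$ (with $w'\notin\langle w\rangle$) and $u'\otimes w$ (with $u'\notin\langle u\rangle$) both lay in $L^*$, their difference $u\otimes w'-u'\otimes w$ would fail the criterion. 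Hence exactly one of $L\subseteq U\otimes w$ or $L\subseteq u\otimes W$ holds. For the ``moreover'' statement, suppose without loss of generality $L^*=C\otimes w$ with $C\subseteq U^*$. Given $s\in S$, the element $s\otimes 1\in S\otimes T\leq G_0$ sends $L^*$ to $C^s\otimes w$; since $L^*$ is a block of $G_0$ on $(U\otimes W)^*\setminus 0$ by Lemma~\ref{lemma:necessary}, this image either equals $L^*$ or is disjoint from it. Either way $C^s\cap C\in\{\varnothing,C\}$, so $C$ is a block of $S$ on $U^*$; the case $L^*=u\otimes C$ is symmetric.

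For (ii), any $h\in H_0\leq(\GL(U)\otimes\GL(W)){:}\Aut(\mathbb{F}_q)$ has the form $(g\otimes k)\sigma$, and a direct computation gives $(U\otimes w)^h=U\otimes w'$ where $w'=w^{k\sigma}$. Thus $(U\otimes w)^h$ either coincides with $U\otimes w$ (when $w'\in\langle w\rangle$) or meets it only in $0$, so $(U\otimes w)^*$ is a block of $H_0$ on the simple tensors; the argument for $(u\otimes W)^*$ is identical. Conversely, if $m=2$ and $H_0$ contains an element interchanging $U$ and $W$, then (writing $U=W$) the image of $U\otimes w$ under such an element is $w\otimes U$, and these two sets share precisely the elements $\lambda w\otimes w$ for $\lambda\in\mathbb{F}_q^*$, so they are neither equal nor disjoint, and $(U\otimes w)^*$ fails to be a block. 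For (iii), with the condition on $G_0$ in force we may apply Lemma~\ref{lemma:sufficient}: the set $L=U\otimes w$ (resp.\ $u\otimes W$) is the union of $\{0\}$ with the block $(U\otimes w)^*$ (resp.\ $(u\otimes W)^*$) of $G_0$ inside the orbit of simple tensors, and $G_L$ is transitive on $L$ by Lemma~\ref{lemma:transitive} since $L$ is an $\mathbb{F}_q$-subspace of $V$. Thus $(V,\mathcal{L})$ is a $G$-affine partial linear space whose lines are non-trivial blocks (each of size $q^2\geq 3$ or $q^m\geq 3$), so properness follows from Lemma~\ref{lemma:sufficient}(iv). Independence is immediate: any line $L$ through $0$ is a proper $\mathbb{F}_q$-subspace of $V$ of dimension $\geq 2$, so $L\not\subseteq\langle x\rangle$ for any $x\in L^*$. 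Conversely, if $m=2$ and $G_0$ contains a swap, then by (ii) neither candidate block is a block of $G_0$, so Lemma~\ref{lemma:necessary}(i) prevents $\mathcal{S}$ from being a partial linear space.

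For (iv), when $m=2$ (so $U=W$), the map $\tau:u\otimes w\mapsto w\otimes u$ is an $\mathbb{F}_q$-linear involution of $V$ that sends $U\otimes w$ to $w\otimes U$, i.e.\ to a line of the other type. Extending by translations, $\tau$ induces a bijection on $V$ that carries the line set $\{U\otimes w+v:w\in W^*,v\in V\}$ onto $\{u\otimes W+v':u\in U^*,v'\in V\}$, and therefore realises the required isomorphism between the two partial linear spaces. The only step requiring real care is (i), where one must rule out lines mixing points from $U\otimes w$ and from $u\otimes W$; this is where the simple-rank criterion for differences of simple tensors carries all the weight.
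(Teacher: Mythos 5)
Your proof is correct and follows essentially the same route as the paper's: Lemma~\ref{lemma:basic}(i) together with the rank criterion for differences of simple tensors in (i), the image computation and the swap obstruction (via Lemma~\ref{lemma:necessary}) in (ii), Lemmas~\ref{lemma:transitive}, \ref{lemma:necessary} and \ref{lemma:sufficient} in (iii), and the map $\tau$ in (iv). The only cosmetic slip is in (ii): a general interchanging element $(g_0\otimes h_0)\tau$ sends $U\otimes w$ to $w^{h_0}\otimes U$ rather than to $w\otimes U$, but these two sets are still neither equal nor disjoint, so the block property fails exactly as you claim.
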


\begin{proof}
(i) Suppose that  there exists $u_1\otimes w_1\in L\setminus U\otimes w$ and $u_2\otimes w_2\in L\setminus u\otimes W$. Then $\{w_1,w\}$ and $\{u_2,u\}$ are both linearly independent sets, and $u_1\otimes w_1-u\otimes w$ and $u_2\otimes w_2-u\otimes w$ are both simple by Lemma~\ref{lemma:basic}(i), so $u_1=\lambda u$ and $w_2=\mu w$ for some $\lambda,\mu\in \mathbb{F}_q^*$. Thus $\lambda u\otimes w_1- u_2\otimes \mu w$ is a non-simple tensor, contradicting Lemma~\ref{lemma:basic}(i). Hence $L\subseteq U\otimes w$ or $L\subseteq u\otimes W$.

Moreover, suppose that $S\otimes T\leq G_0$ and $L\subseteq U\otimes w$. Then  $L^*=C\otimes w$   for some $C\subseteq U^*$. Suppose that $v'=v^g$ for some $v',v\in C$ and $g\in S$. Then $v'\otimes w=(v\otimes w)^{g\otimes 1}$, so by Lemma~\ref{lemma:necessary},  $C\otimes w=(C\otimes w)^{g\otimes 1}=C^g\otimes w$. Thus $C=C^g$. The case where $L\subseteq u\otimes W$ is similar. 

(ii) First suppose that $m=2$ and $H_0$ contains an element $\nu$ interchanging $U$ and $W$, where we may assume that $W=U$. If $(U\otimes u)^*$ is a block for some $u\in U^*$, then  $(U\otimes u)^{\nu}=u'\otimes U$ is also a block for some $u'\in U^*$, but $u'\otimes u\in (U\otimes u)^* \cap (u'\otimes U)^*$, a contradiction. Similarly, $(u\otimes U)^*$ is not a block for any $u\in U^*$. 

Otherwise,  $H_0\leq (\GL(U)\otimes \GL(W)){:}\Aut(\mathbb{F}_{q})$. Let $w\in W^*$. Suppose that $u'\otimes w=(u\otimes w)^{g\otimes h}$ for some $u\otimes w,u'\otimes w\in (U\otimes w)^*$ and $g\otimes h\in H_0$. Then $u'=\lambda u^{g}$ and $w=\lambda^{-1} w^{h}$ for some $\lambda\in\mathbb{F}_q^*$. Hence $(U\otimes w)^{g\otimes h}=U^{g}\otimes \lambda w = U\otimes w$, so $(U\otimes w)^*$ is a block. The proof for $(u\otimes W)^*$ is similar.

(iii) This follows from (ii) and Lemmas~\ref{lemma:transitive},~\ref{lemma:necessary} and~\ref{lemma:sufficient}.

(iv) Suppose that $m=2$. Let $\tau\in \GL(V)$ be defined by $u\otimes w\mapsto w\otimes u$ for all $u,w\in U=W$. Now $\tau$ interchanges the line sets of the two partial linear spaces of (iii), so these two partial linear spaces are isomorphic. 
\end{proof}

Note that in (i), we are not necessarily assuming  that $S$ is transitive on $U^*$ nor that $T$ is transitive on $W^*$. Similarly,  we are not necessarily assuming in (ii) that $H_0$ is transitive on the set of simple tensors.

\begin{lemma}
\label{lemma:tensorsimpleSL}
Let $G$ be an affine permutation group of rank~$3$ on $V:=U\otimes W$ where $U:=V_2(q)$, $W:=V_m(q)$, $m\geq 2$, $q$ is a power of a prime $p$ and $G_0$ stabilises the decomposition of $V$.  Let $\mathcal{S}:=(V,\mathcal{L})$ be an $\mathbb{F}_q$-independent  $G$-affine proper partial linear space in which $\mathcal{S}(0)$ is the set of simple tensors. Let $L\in\mathcal{L}_0$.
\begin{itemize}
\item[(i)] If $\SL(U)\otimes 1\leq G_0$ and  $L\subseteq U\otimes w$ for some $w\in W^*$, then  $\mathcal{L}_0=\{U\otimes v: v \in W^*\}$.
\item[(ii)] If $1\otimes \SL(W)\leq G_0$ and  $L\subseteq u\otimes W$ for some $u\in U^*$, then  $\mathcal{L}_0=\{v\otimes W: v \in U^*\}$.
\item[(iii)] If   $q=p$ and  $L\subseteq U\otimes w$ for some $w\in W^*$, and if $-1\otimes g\in G_0$ for some $g\in \GL(W)_w$, then  $\mathcal{L}_0=\{U\otimes v: v \in W^*\}$.
\end{itemize}
\end{lemma}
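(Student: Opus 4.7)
My strategy is to show, in each of the three parts, that the given line $L\in\mathcal{L}_0$ must have the form $U\otimes w$ or $u\otimes W$, and then to obtain all of $\mathcal{L}_0$ as a single $G_0$-orbit of $L$. For parts (i) and (ii) this shape will come from Lemma~\ref{lemma:tensorsimple}(i) combined with a block-theoretic analysis of $\SL_n(q)$ acting on $V_n(q)^*$, while for part (iii) it will come from Lemma~\ref{lemma:affine}(i) applied to the element $-1\otimes g$.

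In parts (i) and (ii), I would first apply Lemma~\ref{lemma:tensorsimple}(i) with $(S,T)=(\SL(U),1)$ and $(S,T)=(1,\SL(W))$ respectively, obtaining $L^*=C\otimes w$ in (i) and $L^*=u\otimes C$ in (ii), where $C$ is a block of $\SL_2(q)$ on $U^*$ (respectively of $\SL_m(q)$ on $W^*$) with $|C|\ge 2$. The $\mathbb{F}_q$-independence of $\mathcal{S}$ forces $C$ not to be contained in any one-dimensional $\mathbb{F}_q$-subspace. The main technical step will then be to classify such blocks: using the $2$-transitivity of $\SL_n(q)$ on $\PG_{n-1}(q)$---which yields the maximality of the $1$-space stabiliser in $\SL_n(q)$---together with a counting argument for $n\ge 3$ (observing that the point stabiliser is transitive on $V_n(q)^*\setminus\langle u\rangle$, so the only divisor of $q^n-1$ in the relevant range $[q^n-q+1,q^n-1]$ is $q^n-1$ itself) and Dickson's subgroup classification of $\PSL_2(q)$ for $n=2$ (to rule out overgroups of the point stabiliser of order $q(q+1)$ and the like), I expect to show that every such block is either of the form $Ku$ for some subgroup $K\le\mathbb{F}_q^*$ (and so contained in $\langle u\rangle^*$) or equal to $V_n(q)^*$. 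Combined with $\mathbb{F}_q$-independence, this forces $C=U^*$ (respectively $C=W^*$), and hence $L=U\otimes w$ (respectively $L=u\otimes W$).

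For part (iii), the element $-1\otimes g\in G_0$ satisfies $v^{(-1\otimes g)}=-v$ for every $v\in L\subseteq U\otimes w$, because $g$ fixes $w$; Lemma~\ref{lemma:affine}(i) then forces every line of $\mathcal{S}$ to be an affine $\mathbb{F}_p$-subspace of $V$. Since $q=p$, the ambient set $U\otimes w$ has exactly $p^2$ elements, so $|L|\in\{1,p,p^2\}$; the bound $|L|\ge 3$ (which holds because $\mathcal{S}$ is proper) excludes $|L|=1$, and $\mathbb{F}_q$-independence excludes $|L|=p$ (which would place $L$ inside a one-dimensional $\mathbb{F}_q$-subspace of $V$), so $L=U\otimes w$. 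In all three parts, once $L$ has been pinned down, the description of $\mathcal{L}_0$ follows by taking $L^{G_0}$---justified by flag-transitivity of $\mathcal{S}$, which is a consequence of Lemma~\ref{lemma:rank3} and Remark~\ref{remark:PLSrank3}---invoking Lemma~\ref{lemma:tensorsimple}(ii) to preclude any element of $G_0$ swapping $U$ and $W$ when $m=2$ (since $L^*$ must be a block of $G_0$ on the simple tensors), and using the transitivity of $G_0$ on the set of simple tensors to sweep $v$ through all of $W^*$. The main obstacle I anticipate is the block classification invoked in the second paragraph, particularly the exclusion of exotic overgroups of the point stabiliser in $\SL_2(q)$; the remaining bookkeeping is routine.
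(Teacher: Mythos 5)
Your proposal is correct and follows essentially the same route as the paper: reduce via Lemma~\ref{lemma:tensorsimple}(i) to a block $C$ of $\SL_n(q)$ on $V_n(q)^*$, show any such block is either contained in a one-dimensional $\mathbb{F}_q$-subspace or is all of $V_n(q)^*$, invoke $\mathbb{F}_q$-independence, and handle (iii) via Lemma~\ref{lemma:affine} and $|U\otimes w|=p^2$. The only difference is that the paper dismisses the block classification as ``routine to verify,'' whereas you supply the verification (stabiliser orbits and divisibility for $n\geq 3$, Dickson for $n=2$), which is correct.
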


\begin{proof} 
If $L=U\otimes w$ or $u\otimes W$, then $\mathcal{L}_0=\{U\otimes v: v \in W^*\}$ or $\{v\otimes W: v \in U^*\}$ by Lemmas~\ref{lemma:necessary} and~\ref{lemma:tensorsimple}(ii). It is routine to verify that if $C$ is a block of $\SL_n(q)$ on $V_n(q)^*$ where $n\geq 2$ and $x\in C$, then either $C\subseteq \langle x\rangle_{\mathbb{F}_q}$, or $C=V_n(q)^*$. Thus  (i) and (ii) follow from Lemma~\ref{lemma:tensorsimple}(i). For (iii), $L$ is an $\mathbb{F}_p$-subspace of $U\otimes w$ by Lemma~\ref{lemma:affine}, and $|U\otimes w|=p^2$, so $L=U\otimes w$.
\end{proof}

\begin{lemma}
\label{lemma:T34simple}
Let $G$ be an affine permutation group of rank~$3$ on $V:=U\otimes W$ where $U:=V_2(q)$, $W:=V_m(q)$, $m\geq 2$, $q$ is a prime power and $G_0$   belongs to one of the classes~\emph{(T1)},~\emph{(T3)} or~\emph{(T4)}.
Let $\mathcal{S}:=(V,\mathcal{L})$ be an $\mathbb{F}_q$-independent  $G$-affine proper partial linear space in which $\mathcal{S}(0)$ is the set of simple tensors. Then $\mathcal{L}_0$ is $\{U\otimes w: w \in W^*\}$ or $\{u\otimes W: u\in U^*\}$. 
 \end{lemma}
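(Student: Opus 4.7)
The plan is to begin with Lemma~\ref{lemma:tensorsimple}(i), which constrains each $L\in\mathcal{L}_0$ to lie in $U\otimes w$ for some $w\in W^*$ or in $u\otimes W$ for some $u\in U^*$. Flag-transitivity of $\mathcal{S}$ (from Remark~\ref{remark:PLSrank3}) makes $G_0$ transitive on $\mathcal{L}_0$, so every line through~$0$ shares the same ``shape''. It then suffices to upgrade the containment $L\subseteq X$ to an equality $L=X$ in each class, since the full description of $\mathcal{L}_0$ will then follow from this transitivity together with the fact that $\{U\otimes w:w\in W^*\}$ and $\{u\otimes W:u\in U^*\}$ are the two $G_0$-invariant partitions of $\mathcal{S}(0)$ whose existence is granted by Lemma~\ref{lemma:tensorsimple}(ii) (valid here because $m\geq 3$ in (T4), and because in (T1) and (T3) the subcases will be handled before any issue with $m=2$ arises). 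I shall treat (T1), (T3) and (T4) in turn, since each provides a different mechanism for filling in the line.

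For class~(T1), both $\SL(U)\otimes 1$ and $1\otimes \SL(W)$ sit inside $G_0$, so the two subcases are dispatched directly by Lemma~\ref{lemma:tensorsimpleSL}(i) and~(ii). For class~(T3) we still have $1\otimes \SL_m(q)\unlhd G_0$, so the case $L\subseteq u\otimes W$ again closes by Lemma~\ref{lemma:tensorsimpleSL}(ii). The remaining case $L\subseteq U\otimes w$ is where I expect to apply Lemma~\ref{lemma:tensorsimpleSL}(iii), which is available because the hypothesis of~(T3) forces $q=p$ to be prime; its input is an element $-1\otimes g\in G_0$ with $g\in \GL(W)_w$. The definition of~(T3) supplies some $-1\otimes h\in G_0$, and using the transitivity of $\SL(W)$ on $W^*$ I can pick $k\in \SL(W)$ with $w^{hk}=w$, so that the product $(-1\otimes h)(1\otimes k) = -1\otimes (hk)$ lies in $G_0$ and has $W$-component stabilising $w$, as required.

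For class~(T4), where $(m,q)=(4,2)$ and hence $p=2$, Lemma~\ref{lemma:affine}(ii) forces every line through~$0$ to be an $\mathbb{F}_2$-subspace of $V$. If $L\subseteq U\otimes w$, the containing space has only $4$ vectors and $|L|\geq 3$ (the partial linear space is proper and flag-transitive, so all lines have the same size, at least~$3$), forcing $L=U\otimes w$. If instead $L\subseteq u\otimes W$, Lemma~\ref{lemma:tensorsimple}(i) applied with $S=1$ and $T=A_7$ gives $L^* = u\otimes C$ with $C$ a block of $A_7$ on the $15$ vectors of $W^*$; since $A_7$ acts $2$-transitively (and hence primitively) there, $C$ is either a singleton (impossible, as it yields $|L|=2$) or all of $W^*$, so $L = u\otimes W$. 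In each of the three classes, once some line of $\mathcal{L}_0$ has been identified as a full $U\otimes w$ or $u\otimes W$, the conclusion $\mathcal{L}_0=\{U\otimes v:v\in W^*\}$ or $\{v\otimes W:v\in U^*\}$ follows from the flag-transitivity argument outlined in the first paragraph.

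The main obstacle I anticipate is the~(T3) subcase with $L\subseteq U\otimes w$: manufacturing an element of $G_0$ that both negates on $U$ and fixes the specific vector $w\in W^*$ requires combining the given $-1\otimes h$ with an element of the normal subgroup $1\otimes \SL(W)$ to land inside $\GL(W)_w$, and this is the only step that goes beyond a direct citation of the lemmas already developed earlier in the section.
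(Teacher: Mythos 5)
Your proof is correct, and for classes (T1) and (T3) it coincides with the paper's own argument: (T1) is dispatched by Lemma~\ref{lemma:tensorsimpleSL}(i)--(ii), and in (T3) the paper likewise takes the element $-1\otimes g$ supplied by the definition of the class and multiplies it by $1\otimes h$ with $h\in\SL(W)$ chosen so that $(w^g)^h=w$, then invokes Lemma~\ref{lemma:tensorsimpleSL}(iii); your ``main obstacle'' is exactly the step the paper performs. The genuine divergence is in class (T4): the paper settles this case by a {\sc Magma} computation showing that $(U\otimes w)^*$ and $(u\otimes W)^*$ are the only non-trivial blocks of $G_0$ on $\mathcal{S}(0)$ containing $u\otimes w$, whereas you argue theoretically. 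Your argument is sound: since $p=2$, Lemma~\ref{lemma:affine}(ii) forces $L$ to be an $\mathbb{F}_2$-subspace, so $L\subseteq U\otimes w$ with $|L|\geq 3$ gives $L=U\otimes w$ at once; and in the other case Lemma~\ref{lemma:tensorsimple}(i) with $S=1$, $T=A_7$ exhibits $L^*$ as $u\otimes C$ with $C$ a block of size at least $2$ for $A_7$ on the $15$ non-zero vectors of $V_4(2)$, on which $A_7$ acts $2$-transitively (the same classical fact the paper uses via~\cite{CamKan1979} in the proof of Theorem~\ref{thm:tensorgroups}), hence primitively, forcing $C=W^*$. What your route buys is the removal of the computer calculation from this case; what the paper's computation buys is independence from the $2$-transitivity of $A_7<\SL_4(2)$. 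Your concluding step, upgrading ``some line is $U\otimes w$ or $u\otimes W$'' to the full description of $\mathcal{L}_0$ via transitivity of $G_0$ on $\mathcal{L}_0$ and on simple tensors, is the same mechanism the paper packages into the first sentence of the proof of Lemma~\ref{lemma:tensorsimpleSL}.
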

 
 \begin{proof}
Let $L\in \mathcal{L}_0$, let $B:=L^*$, and let $u\otimes w\in B$. By  Lemma~\ref{lemma:tensorsimple}(i), either $L\subseteq U\otimes w$, or  $L\subseteq u\otimes W$. Hence we are done when $G_0$ lies in (T1) by Lemma~\ref{lemma:tensorsimpleSL}(i)--(ii), so we assume that $G_0$ lies in (T3) or (T4). Now $(U\otimes w)^*$ and $(u\otimes W)^*$ are blocks of $G_0$ on $\mathcal{S}(0)$ by Lemma~\ref{lemma:tensorsimple}(ii). If $G_0$ lies in (T4), then we use {\sc Magma} to determine that  there are exactly two non-trivial blocks of $G_0$  on $\mathcal{S}(0)$ that contain $u\otimes w$, so  the desired result follows. Suppose that $G_0$ lies in  (T3), and   recall that $q$ is prime and $-1\otimes g\in G_0$ for some $g\in \GL_m(q)$. If $L\subseteq u\otimes W$, then  $\mathcal{L}_0=\{v\otimes W: v \in U^*\}$ by Lemma~\ref{lemma:tensorsimpleSL}(ii). Otherwise, $L\subseteq U\otimes w$.  Now there exists $h\in \SL(W)$ such that $(w^g)^h=w$, and $-1\otimes gh \in G_0$, so $\mathcal{L}_0=\{U\otimes v: v \in W^*\}$ by Lemma~\ref{lemma:tensorsimpleSL}(iii).
 \end{proof}

\begin{lemma}
\label{lemma:T2simple}
Let $G$ be an affine permutation group of rank~$3$ on $V:=U\otimes W$ where $U:=V_2(q)$, $W:=V_m(q)$, $m\geq 2$, $q$ is a prime power and $G_0$  belongs to~\emph{(T2)}. Let $\mathcal{S}:=(V,\mathcal{L})$ be an $\mathbb{F}_q$-independent  $G$-affine proper partial linear space in which $\mathcal{S}(0)$ is the set of simple tensors. Then $\mathcal{L}_0$ is $\{U\otimes w: w \in W^*\}$ or $\{u\otimes W: u\in U^*\}$. 
 \end{lemma}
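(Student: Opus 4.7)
The plan is to follow the structure of Lemma \ref{lemma:T34simple}. Fix $L \in \mathcal{L}_0$ and $u \otimes w \in L^*$. By Lemma \ref{lemma:tensorsimple}(i), either $L \subseteq U \otimes w$ or $L \subseteq u \otimes W$. Since $1 \in \SL_2(5)$, the subgroup $1 \otimes \SL_m(q)$ is contained in $\SL_2(5) \otimes \SL_m(q) \leq G_0$, so in the case $L \subseteq u \otimes W$, Lemma \ref{lemma:tensorsimpleSL}(ii) immediately yields $\mathcal{L}_0 = \{v \otimes W : v \in U^*\}$.

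For the case $L \subseteq U \otimes w$, I would split according to whether $q$ is prime. For $q \in \{11, 19, 29, 59\}$, we have $q = p$, and since $-1 \in \SL_2(5)$ we have $-1 \otimes 1 \in G_0$; as $1 \in \GL(W)_w$, Lemma \ref{lemma:tensorsimpleSL}(iii) applies (with $g = 1$) and gives $\mathcal{L}_0 = \{U \otimes v : v \in W^*\}$. Note that the analogue of Lemma \ref{lemma:tensorsimpleSL}(i) is unavailable here, since $G_0$ is only guaranteed to contain $\SL_2(5) \otimes 1$, not $\SL_2(q) \otimes 1$.

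The remaining case $q = 9$ is where the main difficulty lies. Since $q \neq p$, we have $|U \otimes w| = 81 > p^2$, so the $\mathbb{F}_p$-subspace argument underlying Lemma \ref{lemma:tensorsimpleSL}(iii) breaks down. Three constraints on $L$ do persist: by Lemma \ref{lemma:tensorsimple}(i), $L^* = C \otimes w$ for some block $C$ of $\SL_2(5)$ on $U^* = V_2(9)^*$; by Lemma \ref{lemma:affine}(i) (using $-1 \otimes 1 \in G_0$), $L$ is an $\mathbb{F}_3$-subspace of $U \otimes w \cong V_4(3)$; and by the $\mathbb{F}_9$-independence hypothesis, $L \nsubseteq \langle v \otimes w \rangle_{\mathbb{F}_9}$ for $v \in L^*$. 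Hence $|L| \in \{3, 9, 27, 81\}$: the case $|L|=3$ forces $L = \langle v \otimes w \rangle_{\mathbb{F}_3}$ (violating independence), while $|L|=81$ gives the desired equality $L = U \otimes w$. The intermediate cases $|L| \in \{9, 27\}$ I would rule out by direct computation in {\sc Magma}, in the spirit of the treatment of class (T4) in Lemma \ref{lemma:T34simple}: enumerate the $\mathbb{F}_3$-subspaces $L$ of $V_4(3)$ of dimension $2$ or $3$ whose non-zero vectors form an $\SL_2(5)$-block in $V_2(9)^*$, and apply Lemma \ref{lemma:necessary} (together with the scalar invariance of $C$ under certain $\lambda \in \mathbb{F}_9^*$ arising from $1 \otimes h \in G_0$ with $w^h = \lambda w$) to show that none is compatible with an $\mathbb{F}_9$-independent $G$-affine partial linear space. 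This computational case analysis is the principal obstacle in the proof.
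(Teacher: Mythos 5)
Your proposal is correct and follows the paper's proof almost exactly: the reduction via Lemma~\ref{lemma:tensorsimple}(i), the disposal of the $L\subseteq u\otimes W$ case by Lemma~\ref{lemma:tensorsimpleSL}(ii), and the use of $-1\otimes 1\in G_0$ with Lemma~\ref{lemma:tensorsimpleSL}(iii) to force $q=9$ are all precisely the paper's steps. The only divergence is in how the $q=9$ endgame is closed. The paper avoids your residual cases $|L|\in\{9,27\}$ more cheaply in two ways: first, since $(U\otimes w)^*$ is itself a block of $S\otimes\SL(W)$ on the simple tensors, the sub-block $B$ has size dividing $80$, which kills $|L|=27$ immediately (as $26\nmid 80$); second, for $|C|=8$ it identifies the two candidate blocks by hand ($C=x^{S_C}$ with $|S_C|=24$, and the only order-$24$ overgroups of $S_x$ in $S=\langle\SL_2(3),s\rangle$ are $\SL_2(3)$ and $\SL_2(3)^s$), then derives the contradiction $C=C^{s\mu}$ from the element $s\otimes h$ fixing $x\otimes w$ --- which is the ``scalar invariance'' you allude to, though note the relevant element is $s\otimes h$ with $x^s=\lambda x$ and $w^h=\lambda^{-1}w$, not $1\otimes h$ alone. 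Your proposed {\sc Magma} enumeration would work provided you test $C$ as a block of $\SL_2(5)\langle\zeta_9\rangle$ (i.e.\ including the scalar twists), not merely of $\SL_2(5)$; as stated, checking only $\SL_2(5)$-blocks plus $\mathbb{F}_3$-linearity might not suffice to eliminate all candidates.
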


\begin{proof}
Let $L\in \mathcal{L}_0$ and $B:=L^*$. If $L\subseteq u\otimes W$ for some $u\in U^*$, then  $\mathcal{L}_0=\{v\otimes W: v \in U^*\}$ by Lemma~\ref{lemma:tensorsimpleSL}(ii), as desired. Otherwise, by  Lemma~\ref{lemma:tensorsimple}(i), $L\subseteq U\otimes w$ for some $w\in W^*$, and $B=C\otimes w$, where $C$ is a block of $\SL_2(5)$ on $U^*$. If $C=U^*$, then $\mathcal{L}_0=\{U\otimes v: v \in W^*\}$, as desired, so we   suppose for a contradiction that $C\neq U^*$. Let $U=\langle u_1,u_2\rangle$ and $x:=u_1$. We may assume that $x\otimes w\in B$, so that $x\in C$. Since $\SL_2(5)$ is an irreducible subgroup of $\GL_2(q)$, its central involution must be the central involution of $\GL_2(q)$. Thus  $q=9$ by Lemma~\ref{lemma:tensorsimpleSL}(iii) and the definition of (T2), and $L$ is an $\mathbb{F}_3$-subspace of $U\otimes w$ by Lemma~\ref{lemma:affine}.

View  $\SL_2(3)$ as the subgroup of  $\SL_2(9)$ that acts naturally on the basis $\{u_1,u_2\}$ over $\mathbb{F}_3$. Assume that $\zeta^2=\zeta+1$ where $\zeta:=\zeta_9$ (see~\S\ref{ss:basicsvs}). There exists $s\in \GL_2(9)$ such that $u_1^s=\zeta^2 u_1$ and $u_2^s=-u_1+\zeta^6 u_2$. Let $S:=\langle \SL_2(3),s\rangle$. Now $S\simeq \SL_2(5)$ (this can be verified using {\sc Magma}), and $\GL_2(9)$ has a unique conjugacy class of subgroups isomorphic to $\SL_2(5)$, so we may assume that $S\otimes \SL(W)\unlhd G_0$ and that $C$ is a block of $S$.
 The group $S$ has $2$ orbits on $U^*$, each with size $40$, and $S$ acts transitively on the set of $1$-spaces of $U$. Let $\Omega_1$ and $\Omega_2$ be the two orbits of $S$ where $x\in \Omega_1$, and note that $\Omega_2=\zeta \Omega_1$. Since $|\SL_2(3)_x|=3$  and $|S_x|=3$, we conclude that $S_x\leq \SL_2(3)$. Then $S_x$ has orbits of size $3$ on $U\setminus \langle x\rangle$, and no such orbit contains both $u$ and $-u$.

Observe that $|L|$ divides $81$, while $|L|-1=|B|$ divides $|(U\otimes w)^*|=80$  since $(U\otimes w)^*$ is a block of $S\otimes \SL(W)$   in its transitive action on the set of simple tensors by Lemma~\ref{lemma:tensorsimple}(ii). Since $\mathcal{S}$ is $\mathbb{F}_9$-independent,   it follows that $|C|=8$.  If $C$ is not contained in $\Omega_1$, then by the observations made above concerning the orbits of $S_x$, we must have $|C\cap \Omega_2|=6$.   However, $C\cap \Omega_2$ is a block of $S$ on $\Omega_2$, while $6$ does not divide $40$, a contradiction. Thus $C\subseteq \Omega_1$. In particular,   $C=x^{S_C}$ and $|S_C|=24$. Recall that  $S=\langle \SL_2(3),s\rangle$ where $u_1^s=\zeta^2 u_1$ and $u_2^s=-u_1+\zeta^6 u_2$. 
There are exactly two subgroups of $S$ of order $24$ that contain $S_x$, namely $\SL_2(3)$ and $\SL_2(3)^s$ since $s$ normalises $S_x$. Hence $C$ is either $x^{\SL_2(3)}=\langle u_1,u_2\rangle_{\mathbb{F}_3}^*$ or $$x^{\SL_2(3)^s}=\{\pm u_1\}\cup \{ \zeta^iu_1+u_2 : i\in \{3,5,6\}\}\cup \{\zeta^ju_1-u_2:  j\in \{1,2,7\} \}.$$

Let  $\lambda:=\zeta^2$ and $\mu:=\lambda^{-1}$. There exists $h\in \SL(W)$ such that $w^h=\mu w$. Note that $x^s=\lambda x$. Now $x\otimes w=(x\otimes w)^{s\otimes h}$, so  $C\otimes w=(C\otimes w)^{s\otimes h}=C^{s\mu}\otimes w$. Thus $C=C^{s\mu}$. However, if $C=x^{\SL_2(3)}$, then $u_2\in C$, so $\zeta^2u_1-u_2=u_2^{s\mu}\in C$, a contradiction. Similarly, if $C=x^{\SL_2(3)^s}$, then $\zeta^2u_1-u_2\in C$, so $u_2=(\zeta^2u_1-u_2)^{s\mu}\in C$, a contradiction.
\end{proof}

 Next we consider the orbit of non-simple tensors.
 
 \begin{lemma}
\label{lemma:nonsimple}
Let $G$ be an affine permutation group of rank~$3$ on $V:=U\otimes W$ where $U:=V_2(q)$, $W:=V_m(q)$, $m\geq 2$, $q$ is a power of a prime $p$, $G_0$ stabilises the decomposition of $V$ and $1\otimes \SL(W)\unlhd G_0$. Let $\mathcal{S}:=(V,\mathcal{L})$ be a  $G$-affine proper partial linear space in which $\mathcal{S}(0)$ is the set of non-simple tensors. Let $L\in \mathcal{L}_0$ and $x\in L^*$. Write $x=u_1\otimes x_1+u_2\otimes x_2$ where $U=\langle u_1,u_2\rangle$ and $X:=\langle x_1,x_2\rangle \subseteq W$. Then $L$ is an $\mathbb{F}_p$-subspace of $U\otimes X$, and $|L|$ divides $q^2$. 
\end{lemma}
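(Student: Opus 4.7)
The plan is to argue in three steps that successively constrain $L$: first to the $\mathbb{F}_q$-subspace $U\otimes X$, then to an $\mathbb{F}_p$-subspace of $V$, and finally in cardinality. Throughout I use that $x$ is non-simple (so $x_1,x_2$ are linearly independent and $\dim X=2$) and that $1\otimes\SL(W)_{x_1,x_2}\leq G_{0,x}$.

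I would first show $L\subseteq U\otimes X$. The case $m=2$ is immediate since then $W=X$, so assume $m\geq 3$, fix a complement $Y=\langle v_3,\ldots,v_m\rangle$ of $X$ in $W$, and write each $y\in L^*$ as $y=\sum_{i=1}^{2} u_i\otimes(a_ix_1+b_ix_2+z_i)$ with $z_i\in Y$. For each $c=(c_3,\ldots,c_m)\in\mathbb{F}_q^{m-2}$ define $h_c\in\SL(W)_{x_1,x_2}$ by $h_c(x_j)=x_j$ and $h_c(v_j)=v_j+c_jx_1$; the matrix is upper unitriangular and has determinant $1$. Since $1\otimes h_c\in G_{0,x}$, Lemma~\ref{lemma:basic}(ii) gives $(1\otimes h_c)(y)\in L^*$, and a direct calculation yields
\[
(1\otimes h_c)(y)-y=\bigl((c\cdot z_1)u_1+(c\cdot z_2)u_2\bigr)\otimes x_1,
\]
a simple tensor. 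If $y\notin U\otimes X$ then some $z_i\neq 0$, so $c$ can be chosen with $c\cdot z_i\neq 0$, making this difference a nonzero simple tensor and contradicting Lemma~\ref{lemma:basic}(i). Hence $L\subseteq U\otimes X$.

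Given Step 1, the remaining steps are short. Define $h\in\SL(W)$ by $h|_X=-I_X$ and $h|_Y=I_Y$; since $\det h=(-1)^2=1$, the element $g:=1\otimes h$ lies in $G_0$ and satisfies $v^g=-v$ for every $v\in L\subseteq U\otimes X$. Because $\mathcal{S}$ is a proper flag-transitive partial linear space (Remark~\ref{remark:PLSrank3}), the line-size is at least $3$ and $G$ is line-transitive, so Lemma~\ref{lemma:affine}(i) forces $L$ to be an $\mathbb{F}_p$-subspace of $V$. The $\mathbb{F}_p$-linear projection $\pi\colon U\otimes X\to u_1\otimes X$, $u_1\otimes v_1+u_2\otimes v_2\mapsto u_1\otimes v_1$, has kernel $u_2\otimes X$, a set of simple tensors, so $\pi|_L$ is injective and $|L|\leq|u_1\otimes X|=q^2$; combined with $L$ being an $\mathbb{F}_p$-subspace of order $p^k$, this forces $|L|$ to divide $q^2=p^{2e}$ (where $q=p^e$).

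The crux is Step 1: the elements $h_c$ must be chosen so that $h_c(y_i)-y_i$ lies in $\langle x_1\rangle$ for both $i$ simultaneously, which is what makes $(1\otimes h_c)(y)-y$ simple and produces the contradiction with Lemma~\ref{lemma:basic}(i). The remaining structural conclusions follow cleanly from Lemmas~\ref{lemma:basic} and~\ref{lemma:affine}.
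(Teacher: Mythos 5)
Your proof is correct and takes essentially the same approach as the paper: a transvection in $\SL(W)$ fixing $x_1$ and $x_2$, combined with Lemma~\ref{lemma:basic}, forces $L\subseteq U\otimes X$ (the paper uses a single transvection adapted to a given $y\notin U\otimes X$ rather than your family $h_c$, but the idea is identical); then the element of $\SL(W)$ acting as $-1$ on $X$ together with Lemma~\ref{lemma:affine}(i) gives the $\mathbb{F}_p$-subspace structure, and injectivity of the $u_1$-coordinate projection on $L$ (because $L^*$ contains no simple tensors) yields $|L|\leq q^2$ and hence $|L|\mid q^2$.
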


\begin{proof}
Suppose for a contradiction that there exists $y\in L\setminus (U\otimes X)$. Now $y=u_1\otimes y_1+u_2\otimes y_2$ for some $y_1,y_2\in W$, and without loss of generality, we may assume that $y_1\notin X$. We may therefore extend $\{x_1,x_2,y_1\}$ to a basis $\{x_1,x_2,y_1,v_4,\ldots,v_m\}$ of $W$. Now there exists $g\in \SL(W)_{x_1,x_2,v_4,\ldots,v_m}$ such that $y_1^g=x_1+y_1$, in which case $y_2^g-y_2=\lambda x_1$ for some $\lambda\in\mathbb{F}_q$. By Lemma~\ref{lemma:basic}, since $x^{1\otimes g}=x$, it follows that $y^{1\otimes g}\in L^*$  and therefore that the non-zero vector $y^{1\otimes g}-y=u_1\otimes x_1 + u_2\otimes \lambda x_1$ is not simple, a contradiction. Thus $L\subseteq u_1\otimes X+u_2\otimes X=U\otimes X$.  There exists $g\in \SL(W)$ such that $x_1^g=-x_1$ and $x_2^g=-x_2$, and $v^{1\otimes g}=-v$ for all $v\in L$, 
so $L$ is an $\mathbb{F}_p$-subspace of $U\otimes X$ by Lemma~\ref{lemma:affine}(i). If $u_1\otimes w_1+u_2\otimes w_2$ and $u_1\otimes w_1+u_2\otimes w_3$ are elements of $L$ for some $w_1,w_2,w_3\in X$, then $u_2\otimes (w_3-w_2)\in L$, so $w_2=w_3$. Thus $|L|\leq |X|=q^2$, and since $L$ is an $\mathbb{F}_p$-vector space, $|L|$ divides $q^2$. 
\end{proof}

Lemma~\ref{lemma:nonsimple} enables us to give a reduction to the case where $m=2$; we include groups satisfying (T5) for our analysis of class (S1) in \S\ref{s:(S1)}. 

\begin{lemma}
\label{lemma:tensorred}
Let $G$ be an affine permutation group of rank~$3$ on $V:=U\otimes W$ where $U:=V_2(q)=\langle u_1,u_2\rangle$, $W:=V_m(q)=\langle x_1,\ldots,x_m\rangle$, $m\geq 3$, $q$ is a prime power, $G_0$ stabilises the decomposition of $V$ and $1\otimes \SL(W)\unlhd G_0$. Let $X:=\langle x_1,x_2\rangle$, $Y:=U\otimes X$ and $H:=Y{:}G_{0,Y}^Y$. Then $H$ is an affine permutation group of rank~$3$ on $Y$ such that $H_0$ stabilises the decomposition of $Y$ and $1\otimes \GL(X)\unlhd H_0$. \begin{itemize}
\item[(i)] Let $\mathcal{S}:=(V,\mathcal{L})$ be an $\mathbb{F}_q$-independent  $G$-affine proper partial linear space for which $0$ and $u_1\otimes x_1+u_2\otimes x_2$ lie on a line $L$. Then $\mathcal{S}\cap Y=(Y,L^H)$ is an $\mathbb{F}_q$-independent  $H$-affine proper partial linear space. 
\item[(ii)] Let   $(Y,\mathcal{L})$ be an $\mathbb{F}_q$-independent  $H$-affine proper partial linear space for which $0$ and $u_1\otimes x_1+u_2\otimes x_2$ lie on a line $L$. Then $(V,L^G)$ is an $\mathbb{F}_q$-independent $G$-affine proper partial linear space.
\end{itemize}
Moreover, if  $G_0$ belongs to class~\emph{(Ti)} where $i\in \{1,2,3,5\}$, then  $H_0$ belongs to class~\emph{(Ti)}.
\end{lemma}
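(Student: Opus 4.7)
The plan has four components, corresponding to the four assertions: first the group-theoretic claim about $H$, then directions (i) and (ii), and finally the class-preservation statement.

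For the group-theoretic setup, since $m\geq 3$ the group $G_0$ lies in $(\GL(U)\otimes\GL(W)){:}\Aut(\mathbb{F}_q)$, so any $g=(g_1\otimes g_2)\sigma\in G_{0,Y}$ satisfies $X^{g_2\sigma}=X$ and restricts to an element of $(\GL(U)\otimes\GL(X)){:}\Aut(\mathbb{F}_q)$; hence $H_0$ stabilises the decomposition of $Y$. To obtain $1\otimes\GL(X)\leq H_0$, I will extend any $h\in\GL(X)$ to $\tilde h\in\SL(W)$ by letting $\tilde h$ act on $\langle x_3,\ldots,x_m\rangle$ with a determinant-correcting scaling; then $1\otimes\tilde h\in 1\otimes\SL(W)\leq G_0$ stabilises $Y$ and restricts to $1\otimes h$ on $Y$. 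Rank $3$ of $H$ on $Y$ follows: $1\otimes\GL(X)$ is already transitive on $\{u\otimes w:w\in X^*\}$ for each fixed $u\in U^*$ and on the non-simple tensors of $Y$; to move between the strips for different $u\in U^*$, take $g\in G_0$ with $(u_1\otimes x_1)^g=u\otimes x_1$ in $V$ and modify $g$ by $1\otimes h\in 1\otimes\SL(W)$, where $h$ fixes $x_1$ and maps the linearly independent vector $x_2^{g_2\sigma}$ into $X$ (possible because $m\geq 3$), to land in $G_{0,Y}$ with the desired restriction.

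For (i), Lemma~\ref{lemma:nonsimple} applied to $\mathcal{S}$ shows $L\subseteq U\otimes\langle x_1,x_2\rangle=Y$, so by Lemma~\ref{lemma:intersect} $\mathcal{S}\cap Y$ is a partial linear space with $H\leq\Aut(\mathcal{S}\cap Y)$. Any line $L'$ of $\mathcal{S}$ with $|L'\cap Y|\geq 2$ satisfies $L'\subseteq Y$: translating by $-y_1$ for $y_1\in L'\cap Y$ produces a line of $\mathcal{S}$ through $0$ containing a non-zero element of $Y$, which by Lemma~\ref{lemma:nonsimple} is contained in $Y$, and translating back gives $L'\subseteq Y$. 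Transitivity of $H_0$ on non-simple tensors of $Y$ then lets us conjugate any line of $\mathcal{S}\cap Y$ through $0$ onto $L$, yielding $\mathcal{S}\cap Y=(Y,L^H)$; properness (line-size $\geq 3$, simple tensors in $Y$ are non-collinear with~$0$) and $\mathbb{F}_q$-independence (witnessed by $L$ itself) transfer directly.

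For (ii), the goal is to apply Lemma~\ref{lemma:sufficient} with $(V,G,0,L^*)$. Since $L\subseteq Y$, Lemma~\ref{lemma:nonsimple} applied to $(Y,L^H)$ makes $L$ an $\mathbb{F}_p$-subspace of $V$, so Lemma~\ref{lemma:transitive} gives $G_L$ transitive on $L$. The crux is that $L^*$ is a block of $G_0$ acting on the non-simple tensors: if $g\in G_0$ and $z\in L^*\cap(L^*)^g$ is non-zero, then $z$ is non-simple, so writing $z=u_1\otimes z_1+u_2\otimes z_2$ with $z_1,z_2$ linearly independent, the inclusion $z\in Y\cap Y^g=(U\otimes X)\cap(U\otimes X^g)$ forces both $X$ and $X^g$ to contain (and hence equal) $\langle z_1,z_2\rangle$; thus $g\in G_{0,Y}$, and restricting to $Y$ the lines $L^g$ and $L$ of $(Y,L^H)$ share the two points $0$ and $z$ and therefore coincide, giving $L^g=L$. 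Lemma~\ref{lemma:sufficient}(iv) then yields that $(V,L^G)$ is a $G$-affine proper partial linear space, and $\mathbb{F}_q$-independence again transfers through $L$.

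The class-preservation statement reduces to routine verification: the conditions defining (T1), (T2), (T3), (T5) constrain the $\GL(U)$- and $\GL(W)$-projections of $G_0$ separately, and $H_0$ inherits the $\GL(U)$-projection unchanged (any $g\in G_0$ can be multiplied by a suitable element of $1\otimes\SL(W)$ sending $X^{g_2\sigma}$ back to $X$, landing in $G_{0,Y}$ without altering the $U$-component), while $1\otimes\SL(W)$ restricts onto $1\otimes\GL(X)\supseteq 1\otimes\SL_2(q)$ on $Y$; the auxiliary constraints (such as $q\geq 4$ in (T1), $-1\otimes g\in G_0$ for (T3), the non-containment in $\GammaL_1(q^2)\otimes\GL_m(q)$ for (T3) and (T5)) then pass through. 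The main obstacle is the block argument in (ii), and its key input is the uniqueness of the $2$-space spanned by the coordinates of a non-simple tensor, which is exactly what forces $Y^g=Y$ and permits the reduction to the partial linear space structure already established on $Y$.
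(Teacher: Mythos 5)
Your proposal is correct and follows essentially the same route as the paper: the restriction map $G_{0,Y}\to H_0$ with determinant-corrected extensions of $\GL(X)$ into $\SL(W)$, regularity of $1\otimes\GL(X)$ on non-simple tensors plus an adjustment by $1\otimes\SL(W)$ for the simple ones, Lemma~\ref{lemma:nonsimple} to force $L\subseteq Y$ in (i), and in (ii) the key observation that a shared non-simple tensor in $Y\cap Y^g$ forces $X^g=X$, so that the block property descends from the partial linear space already built on $Y$. The only cosmetic difference is that you derive $L^g=L$ directly from uniqueness of the line through $0$ and $z$ in $(Y,L^H)$ where the paper cites the block property of $L^*$ under $H_0$ via Lemma~\ref{lemma:necessary}; these are the same argument.
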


\begin{proof}
 Let $A:=(\GL(U)\otimes \GL(W)){:}\Aut(\mathbb{F}_q)$ where $\Aut(\mathbb{F}_q)$ fixes $x_1$ and $x_2$. Note that $G_0\leq A$ since $m\geq 3$.  Now $A_Y=(\GL(U)\otimes \GL(W)_X){:}\Aut(\mathbb{F}_q)$ and $A_{(Y)}=1\otimes \GL(W)_{(X)}$, so there is a natural permutation isomorphism  between $A_Y/A_{(Y)}=A_Y^Y$ and $(\GL(U)\otimes \GL(X)){:}\Aut(\mathbb{F}_q)$. In particular, $H_0$ stabilises the decomposition of $Y$ and $1\otimes \GL(X)\simeq (1\otimes \SL(W))_Y^Y\unlhd  H_0$. 
 
Clearly $H$ is an affine permutation group on $Y$. We claim that $H$ has rank~$3$ on $Y$.
Let $Y'$ be the set of non-simple tensors of $Y$. Since $1\otimes \GL(X)$ is faithful and semiregular on  $Y'$ and $|Y'|=q(q-1)(q^2-1)=|\GL(X)|$, it follows that $1\otimes \GL(X)$ is regular on  $Y'$. In particular, $H_0$ is transitive on $Y'$.  Let $u\otimes x,u'\otimes x'$ be simple tensors in $Y$. There exists $g\otimes h\in G_0$ such that $(u\otimes x)^{g\otimes h}=u'\otimes x'$. Now $u^g=\lambda u'$ and $x^h=\lambda^{-1}x'$ for some $\lambda\in\mathbb{F}_q^*$. Further, there exist $w,w'\in X$ such that $X=\langle x,w\rangle=\langle x',w'\rangle$, and since $m\geq 3$, there exists $k\in \SL(W)$ such that $(x')^k=x'$ and $(w^h)^k=w'$. Now  $g\otimes hk\in G_{0,Y}$ since  $X^{hk}=X$, and $(u\otimes x)^{g\otimes hk}= u'\otimes x'$,  so the claim holds.

Suppose that $G_0$ belongs to (Ti)  where $i\in \{1,2,3,5\}$. If $i=1$ or $2$, then $S\otimes 1\unlhd G_0$ where $S=\SL(U)$ or $\SL_2(5)$,  respectively, and $S\otimes 1\simeq (S\otimes 1)^Y_Y\unlhd H_0$, so $H$ belongs to (Ti). If $i=3$, then since $-1\otimes 1\in H_0$ and 
$K\otimes \GL(X)\simeq (K\otimes \GL(W))_Y^Y$ for any $K\leq\GL(U)$, it follows that $H$ belongs to (T3). Lastly, if $i=5$, then $H_0\leq ((\GL_1(q^2)\otimes \GL(W)){:}\langle (t\otimes 1)\sigma_q \rangle)_Y^Y\simeq (\GL_1(q^2)\otimes \GL(X)){:}\langle (t\otimes 1)\sigma_q \rangle$, so $H$ belongs to (T5).

(i)    Lemma~\ref{lemma:intersect} implies that $\mathcal{S}\cap Y$ is an $H$-affine partial linear space. Since $L\subseteq Y$ by Lemma~\ref{lemma:nonsimple}, $L$ is also a line of $\mathcal{S}\cap Y$, so $\mathcal{S}\cap Y$ is  $\mathbb{F}_q$-independent with line-size at least $3$, and $0$ and $u_1\otimes x_1$ are non-collinear points, so $\mathcal{S}\cap Y$ is a proper partial linear space. Also, $\mathcal{S}\cap Y=(Y,L^H)$.
 
 (ii) Let $B:=L^*$.  We claim that $G_L$ is transitive on $L$ and that $B$ is a  non-trivial block  of  $G_0$  on the  non-simple tensors of $V$,  for then $(V,\mathcal{L}^G)$ will be a (clearly $\mathbb{F}_q$-independent) $G$-affine proper partial linear space by Lemma~\ref{lemma:sufficient}.
 Since $-1\otimes 1\in H_0$, Lemmas~\ref{lemma:transitive} and~\ref{lemma:affine} imply that   $G_L$ is transitive on $L$. Suppose that  $(u_1\otimes w_1+u_2\otimes w_2)^{g\otimes h}=u_1\otimes w_1'+u_2\otimes w_2'$ for some $u_1\otimes w_1+u_2\otimes w_2,u_1\otimes w_1'+u_2\otimes w_2'\in B$ and $g\otimes h\in G_0$. Now $X^h=\langle w_1,w_2\rangle ^h=\langle w_1',w_2'\rangle=X$, so $g\otimes h\in G_{0,Y}$. By Lemma~\ref{lemma:necessary},  $B$ is a block of $H_0$ on the  non-simple tensors of $Y$, so $B^{g\otimes h}=B$.  Thus $B$ is a block of $G_0$ on the  non-simple tensors of $V$, and it is clearly non-trivial.
\end{proof}

\begin{lemma}
\label{lemma:T123nonsimple}
Let $G$ be an affine permutation group of rank~$3$ on $V:=U\otimes W$ where $U:=V_2(q)$, $W:=V_m(q)$, $m\geq 2$, $q$ is a prime power and  $G_0$  belongs to one of the classes~\emph{(T1)},~\emph{(T2)} or~\emph{(T3)}. Then there is no $\mathbb{F}_q$-independent $G$-affine proper partial linear space   in which  $0$ is collinear with a non-simple tensor.
\end{lemma}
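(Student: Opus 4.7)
The plan is to reduce to the case $m=2$ via Lemma~\ref{lemma:tensorred} and then derive a contradiction from the incompatibility between the $\mathbb{F}_p$-linear structure of $L$ and the conjugation action of $\SL_2(q)$ on $M_2(q)$.

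For $m\geq 3$, fix a line $L\in\mathcal{L}_0$ through $x=u_1\otimes x_1+u_2\otimes x_2\in L^*$. By Lemma~\ref{lemma:nonsimple}, $L$ is contained in $Y:=U\otimes\langle x_1,x_2\rangle$. Since $1\otimes \SL(W)\unlhd G_0$ in each of the classes (T1), (T2), (T3), Lemma~\ref{lemma:tensorred}(i) yields an $\mathbb{F}_q$-independent $H$-affine proper partial linear space $(Y,L^H)$ (with $0$ collinear with a non-simple tensor in $Y$) whose stabiliser $H_0$ still lies in the same class. So we may assume $m=2$.

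Assume $m=2$ and identify $V$ with the matrix algebra $M_2(q)$ via the given bases, so that $x$ corresponds to the identity matrix $I$ and $\mathcal{S}(0)$ corresponds to $\GL_2(q)$. In each of the three classes we have $-I\in G_0$ (using $-I\otimes I\in\SL_2\otimes\SL_m$ for $p$ odd and trivially for $p=2$), so Lemma~\ref{lemma:affine}(i) makes $L$ an $\mathbb{F}_p$-subspace of $V$ and Lemma~\ref{lemma:nonsimple} gives $|L|$ divides $q^2$. By Lemma~\ref{lemma:basic}(i) every element of $L^*$ is invertible. By $\mathbb{F}_q$-independence we can choose $y\in L$ that is not a scalar matrix; since $y+\lambda I\in L$ must be zero or invertible for every $\lambda\in\mathbb{F}_p$, $y$ has no eigenvalue in $\mathbb{F}_p$.

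The next step is to use the stabiliser of $x=I$ in $G_0$, which in (T1) and (T2) contains $\{g\otimes g^{-T}:g\in\SL_2(q)\}$ and acts on $M_2(q)$ by conjugation $y\mapsto gyg^{-1}$; by Lemma~\ref{lemma:basic}(ii) the entire $\SL_2(q)$-conjugacy class of $y$ lies in $L^*$. If $y$ has distinct eigenvalues in $\mathbb{F}_q$, the class has size $q(q+1)>q^2-1$, an immediate contradiction. Otherwise $y$ is either irreducible over $\mathbb{F}_q$ or a non-trivial Jordan block. In both remaining cases I would combine the conjugacy class $C(y)$ with the $\mathbb{F}_p$-translates $\lambda I+gyg^{-1}$ ($\lambda\in\mathbb{F}_p^*$, $g\in\SL_2(q)$): each of these lies in $L^*$ and, for $p$ odd, is distinguished from the elements of $\mathbb{F}_p^*\cdot I\cup C(y)$ by its trace and from different $\lambda$ by its trace as well, producing $(p-1)+q(q-1)+(p-1)q(q-1)$ distinct elements of $L^*$, which exceeds $q^2-1$ and gives the contradiction. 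For (T3) the smaller stabiliser suffices because $q\in\{3,5,7,11,23\}$ is prime, so $p=q$, and the hypothesis $-1\otimes g\in G_0$ combined with $1\otimes\SL_2(q)\unlhd G_0$ can be used to show that the $\mathbb{F}_q$-span of $L$ is $\SL_2(q)$-invariant; irreducibility of $\mathfrak{sl}_2(q)$ then forces this span to be $M_2(q)$, hence $\dim_{\mathbb{F}_p}L\geq 4$, contradicting $|L|\leq p^2$.

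The main obstacle is the counting for $q>p$ in (T1) and (T2), where the conjugacy-class bound alone does not exceed $q^2-1$ and one has to verify carefully that the $\mathbb{F}_p$-translates of $C(y)$ are pairwise disjoint from one another and from $\mathbb{F}_p^*\cdot I\cup C(y)$; this is clean in odd characteristic via trace, but characteristic $2$ (where $\mathrm{tr}(I+c)=\mathrm{tr}(c)$) requires a separate argument using determinants and the shape of the rational canonical form.
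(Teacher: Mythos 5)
Your reduction to $m=2$ via Lemma~\ref{lemma:tensorred} is exactly the paper's first step, and your counting argument for class (T1) in odd characteristic is sound and even avoids the computer check the paper uses at $q=3$: the $\SL_2(q)$-conjugacy class of $y$ together with its $\mathbb{F}_p$-translates by scalars does exceed $q^2-1$ in each of the three rational canonical forms. However, the argument breaks down in the remaining cases, and these are precisely the cases where the paper falls back on explicit element computations or {\sc Magma}. The most serious failure is (T2): there the normal subgroup is $\SL_2(5)\otimes\SL_2(q)$ with $q\in\{9,11,19,29,59\}$, so the stabiliser of $x=I$ in $G_0$ induces by conjugation only (a soluble extension of) $\SL_2(5)$, not $\SL_2(q)$ --- the first tensor factor carries no copy of $\SL_2(q)$ inside $G_0$. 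The conjugation orbits of $y$ then have size at most $60$, while $|L^*|$ may be as large as $59^2-1=3480$, so no counting contradiction is available and the whole strategy collapses for (T2). For (T3) the situation is similar: $G_{0,x}$ induces a subgroup of $N_{\GL_2(q)}(\SL_2(3))$ by conjugation, so your claim that $\langle L\rangle_{\mathbb{F}_q}$ is $\SL_2(q)$-invariant is unjustified ($1\otimes\SL_2(q)$ acts by right multiplication and meets $G_{0,x}$ trivially). Moreover a purely linear-algebraic contradiction cannot exist here, since $2$-dimensional $\mathbb{F}_q$-subspaces of $\M_2(q)$ all of whose nonzero elements are invertible do exist (embed $\mathbb{F}_{q^2}$ in $\M_2(q)$); one must genuinely use the block condition, which is what the paper's {\sc Magma} search does.

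The deferred characteristic-$2$ case of (T1) is also a real gap rather than a routine verification. When $y$ is irreducible over $\mathbb{F}_q$ with $\operatorname{tr}(y)=1$, one computes $\det(c+I)=1+\operatorname{tr}(c)+\det(c)=\det(c)$ and $\operatorname{tr}(c+I)=\operatorname{tr}(c)$, so $c+I$ has the same (irreducible) characteristic polynomial as $y$ and, since $\GL_2(q)$-classes of such elements do not split under $\SL_2(q)$ for $q$ even, $I+C(y)=C(y)$. The count then gives only $1+q(q-1)\leq q^2-1$ elements and no contradiction; determinants do not separate the translate either. By contrast, the paper's treatment of (T1) with $q\geq 4$ avoids all of this by a direct computation: it applies two explicit stabiliser elements $g\otimes\tilde g$ and $h\otimes\tilde h$ (with $g=\diag(a,a^{-1})$, $a\notin\langle-1\rangle$, and $h$ unipotent) and uses Lemma~\ref{lemma:basic} on successive differences to manufacture a simple tensor inside $L^*$, which works uniformly in all characteristics. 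To complete your proof you would need either to import that style of argument for $p=2$, (T2) and (T3), or to replace the conjugacy-class count by an analysis of $L^*$ as a block under the regular action of $1\otimes\GL_2(q)$ on the non-simple tensors (as in \S\ref{s:(S0)}).
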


\begin{proof}
By Lemma~\ref{lemma:tensorred}, we may assume that $m=2$. Let $\mathcal{S}:=(V,\mathcal{L})$ be an $\mathbb{F}_q$-independent $G$-affine proper partial linear space in which $\mathcal{S}(0)$ is the set of non-simple tensors. Let $q=p^e$ where $p$ is prime.
 Let $L\in\mathcal{L}_0$ and $B:=L^*$.   Now, by Lemma~\ref{lemma:nonsimple}, $L$ is an $\mathbb{F}_p$-subspace of $V$ and $|L|$ divides $q^2$. Further, $B$ is a  block of $G_0$ in its action on the  non-simple tensors by Lemma~\ref{lemma:necessary}. Using {\sc Magma}, we determine that there are no such blocks when any of the following cases hold: (T1) with $q=3$, (T2) or (T3).   Thus $G_0$ belongs to (T1)  and $q\geq 4$.

We may assume that   $U=W$. Now $S:=\SL_2(q)\otimes \SL_2(q)\unlhd G_0$. Let $x:=u_1\otimes u_1+u_2\otimes u_2\in B$, where $U=\langle u_1,u_2\rangle$. By assumption, there exists $y\in B\setminus \langle x\rangle$, and $y=u_1\otimes (\lambda_1 u_1+\lambda_2u_2)+u_2\otimes (\varepsilon_1u_1+\varepsilon_2u_2)$ for some $\lambda_1,\lambda_2,\varepsilon_1,\varepsilon_2\in\mathbb{F}_q$. For $g\in \SL_2(q)$, let $\tilde{g}$ denote the transpose of $g^{-1}$.  Now $S_x=\{g\otimes \tilde{g}: g \in \SL_2(q)\}$. Since $q\geq 4$, there exists $a\in \mathbb{F}_q^*\setminus \langle -1 \rangle$. There exist $g,h\in \SL_2(q)$ such that $u_1^g=au_1$,  $u_2^g=a^{-1}u_2$, $u_1^h=u_1$ and $u_2^h=u_1+u_2$. Then  $u_1^{\tilde{g}}=a^{-1}u_1$, $u_2^{\tilde{g}}=au_2$,   $u_1^{\tilde{h}}=u_1-u_2$ and $u_2^{\tilde{h}}=u_2$. Since $g\otimes \tilde{g}$ fixes $x$ and $L$ is an $\mathbb{F}_p$-subspace of $V$, Lemma~\ref{lemma:basic}(ii) implies that $z:=y^{g\otimes \tilde{g}}-y\in L$. Let $\lambda:=(a^2-1)\lambda_2$ and $\varepsilon:=(a^{-2}-1)\varepsilon_1$. Then $z=u_1\otimes \lambda u_2+u_2\otimes \varepsilon u_1$.  Suppose that $\varepsilon=0$. Then $\lambda=0$ since $B$ only contains non-simple tensors. Since $a\not\in \langle -1\rangle$, it follows that $\lambda_2=\varepsilon_1=0$, so $y=\lambda_1 u_1\otimes u_1 + \varepsilon_2 u_2\otimes u_2$. Now $(\varepsilon_2-\lambda_1)u_1\otimes u_2=y^{h\otimes \tilde{h}}-y\in L$ since $h\otimes \tilde{h}$ fixes $x$, so $\varepsilon_2=\lambda_1$, but then $y\in \langle x\rangle$, a contradiction. Thus  $\varepsilon\neq 0$. Let $w:=u_1\otimes \varepsilon(u_1-u_2)+u_2\otimes (-\varepsilon)u_2=z^{h\otimes \tilde{h}}-z\in B$. Then $u_1\otimes (1-a^2)\varepsilon u_2=w^{g\otimes \tilde{g}}-w\in B$, a contradiction.
\end{proof}

\begin{proof}[Proof of Proposition~\emph{\ref{prop:(T1)--(T3)}}]
If (i) holds, then $0$ is collinear with a simple tensor by Lemma~\ref{lemma:T123nonsimple}, so (ii) holds by Lemmas~\ref{lemma:T34simple},~\ref{lemma:T2simple} and~\ref{lemma:tensorsimple}(iii).   Conversely, if (ii) holds, then (i) holds by Lemma~\ref{lemma:tensorsimple}(iii). 
\end{proof}

To finish this section, we prove a result that enables us to determine the full automorphism group of a $G$-affine proper partial linear space when $G$ belongs to class (T). We then apply this result to the partial linear spaces of Example~\ref{example:tensor}. 

\begin{lemma}
\label{lemma:Taut}
Let $G$ be an affine permutation group of rank~$3$  on $V:=U\otimes W$ where $U:=V_2(q)$, $W:=V_m(q)$, $m\geq 2$, $q$ is a prime power and $G_0$ stabilises the decomposition of $V$. Let $\mathcal{S}$ be a $G$-affine proper partial linear space. Then $\Aut(\mathcal{S})$ is an affine permutation group on $V$ and $\Aut(\mathcal{S})_0$ stabilises the decomposition of $V$.
\end{lemma}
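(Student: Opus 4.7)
The plan is to apply Theorem~\ref{thm:primrank3plus} to the inclusion $G \leq \Aut(\mathcal{S})$, which first requires verifying that $G$ is primitive on $V$. By Lemma~\ref{lemma:primitive}, it suffices to check that neither orbit of $G_0$ on $V^*$ --- the simple tensors $X$ and the non-simple tensors $Y$ --- becomes an $\mathbb{F}_p$-subspace upon adjoining $0$. Choosing linearly independent $u_1, u_2 \in U$ and linearly independent $w_1, w_2 \in W$, the tensor $u_1 \otimes w_1 + u_2 \otimes w_2$ is non-simple, so $X \cup \{0\}$ is not closed under addition; and taking $w_3 \in W \setminus \langle w_1 \rangle$ with $w_3 \neq w_2$, the difference $(u_1 \otimes w_1 + u_2 \otimes w_2) - (u_1 \otimes w_1 + u_2 \otimes w_3) = u_2 \otimes (w_2 - w_3)$ is a non-zero simple tensor, so $Y \cup \{0\}$ is not closed under subtraction either.

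Theorem~\ref{thm:primrank3plus} then gives two cases: either (a) $\mathcal{S}$ is a $p^n \times p^n$ grid with $\Aut(\mathcal{S}) = S_{p^n} \wr S_2$ and $d = 2n$, or (b) $\Aut(\mathcal{S})$ is an affine primitive rank~$3$ permutation group with socle $V$. In case (b) the first assertion of the lemma is immediate. For the second, Remark~\ref{remark:PLSrank3} gives $\Aut(\mathcal{S})$ rank $3$ on $V$, so $\Aut(\mathcal{S})_0 \geq G_0$ has exactly two orbits on $V^*$, and since the $G_0$-orbits $X, Y$ already partition $V^*$, these must coincide with the $\Aut(\mathcal{S})_0$-orbits. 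Hence $\Aut(\mathcal{S})_0$ preserves the set of simple tensors, i.e., the Segre variety in $\PG(V)$, and the classical description of the stabiliser of the Segre variety in $\PGammaL(V)$ --- namely $(\PGL(U) \otimes \PGL(W)){:}\Aut(\mathbb{F}_q)$, extended by the swap $\tau$ when $m = 2$ --- forces $\Aut(\mathcal{S})_0$ to lie in the tensor-decomposition stabiliser.

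Case (a) requires an explicit check. Matching grid subdegrees $\{2(p^n-1), (p^n-1)^2\}$ against the tensor orbit sizes $\{(q+1)(q^m-1), q(q^m-1)(q^{m-1}-1)\}$ under the constraint $p^n = q^m$, and dividing by $q^m - 1$, reduces to $\{2, q^m-1\} = \{q+1, q(q^{m-1}-1)\}$; elementary arithmetic leaves only $(q, m) = (2, 2)$, so $|V| = 16$. Here $\Aut(\mathcal{S}) = S_4 \wr S_2 = \AGL_2(2) \wr S_2$ is affine, since $\GL_2(2) = S_3$ is the full symmetric group on $V_2(2)^*$ and hence $S_4 = \AGL_2(2)$; and $\Aut(\mathcal{S})_0 = S_3 \wr S_2$ of order $72$ coincides with $(\GL(U) \otimes \GL(W)){:}\langle \tau\rangle$ inside $\GL_4(2)$, as both equal the full linear stabiliser of the unordered pair of two-dimensional subspaces formed by the lines of $\mathcal{S}$ through $0$. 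I expect the main obstacle to be the Segre-variety step in case (b); the cleanest route within the paper's framework would probably be via Lemmas~\ref{lemma:tensorsimple}(i) and~\ref{lemma:nonsimple} applied to $G$ to recover the tensor decomposition from the line structure of $\mathcal{S}$ through $0$.
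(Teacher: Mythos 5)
Your route is genuinely different from the paper's. You reduce via Theorem~\ref{thm:primrank3plus} (after checking primitivity with Lemma~\ref{lemma:primitive}, which you do correctly), dispose of the grid alternative by subdegree matching (the arithmetic is right: only $(q,m)=(2,2)$ survives, and your identification of $S_3\wr S_2$ with the tensor-decomposition stabiliser inside $\GL_4(2)$ is correct, since $\{V_1,V_2\}$ is the unique pair of $2$-subspaces covering the six non-simple tensors), and in the affine case you correctly observe that $\Aut(\mathcal{S})_0$ must preserve the set of simple tensors. The paper instead observes that the collinearity graph of $\mathcal{S}$ is the bilinear forms graph $H_q(2,m)$ or its complement and quotes \cite[Theorem~9.5.1]{BroCohNeu1989}, which states outright that $\Aut(H_q(2,m))=V{:}\Aut(H_q(2,m))_0$ with $\Aut(H_q(2,m))_0$ the full stabiliser of the decomposition. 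That single citation absorbs everything: no primitivity check, no appeal to Theorem~\ref{thm:primrank3plus}, no separate grid case, and it controls arbitrary permutations rather than just linear ones.

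The one step of yours that does not close as written is the ``Segre variety'' step when $q$ is not prime. At that point you know only that $\Aut(\mathcal{S})_0\leq\GL_d(p)$ preserves the cone of simple tensors; the classical description of the Segre stabiliser is a statement about $\PGammaL_{2m}(q)$, i.e.\ about $\mathbb{F}_q$-semilinear maps, so before invoking it you must show that an $\mathbb{F}_p$-linear map preserving the simple tensors is automatically $\mathbb{F}_q$-semilinear. This is true, but it is the real content of the result: one has to argue that such a map permutes the maximal $\mathbb{F}_p$-subspaces contained in the cone (which are exactly the $u\otimes W$ and $U\otimes w$, by the standard fact that a subspace of rank~$\leq 1$ tensors lies in a single ``row'' or ``column''), deduce induced collineations of $\PG(U)$ and $\PG(W)$, and then apply the fundamental theorem of projective geometry. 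That chain is precisely what \cite[Theorem~9.5.1]{BroCohNeu1989} packages, so your proof becomes complete either by citing that theorem (at which point you might as well adopt the paper's one-line argument) or by carrying out the subspace analysis explicitly; your suggested fallback via Lemmas~\ref{lemma:tensorsimple}(i) and~\ref{lemma:nonsimple} does not quite do this, since those lemmas constrain the lines of $\mathcal{S}$, not the action of an arbitrary element of $\Aut(\mathcal{S})_0$ on the cone.
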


\begin{proof}
Let $\Gamma$ be the graph with vertex set $V$ in which distinct vectors $v$ and $v'$ are adjacent if and only if $v-v'$ is a simple tensor.  Then  $\Gamma$ is isomorphic to the  bilinear forms graph $H_q(2,m)$ (see~\cite[\S9.5A]{BroCohNeu1989}). By~\cite[Theorem~9.5.1]{BroCohNeu1989}, $\Aut(\Gamma)=V{:}\Aut(\Gamma)_0$, where $\Aut(\Gamma)_0$ is the full stabiliser of the  decomposition of $V$. By Lemma~\ref{lemma:rank3} and Remark~\ref{remark:PLSrank3}, the collinearity graph of~$\mathcal{S}$ is either $\Gamma$ or the complement of $\Gamma$, so $\Aut(\mathcal{S})\leq \Aut(\Gamma)$, and the result  holds since $V\leq G\leq \Aut(\mathcal{S})$.
\end{proof}

\begin{prop}
\label{prop:tensoraut}
Let $V:=U\otimes W$ where $U:=V_2(q)$, $W:=V_m(q)$, $m\geq 2$, and $q$ is a prime power. Let $\mathcal{S}:=(V,\mathcal{L})$ where $\mathcal{L}$ is $\{(U\otimes w) +v : w\in W^*,v\in V\} $ or  $\{(u\otimes W) +v : u\in U^*,v\in V\} $. Then $\Aut(\mathcal{S})=V{:}(\GL_2(q)\otimes  \GL_m(q)){:}\Aut(\mathbb{F}_q)\simeq V{:}(\GL_2(q)\circ \GL_m(q)){:}\Aut(\mathbb{F}_q)$.
\end{prop}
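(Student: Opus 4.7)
My plan is to combine Lemma~\ref{lemma:Taut}, which forces every automorphism to stabilise the tensor decomposition of $V$, with a direct verification of both inclusions; the only subtlety arises when $m=2$, where a ``swap'' of the two tensor factors must be ruled out.

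First I would invoke Lemma~\ref{lemma:Taut}: the group $G:=V{:}(\GL_2(q)\otimes\GL_m(q)){:}\Aut(\mathbb{F}_q)$ is an affine rank~$3$ permutation group on $V$ whose point stabiliser stabilises the decomposition, and $\mathcal{S}$ is $G$-affine by Lemma~\ref{lemma:tensorsimple}(iii). Hence Lemma~\ref{lemma:Taut} gives $\Aut(\mathcal{S})=V{:}\Aut(\mathcal{S})_0$ with $\Aut(\mathcal{S})_0$ inside the full stabiliser of the tensor decomposition of $V$. By the description in \S\ref{s:(T)}, this stabiliser equals $(\GL(U)\otimes\GL(W)){:}\Aut(\mathbb{F}_q)$ when $m\geq 3$, and equals $(\GL(U)\otimes\GL(U)){:}(\Aut(\mathbb{F}_q)\times\langle\tau\rangle)$ when $m=2$ (taking $U=W$ and $\tau:u\otimes w\mapsto w\otimes u$).

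Next I would verify the inclusion $G\leq\Aut(\mathcal{S})$ directly. A line of $\mathcal{S}$ through $0$ has the form $U\otimes w$ (respectively $u\otimes W$), and $(g\otimes h)\sigma$ carries it to $U^{g\sigma}\otimes w^{h\sigma}=U\otimes w^{h\sigma}$ (respectively $u^{g\sigma}\otimes W$), which is again a line of $\mathcal{S}$; since all lines of $\mathcal{S}$ are translates of these and translations obviously preserve $\mathcal{L}$, the set $\mathcal{L}$ is preserved by $G$. Consequently $G\leq\Aut(\mathcal{S})$, and combined with the previous paragraph, this already settles the case $m\geq 3$.

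The key step, and the main (though short) obstacle, is excluding $\tau$ when $m=2$. Since $\tau$ fixes $0$, if it lay in $\Aut(\mathcal{S})_0$ it would send each line through $0$ to another line through $0$. For $\mathcal{L}=\{(U\otimes w)+v:w\in W^*,v\in V\}$, the line $U\otimes w$ maps under $\tau$ to $w\otimes U=\{w\otimes x:x\in U\}$; if this equalled $U\otimes w'$ for some $w'\in U^*$, then for every $x\in U$ there would exist $\lambda\in\mathbb{F}_q^*$ with $w\otimes x=\lambda w\otimes w'$, forcing $x\in\langle w'\rangle$ for all $x\in U$ and contradicting $\dim U=2$. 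An identical argument handles the symmetric line set $\mathcal{L}=\{(u\otimes W)+v\}$. Hence $\tau\notin\Aut(\mathcal{S})_0$, and together with the two inclusions above we conclude $\Aut(\mathcal{S})_0=(\GL(U)\otimes\GL(W)){:}\Aut(\mathbb{F}_q)$ in both cases. The final isomorphism with $(\GL_2(q)\circ\GL_m(q)){:}\Aut(\mathbb{F}_q)$ is immediate from the identification $S\otimes T\simeq S\circ T$ recorded in \S\ref{ss:basicsvs}.
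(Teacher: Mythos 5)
Your proof is correct and follows essentially the same route as the paper, which simply cites Lemmas~\ref{lemma:tensorsimple}(iii) and~\ref{lemma:Taut}: both arguments use Lemma~\ref{lemma:Taut} to confine $\Aut(\mathcal{S})_0$ to the stabiliser of the tensor decomposition and then rule out the factor swap when $m=2$. Your direct computation showing $\tau$ cannot map $U\otimes w$ to any line $U\otimes w'$ is just an unpacking of the block argument in Lemma~\ref{lemma:tensorsimple}(ii), on which the cited part (iii) rests.
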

 
\begin{proof}
This follows from Lemmas~\ref{lemma:tensorsimple}(iii) and~\ref{lemma:Taut}.
\end{proof}

\section{Subfield classes (S1) and (S2)}
\label{s:(S1)}

Classes (S1) and (S2) consist of those  affine permutation groups $G$ of rank~$3$ on $V_n(q^2)$ for which either $\SL_n(q)\unlhd G_0$ where $n\geq 2$, or $A_7\unlhd G_0$ where $(n,q)=(4,2)$ and $A_7\leq \SL_4(2)\simeq A_8$. Let $r:=q^2$,  $K:=\mathbb{F}_{r}$ and $\zeta:=\zeta_r$ (see~\S\ref{ss:basicsvs}).  Let $\{v_1,\ldots,v_n\}$ be a basis of $V_n(r)$ on which $\GL_n(q)$ acts naturally over $\mathbb{F}_q$, and let $W:=\langle v_1,\ldots,v_n\rangle_{\mathbb{F}_q}$. By assumption,  $G_0\leq \GammaL_n(r)$, so    $G_0\leq (\GL_n(q)\circ K^*){:}\langle \sigma_r\rangle$ where  $\sigma_r$ (see~\S\ref{ss:basicsvs}) acts on  $V_n(r)$  with respect to  $\{v_1,\ldots,v_n\}$.
Representatives for the  orbits of  $G_0$ are $v_1$ and $v_1+\zeta v_2$, and  $v_1^{G_0}=\{\lambda v : \lambda\in K^*,v\in W^*\}$.  Let $\lambda W:=\{\lambda v : v \in W\}$ for $\lambda\in K^*$.   The orbits of $\SL_n(q)$ on $v_1^{G_0}$ are $(\lambda_1W)^*,\ldots,(\lambda_{s}W)^*$, where $\lambda_1,\ldots,\lambda_{s}$ is a transversal for $\mathbb{F}_q^*$ in~$K^*$ (so $s=q+1$).

Here is an equivalent definition for the classes (S1) and (S2). As above, let $r:=q^2$, $K:=\mathbb{F}_{r}$ and $\zeta:=\zeta_r$, and let $\{v_1,\ldots,v_n\}$ be a basis of $W:=V_n(q)$. The extension of scalars $K\otimes W$ is an $n$-dimensional $K$-vector space with basis $\{1\otimes v_1,\ldots,1\otimes v_n\}$ on which $(\GL_1(r)\otimes \GL_n(q)){:}\langle \sigma_r\rangle$ acts naturally. Now classes (S1) and (S2) consist of those affine permutation groups $G$ of rank~$3$ on $K\otimes W$ for which either $1\otimes \SL_n(q)\unlhd G_0$ where $n\geq 2$, or $1\otimes A_7\unlhd G_0$ where $(n,q)=(4,2)$ and $A_7\leq \SL_4(2)$. Representatives for the orbits of $G_0$ are $1\otimes v_1$ and $1\otimes v_1+\zeta\otimes v_2$. Further, we may view $K\otimes W$ as a $2n$-dimensional $\mathbb{F}_q$-vector space with basis $\{1\otimes v_1,\ldots,1\otimes v_n,\zeta\otimes v_1,\ldots,\zeta\otimes v_n\}$, in which case $G_0$ lies in class (T5) or (T4), respectively. (Note that in class (T5),  there exists $t\in \GL_2(q)$ such that $t\sigma_q$ acts as the Frobenius automorphism of $K$.) 

The orbits of $1\otimes \SL_n(q)$ on $(1\otimes v_1)^{G_0}$ are $\lambda_1\otimes W^*,\ldots,\lambda_{s} \otimes W^*$ where $\lambda_1,\ldots,\lambda_{s}$ is a transversal for $\mathbb{F}_q^*$ in $K^*$. If we define $\mathcal{L}$ to be the set of translations of $\lambda_i\otimes W$ for $1\leq i\leq q+1$, then $(K\otimes W,\mathcal{L})$ is one of the proper partial linear spaces described in Example~\ref{example:tensor} and Lemma~\ref{lemma:tensorsimple}(iii), and it is both $K$-independent and $\mathbb{F}_q$-independent. However, if we instead define $\mathcal{L}$ to be the set of translations of $K\otimes v=\langle 1\otimes v\rangle_K$ for $v\in W^*$, then $(K\otimes W,\mathcal{L})$ is not only the other proper partial linear space described in Example~\ref{example:tensor} and Lemma~\ref{lemma:tensorsimple}(iii), but also  a partial linear space from Example~\ref{example:AG} that is $K$-dependent and  $\mathbb{F}_q$-independent. 
 (In the notation of Example~\ref{example:AG}, for any $G_0$ in class (S1) or (S2), the partial linear space is $\mathcal{S}_1$, where $\Delta_1$ is the orbit  $\{\langle 1\otimes v\rangle_K : v\in W^*\}$ of $G_0$ on the points of $\PG_{n-1}(q^2)$.)

By combining these two viewpoints of classes (S1) and (S2), we obtain the following as an immediate consequence of Lemmas~\ref{lemma:tensorsimple},~\ref{lemma:tensorsimpleSL}(ii) and Lemma~\ref{lemma:T34simple}.

\begin{prop}
\label{prop:(S1)--(S2)good}
Let $G$ be an affine permutation group of rank~$3$ on $V:=V_n(q^2)$ where $q$ is a prime power and either $\SL_n(q)\unlhd G_0$ and  $n\geq 2$, or $A_7\unlhd G_0$  and   $(n,q)=(4,2)$. Let $K:=\mathbb{F}_{q^2}$. Let $\{v_1,\ldots,v_n\}$ be a basis of $V$ on which $\GL_n(q)$ acts naturally over $\mathbb{F}_q$, and let $W:=\langle v_1,\ldots,v_n\rangle_{\mathbb{F}_q}$. 
\begin{itemize}
\item[(i)] The only  $K$-independent  $G$-affine proper partial linear space  for which $0$ and $v_1$ are collinear  has line set  $\mathcal{L}:=\{\lambda W+v:\lambda\in K^*,v\in V\}$.
\item[(ii)] The partial linear space $(V,\mathcal{L})$ is described in Example~\emph{\ref{example:tensor}}.
\end{itemize}
\end{prop}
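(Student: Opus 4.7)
The plan is to translate the setup into the tensor product framework of Example~\ref{example:tensor} and then invoke the three cited lemmas essentially verbatim. First I would identify $V = V_n(q^2)$ with the tensor product $U \otimes W$, where $U := K$ is regarded as a $2$-dimensional $\mathbb{F}_q$-space with basis $\{1, \zeta\}$ (so $U \simeq V_2(q)$) and $W := \langle v_1, \ldots, v_n\rangle_{\mathbb{F}_q} \simeq V_n(q)$. Under this identification, each $v_i$ becomes the simple tensor $1 \otimes v_i$, each $\lambda W$ becomes $\lambda \otimes W$, and each $\langle v\rangle_K$ becomes $U \otimes v$; viewing $V$ as $V_{2n}(q)$, the group $G_0$ stabilises this decomposition and satisfies the hypotheses of Lemmas~\ref{lemma:tensorsimple} and~\ref{lemma:tensorsimpleSL}. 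In particular, when $\SL_n(q) \unlhd G_0$ the group lies in (T5) with $1\otimes\SL(W) \leq G_0$, and when $A_7\unlhd G_0$ with $(n,q) = (4,2)$ it lies in (T4). Moreover $\mathcal{S}(0) = v_1^{G_0}$ is exactly the set of simple tensors of $V$.

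For (i), let $L\in\mathcal{L}_0$ be the line through $0$ and $v_1 = 1\otimes v_1$. Transitivity of $G$ on ordered pairs of collinear points implies $G_0$ is transitive on $\mathcal{L}_0$, and since $G_0 \leq \GammaL_n(K)$ permutes the $K$-lines of $V$, $K$-independence of $\mathcal{S}$ forces no line through $0$ to lie in a $K$-line; in particular $L\not\subseteq \langle v_1\rangle_K = U\otimes v_1$. Lemma~\ref{lemma:tensorsimple}(i) then gives $L\subseteq 1\otimes W = W$. When $\SL_n(q)\unlhd G_0$ we have $1\otimes\SL(W)\leq G_0$, so Lemma~\ref{lemma:tensorsimpleSL}(ii) yields $\mathcal{L}_0 = \{v\otimes W : v\in U^*\} = \{\lambda W : \lambda\in K^*\}$ directly. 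When $A_7 \unlhd G_0$ and $(n,q) = (4,2)$, the group $G_0$ belongs to class (T4), and $K$-independence implies $\mathbb{F}_q$-independence (by the observation at the start of \S\ref{s:dep}); Lemma~\ref{lemma:T34simple} then forces $\mathcal{L}_0$ to be either $\{U\otimes w : w\in W^*\}$ or $\{u\otimes W : u\in U^*\}$, and the former consists of $K$-lines and is excluded by $K$-independence, leaving $\mathcal{L}_0 = \{\lambda W : \lambda\in K^*\}$ once again. Translating by the normal subgroup $V\leq G$ gives $\mathcal{L} = \{\lambda W + v : \lambda\in K^*, v\in V\}$, as claimed.

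For (ii), with the tensor identification above and $m := n \geq 2$, the line set $\mathcal{L}$ is precisely $\mathcal{L}_W$ of Example~\ref{example:tensor}; no further argument is needed. The only delicate point in the whole proof is the bookkeeping that keeps the two pictures of $V$ aligned (as the $n$-dimensional $K$-space on the one hand, and as $V_{2n}(q) = U\otimes W$ with $U = V_2(q)$, $W = V_n(q)$ on the other), together with the observation that $K$-independence is stronger than $\mathbb{F}_q$-independence, which is what permits Lemma~\ref{lemma:T34simple} to be applied in the $A_7$ case.
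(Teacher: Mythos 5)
Your proof is correct and takes essentially the same route as the paper, which obtains the proposition as an immediate consequence of the two-viewpoints identification of (S1)/(S2) with (T5)/(T4) together with Lemmas~\ref{lemma:tensorsimple}, \ref{lemma:tensorsimpleSL}(ii) and~\ref{lemma:T34simple}. The only cosmetic point is that the $\mathbb{F}_q$-independence hypothesis of Lemma~\ref{lemma:tensorsimpleSL}(ii) should also be noted in the $\SL_n(q)$ case; it follows from $K$-independence exactly as you observe for the $A_7$ case.
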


 Next we consider the orbit containing $v_1+\zeta v_2$.
 
 \begin{prop}
\label{prop:(S1)--(S2)bad}
Let $G$ be an affine permutation group of rank~$3$ on $V:=V_n(q^2)$ where $q$ is a prime power and  either $\SL_n(q)\unlhd G_0$ and  $n\geq 2$, or $A_7\unlhd G_0$ and $(n,q)=(4,2)$. Let $r:=q^2$, $K:=\mathbb{F}_{r}$ and $\zeta:=\zeta_{r}$ (where $\zeta^2=\zeta+1$ when $q=3$). Let $\{v_1,\ldots,v_n\}$ be a basis of $V$ on which $\GL_n(q)$ acts naturally over $\mathbb{F}_q$.  Let $x:=v_1+\zeta v_2$, $y_1:=\zeta^2v_1+v_2$ and $y_2:=-\zeta v_1+v_2$.
\begin{itemize}
\item[(i)] Let $A$ be the affine permutation group on $V$ for which $A_0=\GL_n(q)\langle \zeta^2,\zeta\sigma_r\rangle$, where   $\sigma_r$ acts on $V$   with respect to $\{v_1,\ldots,v_n\}$. Then $A$ has rank~$3$.

\item[(ii)] $(V,\mathcal{L})$ is a $K$-independent  $G$-affine proper partial linear space for which $0$ and $x$ are collinear if and only if $q=3$, $G\leq A$ and  $\mathcal{L}=L^A$ where   $L=\langle x,y_1\rangle_{\mathbb{F}_3}$ or $\langle x,y_2\rangle_{\mathbb{F}_3}$.

\item[(iii)] Let $q:=3$. Let $L_3:=\langle x\rangle_{K}$ and $L_i:=\langle x,y_i\rangle_{\mathbb{F}_3}$ for $i=1,2$. 
 Let $\mathcal{S}_i$ be the partial linear space $(V,L_i^A)$ for $1\leq i\leq 3$. Then $\mathcal{S}_3$ is $K$-dependent and $\mathcal{S}_1\simeq \mathcal{S}_2\simeq \mathcal{S}_3$.
\end{itemize}
\end{prop}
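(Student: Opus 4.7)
For (i), I would argue that $A_0K^* = \GL_n(q)\cdot K^*\cdot \langle \sigma_r\rangle = \GammaL_n(q^2)$ and that $A_0\cap K^* = \langle \zeta^2\rangle$, so $[A_0K^*:A_0]\leq 2$ and the $A_0$-orbits on $V^*$ arise by splitting the two $\GammaL_n(q^2)$-orbits (each into at most two pieces). Therefore it suffices to exhibit, for a representative $v$ of each $\GammaL_n(q^2)$-orbit, some $h\in A_0$ with $h(v)\in\zeta\langle\zeta^2\rangle\cdot v$. For the ``special'' orbit this is immediate since $(\zeta\sigma_r)(v_1)=\zeta v_1$. For the ``general'' orbit, writing $\zeta^2=s+t\zeta$ with $s,t\in\mathbb F_q$ and $\alpha=\zeta^{q+1}\in\mathbb F_q^\ast$, I would solve a short linear system over $\mathbb F_q$ to produce an explicit $g\in\GL_n(q)$ (acting as the identity on $v_3,\dots,v_n$) and a scalar $\mu\in\langle\zeta^2\rangle$ such that $\mu g\circ(\zeta\sigma_r)(x)=\zeta x$; the key point is that $s\neq 0$ (else the minimal polynomial of $\zeta$ over $\mathbb F_q$ is reducible), which makes the determinant of $g$ nonzero.

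For (ii), in the forward direction, let $L$ be the line through $0$ and $x$, $B:=L^\ast$; by Lemma~\ref{lemma:necessary}, $B$ is a non-trivial block of $G_0$ on $x^{G_0}$. Viewing $V=U\otimes W$ as a tensor product in the sense of class~(T5) or (T4), the vector $x$ is a non-simple tensor, so Lemma~\ref{lemma:nonsimple} forces $L$ to be an $\mathbb F_p$-subspace of $U\otimes X$ (with $X=\langle v_1,v_2\rangle_{\mathbb F_q}$) with $|L|$ dividing $q^2$. Lemma~\ref{lemma:tensorred} then lets me reduce to $n=2$. $K$-independence excludes $L\subseteq\langle x\rangle_K$, so $L$ is a $2$-dimensional $\mathbb F_p$-subspace of $V$ distinct from $\langle x\rangle_K$ with $L^\ast$ contained in the general orbit. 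For $y\in L\setminus\langle x\rangle_K$, Lemma~\ref{lemma:basic}(i) forces every non-zero element of $L$ (and in particular $ax+by$ with $a,b\in\mathbb F_p$ not both zero) to be a non-simple tensor; combined with Lemma~\ref{lemma:basic}(ii) applied with generators of $G_{0,x}\supseteq\SL_n(q)_x$, this gives very tight constraints on the $\mathbb F_q$-coordinates of $y$. I expect to show that no such $y$ exists unless $q=p=3$, in which case one may assume $y\in\{y_1,y_2\}$ after translating by elements of $G_{0,x}$ and $N_{A_0}(\langle x\rangle_{\mathbb F_3})$. The condition $G\leq A$ then drops out because $\Aut(V,L_i^A)$ preserves the block $B$, and computing $N_{\GammaL_n(9)}(L_i^\ast)$ directly shows it lies in $A_0$.

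For the converse in (ii), I would verify via Lemma~\ref{lemma:sufficient} that $(V,L_i^A)$ is a proper partial linear space: $A_{L_i}$ is transitive on $L_i$ by Lemma~\ref{lemma:transitive}, and a direct orbit-counting argument (or a small \textsc{Magma} check) shows that $L_i^\ast$ is a non-trivial block of $A_0$ on the general orbit of size $(q^n-1)(q^n-q)=48(3^{n-2})\cdot$~etc. $K$-independence is immediate from $L_i\neq\langle x\rangle_K$, and $0$ and $x$ are collinear by construction. For (iii), the claim that $\mathcal S_3$ is $K$-dependent is immediate from $L_3=\langle x\rangle_K$. For the isomorphisms, I would exhibit an element $\phi\in N_{\GammaL_n(q^2)}(A_0)$ that maps $L_1$ to $L_2$ (a good candidate involves $\sigma_r$ together with a suitable element of $\GL_n(3)\langle\zeta^2\rangle$); since $\phi$ normalises $A$, it sends $L_1^A$ to $L_2^A$. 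For $\mathcal S_1\simeq\mathcal S_3$, the key observation is that both $L_1$ and $L_3$ are $2$-dimensional $\mathbb F_3$-subspaces of $U\otimes X$ containing $0$ and $x$ and consisting of non-simple tensors together with $0$; I would build an $\mathbb F_3$-linear map (for example $\mathrm{id}\otimes g$ for a carefully chosen $g\in\GL_2(3)$ composed with a scaling) that normalises $A_0$ and carries $L_3$ to $L_1$.

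The hard part will be part (ii), specifically the forward implication. The obstacle is that $\SL_n(q)_x$ is already small (trivial when $n=2$), so Lemma~\ref{lemma:basic}(ii) gives limited information directly; the argument must bootstrap by combining it with the $\mathbb F_p$-linear structure of $L$ from Lemma~\ref{lemma:affine} and the detailed constraints on non-simple tensors from Lemma~\ref{lemma:nonsimple}, and then checking that the resulting constraints on the $\mathbb F_q$-entries of $y$ have solutions exactly when $q=3$. The identification of the full automorphism group as $A$ (needed for $G\leq A$) will likely require an explicit, somewhat case-by-case computation of the normaliser of $L_i^\ast$ inside $\GammaL_n(9)$.
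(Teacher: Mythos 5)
Your overall architecture matches the paper's: reduce from $n\geq 3$ to $n=2$ via Lemma~\ref{lemma:tensorred}, use Lemma~\ref{lemma:nonsimple} to confine $L$ to an $\mathbb{F}_p$-subspace with $|L|$ dividing $q^2$, and finish (iii) with explicit normalising elements. But the forward direction of (ii) — the only part with real content — has a genuine gap at exactly the point you flag as "the hard part," and the gap is not one that the tools you name will close. For $n=2$ the subgroup $\SL_2(q)_x$ is trivial, so Lemma~\ref{lemma:basic}(ii) applied with it says nothing, and "tight constraints on the $\mathbb{F}_q$-coordinates of $y$" do not materialise from the non-simple-tensor condition alone: for every $q$ there are plenty of $\mathbb{F}_p$-subspaces of $U\otimes X$ all of whose nonzero vectors are non-simple (e.g.\ $\langle x\rangle_K$ itself and its $\mathbb{F}_p$-subspaces), so purely coordinate-wise constraints cannot single out $q=3$. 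You also assert without justification that $L$ is $2$-dimensional over $\mathbb{F}_p$; for $q=p^e$ with $e>1$ the constraint is only $|L|\mid q^2$, and ruling out the larger possibilities is part of what must be proved.

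The missing idea in the paper (Proposition~\ref{prop:A4red}) is group-theoretic, not coordinate-theoretic. One first applies Lemma~\ref{lemma:closed} to get a subfield $\mathbb{F}_s$ of $K$ with $\mathbb{F}_s=\{\lambda\in K:\lambda u\in L\}$ for all $u\in L^*$ ($K$-independence forces $\mathbb{F}_s$ proper). One then switches to the unitary model $\SU_2(q)\simeq\SL_2(q)$, so that $x$ becomes a non-isotropic vector whose stabiliser in $\GL_2(q^2)\cap\GammaU_2(q)$ is a cyclic group $D$ of order $q+1$ acting by $u_1\mapsto u_1$, $u_2\mapsto\lambda u_2$ with $\lambda$ in the norm kernel; a divisibility count using the two subdegrees shows $|G_{0,x}\cap D|$ is divisible by $(q+1)/\gcd(q+1,4e)$. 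Feeding these stabiliser elements into Lemma~\ref{lemma:basic}(ii) forces a subgroup of the norm kernel of order $(q+1)/\gcd(q+1,4e)$ into $\mathbb{F}_s^*$, i.e.\ $(q+1)/\gcd(q+1,4e)\mid s-1$, and an elementary number-theoretic lemma then leaves only $(q,s)\in\{(3,3),(7,7),(8,4)\}$, the last two eliminated by computation. None of this — the subfield structure of $L$, the nontrivial cyclic point stabiliser coming from $\GL_2(q)\circ K^*$, or the arithmetic reduction — appears in your proposal, and without it the claim "no such $y$ exists unless $q=3$" is unsupported. Your treatment of (i), the converse of (ii), and (iii) is essentially fine and close to the paper's.
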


 When $\SL_n(q)\unlhd G_0$, we will use Lemma~\ref{lemma:tensorred} to reduce to the case $n=2$. In order to deal with the case $n=2$, we require the following 
technical result.

\begin{lemma}
\label{linear tech}
Let $q:=p^e$ where $p$ is a prime and $e\geq 1$, and let $s:=p^f$ where $1\leq f< 2e$  and $f$ divides $2e$. If  $(q+1)/\gcd(q+1,4e)$ divides $s-1$, 
then $(q,s)$ is $(3,3)$, $(7,7)$ or $(8,4)$.
\end{lemma}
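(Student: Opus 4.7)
My plan is to bound $s$ in terms of $e$ by elementary divisibility, and then finish the argument by a short finite check. Set $N := (q+1)/\gcd(q+1, 4e)$ and $d := \gcd(q+1, 4e)$, so $Nd = q+1$ and $d \le 4e$. Because $f$ is a proper divisor of $2e$, we have $f \le e$ and hence $s \le q$, so $s-1 < q+1$. The hypothesis $N \mid s-1$ then forces $N \le s - 1$, and together with $N \mid q+1$ we deduce $N \mid \gcd(q+1, s-1)$. The whole proof is driven by bounding this gcd in two cases.

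First I would handle the case $f \mid e$. Here $s - 1 \mid q - 1$, so
$$\gcd(q+1, s-1) \;\mid\; \gcd(q+1, q-1) \;\le\; 2,$$
forcing $N \in \{1, 2\}$ and hence $q + 1 = Nd \le 8e$. This inequality leaves only a short list of pairs $(p, e)$, and within that list I would keep exactly those satisfying either $q + 1 \mid 4e$ (forcing $N = 1$) or $q$ odd with $(q+1)/2 \mid 4e$ (forcing $N = 2$). A direct check singles out precisely $(q, s) = (3, 3)$ and $(q, s) = (7, 7)$.

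For the case $f \nmid e$, since $f \mid 2e$ we must have $f/\gcd(f,e) = 2$, so $f = 2g$ with $g := \gcd(f, e)$ dividing $e$ and $t := e/g$ odd and at least $3$. Applying the identity $x^t + 1 = (x+1)(x^{t-1} - x^{t-2} + \cdots + 1)$ to $x = p^g$ gives $p^g + 1 \mid q + 1$, while $p^g - 1 \mid q - 1$ forces $\gcd(p^g - 1, q + 1) \mid 2$. Writing $s - 1 = (p^g - 1)(p^g + 1)$ and using $p^g + 1 \mid q + 1$, I then compute
$$\gcd(q+1, s-1) \;=\; (p^g + 1)\, \gcd\!\bigl((q+1)/(p^g+1),\; p^g - 1\bigr) \;\le\; 2(p^g + 1).$$
Combining $N \le 2(p^g + 1)$ with $N \ge (q+1)/(4e)$ and $e = gt$ yields
$$p^{gt} + 1 \;\le\; 8\,g\,t\,(p^g + 1).$$
Since $t \ge 3$, the left side grows like $p^{3g}$ or faster while the right side is only linear in $t$ and essentially $p^g$ in $g$, so only a handful of triples $(p, g, t)$ survive; a direct check shows that of these, only $(p, g, t) = (2, 1, 3)$, giving $(q, s) = (8, 4)$, actually satisfies $N \mid s-1$.

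The main obstacle lies in the second case: without the observation that $p^g + 1$ divides $q + 1$ (which relies crucially on $t = e/g$ being odd), the bound on $\gcd(q+1, s-1)$ degrades to something of size $s - 1 = p^{2g} - 1$, and the resulting bound on $q$ is too weak to close the enumeration. The factorisation of $p^{gt} + 1$ is therefore the arithmetic heart of the argument, and everything else is bookkeeping.
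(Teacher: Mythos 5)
Your proof is correct, but it is organised differently from the paper's. The paper splits on $f=e$ versus $f<e$: for $f=e$ it notes that $(q+1)/\gcd(q+1,4e)$ divides both $q+1$ and $q-1$, hence is at most $2$, giving $q+1\leq 8e$; for $f<e$ it uses $f\leq 2e/3$ together with the crude estimate $\gcd(q+1,4e)\leq me$ (where $m\in\{1,2,4\}$ records the $2$-part of $q+1$) to get $p^e<(me)^3$, and then eliminates the surviving candidates $2^3,2^9,3^3,5^3,11^3$ by direct computation. You instead split on $f\mid e$ versus $f\nmid e$ and in both cases bound $N:=(q+1)/\gcd(q+1,4e)$ via $N\mid\gcd(q+1,s-1)$: when $f\mid e$ this gcd is at most $2$ (absorbing the paper's first case and part of its second), and when $f\nmid e$ you exploit that $f=2g$ with $t=e/g$ odd, so that $p^g+1\mid q+1$ and $\gcd(q+1,s-1)\leq 2(p^g+1)$, yielding the sharper inequality $p^{gt}+1\leq 8gt(p^g+1)$ and a shorter candidate list ($(p,g,t)\in\{(2,1,3),(3,1,3),(5,1,3),(2,2,3),(2,3,3),(2,1,5),(2,1,7)\}$, of which only $(2,1,3)$ survives the final divisibility test). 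I verified both finite checks; they are as you claim. One small quibble: your closing remark that without the factorisation $p^{gt}+1=(p^g+1)(p^{g(t-1)}-\cdots+1)$ the enumeration could not be closed is an overstatement --- the weaker bound $N\leq s-1<p^{2e/3}$ still gives $p^{e/3}<4e$ and a finite (if longer) list, which is essentially what the paper does --- but this affects only your commentary, not the validity of the argument.
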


\begin{proof}
Since $f$ is a proper divisor of $2e$, it follows that $f\leq e$. First suppose that $f=e$. Then $s=q$, so $q+1=a\gcd(q+1,4e)$ for some $a\in \{1,2\}$, in which case $q=3$ or $7$.

Now suppose that $f<e$. Since $f$ divides $2e$, it follows that $f\leq 2e/3$. Define $m$ to be $1$  when $q$ is even, $2$ when $q\equiv 1\mod 4$, and  $4$  when $q\equiv 3\mod 4$.  Now  $p^e/me<p^{2e/3}$,  so $p^e<(me)^3$.  

If $q$ is even, then $2\leq e\leq 9$. Since $(q+1)/\gcd(q+1,e)\leq p^{2e/3}-1$, it follows that $q$ is $2^3$ or $2^9$. Since $(q+1)/\gcd(q+1,e)$ divides $s-1$ and $f\mid 2e$ but $f<e$, it follows that $(q,s)=(8,4)$.

If $q$ is odd, then $2\leq e\leq 10$, so $p\leq 19$. Since $(q+1)/\gcd(q+1,4e)\leq p^{2e/3}-1$, it follows that $q$ is $3^3$, $5^3$, or $11^3$.  Then $f\in \{1,2\}$, but $(q+1)/\gcd(q+1,4e)\nmid (s-1)$. 
\end{proof}

Now we consider the case $n=2$.

\begin{prop}
\label{prop:A4red}
Let $G$ be an affine permutation group of rank~$3$ on $V:=V_2(q^2)$ where $\SL_2(q)\unlhd G_0$ and $q$ is a prime power. Let $r:=q^2$, $K:=\mathbb{F}_{r}$ and $\zeta:=\zeta_r$ (where $\zeta^2=\zeta+1$ when $q=3$). Let $\{v_1,v_2\}$ be a basis of $V$ on which $\GL_2(q)$ acts naturally over $\mathbb{F}_q$.  Let $A$ be the affine permutation group on $V$ for which $A_0=\GL_2(q)\langle \zeta^2,\zeta\sigma_r\rangle$, where   $\sigma_r$ acts on $V$ with respect to  $\{v_1,v_2\}$.  The following are equivalent.
 \begin{itemize}
 \item[(i)] $(V,\mathcal{L})$ is a $K$-independent  $G$-affine proper partial linear space for which $0$ and $v_1+\zeta v_2$ are collinear.
 \item[(ii)] $q=3$, $G\leq A$ and  $\mathcal{L}=L^A$ where   $L=\langle v_1+\zeta v_2,\zeta^2v_1+v_2\rangle_{\mathbb{F}_3}$ or $\langle v_1+\zeta v_2,-\zeta v_1+v_2\rangle_{\mathbb{F}_3}$.
 \end{itemize}
\end{prop}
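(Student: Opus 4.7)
My plan proceeds in two directions.

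For (ii) $\Rightarrow$ (i): when $q=3$, I would verify by a direct computation in $V_2(9)$ (e.g.\ in {\sc Magma}) that the group $A$ has rank~$3$ on $V$, that $L \setminus \{0\}$ is a non-trivial block of $A_0$ on $x^{A_0}$ for each of the two candidate lines $L = \langle x, \zeta^2 v_1 + v_2 \rangle_{\mathbb{F}_3}$ and $L = \langle x, -\zeta v_1 + v_2 \rangle_{\mathbb{F}_3}$, and that $A_L$ acts transitively on $L$. Lemma~\ref{lemma:sufficient} then produces the $G$-affine proper partial linear space $(V, L^A)$, and $K$-independence follows since the second generator of $L$ does not lie in $\langle x \rangle_K$.

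For (i) $\Rightarrow$ (ii), let $L \in \mathcal{L}_0$ contain $x$, set $B := L^*$ and $s := |L|$. By Remark~\ref{remark:PLSrank3} and Lemma~\ref{lemma:necessary}, $B$ is a non-trivial block of $G_0$ on $x^{G_0}$, so $s-1$ divides the subdegree $|x^{G_0}| = q(q-1)^2(q+1)$ from Table~\ref{tab:subdegree}. Since $-1 \in \SL_2(q) \leq G_0$ when $p$ is odd (and trivially when $p=2$), Lemma~\ref{lemma:affine} forces $L$ to be an $\mathbb{F}_p$-subspace of $V$, so $s$ is a power of $p$. Reinterpreting $V = U \otimes W$ with $U \cong V_1(q^2)$ and $W = \langle v_1, v_2 \rangle_{\mathbb{F}_q}$ casts $G_0$ into class (T5) with $x$ a non-simple tensor, whence Lemma~\ref{lemma:nonsimple} yields $s \mid q^2$. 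A short calculation using that $\{1, \zeta\}$ is an $\mathbb{F}_q$-basis of $K$ gives $\SL_2(q)_x = 1$, so $\SL_2(q)$ acts semiregularly on $x^{\SL_2(q)}$ and $|G_{0,x}|$ divides $2e(q+1)$, where $q = p^e$.

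The technical heart is to combine the $2$-transitivity of $G_L$ on $L$ (Lemma~\ref{lemma:necessary}(ii)) with the semiregular $\SL_2(q)$-action on $x^{\SL_2(q)}$ and the bound on $|G_{0,x}|$ to extract a divisibility tying $s$ to $q$ and $e$. Writing $s = p^f$ with $1 \leq f \leq 2e$, I would argue that the natural combination yields precisely the hypothesis of Lemma~\ref{linear tech} in the range $f < 2e$, so that $(q, s) \in \{(3,3), (7,7), (8,4)\}$. The case $(3,3)$ is excluded because it would give $L = \mathbb{F}_3 x \subseteq \langle x \rangle_K$, contradicting $K$-independence, while $(7,7)$ and $(8,4)$ are eliminated by a direct {\sc Magma} enumeration showing that no block of the required size arises.

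The residual case $f = 2e$ (i.e.\ $s = q^2$) is handled by a refined count exploiting how $G_{0,L}$ must permute the $q+1$ orbits of $\SL_2(q)$ inside $x^{G_0}$; this yields a divisibility that forces $q \leq 3$, and a separate check rules out $q = 2$, leaving $q = 3$. An explicit enumeration in $V_2(9)$ then identifies $L$ up to $A_0$-action with $\langle x, \zeta^2 v_1 + v_2 \rangle_{\mathbb{F}_3}$ or $\langle x, -\zeta v_1 + v_2 \rangle_{\mathbb{F}_3}$, and the inclusion $G \leq A$ follows because any strictly larger rank~$3$ subgroup of $\GammaL_2(9)$ fails to preserve either line. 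The main obstacle is the divisibility extraction: the interplay between $\SL_2(q)_L$ and the scalar and Frobenius cosets in $G_{0,L}$ must be tracked carefully enough to produce the precise $\gcd(q+1, 4e)$ factor for which Lemma~\ref{linear tech} is engineered.
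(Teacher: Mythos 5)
Your outline has the right global shape (force a divisibility condition, feed it into Lemma~\ref{linear tech}, clean up small cases by computation), and several ingredients are correctly identified: Lemma~\ref{lemma:affine} gives that $L$ is an $\mathbb{F}_p$-subspace, Lemma~\ref{lemma:nonsimple} gives $|L|\mid q^2$ via the class (T5) reinterpretation, and the stabiliser $(\GL_2(q)\circ K^*)_x\simeq C_{q+1}$ is the right object to exploit. But there is a genuine gap: the step you call the ``technical heart'' --- extracting the divisibility $(q+1)/\gcd(q+1,4e)\mid s-1$ --- is the entire content of the proof, and you only assert that ``the natural combination yields'' it. The paper's argument here is not a routine combination of the facts you list: it passes to the conjugate viewpoint $\SU_2(q)\unlhd G_0\leq\GammaU_2(q)$ (so that $x$ becomes a non-isotropic vector $u_1$ in an orthonormal basis), locates a cyclic subgroup $D\simeq C_{q+1}$ of norm-one diagonal maps inside $\GammaU_2(q)_{u_1}$, shows $(q+1)/\gcd(q+1,4e)$ divides $|G_{0,u_1}\cap D|$, and then applies Lemma~\ref{lemma:basic}(ii) twice (once at $u_1$, once at a point $\delta u_2\in B$) to push a subgroup $N'$ of norm-one scalars into $\{\mu:\mu u_1\in L\}$. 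Nothing in your sketch supplies this mechanism.

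A second, related problem is that you set $s:=|L|$, whereas Lemma~\ref{linear tech} requires $s=p^f$ with $f$ a divisor of $2e$ --- a hypothesis that holds for the paper's $s$ because there $\mathbb{F}_s$ is the \emph{stabilising subfield} $\{\lambda\in K:\lambda u\in L\}$ supplied by Lemma~\ref{lemma:closed} (a proper subfield of $K$ by $K$-independence), not the line size. You omit Lemma~\ref{lemma:closed} entirely, yet it is exactly what converts ``$\mu u_1\in B$'' into ``$\mu\in\mathbb{F}_s^*$'' in the final step of the divisibility, and it is what makes $f\mid 2e$ automatic. Your separate residual case $|L|=q^2$ is an artefact of this conflation (in the paper's setup no such case arises, since the claim concerns the subfield, not the line), and the ``refined count'' you propose for it is unsubstantiated --- note also that $\SL_2(q)$ is semiregular on the non-simple tensors and so has $q-1$ orbits there, not $q+1$. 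As written, the forward implication is not proved.
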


\begin{proof}
If (ii) holds, then (i) holds by Lemmas~\ref{lemma:transitive} and~\ref{lemma:sufficient} and  a computation in {\sc Magma}. 

Conversely, suppose that (i) holds, and let $\mathcal{S}:=(V,\mathcal{L})$.  By Lemmas~\ref{lemma:nonsimple} and~\ref{lemma:closed}, there exists a subfield $\mathbb{F}_s$ of $K$ such that each $L\in\mathcal{L}_0$ is an $\mathbb{F}_s$-subspace of $V$ with the property that $\mathbb{F}_s=\{\lambda\in K:\lambda u\in L\}$ for all $u\in L^*$. Write $q=p^e$ and $s=p^f$ where $p$ is prime.  By Lemma~\ref{lemma:nonsimple}, $\mathcal{S}$ has line-size at most $q^2$.  Since $\mathcal{S}$ is $K$-independent,  $\mathbb{F}_s$ is a proper subfield of $K$, so $f$ is a proper divisor of $2e$.  If $q=3$, then (ii) holds by Lemma~\ref{lemma:necessary} and a computation in {\sc Magma}, so we assume that $q\neq 3$. 
We claim that $(q+1)/\gcd(q+1,4e)$ divides $s-1$. If so, then $(q,s)$ is $(7,7)$ or $(8,4)$ by Lemma~\ref{linear tech}, in which case we use {\sc Magma} and Lemma~\ref{lemma:necessary}(i) and (ii)   to obtain a contradiction. 

Since $\SL_2(q)$ and $\SU_2(q)$ are conjugate in $\GL(V)$ (see case (2) of the proof of Theorem~\ref{thm:rank3}),  we may assume  that $\SU_2(q)\unlhd G_0\leq \GammaU_2(q)$, in which case  the orbits of $G_0$ on $V^*$ consist of the non-zero isotropic vectors and the non-isotropic vectors, with sizes $(q^2-1)(q+1)$ and  $q(q-1)(q^2-1)$, respectively. Further, the  vector $v_1+\zeta v_2$ in (i) corresponds to a non-isotropic vector, so $0$ is collinear with a non-isotropic vector. Let $\f$ be the non-degenerate unitary form preserved by $\SU_2(q)$, and let 
$\{u_1,u_2\}$ be an orthonormal basis for $V$ (see~\cite[Proposition~2.3.1]{KleLie1990}).  Then $u_1$ is non-isotropic. Let $N$ be the kernel of the (multiplicative) norm map $\lambda\mapsto \lambda^{q+1}$ for $\lambda\in K^*$.
Let $D$ be the set of $g\in \GL_2(q^2)$ such that  $u_1^g=u_1$ and $u_2^g=\lambda u_2$ for some $\lambda\in N$, and observe that $D\simeq C_{q+1}$ since the norm map is surjective. It is routine to verify that $\GammaU_2(q)_{u_1}\cap \GL_2(q^2)=D$, so $\GammaU_2(q)_{u_1}/D$ is a group of order $2e$, and $|G_{0,u_1}/(G_{0,u_1}\cap D)|$ divides~$2e$.

Since the subdegrees of $G$ are  $(q^2-1)(q+1)$ and $|u_1^{G_0}|=q(q-1)(q^2-1)$, it follows that    the least common multiple of $q+1$ and $q-1$ divides $|G_0|/q(q^2-1)=(q-1)|G_{0,u_1}|$.  Thus $m:=(q+1)/\gcd(q+1,q-1)$ divides $|G_{0,u_1}|$, which divides $|G_{0,u_1}\cap D| 2e$, so $m/\gcd(m,2e)$ divides $|G_{0,u_1}\cap D|$. Now $\gcd(q+1,q-1)=\gcd(2,q-1)$, so $|G_{0,u_1}\cap D|$ is divisible by $(q+1)/\gcd(q+1,4e)$. Thus,   
for any non-isotropic vector $v\in V$, the order of $|G_{0,v}\cap \GL_2(q^2)|$ is divisible by $(q+1)/\gcd(q+1,4e)$. Moreover, $(q+1)\neq \gcd(q+1,4e)$ since $q\neq 3$, so there exists  $h\in G_{0,u_1}\cap D $ such that $h\neq 1$, in which case $u_2^h=\lambda u_2$ for some $\lambda\in N\setminus \{1\}$.

Let $L$ be the line of $\mathcal{S}$ on $0$ and $u_1$, and let $B:=L^*$. By Lemma~\ref{lemma:necessary}, $B$ is a  block of $G_0$ on the set of non-isotropic vectors.  By assumption, there exists $y\in B\setminus \langle u_1\rangle$. Write $y=\lambda_1u_1+\lambda_2u_2$ where $\lambda_1,\lambda_2\in K$. Now $\lambda_1u_1+\lambda \lambda_2u_2=y^h\in B$ by Lemma~\ref{lemma:basic}(ii), so $\delta u_2\in B$ where $\delta:=(\lambda-1)\lambda_2$ since $L$ is an $\mathbb{F}_s$-subspace of $V$.  Let $D'$ be the set of $g\in \GL_2(q^2)$ such that $u_2^g=u_2$ and $u_1^g=\alpha u_1$ for some $\alpha\in N$. Let $N'$ be the set of $\alpha\in N$ such that $u_1^g=\alpha u_1$ for some $g\in G_{0,\delta u_2}\cap D'$. Then $N'\leq N$, and $N'\simeq G_{0,\delta u_2}\cap D'=G_{0,\delta u_2}\cap \GL_2(q^2)$, so
$(q+1)/\gcd(q+1,4e)$ divides $|N'|$. 
If $\mu\in N'$, then  $\mu u_1\in B$ by Lemma~\ref{lemma:basic}(ii), so $\mu\in \mathbb{F}_s^*$. Thus $N'\leq \mathbb{F}_s^*$, and the claim follows.
\end{proof}

\begin{proof}[Proof of Proposition~\emph{\ref{prop:(S1)--(S2)bad}}] (i) This is routine. 

(ii) If $A_7\unlhd G_0$ and $(n,q)=(4,2)$, then there are no $K$-independent $G$-affine proper partial linear spaces for which $0$ and $x$ are collinear by Lemmas~\ref{lemma:necessary} and~\ref{lemma:affine} and a computation in {\sc Magma}, so we assume that $\SL_n(q)\unlhd G_0$. If $n=2$, then we are done by Proposition~\ref{prop:A4red}, so we assume that  $n\geq 3$.  Let $Y:=\langle v_1,v_2\rangle_K$ and $H:=W{:}G_{0,Y}^Y$. By Lemma~\ref{lemma:tensorred}, $\GL_2(q)\unlhd H_0$ and $H$ is transitive of rank~$3$. 

 First we claim that if $q=3$ and $H_0 \leq \GL_2(3)\langle \zeta^2,\zeta\sigma_9\rangle$, then $H_0=\GL_2(3)\langle \zeta^2,\zeta\sigma_9\rangle$. Since $H$ has rank~$3$, there exists $g\in H_0$ mapping $v_1$ to $\zeta^2 v_1$, and $g=h\zeta^{2i}(\zeta\sigma_9)^j$ for some $h\in \GL_2(3)$, $i\in \{0,1\}$ and $j\in \{0,1\}$, but this is impossible if $j=1$, so $j=0$, whence $i=1$. Since $\GL_2(3)\leq H_0$, it follows that  $\zeta^2\in H_0$. Similarly, there exists $g\in H_0$ mapping $v_1$ to $\zeta v_1$, so $\zeta\sigma_9\in H_0$, and the claim holds.

Let $\mathcal{S}:=(V,\mathcal{L})$ be a $K$-independent $G$-affine proper partial linear space for which $0$ and $x$ lie on a line $L$.  Then $\mathcal{S}$ is $\mathbb{F}_q$-independent, so by  Lemma~\ref{lemma:tensorred}(i), $\mathcal{S}\cap Y=(Y,L^H)$ is an  $H$-affine proper partial linear space. Since $\mathcal{S}\cap Y$ contains the line $L$, it is  $K$-independent, so by  Proposition~\ref{prop:A4red}  and the claim, 
$q=3$, $H_0=\GL_2(3)\langle \zeta^2,\zeta\sigma_9\rangle$ and $L=\langle x,y_1\rangle_{\mathbb{F}_3}$ or $\langle x,y_2\rangle_{\mathbb{F}_3}$. 
 Let $g\in G_0$. Then $Y^g=Y^h$ for some $h\in \GL_n(3)$. There exists $s\in \SL_n(3)$ such that $v_1^{hs}=v_1$ and $v_2^{hs}=v_2$, so  $gs\in G_{0,Y}\leq A_0$. Thus $G\leq A$. Further,  $L^{G}=L^{A}$ by Lemma~\ref{lemma:tensorred}(ii) since  $A$ is a rank~$3$ group and $A_{0,Y}^Y=H_0$. 

Conversely, suppose that $q=3$, $G\leq A$ and  $\mathcal{L}=L^A$ where   $L=\langle x,y_1\rangle_{\mathbb{F}_3}$ or $\langle x,y_2\rangle_{\mathbb{F}_3}$. 
Then $H_0\leq \GL_2(3)\langle \zeta^2,\zeta\sigma_9\rangle$, so
$H_0=\GL_2(3)\langle \zeta^2,\zeta\sigma_9\rangle$  by the claim.  Thus  $(V,\mathcal{L})$ is a $G$-affine proper partial linear space by  Lemma~\ref{lemma:tensorred}(ii) and Proposition~\ref{prop:A4red}, and it is clearly $K$-independent.

(iii) Clearly $\mathcal{S}_3$ is $K$-dependent. Since  $\zeta$ normalises $A$, it follows that $\zeta$ maps $\mathcal{L}_1$ to $\mathcal{L}_2$, so it suffices to show that $\mathcal{S}_2$ and $\mathcal{S}_3$ are isomorphic. Since $\{v_1,\ldots,v_n,\zeta v_1,\ldots,\zeta v_n\}$ is an $\mathbb{F}_3$-basis of $V$, there exists $t\in \GL_{2n}(3)$ such that $v_i^t=\zeta^2 v_i$ and $(\zeta v_i)^t=-v_i$ for $1\leq i\leq n$. Now $t$ centralises $\GL_n(3)$, and $t^{-1}\zeta\sigma_9 t=\zeta^3\sigma_9$ and $t^{-1}\zeta^2t=\zeta\sigma_9$, so $t$ normalises $A_0$. Further, $x^t=\zeta^2v_1-v_2$ and $y_2^t=v_1+\zeta^2 v_2$, so  $y_2^t=\zeta^6 x^t$, in which case $L_2^t=\langle x^t\rangle_{K}$. Thus $(L_2^A)^t=(\langle x^t\rangle_{K})^{A}=L_3^A$.
\end{proof}

In Remark~\ref{remark:lineartopls}, we observed that if $\mathcal{S}:=(\mathcal{P},\mathcal{L})$ and $\mathcal{S}':=(\mathcal{P}',\mathcal{L}')$ are isomorphic linear spaces satisfying the conditions of Lemma~\ref{lemma:linearspace}(i), then any isomorphism $\varphi:\mathcal{S}\to \mathcal{S}'$  determines isomorphisms between the proper  partial linear spaces obtained from $\mathcal{S}$ and $\mathcal{S}'$ (in the sense of  Lemma~\ref{lemma:linearspace}). In the following, we show that the converse of this observation need not hold. 

\begin{remark}
\label{remark:linearspace}
Let $V:=V_2(q^2)$ where $q$ is a prime power, let $K:=\mathbb{F}_{r}$ where $r:=q^2$, and let $G$ be the rank~$3$ affine primitive permutation group $V{:}(\GL_2(q)\circ K^*){:}\Aut(K)$ from class (S1).  Let $\{v_1,v_2\}$ be a basis of $V$ on which $\GL_2(q)$ acts naturally over $\mathbb{F}_q$, and let $W:=\langle v_1,v_2\rangle_{\mathbb{F}_q}$ and $Y:=(v_1+\zeta_{r} v_2)^{G_0}$. Let $\mathcal{L}_U:=\{\langle w\rangle_K +v: w\in W^*,v\in V\}$, $\mathcal{L}_W:=\{\lambda W+v:\lambda\in K^*,v\in V\}$ and $\mathcal{L}_Y:=\{\langle w\rangle_K +v: w\in Y, v\in V\}$. Let $\mathcal{S}_U:=(V,\mathcal{L}_U)$, $\mathcal{S}_W:=(V,\mathcal{L}_W)$ and $\mathcal{S}_Y:=(V,\mathcal{L}_Y)$. Now $\mathcal{S}_U$, $\mathcal{S}_W$ and $\mathcal{S}_Y$ are $G$-affine  proper partial linear spaces, and the collinearity relation of $\mathcal{S}_U$ and $\mathcal{S}_W$ is disjoint from the collinearity relation of $\mathcal{S}_Y$. 
 Further, as we discussed    at the beginning of this section,   $\mathcal{S}_U$ and $\mathcal{S}_W$ are the two partial linear spaces  of  Example~\ref{example:tensor} (for $m=2$).
Thus  $\mathcal{S}_U\simeq \mathcal{S}_W$  by Lemma~\ref{lemma:tensorsimple}(iv), and of course $\mathcal{S}_Y$ is isomorphic to itself. However, the linear spaces $\mathcal{S}_1:=(V,\mathcal{L}_U\cup \mathcal{L}_Y)$ and $\mathcal{S}_2:=(V,\mathcal{L}_W\cup \mathcal{L}_Y)$ (see Lemma~\ref{lemma:linearspace}) are not isomorphic  for $q\geq 3$:  $\mathcal{S}_1$ is the affine plane $\AG_2(r)$, while $\mathcal{S}_2$ is the Hall plane of order $r$ (see~\cite[\S13]{Lun1980}). 
\end{remark}

\section{Subfield class (S0)}
\label{s:(S0)}

Class (S0) is similar to class (S1): it consists of those  affine permutation groups $G$ of rank~$3$ on $V_2(q^3)$ for which $\SL_2(q)\unlhd G_0$. Let $r:=q^3$, $K:=\mathbb{F}_r$ and $\zeta:=\zeta_r$ (see~\S\ref{ss:basicsvs}). Let $\{v_1,v_2\}$ be a basis of $V_2(r)$ on which $\GL_2(q)$ acts naturally over $\mathbb{F}_q$, and let $U:=\langle v_1,v_2\rangle_{\mathbb{F}_q}$. By assumption, $G_0\leq \GammaL_2(r)$, so  $G_0\leq (\GL_2(q)\circ K^*){:}\langle \sigma_r\rangle$ where  $\sigma_r$ (see~\S\ref{ss:basicsvs}) acts on $V_2(r)$  with respect to $\{v_1,v_2\}$.
Representatives for the  orbits of  $G_0$ are $v_1$ and $v_1+\zeta v_2$, and $v_1^{G_0}=\{\lambda v : \lambda\in K^*,v\in U^*\}$.  Let $\lambda U:=\{\lambda v : v \in U\}$ for $\lambda\in K^*$.  The orbits of $\SL_2(q)$ on $v_1^{G_0}$ are $(\lambda_1U)^*,\ldots,(\lambda_{s}U)^*$, where $\lambda_1,\ldots,\lambda_{s}$ is a transversal for $\mathbb{F}_q^*$ in~$K^*$ (so  $s=q^2+q+1$).

Here is an equivalent definition for the class (S0). As above, let $r:=q^3$, $K:=\mathbb{F}_{r}$ and $\zeta:=\zeta_r$. Let $\{v_1,v_2\}$ be a basis of $U:=V_2(q)$. The extension of scalars $U\otimes K$ is a $2$-dimensional $K$-vector space with basis $\{v_1\otimes 1,v_2\otimes 1\}$ on which $\GL_2(q)\otimes \GL_1(r){:}\langle \sigma_r\rangle$ acts naturally. Now class (S0) consists of those affine permutation groups $G$ of rank~$3$ on $U\otimes K$ for which $\SL_2(q)\otimes 1\unlhd G_0$. Representatives for the orbits of $G_0$ are $v_1\otimes 1$ and $v_1\otimes 1+v_2\otimes \zeta$. Further, we may view $U\otimes K$ as a $6$-dimensional $\mathbb{F}_q$-vector space with basis $\{v_1\otimes 1,v_2\otimes 1,v_1\otimes \zeta,v_2\otimes \zeta,v_1\otimes \zeta^2,v_2\otimes \zeta^2\}$, in which case $G_0$ lies in class (T6). (Note that in class (T6), there exists $t\in \GL_3(q)$ such that $t\sigma_q$ acts as the Frobenius automorphism of $K$.) 

The orbits of $\SL_2(q)\otimes 1$ on $(v_1\otimes 1)^{G_0}$ are $U^*\otimes\lambda_1,\ldots, U^*\otimes \lambda_{s}$ where $\lambda_1,\ldots,\lambda_{s}$ is a transversal for $\mathbb{F}_q^*$ in $K^*$. If we define $\mathcal{L}$ to be the set of translations of $U\otimes \lambda_i$ for $1\leq i\leq q^2+q+1$, then $(U\otimes K,\mathcal{L})$ is one of the proper partial linear spaces described in Example \ref{example:tensor} and Lemma~\ref{lemma:tensorsimple}(iii), and it is both $K$-independent and $\mathbb{F}_q$-independent. However, if we instead define $\mathcal{L}$ to be the set of translations of $v\otimes K=\langle v\otimes 1\rangle_K$ for $v\in U^*$, then $(U\otimes K,\mathcal{L})$ is not only the other proper partial linear space described in Example~\ref{example:tensor} and Lemma~\ref{lemma:tensorsimple}(iii), but also  a partial linear space from Example~\ref{example:AG} that is $K$-dependent and  $\mathbb{F}_q$-independent.  (In the notation of Example~\ref{example:AG}, for any $G_0$ in class (S0), the partial linear space is $\mathcal{S}_1$, where $\Delta_1$ is the orbit  $\{\langle v\otimes 1\rangle_K : v\in U^*\}$ of $G_0$ on the points of $\PG_{1}(q^3)$.)

By combining these two viewpoints of class (S0), we obtain the following as an immediate consequence of Lemmas~\ref{lemma:tensorsimple} and~\ref{lemma:tensorsimpleSL}(i).

\begin{prop}
\label{prop:(S0)good}
Let $G$ be an affine permutation group of rank~$3$ on $V:=V_2(q^3)$ where $\SL_2(q)\unlhd G_0$ and $q$ is a prime power. Let $K:=\mathbb{F}_{q^3}$.
 Let $\{v_1,v_2\}$ be a basis of $V$ on which $\GL_2(q)$ acts naturally over $\mathbb{F}_q$, and let $U:=\langle v_1,v_2\rangle_{\mathbb{F}_q}$.  \begin{itemize}
\item[(i)] The only  $K$-independent  $G$-affine proper partial linear space  for which $0$ and $v_1$ are collinear  has line set  $\mathcal{L}:=\{\lambda U+v:\lambda\in K^*,v\in V\}$.
\item[(ii)] The partial linear space $(V,\mathcal{L})$ is described in Example~\emph{\ref{example:tensor}}.
\end{itemize}
\end{prop}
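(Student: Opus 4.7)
The plan is to reinterpret the hypotheses via the equivalent viewpoint developed just before the statement, so that Lemmas~\ref{lemma:tensorsimple} and~\ref{lemma:tensorsimpleSL}(i) apply directly. Concretely, I will identify $V=V_2(q^3)$ with the tensor product $U\otimes K$, where $U:=V_2(q)=\langle v_1,v_2\rangle_{\mathbb{F}_q}$ and $K:=\mathbb{F}_{q^3}$ viewed as a $3$-dimensional $\mathbb{F}_q$-vector space, under the map $\lambda_1v_1+\lambda_2v_2\mapsto v_1\otimes\lambda_1+v_2\otimes\lambda_2$. Under this identification, the hypothesis $\SL_2(q)\unlhd G_0\leq \GammaL_2(q^3)$ translates to $G_0$ lying in class~(T6), so $G_0$ stabilises the decomposition $V=U\otimes K$ and contains $\SL(U)\otimes 1$; moreover the orbit of $v_1=v_1\otimes 1$ under $G_0$ is exactly the set of simple tensors of $V$.

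Next I would apply Lemma~\ref{lemma:tensorsimple}(i) to any line $L\in\mathcal{L}_0$ containing $v_1\otimes 1$: since $\mathcal{S}(0)$ is the set of simple tensors and $v_1\otimes 1\in L^*$, either $L\subseteq U\otimes 1$, or $L\subseteq v_1\otimes K=\langle v_1\otimes 1\rangle_K$. The second alternative has to be ruled out under the $K$-independence assumption. For this I would use the fact that $G_0$ acts transitively on the set of simple tensors, so every line in $\mathcal{L}_0$ is a $G_0$-translate of $L$; if $L\subseteq u_0\otimes K$ for some simple tensor $u_0\otimes k_0$, then every line $L'\in\mathcal{L}_0$ would be contained in $u\otimes K=\langle u\otimes k\rangle_K$ for some simple tensor $u\otimes k\in (L')^*$, forcing $\mathcal{S}$ to be $K$-dependent, a contradiction. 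Thus $L\subseteq U\otimes 1$.

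With this in hand, Lemma~\ref{lemma:tensorsimpleSL}(i) (which applies since $\SL(U)\otimes 1\leq G_0$) immediately gives $\mathcal{L}_0=\{U\otimes v:v\in K^*\}$, and hence $\mathcal{L}=\{U\otimes v+x:v\in K^*,\,x\in V\}$. Translating back through the identification $U\otimes\lambda\leftrightarrow \lambda U$ shows that $\mathcal{L}=\{\lambda U+v:\lambda\in K^*,\,v\in V\}$, proving part~(i). For part~(ii), the identification also realises $(V,\mathcal{L})$ as the partial linear space $\mathcal{S}_U$ of Example~\ref{example:tensor} taken with parameters $U=V_2(q)$ and $W=V_3(q)\cong K$, so there is nothing further to check. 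There is no real obstacle here: the bulk of the work was already absorbed into Lemmas~\ref{lemma:tensorsimple} and~\ref{lemma:tensorsimpleSL}, and the only subtlety is confirming that the $K$-dependent alternative is ruled out by using the rank~$3$ transitivity of $G_0$ on simple tensors.
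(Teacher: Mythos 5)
Your proposal is correct and follows the paper's own route exactly: the paper obtains this proposition as an immediate consequence of Lemmas~\ref{lemma:tensorsimple} and~\ref{lemma:tensorsimpleSL}(i) after identifying $V_2(q^3)$ with $U\otimes K$, which is precisely the identification and the pair of lemmas you invoke. Your explicit check that the alternative $L\subseteq v_1\otimes K$ would force $K$-dependence (via transitivity of $G_0$ on $\mathcal{L}_0$), and hence that $K$-independence (which implies the $\mathbb{F}_q$-independence needed for Lemma~\ref{lemma:tensorsimpleSL}(i)) rules it out, is just the detail the paper leaves implicit.
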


Unfortunately, in contrast to class (S1), we are unable to provide a classification of the $K$-independent $G$-affine proper partial linear spaces for which $0$ is collinear with $v_1+\zeta v_2$ for groups $G$ in class (S0).  (Note that the conditions of  Theorem~\ref{thm:main}(iv)(c) hold under these assumptions.) Here the group $\GL_2(q)\circ K^*$ acts regularly on the orbit containing $v_1+\zeta v_2$, so  our usual techniques fail.  However, we can classify the $G$-affine proper partial linear spaces when $\GL_2(q)\circ K^*\leq G_0$. Since the affine permutation group with  stabiliser $\GL_2(q)\circ K^*$ is transitive of rank~$3$, any example that arises for $G_0$ also arises for $\GL_2(q)\circ K^*$, so we assume that $G_0=\GL_2(q)\circ K^*$. 

To state our classification, we need some notation. Let $F/E$ be a field extension of degree~$2$, and let $\End_E(F)$ denote the ring of $E$-endomorphisms of $F$. For 
$\varphi\in \End_E(F)$,  we denote the $2\times 2$ transformation matrix of $\varphi$ with respect to an $E$-basis $\mathcal{B}$ of $F$ by $[\varphi]_\mathcal{B}$. In other words, if $\mathcal{B}=\{b_1,b_2\}$ and  $v=\lambda_1b_1+\lambda_2 b_2$ for some $\lambda_1,\lambda_2\in E$, then  $(\lambda_1,\lambda_2)[\varphi]_{\mathcal{B}}=(\mu_1,\mu_2)$ if and only if  $v\varphi=\mu_1b_1+\mu_2b_2$ where $\mu_1,\mu_2\in E$.  Note  that in the following result, it is more convenient not to    make the distinction between  $K$-independent and $K$-dependent partial linear spaces. 

\begin{prop}
\label{prop:(S0)bad}
 Let $r:=q^3$, $K:=\mathbb{F}_r$ and $\zeta:=\zeta_r$ where $q$ is a prime power. Let $V:=K^2$ and $G:=V{:}(\GL_2(q)\circ K^*)$.  Let $x:=(1,\zeta)\in V$. Then $(V,\mathcal{L})$ is a $G$-affine proper partial linear space for which $0$ and $x$ are collinear if and only if $\mathcal{L}=L^G$  and one of the following holds.
\begin{itemize}
\item[(i)]  $L=\{\lambda x : \lambda\in F\}$,   where $F$ is a subfield of $K$ and $|F|>2$.
\item[(ii)]  $L=\{x \diag(\lambda,\lambda^\tau)^g: \lambda\in F\}$, where $F$ is a subfield of $\mathbb{F}_q$,  $\tau$ is a non-trivial element of $ \Aut(F)$, and $g\in\GL_2(q)$.
\item[(iii)] $L=\{x[\varphi_\lambda]_\mathcal{B}^g : \lambda\in F\}$, where $F$ is a field with $[F: F\cap \mathbb{F}_q]=2$,  the map  $\varphi_\lambda\in \End_{F\cap \mathbb{F}_q}(F)$ is defined by $v\mapsto \lambda v$ for all $v\in F$,   the set $\mathcal{B}$ is an $(F\cap \mathbb{F}_q)$-basis of $F$,  and $g\in \GL_2(q)$.
\end{itemize}
\end{prop}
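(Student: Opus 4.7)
Paragraph 1: I would begin by verifying sufficiency. In each of (i), (ii), (iii), $L$ is the image of a field $F$ under an $(F\cap\mathbb{F}_p)$-linear injection into $V$, so $L$ is automatically an $\mathbb{F}_p$-subspace of $V$ containing $0$ and $x$. Explicit subgroups of $G_0$ act regularly on $L\setminus\{0\}$: in (i) the scalar subgroup $F^*\leq K^*\leq G_0$; in (ii) the twisted diagonal subgroup $\{\diag(\mu,\mu^\tau)^g:\mu\in F^*\}\leq GL_2(q)^g\leq G_0$; in (iii) the ``Singer-type'' subgroup $\{[\varphi_\mu]_\mathcal{B}^g:\mu\in F^*\}\leq GL_2(q)^g\leq G_0$ coming from the regular representation of $F^*$ on its underlying $(F\cap\mathbb{F}_q)$-vector space. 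Each such subgroup lies in $G_{0,L}$, so $B:=L\setminus\{0\}$ is a block of $G_0$ on $x^{G_0}$. Since $L$ is an $\mathbb{F}_p$-subspace, Lemma~\ref{lemma:transitive} gives $G_L$ transitive on $L$, and Lemma~\ref{lemma:sufficient} delivers the $G$-affine proper partial linear space $(V,L^G)$.

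Paragraph 2: For necessity, let $(V,\mathcal{L})$ be such a partial linear space and let $L\in\mathcal{L}$ with $0,x\in L$. Because $-I\in GL_2(q)\leq G_0$ satisfies $v\cdot(-I)=-v$ for all $v\in V$, Lemma~\ref{lemma:affine}(i) forces every line to be an affine $\mathbb{F}_p$-subspace; in particular $L$ is an $\mathbb{F}_p$-subspace of $V=K^2$. A short coordinate computation, using that $\{1,\zeta,\zeta^2\}$ is $\mathbb{F}_q$-linearly independent in $K=\mathbb{F}_{q^3}$, shows that the stabilizer of $x$ in $G_0=GL_2(q)\circ K^*$ is trivial, so $G_0$ acts regularly on $\Omega:=x^{G_0}$. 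Hence Lemma~\ref{lemma:necessary}, combined with regularity, yields a unique subgroup $H:=G_{0,L}\leq G_0$ with $L\setminus\{0\}=x^H$ and $H$ acting regularly on $L\setminus\{0\}$; equivalently $|H|=|L|-1$.

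Paragraph 3: Since $H$ acts $K$-linearly on $V$ (as $G_0\leq GL_2(K)$) and stabilizes $L$, the $K$-span $\langle L\rangle_K$ is $H$-invariant and nonzero, hence equal to $Kx$ or to $V$. In the first subcase, identifying $Kx$ with $K$ via $\lambda x\leftrightarrow\lambda$, the group $H$ embeds into $K^*$ as $F^*:=\{\lambda\in K^*:\lambda x\in L\}$; since $L$ is additively closed and $F^*$ is multiplicatively closed, $F:=F^*\cup\{0\}$ is a subfield of $K$, and $|F|>2$ because the line $L$ has at least three points. This yields case (i). In the second subcase, $L$ is an $\mathbb{F}_p$-subspace $K$-spanning $V$, and the regular action of $H$ on $L\setminus\{0\}$ endows $L$ with a nearfield structure whose multiplicative group is $H$. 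Pulling $H$ back to $\widetilde H\leq GL_2(q)\times K^*$, the projection to $GL_2(q)$ is non-scalar (otherwise $L\subseteq Kx$) and acts fixed-point-freely on $L\setminus\{0\}$, so it sits in a maximal torus of $GL_2(q)$. Up to $GL_2(q)$-conjugation (absorbed by the parameter $g$) that torus is either split or non-split: the split case gives a diagonal projection $\diag(\mu_1(\mu),\mu_2(\mu))$, and matching it with the $K^*$-coordinate of $\widetilde H$ so that $x^H\cup\{0\}$ is additively closed and $\mathbb{F}_p$-rational forces $\mu_2(\mu)=\mu_1(\mu)^\tau$ for a Galois automorphism $\tau$ of a subfield $F\subseteq\mathbb{F}_q$, which is case (ii); the non-split case identifies the projection with the regular representation $[\varphi_\mu]_\mathcal{B}$ of the multiplicative group of a field $F$ with $[F:F\cap\mathbb{F}_q]=2$, which is case (iii).

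Paragraph 4: The hard part is the torus analysis in Paragraph 3. Two structural features make it tractable: first, $G_0=GL_2(q)\circ K^*$ contains no field automorphisms, so $H$ is purely $K$-linear, which keeps the analysis linear-algebraic and sidesteps the sporadic exceptions to Zassenhaus' classification of sharply transitive groups (Section~\ref{ss:nearfield}); second, the requirement that $x^H\cup\{0\}$ be an $\mathbb{F}_p$-subspace, together with the explicit form of $x=(1,\zeta)$, converts into a matrix equation on the entries of $\diag(\mu,\mu^\tau)$ or $[\varphi_\mu]_\mathcal{B}$ whose solution space pins down the Galois datum in (ii) and the quadratic-extension datum in (iii). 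Verifying that the free parameter $g\in GL_2(q)$ accounts for the $GL_2(q)$-conjugacy of the torus, and that no mixed or intermediate embedding of $H$ survives the additive-closure test, will be the main computational burden.
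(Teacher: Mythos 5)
Your sufficiency argument and the first half of your necessity argument are sound and match the paper: lines are $\mathbb{F}_p$-subspaces by Lemma~\ref{lemma:affine} since $-1\in\GL_2(q)$, the stabiliser $G_{0,x}$ is trivial so $G_0$ is regular on $x^{G_0}$, hence $H:=G_{0,L}$ is regular on $L^*$ with $|H|=|L|-1$, and the subcase $L\subseteq Kx$ does give (i). The gap is in the subcase $\langle L\rangle_K=V$, and it is twofold. First, you never show that the $K^*$-components of the elements of $H$ lie in $\mathbb{F}_q^*$, i.e.\ that $H$ can actually be placed inside $\GL_2(q)$ as cases (ii) and (iii) require; a priori $h=\lambda g$ with $\lambda\in K^*\setminus\mathbb{F}_q^*$, and then $x^H\cup\{0\}$ is not of the form described in (ii) or (iii). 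The paper rules this out by a genuine arithmetic argument: $(H\cap K^*)\cup\{0\}$ is a field of order $p^f$, the order $p^m-1$ of $H$ divides $(p^f-1)(p^{2e}-1)$, a primitive-prime-divisor (Zsigmondy) argument forces $m\mid 2e$, and an order computation then shows each $\lambda$ has order dividing $q-1$. Nothing in your sketch substitutes for this.

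Second, and more seriously, the inference ``acts fixed-point-freely \ldots\ so it sits in a maximal torus of $\GL_2(q)$'' is false as a general principle: $Q_8\leq\GL_2(3)$, $\SL_2(3)\leq\GL_2(5)$, $2\nonsplit S_4^-\leq\GL_2(7)$ and $\SL_2(5)\leq\GL_2(11)$ all act fixed-point-freely (indeed regularly) on the nonzero vectors yet are non-abelian, so lie in no torus. These are exactly the Zassenhaus exceptions you propose to ``sidestep'', and your stated reason for dismissing them --- that $G_0$ contains no field automorphisms --- is irrelevant, since all of these exceptional groups are $\mathbb{F}_q$-linear. (Moreover, the projection of $\widetilde H$ to $\GL_2(q)$ does not act on $L$ at all, so ``fixed-point-free on $L\setminus\{0\}$'' is not even well posed for it.) What actually eliminates these possibilities in the paper is a direct proof that $H\cup\{0\}$ is closed under addition, hence a field by Wedderburn's theorem, hence cyclic: one shows $\det(1+h)\neq 0$ for every $h\in H\setminus\{-1\}$ by diagonalising a putative $h$ with eigenvalue $-1$ over $\GL_2(q)$ and deriving a contradiction from the block property of $L^*$ and the $\mathbb{F}_p$-linearity of $L$. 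This determinant argument is the key missing idea; once $H\cup\{0\}$ is known to be a field inside $\M_2(q)$, your split/non-split dichotomy (the paper's Lemma~\ref{lemma:(S0)badmore}) does correctly yield (ii) and (iii).
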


 Before proving Proposition~\ref{prop:(S0)bad},  we make some remarks and prove some lemmas.  

\begin{remark}
\label{remark:S0exceptions}
Biliotti et al.~\cite{BilMonFra2015} consider the case where $G_0=\GL_2(q)\circ K^*$ for $2$-$(v,k,1)$ designs in~\cite[Theorem 32]{BilMonFra2015}, but they mistakenly assert that their result applies to all rank~$3$ groups $G$ in class (S0) with $K^*\leq G_0$ by stating  that any such group must contain $\GL_2(q)\circ K^*$ (see~\cite[p.148]{BilMonFra2015}). This is not true, as we now see. Let $K$ be a field of order $r:=q^3$ where $q=p^e$ and $p$ is prime. Let 
$S:=\SL_2(q)\circ K^*$ and $T:=\GL_2(q)\circ K^*$. Now $T$ is regular on $(1,\zeta_r)^{T}$ and $S$ has index $(2,q-1)$ in $T$, so $S$ has $(2,q-1)$ orbits on $(1,\zeta_r)^{T}$, say $X_1,\ldots,X_{(2,q-1)}$. Note also that $(1,0)^S=(1,0)^T$.  Let $g:=\diag(\zeta_q,1)$ and $\sigma:=\sigma_r$.
Then  $T=S\langle g\rangle$ and  $S\langle g,\sigma\rangle/S=\langle Sg\rangle\times \langle S\sigma\rangle\simeq C_{(2,q-1)}\times C_{3e}$. Now $G_0$ is the point stabiliser of a rank $3$ group in class (S0) that contains $K^*$ but not $T$ precisely when the following hold: $q$ is odd;  $S\leq G_0\leq S\langle g,\sigma\rangle$; $g\notin G_0$; and $X_1^h=X_2$ for some $h\in G_0\cap \langle g,\sigma\rangle$. Suppose then that $q$ is odd. The subgroups of $ S\langle g,\sigma\rangle/S$  not containing $Sg$ are cyclic, so it is routine to determine the possibilities for $G_0$: $e$ is even, and either   
$X_1^\sigma=X_1$, in which case $G_0$ is $S\langle g\tau\rangle$ for any   $\tau\in\langle\sigma\rangle$   with even order, or  $X_1^\sigma=X_2$, in which case  $G_0$ is  $S\langle g\tau\rangle$ for any  $\tau\in\langle\sigma^2\rangle$  with even order, or  $S\langle \tau\rangle$ for  any $\tau\in\langle\sigma\rangle\setminus \langle\sigma^2\rangle$. 
\end{remark}

\begin{remark}
\label{remark:S0bad}
Let $K$ be a field of order $q^3$ where $q=p^e$ and $p$ is prime. Let $V:=K^2$ and $G:=V{:}(\GL_2(q)\circ K^*)$.
Note that the fields $F$ for which $[F:F\cap \mathbb{F}_q]=2$ are precisely the fields of order $p^{2f}$ where $f$ divides $e$ but $2f$ does not divide $e$. Let $\mathcal{S}$ be any $G$-affine proper partial linear space that arises from  Proposition~\ref{prop:(S0)bad}(iii) with respect to the field $F$, where $|F|=p^{2f}$.  Now $F$ is not a subfield of $K$. Hence, if  $f< e$, then  $\mathcal{S}$ 
cannot be obtained from any $G$-affine $2$-$(q^6,p^{2f},1)$ design  by Propositions~\ref{prop:dep} and~\ref{prop:(S0)good} (in the sense of Remark~\ref{remark:PLSfromLinear} and Lemma~\ref{lemma:linearspace}).
\end{remark}
 
Next we have a lemma that applies to all groups in class (S0).

\begin{lemma}
\label{lemma:A5add}
Let $G$ be an affine permutation group of rank~$3$ on $V:=V_2(q^3)$ where $\SL_2(q)\unlhd G_0$ and $q$ is a power of a prime $p$. Let $r:=q^3$. Let $\{v_1,v_2\}$ be a basis of $V$ on which $\GL_2(q)$ acts naturally over $\mathbb{F}_q$.  Let  $(V,\mathcal{L})$ be a   $G$-affine proper partial linear space for which $0$ and $v_1+\zeta_{r} v_2$ are collinear.  If $L\in \mathcal{L}_0$, then $L$ is an $\mathbb{F}_p$-subspace of $V$ and $|L|$ divides~$q^3$.
\end{lemma}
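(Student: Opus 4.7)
The plan is to verify the two assertions separately. For the first, every line of $\mathcal{S}$ is an affine subspace of $V$ by Lemma~\ref{lemma:affine}: when $q$ is odd, the central involution $-I$ of $\SL_2(q) \leq G_0$ acts on $V$ by $v \mapsto -v$, so the hypothesis $v^g = -v$ for all $v \in L$ is satisfied for any $L \in \mathcal{L}_0$ with $g := -I$, verifying condition~(i); when $q$ is even, condition~(ii) is automatic. Since $0 \in L$, it follows that $L$ is an $\mathbb{F}_p$-subspace of $V$, and in particular $|L|$ is a power of $p$.

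For the divisibility, the idea is to consider the $K$-linear projection $\pi \colon V \to K$, $\alpha v_1 + \beta v_2 \mapsto \alpha$, and to show that $\pi$ restricts to an injection on $L$. If so, then $|L| \leq |K| = q^3$, and since both $|L|$ and $q^3$ are powers of $p$, it follows that $|L|$ divides $q^3$. To establish injectivity, recall from the opening of this section that the two orbits of $G_0$ on $V^*$ are $v_1^{G_0} = \{\lambda u : \lambda \in K^*,\ u \in U^*\}$, where $U := \langle v_1, v_2\rangle_{\mathbb{F}_q}$, and $x^{G_0}$, where $x := v_1 + \zeta_r v_2$; by hypothesis, $\mathcal{S}(0) = x^{G_0}$. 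In particular, $\langle v_2\rangle_K^* \subseteq v_1^{G_0}$, so $L^* \cap \langle v_2\rangle_K = \varnothing$. Now if distinct $y, y' \in L$ satisfy $\pi(y) = \pi(y')$, then $y - y'$ is a nonzero element of $\langle v_2\rangle_K$ and hence lies in $v_1^{G_0}$; but if both $y$ and $y'$ are nonzero, Lemma~\ref{lemma:basic}(i) forces $y - y' \in x^{G_0}$, contradicting the disjointness of the two $G_0$-orbits, while if one of $y, y'$ equals $0$, the other would lie in $L^* \cap \langle v_2\rangle_K = \varnothing$, again a contradiction.

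I do not anticipate a serious obstacle: the argument boils down to a single application of Lemma~\ref{lemma:basic}(i) after choosing the right projection, together with a small amount of bookkeeping to dispose of the case in which one of the colliding points happens to be $0$.
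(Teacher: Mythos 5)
Your proof is correct and follows essentially the same route as the paper: both parts are established by invoking Lemma~\ref{lemma:affine} via the central involution of $\SL_2(q)$, and then by showing that the first-coordinate projection is injective on $L$ because a nonzero difference of two line points with equal first coordinate would lie in $\langle v_2\rangle_{\mathbb{F}_r}^*\subseteq v_1^{G_0}$, contradicting the fact that differences of collinear points lie in $\mathcal{S}(0)=x^{G_0}$. The only cosmetic difference is that you derive the injectivity from Lemma~\ref{lemma:basic}(i) (with a separate check when one of the points is $0$), whereas the paper subtracts directly inside the $\mathbb{F}_p$-subspace $L$; both yield $|L|\leq q^3$ and hence divisibility, since $|L|$ and $q^3$ are powers of $p$.
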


\begin{proof}
Let $L\in \mathcal{L}_0$. Then $L$ is an $\mathbb{F}_p$-subspace of $V$ by Lemma~\ref{lemma:affine} since $-1\in \SL_2(q)$.
 If $\lambda v_1 +\mu v_2,\lambda v_1+\delta v_2\in L^*$ for some $\lambda,\mu,\delta\in  \mathbb{F}_r^*$, then $(\mu-\delta)v_2\in L$, so $\mu=\delta$. Thus $|L|\leq q^3$, and since $L$ is an $\mathbb{F}_p$-vector space, $|L|$ divides $q^3$.
\end{proof}

In the following, for a field of size $s$, we denote the ring of $2\times 2$ matrices over $\mathbb{F}_s$ by $\M_2(s)$. Note that if $F$ is a field and $\varphi:F\to \M_2(s)$ is an injective ring homomorphism, then $(F\varphi)^*\leq \GL_2(s)$.

\begin{lemma}
\label{lemma:(S0)bad}
 Let $r:=q^3$, $K:=\mathbb{F}_r$ and $\zeta:=\zeta_r$, where $q$ is a prime power. Let $V:=K^2$ and $G:=V{:}(\GL_2(q)\circ K^*)$.  Let $x:=(1,\zeta)\in V$. The following are equivalent.
\begin{itemize}
\item[(i)] $(V,\mathcal{L})$ is a $G$-affine proper partial linear space for which $0$ and $x$ are collinear.
\item[(ii)] $\mathcal{L}=L^G$ where $L=\{x(\lambda\varphi): \lambda\in F\}$ for some field $F$  with $|F|>2$ and injective ring homomorphism $\varphi:F\to \M_2(r)$ such that $(F\varphi)^*\leq \GL_2(q)$ or $K^*$.
\end{itemize}
\end{lemma}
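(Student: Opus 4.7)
The approach is to translate the problem into algebra inside $\M_2(r)$ by exploiting that $G_0 = T := \GL_2(q) \circ K^*$ acts regularly on $Y := x^{G_0}$. Viewing $T$ as a subset of $\M_2(r)$, the map $m \mapsto xm$ is then a bijection between $M := T \cup \{0\}$ and $Y \cup \{0\}$, and this bijection will let me pull lines through $0$ and $x$ back to subsets of $\M_2(r)$ on which additive structure is visible.

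For (ii) $\Rightarrow$ (i), I would take $F$ and $\varphi$ as given and set $L := \{x(\lambda\varphi) : \lambda \in F\}$. Since $\lambda \mapsto x(\lambda\varphi)$ is additive, $L$ is an $\mathbb{F}_p$-subspace of $V$, so Lemma~\ref{lemma:transitive} gives that $G_L$ is transitive on $L$. Under the bijection above, $L^*$ corresponds to the subgroup $(F\varphi)^* \leq T$ and is therefore a block of $G_0$ on $Y$; this block is non-trivial because $|F| > 2$ and $F^*$, being cyclic, is a proper subgroup of the non-cyclic group $T$. Lemma~\ref{lemma:sufficient} then yields a $G$-affine proper partial linear space, and $L$ contains both $0 = x(0\varphi)$ and $x = x(1\varphi)$.

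For (i) $\Rightarrow$ (ii), Lemma~\ref{lemma:A5add} gives that $L$ is an $\mathbb{F}_p$-subspace of $V$, and Lemma~\ref{lemma:necessary}(i) gives that $L^*$ is a block of $G_0$ on $Y$. Pulling $L$ back along the bijection produces a subset $M' \subseteq M$ containing $0$ with $M'^*$ a subgroup of $T$ (blocks through $x$ correspond to subgroups of $T$ in the regular action). The identity $xA + xB = x(A + B)$, together with injectivity of the bijection, transfers the additivity of $L$ to closure of $M'$ under matrix addition in $\M_2(r)$. Hence $M'$ is a finite $\mathbb{F}_p$-subalgebra of $\M_2(r)$ whose non-zero elements are all invertible, so by Wedderburn's theorem $M'$ is a field $F$ with $|F| > 2$, and we can take $\varphi$ to be the inclusion $F \hookrightarrow \M_2(r)$.

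The main obstacle is showing $F^* \leq \GL_2(q)$ or $F^* \leq K^*$. Consider the projection $\pi : T \to K^*/\mathbb{F}_q^*$ sending $g\lambda \mapsto \lambda \mathbb{F}_q^*$, whose kernel is $\GL_2(q)$. If $\pi|_{F^*}$ is trivial then $F^* \leq \GL_2(q)$, so assume some $A \in F^*$ has $\pi(A) \neq 1$ and write $A = g\lambda$ with $\lambda \notin \mathbb{F}_q^*$. Setting aside the harmless case $A = -I$, additive closure of $F$ forces $A + I \in T$, and comparing entries (all of which must simultaneously lie in $\mu \mathbb{F}_q$ for some $\mu \in K^*$) will force $g$ to be a scalar matrix, whence $A \in K^*$. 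Multiplying by any $B \in F^* \cap \GL_2(q)$ and applying the same conclusion to $BA$ (which still satisfies $\pi(BA) \neq 1$) then forces $B$ to be scalar, giving $F^* \leq K^*$. The delicate part of this entry-level case analysis is the subcase in which $g$ is diagonal but non-scalar, where one extracts a short linear identity forcing $\lambda \in \mathbb{F}_q$, contradicting the choice of $A$.
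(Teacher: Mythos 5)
Your (ii)~$\Rightarrow$~(i) direction and the overall framing (regularity of $G_0$ on $x^{G_0}$, the block--subgroup correspondence, Wedderburn) match the paper, but there is a genuine gap at the heart of (i)~$\Rightarrow$~(ii): the claim that additivity of $L$ ``transfers to closure of $M'$ under matrix addition.'' The evaluation map $m\mapsto xm$ is injective on $T\cup\{0\}$ where $T:=\GL_2(q)\circ K^*$, but on all of $\M_2(r)$ it is $K$-linear with a $2$-dimensional kernel, and $T\cup\{0\}$ is \emph{not} closed under addition (e.g.\ $g+\lambda I\notin T\cup\{0\}$ for non-scalar $g\in\GL_2(q)$ and $\lambda\in K^*\setminus\mathbb{F}_q^*$). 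So from $x(A+B)=xA+xB\in L$ you may only conclude that $x(A+B)=xC$ for some $C\in M'$; you cannot conclude $A+B=C$, because $A+B$ need not lie in the domain where the map is injective. Establishing $A+B\in T$ is precisely the content of the paper's proof: it first shows $H:=G_{0,L^*}$ lies in $\GL_2(q)$ or $K^*$ (via a Zsigmondy primitive-prime-divisor and order argument), then proves $\det(1+h)\neq 0$ for all $h\in H\setminus\{-1\}$ (an eigenvalue argument that uses the block property of a \emph{translate} $B^g$ of the block, not just $B$ itself), and only then deduces additive closure from semiregularity of $\GL_2(q)$ on $x^{G_0}$. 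Your proposal omits the $\det(1+h)\neq 0$ step entirely, and without it the Wedderburn argument never gets off the ground.

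The gap also creates a circularity in your ordering. You prove the field structure first and the containment $F^*\leq\GL_2(q)$ or $K^*$ second, but your containment argument itself invokes additive closure (``additive closure of $F$ forces $A+I\in T$''), while additive closure -- at least by any route resembling the paper's -- requires the containment first. That said, your projection-and-entry-comparison argument for the dichotomy (showing that $A+I\in T$ with $\pi(A)\neq 1$ forces $g$ scalar, then transporting this to all of $F^*$) is correct as far as it goes and is a genuinely more elementary alternative to the paper's Zsigmondy computation; if you can independently establish that $H\leq\GL_2(q)$ or $K^*$ and that $\det(h_1+h_2)\neq 0$ for $h_1,h_2\in H$ with $h_1\neq -h_2$, the rest of your outline can be repaired.
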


\begin{proof}
Let $(V,\mathcal{L})$ be a $G$-affine proper partial linear space for which $0$ and $x$ are collinear. Let $L$ be the line containing $0$ and $x$. Since $G$ is transitive of rank~$3$ on $V$, the line set $\mathcal{L}=L^G$. By Lemma~\ref{lemma:A5add}, $L$ is an $\mathbb{F}_p$-subspace of $V$ and $|L|=p^m$ for some $m\leq 3e$ where $q=p^e$ and $p$ is prime.  Let $B:=L^*$. By Lemma~\ref{lemma:necessary}, $B$ is a block of $G_0$ on $x^{G_0}$. Let $H:=G_{0,B}$. Note that $|H|=p^m-1$ since $G_0$ is regular on $x^{G_0}$. In order to establish (ii), since $B=\{x^h :h \in H\}$, it suffices to prove that $H\cup\{0\}$ is a field (as a subring of $\M_2(r)$) and that $H\leq \GL_2(q)$ or $K^*$.

Observe that $H\cap K^*=\{\lambda\in K^* : \lambda x \in B\}$, for if $\lambda\in K^*$ and $\lambda x\in B$, then $x^\lambda\in B^\lambda\cap B$, and since $\lambda\in G_0$, it follows that $B^\lambda=B$, so $\lambda\in H$.   Since $L$ is closed under addition, it follows that $(H\cap K^*)\cup \{0\}$ is closed under addition. Clearly $H\cap K^*$ is
a subgroup of the abelian group  $K^*$, 
  so $(H\cap K^*)\cup \{0\}$  is a field and $|H\cap K^*|=p^f-1$ for some $f$ that divides $3e$. Since  $|H\cap K^*|$ divides $|H|$, it follows that $f$ divides $m$. If $f=m$, then $H\leq K^*$, in which case  $H\cup \{0\}$ is a field and (ii) holds, so we may assume that $f<m$.

The group $H/(H\cap K^*)$ is isomorphic to a subgroup of $\GL_2(q)/(\GL_2(q)\cap K^*)$, which has order $q(q^2-1)$, so $p^m-1$ divides $(p^f-1)(p^{2e}-1)$. We claim that $m$ divides $2e$. We may assume that $m>2$. If the pair $(p,m)\neq (2,6)$, then $p^m-1$ has a primitive prime divisor, say $t$ (see~\cite{Zsi1892}  or~\cite[Theorem 5.2.14]{KleLie1990}). Now $t$ divides $(p^f-1)(p^{2e}-1)$, but $f<m$, so $t$ divides $p^{2e}-1$.  Since $p$ has order $m$ in $(\mathbb{Z}/t\mathbb{Z})^*$, we conclude that  $m$ divides $2e$, as desired. Otherwise, $(p,m)=(2,6)$. Now $f\in \{1,2,3\}$, and $63$ divides $(2^f-1)(4^e-1)$. If $f=1$ or $2$, then $7$ divides $4^e-1$, and $4$ has order $3$ in $(\mathbb{Z}/7\mathbb{Z})^*$, so $3$ divides $e$. Thus $m$ divides $2e$. If instead  $f=3$, then $9$ divides $4^e-1$, and $4$ has order $3$ in $(\mathbb{Z}/9\mathbb{Z})^*$, so $m$ divides $2e$. Hence we have established the claim.

Next we claim that $H\leq \GL_2(q)$. Since $m$ divides $2e$, the order of $H$ divides $q^2-1$. Let $h\in H$. Now $h=\lambda g$ for some $g\in \GL_2(q)$ and $\lambda\in K^*$. Since $h$ has order dividing $q^2-1$, it follows that $g^{q^2-1}\lambda^{q^2-1}=1$.  Since $g\in \GL_2(q)$, either  $g$ has one Jordan block and the order of $g$ divides $p(q-1)$, or  $g$ has two Jordan blocks (over an extension field of $\mathbb{F}_q$ with degree at most $2$) and the order of $g$ divides $q^2-1$.  Thus $\lambda^{p(q^2-1)}=1$, so the order of $\lambda$ divides  $\gcd(p(q^2-1),q^3-1)=q-1$. Hence $\lambda\in \GL_2(q)$, and the claim follows.

It remains to show that $H\cup \{0\}$ is a field.  To do so, it suffices to prove that $\det(1+h)\neq 0$ for all $h\in H\setminus \{-1\}$. Indeed, suppose that this statement holds, and let $h_1,h_2\in H$. We claim that $h_1+h_2\in H\cup \{0\}$, in which case it will follow that $H\cup \{0\}$ is a field since  $H$ is a subgroup of $\GL_2(q)$ and any finite division ring is a field by Wedderburn's theorem. If $h_1+h_2=0$, then we are done, so assume otherwise. Now $h_1^{-1}h_2\neq -1$, so   $\det(1+h_1^{-1}h_2)\neq 0$, whence $\det(h_1+h_2)\neq 0$.  Since $H\leq\GL_2(q)$, it follows that $h_1+h_2\in \GL_2(q)$. Now $x^{h_1+h_2}\neq 0$, and $L$ is closed under addition, so   $x^{h_1+h_2}=x^{h_1}+x^{h_2}\in B$. Thus there exists $k\in H$ such that $x^{h_1+h_2}=x^k$, but $\GL_2(q)$ is semiregular on $x^{G_0}$, so $h_1+h_2=k\in H$, as desired.

Let $h\in H\setminus \{-1\}$, and suppose for a contradiction that $\det(1+h)=0$. Let  $\lambda$ and $\mu$ be the eigenvalues of $h$ in some field extension of $\mathbb{F}_q$.  Since $|H|=p^m-1$,  the order of $h$ is not divisible by $p$, so $h$ has two Jordan blocks. Thus $h^g=\diag(\lambda,\mu)$ for some $g\in \GL_2(\mathbb{F}_q(\lambda))$. Now $(1+h)^g=1+h^g$, so $\det(1+h^g)=0$. Thus $\lambda=-1$ or $\mu=-1$. Without loss of generality, we may assume that $\lambda=-1$. In particular,  $\mu\in \mathbb{F}_q^*$, so  $g\in \GL_2(q)$.  Let $y:=x^g$, and write $y=(\lambda_1,\lambda_2)$ where $\lambda_1,\lambda_2\in K^*$. Now $B^g$ is a block of $G_0$ containing $y$, and $h^g\in G_{0,B^g}$, so $(-\lambda_1 ,\mu \lambda_2)\in B^g$.  However, $L^g$ is an $\mathbb{F}_p$-subspace of $V$, so  $(-\lambda_1 ,-\lambda_2 )\in B^g$, but then $(0,\lambda_2(\mu +1) )\in L^g$. Since $B^g\subseteq x^{G_0}$, it follows that $\mu=-1$, but then $h=-1$, a contradiction. Thus we have established~(ii).

Conversely, suppose that $\mathcal{L}=L^G$ where $L=\{x(\lambda\varphi): \lambda\in F\}$ for some field $F$ with $|F|>2$ and injective ring homomorphism $\varphi:F\to \M_2(r)$ such that $(F\varphi)^*\leq \GL_2(q)$ or $K^*$. Let $H:=(F\varphi)^*$ and $B:=L^*=x^H$. Since $G_0$ acts regularly on $x^{G_0}$, it follows that $B$ is a block of $G_0$, and clearly  $L$ is an $\mathbb{F}_p$-subspace of $V$, so (i) holds by  Lemmas~\ref{lemma:transitive} and~\ref{lemma:sufficient}.
\end{proof}

Now we consider the possibilities for the field $F$ and the injective  ring homomorphism $\varphi$ in the case where $\varphi:F\to \M_2(q)$. There are two natural ways of defining such a ring homomorphism. First, let $F$ be a subfield of $\mathbb{F}_q$, let $\tau\in \Aut(F)$ and define $\varphi:F\to \M_2(q)$ by $\lambda\mapsto \diag(\lambda,\lambda^\tau)$. Clearly $\varphi$ is an injective ring homomorphism. Second, let $E$ be a subfield of $\mathbb{F}_q$, and let $F$ be a field extension of $E$ of degree $2$. Define $\varphi:F\to \End_E(F)$ by $\lambda\mapsto \varphi_\lambda $ where $\varphi_\lambda: v\mapsto \lambda v$ for all $v\in F$. This is an injective ring homomorphism, and by fixing a basis for the $E$-vector space $F$, we see that $\End_E(F)\simeq \M_2(E)$, and $\M_2(E)$ is a subring of $\M_2(q)$, so $\varphi$ naturally gives us an injective ring homomorphism from $F$ to $\M_2(q)$. It is well known that, up to conjugacy in $\GL_2(q)$,  there are no other ways of embedding $F$ in $\M_2(q)$. We state this result explicitly below, and since we could not find a reference for it in this form, we prove it  for completeness.

\begin{lemma}
\label{lemma:(S0)badmore}
Let $F$ be a field, and let $\varphi:F\to \M_2(q)$ be an injective ring homomorphism where $q$ is a prime power. Let $H:=(F\varphi)^*\leq \GL_2(q)$, and let $H=\langle h\rangle$. Let $\varepsilon$ be an eigenvalue of $h$ in some field extension of $\mathbb{F}_q$. Then $F^*\simeq \langle \varepsilon\rangle$, and we may view $F$ as a subfield of $\mathbb{F}_q(\varepsilon)$. Further, exactly one of the following holds, where $E:=F\cap \mathbb{F}_q$.
\begin{itemize}
\item[(i)] $\varepsilon\in\mathbb{F}_q$, and $h$ is conjugate in $\GL_2(q)$ to $\diag(\varepsilon,\varepsilon^\tau)$  for some $\tau\in \Aut(F)$.
\item[(ii)]  $[F:E]=2$, and  $h$ is conjugate in $\GL_2(q)$ to $[\varphi_\varepsilon]_{\mathcal{B}}$,    
where  $\varphi_\varepsilon\in \End_E(F)$ is defined by $  v\mapsto \varepsilon  v$ for all $v\in F$, and $\mathcal{B}:=\{1,\varepsilon\}$ is an $E$-basis of $F$.
\end{itemize}
\end{lemma}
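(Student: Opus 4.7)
The plan is to reduce everything to standard facts about finite fields and matrix canonical forms. Since $F$ embeds into $\GL_2(q) \cup \{0\}$ and $H = (F\varphi)^*$ is finite cyclic generated by $h$, the field $F$ is finite, so $F^* = \langle g\rangle$ for some generator $g$, and setting $h = g\varphi$ we have $\mathrm{ord}(h) = |F|-1$. Since $\varphi$ is a unital ring homomorphism it is automatically an $\mathbb{F}_p$-algebra homomorphism, and because $F = \mathbb{F}_p[g]$, we get $F\varphi = \mathbb{F}_p[h]$. The minimum polynomial of $h$ over $\mathbb{F}_p$ therefore equals the minimum polynomial $\mu(x)\in\mathbb{F}_p[x]$ of $g$, which is irreducible of degree $[F:\mathbb{F}_p]$.

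Now any eigenvalue $\varepsilon$ of $h$ is a root of $\mu$, so $\varepsilon = g^{p^i}$ for some $i$, i.e.\ $\varepsilon$ is a Frobenius conjugate of $g$. Since $|F|-1$ is coprime to $p$, the elements $g$ and $g^{p^i}$ have the same order $|F|-1$, giving $F^* \simeq \langle\varepsilon\rangle$ and $F \simeq \mathbb{F}_p(\varepsilon)\subseteq\mathbb{F}_q(\varepsilon)$; from now on we identify $F$ with $\mathbb{F}_p(\varepsilon)$ and set $E := F\cap\mathbb{F}_q$. The argument then splits on whether $\varepsilon \in \mathbb{F}_q$, and these alternatives are clearly mutually exclusive.

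Suppose first $\varepsilon \in \mathbb{F}_q$, so $F = E$. Since $\mathrm{ord}(h)$ is coprime to $p$, $h$ is semisimple, hence diagonalisable over $\mathbb{F}_q$: indeed the $\mathbb{F}_q$-minimum polynomial of $h$ is a separable divisor of $\mu$ splitting over $\mathbb{F}_q$. (This also rules out the Jordan-block possibility, for a Jordan block with eigenvalue $\varepsilon$ would have order $p\cdot\mathrm{ord}(\varepsilon)$, conflicting with $p \nmid \mathrm{ord}(h)$.) Thus $h$ is conjugate in $\GL_2(q)$ to $\mathrm{diag}(\varepsilon,\varepsilon')$ for some eigenvalue $\varepsilon'\in\mathbb{F}_q$, and $\varepsilon'$ is a root of $\mu$, so $\varepsilon' = \varepsilon^{p^j}$ for some $j$; taking $\tau$ to be the corresponding power of Frobenius in $\mathrm{Gal}(F/\mathbb{F}_p) = \Aut(F)$ (with $\tau=\mathrm{id}$ if $\varepsilon'=\varepsilon$), we obtain (i).

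Now suppose $\varepsilon\notin\mathbb{F}_q$. Then $\varepsilon$ is a root of an irreducible quadratic over $\mathbb{F}_q$, so $\varepsilon\in\mathbb{F}_{q^2}$, and consequently $F\subseteq\mathbb{F}_{q^2}$ with $[F:E] = 2$. The Frobenius $\sigma_q$ preserves $F$ and its fixed subfield in $F$ is precisely $E$, so $\sigma_q|_F$ is the unique non-trivial element of $\Gal(F/E)$; equivalently $\varepsilon^q = \varepsilon^{|E|}$. This identity makes the $\mathbb{F}_q$-characteristic polynomial of $h$ (with roots $\varepsilon$ and $\varepsilon^q$) coincide with the minimum polynomial of $\varepsilon$ over $E$, which is also the characteristic polynomial of the $E$-linear map $[\varphi_\varepsilon]_\mathcal{B}$ with respect to $\mathcal{B}=\{1,\varepsilon\}$. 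Both $h$ and $[\varphi_\varepsilon]_\mathcal{B}$ are $2\times 2$ matrices over $\mathbb{F}_q$ with the same irreducible quadratic characteristic polynomial, so by rational canonical form they are conjugate in $\GL_2(q)$, giving (ii). The main technical point, which I expect to be the subtlest step, is precisely this equality $\varepsilon^q = \varepsilon^{|E|}$: everything else is bookkeeping with minimal polynomials and Galois orbits, but one needs the Galois-theoretic identification of $\sigma_q|_F$ with the generator of $\Gal(F/E)$ to match the $E$-structure of $[\varphi_\varepsilon]_\mathcal{B}$ against the $\mathbb{F}_q$-structure of $h$.
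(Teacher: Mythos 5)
Your proof is correct, and while it reaches the same case division and the same endgame as the paper, the mechanism by which you identify the second eigenvalue is genuinely different. The paper diagonalises $h$ over $\mathbb{F}_q(\varepsilon)$ as $\diag(\varepsilon,\delta)$ and then exploits the additive closure of $F\varphi\cup\{0\}$ twice: first, the singular matrix $1-\diag(1,\delta^{\ell})=\diag(0,1-\delta^{\ell})$ must be zero, forcing both eigenvalues to have full order $|F|-1$; then the additivity of $\lambda\mapsto\lambda^{n}$ (where $\delta=\varepsilon^{n}$) shows this map is a field automorphism $\tau$. You instead observe that $F\varphi=\mathbb{F}_p[h]$, so the minimal polynomial of $h$ over the prime field is the irreducible minimal polynomial $\mu$ of a generator of $F^*$; every eigenvalue of $h$ is then a root of $\mu$, hence a Frobenius conjugate of that generator, which yields the common order and the automorphism $\tau$ in one stroke. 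This packages the same information more conceptually (additive and multiplicative closure of $F\varphi$ is exactly the statement that it is an $\mathbb{F}_p$-subalgebra), at the cost of one briefly justified assertion: that $F\subseteq\mathbb{F}_{q^2}$ with $F\nsubseteq\mathbb{F}_q$ forces $[F:F\cap\mathbb{F}_q]=2$. That is true but deserves a line, e.g.\ via $[F:F\cap\mathbb{F}_q]=[F\mathbb{F}_q:\mathbb{F}_q]=[\mathbb{F}_q(\varepsilon):\mathbb{F}_q]$ for finite fields, or as the paper does it, by noting $F=E(\varepsilon)$ and that the irreducible quadratic characteristic polynomial of $h$ has coefficients in $E$. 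The conclusion of case (ii) is essentially identical in both arguments: the characteristic polynomial of $h$ over $\mathbb{F}_q$ coincides with that of $[\varphi_\varepsilon]_{\mathcal{B}}$, and two matrices over $\mathbb{F}_q$ with the same irreducible characteristic polynomial are conjugate in $\GL_2(q)$.
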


\begin{proof}
Let $p$ be the characteristic of $\mathbb{F}_q$. 
Since $\varphi$ maps the identity of $F$ to the identity of $\M_2(q)$, it follows that $F$ also has  characteristic $p$. In particular, the order of $h$ is not divisible by $p$. Let $f(X)\in \mathbb{F}_q[X]$ be the characteristic polynomial of $h$. Then $f(X)=(X-\varepsilon)(X-\delta)$ for some $\delta\in \mathbb{F}_q(\varepsilon)$. Since the order of $h$ is not divisible by $p$,  it must have two  Jordan blocks, so  $h$ is conjugate in $\GL_2(\mathbb{F}_q(\varepsilon))$ to  $g:=\diag(\varepsilon,\delta)$. 

Let $\ell$ and $m$ be the order of $\varepsilon$ and $\delta$ in $\mathbb{F}_q(\varepsilon)^*$,  respectively. We may assume that $\ell\leq m$. Now $\langle g\rangle \cup \{0\}$ is conjugate  to $F\varphi$, so it is closed under addition. In particular, it contains the matrix $1-\diag(1,\delta^\ell)=\diag(0,1-\delta^\ell)$, so $1=\delta^\ell$, and it follows that $m=\ell=|F|-1$. Thus $F^*\simeq \langle \varepsilon\rangle\simeq \langle\delta\rangle$, and we may view $F$ as the subfield of $\mathbb{F}_q(\varepsilon)$ that contains $\varepsilon$ and $\delta$.  Now $\delta=\varepsilon^n$ for some integer $n$, so $\langle g\rangle=\{\diag(\lambda,\lambda^n):\lambda\in F^*\}$, and it follows that $(\lambda+\mu)^n=\lambda^n+\mu^n$ for all $\lambda,\mu\in F$. Thus there exists $\tau\in \Aut(F)$ such that $\delta=\varepsilon^\tau$, and (i) holds when $\varepsilon\in \mathbb{F}_q^*$.

Suppose instead that $\varepsilon\notin \mathbb{F}_q$. As in the statement of the lemma, let $E:=F\cap \mathbb{F}_q$.  Now $f(X)\in E[X]$ is irreducible, so $f(X)$ is the minimal polynomial of $\varepsilon$ over $E$, but $F=E(\varepsilon)$, so $[F:E]=2$. Thus $\mathcal{B}$ is an $E$-basis for $F$, and  there exist $a,b\in E$ such that $\varphi_\varepsilon$ maps $\varepsilon$ to $a\cdot 1+b\cdot \varepsilon$. Now $\varepsilon$ is a root of $X^2-bX-a\in E[X]$, so  $f(X)=X^2-bX-a$. 
Since $X^2-bX-a$ is also the characteristic polynomial of $[\varphi_\varepsilon]_{\mathcal{B}}$, we conclude that $[\varphi_\varepsilon]_{\mathcal{B}}$ is conjugate in $\GL_2(\mathbb{F}_q(\varepsilon))$ to $\diag(\varepsilon,\delta)$. Hence $h$ and $[\varphi_\varepsilon]_{\mathcal{B}}$ are conjugate in $\GL_2(\mathbb{F}_q(\varepsilon))$, but both matrices have entries in $\mathbb{F}_q$, so they are also conjugate in $\GL_2(q)$.
\end{proof}

\begin{proof}[Proof of Proposition~\emph{\ref{prop:(S0)bad}}]
Let $(V,\mathcal{L})$ be a $G$-affine proper partial linear space for which $0$ and $x$ are collinear. By Lemma~\ref{lemma:(S0)bad}, $\mathcal{L}=L^G$ where $L=\{x(\lambda\varphi): \lambda\in F\}$ for some field $F$  with $|F|>2$ and injective ring homomorphism $\varphi:F\to \M_2(r)$ such that $(F\varphi)^*\leq \GL_2(q)$ or $K^*$. If $(F\varphi)^*\leq K^*$, then (i) holds, so we may assume that $(F\varphi)^*\leq \GL_2(q)$. Let $(F\varphi)^*=\langle h\rangle$, and let $\varepsilon$ be an eigenvalue of $h$ in some field extension of $\mathbb{F}_q$. By Lemma~\ref{lemma:(S0)badmore},  $F^*\simeq \langle \varepsilon\rangle$, we may view $F$ as a subfield of $\mathbb{F}_q(\varepsilon)$,  and Lemma~\ref{lemma:(S0)badmore}(i) or (ii) holds.
First suppose that  Lemma~\ref{lemma:(S0)badmore}(i) holds. Then $\varepsilon\in \mathbb{F}_q$ and $h=\diag(\varepsilon,\varepsilon^\tau)^g$ for some $\tau\in \Aut(F)$ and $g\in \GL_2(q)$. If $\tau=1$, then (i) holds. Otherwise, (ii) holds. Thus we may assume that Lemma~\ref{lemma:(S0)badmore}(ii) holds. Let $E:=F\cap \mathbb{F}_q$. Then $[F:E]=2$, and $h$ is conjugate in $\GL_2(q)$ to $[\varphi_\varepsilon]_{\mathcal{B}}$,  where  
$\varphi_\varepsilon\in \End_E(F)$ is defined by $  v\mapsto \varepsilon v$ for all $v\in F$, and $\mathcal{B}=\{1,\varepsilon\}$ is an $E$-basis of $F$. Thus (iii) holds.

Conversely, suppose that $\mathcal{L}=L^G$ where $L$ is given by (i), (ii) or (iii). In each case, we claim that  that $L=\{x(\lambda \varphi): \lambda\in F\}$ for some injective ring homomorphism $\varphi:F\to \M_2(r)$ such that $(F\varphi)^*\leq \GL_2(q)$ or $K^*$.  If the claim holds, then since $|F|>2$ in (i)--(iii),  Lemma~\ref{lemma:(S0)bad} implies that $(V,\mathcal{L})$ is a $G$-affine proper partial linear space for which $0$ and $x$ are collinear, as desired. 
If (i) holds, then  we define $\varphi$ by $\lambda\mapsto \diag(\lambda,\lambda)$ for all $\lambda\in F$, and the claim holds.
If (ii) holds, then we define $\varphi$ by $\lambda\mapsto \diag(\lambda,\lambda^\tau)^g$ for all $\lambda\in F$, and the claim holds.
If (iii) holds, then we define $\varphi$ by $\lambda\mapsto [\varphi_\lambda]_\mathcal{B}^g$ for all $\lambda\in F$, and the claim holds.
\end{proof}

Proposition~\ref{prop:(S0)bad}  provides a classification of the $G$-affine proper partial linear spaces for which $0$ and $(1,\zeta)$ are collinear when $G_0$ lies in class (S0) and   $\GL_2(q)\circ \mathbb{F}_{q^3}^*\leq G_0$. However,  we suspect that the situation is much more complicated in general. 
To illustrate this, we now provide a complete classification of the partial linear spaces that arise when $q=4$. 

\begin{example}
\label{example:hardS0}
 Let $q:=4$, $r:=q^3$, $K:=\mathbb{F}_{r}$,  $\sigma:=\sigma_r$, $\zeta:=\zeta_{r}$,  $x:=(1,\zeta)$ and $V:=K^2$.
Let $S:=\GL_2(q)$. By a computation in {\sc Magma},    the point stabiliser of an   affine permutation group of rank~$3$ on $V$ in (S0) is 
either $S\langle\zeta,\sigma^i\rangle$ for $i\in \{1,2,3,6\}$, or conjugate to  $S\langle \zeta^3,\zeta^7\sigma^2\rangle$. Note that $S\langle \zeta^3,\zeta^7\sigma^2\rangle$ does not contain $S\circ K^*=S\langle\zeta\rangle$. Recall that we may view   $S\langle \zeta,\sigma\rangle$ as a member of class (T6) and therefore as a subgroup of $W_0:=(S\otimes \GL_3(q)){:}\Aut(\mathbb{F}_q)$. 
 
  We claim that Table~\ref{tab:S0examplesq=4} contains a complete list of representatives for the isomorphism classes of $X$-affine proper partial linear spaces for which $0$ and $x$ are collinear, where $X$ is an affine permutation group of rank~$3$ on $V$  in (S0). For each such partial linear space $\mathcal{S}$, we prove that $\Aut(\mathcal{S})=V{:}\Aut(\mathcal{S})_0$, and  we list  the following in Table~\ref{tab:S0examplesq=4}: the line-size $k$; $\Aut(\mathcal{S})_0$;
 and when  $S\langle\zeta\rangle\leq \Aut(\mathcal{S})_0$, a reference to 
 Proposition~\ref{prop:(S0)bad}. 
 
 \begin{table}[!h]
\centering
\begin{tabular}{ c c c c c  }
\hline
 $k$ & $\mathcal{S}$ &  $\Aut(\mathcal{S})_0$ & \ref{prop:(S0)bad} &   Notes \\
\hline 
$4$ & $\mathcal{S}_{4,1}$ & $W_0$ & (i) & $K$-dependent\\
& $\mathcal{S}_{4,2}$ & $S\langle \zeta,\sigma\rangle$ & (ii) & \\
& $\mathcal{S}_{4,3}$ & $S\langle \zeta,\sigma^3\rangle$ & (ii) & \\
& $\mathcal{S}_{4,4}$ & $S\langle \zeta\rangle$ & (ii) &  \\
& $\mathcal{S}_{4,\ell}$ & $S\langle \zeta^3,\zeta^7\sigma^2\rangle$ & n/a  &  $5\leq \ell\leq 7$ \\
\hline 
$8$ & $\mathcal{S}_{8,1}$ & $S\langle \zeta,\sigma\rangle$ & (i) & $K$-dependent \\
\hline 
$16$ & $\mathcal{S}_{16,1}$ & $S\langle \zeta\rangle$ & (iii) & \\
& $\mathcal{S}_{16,\ell}$ & $S\langle \zeta^3,\zeta^7\sigma^2\rangle$ & n/a & $2\leq \ell\leq 3$ \\
\hline 
$64$ & $\mathcal{S}_{64,1}$ & $S\langle \zeta,\sigma\rangle$ & (i) & $K$-dependent \\
\hline 
\end{tabular}
\caption{Partial linear spaces $\mathcal{S}$ when $q=4$ and $0$ is collinear with $(1,\zeta_{64})$}
\label{tab:S0examplesq=4}
\end{table}

 Observe in Table~\ref{tab:S0examplesq=4} that there are  five pairwise non-isomorphic partial linear spaces $\mathcal{S}$ for which $\Aut(\mathcal{S})_0=S\langle \zeta^3,\zeta^7\sigma^2\rangle$, none of which are  isomorphic to a partial linear space described by  Proposition~\ref{prop:(S0)bad}. Further,  there are three pairwise non-isomorphic partial linear spaces  that are described by Proposition~\ref{prop:(S0)bad}(ii), necessarily with $F=\mathbb{F}_4$ and $\tau=\sigma_4$.

 Now we prove the claim. Let $G:=V{:}S\langle\zeta\rangle$, $H:=V{:}S\langle \zeta^3,\zeta^7\sigma^2\rangle$, $N:=V{:}S\langle \zeta,\sigma\rangle$ and $W:=V{:}W_0$. Then $N_{N_0}(H_0)=H_0\langle \sigma^2\rangle=G_0\langle\sigma^2\rangle$ and $|G_0|=|H_0|$.   Let $\mathcal{G}:=\{H,G,G\langle\sigma^2\rangle,G\langle\sigma^3\rangle,N\}$.
For $X\in \{G,H\}$, let $\mathcal{B}_X$ denote the set of non-trivial blocks $B$ of $X_0$ on $x^{N_0}$ such that $x\in B$ and  $B\cup \{0\}$ is an $\mathbb{F}_2$-subspace of $V$. Let $Y\in \mathcal{G}$, and recall
from Lemmas~\ref{lemma:necessary} and~\ref{lemma:A5add} that if  $(V,\mathcal{L})$ is a $Y$-affine proper partial linear space and $L\in \mathcal{L}_0\cap \mathcal{L}_x$, then $L^*\in \mathcal{B}_X$ for some $X\in \{G,H\}$. Conversely, if $X\in \{G,H\}$ and 
 $B\in \mathcal{B}_X$, then $(V,(B\cup \{0\})^X)$ is an $X$-affine proper partial linear space by Lemmas~\ref{lemma:transitive} and~\ref{lemma:sufficient}. Therefore, in order to find all of the relevant partial linear spaces, it suffices to determine the sets $\mathcal{B}_X$. By a computation in {\sc Magma}, $\mathcal{B}_G=\mathcal{B}_H$, and $\mathcal{B}_G$ consists of 19 blocks. Three of these correspond to $K$-dependent $N$-affine partial linear spaces with line-sizes $4$, $8$ and $64$, which we denote by  $\mathcal{S}_{4,1}$, $\mathcal{S}_{8,1}$ and $\mathcal{S}_{64,1}$, respectively. Note that these are described by Proposition~\ref{prop:(S0)bad}(i) with $F=\mathbb{F}_k$ where $k$ is the line-size. By a computation in {\sc Magma}, $\Aut(\mathcal{S}_{8,1})=\Aut(\mathcal{S}_{64,1})=N$. We will determine $\Aut(\mathcal{S}_{4,1})$ below.

 Of the remaining sixteen blocks in $\mathcal{B}_G$, six have size $15$ and ten have size $3$. 
 First we consider the blocks with size $15$.  By a computation in {\sc Magma}, the  group $\langle\sigma\rangle$ permutes the six  $G$-affine partial linear spaces with line-size $16$ transitively, so all of these examples are isomorphic; let $\mathcal{S}_{16,1}$ denote one of these. Further, the group $\langle\sigma\rangle$ does not act on the six  $H$-affine partial linear spaces with line-size $16$, while   $\langle\sigma^2\rangle$ has two orbits of size $3$; let $\mathcal{S}_{16,2}$ and $\mathcal{S}_{16,3}$ denote  orbit representatives of these. By a computation in {\sc Magma}, $\Aut(\mathcal{S}_{16,1})=G$, while $\Aut(\mathcal{S}_{16,2})=\Aut(\mathcal{S}_{16,3})=H$, and the partial linear spaces $\mathcal{S}_{16,1}$, $\mathcal{S}_{16,2}$ and $\mathcal{S}_{16,3}$ are pairwise non-isomorphic. Note that  $\mathcal{S}_{16,1}$ must have the  form of  Proposition~\ref{prop:(S0)bad}(iii) with $F=\mathbb{F}_{16}$.  
 
 It remains to consider the blocks with size $3$. 
 Recall that $\mathcal{S}_{4,1}$ denotes the $K$-dependent $N$-affine partial linear space with line-size $4$. 
 By a computation in {\sc Magma}, the group $\langle\sigma\rangle$ has orbit sizes $1$, $3$ and $6$ on the ten $G$-affine $K$-independent  partial linear spaces with line-size $4$. In particular, $N$ is an automorphism group of the partial linear space  in  the orbit of size $1$; let $\mathcal{S}_{4,2}$ denote this partial linear space, and let $\mathcal{S}_{4,3}$ and $\mathcal{S}_{4,4}$ be  representatives of the orbits with size  $3$ and $6$, respectively. Note that $\mathcal{S}_{4,j}$ must have the form of Proposition~\ref{prop:(S0)bad}(ii) with $F=\mathbb{F}_4$ and $\tau=\sigma_4$ for $2\leq j\leq 4$. By a computation in {\sc Magma}, $\langle\sigma\rangle$ does not act on the nine $H$-affine $K$-independent partial linear spaces with line-size $4$ that are not equal to $\mathcal{S}_{4,2}$, while     $\langle\sigma^2\rangle$ has orbit sizes  $3$, $3$ and $3$; let $\mathcal{S}_{4,5}$, $\mathcal{S}_{4,6}$ and $\mathcal{S}_{4,7}$  be orbit  representatives of these. 
 
 Let $j\in \{1,\ldots,7\}$.  We wish to determine $\Aut(\mathcal{S}_{4,j})$, but we are unable to do so directly  using {\sc Magma}.
 By Lemma~\ref{lemma:Taut}, $\Aut(\mathcal{S}_{4,j})\leq W$. In particular, $\Aut(\mathcal{S}_{4,1})=W$, so we assume that $j\neq 1$.  
Recall that $S\unlhd W_0$ and   $S\langle \zeta^3\rangle\leq \Aut(\mathcal{S}_{4,j})_0$.  
 By a computation in {\sc Magma}, 
  there are exactly four  subgroups of $W_0$ (up to conjugacy)  that contain $S$ and have two orbits on $V^*$ but are not conjugate to a subgroup of $N_0$. 
  Further, if  $B$ is a block of one of these four groups on~$x^{N_0}$ for which $x\in B$ and $B\cup \{0\}$ is an $\mathbb{F}_2$-subspace of $V$ with size~$4$, then $B\cup \{0\}=\langle x\rangle_{\mathbb{F}_4}$. 
However, $j\neq 1$, so $\Aut(\mathcal{S}_{4,j})_0$ is conjugate in $W_0$ to a subgroup of $N_0$. By  a computation in  {\sc Magma}, $N_0$ is the only conjugate of $N_0$ in $W_0$ that contains $\zeta^3$. Thus  $\Aut(\mathcal{S}_{4,j})_0\leq N_0$. Now we can deduce from  the actions of $\langle\sigma\rangle$ given above  that  $\Aut(\mathcal{S}_{4,j})$ is as described in Table~\ref{tab:S0examplesq=4}.  
By a computation in {\sc Magma}, $G\langle\sigma^2\rangle$ is the normaliser of $H$ in $S_{4096}$, and  $G$ and $H$ are not permutation isomorphic. Thus the partial linear spaces $\mathcal{S}_{4,j}$ are pairwise non-isomorphic for $1\leq j\leq 7$.
 \end{example}

\section{The imprimitive class (I)}
\label{s:(I)}

Let $V:=V_n(p)\oplus V_n(p)$ where $n\geq 1$ and $p$ is prime. 
We adopt the following notation throughout this section. 
  The stabiliser of the decomposition of $V$ in $\GL_{2n}(p)$ is $\GL_n(p)\wr\langle \tau\rangle $, where $\tau$ is the involution in $\GL_{2n}(p)$  defined by  $(u_1,u_2)^\tau =(u_2,u_1)$ for all $u_1,u_2\in V_n(p)$. We  write $V_1:=\{(u,0): u \in V_n(p)\}$ and $V_2:=\{(0,u):u\in V_n(p)\}$, and for $G_0\leq \GL_n(p)\wr\langle \tau\rangle$ and $i\in \{1,2\}$, we write
$G_0^i$ for the image of the projection of $G_{0,V_1}=G_0\cap (\GL_n(p)\times \GL_n(p))$ onto the $i$-th factor of $\GL_n(p)\times \GL_n(p)$. 

If $G$ is an affine permutation group of rank~$3$  on $V$, then the orbits of $G_0$ on $V^*$ are $V_1^*\cup V_2^*$ and $V_n(p)^*\times V_n(p)^*$. In particular,  $G$ is primitive if and only if $p^n>2$ by Lemma~\ref{lemma:primitive}.
 Further, $G_0^i$ is transitive on $V_n(p)^*$ for both $i$, and $G_{0,V_1}$ is an index $2$ subgroup of $G_0$. Now $G_0=G_{0,V_1}\langle (t,s)\tau\rangle$ for some $t,s\in\GL_n(p)$. Conjugating $G_0$ by $(s^{-1},1)$ if necessary, we may assume that $s=1$, in which case $G_0^1=G_0^2$ and $G_0\leq G_0^1\wr \langle\tau\rangle$ since $((t,1)\tau)^2=(t,t)$. 

 If $H_0\leq \GammaL_m(q)$ where $q^m=p^n$ and $H_0$ is transitive on $V_n(p)^*$, then $V{:}(H_0\wr\langle \tau\rangle)$ is a rank~$3$ subgroup of $V{:}(\GL_n(p)\wr \langle \tau\rangle)$.  Note that  $\GammaL_m(q)\wr\langle\tau\rangle$ is not a subgroup of $\GammaL_{2m}(q)$ when $q\neq p$. Indeed, 
 $(\GammaL_m(q)\wr\langle\tau\rangle)\cap \GammaL_{2m}(q)=\{(g_1\sigma,g_2\sigma):g_1,g_2\in \GL_m(q),\sigma\in\Aut(\mathbb{F}_q)\}\langle \tau\rangle$,  which is isomorphic to $(\GL_m(q)\wr \langle \tau\rangle){:}\Aut(\mathbb{F}_q)$.
 
In previous sections, the rank~$3$ groups under consideration were all subgroups of $\AGammaL_m(q)$ for some field extension $\mathbb{F}_q$ of $\mathbb{F}_p$, in which case it was logical to consider  $\mathbb{F}_q$-dependent and $\mathbb{F}_q$-independent partial linear spaces separately. However, since this does not naturally occur for all groups in a particular subclass of (I), we will not make this distinction here. Nevertheless, we will see that there are two infinite families of $\mathbb{F}_q$-dependent proper partial linear spaces for each field $\mathbb{F}_q$ such that $q^m=p^n$. In fact,  one of these examples admits $V{:}(\GammaL_m(q)\wr\langle\tau\rangle)$ as an automorphism group, even though $\GammaL_m(q)\wr\langle\tau\rangle$ is not $\mathbb{F}_q$-semilinear  when $q\neq p$.
 
 In this section, we prove the following two propositions. For the first, recall the definition of the cartesian product of two partial linear spaces that was given in \S\ref{ss:plsnew}, and observe that the partial linear space in (i) below is that of Example~\ref{example:grid}.

 \begin{prop}
\label{prop:(I)good}
Let $G$ be an affine permutation group of rank~$3$ on $V:=V_n(p)\oplus V_n(p)$ where $n\geq 1$, $p$ is prime and $G_0\leq \GL_n(p)\wr \langle \tau\rangle$. 
Then $\mathcal{S}$ is a $G$-affine proper partial linear space  in which $\mathcal{S}(0)=V_1^*\cup V_2^*$ if and only if one of the following holds.
\begin{itemize}
\item[(i)] $\mathcal{S}$ is the  $p^n\times p^n$ grid where $p^n>2$.
\item[(ii)] $\mathcal{S}=\AG_m(q)\cprod\AG_m(q)$ and $G_0\leq \GammaL_m(q)\wr\langle \tau\rangle$, where $q^m=p^n$, $m\geq 2$ and $q>2$.
\item[(iii)] $\mathcal{S}\simeq \mathcal{S}'\cprod\mathcal{S}'$ and one of the following holds.
\begin{itemize}
\item[(a)]  $\mathcal{S}'$ is  the  nearfield plane of order $9$ and $G_0\leq N_{\GL_4(3)}(D_8\circ Q_8)\wr\langle\tau\rangle$. Here $p^n=3^4$.
\item[(b)]  $\mathcal{S}'$ is the Hering plane of order $27$ or either of the two Hering spaces  with line-size $9$, and $G_0=\SL_2(13)\wr\langle\tau\rangle$. Here $p^n=3^6$.
\end{itemize}
\end{itemize}
\end{prop}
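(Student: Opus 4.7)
The plan is to analyze lines through $0$, restrict to the summands $V_1$ and $V_2$, and then invoke Kantor's classification (Theorem~\ref{thm:Kantor}) to identify the resulting $2$-transitive affine linear spaces. First I would show that every line $L\in\mathcal{L}_0$ lies entirely in $V_1$ or in $V_2$. For distinct $x,y\in L^*$, Lemma~\ref{lemma:basic}(i) gives $y-x\in\mathcal{S}(0)=V_1^*\cup V_2^*$; but if $x\in V_1^*$ and $y\in V_2^*$ then $y-x\in V_n(p)^*\times V_n(p)^*$, which is disjoint from $\mathcal{S}(0)\cup\{0\}$, a contradiction. So $L^*\subseteq V_1^*$ or $L^*\subseteq V_2^*$. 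Since $\mathcal{S}(0)$ is a single $G_0$-orbit, some element of $G_0$ swaps $V_1$ and $V_2$, so the two line-types are interchanged.

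Next, for $i\in\{1,2\}$, I would form $\mathcal{S}_i:=\mathcal{S}\cap V_i$, identifying $V_i$ with $V_n(p)$. By Lemma~\ref{lemma:intersect}(i), $\mathcal{S}_i$ is a partial linear space; and since $V_i^*\subseteq \mathcal{S}(0)$, every non-zero point of $V_i$ is collinear with $0$ in $\mathcal{S}_i$, so $\mathcal{S}_i$ is in fact a linear space. Lemma~\ref{lemma:intersect}(ii) gives $V_n(p){:}G_0^i\leq \Aut(\mathcal{S}_i)$, and since $G_0^i$ is transitive on $V_n(p)^*$ this affine group is $2$-transitive; thus $\mathcal{S}_i$ is a $2$-transitive affine linear space. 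The swap of $V_1$ and $V_2$ induces $\mathcal{S}_1\simeq\mathcal{S}_2$. I then invoke Theorem~\ref{thm:Kantor}: the trivial case (where $\mathcal{S}_1$ has just one line, requiring $p^n>2$) gives the grid in (i); the case $\mathcal{S}_1=\AG_m(q)$ with $q>2$ gives (ii), and in this case $G_0^1\leq\Aut(\AG_m(q))_0=\GammaL_m(q)$ (since $G_0^1$ must preserve the line structure), whence $G_0\leq \GammaL_m(q)\wr\langle\tau\rangle$ with $q^m=p^n$ and $m\geq 2$; the exceptional cases of Theorem~\ref{thm:Kantor} at $p^n=3^4$ and $p^n=3^6$ give (iii)(a) and (iii)(b) respectively, with the required containment $G_0\leq N_{\GL_4(3)}(D_8\circ Q_8)\wr\langle\tau\rangle$ or $G_0=\SL_2(13)\wr\langle\tau\rangle$ forced by Theorem~\ref{thm:Kantor} together with the fact that $G_0^1$ must be transitive on $V_n(p)^*$.

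Finally, I would verify $\mathcal{S}\simeq \mathcal{S}_1\cprod\mathcal{S}_2$: every line of $\mathcal{S}$ is a $V$-translate of some line in $\mathcal{L}_0$, and the translates of a line $L\subseteq V_1$ corresponding to a line $L'$ of $\mathcal{S}_1$ are exactly the sets $\{(u+v_1,v_2):u\in L'\}$ as $(v_1,v_2)$ ranges over $V$ — these are precisely the horizontal lines of the cartesian product; similarly for $V_2$. For the converse direction, Lemma~\ref{lemma:cartesian} provides the required automorphisms (in particular $\Aut(\mathcal{S}_1)\wr\langle\tau\rangle\leq \Aut(\mathcal{S}_1\cprod\mathcal{S}_1)$), and it is routine to check that each construction in (i)--(iii) yields a $G$-affine proper partial linear space with $\mathcal{S}(0)=V_1^*\cup V_2^*$. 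The main obstacle will be correctly matching Kantor's list of $2$-transitive affine linear spaces with the descriptions in (i)--(iii), in particular ensuring that the exceptional geometries in (iii) really do arise (and only arise) from the groups listed, and that the field-semilinearity constraint $G_0\leq\GammaL_m(q)\wr\langle\tau\rangle$ in (ii) is correctly deduced from the action on $\mathcal{S}_1$.
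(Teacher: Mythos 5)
Your proposal is correct and follows essentially the same route as the paper: the paper's Lemma~\ref{lemma:A2good} shows (via Lemma~\ref{lemma:basic}(i)) that every line through $0$ lies in $V_1$ or $V_2$ and that $\mathcal{S}=\mathcal{S}'\cprod\mathcal{S}'$ for a $2$-transitive affine linear space $\mathcal{S}'$ on $V_n(p)$ admitting $V_n(p){:}G_0^1$, and then Proposition~\ref{prop:(I)goodmore} matches $\mathcal{S}'$ against Theorem~\ref{thm:Kantor} exactly as you do, including the reparametrisation of the subfield case to give (ii). The one clause to tighten is in case (iii)(b): the equality $G_0=\SL_2(13)\wr\langle\tau\rangle$ is forced not merely by transitivity of $G_0^1$ on $V_n(p)^*$ (which only yields $G_0\leq\SL_2(13)\wr\langle\tau\rangle$) but by transitivity of $G_0$ on the other suborbit $V_n(p)^*\times V_n(p)^*$, which rules out proper subdirect subgroups of $\SL_2(13)\times\SL_2(13)$ by an order count against the normal subgroup structure of $\SL_2(13)$.
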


\begin{remark}
\label{remark:Hexample}
The partial linear space  $\mathcal{S}:=\AG_m(q)\cprod\AG_m(q)$ in Proposition~\ref{prop:(I)good}(ii) has line set $\{\langle u\rangle_{\mathbb{F}_q}+v : u\in V_1^*\cup V_2^*,v\in V\}$, so $\mathcal{S}$ is an $\mathbb{F}_q$-dependent partial linear space. In particular, $\mathcal{S}$ is described in Example~\ref{example:AG} with respect to the triple $(\AGL_m(q)\wr \langle\tau\rangle,2m,q)$, which satisfies Hypothesis~\ref{hyp:AGgroups}(iii). However, $\AGammaL_m(q)\wr\langle \tau\rangle\leq \Aut(\mathcal{S})$, and if $q$ is not prime, then   the triple $(\AGammaL_m(q)\wr\langle \tau\rangle,2m,q)$ does not satisfy Hypothesis~\ref{hyp:AGgroups}  by Remark~\ref{remark:hypfail}.
\end{remark}

Recall that for a vector space $V$ and $H\leq \GL(V)$, we write $C_V(H)$ for the subspace of $V$ consisting of those vectors in $V$ that are fixed by every element of $H$.

\begin{prop}
\label{prop:(I)bad}
Let $G$ be an affine permutation group of rank~$3$ on $V:=V_n(p)\oplus V_n(p)$ where $n\geq 1$, $p$ is prime, $G_0\leq \GL_n(p)\wr \langle \tau\rangle$ and $G_0$  is not a subgroup of $\GammaL_1(p^n)\wr\langle\tau\rangle$. If $\mathcal{S}$ is a $G$-affine proper partial linear space  in which $\mathcal{S}(0)=V_n(p)^*\times V_n(p)^*$, then $\mathcal{S}$ is isomorphic to a partial linear space $(V,\mathcal{L})$ where  one of the following holds.
\begin{itemize}
\item[(i)] $\mathcal{L}=\{\langle v\rangle_{\mathbb{F}_q} +w: v\in V_n(p)^*\times V_n(p)^*,w\in V\}$  and $G_0\leq (\GammaL_m(q)\wr \langle\tau\rangle)\cap\GammaL_{2m}(q)$, where $q^m=p^n$, $m\geq 2$ and $q>2$.
\item[(ii)] $\mathcal{L}=\{\{(v,v^r) +w : v \in V_n(p)\} : r\in R, w\in V\}$ 
for $R\leq \GL_n(p)$,  where $n=2$ and 
$(p,R)$ is one of 
$(3,Q_8)$, 
$(5,\SL_2(3))$, 
$(7,2\nonsplit S_4^-)$, 
$(11,\SL_2(3)\times C_5)$, 
$(23,2\nonsplit S_4^-\times C_{11})$, 
$(11,\SL_2(5))$, 
$(29,\SL_2(5)\times C_7)$ or 
$(59,\SL_2(5)\times C_{29})$. 

\item[(iii)] $\mathcal{L}=\{(v,v^\alpha): v\in M\}^{V{:}H_0\times H_0}$, where $p^n=3^4$, $M=C_{V_4(3)}(H_{0,u})\simeq V_2(3)$ for some $u\in V_4(3)^*$,  $\alpha\in \{1,\beta\}$   where    $\beta$ is  chosen to be one of the two involutions in $\GL(M)_u\setminus (N_{\GL_4(3)}(H_0))_M^M$, and $H_0\leq \GL_4(3)$   is  one of  $2\nonsplit S_5^-{:}2$ or $(D_8\circ Q_8).C_{5}$.

\item[(iv)] $\mathcal{L}=\{(v,v^\alpha): v\in M\}^{V{:}H_0\times H_0}$, where $p^n=3^6$, $M=C_{V_6(3)}(H_{0,u})\simeq V_2(3)$ for some $u\in V_6(3)^*$,  $\alpha\in \GL(M)_u\simeq S_3$ such that $\alpha^2=1$, and $H_0=\SL_2(13)\leq \GL_6(3)$.

\item[(v)] $\mathcal{L}=\{\{(v,v^{g})+w : v\in V_n(p)\} : g\in \SL_2(3)\alpha\cup \SL_2(3)\alpha^2r,\ w\in V\}$, where $p^n=7^2$, $2\nonsplit S_4^-=\SL_2(3)\langle r\rangle$, and $\alpha$ is chosen to be one of the non-trivial elements of  $N_{\GL_2(7)}(2\nonsplit S_4^-)_u\simeq C_3$ for some $u\in V_2(7)^*$.

\end{itemize}
Conversely, if one of (i)--(v) holds, then  $(V,\mathcal{L})$ is an affine proper partial linear space.
\end{prop}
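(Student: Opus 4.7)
The plan is to first determine the structural form of an arbitrary line through $0$, then classify the possibilities using the structure of $H := G_0^1 = G_0^2 \leq \GL_n(p)$, which (as noted at the start of Section~\ref{s:(I)}) is the stabiliser of a $2$-transitive affine group on $V_n(p)$, and finally perform a case analysis via Theorem~\ref{thm:2trans}.

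First I would show that every line $L \in \mathcal{L}_0$ has the form $L = \{(v, v^\alpha) : v \in M\}$ for some $M \subseteq V_n(p)$ containing $0$ and some injective $\alpha : M \to V_n(p)$ fixing $0$. The key observation is that for distinct $x, y \in L^*$, Lemma~\ref{lemma:basic}(i) forces $y - x \in x^{G_0} = V_n(p)^* \times V_n(p)^*$, so neither coordinate of $y - x$ vanishes; hence the two projections $\pi_i : L \to V_i$ are injective, yielding the claimed graph form. Fixing $u \in M^*$ and $x := (u, u^\alpha)$, Lemma~\ref{lemma:necessary} tells us that $L^*$ is a non-trivial block of $G_0$ on $x^{G_0}$ and that $G_L^L$ is $2$-transitive on $L$. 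Since $G_0 \not\leq \GammaL_1(p^n)\wr\langle\tau\rangle$, the group $H$ belongs to one of the classes (H1)--(H8) of Theorem~\ref{thm:2trans}, and $G_0$ sits inside $H \wr \langle\tau\rangle$.

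The case analysis is organised by the structure of $M$. When $M$ is a $1$-dimensional $\mathbb{F}_q$-subspace for some subfield $\mathbb{F}_q$ of $\mathbb{F}_{p^n}$ with $H \leq \GammaL_m(q)$, Lemmas~\ref{lemma:closed} and~\ref{lemma:Kantor} force $L = \langle x \rangle_{\mathbb{F}_q}$; this yields case~(i), with the restriction $G_0 \leq (\GammaL_m(q) \wr \langle\tau\rangle) \cap \GammaL_{2m}(q)$ needed for the lines to be permuted by $G_0$. When $M = V_n(p)$, the map $\alpha$ extends to an element of $\GL_n(p)$, and the set $R := \{r \in \GL_n(p) : \{(v, v^r) : v \in V_n(p)\} \in \mathcal{L}_0\}$ must be stable under the natural action of $G_0$ (the conjugation action of $H$, twisted by $\tau$) and must admit a $2$-transitive block action on each line; the only possibilities live in classes (I4) and (I7), yielding cases~(ii) and~(v). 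When $M$ is a proper non-trivial subspace, the block condition forces $M$ to be $H_{0,u}$-invariant, and a further analysis (using that $H_{0,u}$ must act trivially on $M$ to allow $\alpha$ to vary freely) narrows $M$ to $C_{V_n(p)}(H_{0,u})$, which is non-trivial only when $p^n = 3^4$ (class (I8)) and $p^n = 3^6$ (class (I6)); the admissible $\alpha$ are then enumerated within $\GL(M)_u$, giving cases~(iii) and~(iv).

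The main obstacle lies in the explicit block enumeration for the exceptional classes (I4)--(I8), where one must both identify the precise sets $R$ in cases~(ii) and~(v) and the precise $\alpha$ in cases~(iii) and~(iv), and verify that no candidate blocks produce partial linear spaces beyond those listed. I would handle this by explicit {\sc Magma} computations applying Lemmas~\ref{lemma:necessary} and~\ref{lemma:sufficient}, using the explicit generators of $H$ and $G_0$ in each class. The converse direction---that each listed $(V, \mathcal{L})$ is indeed an affine proper partial linear space---follows from Lemma~\ref{lemma:sufficient}, once it is verified that $G_L$ acts transitively on $L$; since each line is an $\mathbb{F}_p$-affine subspace of $V$, this follows from Lemma~\ref{lemma:transitive} together with the computation in each case that $G_{0,L}$ is transitive on $L \setminus \{0\}$ (resp.\ on the relevant cosets in case~(v)).
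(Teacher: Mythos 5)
Your overall skeleton matches the paper's: lines through $0$ are graphs of injective maps (the paper's Lemma~\ref{lemma:A2badbasic}), the orbit of $M$ under $V_n(p){:}H_0$ is a $2$-transitive linear space whose line stabilisers act sharply $2$-transitively on lines, and the exceptional classes are finished in {\sc Magma}. But there is a genuine gap in how you pass from the graph form $L=\{(v,v^\alpha):v\in M\}$ to the normal forms (i)--(v). In cases (i) and (ii) you assert that $L$ is literally $\langle x\rangle_{\mathbb{F}_q}$, respectively the graph of an element of a regular group $R$. This is false: the block condition only forces $\alpha$ to normalise the group $R:=H_{0,M}^M$ induced on $M$ and $\alpha^2$ to be induced by an element of $N_{\GL_n(p)}(H_0)_M$; the twist $\alpha$ itself need not be $\mathbb{F}_q$-semilinear nor lie in $R$. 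Examples~\ref{example:(I)dep} and~\ref{example:(I)reg} with $\alpha\neq 1$ are genuine $G$-affine partial linear spaces whose line sets are \emph{not} of the form stated in (i) or (ii); they are only isomorphic to them, via conjugation by $(1,s)$ where $s$ induces $\alpha$ (Lemma~\ref{lemma:A2badgeneric}(iii)). This is exactly why the proposition is stated up to isomorphism and why the paper builds the compatible-triple framework of Definitions~\ref{defn:A2badtriple}--\ref{defn:A2badfamily} and Lemmas~\ref{lemma:A2badgeneric},~\ref{lemma:A2badequiv}. Relatedly, Lemmas~\ref{lemma:closed} and~\ref{lemma:Kantor} cannot be invoked over $\mathbb{F}_q$ as you propose: they require $G_0\leq\GammaL_{2m}(q)$, whereas $\GammaL_m(q)\wr\langle\tau\rangle\nleq\GammaL_{2m}(q)$ for $q\neq p$, and membership of $G_0$ in that intersection is part of the conclusion of (i), not a hypothesis.

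Second, organising the case division by the shape of $M$ alone misclassifies some examples. In class (I4) with $q=9$ one has $M=\langle u\rangle_{\mathbb{F}_9}$, a one-dimensional $\mathbb{F}_9$-subspace of $V_4(3)$, so your first case would fire and conclude outcome (i); but with $H_0=2\nonsplit S_5^-{:}2$ the group induced on $M$ is $Q_8$ rather than $\GL_1(9)$, its normaliser in $\GL(M,\mathbb{F}_3)$ is all of $\GL_2(3)$, and the resulting non-semilinear twists $\alpha$ produce the partial linear spaces of outcome (iii), which Example~\ref{example:(I)sl25} shows are \emph{not} isomorphic to $\mathcal{S}(\GL_2(9),M,1)$. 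What governs the outcome is the induced group $H_{0,M}^M$ together with its normaliser in $\GL(M,\mathbb{F}_p)$, not merely the field over which $M$ happens to be a line. Two smaller points: you never justify that $M$ is a subspace (the paper gets this from Theorem~\ref{thm:Kantor} applied to the linear space $M^{V_n(p):H_0}$, or from Lemma~\ref{lemma:affine}); and in the converse direction Lemma~\ref{lemma:sufficient} also requires $L^*$ to be a block of $G_0$ on $\mathcal{S}(0)$, which is the substantive verification, not the transitivity of $G_L$ on $L$.
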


 We are unable to  provide a classification of the $G$-affine proper partial linear spaces~$\mathcal{S}$ for which $\mathcal{S}(0)=V_n(p)^*\times V_n(p)^*$ for groups $G$ in class (I0), which is why we assume that $G_0$ is not a subgroup of $ \GammaL_1(p^n)\wr \langle\tau\rangle$ in Proposition~\ref{prop:(I)bad}. Note that these conditions on $G$ and $\mathcal{S}$  are precisely those of  Theorem~\ref{thm:main}(iv)(b).

Proposition~\ref{prop:(I)good} is a direct consequence of Proposition~\ref{prop:(I)goodmore}, proved below, while Proposition~\ref{prop:(I)bad} will follow (with some work) from Proposition~\ref{prop:(I)badmore}, where detailed information about the groups involved is given. See also Examples~\ref{example:(I)dep}--\ref{example:(I)spor}. 
First we consider the orbit $V_1^*\cup V_2^*$.

\begin{lemma}
\label{lemma:A2good}
Let $G$ be an affine permutation group of rank~$3$ on $V:=V_n(p)\oplus V_n(p)$ where $n\geq 1$, $p$ is prime and $G_0=G_{0,V_1}\langle(t,1)\tau\rangle\leq \GL_n(p)\wr\langle\tau\rangle$.  The following are equivalent.
\begin{itemize}
\item[(i)] $\mathcal{S}$ is a $G$-affine proper partial linear space  in which $\mathcal{S}(0)=V_1^*\cup V_2^*$.
\item[(ii)] $\mathcal{S}=\mathcal{S}'\cprod\mathcal{S}'$ where $\mathcal{S}'$ is a linear space with point set $V_n(p)$ and line-size at least $3$ admitting  $V_n(p){:}G_0^1$ as a $2$-transitive group of automorphisms.
\end{itemize}
\end{lemma}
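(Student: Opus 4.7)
My plan is to establish the equivalence by handling the two implications separately, using Lemma~\ref{lemma:basic}(i) to force each line of $\mathcal{S}$ through $0$ into either $V_1$ or $V_2$, and then using the cartesian product machinery of Lemma~\ref{lemma:cartesian} to assemble the global structure on $V$ from its restriction to $V_1$.

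For (ii)~$\Rightarrow$~(i), I would argue that $\mathcal{S}'\cprod\mathcal{S}'$ is a partial linear space by Lemma~\ref{lemma:cartesian}(i), fails to be a linear space by Lemma~\ref{lemma:cartesian}(iii), and fails to be a graph since its line-size equals that of $\mathcal{S}'$ and is at least~$3$; hence it is proper. Then Lemma~\ref{lemma:cartesian}(i)--(ii) gives $V{:}(G_0^1\wr\langle\tau\rangle)\leq\Aut(\mathcal{S}'\cprod\mathcal{S}')$, which contains $G$ because $G_0\leq G_0^1\wr\langle\tau\rangle$; and since $\mathcal{S}'$ is a linear space on $V_n(p)$, the set of points of $\mathcal{S}'\cprod\mathcal{S}'$ collinear with $0$ is exactly $V_1^*\cup V_2^*$.

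For the converse, I would first show that every line $L$ of $\mathcal{S}$ through $0$ satisfies $L^*\subseteq V_1^*$ or $L^*\subseteq V_2^*$: if there existed $v\in L^*\cap V_1^*$ and $w\in L^*\cap V_2^*$, then by Lemma~\ref{lemma:basic}(i), $w-v\in v^{G_0}=V_1^*\cup V_2^*$, contradicting the fact that $w-v$ has both components nonzero. By Lemma~\ref{lemma:necessary}(iii), the line-size is at least~$3$. I would then identify $V_1$ with $V_n(p)$ canonically and define $\mathcal{S}':=\mathcal{S}\cap V_1$. Because $\mathcal{S}(0)\cap V_1=V_1^*$, every element of $V_n(p)^*$ is collinear with $0$ in $\mathcal{S}'$, and translation invariance then forces $\mathcal{S}'$ to be a linear space. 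By Lemma~\ref{lemma:intersect}(ii), $V_n(p){:}G_0^1=G_{V_1}^{V_1}\leq\Aut(\mathcal{S}')$, and the action is $2$-transitive because the rank-$3$ hypothesis on $G$ forces $G_0^1$ to be transitive on $V_n(p)^*$.

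To conclude I would verify that $\mathcal{S}=\mathcal{S}'\cprod\mathcal{S}'$. The horizontal lines through $0$ coincide by the definition of $\mathcal{S}'$, and all remaining horizontal lines are $V$-translates. Each vertical line of $\mathcal{S}$ through $0$ has the form $L^{(t,1)\tau}=\{(0,v^t):v\in L\}$ for some line $L$ of $\mathcal{S}'$ through $0$. The key observation is that $((t,1)\tau)^2=(t,t)$ lies in $G_{0,V_1}$, so $t\in G_0^1\leq\Aut(\mathcal{S}')$; therefore $L^t$ is again a line of $\mathcal{S}'$ through $0$. Under the canonical identification $V_2\cong V_n(p)$, the vertical lines of $\mathcal{S}$ through $0$ therefore coincide with those of $\mathcal{S}'\cprod\mathcal{S}'$, and the remaining lines match after translation. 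The principal subtlety lies in this last step: one must know that the ``off-diagonal'' element $t$ preserves $\mathcal{S}'$, so that the copies of the linear structure induced on $V_1$ and on $V_2$ coincide rather than differing by a non-trivial twist.
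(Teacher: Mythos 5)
Your proof is correct and follows essentially the same route as the paper's: Lemma~\ref{lemma:basic}(i) confines every line through $0$ to $V_1$ or $V_2$, the linear space $\mathcal{S}'$ is the structure induced on $V_1$, and your ``key observation'' that $t\in G_0^1\leq\Aut(\mathcal{S}')_0$ (because $((t,1)\tau)^2=(t,t)$) is exactly the fact the paper uses, via $G_0\leq G_0^1\wr\langle\tau\rangle$, to match the vertical lines with the horizontal ones. The only quibble is your appeal to Lemma~\ref{lemma:necessary}(iii) for line-size at least $3$, which assumes primitivity of $G$ (not hypothesised here); the same conclusion follows directly from properness together with the flag-transitivity recorded in Remark~\ref{remark:PLSrank3}.
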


\begin{proof}
Suppose that (i) holds and let $\mathcal{L}$ be the line set of $\mathcal{S}$. Let $L\in\mathcal{L}_0$ and $B:=L^*$. Without loss of generality, we may assume that $(u,0)\in L$ for some $u\in V_n(p)^*$.  If there exists $(0,v)\in B$ for some $v\in V_n(p)^*$, then $(u,-v)\in V_1^*\cup V_2^*$ by Lemma~\ref{lemma:basic}(i), a contradiction. Thus $L\subseteq V_1$, so $L=\{(v,0):v \in L'\}$ for some $L'\subseteq V_n(p)$.  Let $\mathcal{S}':=(V_n(p),\mathcal{L}')$ where $\mathcal{L}'$ is defined to be the set of subsets $M'$ of $V_n(p)$ for which $\{(v,0): v\in M'\}\in\mathcal{L}$. Then $\mathcal{S}'$ is a  linear space with line-size at least $3$ admitting $V_n(p){:}G_0^1$ as a $2$-transitive  group of automorphisms. It remains to show that $\mathcal{S}=\mathcal{S}'\cprod\mathcal{S}'$. Let $M\in \mathcal{L}$. Then $M=L^g+(v_1,v_2)$ for some $g\in G_0$ and $(v_1,v_2)\in V$. Now $g=(g_1,g_2)\tau^i$ for some $g_1,g_2\in\GL_n(p)$ and $i\in \{1,2\}$. If $i=2$, then $M=\{(v,v_2): v \in (L')^{g_1}+v_1\}$, and if $i=1$, then $M=\{(v_1,v): v \in (L')^{g_1}+v_2\}$. Now $g_1\in G_0^1$ since $G_0\leq G_0^1\wr\langle\tau\rangle$, so  $(L')^{g_1}+v_j\in\mathcal{L}'$ for $j=1,2$. Thus $M\in\mathcal{L}'\cprod\mathcal{L}'$. On the other hand, a typical line of $\mathcal{S}'\cprod\mathcal{S}'$ has the form $\{(v,w):v\in M'\}$ or $\{(w,v):v\in M'\}$  for some $w\in V_n(p)$ and $M'\in\mathcal{L}'$. Now $\{(v,0): v\in M'\}\in\mathcal{L}$, so $\{(v,w):v\in M'\}=\{(v,0): v\in M'\}+(0,w)\in \mathcal{L}$. Further, $g:=(1,t^{-1})\tau\in G_0$, so  
 $\{(w,v):v\in M'\}= \{(v,w^t):v\in M'\}^g\in\mathcal{L}$. Thus (ii) holds. 

Conversely, suppose that (ii) holds. By Lemma~\ref{lemma:cartesian}, $\mathcal{S}$ is a proper partial linear space with point set $V$ in which $\mathcal{S}(0)=V_1^*\cup V_2^*$, and $\Aut(\mathcal{S})\geq (V_n(p){:}G_0^1)\wr\langle\tau\rangle\simeq  V{:}(G_0^1\wr\langle\tau\rangle)\geq G$. Thus (i) holds. 
\end{proof}

Using Kantor's classification~\cite{Kan1985} of the linear spaces with a $2$-transitive group of automorphisms (see Theorem~\ref{thm:Kantor}), we prove the following, from which Proposition~\ref{prop:(I)good} immediately follows. Note that this result includes the groups in class (I0); in fact, its proof does not require Theorem~\ref{thm:A2groups}. For the following, recall the definition of   $\zeta_9$ and $\sigma_9$ from  \S\ref{ss:basicsvs}.

 \begin{prop}
\label{prop:(I)goodmore}
Let $G$ be an affine permutation group of rank~$3$ on $V:=V_n(p)\oplus V_n(p)$ where $n\geq 1$, $p$ is prime and $G_0=G_{0,V_1}\langle(t,1)\tau\rangle\leq \GL_n(p)\wr \langle \tau\rangle$. 
Then $\mathcal{S}:=(V,\mathcal{L})$ is a $G$-affine proper partial linear space in which $\mathcal{S}(0)=V_1^*\cup V_2^*$ if and only if  one of the following holds.
\begin{itemize}
\item[(i)] $\mathcal{L}=\{V_i+v: i\in \{1,2\}, v\in V\}$  where $p^n\neq 2$.
\item[(ii)] $\mathcal{L}=\{\langle u\rangle_F +v : u\in V_1^*\cup V_2^*, v\in V\}$ for some subfield $F$ of $\mathbb{F}_q$ where $q^m=p^n$,  $|F|\geq 3$ and $G_0\leq \GammaL_m(q)\wr\langle \tau\rangle$. If $m=1$, then $F\neq \mathbb{F}_q$.
\item[(iii)] $\mathcal{L}=\mathcal{L}'\cprod\mathcal{L}'$ where $(V_n(p),\mathcal{L}')$ is a linear space and one of the following holds.
\begin{itemize}
\item[(a)] $G_0\leq N_{\GL_4(3)}(D_8\circ Q_8)\wr\langle\tau\rangle$ and $p^n=3^4$, and if $\mathcal{N}$ denotes the line set of the nearfield plane of order $9$, then either $\mathcal{L}'=\mathcal{N}$, or $G_0\leq (\SL_2(5)\langle \zeta_9\sigma_9\rangle)\wr\langle\tau\rangle$ where $\SL_2(5)\leq \SL_2(9)$, in which case  $\mathcal{L}'=\mathcal{N}^g$ for some $g\in \{1,\zeta_9^2\}$ and $G_0$ is $(\SL_2(5)\langle \zeta_9\sigma_9)\rangle\wr\langle\tau\rangle$ or $(\SL_2(5)\times \SL_2(5))\langle (\zeta_9\sigma_9,1)\tau\rangle$. 

\item[(b)]  $\mathcal{L}'$ is the line set of the Hering plane of order $27$ or either  of the two Hering spaces  with line-size $9$, and $G_0=\SL_2(13)\wr\langle\tau\rangle$ where $p^n=3^6$.
\end{itemize}
\end{itemize}
\end{prop}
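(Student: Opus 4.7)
The plan is to use Lemma~\ref{lemma:A2good} to reduce the problem to classifying $2$-transitive affine linear spaces on $V_n(p)$ with line-size at least $3$, and then apply Kantor's classification (Theorem~\ref{thm:Kantor}) to $V_n(p){:}G_0^1$. Throughout, $G_0 \leq G_0^1 \wr \langle \tau \rangle$, which will let us lift conditions on $G_0^1$ to conditions on $G_0$.

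First I would verify the forward direction. Assume (i) of the proposition statement is the condition from Lemma~\ref{lemma:A2good}: $\mathcal{S} = \mathcal{S}' \cprod \mathcal{S}'$ for a linear space $\mathcal{S}' = (V_n(p),\mathcal{L}')$ of line-size at least $3$ admitting the $2$-transitive affine group $H := V_n(p){:}G_0^1$. Apply Theorem~\ref{thm:Kantor} to $H$. Case (i) of Kantor gives $\mathcal{L}' = \{V_n(p)\}$ with $p^n \neq 2$, so $\mathcal{L}' \cprod \mathcal{L}' = \{V_i + v : i \in \{1,2\}, v \in V\}$, yielding (i) of the proposition. Case (ii) of Kantor gives $G_0^1 \leq \GammaL_m(q)$ with $q^m = p^n$ and $\mathcal{L}' = \{\langle u \rangle_F + v : u \in V_n(p)^*, v \in V_n(p)\}$ for a subfield $F$ of $\mathbb{F}_q$ with $|F| \neq 2$; then $\mathcal{L}' \cprod \mathcal{L}' = \{\langle u \rangle_F + v : u \in V_1^* \cup V_2^*, v \in V\}$, and $G_0 \leq G_0^1 \wr \langle \tau \rangle \leq \GammaL_m(q) \wr \langle \tau \rangle$, yielding (ii). The constraint that $F \neq \mathbb{F}_q$ when $m=1$ is needed only to distinguish (ii) from (i). Cases (iii) and (iv) of Kantor deliver exactly cases (iii)(a) and (iii)(b) of the proposition, once we analyze the admissible $G_0$ in the wreath product.

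The most delicate step, and the one I expect to be the main obstacle, is pinning down the possibilities for $G_0$ in case (iii)(a), since the automorphism group of the nearfield plane of order $9$ admits several transitive subgroups. Here I would argue as follows. By Remark~\ref{remark:2trans}, the transitive subgroups of $N_{\GL_4(3)}(D_8\circ Q_8)$ on $V_4(3)^*$ either contain $D_8 \circ Q_8$ (in which case they preserve only $\mathcal{N}$) or equal $\SL_2(5)\langle \zeta_9\sigma_9\rangle$ (which preserves $\mathcal{N}$ and $\mathcal{N}^{\zeta_9^2}$). If $D_8 \circ Q_8 \unlhd G_0^1$ then $\mathcal{L}' = \mathcal{N}$ and no extra constraint on $G_0$ arises beyond $G_0 \leq N_{\GL_4(3)}(D_8\circ Q_8)\wr\langle\tau\rangle$. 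Otherwise $G_0^1 = \SL_2(5)\langle \zeta_9\sigma_9\rangle$, so $G_0 \leq (\SL_2(5)\langle \zeta_9\sigma_9\rangle)\wr \langle\tau\rangle$, and $\mathcal{L}' \in \{\mathcal{N}, \mathcal{N}^{\zeta_9^2}\}$. Using $G_0 = G_{0,V_1}\langle (t,1)\tau\rangle$ with the index-$2$ subgroup $G_{0,V_1}$ projecting onto $G_0^1$ in each coordinate, I would show that $G_{0,V_1}$ is either the full direct product $\SL_2(5)\langle \zeta_9\sigma_9\rangle \times \SL_2(5)\langle \zeta_9\sigma_9\rangle$ or its index-$2$ diagonal-type subgroup $\SL_2(5)\times \SL_2(5)$ (obtained by enforcing matching $\sigma_9$-components across the two factors), exhausting the two listed options. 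Case (iii)(b) is simpler because $\SL_2(13)$ is quasisimple and transitive on $V_6(3)^*$, so the only possibility is $G_0^1 = \SL_2(13)$ and $G_0 = \SL_2(13)\wr\langle\tau\rangle$, with $\mathcal{L}'$ the line set of any of the three Hering linear spaces listed in Theorem~\ref{thm:Kantor}(iv).

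For the converse direction, if one of (i), (ii), (iii) holds, I would in each case identify $\mathcal{L}$ with $\mathcal{L}' \cprod \mathcal{L}'$ for a linear space $\mathcal{S}' = (V_n(p),\mathcal{L}')$ satisfying the hypothesis of Lemma~\ref{lemma:A2good}(ii), and then invoke that lemma to conclude $(V,\mathcal{L})$ is a $G$-affine proper partial linear space with $\mathcal{S}(0) = V_1^* \cup V_2^*$. For (i), take $\mathcal{L}' = \{V_n(p)\}$; for (ii), take $\mathcal{L}' = \{\langle u\rangle_F + v : u \in V_n(p)^*, v \in V_n(p)\}$, which is $2$-transitively acted on by $V_n(p){:}G_0^1 \leq V_n(p){:}\GammaL_m(q)$; for (iii), take $\mathcal{L}'$ to be the specified line set of the nearfield plane or Hering linear space, whose $2$-transitive automorphism group is verified in Remark~\ref{remark:2trans} and Theorem~\ref{thm:Kantor}(iii)--(iv). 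In each case $G \leq V{:}(G_0^1 \wr \langle \tau \rangle) \leq \Aut(\mathcal{S}' \cprod \mathcal{S}')$ by Lemma~\ref{lemma:cartesian}, and the line-size of $\mathcal{S}'$ is at least $3$ by inspection, completing the proof.
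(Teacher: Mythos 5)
Your proposal is correct and follows essentially the same route as the paper: apply Lemma~\ref{lemma:A2good} to reduce to the classification of $2$-transitive affine linear spaces on $V_n(p)$ with line-size at least $3$, run through the cases of Theorem~\ref{thm:Kantor}, and use Lemma~\ref{lemma:A2good}(ii)$\Rightarrow$(i) for the converse. The only place the two differ is in case (iii)(a), where the paper simply states that the admissible $G_0$ "can be verified" while you sketch the subdirect-product analysis of $G_{0,V_1}$; note there that fixing $G_{0,V_1}=(\SL_2(5)\times\SL_2(5))\langle(\zeta_9\sigma_9,\zeta_9\sigma_9)\rangle$ still leaves two extensions $G_{0,V_1}\langle(t,1)\tau\rangle$ up to the choice of $t$ modulo $\SL_2(5)$, and one must check transitivity on $V_4(3)^*\times V_4(3)^*$ to rule out the one not listed.
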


\begin{proof}
Recall that $t\in G_0^1=G_0^2$ and $G_0\leq G_0^1\wr\langle\tau\rangle$. If one of (i)--(iii) holds, then $(V,\mathcal{L})$ is a $G$-affine proper partial linear space by Lemma~\ref{lemma:A2good} and Theorem~\ref{thm:Kantor}. 

Conversely, let $\mathcal{S}:=(V,\mathcal{L})$ be a $G$-affine proper partial linear space in which $\mathcal{S}(0)=V_1^*\cup V_2^*$. By Lemma~\ref{lemma:A2good}, 
$\mathcal{L}=\mathcal{L}'\cprod\mathcal{L}'$ where $\mathcal{S}':=(V_n(p),\mathcal{L}')$ is a linear space with  line-size at least $3$ admitting  $V_n(p){:}G_0^1$ as a $2$-transitive group of automorphisms. Thus $\mathcal{S}'$ is given by Theorem~\ref{thm:Kantor}. If Theorem~\ref{thm:Kantor}(i) holds for $\mathcal{S}'$, then  $\mathcal{L}'=\{V_n(p)\}$, and (i) holds.

Suppose that Theorem~\ref{thm:Kantor}(ii) holds for $\mathcal{S}'$. Then $G_0^1\leq \GammaL_m(q)$ where $q^m=p^n$ and $\mathcal{L}'=\{\langle u\rangle_F+v : u\in V_n(p)^*, v\in V_n(p)\}$ where $F$ is a subfield of $\mathbb{F}_q$ with $|F|\geq 3$. If $m=1$ and $F=\mathbb{F}_q$, then (i) holds; otherwise, (ii) holds. 

Suppose that Theorem~\ref{thm:Kantor}(iv) holds for $\mathcal{S}'$. Then $G_0^1=\SL_2(13)$ where $p^n=3^6$ and $\mathcal{S}'$ is the Hering plane of order $27$ or either of the two Hering spaces with line-size $9$. It is straightforward to verify that $G_0= \SL_2(13)\wr\langle\tau\rangle$, so (iii)(b) holds. 

Suppose that Theorem~\ref{thm:Kantor}(iii) holds for $\mathcal{S}'$. Now $G_0^1\leq N_{\GL_4(3)}(D_8\circ Q_8)$ where $p^n=3^4$. Let $\mathcal{N}$ be the line set of the nearfield plane of order $9$. By Theorem~\ref{thm:Kantor}, either $D_8\circ Q_8\unlhd G_0^1$ and $\mathcal{L}'=\mathcal{N}$, or $G_0^1=\SL_2(5)\langle \zeta_9\sigma_9\rangle\leq \GammaL_2(9)$ and $\mathcal{L}'=\mathcal{N}^g$ for some $g\in \{1,\zeta_9^2\}$. In either case, $G_0\leq N_{\GL_4(3)}(D_8\circ Q_8)\wr\langle\tau\rangle$. If $G_0\leq (\SL_2(5)\langle \zeta_9\sigma_9)\rangle\wr\langle\tau\rangle$, then since $G$ has rank~$3$, it can be verified that $G_0=(\SL_2(5)\langle \zeta_9\sigma_9\rangle)\wr\langle\tau\rangle$ or $(\SL_2(5)\times \SL_2(5))\langle (\zeta_9\sigma_9,1)\tau\rangle$. 
Thus (iii)(a) holds. 
\end{proof}

\begin{example}
\label{example:(I)goodnearfield}
Let $\mathcal{N}$ be the nearfield plane of order $9$ (see~\S\ref{ss:nearfield}). By Propositions~\ref{prop:(I)good}(iii)(a) or~\ref{prop:(I)goodmore}(iii)(a),  $\mathcal{N}\cprod \mathcal{N}$ is a $G$-affine proper partial linear space with point set $V:=V_4(3)\oplus V_4(3)$ where $G:=V{:}(N_{\GL_4(3)}(D_8\circ Q_8)\wr\langle \tau\rangle)$. By Remark~\ref{remark:2trans},  $N_{\GL_4(3)}(D_8\circ Q_8)=D_8\circ Q_8\circ 2\nonsplit S_5^-$. Using {\sc Magma}, we determine that $\Aut(\mathcal{N}\cprod\mathcal{N})=G$. Note  also that the lines of $\mathcal{N}\cprod \mathcal{N}$ are affine subspaces of $V$.
\end{example}

We wish to determine the full automorphism groups of the partial linear spaces in Proposition~\ref{prop:(I)good}(iii)(b), but we are unable to do this using {\sc Magma}.  Instead, we use the following consequence of Theorem~\ref{thm:primrank3plus}. We will also use this result to determine the full automorphism groups of  some of the partial linear spaces from Proposition~\ref{prop:(I)bad}. 

\begin{lemma}
\label{lemma:(I)aut}
Let $V:=V_n(p)\oplus V_n(p)$ where $n\geq 2$ and $p$ is an odd prime. Let $G$ be an insoluble  affine permutation group of rank~$3$ on $V$  where $G_0\leq \GL_n(p)\wr\langle\tau\rangle$. Let 
$\mathcal{S}$ be a $G$-affine proper partial linear space with point set $V$ that is not isomorphic to the $p^n\times p^n$ grid. Then $\Aut(\mathcal{S})$ is an affine  permutation group  on $V$, and $\Aut(\mathcal{S})_0 \leq \GL_n(p)\wr\langle\tau\rangle$. 
\end{lemma}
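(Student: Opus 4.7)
The plan is to combine Theorem~\ref{thm:primrank3plus} with a short linear-algebra observation that pins down the decomposition $V=V_1\oplus V_2$ canonically from the orbit structure of $G_0$ on $V^*$.

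First I would observe that $G_0$ has precisely the orbits $\Omega_1:=V_1^*\cup V_2^*$ and $\Omega_2:=V_n(p)^*\times V_n(p)^*$ on $V^*$.  Indeed, $\GL_n(p)\wr\langle\tau\rangle$ preserves each of $\Omega_1$ and $\Omega_2$, so every $G_0$-orbit lies inside one of them; since $G$ has rank~$3$, there are exactly two $G_0$-orbits on $V^*$, so each of $\Omega_1$ and $\Omega_2$ must be a $G_0$-orbit.  Because $p$ is odd and $n\geq 2$, we have $p^n\geq 9>2$, so Lemma~\ref{lemma:primitive} shows $G$ is primitive on $V$.  By Remark~\ref{remark:PLSrank3} and Lemma~\ref{lemma:primrank3}, $\Aut(\mathcal{S})$ is then a primitive permutation group of rank~$3$ on $V$.

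Next I would invoke Theorem~\ref{thm:primrank3plus}: since $\mathcal{S}$ is not isomorphic to the $p^n\times p^n$ grid, that theorem forces $\Aut(\mathcal{S})$ to be an affine primitive permutation group of rank~$3$ with socle $V$, and in particular $\Aut(\mathcal{S})_0\leq \GL(V)=\GL_{2n}(p)$.  Because $G\leq \Aut(\mathcal{S})$ and both groups have rank~$3$ on $V$, the orbits of $\Aut(\mathcal{S})_0$ on $V^*$ coincide with those of $G_0$; consequently $\Aut(\mathcal{S})_0$ fixes $\Omega_1$ setwise and therefore preserves the set $V_1\cup V_2=\Omega_1\cup\{0\}$.

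The key step is then to show that $V_1$ and $V_2$ are exactly the maximal $\mathbb{F}_p$-subspaces of $V$ contained in $V_1\cup V_2$.  This is a standard observation: if $W\leq V$ is contained in $V_1\cup V_2$ but in neither $V_1$ nor $V_2$, then choosing $v_1\in (W\cap V_1)\setminus V_2$ and $v_2\in (W\cap V_2)\setminus V_1$ forces $v_1+v_2\in W$ while $v_1+v_2\notin V_1\cup V_2$, a contradiction.  Since $\Aut(\mathcal{S})_0$ acts $\mathbb{F}_p$-linearly and preserves $V_1\cup V_2$, it must therefore permute the pair $\{V_1,V_2\}$, so $\Aut(\mathcal{S})_0$ lies in the setwise stabiliser of this pair in $\GL_{2n}(p)$, which is exactly $\GL_n(p)\wr\langle\tau\rangle$.

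There is no substantial obstacle beyond verifying that Theorem~\ref{thm:primrank3plus} applies; the real content of the lemma is the linear-algebra observation that the unordered pair $\{V_1,V_2\}$ is canonically recoverable from the $\mathbb{F}_p$-subset $V_1\cup V_2$, which in turn is canonically determined by $\mathcal{S}$ via its collinearity relation.  The hypotheses that $p$ is odd and that $n\geq 2$ enter only to ensure $p^n>2$ (and hence primitivity of $G$), while the insolubility of $G$ is not used in this argument but is the natural setting in which the lemma is applied later.
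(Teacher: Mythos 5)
Your proof is correct, but it takes a genuinely different route from the paper's for the key step. Both arguments begin by applying Theorem~\ref{thm:primrank3plus} (after checking primitivity via Lemma~\ref{lemma:primitive} and rank~$3$-ness of $\Aut(\mathcal{S})$ via Remark~\ref{remark:PLSrank3} and Lemma~\ref{lemma:primrank3}) to conclude that $H:=\Aut(\mathcal{S})$ is affine with socle $V$. At that point the paper invokes the classification of Theorem~\ref{thm:rank3}: insolubility of $G$ rules out class (R0), the fact that the subdegrees $2(p^n-1)$ and $(p^n-1)^2$ are coprime to the odd prime $p$ rules out classes (T), (S) and (R1)--(R5) via Table~\ref{tab:subdegree}, and Tables~\ref{tab:E} and~\ref{tab:AS} are checked directly; this places $H_0$ in class (I), so $H_0$ preserves \emph{some} decomposition $V=U_1\oplus U_2$, which is then identified with $\{V_1,V_2\}$ by comparing $G_0$-orbits and using the same closure-under-addition observation you make. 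Your argument bypasses the classification entirely: since $\Omega_1=V_1^*\cup V_2^*$ is an $H_0$-orbit (because the $H_0$-orbits on $V^*$ refine to the two $G_0$-orbits), $H_0$ preserves $V_1\cup V_2$, and the elementary observation that $V_1$ and $V_2$ are precisely the maximal $\mathbb{F}_p$-subspaces contained in $V_1\cup V_2$ forces $H_0$ to permute $\{V_1,V_2\}$. What your approach buys is economy and generality: it does not use the insolubility of $G$ or the oddness of $p$ at all (only $n\geq 2$, which already gives $p^n>2$ and hence primitivity), whereas the paper's hypotheses are tailored to make the subdegree elimination work. What the paper's approach buys is the additional information, obtained en route, that $H$ lies in class (I) of the classification, though this can equally be recovered afterwards from your conclusion via Theorem~\ref{thm:A2groups}.
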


\begin{proof}
Let $H:=\Aut(\mathcal{S})$. 
By Theorem~\ref{thm:primrank3plus}, $H$ is an affine permutation group on $V$. Since $G_0$ is not soluble, $H_0$ does not belong to class (R0). If $H_0$ belongs to one of the classes (T), (S) or (R1)-(R5), then $p$ divides one of the subdegrees of $H$ (see Table~\ref{tab:subdegree}), but $H$ has subdegrees $2(p^n-1)$ and $(p^n-1)^2$, so $p=2$, a contradiction. By   Tables~\ref{tab:E} and \ref{tab:AS}, $H$ belongs to class (I), so  $H_0$ stabilises some (internal) direct sum decomposition $V=U_1\oplus U_2$ where $U_1$ and $U_2$ are $n$-dimensional subspaces of $V$. Now $G_0$ also stabilises this decomposition. In particular,  $V_1^*\cup V_2^*$ and $U_1^*\cup U_2^*$ are both orbits of $G_0$ on $V^*$ with order $2(p^n-1)$, and $2(p^n-1)\neq (p^n-1)^2$ since $n\geq 2$, so $V_1^*\cup V_2^*=U_1^*\cup U_2^*$. If $x,y\in U_1^*$ where $x\in V_1$ and $y\in V_2$, then $x+y\in U_1$, so without loss of generality, $x+y\in V_1$, but then $y\in V_1$, a contradiction. Thus $\{U_1,U_2\}=\{V_1,V_2\}$, so $H_0$ stabilises the decomposition of $V$. 
\end{proof}

\begin{example}
\label{example:(I)goodHering}
Let $\mathcal{H}$ be the Hering plane of order $27$ or either of the two Hering spaces with line-size $9$  (see \S\ref{ss:2trans}). By Propositions~\ref{prop:(I)good}(iii)(b) or~\ref{prop:(I)goodmore}(iii)(b),  $\mathcal{H}\cprod\mathcal{H}$ is a $G$-affine proper partial linear space with point set $V:=V_6(3)\oplus V_6(3)$ where $G:=V{:}(\SL_2(13)\wr\langle\tau\rangle)$. 
By  Lemma~\ref{lemma:(I)aut}, $\Aut(\mathcal{\mathcal{H}\cprod\mathcal{H}})$ is an affine permutation group of rank~$3$ on $V$, and $\Aut(\mathcal{\mathcal{H}\cprod\mathcal{H}})_0\leq \GL_6(3)\wr\langle\tau\rangle$. Note that $\SL_2(13)$ is not a subgroup of $\GammaL_3(9)$ or $\GammaL_2(27)$. Thus $\Aut(\mathcal{\mathcal{H}\cprod\mathcal{H}})=G$ by Proposition~\ref{prop:(I)good}. Using  {\sc Magma}, we determine that $G$ is self-normalising in $\Sym(V)$, so the  two partial linear spaces with line-size $9$ are not isomorphic. Note that the lines of $\mathcal{H}\cprod \mathcal{H}$ are affine subspaces of $V$ for all possible  $\mathcal{H}$, and $G_0$ has no proper subgroups with two orbits on $V^*$.
\end{example}

Now we consider the orbit $V_n(p)^*\times V_n(p)^*$.  We begin by describing a method of constructing a partial linear space  on $V_n(p)\oplus V_n(p)$  from a linear space  on $V_n(p)$  that has a $2$-transitive affine group of automorphisms for which the stabiliser of any line $M$  induces a sharply $2$-transitive group  on $M$. There are several examples of linear spaces with such groups, including  $\AG_m(q)$ with its group of automorphisms  $\AGL_m(q)$ (where $q^m=p^n$), as well as $(V_2(p),\{V_2(p)\})$ with its  group of automorphisms $V_2(p){:}R$,   where $R\leq \GL_2(p)$ and $R$ is regular on $V_2(p)^*$; for example, we may take  $p=3$ and $R=Q_8$. The reader may find it helpful to remember these examples throughout the following exposition. 

\begin{definition}
\label{defn:A2badtriple}
Let $\mathcal{S}:=(V_n(p),\mathcal{M})$ be a linear space where $n\geq 1$ and $p$ is  prime. The pair $(H_0,M)$  \textit{sharply generates}  $\mathcal{S}$ if  the following two conditions hold.
\begin{itemize}
\item[(i)] $M\in \mathcal{M}_0$ and $H_0\leq \GL_n(p)$.
\item[(ii)]  $H:=V_n(p){:}H_0$  is  a  $2$-transitive subgroup of $\Aut(\mathcal{S})$,  and $H_M^M$ is sharply $2$-transitive.
\end{itemize}
 Let $R:=H_{0,M}^M$ and $N:=N_{\GL_n(p)}(H_0)$. Then $R\unlhd  N_M^M\leq   \Sym(M)_0$, so 
 $ N_M^M\leq   N_{\Sym(M)_0}(R)$. 
The triple $(H_0,M,\alpha)$ is  \textit{compatible} with $\mathcal{S}$ if
 $(H_0,M)$ sharply generates $\mathcal{S}$ and the following two conditions hold.
\begin{itemize}
\item[(a)] $\alpha\in N_{\Sym(M)_0}(R)$ and $\alpha^2\in N_M^M$.
\item[(b)]  There exist  $w\in M^*$ and  $h_1,h_2\in H_0$ such that $M^{h_1}+w=M$ and $(v^{h_1}+w)^\alpha=v^{\alpha h_2}+w^\alpha$ for all $v\in M$.
\end{itemize}
The triple $(H_0,M,\alpha)$ is \textit{$u$-compatible} with  $\mathcal{S}$  if $(H_0,M,\alpha)$ is compatible with  $\mathcal{S}$  and  $u^\alpha=u\in M^*$. The triple $(H_0,M,\alpha)$ is \textit{linearly ($u$-)compatible} with  $\mathcal{S}$ if $(H_0,M,\alpha)$ is ($u$-)compatible with  $\mathcal{S}$, the line $M$ is a subspace of $V_n(p)$ (in which case $N_M^M\leq \GL(M,\mathbb{F}_p)$), and $\alpha\in \GL(M,\mathbb{F}_p)$.  
\end{definition}

Suppose that $(H_0,M)$ sharply generates a linear space $\mathcal{S}:=(V_n(p),\mathcal{M})$.  Let $H:=V_n(p){:}H_0$,    $R:=H_{0,M}^M$, $N:=N_{\GL_n(p)}(H_0)$ and $ ^-:N_M\to N_M^M$  be the natural homomorphism. Here are some important observations. First,  the linear space $\mathcal{M}$ can be recovered from $(H_0,M)$ since $\mathcal{M}=M^{H}$. Second, if $M$ is a subspace of $V_n(p)$, then $H_M^M=M{:}R$. Lastly, for any $M$, the group $R$ is regular on $M^*$, so for $u\in M^*$,  the subspace  $C_{V_n(p)}(H_{0,u})$ contains $M$, and 
if $R\unlhd X\leq \Sym(M)_0$, then 
 $X=R{:}X_u$ for $u\in M^*$. 

Here are some natural instances of triples that are compatible with $\mathcal{S}$. 
First, $(H_0,M,\overline{s})$ is compatible with $\mathcal{S}$ for $s\in N_M$: clearly (a) holds,  and  for any $w\in M^*$, there exists $h_1\in H_0$ such that $M^{h_1}+w=M$ since $H_M$ is transitive on $M$, so  (b) holds with $h_2:=s^{-1}h_1s$. In particular, $(H_0,M,1)$ is compatible with $\mathcal{S}$.  
 Second, for  any triple  $(H_0,M,\alpha)$ that is compatible with $\mathcal{S}$ and any $s\in H_{0,M}$, the triple $(H_0,M,\alpha \overline{s})$ is also compatible with $\mathcal{S}$:  (a) holds for $\alpha\overline{s}$ since  $\alpha$ normalises $R$, and (b) holds since   there exist  $w\in M^*$ and  $h_1,h_2\in H_0$ such that $M^{h_1}+w=M$ and $(v^{h_1}+w)^\alpha=v^{\alpha h_2}+w^\alpha$ for all $v\in M$, in which case $(v^{h_1}+w)^{\alpha\overline{s}}=v^{(\alpha\overline{s})(s^{-1}h_2s)}+w^{\alpha\overline{s}}$ for all $v\in M$. 
(However,  we caution the reader that there are instances where $(H_0,M,\alpha)$  is a compatible triple and $s\in N_M$, but $(\alpha\overline{s})^2\notin N_M^M$: see Examples~\ref{example:(I)sl25} and~\ref{example:(I)E}.) 
Finally, if   $M$ is a subspace of $V_n(p)$ and  $\alpha\in N_{\GL(M,\mathbb{F}_p)}(R)$ such that $\alpha^2\in N_M^M$, then  $(H_0,M,\alpha)$ is  linearly compatible with $\mathcal{S}$ since (b) holds for any $w\in M^*$ with $h_1=h_2=1$.  

\begin{definition}
\label{defn:A2badfamily}
Let $V:=V_n(p)\oplus V_n(p)$ where $n\geq 1$ and $p$ is prime, and let $(H_0,M,\alpha)$ be a triple  that is  compatible with a linear space on $V_n(p)$. 
 Let $N:=N_{\GL_n(p)}(H_0)$ and $ ^-:N_M\to N_M^M$  be the natural homomorphism.  Recall that the stabiliser of the decomposition of $V$ in $\GL_{2n}(p)$ is $\GL_n(p)\wr\langle \tau\rangle $,  where  $(u_1,u_2)^\tau =(u_2,u_1)$ for all $u_1,u_2\in V_n(p)$.
Define 
\begin{align*}
\mathcal{L}(H_0,M,\alpha)&:=\{(v,v^\alpha) : v \in M \}^{V:H_0\times H_0},\\
\mathcal{S}(H_0,M,\alpha)&:=(V,\mathcal{L}(H_0,M,\alpha)),\\
\mathcal{N}(H_0,M,\alpha)&:=\{(s_1,s_2) \in N_M\times N_M : \overline{s}_2 = \overline{s}_1^\alpha\}, \\
\mathcal{G}(H_0,M,\alpha)&:=(H_0\times H_0) \mathcal{N}(H_0,M,\alpha)\langle \{(s,1)\tau : s\in N_M, \overline{s}=\alpha^2 \}\rangle.
\end{align*}
\end{definition}

Clearly $\mathcal{N}(H_0,M,\alpha)$ is a group, and if $s\in N_M$ such that $\overline{s}=\alpha^2$, then $(s,1)\tau$ normalises $\mathcal{N}(H_0,M,\alpha)$, so $\mathcal{N}(H_0,M,\alpha)\langle \{(s,1)\tau : s\in N_M, \overline{s}=\alpha^2 \}\rangle$ is a group. Thus $\mathcal{G}(H_0,M,\alpha)$ is a group with normal subgroups $H_0\times H_0$ and $(H_0\times H_0)\mathcal{N}(H_0,M,\alpha)$. 
Observe that for $t\in N_M$ such that $\overline{t}=\alpha^2$, $(H_0\times H_0)\langle (t,1)\tau \rangle$ has two orbits on $V^*$. In particular, $V{:}\mathcal{G}(H_0,M,\alpha)$ is a rank~$3$ group. Observe also that $\mathcal{G}(H_0,M,\alpha)_{V_1}=(H_0\times H_0)\mathcal{N}(H_0,M,\alpha)$ since $(s,s)\in \mathcal{N}(H_0,M,\alpha)$ for all $s\in N_M$ such that $\overline{s}=\alpha^2$. 

Here are some basic properties of $\mathcal{G}(H_0,M,\alpha)$ and $\mathcal{S}(H_0,M,\alpha)$.

\begin{lemma}
\label{lemma:A2badgeneric}
Let $V:=V_n(p)\oplus V_n(p)$ where $n\geq 1$ and $p$ is prime, and let $(H_0,M,\alpha)$ be a triple  that is  compatible with a linear space on $V_n(p)$.  Let $N:=N_{\GL_n(p)}(H_0)$ and $ ^-:N_M\to N_M^M$ be the natural homomorphism.   Let $R:=H_{0,M}^M$ and $S:=N_M^M$. Let $t\in N_M$ be such that $\overline{t}=\alpha^2$.
 \begin{itemize}
\item[(i)] $\mathcal{N}(H_0,M,\alpha)\langle (t,1)\tau \rangle=\mathcal{N}(H_0,M,\alpha)\langle \{(s,1)\tau : s\in N_M, \overline{s}=\alpha^2 \}\rangle$.

\item[(ii)] 
$(H_0\times H_0)\mathcal{N}(H_0,M,\alpha)=(H_0\times H_0)\{(s_1,s_2)\in N_M\times N_M : u^{s_1}=u,\overline{s}_2=\overline{s}_1^\alpha\}$ for $u\in M^*$.

\item[(iii)] For $s\in N_{M}$, $\mathcal{G}(H_0,M,\overline{s})=\mathcal{G}(H_0,M,1)^{(1,s)}$ and $\mathcal{L}(H_0,M,\overline{s})=\mathcal{L}(H_0,M,1)^{(1,s)}$.

\item[(iv)]  For $r\in R$, $\mathcal{G}(H_0,M,\alpha r)=\mathcal{G}(H_0,M,\alpha)$ and $\mathcal{L}(H_0,M,\alpha r)=\mathcal{L}(H_0,M,\alpha)$.

\item[(v)] Suppose that $\alpha$ normalises $S$ and fixes $u\in M^*$. Then  $\mathcal{G}(H_0,M,1)=\mathcal{G}(H_0,M,\alpha)$ if and only if $\alpha^2=1$ and $\alpha$ centralises $S_u$.

\item[(vi)] $\mathcal{S}(H_0,M,\alpha)$ is a $(V{:}\mathcal{G}(H_0,M,\alpha))$-affine partial linear space.
\end{itemize}
\end{lemma}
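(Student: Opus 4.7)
\medskip

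The plan is to unwind the definitions for parts (i)--(v) and treat part (vi) as the substantive claim. Throughout I will use freely that $R$ is regular on $M^{*}$ (a consequence of sharp $2$-transitivity of $H_M^M$) and that $\alpha$ normalises $R$.

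For (i), given any $s \in N_M$ with $\overline{s} = \alpha^{2} = \overline{t}$, I observe that $st^{-1} \in N_M$ lies in the kernel of $\overline{\cdot}$, so $(st^{-1},1) \in \mathcal{N}(H_0,M,\alpha)$, and hence $(s,1)\tau = (st^{-1},1)(t,1)\tau$ lies in $\mathcal{N}(H_0,M,\alpha)\langle(t,1)\tau\rangle$; the reverse inclusion is immediate. For (ii), given $(s_1,s_2) \in \mathcal{N}$, I pick $h_1 \in H_{0,M}$ whose image in $R$ sends $u$ to $u^{s_1}$ (this is possible since $R$ is regular), and pick $h_2 \in H_{0,M}$ with $\overline{h_2} = \overline{h_1}^{\alpha}$ (possible since $\alpha$ normalises $R$); then $(h_1^{-1}s_1, h_2^{-1}s_2)$ lies in the right-hand set.

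For (iii), I check that conjugation by $(1,s)$ sends each generator of $\mathcal{G}(H_0,M,1)$ to a generator of $\mathcal{G}(H_0,M,\overline{s})$: the factor $H_0 \times H_0$ is preserved because $s \in N$; the computation $\overline{s^{-1}s_2 s} = \overline{s_2}^{\overline{s}} = \overline{s_1}^{\overline{s}}$ yields $\mathcal{N}(H_0,M,1)^{(1,s)} = \mathcal{N}(H_0,M,\overline{s})$; and $\tau^{(1,s)} = (s,s^{-1})\tau$ factors as $(s^{-1},s^{-1})\cdot (s^{2},1)\tau$, where the first factor lies in $\mathcal{N}(H_0,M,\overline{s})$ (since $\overline{s}^{-1}$ is centralised by $\overline{s}$) and the second is an allowed $\tau$-generator by (i). The line-set identity follows by simply applying $(1,s)$ to the defining line. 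For (iv), given $r \in R$ I lift to $k \in H_{0,M}$ with $\overline{k} = r$; then $L^{(1,k)} = \{(v,v^{\alpha r}) : v \in M\}$ proves the $\mathcal{L}$-equality, conjugation by $(1,k) \in H_0 \times H_0 \trianglelefteq \mathcal{G}$ sends $\mathcal{N}(H_0,M,\alpha)$ to $\mathcal{N}(H_0,M,\alpha r)$ and leaves $(H_0\times H_0)\mathcal{N}$ invariant, and the new $\tau$-generator $(tk,1)\tau = (t,1)\tau\cdot(1,k)$ sits in $\mathcal{G}(H_0,M,\alpha)$; compatibility of $(H_0,M,\alpha r)$ is obtained by replacing the original $h_2$ in (b) with $k^{-1} h_2 k$. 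For (v), if $\alpha^{2} = 1$ and $\alpha$ centralises $S_u$, then the two $\tau$-generators coincide (both are $\tau$) and the (ii)-description of $\mathcal{N}$ forces $\overline{s_1} \in S_u$, on which $\alpha$ acts trivially, so $\overline{s_2} = \overline{s_1}^{\alpha} = \overline{s_1}$. Conversely, if the groups coincide, then $\tau \in \mathcal{G}(H_0,M,\alpha)$ forces $(t_\alpha^{-1},1) \in (H_0\times H_0)\mathcal{N}(H_0,M,\alpha)$; running the (ii)-decomposition yields $\overline{s_1} \in R$, and since $R$ is regular on $M^{*}$ and $\alpha^{2}(u) = u$, I conclude $\alpha^{2} = 1$; finally, matching the $\mathcal{N}$-parts forces $\overline{s_1}^{\alpha} = \overline{s_1}$ for every $\overline{s_1} \in S_u$.

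Part (vi) is the substantive claim and splits into showing that $\mathcal{G}(H_0,M,\alpha)$ preserves $\mathcal{L}$ and that $(V,\mathcal{L})$ is a partial linear space. For the first I inspect each generator: $H_0 \times H_0$ preserves $\mathcal{L}$ by construction; for $(s_1,s_2) \in \mathcal{N}$ the image of $L$ equals $\{(w, w^{\overline{s_1}^{-1}\alpha\overline{s_2}}) : w \in M\} = L$ using the defining relation $\overline{s_2} = \overline{s_1}^{\alpha}$; and for $(t,1)\tau$ the image is $\{(v^{\alpha}, v^{t}) : v \in M\} = \{(w, w^{\alpha^{-1}t}) : w \in M\}$, which equals $L$ because $\overline{\alpha^{-1}t} = \alpha^{-1}\alpha^{2} = \alpha$. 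For the partial-linear-space property I use $V$-transitivity to reduce to the case where one shared point is $(0,0)$; any line through $(0,0)$ has the form $L^{(h_1,h_2)}$ with $h_i \in H_0$, and if such a line also contains $(u,u^{\alpha})$ with $u \in M^{*}$, then $u \in M \cap M^{h_1}$ has two distinct common points with $M$ so $h_1 \in H_{0,M}$, and similarly $h_2 \in H_{0,M}$. The line then reads $\{(w, w^{\gamma}) : w \in M\}$ with $\gamma = \overline{h_1}^{-1}\alpha\overline{h_2} \in R\alpha R$, and two such $\gamma$'s agreeing at $u \in M^{*}$ satisfy $\gamma_1\gamma_2^{-1} \in R$ (using that $\alpha$ normalises $R$) and fix $u$, hence are equal by regularity of $R$.

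The main obstacle is the partial-linear-space step in (vi): when reducing a general second shared point to one of the form $(u,u^{\alpha})$, one must use compatibility condition (b) to show that the translated line $L + (w,w^{\alpha})$ lies in the orbit $L^{V:(H_0\times H_0)}$, i.e.\ that it is of the form $L^{(h_1,h_2)}$ for appropriate $h_1,h_2 \in H_0$ (this is precisely where the second half of (b) is required, and where extra care is needed because $M$ need not be an $\mathbb{F}_p$-subspace of $V_n(p)$).
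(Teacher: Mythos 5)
Your arguments for (i)--(iii) and (v) match the paper's proof essentially step for step, and your verification in (vi) that the generators of $\mathcal{G}(H_0,M,\alpha)$ outside $H_0\times H_0$ fix the line $L=\{(v,v^\alpha):v\in M\}$ is also the paper's first observation. Two points need attention. First, a slip in (iv): the $\tau$-generator of $\mathcal{G}(H_0,M,\alpha r)$ must be $(s',1)\tau$ with $\overline{s'}=(\alpha r)^2=\alpha^2\, r^\alpha r$, not $\alpha^2 r=\overline{tk}$; since $\alpha$ need not centralise $r$, your element $(tk,1)\tau$ is not in general a valid generator. The repair is immediate (lift $r^\alpha r$ rather than $r$ to $H_{0,M}$, exactly as the paper does, writing $(\alpha\overline{s})^2=\overline{th}$ with $h\in H_{0,M}$), and the conclusion is unaffected because either choice differs from $(t,1)\tau$ by an element of $H_0\times H_0$.

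Second, and more substantively, the step you yourself flag as ``the main obstacle'' in (vi) is genuinely the crux and is not closed in your write-up. Your direct verification that two lines meet in at most one point requires that every line of $\mathcal{L}$ through $(0,0)$ has the form $L^{(h_1,h_2)}$ with $h_i\in H_0$; equivalently, that $L-(m,m^\alpha)\in L^{H_0\times H_0}$ for every $m\in M^*$, i.e.\ that $(V{:}(H_0\times H_0))_L$ is transitive on $L$. Compatibility condition (b) supplies this for only \emph{one} $w\in M^*$: it produces a single element $g=\tau_{(v_1,v_2)}(h_1,h_2)$ of the line stabiliser with $0^g=(w,w^\alpha)$. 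To upgrade to all of $M^*$ you must combine this with the block property you establish via regularity of $R$ (namely that $(H_0\times H_0)_{L^*}$ is transitive on $L^*$, since $L^*$ is a block inside the orbit $V_n(p)^*\times V_n(p)^*$), so that $\langle (H_0\times H_0)_{L^*},\,g\rangle$ is transitive on $L$. This is precisely how the paper proceeds: it proves that $L^*$ is a block of $H_0\times H_0$ and that $G_L$ meets $L^*$ in the orbit of $0$, and then invokes Lemma~\ref{lemma:sufficient} rather than re-deriving the partial-linear-space axiom by hand. With that assembly made explicit, your argument goes through; without it, the claim ``any line through $(0,0)$ has the form $L^{(h_1,h_2)}$'' is unjustified precisely when $M$ is not an $\mathbb{F}_p$-subspace or $\alpha$ is not linear, which is the case the definition of compatibility was designed to handle.
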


\begin{proof}
(i) If $s\in N_M$ such that $\overline{s}=\alpha^2$, then $(st^{-1},1)\in \mathcal{N}(H_0,M,\alpha)$, so (i) holds.
 
(ii) Let $u\in M^*$. Recall that $R$ is regular on $M^*$. Let $(s_1,s_2)\in \mathcal{N}(H_0,M,\alpha)$. There exist $h_1,h_2\in H_{0,M}$ such that $t_1:=h_1^{-1}s_1$ fixes $u$ and $t_2:=h_2^{-1}s_2$ fixes $u^\alpha$, in which case $t_1,t_2\in N_M$. Note that $\overline{t}_1^\alpha$ and $\overline{t}_2$ both fix $u^\alpha$. 
 Now $\overline{h}_2\overline{t}_2=\overline{s}_2=\overline{s}_1^\alpha=\overline{h}_1^\alpha\overline{t}_1^\alpha$, and  $R_{u^\alpha}=1$, so $\overline{t}_2=\overline{t}_1^\alpha$. It follows that (ii) holds.

(iii)
Let $s\in N_M$. Recall that $(H_0,M,\overline{s})$ and $(H_0,M,1)$ are  compatible with $\mathcal{S}$. Since $(1,s)$ normalises $V$ and $H_0\times H_0$, it follows that $\mathcal{L}(H_0,M,1)^{(1,s)}=\mathcal{L}(H_0,M,\overline{s})$.
 It is routine to verify that $\mathcal{N}(H_0,M,1)^{(1,s)}=\mathcal{N}(H_0,M, \overline{s})$. 
Further, $((s,s)\tau)^{(1,s)}=(s^2,1)\tau$ and $(s,s)\in \mathcal{N}(H_0,M,1)$, so $(\mathcal{N}(H_0,M,1)\langle \tau\rangle)^{(1,s)}=\mathcal{N}(H_0,M, \overline{s})\langle (s^2,1)\tau\rangle$. Thus (iii) holds by (i). 

(iv) Let $s\in H_{0,M}$. Recall that $(H_0,M,\alpha\overline{s})$ is compatible with $\mathcal{S}$. Since 
$(1,s)\in H_0\times H_0$, it  follows that $\mathcal{L}(H_0,M,\alpha\overline{s})=\mathcal{L}(H_0,M,\alpha)$. It is routine to verify that $\mathcal{N}(H_0,M,\alpha)^{(1,s)}=\mathcal{N}(H_0,M,\alpha \overline{s})$. 
Further,  $(\alpha\overline{s})^2=\alpha^2 \overline{s}^\alpha\overline{s}=\overline{th}$ for some $h\in H_{0,M}$.  Hence by (i),   
 $\mathcal{G}(H_0,M,\alpha \overline{s})=(H_0\times H_0)\mathcal{N}(H_0,M,\alpha)^{(1,s)}\langle (th,1)\tau\rangle=\mathcal{G}(H_0,M,\alpha)$ since  $(1,s),(1,h)\in H_0\times H_0$. 

(v) Suppose that $\alpha$ normalises $S$ and fixes $u\in M^*$. First suppose that $\mathcal{G}(H_0,M,1)=\mathcal{G}(H_0,M,\alpha)$. Then $(t,1)\tau \in  \mathcal{G}(H_0,M,1)$, so $(t,1)\tau=(h_1,h_2)(s_1,s_2)\tau$ for some $h_1,h_2\in H_0$ and $s_1,s_2\in N_M$ such that $\overline{s}_2=\overline{s}_1$. Thus $h_1,h_2\in H_{0,M}$, so $\alpha^2=\overline{t}=\overline{h}_1\overline{s}_2=\overline{h}_1\overline{h}_2^{-1}\in R_u=1$.  Let $\overline{t}_1\in S_u$ where $t_1\in N_M$. Now $\overline{t}_1^\alpha=\overline{t}_2\in S_u$ for some $t_2\in N_M$ since $\alpha$ normalises $S$ 
 and fixes $u$, so $(t_1,t_2)\in \mathcal{N}(H_0,M,\alpha)\leq (H_0\times H_0)\mathcal{N}(H_0,M,1)$. Since  $t_1$ and $t_2$  fix $u$, and since $H_{0,u}\times H_{0,u}\leq \mathcal{N}(H_0,M,1)$, (ii) implies that 
  $(t_1,t_2)\in \mathcal{N}(H_0,M,1)$, so $\overline{t}_1=\overline{t}_2=\overline{t}_1^\alpha$. Thus $\alpha$ centralises $S_u$, as desired. Conversely, if $\alpha^2=1$ and $\alpha$ centralises $S_u$, then $\overline{t}=\alpha^2=\overline{1}$,  so $\mathcal{G}(H_0,M,\alpha) =\mathcal{G}(H_0,M,1)$ by (i) and  (ii). 

(vi) Let $L:=\{(v,v^\alpha) : v \in M\}$, and observe that $\mathcal{N}(H_0,M,\alpha)\langle (t,1)\tau\rangle$ fixes $L$. By Lemma~\ref{lemma:sufficient}, (vi) holds if $B:=L^*$ is a block of $H_0\times H_0$ and $(V{:}(H_0\times H_0))_L$ is transitive on $L$. Suppose that $(v,v^\alpha)^{(h_1,h_2)}=(w,w^\alpha)$ for some $v,w\in M^*$ and $(h_1,h_2)\in H_0\times H_0$. Now $h_1,h_2\in H_{0,M}$ and $v^{\alpha h_2\alpha^{-1}}=w=v^{h_1}$, and $R$ is regular on $M^*$, so $\alpha \overline{h}_2=\overline{h}_1\alpha$. Thus $(h_1,h_2)$ fixes $B$, so $B$ is a block of  $H_0\times H_0$. By assumption, there exist $w\in M^*$ and $h_1,h_2\in H_0$ such that $M^{h_1}+w=M$ and $(v^{h_1}+w)^\alpha=v^{\alpha h_2}+w^\alpha$ for all $v\in M$. Let $v_1,v_2\in V_n(p)$ be such that $v_1^{h_1}=w$ and $v_2^{h_2}=w^\alpha$, and let $g:=\tau_{(v_1,v_2)}(h_1,h_2)$,   where   $\tau_{(v_1,v_2)}$ denotes the translation of $V$ by $(v_1,v_2)$. Now $L^g=L$ and $0^g=(w,w^\alpha)\in B$. It follows that $(V{:}(H_0\times H_0))_L$ is transitive on $L$.
\end{proof}

 By  Lemma~\ref{lemma:A2badgeneric}(iv), since $R$ is transitive on $M^*$,  there is no loss in assuming  that $\alpha$ fixes some $u\in M^*$. In other words, we may restrict our attention to triples that are $u$-compatible. 

It turns out that $\AG_m(q)$, the nearfield plane of order $9$ and exactly one of the Hering spaces on $3^6$ points with line-size $9$ have  sharply generating pairs. Further, for $n=2$ and  $p\in \{3,5,7,11,23,29,59\}$, the linear space with line set $\{V_2(p)\}$ has sharply generating pairs.
 Note that the lines of these linear spaces are all  affine subspaces of $V_n(p)$.  We now describe five families of examples that arise under this framework, all of which are built from linearly $u$-compatible triples. 
    
\begin{notation}
Let $V:=V_n(p)\oplus V_n(p)$ where $n\geq 1$ and $p$ is prime. 
 Recall that the stabiliser of the decomposition of $V$ is $\GL_n(p)\wr\langle \tau\rangle $,  where  $(u_1,u_2)^\tau =(u_2,u_1)$ for all $u_1,u_2\in V_n(p)$, and 
recall Definitions~\ref{defn:A2badtriple} and~\ref{defn:A2badfamily}. 
Given a  pair $(H_0,M)$ that sharply generates some linear space on $V_n(p)$ where $M$ is a subspace of $V_n(p)$, we  let $N:=N_{\GL_n(p)}(H_0)$,  $R:=H_{0,M}^M$, $S:=N_M^M$, $T:=N_{\GL(M,\mathbb{F}_p)}(R)$ and $ ^-:N_M\to S$  be the natural homomorphism.   Recall that $R\leq S\leq T$ and $T=R{:}T_u$ for all $u\in M^*$.  Further,  $(H_0,M,\alpha)$ is a linearly $u$-compatible triple if and only if $\alpha\in T_u$  and $\alpha^2\in S_u$.  Lastly, for such a triple, $\mathcal{S}(H_0,M,\alpha)$ is a $(V{:}\mathcal{G}(H_0,M,\alpha))$-affine partial linear space   by Lemma~\ref{lemma:A2badgeneric}(vi).
\end{notation}
 
\begin{example}
\label{example:(I)dep}
Let $U:=V_m(q)$ where $q^m=p^n$, and let $u\in U^*$.  Let $F$ be a subfield of $\mathbb{F}_q$, let $M:=\langle u\rangle_F$, and let $H_0:=\GL_m(q)$. Now $(U,M^{U{:}H_0})$ is isomorphic to the linear space $\AG_a(r)$ where $r=|F|$ and $r^a=p^n$, and $(U{:}H_0)_M^M\simeq \AGL_1(F)$, so $(U{:}H_0)_M^M$ is sharply $2$-transitive. Thus $(H_0,M)$ sharply generates $(U,M^{U{:}H_0})$. 
 Further, $N=\GammaL_m(q)$, $R\simeq \GL_1(F)$, and $S=T\simeq \GammaL_1(F)$.
 Let $\alpha\in T_u\simeq \Aut(F)$.  There exists $\theta\in \Aut(F)$ such that $(\lambda u)^\alpha=\lambda ^\theta u$ for all $\lambda\in F$; with some abuse of notation, we may write $\theta$ for $\alpha$, in which case
$$
\mathcal{L}(\GL_m(q),\langle u\rangle_F,\theta)=\{\{(\lambda u_1+v_1,\lambda^\theta u_2+v_2) : \lambda\in F\} : u_1,u_2\in U^*,v_1,v_2\in U\}.
$$
In addition,  there exists $\pi\in \Aut(\mathbb{F}_q)$ such that  $\pi|_F=\theta^2$. We may assume that   $\Aut(\mathbb{F}_q)$ acts on $U$ in such a way that $u$ is fixed, so by Lemma~\ref{lemma:A2badgeneric}(i) and (ii), $\mathcal{G}(\GL_m(q),\langle u\rangle_F,\theta)$ equals
$$(\GL_m(q)\times \GL_m(q))\{(\sigma_1,\sigma_2)\in \Aut(\mathbb{F}_q)\times\Aut(\mathbb{F}_q): \sigma_1|_F=\sigma_2|_F\}\langle (\pi,1)\tau\rangle,$$ 
which is a subgroup of $\GammaL_m(q)\wr\langle\tau\rangle$. Observe that $\mathcal{S}(\GL_m(q),M,1)\simeq \mathcal{S}(\GL_m(q),M,\theta)$ by Lemma~\ref{lemma:A2badgeneric}(iii). Further,   $\mathcal{S}(\GL_m(q),M,1)$ is an $\mathbb{F}_q$-dependent partial linear space that contains the line $\langle (u,u)\rangle_F$. In fact, if $F=\mathbb{F}_r$ and $r^a=q^m$, then by viewing $U$ as $V_a(r)$, we see that 
$\mathcal{L}(\GL_m(q),M,1)=\mathcal{L}(\GL_a(r),M,1)$ and  
$$\mathcal{G}(\GL_m(q),M,1)\leq \mathcal{G}(\GL_a(r),M,1)=(\GammaL_a(r)\wr\langle\tau\rangle) \cap\GammaL_{2a}(r).$$ 
Since $\Aut(F)\simeq S_u=T_u$ is abelian, parts (iii) and (v) of Lemma~\ref{lemma:A2badgeneric} imply that for $\theta_1,\theta_2\in\Aut(F)$, the group $\mathcal{G}(\GL_m(q),M,\theta_1)=\mathcal{G}(\GL_m(q),M,\theta_2)$ if and only if $\theta_1^2=\theta_2^2$, and this occurs if and only if $\theta_2=\theta_1\iota$ where $\iota\in \Aut(F)$ and either $\iota=1$, or, for $|\Aut(F)|$  even, $\iota$ is the unique involution in $\Aut(F)$. In particular, in this latter case, $\mathcal{G}(\GL_m(q),M,1)$ admits both $\mathcal{S}(\GL_m(q),M,1)$ and $\mathcal{S}(\GL_m(q),M,\iota)$ as examples.
\end{example}

\begin{example}
\label{example:(I)reg}
Let $M:=V_n(p)$ and $H_0\leq \GL_n(p)$ be regular on $V_n(p)^*$. Now $(H_0,M)$ sharply generates the linear space $(V_n(p),\{V_n(p)\})$. 
Further, $R=H_0$ and $S=N=T$. Let $u\in V_n(p)^*$ and $\alpha\in T_u$. Since $\alpha$ normalises $R$, and since $N_M=N$,
\begin{align*}
\mathcal{L}(H_0,M,\alpha)&=\{\{(v+v_1,v^{g}+v_2): v\in V_n(p)\}: g\in R\alpha,v_1,v_2\in V_n(p)\},\\
\mathcal{G}(H_0,M,\alpha)&=(R\times R)\{ (s,s^\alpha): s \in T_u\} \langle (\alpha^2,1)\tau\rangle.
\end{align*}
By Lemma~\ref{lemma:A2badgeneric}(iii), $\mathcal{S}(H_0,M,1)$ is isomorphic to $\mathcal{S}(H_0,M,\alpha)$ for all $\alpha\in T_u$. Moreover, since $S=T$, Lemma~\ref{lemma:A2badgeneric}(iii) and (v) imply that for $\alpha_1,\alpha_2\in T_u$, the groups $\mathcal{G}(H_0,M,\alpha_1)$ and $\mathcal{G}(H_0,M,\alpha_2)$ are equal if and only if $\alpha_1^2=\alpha_2^2$ and $\alpha_2\alpha_1^{-1}\in Z(T_u)$.   

The possibilities for $H_0$ are given by  Zassenhaus's  classification~\cite{Zas1936}
 of the sharply $2$-transitive permutation groups (see~\cite[\S 7.6]{DixMor1996}); in particular, either $H_0$ is a subgroup of $\GammaL_1(p^n)$,
or $n=2$ and $(p,H_0,T_u)$ is given by Table~\ref{tab:sharp} (cf.~Theorem~\ref{thm:2trans}), in which case $\mathcal{S}(H_0,M,1)$ is  listed in Table~\ref{tab:main}. Note that $Q_8\leq \GammaL_1(9)$, but we include this case in Table~\ref{tab:sharp} since $T_u\nleq \GammaL_1(9)$. Note also that if $H_0=\GL_1(p^n)$, then $\mathcal{S}(H_0,V_n(p),\alpha)=\mathcal{S}(H_0,\langle u\rangle_F,\alpha)$ where   $F=\mathbb{F}_{p^n}$, a partial linear space from  Example~\ref{example:(I)dep}. For any possible $H_0$, the linear space $\mathcal{N}$ with point set $V$ and line set $\{V_i+v : i \in \{1,2\}, v\in V\}\cup \mathcal{L}(H_0,M,1)$ is a nearfield plane (see~\S\ref{ss:nearfield}); in particular, if $n=2$ and $(p,H_0)$ is given by Table~\ref{tab:sharp}, then either $p=3$ and $\mathcal{N}$ is the nearfield plane of order~$9$, or $p\neq 3$ and $\mathcal{N}$ is an irregular nearfield plane of order $p^2$.
\begin{table}[!h]
\centering
\begin{tabular}{ c | c c c c c c c c  }
\hline 
 $p$ & $3$ & $5$ & $7$ & $11$ & $23$ & $11$ & $29$ & $59$\\
 $H_0$ & $Q_8$ & $\SL_2(3)$ & $2\nonsplit S_4^-$ & $\SL_2(3)\times C_5$ & $2\nonsplit S_4^-\times C_{11}$ & $\SL_2(5)$ & $\SL_2(5)\times C_7$ & $\SL_2(5)\times C_{29}$ \\
 $T_u$ & $S_3$ & $C_4$ & $C_3$ & $C_2$ & $1$ & $C_5$ & $C_2$ & $1$ \\
 \hline 
\end{tabular}
\caption{Exceptional sharply $2$-transitive groups}
\label{tab:sharp}
\end{table}

We claim that for each  $(p,H_0)$ in Table~\ref{tab:sharp}, the group  $V{:}\mathcal{G}(H_0,M,1)$ is the full automorphism group of $\mathcal{S}(H_0,M,1)$. If $(p,H_0)$ is not $(29,\SL_2(5)\times C_7)$ or $(59,\SL_2(5)\times C_{29})$, then we verify this using {\sc Magma}.  Let $K:=\Aut(\mathcal{S}(H_0,M,1))$ where $(p,H_0)$ is $(29,\SL_2(5)\times C_7)$ or 
$(59,\SL_2(5)\times C_{29})$.   By  Lemma~\ref{lemma:(I)aut}, $K$ is an affine permutation group on $V$, and $K_0\leq \GL_2(p)\wr \langle\tau\rangle$.  If $K$ belongs to (I1),  then $\{(v,v): v\in V_2(p)^*\}$ is a block of $\SL_2(p)\times \SL_2(p)$, a contradiction. Thus $K$ belongs to (I4), and it follows from  Lemma~\ref{lemma:A2badequiv} below (and its proof) that  $K=V{:}\mathcal{G}(H_0,M,1)$.
\end{example}

\begin{example}
\label{example:(I)sl213}
Here $p^n=3^6$. Let $H_0:=\SL_2(13)\leq \GL_6(3)$.  Let  $u\in V_6(3)^*$ and $M:=C_{V_6(3)}(H_{0,u})\simeq V_2(3)$. Using {\sc Magma}, we determine that $M$ is a line of one of the  Hering  spaces $\mathcal{H}$ with line-size $9$  (see \S\ref{ss:2trans}). Now $V_6(3){:}H_0$ is a $2$-transitive  subgroup of $\Aut(\mathcal{H})$, so $(H_0,M)$  sharply generates  $\mathcal{H}$.  
 Further, $N=H_0$, $S=R= Q_8$,  $T=\GL_2(3)$ and $T_u\simeq S_3$. If $\alpha\in T_u$  such that $\alpha^2\in S$, then $\alpha^2\in R_u=1$, so there are four possibilities for $\alpha$, namely $1$ and the three involutions in $T_u$. Now $H_0\wr\langle \tau\rangle=\mathcal{G}(H_0,M,\alpha)$ for all such $\alpha$, so $H_0\wr\langle \tau\rangle$ admits  $\mathcal{S}(H_0,M,\alpha)$ for all such $\alpha$. Let $K:=\Aut(\mathcal{S}(H_0,M,\alpha))$ for such an $\alpha$. By Lemma~\ref{lemma:(I)aut},  $K$ is an affine permutation group on $V$, and $K_0\leq \GL_6(3)\wr \langle\tau\rangle$. Note that $\SL_2(13)$ is not a subgroup of $\GammaL_3(9)$ or $\GammaL_2(27)$. If $K$  belongs to class (I1) or (I2),  then $\{(v,v^\alpha): v\in M^*\}$ is  a block of $\SL_6(3)\times \SL_6(3)$ or $\Sp_6(3)\times \Sp_6(3)$,  a contradiction. Thus  $\Aut(\mathcal{S}(H_0,M,\alpha))=V{:}(H_0\wr\langle \tau\rangle)$ for each $\alpha\in T_u$ such that $\alpha^2=1$. Using  {\sc Magma}, we determine that $V{:}(H_0\wr\langle \tau\rangle)$ is self-normalising in $\Sym(V)$, so the four partial linear spaces $\mathcal{S}(H_0,M,\alpha)$  are pairwise non-isomorphic.
\end{example}

\begin{example}
\label{example:(I)sl25}
Here $p^n=3^4$. Let $U:=V_2(9)$,  $u\in U^*$, $\zeta:=\zeta_9$  and $\sigma:=\sigma_9$ where $u^\sigma=u$ (see \S\ref{ss:basicsvs} for the definition of $\zeta_9$ and $\sigma_9$). Now $\SL_2(5)\leq \SL_2(9)$ and $N_{\GL_4(3)}(\SL_2(5))=\SL_2(5)\langle \zeta,\sigma\rangle$. Let $H_0:=\SL_2(5)\langle \zeta^2, \zeta\sigma\rangle\simeq 2\nonsplit S_5^-{:}2$ and  $M:=\langle u\rangle_{\mathbb{F}_9}$. Note that $\SL_2(5)$ has two orbits on $U^*$, and if $\Omega$ is one such orbit, then $\zeta\Omega$ is the other. Now  $H_0$ is transitive on $U^*$ and has order $480$.  Further,  
$H_{0,u}\leq \GL_2(9)$, so  $(H_0,M)$ sharply generates $\AG_2(9)$, and   $N=\SL_2(5)\langle \zeta,\sigma\rangle$. 
 (Note that the pair $(\SL_2(5)\langle  \zeta^i\sigma\rangle,M)$     sharply generates $\AG_2(9)$  for $i\in \{1,3\}$, but it is more convenient to work only with $H_0$ since  $\SL_2(5)\langle  \zeta^i\sigma\rangle$ is not normal in $N$.)
With some abuse of notation, we may write $\zeta$ for $\overline{\zeta}$ and $\sigma$ for $\overline{\sigma}$, in which case 
 $R=\langle \zeta^2,\zeta\sigma\rangle  =Q_8$, $S=\langle \zeta,\sigma\rangle\simeq \GammaL_1(9)$, $T= \GL_2(3)$ and  $\langle \sigma\rangle=S_u\leq T_u\simeq S_3$. If $\alpha\in T_u$ such that $\alpha^2\in S_u$, then $\alpha^2=1$, so there are four possibilities for $\alpha$, namely $1$, $\sigma$, and the two remaining involutions in $T_u$. Using Lemma~\ref{lemma:A2badgeneric},  we verify that $\mathcal{G}(H_0,M,1)=\mathcal{G}(H_0,M,\sigma)=H_0\wr\langle\tau\rangle \langle (\sigma,\sigma)\rangle$, while for an involution $\alpha\in T_u\setminus\{1,\sigma\}$, $\mathcal{G}(H_0,M,\alpha)=H_0\wr\langle \tau\rangle$. In particular, any subgroup of $H_0\wr\langle \tau\rangle \langle (\sigma,\sigma)\rangle$ admits  $\mathcal{S}(H_0,M,1)$ and $\mathcal{S}(H_0,M,\sigma)$, while any subgroup of $H_0\wr\langle\tau\rangle$ admits  $\mathcal{S}(H_0,M,\alpha)$ for  $\alpha\in T_u$ with $\alpha^2=1$.   Note that $\mathcal{S}(H_0,M,1)\simeq \mathcal{S}(H_0,M,\sigma)$ by Lemma~\ref{lemma:A2badgeneric}(iii), and $\mathcal{S}(H_0,M,\alpha)\simeq \mathcal{S}(H_0,M,\beta)$ for involutions $\alpha,\beta\in T_u\setminus \{1,\sigma\}$ since   $\mathcal{L}(H_0,M,\alpha^\sigma)=\mathcal{L}(H_0,M,\alpha)^{(\sigma,\sigma)}$.  By a computation  in {\sf GAP}   using~\cite{NautyTraces,Grape}, 
the full automorphism group of $\mathcal{S}(H_0,M,\alpha)$ is $V{:}\mathcal{G}(H_0,M,\alpha)$ for $\alpha\in T_u$ such that $\alpha^2=1$. In particular, $\mathcal{S}(H_0,M,1)$ is not isomorphic to $\mathcal{S}(H_0,M,\alpha)$ for any involution $\alpha\in T_u\setminus \{1,\sigma\}$. Further, $\mathcal{S}(H_0,M,1)\not\simeq \mathcal{S}(\GL_2(9),M,1)$. 
\end{example}

\begin{example}
\label{example:(I)E}
Here $p^n=3^4$. Let  $H_0:=(D_8\circ Q_8).C_{5}\leq \GL_4(3)$.  Let $u\in V_4(3)^*$ and $M:=C_{V_4(3)}(H_{0,u})\simeq V_2(3)$. The nearfield plane  of order $9$  (see  \S\ref{ss:nearfield}) admits the $2$-transitive group $V_4(3){:}H_0$ as an automorphism group by Remark~\ref{remark:2trans} and contains the line $M$, so $(H_0,M)$ is a sharply generating pair for this linear space.  Further, $N=(D_8\circ Q_8).\AGL_1(5)$, $R= Q_8$, $S\simeq \GammaL_1(9)$, $T= \GL_2(3)$, $S_u\simeq C_2$, $T_u\simeq S_3$   and $ C_8\simeq N_u\leq N_M$ since $V_4(3){:}N$ is an automorphism group of the nearfield plane of order $9$   by Remark~\ref{remark:2trans}. Write $N_u=\langle s\rangle$. Then $H_{0,u}=\langle s^4\rangle$ and $S_u=\langle \overline{s}\rangle$, so $N=H_0\langle s\rangle$ and $s^2$ fixes $M$ pointwise.  Let $K_0:=H_0\langle s^2\rangle$, so $H_0\leq K_0\leq N$ and   $K_0=(D_8\circ Q_8).D_{10}$. Then $K_{0,u}=\langle s^2\rangle$ and  $M=C_{V_4(3)}(K_{0,u})$, so $(K_0,M)$ is also a sharply generating pair for the nearfield plane of order $9$. Note that $N=N_{\GL_4(3)}(K_0)$.  If $\alpha\in T_u$ such that $\alpha^2\in S_u$, then $\alpha^2=1$, so there are four possibilities for $\alpha$, namely $1$, $\overline{s}$, and the two remaining involutions in $T_u$.  
Observe that $\mathcal{L}(H_0,M,\alpha)=\mathcal{L}(K_0,M,\alpha)$ for any  $\alpha\in T_u$ with $\alpha^2=1$ since  $s^2$ fixes $M$ pointwise.  
Using Lemma~\ref{lemma:A2badgeneric}, we verify that 
$$\mathcal{G}(H_0,M,1)=\mathcal{G}(H_0,M,\overline{s})=\mathcal{G}(K_0,M,1)=\mathcal{G}(K_0,M,\overline{s})=K_0\wr\langle \tau\rangle \langle (s,s)\rangle,
$$ 
while $\mathcal{G}(H_0,M,\alpha)=\mathcal{G}(K_0,M,\alpha)=K_0\wr\langle\tau\rangle$ for an involution $\alpha\in T_u\setminus  \{1,\overline{s}\}$. In particular, any subgroup of 
$K_0\wr\langle \tau\rangle \langle (s,s)\rangle$  admits  $\mathcal{S}(H_0,M,1)$ and $\mathcal{S}(H_0,M,\overline{s})$, while any subgroup of $K_0\wr\langle\tau\rangle$  admits $\mathcal{S}(H_0,M,\alpha)$ for $\alpha\in T_u$ with $\alpha^2=1$. Note that $\mathcal{S}(H_0,M,1)\simeq \mathcal{S}(H_0,M,\overline{s})$ by  Lemma~\ref{lemma:A2badgeneric}(iii), and $\mathcal{S}(H_0,M,\alpha)\simeq \mathcal{S}(H_0,M,\beta)$ for involutions $\alpha,\beta\in T_u\setminus \{1,\overline{s}\}$ since $\mathcal{L}(H_0,M,\alpha^{\overline{s}})=\mathcal{L}(H_0,M,\alpha)^{(s,s)}$. By a computation in {\sf GAP}   using~\cite{NautyTraces,Grape}, 
the full automorphism group of $\mathcal{S}(H_0,M,\alpha)$ is $V{:}\mathcal{G}(H_0,M,\alpha)$ for $\alpha\in T_u$ such that $\alpha^2=1$. In particular, $\mathcal{S}(H_0,M,1)$ is not isomorphic to $\mathcal{S}(H_0,M,\alpha)$ for any involution $\alpha\in T_u\setminus \{1,\overline{s}\}$.
\end{example}

Now we describe a sporadic example  that is very similar to Example~\ref{example:(I)reg} but is not isomorphic to $\mathcal{S}(H_0,M,\alpha)$ for any $u$-compatible triple $(H_0,M,\alpha)$.

\begin{example}
\label{example:(I)spor}
Let $R:=2\nonsplit S_4^-\leq \GL_2(7)$ and $T:=N_{\GL_2(7)}(R)$. Let $u\in V_2(7)^*$ and $V:=V_2(7)\times V_2(7)$.  Now $R$ is regular on $V_2(7)^*$, so $T=R{:}T_u$. Further, the following hold: $T_u\simeq C_3$,  $R$ has a  unique index $2$ subgroup $K:=\SL_2(3)=R'$, and $T=N_{\GL_2(7)}(K)$. Write $R=K\langle r\rangle$. Let 
 $X_0:=(K\times K) \langle (r,1)\tau\rangle$ and $Y_0:=X_0\{(s,s): s \in T_u\}$. Now $X_0$ has  index $2$ in $R\wr\langle\tau\rangle$, while $Y_0$ has index $2$ in  $\mathcal{G}(R,V_2(7),1)=R\wr\langle \tau\rangle\{(s,s) : s \in T_u\}$ (see~Figure~\ref{fig:sporexp}). 
 \begin{figure}
\begin{center}
\includegraphics[height=3cm]{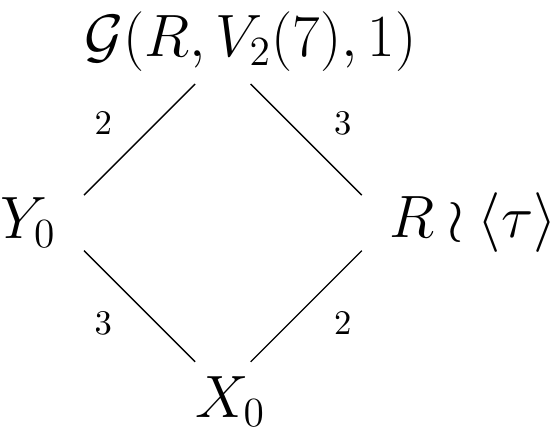}
\end{center}
\caption{Groups of Example~\ref{example:(I)spor}}
\label{fig:sporexp}
\end{figure} 
For  $g\in \{1,r\}$, 
let $\Omega(g):=\{(v,v^{h}) : v\in V_2(7)^*, h\in Kg\}$. Now  $X_{0,V_1}=(K\times K)\langle (r,r)\rangle $, and the orbits of $X_{0,V_1}$ on $V_2(7)^*\times V_2(7)^*$ are $\Omega(1)$ and $\Omega(r)$, so  $V{:}X_0$ and $V{:}Y_0$ are rank~$3$ groups.  
For $\alpha\in T_u$,  let
 $$
\mathcal{L}(\alpha):=\{\{(v,v^{g}) : v\in V_2(7)\}+w : g\in K\alpha\cup K\alpha^2r,w\in V\}.
$$
Observe that  $\mathcal{L}(1)=\mathcal{L}(R,V_2(7),1)$ but $\mathcal{L}(\alpha) \neq \mathcal{L}(R,V_2(7),\alpha)$ for $\alpha\in T_u\setminus \{1\}$.  
We claim that $\mathcal{S}(\alpha):=(V,\mathcal{L}(\alpha))$ is a proper $(V{:}Y_0)$-affine partial linear space for $\alpha\in T_u$. 
Let $L_\alpha:=\{(v,v^\alpha):v\in V_2(7)\}$ and $B_\alpha:=L_\alpha^*$. Since $\mathcal{L}(\alpha)_0=L_\alpha^{X_0}$ and $\{(s,s): s\in T_u\}$ fixes $L_\alpha$,   it suffices to show that $B_\alpha$ is a block of $X_0$ by Lemmas~\ref{lemma:transitive} and~\ref{lemma:sufficient}. Now $B_\alpha$ is a block of $R\times R$ and therefore of   $X_{0,V_1}$. Recall that $(r,1)\tau$ maps the $X_{0,V_1}$-orbit $\Omega(1)$ to $\Omega(r)$, and observe that $B_\alpha\subseteq \Omega(1)$ since $v^\alpha\in v^{K}$ for all $v\in V_2(7)^*$. Thus $B_\alpha$ is a block of $X_0$, and the claim holds.  Note that $\mathcal{S}(\alpha)$ and $\mathcal{S}(\alpha^2)$ are isomorphic for $\alpha\in T_u$ since $\mathcal{L}(\alpha^2)=\mathcal{L}(\alpha)^{(1,r)}$. Let $\alpha\in T_u\setminus\{1\}$. Now $B_\alpha$ is not a block of $R\wr\langle\tau\rangle$  since  $\tau$ fixes $(u,u)$ but not $B_\alpha$, so neither $R\wr\langle\tau\rangle$ nor $\mathcal{G}(R,V_2(7),1)$ are automorphism groups of $\mathcal{S}(\alpha)$. Indeed, we verify using {\sc Magma}  that $V{:}Y_0$ is the full automorphism group of $\mathcal{S}(\alpha)$; 
in particular, $\mathcal{S}(\alpha)$ is not isomorphic to $\mathcal{S}(1)=\mathcal{S}(R,V_2(7),1)$. 

Let  $\mathcal{A}(\alpha)$ be the affine plane of order $49$  on $V$ with  line set $\{V_i+v : i \in \{1,2\}, v\in V\}\cup \mathcal{L}(\alpha)$ for $\alpha\in T_u$. 
By~\cite[Theorem~19.10]{Lun1980}, there are (up to isomorphism) two affine planes of order~$49$ of type F$\ast$7 (as defined in~\cite[p.95]{Lun1980}):  
the irregular nearfield plane $\mathcal{A}(1)$  (cf.   Example~\ref{example:(I)reg}) and the \textit{exceptional L\"uneburg plane of order $49$}.  We claim that $\mathcal{A}(\alpha)$ is  isomorphic to the  latter   when  $\alpha\neq 1$. Now $\mathcal{A}(\alpha)\not\simeq\mathcal{A}(1)$ when $\alpha\neq 1$, or else  $\mathcal{S}(\alpha)\simeq \mathcal{S}(1)$  by Remark~\ref{remark:lineartopls}, a contradiction. Thus it  suffices to show that $\mathcal{A}(\alpha)$  is of type F$\ast$7. To do so, we must show that $\mathcal{A}(\alpha)$ has properties~(j) and~(ij) of~\cite[p.95]{Lun1980}. Property (j) requires $\Aut(\mathcal{A}(\alpha))_L$ to be $2$-transitive on $L$ for all lines $L$, which holds by Lemma~\ref{lemma:necessary}(ii). 
For~(ij), let $P$ and $Q$ be the points at infinity  of the  lines $V_1$ and $V_2$, respectively, and let $O$ be the point $0\in V$. Let $S_0:=K\times 1$ and $S_\infty:=1\times K$. Then $S_0\simeq S_\infty\simeq \SL_2(3)$, and $S_0$ and $S_\infty$ have the same four orbits on $\ell_\infty$. Further,  $S_0$ fixes $P$ and the line on $O$ and $Q$ pointwise (i.e., $V_2\cup \{Q\}$), while $S_\infty$ fixes $Q$ and the line on $O$ and $P$ pointwise (i.e., $V_1\cup \{P\}$). Since~(ij) requires the existence of $P$, $Q$, $O$,  $S_0$ and $S_\infty$ with precisely these properties,  $\mathcal{A}(\alpha)$ is of type F$\ast$7, and the claim holds.
\end{example}

We wish to prove that no other examples arise; the following lemma is a basic but crucial observation. 

\begin{lemma}
\label{lemma:A2badbasic}
Let $G$ be an affine permutation group of rank~$3$ on $V:=V_n(p)\oplus V_n(p)$ where $n\geq 1$, $p$ is prime and $G_0\leq \GL_n(p)\wr\langle\tau\rangle$. Let  $\mathcal{S}:=(V,\mathcal{L})$ be a $G$-affine proper partial linear space in which $\mathcal{S}(0)=V_n(p)^*\times V_n(p)^*$. Let $L\in\mathcal{L}_0$. Then $L=\{(v,v^\alpha) : v\in M\}$ for some $M\subseteq V_n(p)$ and  injective map $\alpha:M\to V_n(p)$.    
In particular, if  $(x,x)\in L$, then $\alpha$ fixes $x$.  
\end{lemma}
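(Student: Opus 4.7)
The plan is to exploit the fact that the two orbits of $G_0$ on $V^*$ are $V_1^*\cup V_2^*$ and $V_n(p)^*\times V_n(p)^*$, and to use Lemma~\ref{lemma:basic}(i) to force injectivity of the first-coordinate projection on $L$. The key observation is that any two collinear non-zero vectors must have a difference lying in the same $G_0$-orbit as each of them; if two points of $L^*$ shared a first coordinate, their difference would land in the \emph{wrong} orbit.

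First, I would note that since $0\in L$, every $u\in L^*$ is collinear with $0$, so $L^*\subseteq \mathcal{S}(0)=V_n(p)^*\times V_n(p)^*$. In particular, every element of $L^*$ has both coordinates non-zero.

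Next, I would show that the projection $\pi_1\colon L\to V_n(p)$ onto the first factor is injective. Suppose for a contradiction that $(v,w_1),(v,w_2)\in L^*$ are distinct. Then $w_1\ne w_2$ and
\[
(v,w_2)-(v,w_1)=(0,w_2-w_1).
\]
By Lemma~\ref{lemma:basic}(i) applied to $x=(v,w_1)$ and $y=(v,w_2)$, this difference must lie in $x^{G_0}=V_n(p)^*\times V_n(p)^*$. However, its first coordinate is $0$, contradiction. Hence $\pi_1|_L$ is injective.

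Setting $M:=\pi_1(L)\subseteq V_n(p)$, I can then define $\alpha\colon M\to V_n(p)$ by letting $\alpha(v)$ be the unique second coordinate of the point of $L$ projecting to $v$; by construction, $L=\{(v,v^\alpha):v\in M\}$. Injectivity of $\alpha$ follows by the symmetric argument applied to the second projection $\pi_2$ (which is also injective by the same orbit-parity contradiction, since $G_0\le \GL_n(p)\wr\langle\tau\rangle$ stabilises the decomposition and the two orbits are swapped consistently by $\tau$). Finally, if $(x,x)\in L$, then $x\in M$ and by definition $x^\alpha=x$.

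There is no real obstacle here: once Lemma~\ref{lemma:basic}(i) is invoked, the argument is immediate from the fact that a vector with a zero coordinate lies in $V_1^*\cup V_2^*$, which is disjoint from the orbit $V_n(p)^*\times V_n(p)^*$ containing $\mathcal{S}(0)$. The whole proof should fit in a few lines.
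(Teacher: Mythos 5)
Your proof is correct and is essentially the paper's argument: both use Lemma~\ref{lemma:basic}(i) to show that two points of $L^*$ sharing a coordinate would have a difference with a zero coordinate, which cannot lie in the orbit $V_n(p)^*\times V_n(p)^*$, so both coordinate projections are injective on $L$. The extra remark about $\tau$ is unnecessary (the second-coordinate case is just Lemma~\ref{lemma:basic}(i) again), but nothing is missing.
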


\begin{proof}
 If $(v,w_1),(v,w_2)\in L^*$ for some $v,w_1,w_2\in V_n(p)$, then $w_1=w_2$ by Lemma~\ref{lemma:basic}(i). The desired result then follows from the symmetry of this argument.
\end{proof}

In the following, we prove that if $H_0\times H_0\unlhd  G_0$ where $H_0$ is transitive on $V_n(p)^*$, then every $G$-affine proper partial linear space $\mathcal{S}$ with $\mathcal{S}(0)=V_n(p)^*\times V_n(p)^*$ has the form $\mathcal{S}(H_0,M,\alpha)$ for some $u$-compatible triple $(H_0,M,\alpha)$. In order to give an elementary proof of this fact, we do not assume that the triple is linearly compatible. 
Note that if $H_0\times H_0\leq G_0=G_{0,V_1}\langle (t,1)\tau\rangle\leq \GL_n(p)\wr\langle \tau\rangle$ where $H_0\leq \GL_n(p)$ is transitive on $V_n(p)^*$, then for any $u\in V_n(p)^*$, there exists $h\in H_0$ such that $u^{ht}=u$, and $G_0=G_{0,V_1}\langle (ht,1)\tau\rangle$. Thus, for such $G_0$, there is no loss of generality in assuming that $t$ fixes some non-zero vector.

\begin{lemma}
\label{lemma:A2badequiv}
Let $G$ be an affine permutation group of rank~$3$ on $V:=V_n(p)\oplus V_n(p)$ where $n\geq 1$, $p$ is prime and $H_0\times H_0\unlhd G_0=G_{0,V_1}\langle (t,1)\tau\rangle\leq \GL_n(p)\wr\langle\tau\rangle$ such that 
 $H_0\leq \GL_n(p)$ is transitive on $V_n(p)^*$ and $t\in \GL_n(p)_u$ for some $u\in V_n(p)^*$. The following are equivalent.
\begin{itemize}
\item[(i)]  $\mathcal{S}$ is a $G$-affine proper partial linear space  in which $\mathcal{S}(0)=V_n(p)^*\times V_n(p)^*$.
\item[(ii)] 
$\mathcal{S}=\mathcal{S}(H_0,M,\alpha)$ and $G_0\leq \mathcal{G}(H_0,M,\alpha)$ for some triple  $(H_0,M,\alpha)$ that is $u$-compatible with a  linear space on $V_n(p)$ such that $|M|\geq 3$ and $\alpha^2=t|_M$.
\end{itemize}
\end{lemma}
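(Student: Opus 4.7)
The direction (ii) $\Rightarrow$ (i) is essentially immediate from Lemma~\ref{lemma:A2badgeneric}(vi), which tells us $\mathcal{S}(H_0,M,\alpha)$ is a $(V{:}\mathcal{G}(H_0,M,\alpha))$-affine partial linear space; the hypothesis $G_0\leq \mathcal{G}(H_0,M,\alpha)$ promotes this to $G$-affine, and $|M|\geq 3$ guarantees line size $\geq 3$, so $\mathcal{S}$ is proper. That $\mathcal{S}(0)=V_n(p)^*\times V_n(p)^*$ is transparent from the definition of $\mathcal{L}(H_0,M,\alpha)$ together with the transitivity of $H_0\times H_0$ on $V_n(p)^*\times V_n(p)^*$.

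For (i) $\Rightarrow$ (ii), the plan is to reverse-engineer the triple directly from $\mathcal{S}$. Since $(u,u)\in V_n(p)^*\times V_n(p)^*=\mathcal{S}(0)$, there is a unique line $L\in\mathcal{L}_0$ through $(u,u)$. Lemma~\ref{lemma:A2badbasic} gives $L=\{(v,v^\alpha):v\in M\}$ for some $M\subseteq V_n(p)$ containing $0,u$ and some injective map $\alpha:M\to V_n(p)$ with $0^\alpha=0$ and $u^\alpha=u$, and $|M|=|L|\geq 3$ by properness of $\mathcal{S}$. The identity $\alpha^2=t|_M$ falls out quickly by considering the element $(t,1)\tau\in G_0$: since it fixes both $(0,0)$ and $(u,u)$, it must fix the unique line $L$ through them, and a direct calculation gives $(v,v^\alpha)^{(t,1)\tau}=(v^\alpha,v^t)$, which lies on $L$ iff $\alpha(M)\subseteq M$ and $v^t=(v^\alpha)^\alpha$ for every $v\in M$; injectivity of $\alpha$ on the finite set $M$ then forces $\alpha(M)=M$ and $\alpha^2=t|_M$.

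The heart of the argument is to produce the linear space on $V_n(p)$ that $(H_0,M)$ sharply generates. The plan is to define $\mathcal{M}:=\{M^h+w:h\in H_0,\,w\in V_n(p)\}$; since $H_0$ is transitive on $V_n(p)^*$, the group $V_n(p){:}H_0$ is $2$-transitive on $V_n(p)$, so every pair of distinct points of $V_n(p)$ is on at least one element of $\mathcal{M}$. To get uniqueness of the line through a pair, and to obtain the sharp $2$-transitivity of $H_M^M$, the key is to translate the partial linear space property of $\mathcal{S}$ into rigidity of the $(H_0\times H_0)$-action on $M$: the stabiliser $(H_0\times H_0)_L$ consists of pairs $(h_1,h_2)$ with $h_1$ preserving $M$ and $\alpha h_2=h_1\alpha$ on $M$, so the kernel of its induced action on $M$ corresponds to pairs with $h_1$ trivial on $M$ (equivalently $h_2$ trivial on $\alpha(M)=M$), and distinct lines in $\mathcal{M}$ meeting in two points would, via translation and the $H_0\times H_0$-action, produce two distinct lines of $\mathcal{S}$ through a common pair of points. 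Once the linear space and sharp generation are in hand, the $u$-compatibility condition (b) of Definition~\ref{defn:A2badtriple} is extracted by looking at how a generic element of $(H_0\times H_0)_L$, combined with a translation within $L$, realises the required $(w,h_1,h_2)$. The identities $\mathcal{L}=\mathcal{L}(H_0,M,\alpha)$ and $G_0\leq \mathcal{G}(H_0,M,\alpha)$ then follow: the former because both line sets are the $V{:}(H_0\times H_0)$-orbit of $L$ (using $H_0\times H_0\unlhd G_0$), and the latter by writing any element of $G_0$ as a product of an element of $H_0\times H_0$, a stabiliser-of-$L$ piece from $\mathcal{N}(H_0,M,\alpha)$, and a $(t,1)\tau$-coset, now justified by $\alpha^2=t|_M$.

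The main obstacle will be proving that $\mathcal{M}$ really is a linear space and that $H_M^M$ is sharply $2$-transitive, as opposed to merely being a partial linear space with a transitive group. The subtlety is that $M$ is not a priori a subspace and $\alpha$ not a priori linear, so one cannot invoke the usual linear-algebraic arguments; one must genuinely exploit the normality $H_0\times H_0\unlhd G_0$ together with the block structure of $L^*$ to rule out extraneous coincidences among the translates $M^h+w$. A second, more bookkeeping-heavy difficulty will be verifying (b) of Definition~\ref{defn:A2badtriple} when $M$ is not a subspace: the interplay between the additive structure inherited from $V_n(p)$ and the multiplicative action of $H_0$ on $M$ must be unwound carefully, particularly to pin down the pair $(h_1,h_2)$ and the vector $w$ showing compatibility.
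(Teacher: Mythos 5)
Your proposal is correct and follows essentially the same route as the paper's proof: extract $M$ and $\alpha$ from the line through $(0,0)$ and $(u,u)$ via Lemma~\ref{lemma:A2badbasic}, obtain $\alpha^2=t|_M$ from the fact that $(t,1)\tau$ fixes that line, show $(H_0,M)$ sharply generates the linear space $M^{V_n(p):H_0}$ by exploiting the block structure of $L^*$, and then verify compatibility and the containment $G_0\leq\mathcal{G}(H_0,M,\alpha)$ by the same stabiliser decomposition. One tiny remark: the identity $\mathcal{L}=\mathcal{L}(H_0,M,\alpha)$ follows from the transitivity of $H_0\times H_0$ on $V_n(p)^*\times V_n(p)^*$ together with uniqueness of lines (or, as in the paper, from $G_0\leq\mathcal{G}(H_0,M,\alpha)$), rather than from normality of $H_0\times H_0$ in $G_0$ per se.
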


\begin{proof}
If (ii) holds, then (i) holds by Lemma~\ref{lemma:A2badgeneric}(vi). Conversely, suppose that (i) holds, and let $\mathcal{L}$ be the line set of $\mathcal{S}$. Recall that $t\in G_0^1=G_0^2$, and note that $G_0^1\leq N:=N_{\GL_n(p)}(H_0)$ since $H_0\unlhd G_0^1$. 
Let $L\in \mathcal{L}_0$ be such that $(u,u)\in L$. By Lemma~\ref{lemma:A2badbasic}, $L=\{(v,v^\alpha): v\in M\}$ for some $M\subseteq V_n(p)$ and injective map $\alpha:M\to V_n(p)$, where $\alpha$ fixes $0$ and $u$. Also, $|M|=|L|\geq 3$. Let  $ ^-:N_M\to N_M^M$ be the natural homomorphism and $R:=H_{0,M}^M
$. 
Let $B:=L^*$. By Lemma~\ref{lemma:necessary}, $B$ is a block of $G_0$.  Then $(t,1)\tau\in G_{0,B}$ since $(t,1)\tau$ fixes $(u,u)$. Thus $L=\{(v^{\alpha},v^t):v\in M\}$, so $\alpha\in \Sym(M)$ and $t\in N_M$. Further  $\alpha^2=\overline{t}=t|_M$.

Let $H:=V_n(p){:}H_0$. We claim that $(V_n(p),M^H)$ is a linear space. By Lemma~\ref{lemma:sufficient}, it suffices to show that $C:=M^*$ is a block of $H_0$ on $V_n(p)^*$ and that $H_M$ is transitive on $M$. 
 Now $C$ is a block of $H_0$ on $V_n(p)^*$, for if $v^h=w$ for some $v,w\in C$ and $h\in H_0$, then there exists $h'\in H_0$ such that $(v^\alpha)^{h'}=w^\alpha$, so $(h,h')\in G_{0,B}$, in which case   $C^h=C$, as desired. 
 Recall that $(t,1)\tau$ fixes $(u,u)$ and $L$, and note that $V{:}(H_0\times H_0)\langle(t,1)\tau\rangle$   is a  rank~$3$ subgroup of $G$.
Now, by Lemma~\ref{lemma:necessary}, there exist $(v_1,v_2)\in V$ and $(h_1,h_2)\in H_0\times H_0$ such that $g:=\tau_{(v_1,v_2)}(h_1,h_2)$ fixes $L$ and maps $(0,0)$ to $(u,u)$,   where  $\tau_{(v_1,v_2)}$ denotes the translation of $V$  by $(v_1,v_2)$. Then    $L=L^g=\{(v^{h_1}+u,v^{\alpha h_2}+u) : v \in M\}$. In particular, $\tau_{v_1}h_1$ fixes $M$ and maps $0$ to $u$,   where $\tau_{v_1}$ denotes the translation of $V_n(p)$ by $v_1$. Thus $H_M$ is transitive on $M$, and the claim holds. 

Next we claim that that $H_M^M$ is sharply $2$-transitive, in which case $(H_0,M)$ sharply generates $(V_n(p),M^H)$.  Since $H_M^M$ is $2$-transitive, it suffices to prove that $H_{0,u}$ fixes $C=M^*$ pointwise. Let $h\in H_{0,u}$. Now $(h,1)\in G_{0,B}$, so   $(v^h,v^\alpha)\in B$ for all $v\in C$, in which case $(v^h)^\alpha=v^\alpha$ for all $v\in C$.  Since $\alpha$ is injective, $v^h=v$ for all $v\in C$, as desired.

Now we claim that $(H_0,M,\alpha)$ is $u$-compatible with  $(V_n(p),M^H)$. For condition (a) of Definition~\ref{defn:A2badtriple}, since $\alpha^2=\overline{t}\in N_M^M$, 
it remains to prove that $\alpha$ normalises $R$. Let $h\in H_{0,M}$. There exists $h'\in H_{0,M}$ such that $u^{h'}=(u^h)^\alpha$. Now $(h,h')\in G_{0,B}$, so $L=\{(v^h,v^{\alpha h'}) : v\in M\}$, in which case  $\overline{h}\alpha=\alpha\overline{h'}$. Thus $\alpha$ normalises $R$.
We saw above that there exist $h_1,h_2\in H_0$ such that $L=\{(v^{h_1}+u,v^{\alpha h_2}+u) : v \in M\}$. In particular, $M^{h_1}+u=M$ and $(v^{h_1}+u)^\alpha=v^{\alpha h_2}+u^\alpha$ for all $v\in M$, so condition (b) of Definition~\ref{defn:A2badtriple} holds, and the claim follows. 

It remains to prove that $G_0\leq \mathcal{G}(H_0,M,\alpha)$ and $\mathcal{L}=\mathcal{L}(H_0,M,\alpha)$. By definition,  $\mathcal{L}(H_0,M,\alpha)\subseteq L^G=\mathcal{L}$, so it suffices to show that $G_0\leq \mathcal{G}(H_0,M,\alpha)$. Since $\overline{t}=\alpha^2$ where $t\in N_M$ and $G_0=G_{0,V_1}\langle (t,1)\tau\rangle$, we  only need to show that $G_{0,V_1}\leq (H_0\times H_0)\mathcal{N}(H_0,M,\alpha)$. Let $(g_1,g_2)\in G_{0,V_1}$. There exist $h_1,h_2\in H_0$ such that $u^{h_1g_1}=u=u^{h_2g_2}$. Let $s_1:=h_1g_1$ and $s_2:=h_2g_2$. Now  $(s_1,s_2)\in G_{0,B}$, so $L=\{(v^{s_1},v^{\alpha s_2}) : v\in M\}$, and  $s_1,s_2\in N_M$ since $H_0\unlhd G_0^1\leq N$. Further, $\overline{s}_1\alpha=\alpha \overline{s}_2$, so $\overline{s}_2=\overline{s}_1^\alpha$. Thus $(g_1,g_2)=(h_1^{-1},h_2^{-1})(s_1,s_2)\in (H_0\times H_0)\mathcal{N}(H_0,M,\alpha)$.
\end{proof}

Now we prove that Examples~\ref{example:(I)dep}--\ref{example:(I)spor} are the only partial linear spaces $\mathcal{S}$ on $V_n(p)\times V_n(p)$ with an affine automorphism group in class (I) but not (I0) for which $\mathcal{S}(0)=V_n(p)^*\times V_n(p)^*$.  
In particular, we see that with the exception of  Example~\ref{example:(I)spor}, every such partial linear space has the form of Definition~\ref{defn:A2badfamily}. We also provide more information about the possible rank~$3$ groups that arise.  Although the groups $\mathcal{G}(H_0,M,1)$ and $\mathcal{G}(H_0,M,\alpha)$ are often conjugate, we work with arbitrary $\alpha$ since this is not true in general (see Examples~\ref{example:(I)sl25} and~\ref{example:(I)E}). For the following, recall the definition of   $\zeta_q$ and $\sigma_q$ from  \S\ref{ss:basicsvs}.

\begin{prop}
\label{prop:(I)badmore}
Let $G$ be an affine permutation group of rank~$3$ on $V:=V_n(p)\oplus V_n(p)$ where $n\geq 2$, $p$ is prime, $G_0=G_{0,V_1}\langle (t,1)\tau\rangle \leq \GL_n(p)\wr \langle \tau\rangle$ 
and $G_0$ is not a subgroup of $\GammaL_1(p^n)\wr \langle\tau\rangle$. Let $u\in V_n(p)^*$. Then $\mathcal{S}:=(V,\mathcal{L})$ is a $G$-affine proper partial linear space in which $\mathcal{S}(0)=V_n(p)^*\times V_n(p)^*$ if and only if one of the following holds  (up to conjugacy in $\GL_n(p)\wr\langle\tau\rangle$).
\begin{itemize}
\item[(i)] $\mathcal{L}=\mathcal{L}(H_0,M,\alpha)$ and $G_0\leq \mathcal{G}(H_0,M,\alpha)$  for some triple $(H_0,M,\alpha)$ that is linearly $u$-compatible with a  linear space on $V_n(p)$. Here $\alpha\in T_u$ where $T:=N_{\GL(M,\mathbb{F}_p)}(H_{0,M}^M)$, and one of the following holds. 
\begin{itemize}
\item[(a)] $H_0=\GL_m(q)$ and $M=\langle u\rangle_F$ for some subfield $F$ of $\mathbb{F}_q$ where $q^m=p^n$, $m\geq 2$ and $|F|\geq 3$. Here there exists $\theta\in\Aut(F)$ such that $(\lambda u)^\alpha=\lambda^\theta u$ for all $\lambda\in F$, and  $\theta^2=\pi|_F$ where $\pi\in \Aut(\mathbb{F}_q)$ is such that $t$ is $\pi$-semilinear.
\item[(b)] $H_0$ is regular on $V_n(p)^*$ and $M=V_n(p)$ where $n=2$ and $(p,H_0,T_u)$ is given by Table~\emph{\ref{tab:sharp}}. Either $H_0\times H_0\unlhd G_0$ and $\alpha^2t^{-1}\in H_0$, or  one of the following holds. 
\begin{itemize}
\item[(1)]   $p=7$ and either  $G_0=X_0$ and $\alpha=1$,  
or $G_0=X_0^{g_\alpha}\{(s,s): s \in T_u\}$  where $g_\alpha=(1,\alpha)$, $X_0=(\SL_2(3)\times \SL_2(3))\langle (r,1)\tau\rangle$ and $H_0=2\nonsplit S_4^-=\SL_2(3)\langle r\rangle$.
\item[(2)]  $p=23$ and  $G_0=(K\times K)\langle (t,1)\tau\rangle$ where  $K:=\SL_2(3)\times C_{11}$ is the unique index $2$  subgroup of  $H_0=T=2\nonsplit S_4^-\times C_{11}$,  and $t\in H_0\setminus  K$.
\end{itemize}

\item[(c)] $H_0=\SL_2(13)$ and $M=C_{V_6(3)}(H_{0,u})\simeq V_2(3)$ where $p^n=3^6$. Here $T_u\simeq S_3$, $\alpha^2=1$ and 
$G_0=\SL_2(13)\wr\langle \tau\rangle
=\mathcal{G}(H_0,M,\alpha)$.

\item[(d)]  $H_0=\SL_2(5)\langle \zeta_9^2,\zeta_9\sigma_9\rangle\leq\GammaL_2(9)$ and $M=\langle u\rangle_{\mathbb{F}_9}$ where $p^n=3^4$  and $u^{\sigma_9}=u$.  Here $T_u\simeq S_3$, $\alpha^2=1$,
$\SL_2(5)\times \SL_2(5)\unlhd G_0\leq H_0\wr\langle\tau\rangle\langle (\sigma_9,\sigma_9)\rangle=\mathcal{G}(H_0,M,1)$,  and if $G_0\nleq H_0\wr\langle\tau\rangle$, then $\alpha\in\{1,\sigma_9\}$.

\item[(e)]  
$H_0=(D_8\circ Q_8).C_{5}$ and $M=C_{V_4(3)}(H_{0,u})\simeq V_2(3)$ where $p^n=3^4$. Here $T_u\simeq S_3$, $\alpha^2=1$, $H_0\times H_0\unlhd G_0\leq (H_0\langle s^2\rangle)\wr\langle\tau\rangle\langle (s,s)\rangle=\mathcal{G}(H_0,M,1)$,   $H_0\langle s\rangle=N_{\GL_4(3)}(H_0)$, $u^s=u$, $M^s =M$,
 and if $G_0\nleq (H_0\langle s^2\rangle)\wr\langle\tau\rangle$, then $\alpha\in\{1,s|_M\}$.  
\end{itemize}

\item[(ii)] $\mathcal{L}=\{\{(v,v^{g})+w : v\in V_n(p)\} : g\in \SL_2(3)\alpha\cup \SL_2(3)\alpha^2r, w\in V\}$ and $G_0=X_0$ or $X_0\{(s,s): s \in T_u\}$,  where $p^n=7^2$,
$T=N_{\GL_n(p)}(2\nonsplit S_4^-)$,   $X_0=(\SL_2(3)\times \SL_2(3))\langle (r,1)\tau\rangle$,  $2\nonsplit S_4^-=\SL_2(3)\langle r\rangle$, $\alpha\in T_u\setminus \{1\}$ and $T_u\simeq C_3$.
\end{itemize}
\end{prop}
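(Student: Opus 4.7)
The forward direction is essentially built in: for each of (i)(a)--(e), the partial linear space $\mathcal{S}(H_0,M,\alpha)$ is $(V{:}\mathcal{G}(H_0,M,\alpha))$-affine by Lemma~\ref{lemma:A2badgeneric}(vi), and since $G_0\leq\mathcal{G}(H_0,M,\alpha)$ by hypothesis, it is also $G$-affine; case (ii) is the content of Example~\ref{example:(I)spor}.

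For the reverse direction, my plan would be as follows. After reducing (by conjugation in $\GL_n(p)\wr\langle\tau\rangle$) to the setting where $t$ fixes a prescribed $u\in V_n(p)^*$, I would invoke Theorem~\ref{thm:A2groups} to place $G_0$ in one of the classes (I1)--(I8). In each class I would single out a natural subgroup $H_0\leq G_0^1$ that is transitive on $V_n(p)^*$ and satisfies $H_0\times H_0\unlhd G_0$, namely the socle or near-socle of the class: $\SL_m(q)$, $\Sp_m(q)$, $G_2(q)'$, $\SL_2(5)$, $A_6$ or $A_7$, $\SL_2(13)$, or $D_8\circ Q_8$. Lemma~\ref{lemma:A2badequiv} then produces a $u$-compatible triple $(H_0,M,\alpha)$ with $\mathcal{S}=\mathcal{S}(H_0,M,\alpha)$, $G_0\leq\mathcal{G}(H_0,M,\alpha)$, and $\alpha^2=t|_M$.

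The structure of $M$ and $\alpha$ would then be pinned down by applying Kantor's classification (Theorem~\ref{thm:Kantor}) to the sharply generated linear space $(V_n(p),M^{V_n(p){:}H_0})$. In each class the possibilities reduce to one of: $\AG_m(q)$, giving case (a) after enlarging the triple to $H_0=\GL_m(q)$ via the subfield description; a Hering space, giving case (c) for class (I6) (cf.\ Example~\ref{example:(I)sl213}); the nearfield plane of order $9$, giving case (d) for (I4) with $q=9$ (cf.\ Example~\ref{example:(I)sl25}) and case (e) for (I8) (cf.\ Example~\ref{example:(I)E}); or the trivial linear space $\{V_n(p)\}$, giving case (b) for (I7) with $p\in\{3,5,11,23,29,59\}$. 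Class (I5) with $H_0\in\{A_6,A_7\}$ contributes nothing, since Kantor rules out any $M\subseteq V_4(2)$ with $|M|\geq 3$ being a line of a linear space sharply generated by $(H_0,M)$. In each surviving case, linear compatibility --- that $M$ is an $\mathbb{F}_p$-subspace of $V_n(p)$ and $\alpha$ may be chosen $\mathbb{F}_p$-linear --- would be verified from the explicit structure of the identified linear space, using Lemma~\ref{lemma:A2badgeneric}(iii)--(iv) to normalise $\alpha$ within its coset of $R$.

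The main obstacle is class (I7) with $p=7$. Here the natural regular sharply $2$-transitive group $H_0=2\nonsplit S_4^-=\SL_2(3)\langle r\rangle$ need not satisfy $H_0\times H_0\unlhd G_0$: one can have $(r,1)\tau\in G_0$ without $(r,1)$ or $\tau$ being separately present, giving $G_0\in\{X_0,\,X_0\{(s,s):s\in T_u\}\}$ with $X_0=(\SL_2(3)\times\SL_2(3))\langle(r,1)\tau\rangle$, as in Example~\ref{example:(I)spor}. Lemma~\ref{lemma:A2badequiv} does not apply to such $G_0$ with $H_0=2\nonsplit S_4^-$, so I would instead work directly with the normal subgroup $\SL_2(3)\times\SL_2(3)\unlhd G_0$, use Lemma~\ref{lemma:A2badbasic} to force every line through $(0,0)$ to have the form $\{(v,v^g):v\in V_2(7)\}$ for some $g\in\GL_2(7)$, and then combine Lemma~\ref{lemma:necessary} with the partition $V_2(7)^*\times V_2(7)^*=\Omega(1)\cup\Omega(r)$ into the two $X_0$-orbits to extract the parameter $\alpha\in T_u\setminus\{1\}$ producing (ii). The degenerate sub-cases (i)(b)(1) (still $p=7$, but with $H_0\times H_0\unlhd G_0$) and (i)(b)(2) (with $p=23$ and $T_u=1$) are handled by analogous direct arguments. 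Carrying out this case work --- and, across all classes, matching up precisely which $G_0$ correspond to which $\mathcal{G}(H_0,M,\alpha)$ --- constitutes the chief technical hurdle.
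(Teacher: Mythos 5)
Your overall strategy -- reduce to a transitive normal product $H_0\times H_0\unlhd G_0$, invoke Lemma~\ref{lemma:A2badequiv} to extract a $u$-compatible triple, and then pin down $M$ via Kantor's theorem class by class -- is exactly the paper's, and your identification of the $p=7$ obstruction in (I7) leading to case (ii) is correct. However, there is a concrete gap in your choice of $H_0$ for classes (I4) and (I8). Lemma~\ref{lemma:A2badequiv} requires $H_0$ to be \emph{transitive} on $V_n(p)^*$, and the "socles" you propose fail this: $D_8\circ Q_8$ has order $32$ while $|V_4(3)^*|=80$, and $\SL_2(5)$ has two orbits of size $40$ on $V_2(9)^*$ and is semiregular with several orbits on $V_2(29)^*$ and $V_2(59)^*$. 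So in these classes your plan never gets off the ground. The paper instead passes to $(D_8\circ Q_8).C_5$ in (I8) and to $\SL_2(5)\langle\zeta_9^2,\zeta_9\sigma_9\rangle$ (resp.\ $\SL_2(5)\times C_7$, $\SL_2(5)\times C_{29}$) in (I4); and in (I8) the transitive group $(D_8\circ Q_8).C_5$ is \emph{not} normal in $N_{\GL_4(3)}(D_8\circ Q_8)=(D_8\circ Q_8).S_5$, so one must first conjugate $G_0$ inside $N^*\times N^*$ to arrange $H_0\times H_0\unlhd G_0$. This conjugation -- not the normalisation of $t$ -- is the reason the proposition is stated only up to conjugacy in $\GL_n(p)\wr\langle\tau\rangle$, and your proposal does not account for it.

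Two further points you gloss over. First, in (I4) with $q=19$ the subcase $M=V_2(19)$ must be \emph{eliminated} (here $G_0=N^*\wr\langle\tau\rangle$ forces $N^*$ itself to be regular, which it is not), rather than producing an entry of Table~\ref{tab:sharp}; your sketch does not mention $q=19$ at all. Second, even where a regular or transitive $R$ exists in (I7) and (I4), the containment $R\times R\leq G_0$ is not automatic -- its failure is precisely what produces the exceptional configurations (i)(b)(1), (i)(b)(2) and (ii) -- and the paper settles which $G_0$ survive (and which $\alpha$ each admits) by machine computation. You flag this as the "chief technical hurdle", which is fair, but be aware that the enumeration in (i)(b)(1)--(2), (i)(d), (i)(e) and (ii) is not recoverable from the general lemmas alone.
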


\begin{proof}
If (i) holds, then $\mathcal{S}$ is a $G$-affine proper partial linear space by Lemma~\ref{lemma:A2badgeneric} and the information given in Examples~\ref{example:(I)dep}--\ref{example:(I)E}. If (ii) holds, then $\mathcal{S}$ is a $G$-affine proper partial linear space by the information given in Example~\ref{example:(I)spor}.

Conversely, suppose that $\mathcal{S}:=(V,\mathcal{L})$ is a $G$-affine proper partial linear space in which $\mathcal{S}(0)=V_n(p)^*\times V_n(p)^*$. There exists  $L\in\mathcal{L}_0$  such that $(u,u)\in L$. Let  $B:=L^*$. By Lemma~\ref{lemma:necessary}, $B$ is a  block of $G_0$ on $V_n(p)^*\times V_n(p)^*$. By Lemma~\ref{lemma:A2badbasic}, $L=\{(v,v^\alpha) : v\in M\}$ for some $M\subseteq V_n(p)$ with $|M|\geq 3$ and  injective map $\alpha:M\to V_n(p)$, where $\alpha$ fixes $0$ and $u$.   By assumption,  $G_0$ belongs to one of the classes (I1)--(I8), so that (in the notation of Theorem~\ref{thm:rank3}),  $G_0\leq \GammaL_m(q)\wr \langle\tau\rangle$ where $q^m=p^n$ and $m\geq 2$.   Recall that $t\in G_0^1=G_0^2\leq\GammaL_m(q)$. Let $\sigma:=\sigma_q$.  We may assume without loss of generality that $u^\sigma=u$. Let $\zeta:=\zeta_q$. 
  Let $\pi\in\Aut(\mathbb{F}_q)$ be such that $t$ is $\pi$-semilinear. 

Assume for this paragraph that  $H_0\times H_0\unlhd  G_0$ where $H_0\leq \GammaL_m(q)$ is transitive on $V_n(p)^*$. Now  $u^{ht}=u$ for some $h\in H_0$, so by Lemma~\ref{lemma:A2badequiv}, $\mathcal{L}=\mathcal{L}(H_0,M,\alpha)$, $G_0\leq \mathcal{G}(H_0,M,\alpha)$ and $(H_0,M,\alpha)$ is $u$-compatible with a  linear space on $V_n(p)$.  
 In particular,  $M\subseteq C_{V_n(p)}(H_{0,u})$.
  Further, $\alpha^2=\overline{ht}$ and $ht\in N_M$, where  $N:=N_{\GL_n(p)}(H_0)$ and $ ^-:N_M\to N_M^M$ is the natural homomorphism. Note that  the line $M$  is  described by Theorem~\ref{thm:Kantor}, and  $H_0$ is regular on $V_n(p)^*$ when $M=V_n(p)$. Note also that if $L$ is an $\mathbb{F}_p$-subspace of $V$, then $M$ is an $\mathbb{F}_p$-subspace of $V_n(p)$ and $\alpha\in\GL(M,\mathbb{F}_p)$, in which case $\alpha\in T_u$ where $T:=N_{\GL(M,\mathbb{F}_p)}(H_{0,M}^M)$ and $(H_0,M,\alpha)$ is linearly $u$-compatible. 
Suppose now that $H_0\leq \GL_m(q)$ and  $M=\langle u\rangle_{F}$ for some subfield $F$ of $\mathbb{F}_q$. We claim that 
(i)(a) holds. Note that $H_{0,M}^M=\GL_m(q)_M^M\simeq \GL_1(F)$.  There exists $g\in H_0$ such that $u^g=-u$. Now $L\subseteq M\oplus M$ since $\alpha\in\Sym(M)$, and $g\in \GL_m(q)$, so $L$ is an $\mathbb{F}_p$-subspace of $V$ by Lemma~\ref{lemma:affine}. Thus  $\alpha\in T_u\simeq \Aut(F)$, and we have already seen that $\alpha^2=\overline{ht}\in \GammaL_m(q)_M^M$, so  $(\GL_m(q),M,\alpha)$ is linearly $u$-compatible. Further, there exists $\theta\in \Aut(F)$ such that $(\lambda u)^\alpha=\lambda^\theta u$ for all $\lambda\in F$, and $\theta^2=\pi|_F$. Now $L^{\GL_m(q)\times \GL_m(q)}=L^{H_0\times H_0}$, so $\mathcal{L}(\GL_m(q),M,\alpha)=\mathcal{L}(H_0,M,\alpha)=\mathcal{L}$. Recall that $G_0=G_{0,V_1}\langle (ht,1)\tau\rangle$, and $G_{0,V_1}\leq (H_0\times H_0)\{ (s_1,s_2)\in N_M\times N_M : \overline{s}_2=\overline{s}_1^\alpha\}$. Since $G_{0,V_1}\leq \GammaL_m(q)\times \GammaL_m(q)$ and $ht$ is $\pi$-semilinear, it follows that $G_0\leq \mathcal{G}(\GL_m(q),M,\alpha)$ (cf. Example~\ref{example:(I)dep}), and the claim holds.

We now  consider the various possibilities for $G_0$.  Whenever $H_0\times H_0\unlhd  G_0$ such that  $H_0\leq \GammaL_m(q)$ is transitive on $V_n(p)^*$, we make use of the above observations and notation.

Suppose that $G_0$ belongs to one of (I1)--(I3). We may take $H_0$ to be $\SL_m(q)$, $\Sp_m(q)$ or $G_2(q)'$,  respectively (by~\cite[p.125]{Wil2009} when $H_0=G_2(q)'$).  Now $H_0$ is not regular on $V_n(p)^*$, so $M\neq V_n(p)$, in which case
 $M=\langle u\rangle_{F}$ for some subfield $F$ of $\mathbb{F}_q$ by Theorem~\ref{thm:Kantor}. Thus (i)(a) holds.

Suppose that $G_0$ belongs to (I5). We may take $H_0$ to be $A_6$ or $A_7$. Now $H_0$ is not regular on $V_n(p)^*$, so $M\neq V_n(p)$, but this contradicts Theorem~\ref{thm:Kantor}.

Suppose that $G_0$ belongs to (I6). We may take $H_0$ to be $\SL_2(13)$, which is self-normalising in $\GL_6(3)$.  Since $-1\in H_0$, Lemma~\ref{lemma:affine} implies that $L$ is a subspace of $V_6(3)$, so  $M$ is a subspace of $C_{V_6(3)}(\SL_2(13)_u)\simeq V_2(3)$.  Either  $M=\langle u\rangle_{\mathbb{F}_3}$, in which case (i)(a) holds, or $M=C_{V_6(3)}(\SL_2(13)_u)$, in which case     (i)(c) holds by Example~\ref{example:(I)sl213}.  

Suppose that $G_0$ belongs to (I7), and let $N^*:=N_{\GL_m(q)}(\SL_2(3))$. Then $G_0\leq  N^*\wr\langle\tau\rangle$. Note that $m=2$ and $q=p$. Using {\sc Magma}, we determine that $-1\in G_0$,  so $L$ is an $\mathbb{F}_p$-subspace of $V$ by Lemma~\ref{lemma:affine}. In particular, $M$ is an $\mathbb{F}_p$-subspace of $V_2(p)$ and $|L|$ divides $p^2$.  If $|L|=p$, then $L=\langle (u,u)\rangle_{\mathbb{F}_p}$ and $G_0\leq \GL_2(p)\wr\tau = \mathcal{G}(\GL_2(p),\langle u\rangle_{\mathbb{F}_p},1)$, so (i)(a) holds. Otherwise, $|L|=p^2$ and $M=V_2(p)$. 
 Using {\sc Magma}, we determine that $N^*$ has a subgroup $R$ that is regular on $V_2(p)^*$ and  isomorphic to a group in Table~\ref{tab:sharp}. Moreover,   $N^*=N_{\GL_2(p)}(R)$, so $G_0$ normalises $R\times R$. If $R\times R\subseteq  G_0$, then we may take $H_0$ to be $R$, in which case $\alpha^2t^{-1}=h\in H_0$ and  $T_u$ is  given by Table~\ref{tab:sharp}, so  (i)(b) holds. Suppose instead that $R\times R$ is not a subgroup of $G$. Since $G_0\nleq \GammaL_1(p^2)\wr\langle\tau\rangle$, it can be checked using {\sc Magma} that either $p=23$ and (i)(b)(2) holds, or $p=7$, in which case either (i)(b)(1) holds,  or (ii) holds  (up to conjugacy in $N^*\times N^*$).

Suppose that $G_0$ belongs to (I4). Then $S\times S\unlhd G_0\leq N^*\wr\langle \tau\rangle$ where $S:=\SL_2(5)$, $N^*:=N_{\GammaL_m(q)}(S)$ and $m=2$. 
 Now $-1\in G_0$, so $L$ is an $\mathbb{F}_p$-subspace of $V$ by Lemma~\ref{lemma:affine}. In particular, $M$ is an $\mathbb{F}_p$-subspace of $V_n(p)$ and $|L|$ divides $q^2$. 
Suppose that $q\in \{11,19,29,59\}$ (we treat the case $q=9$ below). Now $q=p$, so if $|L|=p$, then $L=\langle (u,u)\rangle_{\mathbb{F}_p}$ and $G_0\leq \GL_2(p)\wr\tau = \mathcal{G}(\GL_2(p),\langle u\rangle_{\mathbb{F}_p},1)$, so (i)(a) holds. Otherwise, $|L|=p^2$ and $M=V_2(p)$.  By a computation in {\sc Magma}, $N^*=S\langle \zeta\rangle$, so $|N^*_u|=60/(q+1)$. Further, if $q=19$, then $G_0=N^*\wr \langle \tau\rangle$, so we may take $H_0$ to be $N^*$, but $N^*$ is not regular on $V_2(p)^*$, a contradiction. Thus $q\in \{11,29,59\}$. Using {\sc Magma}, we determine that $N^*$ has a subgroup $R$ that is regular on $V_2(p)^*$ and  isomorphic to a group in Table~\ref{tab:sharp}. Moreover,   $N^*=N_{\GL_2(p)}(R)$ and $R\times R\unlhd G_0$. Thus we may take $H_0$ to be $R$, in which case $\alpha^2t^{-1}=h\in H_0$ and  $T_u$ is  given by Table~\ref{tab:sharp}, so  (i)(b) holds.

Still assuming that $G_0$ lies in (I4), suppose that $q=9$. 
If $|L|=3$, then $L=\langle (u,u)\rangle_{\mathbb{F}_3}$ and $G_0\leq \GammaL_2(9)\wr\langle\tau\rangle=\mathcal{G}(\GL_2(9),\langle u\rangle_{\mathbb{F}_3},1)$, so (i)(a) holds. Otherwise, $|L|\in \{9,81\}$ since $|L|$ divides $81$ and $|B|$ divides 6400. 
By a computation in {\sc Magma} and  Examples~\ref{example:(I)dep} and~\ref{example:(I)sl25}, 
if   $G_0\leq \mathcal{G}(\GL_2(9),\langle u\rangle_{\mathbb{F}_9},1)=\GL_2(9)\wr \langle\tau\rangle\langle (\sigma,\sigma)\rangle$, then   (i)(a) holds, and if not,  then 
(i)(d) holds.

Suppose that $G_0$ belongs to (I8). Then $(D_8\circ Q_8)\times (D_8\circ Q_8)\unlhd G_0\leq N^*\wr\langle\tau\rangle$ where $N^*:=N_{\GL_m(q)}(D_8\circ Q_8)$, $m=4$ and $q=p=3$. Now $-1\in G_0$, so $L$ is an $\mathbb{F}_3$-subspace of $V$ by Lemma~\ref{lemma:affine}. In particular, $M$ is a subspace of $V_4(3)$ and $|L|$ divides $3^4$. If $|L|=3$, then $L=\langle (u,u)\rangle_{\mathbb{F}_3}$ and $G_0\leq \GL_4(3)\wr\tau = \mathcal{G}(\GL_4(3),\langle u\rangle_{\mathbb{F}_3},1)$, so (i)(a) holds.  Otherwise, $|L|>3$. By  Example~\ref{example:(I)E},  $\GL_4(3)$ has a subgroup $H_0:=(D_8\circ Q_8).C_5$ that is transitive on $V_4(3)^*$. Further, $N:=N_{\GL_4(3)}(H_0)=(D_8\circ Q_8).\AGL_1(5)=H_0\langle s\rangle$, where $N_u=\langle s\rangle\simeq C_8$ and  $H_{0,u}=\langle s^4\rangle$.  By a computation in {\sc Magma}, there exists $g\in N^*\times N^*$ such that $H_0\times H_0\unlhd G_0^g$. 
(Note that $H_0$ is not normal in $N^*$ since  $N<(D_8\circ Q_8).S_5=N^*$ by Remark~\ref{remark:2trans}.) 
Thus we may assume that  
 $H_0\times H_0\unlhd G_0=G_{0,V_1}\langle(t,1)\tau\rangle$, in which case  $M=C_{V_4(3)}(H_{0,u})\simeq V_2(3)$ and (i)(e) holds. (The additional statements  can be verified using {\sc Magma} and  Example~\ref{example:(I)E}.)
\end{proof}

For Proposition~\ref{prop:(I)badmore}(ii) and  cases (a), (c), (d) and (e) of Proposition~\ref{prop:(I)badmore}(i), the number of examples for a particular group $G$ can be determined directly from the conditions on $\alpha$, as described in Examples~\ref{example:(I)spor}, 
\ref{example:(I)dep},~\ref{example:(I)sl213},~\ref{example:(I)sl25} and~\ref{example:(I)E}, respectively. This can also be done for Proposition~\ref{prop:(I)badmore}(i)(b)  when $H_0\times H_0\unlhd G_0$ and $\alpha^2t^{-1}\in H_0$, as follows. Observe that if $T_u$ is abelian and $G_0\leq \mathcal{G}(H_0,V_2(p),\alpha_1)\cap\mathcal{G}(H_0,V_2(p),\alpha_2)$ for some $\alpha_1,\alpha_2\in T_u$, then $\alpha_1^2=r_1t$ and $\alpha_2^2=r_2t$ for some $r_1,r_2\in H_0$,  so $\alpha_1^2=\alpha_2^2$  since $H_{0,u}=1$,  in which case  $\mathcal{G}(H_0,V_2(p),\alpha_1)=\mathcal{G}(H_0,V_2(p),\alpha_2)$ by Lemma~\ref{lemma:A2badgeneric} (cf. Example~\ref{example:(I)reg}). In particular, if $T_u$ is cyclic of odd order, then $G_0$ admits a unique example, and if $T_u=C_2$ or $C_4$, then $G_0$ admits exactly two examples. Otherwise, $T_u=S_3$ and the groups $\mathcal{G}(H_0,V_2(3),\alpha)$ are pairwise distinct for $\alpha\in T_u$. In particular,  both $\mathcal{G}(H_0,V_2(3),1)$ and its subgroup $H_0\wr\langle\tau\rangle\{(s,s): s \in A_3\}$ admit exactly one example,
 while $H_0\wr\langle\tau\rangle\{(s,s^{-1}): s\in A_3\}=\bigcap_{\alpha\in S_3\setminus A_3} \mathcal{G}(H_0,V_2(3),\alpha)$  admits exactly three examples. Up to conjugacy, these are all of the rank~$3$ groups. (Note that $H_0\wr\langle\tau\rangle=\bigcap_{\alpha\in (S_3\setminus A_3)\cup\{1\}} \mathcal{G}(H_0,V_2(3),\alpha)$   admits exactly four examples, but $H_0=Q_8\leq \GammaL_1(9)$, so this group is not considered in Proposition~\ref{prop:(I)badmore}.)
 
 \begin{proof}[Proof of Proposition~\emph{\ref{prop:(I)bad}}]
 This follows from Proposition~\ref{prop:(I)badmore}, Lemma~\ref{lemma:A2badgeneric}(iii), and the information given in  Examples~\ref{example:(I)dep}--\ref{example:(I)spor}.
 \end{proof}

\section{The extraspecial class (E)}
 \label{s:(E)}
 
  We begin by describing the exceptional partial linear spaces that arise for class (E). Recall the definition of   $\zeta_q$ from  \S\ref{ss:basicsvs}.

\begin{example}
\label{example:(E)walker}
Let $E:=D_8\circ Q_8$, and let $G:=V{:}N_{\GL_4(5)}(E)$ where  $V:=V_4(5)$. 	Now $G$ has rank~$3$ with subdegrees   $240$ and $384$. By a computation in {\sc Magma},  $G_0=(\SL_2(5)\circ E \circ \langle \zeta_5\rangle)\nonsplit 2$, and the proper subgroups of $G_0$ with two orbits on $V^*$ are $\SL_2(5)\circ E$ and $\SL_2(5)\circ E\circ \langle \zeta_5\rangle$.  The exceptional Walker plane of order $25$ is an affine plane with group of automorphisms $G$, and $G$ has  $\ell_\infty$-orbit lengths $(10,16)$   (see~\cite{Ost1981,Walk1979} and~\cite[Corollary~8]{BilJoh2001}). Now $G$ has two orbits $\mathcal{L}_1$ and $\mathcal{L}_2$ on the lines of this affine plane, so   $\mathcal{S}_1:=(V,\mathcal{L}_1)$ and  $\mathcal{S}_2:=(V,\mathcal{L}_2)$ are  $G$-affine proper partial linear spaces by Lemma~\ref{lemma:linearspace}. Using {\sc Magma}, we determine that $\Aut(\mathcal{S}_1)=\Aut(\mathcal{S}_2)=G$. Further, the lines of $\mathcal{S}_1$ and $\mathcal{S}_2$ are affine subspaces of $V$.\end{example}

\begin{example}
\label{example:(E)mason-ostrom}
Let $E:=D_8\circ Q_8$, and let $G:=V{:}N_{\GL_4(7)}(E)$ where $V:=V_4(7)$. Now $G$ has  rank~$3$ with subdegrees $480$ and $1920$. By a computation in {\sc Magma},   $G_0=2\nonsplit S_5^+\circ E\circ \langle \zeta_7\rangle$, and the proper subgroups of $G_0$ with two orbits on $V^*$ are $2\nonsplit S_5^+\circ E$ or a conjugate of $(E\circ \langle \zeta_7\rangle)\nonsplit \AGL_1(5)$. The Mason-Ostrom plane of order $49$ is an affine plane with group of automorphisms $G$, and $G$ has $\ell_\infty$-orbit lengths $(10,40)$ (see~\cite{MasOst1985} and~\cite[Corollary~8]{BilJoh2001}). Now $G$ has two orbits $\mathcal{L}_1$ and $\mathcal{L}_2$ on the lines of this affine plane, so   $\mathcal{S}_1:=(V,\mathcal{L}_1)$ and  $\mathcal{S}_2:=(V,\mathcal{L}_2)$ are  $G$-affine proper partial linear spaces by Lemma~\ref{lemma:linearspace}. Using {\sc Magma}, we determine that $\Aut(\mathcal{S}_1)=\Aut(\mathcal{S}_2)=G$. Further, the lines of $\mathcal{S}_1$ and $\mathcal{S}_2$ are affine subspaces of $V$.
\end{example}

 \begin{example}
 \label{example:(E)nonplane}
 Let $E:=Q_8\circ Q_8\circ Q_8$, and let $G:=V{:}N_{\GL_8(3)}(E)$ where $V:=V_8(3)$. By~\cite[p.485]{Lie1987}, $G_0=E\nonsplit \SO^-_6(2)\leq \GSp_8(3)$, and $G$ has rank~$3$ with subdegrees $1440$ and $5120$. By a computation in {\sc Magma}, the only proper subgroup of $G_0$ with two orbits on $V^*$ is $E\nonsplit \Omega_6^-(2)$. Now  $G_0$ has an orbit $\Delta$ on the points of $\PG_7(3)$ with size $720$. By a computation in {\sf GAP} using~\cite{FinInG}, there is a partition $\Sigma$ of $\Delta$ into hyperbolic lines---that is,  projective lines arising from non-degenerate $2$-subspaces of $V$ with respect to the symplectic form preserved by $\Sp_8(3)$---such that $G_0$ acts transitively on the elements of $\Sigma$. 
Embed $\PG_7(3)$ as a hyperplane $\Pi$ in $\PG_8(3)$. View $V$ as the set of points in $\AG_8(3)$ (i.e., the set of points in $\PG_8(3)$ that are not in $\Pi$), 
 and let $\mathcal{L}$ be the set of affine  $2$-subspaces of $\AG_8(3)$   whose completions meet $\Pi$ in an element of  $\Sigma$. Then $\mathcal{S}:=(V,\mathcal{L})$ is a $G$-affine proper partial linear space with line-size $9$. By a computation in {\sc Magma}, $\Aut(\mathcal{S})=G$.
  \end{example}	

 \begin{prop}
 \label{prop:(E)}
Let $G$ be an affine permutation group of rank~$3$ on $V:=V_d(p)$  where $p$ is prime, $E\unlhd G_0$,  $E$ is irreducible on $V$  and $(E,p^d,G)$ is given by Table~\emph{\ref{tab:E}}. Then $\mathcal{S}$ is a
$G$-affine proper partial linear space  if and only if one of the following holds.
\begin{itemize}
\item[(i)] $\mathcal{S}=(V,\{\langle u\rangle_{\mathbb{F}_p}+v : u\in X, v\in V\})$ where $X$ is an orbit of $G_0$ on $V^*$ and $p\neq 2$.
\item[(ii)] $(E,p^d)=(3^{1+2},2^6)$ and $\mathcal{S}=(V,\{\langle u\rangle_{\mathbb{F}_4}+v : u\in X, v\in V\})$ where $V=V_3(4)$ and $X$ is an orbit of $G_0$ on $V^*$. Here $\mathcal{S}$ has an automorphism group in class~\emph{(R1)}.
\item[(iii)] $(E,p^d)=(D_8\circ Q_8,5^4)$
 and $\mathcal{S}$ is described in Example~\emph{\ref{example:(E)walker}}. 
\item[(iv)] $(E,p^d)=(D_8\circ Q_8,7^4)$ and $\mathcal{S}$ is described in Example~\emph{\ref{example:(E)mason-ostrom}}. 
\item[(v)] $(E,p^d)=(Q_8\circ Q_8\circ Q_8,3^8)$ and $\mathcal{S}$ is described in Example~\emph{\ref{example:(E)nonplane}}. 
\end{itemize}
\end{prop}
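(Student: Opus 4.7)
Proof proposal. The plan is to combine the general machinery of Lemmas \ref{lemma:necessary}, \ref{lemma:sufficient}, \ref{lemma:affine} and \ref{lemma:transitive} with a finite computational enumeration, since Table \ref{tab:E} lists only five pairs $(E,p^d)$ and in each case $G_0$ has known, relatively small order. First I would dispose of the forward direction: (i) is an instance of Example \ref{example:AG} (with $(c,s)=(d,p)$ as in Corollary \ref{cor:remark3} together with Hypothesis \ref{hyp:AGgroups}), (ii) is an instance of Example \ref{example:AG} applied after viewing $V=V_6(2)$ as $V_3(4)$ and noting $G_0\leq \GammaU_3(2)\leq \GammaL_3(4)$, while (iii)--(v) have already been verified in Examples \ref{example:(E)walker}, \ref{example:(E)mason-ostrom} and \ref{example:(E)nonplane}.

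For the converse, let $\mathcal{S}=(V,\mathcal{L})$ be a $G$-affine proper partial linear space. Pick $L\in\mathcal{L}_0$ and set $B:=L^*$; by Lemma \ref{lemma:necessary}(i),(iii), $B$ is a non-trivial block of $G_0$ on the orbit $\mathcal{S}(0)$ of $G_0$ on $V^*$. Since $E$ is irreducible on $V$ and contains a central involution, that involution must be $-1$, so $-1\in G_0$ when $p$ is odd; combining Lemma \ref{lemma:affine}(i) for $p$ odd and Lemma \ref{lemma:affine}(ii) for $(E,p^d)=(3^{1+2},2^6)$, every line of $\mathcal{S}$ is an affine $\mathbb{F}_p$-subspace of $V$. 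Hence $L$ is an $\mathbb{F}_p$-subspace, $B\cup\{0\}=L$, and $L$ is one of finitely many candidates. Conversely, Lemmas \ref{lemma:transitive} and \ref{lemma:sufficient} imply that every non-trivial block $B$ of $G_0$ on an orbit, with $B\cup\{0\}$ an $\mathbb{F}_p$-subspace, gives rise to a $G$-affine proper partial linear space $(V,(B\cup\{0\})^G)$. The problem therefore reduces to enumerating the $\mathbb{F}_p$-subspaces $L\leq V$ such that $L^*$ is a block of $G_0$ on one of its two orbits, and then identifying the resulting partial linear space.

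The enumeration I would carry out row by row in {\sc Magma} using the explicit descriptions of $G_0$ given by Table \ref{tab:E} together with the class (E) description in Theorem \ref{thm:rank3}. In the rows $(E,p^d)\in\{(Q_8,p^2):p=7,13,17,19,23,29,31,47\}$ the only $\mathbb{F}_p$-subspaces whose non-zero vectors form a block are the $\mathbb{F}_p$-lines $\langle u\rangle_{\mathbb{F}_p}$, yielding case (i). For $(E,p^d)=(3^{1+2},2^6)$ the subspaces partition into $\mathbb{F}_2$-lines $\langle u\rangle_{\mathbb{F}_2}$ (which give isolated-point graphs, excluded because $\mathcal{S}$ is a proper partial linear space) and $\mathbb{F}_4$-lines $\langle u\rangle_{\mathbb{F}_4}$, yielding case (ii). For $(E,p^d)=(D_8\circ Q_8,5^4)$ and $(D_8\circ Q_8,7^4)$ the non-trivial blocks whose span is an $\mathbb{F}_p$-subspace produce precisely the two $G$-orbits on the lines of the Walker plane (resp.\ Mason-Ostrom plane), matching (iii) (resp.\ (iv)). For $(E,p^d)=(Q_8\circ Q_8\circ Q_8,3^8)$ one gets, besides the $\mathbb{F}_3$-lines of (i), exactly one further family, corresponding to the $G_0$-invariant partition of the $720$-point orbit on $\PG_7(3)$ into hyperbolic lines of the underlying symplectic geometry, yielding case (v). The $(D_8\circ Q_8\circ\langle\zeta_5\rangle,5^4)$ row behaves like $(D_8\circ Q_8,5^4)$ since $\langle\zeta_5\rangle\leq Z(\GL_4(5))$ acts by scalars and cannot produce additional blocks beyond those already preserved.

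The main obstacle will be the last case $(Q_8\circ Q_8\circ Q_8,3^8)$: the orbits of $G_0$ have sizes $1440$ and $5120$, and a direct block computation on the $5120$-point orbit is feasible but heavy. The cleanest way to handle it is to exploit the symplectic structure preserved by $G_0\leq \GSp_8(3)$ and the classification of $\Omega_6^-(2)$-invariant partitions of its degree-$720$ orbit on $\PG_7(3)$, as in Example \ref{example:(E)nonplane}, which isolates the hyperbolic-line partition as the unique non-subspace example and rules out anything further. All remaining rows are small enough that the {\sc Magma} enumeration terminates quickly, and the matching with cases (i)--(v) is then immediate from the parameters in Table \ref{tab:main}.
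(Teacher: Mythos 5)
Your overall strategy is exactly the paper's: the forward direction follows from Corollary~\ref{cor:remark3} (and, for (ii), the identification $G_0\leq\GammaU_3(2)\leq\GammaL_3(4)$) together with Examples~\ref{example:(E)walker}--\ref{example:(E)nonplane}; for the converse, one shows $B:=L^*$ is a non-trivial block via Lemma~\ref{lemma:necessary} (primitivity coming from Lemma~\ref{lemma:primitiveirreducible}), deduces $-1\in G_0$ for odd $p$ from the irreducibility of $E$ so that Lemma~\ref{lemma:affine} forces every line to be an $\mathbb{F}_p$-subspace, and then enumerates the candidate subspaces by computer. The paper adds one small shortcut you omit: once $|L|\geq p^2$, the subspace $L$ has dimension at least $2$, and since $L^*$ is a \emph{non-trivial} block this forces $d>2$, which kills all eight $(Q_8,p^2)$ rows at once with no computation. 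That is cosmetic; your row-by-row {\sc Magma} check would reach the same conclusion for those rows.

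There is, however, one concrete error: your treatment of the row $(E,p^d)=(D_8\circ Q_8\circ\langle\zeta_5\rangle,5^4)$. You assert it ``behaves like'' the $(D_8\circ Q_8,5^4)$ row because $\langle\zeta_5\rangle$ is central, i.e.\ that it again yields the Walker-plane partial linear spaces. That is the wrong conclusion, and it would contradict the proposition as stated, since case (iii) explicitly requires $E=D_8\circ Q_8$. The point you are missing is that Table~\ref{tab:E} attaches to this row the condition $G_0\nleq N_{\GL_4(5)}(D_8\circ Q_8)$, whereas by Example~\ref{example:(E)walker} the full automorphism group of each Walker-plane partial linear space has point stabiliser exactly $N_{\GL_4(5)}(D_8\circ Q_8)$; a group $G_0$ not contained in that normaliser therefore cannot preserve either line set. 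Adjoining central scalars does not ``preserve existing blocks'' here --- a block of a larger transitive group must be a block of every transitive subgroup, but not conversely, so enlarging $G_0$ can (and in this row does) destroy the blocks of size $24$ and $384$. The correct outcome, as in the paper, is that this row contributes no examples beyond case (i). The rest of your enumeration claims match the paper's computations.
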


\begin{proof}
 Note that by Corollary~\ref{cor:remark3}, $\langle x\rangle_{\mathbb{F}_p}^*\subseteq x^{G_0}$ for all $x\in V^*$. Further, if $(E,p^d)=(3^{1+2},2^6)$, then $G_0\leq\GammaU_3(2)$ and $V=V_3(4)$, so $\langle x\rangle_{\mathbb{F}_4}^*\subseteq x^{G_0}$ for all $x\in V^*$. In particular, if (i) or (ii) holds, then $\mathcal{S}$ is a $G$-affine proper partial linear space, and when (ii) holds, $V{:}\GammaU_3(2)$ is an automorphism group of $\mathcal{S}$ in class (R1). Moreover, if one of (iii)--(v) holds, then we have already seen in Examples~\ref{example:(E)walker}--\ref{example:(E)nonplane} that $\mathcal{S}$ is a $G$-affine proper partial linear space.
 
 Suppose that $\mathcal{S}$ is a $G$-affine proper partial linear space. Let $L\in\mathcal{L}_0$, let $B:=L^*$, and let $x\in B$. Since $E$ is irreducible,  $G$ is primitive by Lemma~\ref{lemma:primitiveirreducible}, so  $B$ is a non-trivial block of $G_0$ on $x^{G_0}$ by  Lemma~\ref{lemma:necessary}. If $p$ is odd, then since $E$ is irreducible, its  central involution is $-1$. 
Thus $L$ is an $\mathbb{F}_p$-subspace of $V_d(p)$ for all possible $p$ by Lemma~\ref{lemma:affine}. If $|L|=p$, then (i) holds, so we assume that $|L|\geq p^2$. In particular,  $d>2$.  If $(E,p^d)=( D_8\circ Q_8\circ \langle\zeta_{5}\rangle,5^4)$ and $G_0\nleq N_{\GL_4(5)}(D_8\circ Q_8)$, then there are no examples by a computation in {\sc Magma}. Otherwise, $(E,p^d)$ is one of $(3^{1+2},2^6)$, $(D_8\circ Q_8,5^4)$, $(D_8\circ Q_8,7^4)$ or   $(Q_8\circ Q_8\circ Q_8,3^8)$, in which case one of (ii)--(v) holds   by a computation in {\sc Magma}.
 \end{proof}

 \section{The almost simple class (AS)}
 \label{s:(AS)}
 
 We begin by describing the exceptional partial linear spaces that arise for class (AS). Note that when computing with {\sc Magma} in these examples, we often use~\cite{web-atlas} to construct the representation of the appropriate quasisimple group. Recall the definition of   $\zeta_q$ and $\sigma_q$ from  \S\ref{ss:basicsvs}.

 \begin{example}
 \label{example:(AS)nearfield}
 Let $S:=2\nonsplit A_5\simeq \SL_2(5)$. By~\cite[Lemma 11.2]{Fou1964}, the normaliser $N$ of $S$ in~$\GammaL_2(9)$  is $S\langle \zeta,\sigma\rangle$ where $\zeta:=\zeta_9$ and $\sigma:=\sigma_9$, and $N$ is transitive on $V^*$ where $V:=V_2(9)$. Moreover, by~\cite[Theorem 5.3]{FouKal1978}, the  subgroups of $N$ containing $S$ with two orbits on $V^*$  are $S$,  $S\langle \zeta^2\rangle$, $S\langle \sigma\rangle$, $S\langle \zeta^2\sigma\rangle$ and  $S\langle \zeta^2,\sigma\rangle$; these all have orbit sizes $40$ and $40$ on $V^*$. Note that $S\langle \sigma\rangle^\zeta=S\langle \zeta^2\sigma\rangle\simeq 2\nonsplit S_5^+$.  
 The nearfield plane of order $9$  is an affine plane whose full automorphism group is the $2$-transitive group $V{:}(D_8\circ Q_8\circ S\langle \zeta\sigma\rangle)$ (see~\S\ref{ss:nearfield} and   Remark~\ref{remark:2trans}), and $V{:}S$ has $\ell_\infty$-orbit lengths $(5,5)$ (see~\cite[Corollary~8]{BilJoh2001}). Now $V{:}S$ has two orbits $\mathcal{L}_1$ and $\mathcal{L}_2$ on the lines of  the nearfield plane, so   $\mathcal{S}_1:=(V,\mathcal{L}_1)$ and  $\mathcal{S}_2:=(V,\mathcal{L}_2)$ are  $(V{:}S)$-affine proper partial linear spaces by Lemma~\ref{lemma:linearspace}. Using {\sc Magma}, we determine that, without loss of generality,  $\Aut(\mathcal{S}_1)=V{:}S\langle \sigma\rangle $ and $\Aut(\mathcal{S}_2)=V{:}S\langle \zeta^2\sigma\rangle$. 
  Thus $\mathcal{S}_1$ is a $(V{:}G_0)$-affine proper partial linear space for $G_0\in \{S,S\langle \sigma \rangle\}$, and $\mathcal{S}_2$ is a $(V{:}G_0)$-affine proper partial linear space for $G_0\in \{S,S\langle \zeta^2\sigma \rangle\}$.  
  Note that $\mathcal{S}_1\simeq \mathcal{S}_2$ since $\zeta\sigma$ maps $\mathcal{L}_1$ to $\mathcal{L}_2$.
 Note also that the lines of $\mathcal{S}_1$ are affine $\mathbb{F}_3$-subspaces of $V$.
  \end{example}

\begin{example}
\label{example:(AS)korch}
Let $S:=2\nonsplit A_5\simeq \SL_2(5)$ and $q:=49$. By a computation in {\sc Magma}, the normaliser $N$ of $S$ in $\GammaL_2(q)$ is $S\langle \zeta,\sigma\rangle$ where $\zeta:=\zeta_q$ and $\sigma:=\sigma_q$, and $N$ has
orbits $X$ and $Y$ on $V^*$ with sizes  $960$ and $1440$,  respectively,  where $V:=V_2(q)$. Moreover, the proper subgroups of $N$ with two orbits on $V^*$ are  $S\circ \langle \zeta\rangle$ and $S\langle \zeta\sigma, \zeta^2\rangle=2\nonsplit S_5^-\circ \langle \zeta^2\rangle$. Let $G:=V{:}S\langle \zeta\sigma, \zeta^2\rangle$. The Korchm{\'a}ros plane of order $49$ is an affine plane with group of automorphisms $G$, and $G$ has $\ell_\infty$-orbit lengths $(20,30)$~\cite{Kor1985}. Now $G$ has two orbits on the lines of this affine plane; let $\mathcal{L}$ be the orbit for which $|\mathcal{L}_0|=20$. By Lemma~\ref{lemma:linearspace}, $\mathcal{S}:=(V,\mathcal{L})$ is a  $G$-affine proper partial linear space with $\mathcal{S}(0)=X$, and using {\sc Magma}, we determine that $\Aut(\mathcal{S})=G$ and $\mathcal{L}$ consists of  affine $\mathbb{F}_7$-subspaces of $V$. We do not consider  the other partial linear space arising from the Korchm{\'a}ros plane here, for it is $\mathbb{F}_q$-dependent with 
 line set $\{\langle u\rangle_{\mathbb{F}_q}+v : u\in Y, v\in V\}$. Further, $\mathcal{S}$ is not isomorphic to the $\mathbb{F}_q$-dependent partial linear space  with line set $\{\langle u\rangle_{\mathbb{F}_q}+v : u\in X, v\in V\}$, for the  automorphism group of this latter partial linear space  contains $V{:}S\langle \zeta,\sigma\rangle$. 
\end{example}

\begin{example}
\label{example:(AS)walker}
Let $S:=2\nonsplit A_6$. By a computation in {\sc Magma}, the normaliser $N$ of $S$ in $\GL_4(5)$ is $(S\circ \langle \zeta_5\rangle)\nonsplit 2$, and $N$ has orbit sizes $144$ and $480$ on $V^*$ where $V:=V_4(5)$. Moreover, $N$ has no proper subgroups with two orbits on $V^*$. Let $G:=V{:}N$. The exceptional Walker plane of order $25$, or  Hering plane, is an affine plane  with group of automorphisms $G$, and $G$ has $\ell_\infty$-orbit lengths $(6,20)$ (see~\cite{Walk1979} and~\cite[Corollary~8]{BilJoh2001}).
Now $G$ has two orbits $\mathcal{L}_1$ and $\mathcal{L}_2$ on the lines of this affine plane, so   $\mathcal{S}_1:=(V,\mathcal{L}_1)$ and  $\mathcal{S}_2:=(V,\mathcal{L}_2)$ are  $G$-affine proper partial linear spaces by Lemma~\ref{lemma:linearspace}. Using {\sc Magma}, we determine that $\Aut(\mathcal{S}_1)=\Aut(\mathcal{S}_2)=G$. Further, the lines of $\mathcal{S}_1$ and $\mathcal{S}_2$ are affine subspaces of~$V$.
\end{example}

\begin{example}
\label{example:(AS)A9}
Let $S:=A_9$. 
By~\cite[p.509]{Lie1987}, 
the group $G:=V{:}S$ has rank~$3$, where $V:=V_8(2)$ and  $S\leq \Omega_8^+(2)$. The orbits of $S$ on $V^*$ are the sets of  non-singular and singular vectors with respect to the quadratic form preserved by $\Omega_8^+(2)$,  with sizes $120$ and $135$, respectively. By a computation in {\sc Magma}, $S$   has no proper subgroups with two orbits on $V^*$. 
The following definition is made in~\cite[\S 2.3.3]{BueVanBook1994}. Let $\Delta$ be the set of  points of $\PG_7(2)$ that are singular with respect to the quadratic form preserved by $\Omega_8^+(2)$. By~\cite{Dye1977}, there exists a  spread $\Sigma$ of $\Delta$---that is, a partition of the points of $\Delta$ into maximal totally singular subspaces---such that $S$ acts $2$-transitively on the elements of $\Sigma$.  Embed $\PG_7(2)$ as a hyperplane $\Pi$ in $\PG_8(2)$. View $V$ as the set of points in $\AG_8(2)$ (i.e. the set of points in $\PG_8(2)$ that are not in $\Pi$), and let $\mathcal{L}$ be the set of affine $4$-subspaces of $\AG_8(2)$ whose completions  meet $\Pi$ in an element of $\Sigma$. Then $\mathcal{S}:=(V,\mathcal{L})$ is a $G$-affine proper partial linear space with line-size $16$. By~\cite[\S 2.3.3]{BueVanBook1994}, $\Aut(\mathcal{S})=G$.
\end{example}

\begin{example}
\label{example:(AS)M11}
Let $S:=M_{11}$. Let $V$ be the irreducible $\mathbb{F}_3S$-module of dimension  $5$ such that $S$ has orbit sizes $110$ and $132$ on $V^*$.   Let $G:=V{:}S$.  Let $x$ be an element of the orbit of $S$ of size $132$ on $V^*$. Then $S_x\simeq A_5$, and there are exactly two maximal subgroups $H$ of $S$  for which $S_x\leq H\simeq \PSL_2(11)$. Let  $B:=x^H$. Now  $B$ is a block of $S$. Let $L:=B\cup \{0\}$ and $\mathcal{L}:=L^G$. By a computation in {\sc Magma}, $S_L$ is transitive on $L$, so by Lemma~\ref{lemma:sufficient}, $\mathcal{S}:=(V,\mathcal{L})$ is a $G$-affine proper partial linear space with line-size $12$ and point-size $12$.                                                                                                                                                                                                    
By a computation in {\sc Magma}, $\Aut(\mathcal{S})=G$.
Note that if we instead choose $H$ to be the other  maximal subgroup  of $S$  for which $S_x\leq H\simeq \PSL_2(11)$, then we obtain another  $G$-affine proper partial linear space, but we will see in Proposition~\ref{prop:(AS)} that these partial linear spaces are isomorphic. Note also that $S$ has no proper subgroups with two orbits on $V^*$.
\end{example}

\begin{example}
\label{example:(AS)G24}
Let $S:=2\nonsplit G_2(4)$. 
By a computation in {\sc Magma}, the normaliser $N$ of $S$ in $\GL_{12}(3)$ is $S.2^-$, and $N$ has orbit sizes $65520$ and $465920$ on $V^*$ where $V:=V_{12}(3)$. Moreover, the only proper subgroup of $N$ with two orbits on $V^*$ is $S$.  Recall from  Table~\ref{tab:AS} that   $S\leq \Sp_{12}(3)$, so $N\leq \GSp_{12}(3)$ by~\cite[Corollary~2.10.4(i)]{KleLie1990}. 
Let $G:=V{:}N$. 
Now $G_0$ has an orbit $\Delta$ on the points of $\PG_{11}(3)$ with size $32760$.   
  By a computation in {\sf GAP} using~\cite{FinInG}, there is a partition $\Sigma$ of $\Delta$ into hyperbolic lines---that is,  projective lines arising from non-degenerate $2$-subspaces of $V$ with respect to the symplectic form preserved by $\Sp_{12}(3)$---such that $G_0$ acts transitively on the elements of $\Sigma$. 
 Embed $\PG_{11}(3)$ as a hyperplane $\Pi$ in $\PG_{12}(3)$. View $V$ as the set of points in $\AG_{12}(3)$, and let $\mathcal{L}$ be the set of affine  $2$-subspaces of $\AG_{12}(3)$ whose completions  meet $\Pi$ in an element of $\Sigma$. Then $\mathcal{S}:=(V,\mathcal{L})$ is a $G$-affine proper partial linear space with line-size~$9$. 
We claim that $\Aut(\mathcal{S})=G$. By Theorem~\ref{thm:primrank3plus}, $\Aut(\mathcal{S})$ is an affine permutation group on $V$. A consideration of the subdegrees in Tables~\ref{tab:E}--\ref{tab:subdegree}  shows that $\Aut(\mathcal{S})$ belongs to class (AS) with $S=2\nonsplit G_2(4)$ or $2\nonsplit \Suz$. It then follows from Proposition~\ref{prop:(AS)} below that $\Aut(\mathcal{S})=G$.
\end{example}

\begin{example}
\label{example:(AS)J2} 
Let $S:=2\nonsplit J_2$. By a computation in {\sc Magma}, the normaliser $N$ of $S$ in $\GL_6(5)$ is $(S\circ \langle \zeta_5\rangle)\nonsplit 2$, and $N$ has orbit sizes $7560$ and $8064$ on $V^*$ where $V:=V_6(5)$. Moreover, the proper subgroups of $N$ with two orbits on $V^*$ are $S$ and $S\circ \langle \zeta_5\rangle$.  Recall from  Table~\ref{tab:AS} that $S\leq \Sp_{6}(5)$, so $N\leq \GSp_{6}(5)$ by~\cite[Corollary~2.10.4(i)]{KleLie1990}. 
Let $G:=V{:}N$. Now $G_0$ has an orbit $\Delta$ on the points of $\PG_5(5)$ with size $1890$.      By the proof of~\cite[Lemma~5.1]{Lie1987},   there is a partition $\Sigma$ of $\Delta$ into hyperbolic lines---that is,  projective lines arising from non-degenerate $2$-subspaces of $V$ with respect to the symplectic form preserved by $\Sp_6(5)$---such that $G_0$ acts transitively on the elements of $\Sigma$. (This partition arises from the $315$ quaternionic reflections described in~\cite[p.42]{Atlas}: these reflections form a conjugacy class $C$ of involutions of $S$, and for each $r\in C$, there is a unique non-degenerate $2$-subspace of $V$ on which $r$ acts as $-1$, and $C_{S}(r)$ acts as $\SL_2(5)$ on this $2$-subspace.) 
Embed $\PG_5(5)$ as a hyperplane $\Pi$ in $\PG_6(5)$. View $V$ as the set of points in $\AG_6(5)$, and let $\mathcal{L}$ be the set of affine $2$-subspaces of $\AG_6(5)$ whose completions  meet $\Pi$ in an element of $\Sigma$. Then $\mathcal{S}:=(V,\mathcal{L})$ is a $G$-affine proper partial linear space with line-size $25$. By  a computation in {\sc Magma}, $\Aut(\mathcal{S})=G$. 
\end{example}

\begin{prop}
\label{prop:(AS)}
Let $G$ be an affine permutation group of rank~$3$ on $V:=V_n(q)$ where $q^n=p^d$ for $p$ prime, $S\unlhd G_0$, 
  $S$ is irreducible on $V_d(p)$ and  $(S,p^d,G,q)$   is given by Table~\emph{\ref{tab:AS}}. Let $\zeta:=\zeta_q$. Then $\mathcal{S}$ is a
$G$-affine proper partial linear space  if and only if one of the following holds.
\begin{itemize}
\item[(i)] $\mathcal{S}=(V,\{\langle u\rangle_{\mathbb{F}_p}+v : u\in X, v\in V\})$ where $X$ is an orbit of $G_0$ on $V^*$ and $p\neq 2$.
\item[(ii)] $(S,p^d)$ is one of  $(2\nonsplit A_5,7^4)$, $(3\nonsplit A_6,2^6)$ or $(J_2,2^{12})$ and  $\mathcal{S}=(V,\{\langle u\rangle_{\mathbb{F}_{q}}+v : u\in X, v\in V\})$ where  $X$ is an orbit of $G_0$ on $V^*$.  Here $q=p^2$. 
\item[(iii)] $(S,p^d)=(2\nonsplit A_5,3^4)$
and  $G_0$ and $\mathcal{S}$ or $\mathcal{S}^{\zeta^2}$   are described in Example~\emph{\ref{example:(AS)nearfield}}.
\item[(iv)] $(S,p^d)=(2\nonsplit A_5,7^4)$ and  $G_0$ and $\mathcal{S}$ or $\mathcal{S}^{\zeta}$ are described in Example~\emph{\ref{example:(AS)korch}}.
\item[(v)] $(S,p^d)=(2\nonsplit A_6,5^4)$ and $\mathcal{S}$ is described in Example~\emph{\ref{example:(AS)walker}}. 
\item[(vi)] $(S,p^d)=(A_9,2^8)$ and $\mathcal{S}$ is described in Example~\emph{\ref{example:(AS)A9}}.
\item[(vii)] $(S,p^d)=(M_{11},3^5)$ and    $G_0$ and $\mathcal{S}$ or $\mathcal{S}^{\zeta}$ are described in Example~\emph{\ref{example:(AS)M11}}.
\item[(viii)] $(S,p^d)=(2\nonsplit G_2(4),3^{12})$ and $\mathcal{S}$ is described in Example~\emph{\ref{example:(AS)G24}}.
\item[(ix)] $(S,p^d)=(2\nonsplit J_2,5^6)$ and $\mathcal{S}$ is described in Example~\emph{\ref{example:(AS)J2}}. 
\end{itemize}
\end{prop}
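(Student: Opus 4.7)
The plan is to prove both directions. For the forward direction, each configuration in~(i)--(ix) gives a $G$-affine proper partial linear space: cases~(i) and~(ii) follow from Example~\ref{example:AG} using Corollary~\ref{cor:remark3}, which supplies the required two orbits of $G_0$ on the points of $\PG_{c-1}(s)$ and guarantees $|F|\geq 3$ so that line-size is at least $3$; cases (iii)--(ix) are verified in Examples~\ref{example:(AS)nearfield}--\ref{example:(AS)J2}.

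For the converse, I would let $\mathcal{S}:=(V,\mathcal{L})$ be a $G$-affine proper partial linear space, fix $L\in\mathcal{L}_0$ and set $B:=L^*$. Since $S$ is irreducible on $V_d(p)$, Lemma~\ref{lemma:primitiveirreducible} gives that $G$ is primitive on $V$, and Lemma~\ref{lemma:necessary} then says that $B$ is a non-trivial block of $G_0$ on its orbit containing any chosen $x\in B$. When $p$ is odd, the central involution $-1$ belongs to $S$ (since $S$ is irreducible) and hence to $G_0$, so Lemma~\ref{lemma:affine}(i) forces $L$ to be an $\mathbb{F}_p$-subspace of $V$; for $p=2$, Lemma~\ref{lemma:affine}(ii) gives the same directly. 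Setting $(c,s)$ as in Corollary~\ref{cor:remark3}, I have $G_0\leq \GammaL_c(s)$, and $\langle y\rangle_{\mathbb{F}_s}^*\subseteq y^{G_0}$ for all $y\in V^*$ unless $(S,p^d)=(2\nonsplit A_5,3^4)$.

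I would then split on whether $\mathcal{S}$ is $\mathbb{F}_s$-dependent or $\mathbb{F}_s$-independent. If $\mathcal{S}$ is $\mathbb{F}_s$-dependent and $(S,p^d)\neq(2\nonsplit A_5,3^4)$, then Proposition~\ref{prop:dep} applies and yields a subfield $F$ of $\mathbb{F}_s$ with $|F|\geq 3$ together with an orbit $X$ of $G_0$ on $V^*$ such that $\mathcal{L}=\{\langle u\rangle_F+v:u\in X,v\in V\}$; inspection of the $(c,s)$ column of Corollary~\ref{cor:remark3} forces either $F=\mathbb{F}_p$ with $p$ odd, giving~(i), or $s=p^2$ and $F=\mathbb{F}_s$ in one of the listed cases, giving~(ii). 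The exceptional group $(2\nonsplit A_5,3^4)$ is transitive on $\PG_1(9)$, and I would show directly that no $\mathbb{F}_9$-dependent proper partial linear space arises (since any such line would coincide with $V_1(9)$ and the resulting block structure is incompatible with $G_L$-transitivity), so $\mathcal{S}$ is $\mathbb{F}_9$-independent and is folded into the computation below. In the $\mathbb{F}_s$-independent case, for each $(S,p^d)$ from Table~\ref{tab:AS} with small enough dimension---$(2\nonsplit A_5,3^4)$, $(2\nonsplit A_5,7^4)$, $(2\nonsplit A_6,5^4)$, $(3\nonsplit A_6,2^6)$, $(A_9,2^8)$, $(A_{10},2^8)$, $(\PSL_2(17),2^8)$, $(2\nonsplit\PSL_3(4),3^6)$, $(2\nonsplit\PSU_4(2),7^4)$, $(2\nonsplit A_7,7^4)$, $(M_{11},3^5)$---I would run a {\sc Magma} enumeration of the non-trivial blocks $B$ of $G_0$ on each of its two orbits subject to $L=B\cup\{0\}$ being an $\mathbb{F}_p$-subspace and $G_L$ transitive on $L$, and match the surviving blocks to the examples in~(iii)--(vii) (together with the identifications $\mathcal{S}\simeq\mathcal{S}^\zeta$ that come from $\zeta$ normalising the relevant group and mapping one orbit to the other). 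Cases $(A_{10},2^8)$, $(\PSL_2(17),2^8)$, $(2\nonsplit\PSL_3(4),3^6)$, $(2\nonsplit\PSU_4(2),7^4)$ and $(2\nonsplit A_7,7^4)$ yield no $\mathbb{F}_s$-independent examples.

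The main obstacle is the four high-dimensional cases $(M_{24},2^{11})$, $(2\nonsplit\Suz,3^{12})$, $(2\nonsplit G_2(4),3^{12})$ and $(J_2,2^{12})$, where direct block enumeration is infeasible. For $(2\nonsplit\Suz,3^{12})$ and $(2\nonsplit G_2(4),3^{12})$ I would exploit the embedding $S\leq\Sp_{12}(3)$: Lemma~\ref{lemma:affine} forces $L$ to be an $\mathbb{F}_3$-subspace, and since $B$ is a block of $G_0$ on its short orbit, the corresponding set of projective points of $\PG_{11}(3)$ is a block in the $G_0$-action on the singular points of the symplectic form. A classification of the $G_0$-invariant partitions of this orbit (via the maximal subgroups of $S$ containing a point stabiliser and the list of hyperbolic-line spreads fixed by $G_0$ obtained in Example~\ref{example:(AS)G24}) shows that the only possibility is the hyperbolic-line spread construction of Example~\ref{example:(AS)G24}. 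The argument for $(J_2,2^{12})$ is analogous using $J_2\leq G_2(4)\leq\Sp_6(4)$ and forces line-size $|\mathbb{F}_4|$, giving case~(ii). For $(M_{24},2^{11})$, a maximal-subgroup analysis of $M_{24}$ together with the $\mathbb{F}_2M_{24}$-submodule structure shows no non-trivial block yields a proper partial linear space outside case~(ii). Finally, to establish $\Aut(\mathcal{S})=V{:}G_0$ in each exceptional example, I would apply Theorem~\ref{thm:primrank3plus} to conclude that $\Aut(\mathcal{S})$ is affine, then use the subdegree data of Tables~\ref{tab:subdegree}--\ref{tab:AS} to exclude all other classes from Theorem~\ref{thm:rank3}, leaving only $G_0$ itself (supplemented by a {\sc Magma} check in the small cases).
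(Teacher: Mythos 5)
Your overall strategy matches the paper's: the forward direction from Corollary~\ref{cor:remark3} and Examples~\ref{example:(AS)nearfield}--\ref{example:(AS)J2}, and the converse via Lemma~\ref{lemma:necessary} (blocks), Lemma~\ref{lemma:affine} ($\mathbb{F}_p$-subspace structure), and a computational enumeration of the surviving blocks. However, there is a genuine gap in the step where you assert that for $p$ odd the central involution $-1$ lies in $S$ ``since $S$ is irreducible.'' Irreducibility only tells you that \emph{if} $Z(S)$ contains an involution then that involution is $-1$; it does not produce one. For $(S,p^d)=(M_{11},3^5)$ the group $S$ is simple with trivial centre, so $-1\notin S$, and indeed one cannot conclude that $-1\in G_0$ (the paper allows $G_0=S$ here). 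Consequently Lemma~\ref{lemma:affine} does not apply, and lines need not be $\mathbb{F}_3$-subspaces. This is not a technicality: the partial linear space of Example~\ref{example:(AS)M11} has line-size $12$, which is not a power of $3$, so your {\sc Magma} enumeration ``subject to $L=B\cup\{0\}$ being an $\mathbb{F}_p$-subspace'' would silently discard exactly the blocks realising case~(vii), and would also fail to certify that no \emph{other} non-subspace lines occur. The paper isolates $M_{11}$ for precisely this reason: it enumerates \emph{all} non-trivial blocks of $G_0$ on each orbit, finds size-$4$ blocks on the orbit of length $220$ and size-$11$ blocks on the orbit of length $132$, keeps the latter (giving~(vii)), and kills the former by noting that $-1\in S\times\mathbb{F}_3^*$ forces such a line to be an $\mathbb{F}_3$-subspace of order $5$, which cannot divide $3^5$.

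A secondary, less serious divergence: for the four large cases $(M_{24},2^{11})$, $(2\nonsplit\Suz,3^{12})$, $(2\nonsplit G_2(4),3^{12})$ and $(J_2,2^{12})$ you replace the paper's direct computation with structural arguments (hyperbolic-line spreads, maximal-subgroup and submodule analyses). These are only sketched; since all four have $p=2$ or $-1\in S$, the $\mathbb{F}_p$-subspace constraint makes the paper's block search feasible, and your alternative would need to be carried out in full to be convincing. The $(2\nonsplit A_5,3^4)$ digression about $\mathbb{F}_9$-dependence is harmless but muddled: since $G_0$ is transitive on $\PG_1(9)$, Proposition~\ref{prop:dep} is simply unavailable, and the correct field to work over is $\mathbb{F}_3$ (where $\langle x\rangle_{\mathbb{F}_3}^*\subseteq x^{G_0}$ because $-1\in 2\nonsplit A_5$), exactly as in the paper.
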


\begin{proof}
By assumption, $S$ is  irreducible on $V_d(p)$,  so $G$ is primitive by Lemma~\ref{lemma:primitiveirreducible}. Let $\sigma:=\sigma_q$.
By Corollaries~\ref{cor:remark3} and~\ref{cor:AGgroups}, $\langle x\rangle_{\mathbb{F}_q}^*\subseteq x^{G_0}$ for all $x\in V^*$, except when $S=2\nonsplit A_5$ and $q=9$, in which case $\langle x\rangle_{\mathbb{F}_3}^*\subseteq x^{G_0}$ for all $x\in V^*$. In particular, if (i) or (ii) holds, then $\mathcal{S}$ is a $G$-affine proper partial linear space. Moreover, if one of (iii)--(ix) holds, then we have already seen in Examples~\ref{example:(AS)nearfield}--\ref{example:(AS)J2} that $\mathcal{S}$ is a $G$-affine proper partial linear space.

Suppose that $\mathcal{S}$ is a $G$-affine proper partial linear space. 
Let $L\in\mathcal{L}_0$, let $B:=L^*$, and let $x\in B$. By Lemma~\ref{lemma:necessary},  $B$ is a non-trivial block of $G_0$ on $X:=x^{G_0}$. 
 First suppose that $S\neq M_{11}$. Then  $-1\in G_0$ (including the case $p=2$), so $L$ is an $\mathbb{F}_p$-subspace of $V$ by Lemma~\ref{lemma:affine}. If $|L|=p$, then   $p\neq 2$ and (i) holds. Thus  we may assume that $|L|>p$, so $d>2$. Using {\sc Magma} and~\cite{web-atlas}, we verify  that one of  (ii)-(vi), (viii) or (ix) holds.

Thus  $(S,p^d)=(M_{11},3^5)$ and $q=3$. If $B=\{x,-x\}$, then (i) holds, so we assume otherwise. By a computation in {\sc Magma}, either $|X|=220$,  $G_0\in \{S,S\times  \mathbb{F}_3^*\}$ and $|B|=4$ where $B$ is the same block  for $S$ and $S\times  \mathbb{F}_3^*$;  or $|X|=132$, $G_0=S$  and $B$ is one of two blocks with size $11$. In the latter case, (vii) holds. In the former case, $|L|$ does not divide $3^5$, but  $-1\in (S\times \mathbb{F}_3^*)\setminus S$,  so we obtain a contradiction using Lemmas~\ref{lemma:necessary},~\ref{lemma:sufficient} and~\ref{lemma:affine}.
\end{proof}

Note that the partial linear space in Proposition~\ref{prop:(AS)}(ii) with $S=3\nonsplit A_6$ and $|X|=18$ is the unique generalised quadrangle of order $(3,5)$; see \cite{BueVan1994} for more details. 

\section{Proof of Theorem~\ref{thm:main}}
\label{s:proof}

Let $\mathcal{S}$ be a finite proper partial linear space, and let $G\leq \Aut(\mathcal{S})$ where $G$ is an affine  primitive permutation group of rank~$3$ with socle $V:=V_d(p)$ where $d\geq 1$ and $p$ is prime. We may assume that $\mathcal{S}$ has point set $V$. Let $k$ be the line-size of $\mathcal{S}$,  and let $\ell$ be the point-size of $\mathcal{S}$. By Lemma~\ref{lemma:lines}, $|\mathcal{S}(0)|=\ell(k-1)$.

Suppose that $\mathcal{S}$ is described in one of Examples~\ref{example:(R2)},~\ref{example:(I)goodnearfield},~\ref{example:(I)goodHering},~\ref{example:(I)reg}--\ref{example:(I)spor},~\ref{example:(E)walker}--\ref{example:(E)nonplane} or~\ref{example:(AS)nearfield}--\ref{example:(AS)J2}. By the information  in these examples, in order to prove that (iii) holds, it remains to justify the comments  in the column ``Notes" for Examples~\ref{example:(R2)},~\ref{example:(I)goodHering} (with line-size $27$),~\ref{example:(E)nonplane} and~\ref{example:(AS)A9}--\ref{example:(AS)J2}, where we claim that $\mathcal{S}$ cannot be obtained from any  $2$-$(p^d,k,1)$ design admitting a rank~$3$ group (by the method of Lemma~\ref{lemma:linearspace}), as well as Examples~\ref{example:(I)goodnearfield},~\ref{example:(I)goodHering} (with line-size~$9$) and~\ref{example:(I)sl213}--\ref{example:(I)E}, where we claim that $\mathcal{S}$ can be obtained from a rank~$3$ affine $2$-$(p^d,k,1)$ design.  By joining the lines of a partial linear space of line-size $9$  from  Example~\ref{example:(I)goodHering} with the lines of  a partial linear space from Example~\ref{example:(I)sl213}, both of which admit  $3^{12}{:}(\SL_2(13)\wr S_2)$ as an automorphism group, we obtain a rank~$3$ affine $2$-$(3^{12},9,1)$ design by  Lemma~\ref{lemma:linearspace}. Similarly, we obtain a rank~$3$ affine $2$-$(3^{8},9,1)$ design by joining the lines of a partial linear space from Example~\ref{example:(I)goodnearfield} with the lines of a partial linear space from Example~\ref{example:(I)sl25} (respectively,~\ref{example:(I)E}), both of which admit $3^8{:}(2\nonsplit S_5^-\wr S_2)$ (respectively, $3^8{:}(((D_8\circ Q_8).D_{10})\wr S_2)$) as an automorphism group. 

Now suppose that $\mathcal{S}$ is described in Examples~\ref{example:(R2)},~\ref{example:(I)goodHering} (with line-size $27$),~\ref{example:(E)nonplane} or~\ref{example:(AS)A9}--\ref{example:(AS)J2}, and suppose that there exists a permutation group $H$ on $V$ that is transitive of rank~$3$ and an automorphism group of a $2$-$(p^d,k,1)$ design $\mathcal{D}$ with point set $V$ such that $H$ has orbits $\mathcal{L}_1$ and $\mathcal{L}_2$ on the lines of $\mathcal{D}$, where $\mathcal{L}_1$ is the line set of $\mathcal{S}$. Now $H\leq \Aut(\mathcal{S})$, and $\Aut(\mathcal{S})$ is an affine primitive permutation group with socle $V$.  We claim that $V\leq H$. Note that $|H|=|V||H_0|$ and $H_0$ has two orbits on $V^*$. In particular, by the information given in the examples, either $H_0=\Aut(\mathcal{S})_0$, or  $H_0$ is one of the following: $(Q_8\circ Q_8\circ Q_8)\nonsplit \Omega_6^-(2)$ when $p^d=3^8$; $2\nonsplit G_2(4)$ when $p^d=3^{12}$; or one of $2\nonsplit J_2$ or $2\nonsplit J_2 \circ 4$ when $p^d=5^6$. In particular, $[\Aut(\mathcal{S})_0:H_0]=2^i$ for some $i$, and  $i=0$ when $p=2$. However $[V:V\cap H]=[HV:H]$, and this divides $[\Aut(\mathcal{S}):H]=[\Aut(\mathcal{S})_0:H_0]$, so  the claim holds.  Now $(V,\mathcal{L}_2)$ is an $H$-affine proper partial linear space with line-size $k$ by Lemma~\ref{lemma:linearspace}, but no such partial linear space exists by Propositions~\ref{prop:(R2)msmall},~\ref{prop:(I)bad},~\ref{prop:(E)} and~\ref{prop:(AS)} since $\SL_2(13)$ is not a subgroup of $\GammaL_2(27)$. Thus  (iii) does indeed hold.

For the remainder of this proof, we say that a triple  $(H,a,r)$ is \textit{good} if it satisfies Hypothesis~\ref{hyp:AGgroups} and $H$ has socle $V$, in which case $H$ is primitive on $V$ and $r^a=p^d$, and by Corollary~\ref{cor:AGgroups}, $H_0\leq \GammaL_a(r)$ and $H_0$ has two orbits on the points of $\PG_{a-1}(r)$. 

By Theorem~\ref{thm:rank3}, $G_0$ belongs to one of the classes (R0)--(R5), (T1)--(T3), (S0)--(S2), (I0)--(I8), (E) or (AS).  If $G_0$ belongs to the class (R0), then (iv)(a) holds. If $G_0$ belongs to class (E), then by Proposition~\ref{prop:(E)}, either $\mathcal{S}$ is described in Examples~\ref{example:(E)walker}--\ref{example:(E)nonplane} and (iii) holds, or $\mathcal{S}$ is described in Example~\ref{example:AG} with respect to the   good triple $(G,d,p)$ when $p$ is odd  or $(G,3,4)$ when $p=2$,  in which case (i) holds. Similarly, if $G_0$ belongs to class (AS), then by Proposition~\ref{prop:(AS)}, either  $\mathcal{S}$ is described in Examples~\ref{example:(AS)nearfield}--\ref{example:(AS)J2} and  (iii) holds, or $\mathcal{S}$ is described in Example~\ref{example:AG} with respect to the good triple $(G,d,p)$ when $p\neq 2$ or the good triple $(G,d/2,p^2)$ when  $S\unlhd G_0$ and $(S,p^d)$ is one of $(2\nonsplit A_5,7^4)$, $(3\nonsplit A_6,2^6)$ or $(J_2,2^{12})$, in which case (i) holds. 
 
Suppose that $G_0$ belongs to one of the classes (R1)--(R5), (T1)--(T3) or  (S0)--(S2). Now $V=V_c(s)$  and $G_0\leq \GammaL_c(s)$ where  $(c,s)$ is given by 
Table \ref{tab:ar} of Corollary~\ref{cor:remark3}, and $\langle x\rangle_{\mathbb{F}_s}^*\subseteq x^{G_0}$ for all $x\in V^*$.  If $\mathcal{S}$ is $\mathbb{F}_s$-dependent, then by Proposition~\ref{prop:dep}, $\mathcal{S}$ has line set $\{\langle u\rangle_{\mathbb{F}_r}+v : u\in \mathcal{S}(0),v\in V\}$ for some subfield $\mathbb{F}_r$  of $\mathbb{F}_s$ with $r>2$, so $\mathcal{S}$ is described in  Example~\ref{example:AG} with respect to the good triple $(G,\log_r(p^d),r)$  (see the discussion before Proposition~\ref{prop:dep}).    Otherwise, $\mathcal{S}$ is $\mathbb{F}_s$-independent. If $G_0$ belongs to one of the classes (R1)--(R5), then by Propositions~\ref{prop:(R1)},~\ref{prop:(R2)mbig},~\ref{prop:(R2)msmall},~\ref{prop:(R3)},~\ref{prop:(R4)},~\ref{prop:(R5)10} and~\ref{prop:(R5)7}, $\mathcal{S}$ is described in Example~\ref{example:(R2)}, so (iii) holds. If $G_0$ belongs to one of the classes (T1)--(T3), then by Proposition~\ref{prop:(T1)--(T3)}, $\mathcal{S}$ is described in Example~\ref{example:tensor}, and (ii) holds. If $G_0$ belongs to (S1) or (S2), then by Propositions~\ref{prop:(S1)--(S2)good} and~\ref{prop:(S1)--(S2)bad}, either $\mathcal{S}$ is described in Example~\ref{example:tensor} and (ii) holds,  or  $s=9$ and $\mathcal{S}$ is isomorphic to a partial linear space from Example~\ref{example:AG}
that is defined with respect to the good triple $(G,c,s)$, so  (i) holds. 
 Lastly, suppose that $G_0$ belongs to (S0). If $|\mathcal{S}(0)|=q(q^3-1)(q^2-1)$, then (iv)(c) holds. Otherwise, by Proposition~\ref{prop:(S0)good}, $\mathcal{S}$ is described in Example~\ref{example:tensor}, so (ii) holds. 
  
We may therefore assume that $G_0$ belongs to one of the classes (I0)--(I8). Now $V=V_n(p)\oplus V_n(p)$. Let $V_1:=\{(u,0): u \in V_n(p)\}$ and $V_2:=\{(0,u):u\in V_n(p)\}$, and recall that $\mathcal{S}(0)=V_1^*\cup V_2^*$ or $V_n(p)^*\times V_n(p)^*$. First  suppose that $\mathcal{S}(0)=V_1^*\cup V_2^*$. By Proposition~\ref{prop:(I)good},  one of the following holds: $\mathcal{S}$ is the $p^n\times p^n$ grid  of Example~\ref{example:grid}, in which case (ii) holds; $\mathcal{S}$ is isomorphic to a partial linear space in Example~\ref{example:(I)goodnearfield} or~\ref{example:(I)goodHering}, in which case (iii) holds; or $\mathcal{S}=\AG_b(r)\cprod \AG_b(r)$ where $r^b=p^n$, $b\geq 2$ and $r>2$, in which case $\mathcal{S}$  is described in Example~\ref{example:AG} with respect to the good triple $(V{:}((\GammaL_b(r)\wr S_2)\cap \GammaL_{2b}(r)),2b,r)$ by Lemma~\ref{lemma:primitive}, and (i) holds. Suppose instead that $\mathcal{S}(0)=V_n(p)^*\times V_n(p)^*$. If $G_0$ lies in class (I0), then (iv)(b) holds, so we assume otherwise. By Proposition~\ref{prop:(I)bad}, either $\mathcal{S}$ is isomorphic to  a partial linear space described in Examples~\ref{example:(I)reg}--\ref{example:(I)spor}, in which case (iii) holds, or $G_0\leq (\GammaL_b(r)\wr S_2)\cap \GammaL_{2b}(r)$ and 
$\mathcal{S}$ is isomorphic to a partial linear space with line set $\{\langle u\rangle_{\mathbb{F}_r} +v: u\in V_n(p)^*\times V_n(p)^*,v\in V\}$ where $r^b=p^n$, $b\geq 2$ and $r>2$, in which case (i) holds since this partial linear space is described in Example~\ref{example:AG} with respect to the good triple $(G,2b,r)$.  \qed

\section{Proof of Corollary~\ref{cor:affinesub}}
 \label{s:affinesub}
 
First we require the following result. Recall the definition of   $\zeta_q$ and $\sigma_q$ from  \S\ref{ss:basicsvs}.
 
\begin{lemma}
\label{lemma:onedim}
Let  $G_0\leq \GammaL_1(p^d)$ where $d\geq 1$ and $p$ is an odd  prime. Suppose that $G_0$ has $t$ orbits on $V_1(p^d)^*$ where   $t\leq 2$, and suppose that $|G_0|$ is even when $t=2$. Then $-1\in G_0$.
\end{lemma}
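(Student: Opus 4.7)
I argue by contradiction. Suppose $-1 \notin G_0$; set $q := p^d$ and $F := G_0 \cap \mathbb{F}_q^*$. Since $\mathbb{F}_q^*$ is cyclic with unique involution $-1$, the subgroup $F$ must have odd order. Writing $|G_0| = |F||K|$, where $K$ is the image of $G_0$ in $\Gal(\mathbb{F}_q/\mathbb{F}_p) = \langle \sigma_q \rangle$, the hypothesis that $|G_0|$ is even---automatic in the $t = 1$ case since $(q-1) \mid |G_0|$ and $p$ is odd---forces $|K|$ to be even, hence $d$ is even and $\sigma_q^{d/2} \in K$. A direct calculation shows every involution of $\GammaL_1(q)$ other than $-1$ has the form $\zeta_q^{k(p^{d/2}-1)} \sigma_q^{d/2}$ and fixes exactly $p^{d/2} - 1 \geq 2$ vectors of $\mathbb{F}_q^*$.

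Because $-1$ lies in the centre of $\GammaL_1(q)$ (using $p$ odd), multiplication by $-1$ permutes the $G_0$-orbits, and two cases arise. \emph{Case~(I)}: some pair of orbits $X$, $-X$ with $X \neq -X$ is swapped by $-1$; this forces $t = 2$, and the group $H := G_0 \langle -1\rangle$ (of order $2|G_0|$) is transitive on $\mathbb{F}_q^*$. The stabiliser in $H$ of the point $1$ has order $|H_1| = 2|G_0|/(q-1) = 2|K|/m$, where $m := (q-1)/|F|$, which must be a positive integer, so $m \mid 2|K|$. Using lifting-the-exponent, $\nu_2(q-1) \geq 2 + \nu_2(d)$ for $p$ odd and $d$ even; combined with $\nu_2(m) = \nu_2(q-1)$ (as $|F|$ is odd) and $\nu_2(|K|) \leq \nu_2(d)$, one obtains $2 + \nu_2(d) \leq 1 + \nu_2(d)$, a contradiction.

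\emph{Case~(II)}: every $G_0$-orbit is closed under $-1$. Then $-1 \in 1^{G_0}$, so some $g = -\sigma_q^k$ lies in $G_0$. If $k = 0$ then $-1 \in G_0$, contrary to assumption, so $k \neq 0$. Using $(-\sigma_q^k)^n = (-1)^n \sigma_q^{nk}$ and the exclusion $-1 \notin G_0$, the unique involution of the cyclic subgroup $\langle -\sigma_q^k\rangle$ must equal $-\sigma_q^{d/2}$, which therefore belongs to $G_0$. For $t = 1$, $G_0$ is transitive and the analogous $2$-adic divisibility argument applied to $G_0$ itself (now $|G_{0,1}| = |G_0|/(q-1) = |K|/m$ must be integral, giving $m \mid |K|$, hence $2 + \nu_2(d) \leq \nu_2(d)$) furnishes a contradiction.

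The main obstacle is the remaining subcase $t = 2$ within Case~(II). Here the $2$-adic constraints alone are necessary but not sufficient to rule out every numerical configuration. The cleanest route is to invoke the Foulser--Kallaher classification of rank~$3$ subgroups of $\AGammaL_1(p^d)$ \cite{FouKal1978}, cited earlier in the paper in the context of class (R0), from which one reads off directly that the scalar $-1$ lies in every listed group. A self-contained alternative is a Burnside count applied to the involution $-\sigma_q^{d/2}$ (whose fixed-point set on $\mathbb{F}_q^*$ is a coset of $\mathbb{F}_{p^{d/2}}^*$, itself closed under negation), combined with the explicit affine action of $K$ on $\mathbb{Z}/m$; showing that the forced divisibility identity $|K|(s_1 + s_2) = m s_1 s_2$ for the stabiliser orders $s_1 = |G_{0,x_1}|$, $s_2 = |G_{0,x_2}|$ cannot coexist with the actual orbit structure is the most delicate remaining step.
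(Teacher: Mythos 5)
Your Case~(I) and your $t=1$ argument are fine (the lifting-the-exponent bound $\nu_2(p^d-1)\ge 2+\nu_2(d)$ for $d$ even, combined with $m\mid 2|K|$ resp.\ $m\mid |K|$, does give the contradictions you claim), but the proof has a genuine gap exactly where you say it does: the subcase $t=2$ with both $G_0$-orbits invariant under $-1$ is the hard case of the lemma, and neither of your two proposed escapes closes it. Reading the conclusion off the Foulser--Kallaher classification is not legitimate: \cite{FouKal1978} does not list ``$-1\in G_0$'' for the rank~$3$ subgroups of $\AGammaL_1(p^d)$, and it cannot, since odd-order subgroups with two (equal) orbits exist --- e.g.\ the squares of $\mathbb{F}_q^*$ when $q\equiv 3\bmod 4$, as noted immediately after the lemma in the paper --- so any use of the classification must genuinely engage the evenness hypothesis, which is precisely the unresolved point. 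What the paper actually extracts from \cite{FouKal1978} is the canonical presentation $G_0=\langle\zeta^n,\zeta^k\sigma^s\rangle$ with $|G_0\cap\langle\zeta\rangle|=(q-1)/n$ and $s\mid d$, together with the facts that for $t=2$ and $n$ even the two orbits have equal size and $n/2\mid d$ (their Corollary~3.8 and Theorem~3.10); the $2$-adic estimate then finishes in all cases at once, with no case division according to how $-1$ permutes the orbits. Your self-contained alternative is only a sketch: the identity $|K|(s_1+s_2)=m\,s_1s_2$ together with $\nu_2(m)\ge 2+\nu_2(d)$ and $s_i\mid |K|\mid d$ does not by itself produce a contradiction (when $\nu_2(s_1)=\nu_2(s_2)$ the sum $s_1+s_2$ can absorb the extra powers of $2$), and you acknowledge that this ``most delicate remaining step'' is not carried out.

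Two smaller points. In Case~(II) the unique involution of $\langle -\sigma_q^{k}\rangle$ need not be $-\sigma_q^{d/2}$; it can equally be $\sigma_q^{d/2}$ (it is $(-1)^{n/2}\sigma_q^{kn/2}$ for $n$ the order), which is harmless for your intended use but false as stated. Also, for $t=1$ your argument needs $d$ even before invoking the LTE bound; this does follow from $(q-1)\mid |G_0|$, $|F|$ odd and $|K|\mid d$, but it should be said, since for $d$ odd one only has $\nu_2(q-1)=\nu_2(p-1)$.
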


\begin{proof}
 Let $q:=p^d$, $\zeta:=\zeta_q$ and $\sigma:=\sigma_q$. Let $H:=G_0\cap \langle \zeta\rangle$. By~\cite[\S 3]{FouKal1978}, $G_0=\langle \zeta^n,\zeta^k\sigma^s\rangle $ for some integers $n$, $k$ and $s$ such that  $H=\langle \zeta^n\rangle$, $|H|=(q-1)/n$, $|G_0|=d(q-1)/sn$ and $s$ divides $d$.   Since $-1\in G_0$ if and only if $|H|$ is even, it suffices to prove that $(q-1)/n$ is even.  Since $q$ is odd, we may assume that $n$ is even, and since $|G_0|$ is even and $s$ divides $d$, we may  assume that $d$ is even. Write $d=2^ed'$ and $n=2^fn'$ where $d'$ and $n'$ are odd. If $t=2$, then since $n$ is even, the orbits of $G_0$ on $V^*$ have the same size  by~\cite[Corollary 3.8]{FouKal1978}, in which case $n/2$ divides $d$ by~\cite[Theorem 3.10]{FouKal1978}. If $t=1$, then $n$ divides $d$ by~\cite[Proposition 15.3]{Fou1964}. Thus, in either case, $f\leq e+1$, so it suffices to prove that $2^{e+2}$ divides $q-1$.
 Let $r:=p^{d'}$. Since $e\geq 1$,	 
$$ q-1=r^{2^e}-1=(r-1)(r^{2^0}+1)(r^{2^1}+1)\cdots (r^{2^{e-1}}+1).$$
Since $4$ divides $r-1$ or $r+1$, 
it follows that  $2^{e+2}$ divides $q-1$, as desired.
\end{proof}

 Note that $\GammaL_1(q)$ may contain odd-order  subgroups with  two orbits on $V_1(q)^*$: if $q\equiv 3\mod 4$, then $G_0:=\{\lambda^2:\lambda \in \mathbb{F}_q^*\}$ has odd order, and the orbits of $G_0$ on $\mathbb{F}_q^*$ are $G_0$ and $-G_0$.
 
 \begin{proof}[Proof of Corollary~\emph{\ref{cor:affinesub}}]
We may assume that $\mathcal{S}$ has point set $V$.  Suppose that the lines of $\mathcal{S}$ are not affine subspaces of $V$. Now  Theorem~\ref{thm:main}(iii) or (iv) holds. If Theorem~\ref{thm:main}(iii) holds, then by the descriptions of $\mathcal{S}$ given in Examples~\ref{example:(R2)},~\ref{example:(I)goodnearfield},~\ref{example:(I)goodHering},~\ref{example:(I)reg}--\ref{example:(I)spor},~\ref{example:(E)walker}--\ref{example:(E)nonplane} and~\ref{example:(AS)nearfield}--\ref{example:(AS)J2}, either  (i) or (ii) holds. 
 Thus we may assume that Theorem~\ref{thm:main}(iv) holds. By Lemma~\ref{lemma:affine}, $p$ is odd and $G_0$ does not contain $-1$. In particular, Theorem~\ref{thm:main}(iv)(c) does not hold. If Theorem~\ref{thm:main}(iv)(a) holds, then $G_0\leq \GammaL_1(p^d)$ and $G_0$ has two orbits on $V^*$, so by Lemma~\ref{lemma:onedim}, $|G_0|$ is odd. However, this contradicts the fact that the orbitals of $G$ are self-paired  by Remark~\ref{remark:PLSrank3}. 
 
 Hence Theorem~\ref{thm:main}(iv)(b) holds. Now $V=V_n(p)\oplus V_n(p)$ and $G_0\leq \GammaL_1(p^n)\wr S_2$ where $\mathcal{S}(0)=V_n(p)^*\times V_n(p)^*$. In order to show that (iii) holds, it remains to show that (2)--(7) hold and $n\geq 2$. Now (2) holds by Lemma~\ref{lemma:notaffine},  (3) holds by Lemma~\ref{lemma:affineplus}, (4) holds by Lemma~\ref{lemma:A2badbasic}, and (5) holds by Lemma~\ref{lemma:affine}.   If $H\times K\leq G_0$ for some $H,K\leq \GammaL_1(p^n)$  and if $H$ and $K$ are both transitive on $V_n(p)^*$, then $-1\in H$ and $-1\in  K$ by Lemma~\ref{lemma:onedim}, so $-1\in G_0$, a contradiction. Thus (7) holds.  Let $V_1:=\{(u,0): u \in V_n(p)\}$ and $V_2:=\{(0,u):u\in V_n(p)\}$, and for  $i\in \{1,2\}$, let
$G_0^i$ be the image of the projection of $G_{0,V_1}=G_0\cap (\GammaL_1(p^n)\times \GammaL_1(p^n))$ onto the $i$-th factor of $\GammaL_1(p^n)\times \GammaL_1(p^n)$. Since $G_0$ is transitive on $V_1^*\cup V_2^*$, it follows that $G_0^1$ and $G_0^2$ are transitive on $V_n(p)^*$. By Lemma~\ref{lemma:onedim}, $-1\in G_0^1$ and $-1\in G_0^2$, so (6) holds. 

Now suppose for a contradiction that $n=1$. Then $G_0^1=\GL_1(p)=G_0^2$. For $i\in \{1,2\}$, let $K_i$ be the kernel of the projection map of $G_{0,V_1}=G_0\cap (\GL_1(p)\times \GL_1(p))$ onto $G_0^i$. Since $G_0=G_{0,V_1}\langle (t,s)\tau\rangle$ for some $t,s\in\GL_1(p)$, there exists $H\unlhd G_0^1$ such that $K_1=1\times H$ and $K_2=H\times 1$, and $|G_0|=2|G_0^1||H|$. By assumption, $G_0$ is transitive on $V_1(p)^*\times V_1(p)^*$, so $(p-1)^2$ divides $|G_0|$. Thus   $|H|=p-1$ or $(p-1)/2$, so $\langle \zeta^2\rangle\leq H$, where $\zeta:=\zeta_p$. Since $\zeta\in G_0^1=G_0^2$, it follows that  $(\zeta,\zeta)\in G_0$, but then $-1\in G_0$, a contradiction.
 \end{proof}
 
 \section{Example~\ref{example:tensor} and  $2$-$(v,k,1)$ designs}
 \label{s:extra}
 
 In this section, we consider  when a  partial linear space  from Example~\ref{example:tensor}  can be obtained from a $2$-$(v,k,1)$ design using a rank~$3$ group (in the sense of Remark~\ref{remark:PLSfromLinear} and Lemma~\ref{lemma:linearspace}).
 
Let $V:=U\otimes W$ where $U:=V_2(q)$, $W:=V_m(q)$, $m\geq 2$, and $q$ is a prime power.  Let $\mathcal{S}_U:=(V,\mathcal{L}_U)$ where $\mathcal{L}_U:=\{(U\otimes w) +v : w\in W^*,v\in V\} $, and define $\mathcal{S}_W$ and $\mathcal{L}_W$ similarly. Recall that $\mathcal{S}_U\simeq \mathcal{S}_W$ when $m=2$ by Lemma~\ref{lemma:tensorsimple}(iv).  Let $K:=\mathbb{F}_{q^2}$.  In \S\ref{s:(S1)}, we saw that  $\mathcal{S}_U$ may be viewed as a  $K$-dependent proper partial linear space from  Example~\ref{example:AG} with respect to the rank~$3$ affine primitive group $G:=V{:}(\GL_m(q)\circ K^*){:}\Aut(K)$ from class (S1).  We may define $\mathcal{L}_Y:=\{\langle u\rangle_K +v : u \in Y,v\in V\}$, where $Y$ is the orbit of $G_0$ on $V^*$ consisting of those vectors that are not collinear with $0$ in $\mathcal{S}_U$, in which case $\mathcal{S}_Y:=(V,\mathcal{L}_Y)$ is a $K$-dependent $G$-affine proper partial linear space whose collinearity relation is disjoint from that of $\mathcal{S}_U$. Observe that the incidence structure $(V,\mathcal{L}_U\cup \mathcal{L}_Y)$ is the linear space $\AG_m(q^2)$. Thus the partial linear space $\mathcal{S}_U$ can be obtained from a $2$-$(q^{2m},q^2,1)$ design using a rank $3$ group  (in the sense of Remark~\ref{remark:PLSfromLinear} and Lemma~\ref{lemma:linearspace}).
Similarly, in \S\ref{s:(S0)} we saw that for $K:=\mathbb{F}_{q^3}$ and $m=3$, $\mathcal{S}_W$ may be viewed as a  $K$-dependent proper partial linear space from  Example~\ref{example:AG} with respect to the rank~$3$ affine primitive group  $G:=V{:}(\GL_2(q)\circ K^*){:}\Aut(K)$ from class (S0), in which case $\mathcal{S}_W$ can be obtained from the $2$-$(q^{6},q^3,1)$ design $\AG_2(q^3)$ using a rank~$3$ group. However, when $m\geq 4$, it turns out that $\mathcal{S}_W$ cannot be obtained from any $2$-$(v,k,1)$ design using a rank~$3$ group   unless $(m,q)=(5,2)$, which we now prove.
 
\begin{prop}
\label{prop:tensornodesign}
Let $V:=U\otimes W$ where $U:=V_2(q)$, $W:=V_m(q)$, $m\geq 4$ and $q$ is a prime power. Let  $\mathcal{L}_W:=\{(u\otimes W) +v : u\in U^*,v\in V\} $. The  following are equivalent.
\begin{itemize}
\item[(i)] $H$ is a  permutation group of rank~$3$  on $V$ and there is a $2$-$(q^{2m},q^m,1)$ design $\mathcal{D}:=(V,\mathcal{L})$ such that  $H\leq \Aut(\mathcal{D})$ and $H$ has two orbits on $\mathcal{L}$, one of which is $\mathcal{L}_W$.
\item[(ii)] $H=V{:}(\GammaL_1(2^2)\otimes  \GammaL_1(2^5))\leq \AGammaL_1(2^{10})$ and  $(m,q)=(5,2)$.
\end{itemize}
\end{prop}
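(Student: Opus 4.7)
The plan is to first observe that a $2$-$(q^{2m},q^m,1)$ design is an affine plane of order $q^m$, so $\mathcal{D}$ is such a plane: the $q+1$ parallel classes from $\mathcal{L}_W$ and the $q^m-q$ parallel classes from the other $H$-orbit on $\mathcal{L}$ correspond to the two orbits of $H$ on the line at infinity $\ell_\infty$ of the projective completion. Since $H\leq\Aut(\mathcal{S}_W) = V{:}(\GL_2(q)\otimes\GL_m(q)){:}\Aut(\mathbb{F}_q)$ by Proposition~\ref{prop:tensoraut}, and the latter is a primitive rank $3$ affine group, Lemma~\ref{lemma:primrank3} gives that $H$ is primitive. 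A standard translation argument shows every line through $0$ in $\mathcal{D}$ is an $\mathbb{F}_p$-subspace of $V$ (if $v\in L\ni 0$, then $L+v$ is parallel to $L$ and passes through $v$, so $L+v=L$, forcing $L$ to be a subgroup of $(V,+)$). The lines through $0$ therefore form a spread $\Sigma$ of $V$ containing the partial spread $\Sigma_0 = \{u\otimes W:\langle u\rangle\in\PG(U)\}$ of Baer subplanes.

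Next I would identify $H$ using Proposition~\ref{prop:primrank3}. The almost simple case is excluded since any non-abelian simple transitive subgroup on $q^{2m}$ points would be $2$-transitive by Theorem~\ref{thm:Guralnick} (the exception $\PSU_4(2)$ on $27$ points does not arise for $m\geq 4$), forcing $H$ to be $2$-transitive and so rank $2$, a contradiction. The grid case gives subdegrees $2(p^{m'}-1)$ and $(p^{m'}-1)^2$, which cannot be matched with $(q+1)(q^m-1)$ and $q(q^{m-1}-1)(q^m-1)$ for $m\geq 4$. Hence $H$ is affine with socle $V$, and by Theorem~\ref{thm:rank3} together with Theorem~\ref{thm:tensorgroups} (since $H_0$ stabilises the tensor decomposition), $H_0$ belongs to one of the classes (T1)--(T6), (R0), (I0); I would quickly eliminate (T6) ($m=3$) and (I0) (via subdegrees), and handle the case (T4) with $(m,q)=(4,2)$ by direct computation.

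The crucial step is then to rule out classes (T1)--(T3) and (T5) for $m\geq 4$. The key constraint is that $H_0$ acts transitively on the $q^m-q$ members of $\Sigma\setminus\Sigma_0$, and that for each $L'\in\Sigma\setminus\Sigma_0$ the stabiliser $H_{0,L'}$ induces a $2$-transitive group on $L'$ by Lemma~\ref{lemma:necessary}(ii), so $q^m(q^m-1)(q^m-q)$ must divide $|H_0|$. In (T1)--(T3), the normal subgroup $1\otimes\SL_m(q)\unlhd H_0$ fixes each $u\otimes W$ setwise; I would compute its orbits on the non-simple tensors and show that these cannot be partitioned into $\mathbb{F}_p$-subspaces of size $q^m$ for $m\geq 4$, producing incompatibilities with the spread axioms. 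A similar analysis using the semilinear structure $(\GL_1(q^2)\otimes\GL_m(q)){:}\langle(t\otimes 1)\sigma_q\rangle$ rules out (T5).

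For (R0), $H_0\leq\GammaL_1(q^{2m})$, so $V = \mathbb{F}_{q^{2m}}$ for some field structure. Preserving the tensor decomposition forces the identification $\mathbb{F}_{q^{2m}} = \mathbb{F}_{q^2}\otimes_{\mathbb{F}_q}\mathbb{F}_{q^m}$ as $\mathbb{F}_q$-algebras, which requires $\gcd(2,m)=1$ (so $m$ is odd); viewing $V = V_2(q^m)$, the plane $\mathcal{D}$ is the Desarguesian plane $\AG_2(q^m)$ and $\Sigma_0$ is a sub-line $\PG_1(q)\subseteq\PG_1(q^m)$. The stabiliser of this sub-line in $\GammaL_1(q^{2m})$ is $\GammaL_1(q^2)\otimes\GammaL_1(q^m)$, and the divisibility $q^m(q^m-1)(q^m-q)\mid|H_0|$ together with the resulting order bound on $|H_0|$ yields, via a Zsigmondy-type argument, $q=2$ and then $m=5$; the example $H = V{:}(\GammaL_1(4)\otimes\GammaL_1(32))\leq\AGammaL_1(2^{10})$ is then verified directly using the identification $V\cong\mathbb{F}_{1024}$ and the inclusion $\GammaL_1(1024)\leq\GammaL_2(32)$, completing both directions. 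The main obstacle is the elimination of classes (T1)--(T5), which requires a careful orbit-structure analysis of the action of large classical groups on non-simple tensors versus the requirement that the complement $\Sigma\setminus\Sigma_0$ be a partition into $H_0$-compatible $\mathbb{F}_p$-subspaces of fixed size.
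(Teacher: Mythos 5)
Your reduction to the case that $H$ is affine with $H_0$ stabilising the tensor decomposition (via Proposition~\ref{prop:tensoraut}, Lemma~\ref{lemma:primrank3}, Proposition~\ref{prop:primrank3}, Theorem~\ref{thm:Guralnick} and a subdegree count) matches the paper, and the subdegree eliminations of grid type and (I0) are fine. But the two steps that carry the real content have genuine gaps. First, your treatment of class (R0) rests on the unjustified assertion that $\mathcal{D}$ is the Desarguesian plane $\AG_2(q^m)$: all you know is that $\Sigma$ is an $H_0$-invariant spread containing $\Sigma_0$, and nothing forces the remaining $q^m-q$ spread members to be $\mathbb{F}_{q^m}$-subspaces (indeed, if that argument were sound it would also have to accommodate the genuine example in (ii), where $H_0\leq\GammaL_1(2^{10})$). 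The claim that compatibility of the field structure with the tensor decomposition forces $m$ odd is likewise not justified, and your divisibility $q^m(q^m-1)(q^m-q)\mid|H_0|$ carries an extra factor $q^m$ that does not follow from $2$-transitivity of $H_{L'}$ on $L'$ (one only gets $(q^m-1)(q^m-q)\mid|H_0|$). Second, the elimination of (T1)--(T3) and (T5), which you yourself identify as the main obstacle, is only a programme ("I would compute its orbits\dots and show that these cannot be partitioned\dots"); no argument is supplied.

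The paper closes both gaps with one observation you did not exploit: since $H_0$ is transitive on the non-simple tensors, the induced group $H_0^W$ is transitive on the \emph{lines} of $\PG(W)$, so by the classification of line-transitive subgroups of $\PGammaL_m(q)$ either $H_0^W$ is $2$-transitive on $\PG(W)$ (whence $\SL_m(q)\unlhd H_0^W$ or $H_0^W=A_7$ with $(m,q)=(4,2)$, so $H_0$ is insoluble and classes (R0) and (I0) disappear at once), or $H_0^W=\GammaL_1(2^5)$ with $(m,q)=(5,2)$, which is exactly where the exceptional example (ii) lives and is settled by computation. The surviving tensor classes are then killed by quoting results already proved in the paper: the second orbit $\mathcal{L}'$ gives an $H$-affine proper partial linear space $\mathcal{S}'$ of line-size $q^m$ whose collinearity class at $0$ is the set of non-simple tensors, and Proposition~\ref{prop:(T1)--(T3)} (for (T1)--(T3)) and Proposition~\ref{prop:(S1)--(S2)bad} (for (T4), (T5), viewed as (S2), (S1)) force such a space to be field-dependent with line-size at most $q^2<q^m$. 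If you want to salvage your route, you should replace the Desarguesian assumption and the orbit-partition sketch by these (or equivalent) arguments; as written the proof is incomplete at precisely the points where the proposition is hardest.
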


\begin{proof}
Suppose that (i) holds, and 
let $\mathcal{S}_W$ denote the partial linear space $(V,\mathcal{L}_W)$.  Note that  $H\leq \Aut(\mathcal{S}_W)$. By Proposition~\ref{prop:tensoraut}, 
$\Aut(\mathcal{S}_W)= V{:}(\GL_2(q)\otimes \GL_m(q)){:}\Aut(\mathbb{F}_q)$, so $\Aut(\mathcal{S}_W)$ is primitive on $V$ by Lemma~\ref{lemma:primitive}. Thus $H$ is primitive on $V$ by Lemma~\ref{lemma:primrank3}. 
If $H$ is not affine, then 
by Proposition~\ref{prop:primrank3}, either $H$ is almost simple, or $H$ has subdegrees $2(q^m-1)$ and $(q^m-1)^2$. However, the former is not possible by Theorem~\ref{thm:Guralnick} since $H$ is not $2$-transitive, and the 
latter is not possible since $m\geq 4$ and  $\Aut(\mathcal{S}_W)$ has subdegrees $(q+1)(q^m-1)$ and $q(q^m-1)(q^{m-1}-1)$. Thus $H$ is affine, so the socles of $H$ and $\Aut(\mathcal{S}_W)$ are equal by ~\cite[Proposition~5.1]{Pra1990}. 

Hence $H$ is an affine permutation group of rank~$3$ on $V$    and  $H_0$ stabilises the tensor decomposition of $V$. Since $m\geq 4$, $H$ belongs to one of the classes (R0), (I0) or (T1)--(T5) by Theorem~\ref{thm:tensorgroups}. Recall the definition of the group $H_0^W$ 
 from the proof of Theorem~\ref{thm:tensorgroups}. Now $H_0^W$ is transitive on the lines of $\PG(W)$ since $H_0$ is transitive on the set of non-simple tensors of $V$,  so  either $H_0^W$ is $2$-transitive on $\PG(W)$, or  $H_0^W=\GammaL_1(2^5)$ with $(m,q)=(5,2)$~\cite{Kan1973}. If $H_0^W=\GammaL_1(2^5)$ and $(m,q)=(5,2)$, then (ii) holds by a computation in {\sc Magma}, so we assume otherwise. By~\cite{CamKan1979}, either $\SL_m(q)\unlhd H_0^W$, or $H_0^W=A_7$ and $(m,q)=(4,2)$. In particular, $H_0$ is not soluble, so $H_0$ does not belong to (R0) or (I0). 
 Let $\mathcal{L}'$ be the orbit of $H$ on  $\mathcal{L}$ that is not $\mathcal{L}_W$. 
By Lemma~\ref{lemma:linearspace}, $\mathcal{S}':=(V,\mathcal{L}')$ is an $H$-affine proper partial linear space with line-size $q^m$ for which $\mathcal{S}'(0)$ is the set of non-simple tensors of $V$. 
If $H_0$ belongs to one of the classes (T1)--(T3), then by Proposition~\ref{prop:(T1)--(T3)}, $\mathcal{S}'$ is $\mathbb{F}_q$-dependent, but then $\mathcal{S}'$ has line-size at most $q$, a contradiction. Otherwise,  $H_0$ belongs to (T4) or (T5), in which case $H_0$ belongs to (S2) or (S1), respectively. By Proposition~\ref{prop:(S1)--(S2)bad}, $\mathcal{S}'$ is isomorphic to an $\mathbb{F}_{q^2}$-dependent partial linear space, but then $\mathcal{S}'$ has line-size at most $q^2$, a contradiction.

Conversely, suppose that (ii) holds. Using {\sc Magma} and Lemmas~\ref{lemma:transitive}, \ref{lemma:sufficient} and  \ref{lemma:linearspace}, we verify that (i) holds. 
\end{proof}

 \bibliographystyle{acm}
\bibliography{jbf_references}

\end{document}